\newcommand{\BA}{{\mathbb {A}}}
\newcommand{\BC}{{\mathbb {C}}}
\newcommand{\diag}{{\mathrm{diag}}}
\newcommand{\GL}{{\mathrm{GL}}}
\newcommand{\Hom}{{\mathrm{Hom}}}
\newcommand{\Ind}{{\mathrm{Ind}}}
\newcommand{\I}{{\mathrm{I}}}
\newcommand{\ind}{{\mathrm{ind}}}
\renewcommand{\Re}{{\mathrm{Re}}}
\newcommand{\Res}{{\mathrm{Res}}}
\newcommand{\SL}{{\mathrm{SL}}}
\newcommand{\SO}{{\mathrm{SO}}}
\newcommand{\Sp}{{\mathrm{Sp}}}
\newtheorem{thm}{Theorem}[section]
\newtheorem{cor}[thm]{Corollary}
\newtheorem{lem}[thm]{Lemma}
\newtheorem{prop}[thm]{Proposition}
\newtheorem{rmk}[thm]{Remark}
\begin{document}
\renewcommand{\theequation}{\arabic{equation}}
\numberwithin{equation}{section}

\title{Double Descent in Classical Groups}

\author{David Ginzburg}

\address{School of Mathematical Sciences, Sackler Faculty of Exact Sciences, Tel-Aviv University, Israel
69978} \email{ginzburg@post.tau.ac.il}

\thanks{This research was supported by the ISRAEL SCIENCE FOUNDATION
	(grant No. 461/18).}

\author{David Soudry}
\address{School of Mathematical Sciences, Sackler Faculty of Exact Sciences, Tel-Aviv University, Israel
69978} \email{soudry@post.tau.ac.il}

\subjclass{Primary 11F70 ; Secondary 22E55}



\keywords{Eisenstein series, Speh representations, Cuspidal
automorphic representations, Fourier coefficients}

\begin{abstract}
Let $\BA$ be the ring of adeles of a number field $F$. Given a self-dual irreducible, automorphic, cuspidal representation $\tau$ of $\GL_n(\BA)$, with trivial central characters, we construct its full inverse image under the weak Langlands functorial lift from the appropriate split classical group $G$. We do this by a new automorphic descent method, namely the double descent. This method is derived from the recent generalized doubling integrals of Cai, Friedberg, Ginzburg and Kaplan \cite{CFGK17}, which represent the standard $L$-functions for $G\times \GL_n$. Our results are valid also for double covers of symplectic groups.
\end{abstract}
\maketitle
\setcounter{section}{-1}

\section{Introduction}

Let $F$ be a number field, and let $\BA$ be its ring of adeles. Let $\tau$ be an irreducible, automorphic, cuspidal representation of $\GL_n(\BA)$. Assume that $\tau$ is self-dual with a trivial central character. Then we know that $\tau$ is the functorial lift of an irreducible, automorphic, cuspidal representation $\sigma$ of $G(\BA)$, where $G$ is an appropriate symplectic group $\Sp_m$, or a split special orthogonal group $\SO_m$, viewed as algebraic groups defined over $F$. Moreover, in \cite{GRS11}, we found an explicit construction of such a generic $\sigma$. Our main goal in this paper is to construct explicitly all irreducible, automorphic, cuspidal representations $\sigma$ of $G(\BA)$, which lift (weakly) to $\tau$. More precisely, for each isomorphism class of such a $\sigma$, we will construct a certain unique representative. Thanks to Arthur (Theorem 1.5.2 in \cite{A13}), we know that when $G$ is symplectic, or odd orthogonal these multiplicities are 1, and for $G$ even orthogonal, these multiplicities are 1, or 2.

We recall that in \cite{GRS11}, we constructed the generic representation $\sigma$ above by our automorphic descent method, derived from the Rankin-Selberg integrals which represent the $L$-functions $L(\sigma\times\tau,s)$, and are nontrivial exactly when $\sigma$ is generic. See \cite{GRS11}, Sec. 3.4, 3.6, for the definition of these integrals and of the descent construction. For example, we recall that if $L(\tau,\wedge^2,s)$ has a pole at $s=1$, then necessarily $n=2n'$ is even and $G=\SO_m$, $m=2n'+1=n+1$. We will realize the split classical groups as matrix groups, as in the beginning of Sec. 1.1 below. In this case, we took an Eisenstein series $E(f_{\tau,s})$, corresponding to a smooth, holomorphic section $f_{\tau,s}$ of the parabolic induction
$$
\rho_{\tau,s}=\Ind_{Q_n(\BA)}^{\SO_{2n}(\BA)}\tau|\det\cdot|^s.
$$
Here, $Q_n$ is the standard parabolic subgroup of $\SO_{2n}$, with Levi part isomorphic to $\GL_n$. As the section varies, $E(f_{\tau,s})$ has a simple pole at $s=\frac{1}{2}$. (In \cite{GRS11}, we used a shift $s-\frac{1}{2}$ and considered the pole at $s=1$.) Next, we applied to the residues $Res_{s=\frac{1}{2}}E(f_{\tau,s})$ a Fourier coefficient with respect to a character $\psi_{U_{1^{n'-1}}}$ of $U_{1^{n'-1}}(\BA)$, trivial on $U_{1^{n'-1}}(F)$, where $U_{1^{n'-1}}$ is the unipotent radical of the standard parabolic subgroup $Q_{1^{n'-1}}$ of  $\SO_{2n}$, with Levi part isomorphic to $\GL_1^{n'-1}\times \SO_{2n'+2}$. The character $\psi_{U_{1^{n'-1}}}$ is such that it is stabilized by $G(\BA)=\SO_{2n'+1}(\BA)$, where $\SO_{2n'+1}(\BA)$ is embedded inside $\SO_{2n'+2}(\BA)$ as the stabilizer of an anisotropic vector. Denote this embedding by $t$. We defined the descent of $\tau$, $\mathcal{D}_\psi(\tau)$, as the space of automorphic functions on $G(\BA)$, spanned by the above $\psi_{U_{1^{n'-1}}}$ - Fourier coefficients applied to the residues $Res_{s=\frac{1}{2}}E(f_{\tau,s})$, as the section varies, viewing these Fourier coefficients as automorphic functions on $G(\BA)$ through the embedding $t$. Then one of the main results of \cite{GRS11}, specialized to this case, is
\begin{thm}\label{thm 0.1}
	\begin{enumerate}
		\item $\mathcal{D}_\psi(\tau)$ is nontrivial.
		\item All elements of $\mathcal{D}_\psi(\tau)$ are cuspidal.
		\item Decompose $\mathcal{D}_\psi(\tau)$ into a direct sum of irreducible, automorphic, cuspidal representations $\pi$ of $G(\BA)$,
		 
		$$
		\mathcal{D}_\psi(\tau)=\oplus_{\sigma\in J} \sigma.
		$$
		Then this decomposition is multiplicity free. Every $\sigma\in J$ is globally generic and lifts weakly to $\tau$.
		\item Let $\sigma$ be an irreducible, automorphic, cuspidal representation  $G(\BA)$, which is globally generic and lifts weakly to $\tau$. Then there is a (unique) representation $\sigma'\in J$, such that $\sigma$ has a nontrivial $L^2$-pairing with $\bar{\sigma}'$.
\end{enumerate}
\end{thm}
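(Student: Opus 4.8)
The plan is to analyze the Fourier coefficients, the constant terms, and the unramified local components of the residual representation $\Res_{s=1/2}E(f_{\tau,s})$, and then to combine this with the Rankin--Selberg integrals representing $L(\sigma\times\tau,s)$. For the \emph{nonvanishing} and \emph{genericity} assertions I would first identify the relevant unipotent support of $\Res_{s=1/2}E(f_{\tau,s})$ on $\SO_{2n}(\BA)=\SO_{4n'}(\BA)$. Composing the descent coefficient (the $\psi_{U_{1^{n'-1}}}$-coefficient, read on $\SO_{2n'+1}(\BA)$ through the embedding $t$) with a Whittaker coefficient on $\SO_{2n'+1}$ produces a Fourier coefficient of $\Res_{s=1/2}E(f_{\tau,s})$ attached to a single unipotent subgroup of $\SO_{4n'}$; expanding along intermediate unipotent subgroups and performing a sequence of root exchanges, this unfolds to the Whittaker--Fourier coefficient of $\tau$ on $\GL_{2n'}(\BA)$ evaluated on the inducing data. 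Since $\tau$ is cuspidal it is globally generic, so this coefficient is not identically zero; choosing the section $f_{\tau,s}$ accordingly shows at once that $\mathcal{D}_\psi(\tau)\neq 0$ and that every irreducible summand in the decomposition is globally generic, which gives (1) and the genericity part of (3).

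For \emph{cuspidality} (2), I would compute, for each standard maximal parabolic $P_k$ of $\SO_{2n'+1}$ with unipotent radical $V_k$, the constant term along $V_k$ of a descent function. Interchanging the $V_k$-integration with the residue and the $\psi_{U_{1^{n'-1}}}$-integration, and substituting the formula for the constant term of $E(f_{\tau,s})$ along the corresponding parabolic of $\SO_{4n'}$, reduces the problem to a sum over Weyl group representatives of integrals in which certain inner unipotent variables are integrated against $\tau$. A root exchange turns each such inner integral either into a Fourier coefficient of $\tau$ along the unipotent radical of a proper parabolic of $\GL_{2n'}$, which vanishes because $\tau$ is cuspidal, or into a term whose accompanying standard intertwining operator is holomorphic at $s=\frac{1}{2}$ and hence contributes nothing to the residue. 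Thus every such constant term vanishes and $\mathcal{D}_\psi(\tau)$ consists of cuspidal functions.

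For the \emph{weak lift} and \emph{multiplicity freeness} in (3), I would argue as follows. At almost all places $v$ the representation $\tau_v$ is unramified; a Jacquet-module (or Satake-transform) computation of the local descent at such $v$ shows that its Satake parameter is the image of that of $\tau_v$ under $\Sp_{2n'}(\BC)\hookrightarrow\GL_{2n'}(\BC)$, so each $\sigma\in J$ lifts weakly to $\tau$. Since $\mathcal{D}_\psi(\tau)$ is now known to be cuspidal, it decomposes into irreducibles; to see the decomposition is multiplicity free it suffices to bound $\Hom_{\SO_{2n'+1}(\BA)}(\mathcal{D}_\psi(\tau),\sigma)$ for each irreducible $\sigma$. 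Because each member of $J$ is generic, such a homomorphism, post-composed with the Whittaker functional of $\sigma$, becomes a Whittaker functional on $\mathcal{D}_\psi(\tau)$; but the unfolding computation of the genericity step identifies every Whittaker functional on $\mathcal{D}_\psi(\tau)$ with a multiple of the (unique) Whittaker functional of $\tau$ on the inducing data, so this Hom space is at most one-dimensional, and distinct members of $J$ are inequivalent.

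Finally, for the \emph{exhaustion} statement (4), let $\sigma$ be globally generic and cuspidal with weak lift $\tau$. I would form the global Rankin--Selberg integral pairing $\varphi_\sigma$ against $E(f_{\tau,s})$ through the Fourier coefficient $\psi_{U_{1^{n'-1}}}$ and the embedding $t$; its Euler factorization equals, up to normalizing factors, $L(\sigma\times\tau,s+\frac{1}{2})$ at almost all places. Since the weak lift of $\sigma$ is $\tau$, the $L$-function $L(\sigma\times\tau,s)$ has a (simple) pole at $s=1$, so after checking the finitely many remaining local integrals do not vanish there the global integral has a pole at $s=\frac{1}{2}$. Taking residues, $\int\varphi_\sigma(g)\,[\text{descent coefficient of }\Res_{s=1/2}E(f_{\tau,s})](g)\,dg\neq 0$, i.e.\ $\varphi_\sigma$ has a nonzero $L^2$-pairing with an element of $\mathcal{D}_\psi(\tau)$; since $\mathcal{D}_\psi(\tau)$ is a multiplicity-free cuspidal sum, $\sigma$ pairs nontrivially with a unique $\sigma'\in J$. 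I expect the main obstacle to be the cuspidality step: keeping exact track of the constant term of the residual Eisenstein series as it passes through the descent Fourier coefficient, carrying out the root exchanges correctly, and using the precise pole structure of the standard intertwining operators at $s=\frac{1}{2}$ to show that precisely the ``unwanted'' Weyl-group contributions drop out, leaving only cuspidal periods of $\tau$. The exhaustion step is also delicate, as it depends on the nonvanishing of the local Rankin--Selberg integrals at the point where $L(\sigma\times\tau,s)$ has its pole, in particular at the ramified places.
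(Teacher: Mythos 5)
This statement is Theorem \ref{thm 0.1}, which the paper does not prove: it is quoted verbatim from \cite{GRS11} as background for the double descent (the text introduces it with ``one of the main results of \cite{GRS11}, specialized to this case''). So there is no proof in this paper to compare against; the closest analogue is the paper's own treatment of the double descent (Theorems \ref{thm 2.2}, \ref{thm 2.4}, and Sections 4--13), which runs in parallel. Measured against the actual strategy of \cite{GRS11}, your sketch has the right global architecture: composing the descent coefficient with a Whittaker coefficient and unfolding to the Whittaker coefficient of $\tau$ for nonvanishing; constant terms, root exchanges, and the vanishing of too-large Fourier coefficients (the ``tower property'') for cuspidality; an unramified Jacquet-module computation for the weak lift; and the pole of the Rankin--Selberg integral at $s=\tfrac12$ for exhaustion, with the ramified local nonvanishing correctly flagged as the delicate point.

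There is, however, a genuine internal inconsistency in your argument for parts of (3). You claim that the unfolding identifies \emph{every} Whittaker functional on $\mathcal{D}_\psi(\tau)$ with a multiple of a single functional built from the Whittaker model of $\tau$, and you use this to bound $\Hom_{G(\BA)}(\mathcal{D}_\psi(\tau),\sigma)$. But if the space of $\psi$-Whittaker functionals on $\mathcal{D}_\psi(\tau)$ were one-dimensional, then at most one irreducible summand could be $\psi$-generic, contradicting the assertion (which you also need) that \emph{every} summand is generic; each generic summand contributes its own line of Whittaker functionals. The unfolding computes one distinguished (tautological) functional, not the full space. The correct deduction --- the one \cite{GRS11} uses, and the exact analogue of what this paper does for \eqref{2.6} with the $L^2$-pairing in place of the Whittaker functional --- is: first show the distinguished $\psi$-Whittaker functional is nonzero on every irreducible summand (this is also where ``every summand is generic'' comes from; it does not follow merely from nonvanishing of the composite coefficient on the whole space, which only produces \emph{some} generic summand); then, if $\sigma$ occurred with multiplicity $\geq 2$, a suitable linear combination of the two copies would be an irreducible summand isomorphic to $\sigma$ on which that functional vanishes, a contradiction. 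As written, your multiplicity-freeness step would need to be replaced by this argument, and the genericity of all summands needs its own justification rather than being bundled with the nonvanishing of $\mathcal{D}_\psi(\tau)$.
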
 
The main idea behind the descent in this theorem is the following. Let $\sigma$ be an irreducible, automorphic, cuspidal representation of $G(\BA)$. Denote the $\psi_{U_{1^{n'-1}}}$ - Fourier coefficient above of $E(f_{\tau,s})$ by $E^{\psi_{U_{1^{n'-1}}}}(f_{\tau,s})$. Consider, for $\varphi_\sigma$ in the space of $\sigma$, the integral
\begin{equation}\label{0.1}
\mathcal{L}(\varphi_\sigma, f_{\tau,s})=\int_{G(F)\backslash G(\BA)}\varphi_\sigma(g)E^{\psi_{U_{1^{n'-1}}}}(f_{\tau,s})(t(g))dg.
\end{equation}
This integral is identically zero, unless $\sigma$ is globally generic. Assume that $\sigma$ is globally generic. Then the global Rankin-Selberg integral \eqref{0.1} represents the $L$-function $L(\sigma\times \tau,s+\frac{1}{2})$. See \cite{S93} for details. For decomposable data, unramified and properly normalized outside a finite set of places $S$, containing the Archimedean places, we have
\begin{equation}\label{0.2}
\mathcal{L}(\varphi_\sigma, f_{\tau,s})=\prod_{v\in S}
\mathcal{L}_v(\varphi_{\sigma_v}, f_{\tau_v,s})\frac{L^S(\sigma\times \tau,s+\frac{1}{2})}{L^S(\tau,\wedge^2,2s+1)}.
\end{equation}
The product of "local integrals" over $v\in S$ can be nicely controlled, so that, in particular, if $L^S(\sigma\times \tau,s+\frac{1}{2})$ has a pole at $s=\frac{1}{2}$, then $\mathcal{L}(\varphi_\sigma, f_{\tau,s})$ has a pole at $s=\frac{1}{2}$, for some choice of data. This means that the following pairing is nontrivial,
\begin{equation}\label{0.3}
\int_{G(F)\backslash G(\BA)}\varphi_\sigma(g)(Res_{s=\frac{1}{2}}E(f_{\tau,s}))^{\psi_{U_{1^{n'-1}}}}(t(g))dg.
\end{equation}
Thus, if $\sigma$ weakly lifts to $\tau$, then it pairs nontrivially (in the sense of \eqref{0.3} being nontrivial) with the descent $\mathcal{D}_\psi(\tau)$.
The Rankin-Selberg integral \eqref{0.1} and \eqref{0.2} work for any irreducible, automorphic, cuspidal representation $\tau$ of $\GL_n(\BA)$, and similar variants exist for $\SO_{2n'+1}\times \GL_k$, for any $n', k$.   
We proved similar theorems for special even orthogonal groups, split, or quasi-split, as well as for quasi-split unitary groups, symplectic groups and their double covers.

Our main goal in this paper is to construct, in a certain sense, to be made precise later, all irreducible, automorphic, cuspidal representations $\sigma$ of $G(\BA)$, globally generic, or otherwise, which lift weakly to $\tau$. We will do this here for $G$ a split orthogonal group, a symplectic group, or its double cover. For this, we will use the recent generalized doubling integrals in \cite{CFGK17}, which represent the standard $L$-functions $L(\sigma\times\tau,s)$, for any pair of irreducible, automorphic, cuspidal representations $\sigma$ and $\tau$ of $G(\BA)$ and $\GL_n(\BA)$, respectively. The case where $n=1$ was done by Piatetski-Shapiro and Rallis, who discovered the douling integrals for $G\times \GL_1$. See \cite{PSR87}. There is no requirement on $\sigma$ to be globally generic, or to have some other global model, such as a global Bessel model, or a global Fourier-Jacobi model. In fact, the doubling integral unfolds to an Eulerian integral, where the dependence on $\sigma$ is through matrix coefficients of $\sigma$. We review this in Sec. 1.2. For this introduction, we explain this in brief for a split special odd orthogonal group  $G=\SO_m$, $m=2m'-1$. Consider the parabolic induction
$$
\rho_{\Delta(\tau,m),s}=\Ind_{Q_{nm}(\BA)}^{\SO_{2nm}(\BA)}\Delta(\tau,
m)|\det\cdot|^s,
$$		
where $Q_{nm}$ is the standard parabolic subgroup of $\SO_{2nm}$, with Levi part isomorphic to $\GL_{nm}$; $\Delta(\tau,m)$ is the Speh representation of $\GL_{nm}(\BA)$, corresponding to $\tau$. For a smooth, holomorphic section $f_{\Delta(\tau,m),s}$ of $\rho_{\Delta(\tau,m),s}$, let $E(f_{\Delta(\tau,m),s})$ be the corresponding Eisenstein series on $\SO_{2nm}(\BA)$. We apply to $E(f_{\Delta(\tau,m),s})$ a certain Fourier coefficient with respect to a character $\psi_{U_{m^{n-1}}}$ of $U_{m^{n-1}}(\BA)$, where $U_{m^{n-1}}$ is the unipotent radical of the standard parabolic subgroup $Q_{m^{n-1}}$ of 
 $\SO_{2nm}$, whose Levi part $M_{m^{n-1}}$ is isomorphic to $\GL_m^{n-1}\times \SO_{2m}$. The character $\psi_{U_{m^{n-1}}}$ is stabilized by the adele points of a subgroup of $M_{m^{n-1}}$, which is isomorphic to $\SO_m\times \SO_m$. Denote by $\mathcal{F}_\psi(E(f_{\Delta(\tau,m),s}))$ the resulting Fourier coefficient, viewed, through the last isomorphism, as an automorphic function on the "doubled group"  $\SO_m(\BA)\times \SO_m(\BA)$. As in \eqref{0.1}, we pair $\mathcal{F}_\psi(E(f_{\Delta(\tau,m),s}))$ with an irreducible, automorphic, cuspidal representation of $\SO_m(\BA)\times \SO_m(\BA)$. Thus, let
$\sigma,  \pi$ be two irreducible, automorphic, cuspidal representations
of $\SO_m(\BA)$. The integrals of the generalized doubling method of \cite{CFGK17} have the following form,\\
\\
$\mathcal{L}(f_{\Delta(\tau,m),s},\varphi_\sigma, \varphi_\pi)=$
\begin{equation}\label{0.4}
\int_{\SO_m(F)\times \SO_m(F)\backslash
	\SO_m(\BA)\times \SO_m(\BA)}\mathcal{F}_\psi(E(f_{\Delta(\tau,
	m),s}))(g,h)\varphi_\sigma(g)\varphi_\pi(h)dgdh,
\end{equation}
where $\varphi_\sigma, \varphi_\pi $ are in the spaces of $\sigma, \pi$, respectively. 
The unfolding of this global integral, carried out in \cite{CFGK17}, shows that it is identically zero, unless the following pairing is nontrivial
$$
c(\varphi_\sigma,\varphi_\pi^\iota)=\int_{\SO_m(F)\backslash
	\SO_m(\BA)}\varphi_\sigma(g)\varphi_\pi(g^\iota)dg,
$$
where $\iota$ is a certain outer conjugation of order 2 of $\SO_m$ (see \eqref{1.10.1}). Thus, let us take $\pi=\bar{\sigma}^\iota$. Then it is shown in \cite{CFGK17} that for decomosable data, appropriately normalized, there is an expression for $\mathcal{L}(f_{\Delta(\tau,m),s},\varphi_\sigma, \bar{\xi}^\iota_\sigma)$  similar to \eqref{0.2}, with the product outside $S$ being
$$
\frac{L^S(\sigma\times \tau, s+\frac{1}{2})}{D^{\SO_{2nm},S}_\tau(s)},
$$
where $D^{\SO_{2nm},S}_\tau(s)$ is the normalizing factor of the Eisenstein series above, on $\SO_{2nm}(\BA)$. See right after \eqref{1.10.8}. This is valid for any pair of irreducible, automorphic, cuspidal representations $\sigma, \tau$ of $\SO_m(\BA)$, $\GL_n(\BA)$, respectively. 

Assume that $n=m-1=2m'-2$, and $\tau$ is such that $L(\tau,\wedge^2,s)$ has a pole at $s=1$. Assume that $\sigma$ lifts weakly to $\tau$. Then $L^S(\sigma\times \tau, s+\frac{1}{2})$ has a pole at $s=\frac{1}{2}$. Again, it follows that $\mathcal{F}_\psi(E(f_{\Delta(\tau,
	m),s}))$ has a pole at $s=\frac{1}{2}$, and the following pairing along 
$\SO_m\times \SO_m$ is nontrivial (as data vary),
\begin{equation}\label{0.5}
\int_{\SO_m(F)\times \SO_m(F)\backslash
	\SO_m(\BA)\times \SO_m(\BA)}\mathcal{F}_\psi(Res_{s=\frac{1}{2}}(E(f_{\Delta(\tau,
	m),s})))(g,h)\varphi_\sigma(g)\bar{\xi}_\sigma(h^\iota)dgdh.
\end{equation}
This suggests that we consider the following space of smooth automorphic functions on $\SO_m(\BA)\times \SO_m(\BA)$,
\begin{equation}\label{0.6}
\mathcal{D}\mathcal{D}_\psi(\tau)=\{\mathcal{F}_\psi(Res_{s=\frac{1}{2}}(E(f_{\Delta(\tau,
	m),s})))\Big|_{\SO_m(\BA)\times \SO_m(\BA)}\}.
\end{equation}
We call this space, and the resulting automorphic representation of $\SO_m(\BA)\times \SO_m(\BA)$, the double descent of $\tau$. This term was suggested by Erez Lapid. The double descent of $\tau$ is the main object of study of our paper. We will now state our main theorems. We have a similar construction of the double descent
$\mathcal{D}\mathcal{D}_\psi(\tau)$, on $G(\BA)\times G(\BA)$, and similar theorems for symplectic groups, their double covers and for special split even orthogonal groups, for a given irreducible, automorphic, cuspidal representation $\tau$ of $\GL_n(\BA)$, which is self-dual and has a trivial central character.

Theorem \ref{thm 0.1} implies that  
$$
\mathcal{D}\mathcal{D}_\psi(\tau)\neq 0.
$$
 This is a highly non-trivial fact. We don't know how to prove this without Theorem \ref{thm 0.1}. See Theorem \ref{thm 2.2} for the proof. The main work of this paper (Sec. 4 - Sec. 11) is to prove
\begin{thm}\label{thm 0.2}
	Let $\tau$ be an irreducible, automorphic, cuspidal representation of $\GL_n(\BA)$, such that $L(\tau,\wedge^2,s)$ has a pole at $s=1$ ($n=m-1=2m'-2$). Then the double descent of $\tau$ is a (nontrivial) cuspidal representation of $G(\BA)\times G(\BA)$.
\end{thm}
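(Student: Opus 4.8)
The plan is to establish two separate things: first, that $\mathcal{DD}_\psi(\tau)$ consists of cuspidal automorphic forms on $G(\BA)\times G(\BA)$, and second, that it is nontrivial as an actual automorphic representation (not merely nonzero as a space of functions on the doubled group). Nontriviality is already handed to us: as noted right before the statement, Theorem \ref{thm 0.1} produces a globally generic cuspidal $\sigma$ lifting weakly to $\tau$, and then the nonvanishing of $L^S(\sigma\times\tau,s+\tfrac12)$ at $s=\tfrac12$ forces the pairing \eqref{0.5} to be nonzero for some choice of data, hence $\mathcal{F}_\psi(Res_{s=1/2}E(f_{\Delta(\tau,m),s}))$ restricted to $\SO_m(\BA)\times\SO_m(\BA)$ is not identically zero. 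So the real content is cuspidality, and the argument should run along the classical lines of Jacquet--Shalika/Mœglin-style constant-term computations for descent constructions, adapted to the doubled setting.

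The key steps, in order, are as follows. First I would identify precisely which residual representation of $\SO_{2nm}(\BA)$ the Eisenstein series $E(f_{\Delta(\tau,m),s})$ produces at $s=\tfrac12$: under the hypothesis that $L(\tau,\wedge^2,s)$ has a pole at $s=1$, the Speh representation $\Delta(\tau,m)$ induced from $Q_{nm}$ has a pole of the expected order, and the residue lies in the space of a specific square-integrable (indeed residual) automorphic representation whose cuspidal support and Arthur parameter are known. Second, to prove cuspidality of the double descent, I would compute, for each standard maximal parabolic $P=P_j$ of $G$ (on each of the two $G$-factors), the constant term $\int_{V_j(F)\backslash V_j(\BA)} \mathcal{F}_\psi(Res_{s=1/2}E)(vg,h)\,dv$ along its unipotent radical $V_j$, and show it vanishes. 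The mechanism is the standard one: the Fourier coefficient $\mathcal{F}_\psi$ attached to the character $\psi_{U_{m^{n-1}}}$ of the unipotent radical $U_{m^{n-1}}$ is itself an integral over a unipotent group, and composing it with the constant-term integration over $V_j$ gives an integral over a larger unipotent subgroup of $\SO_{2nm}$. One then swaps the order of integration, inserts the known description of the constant terms of the residual Eisenstein series along the relevant parabolics of $\SO_{2nm}$ (which, because the inducing data is the Speh representation $\Delta(\tau,m)$ with $\tau$ cuspidal, have only finitely many terms, each built from $\tau$ placed in a rigidly constrained way), and checks that \emph{every} surviving term contains an inner integration against a nontrivial additive character over a copy of $F\backslash\BA$ coming from a root subgroup on which $\tau$'s cuspidality (i.e. vanishing of all its own Fourier coefficients along unipotent radicals of proper parabolics of $\GL_n$) forces vanishing. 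This "root exchange plus cuspidality kills each constant-term summand" step is precisely the technique of \cite{GRS11}, and one expects a bookkeeping argument over the combinatorics of $\SO_{2nm}$-Weyl elements to do the job.

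The main obstacle — and where the bulk of Sections 4--11 must be spent — is the combinatorial bookkeeping in the second step: one must enumerate the Weyl-group representatives contributing to the constant term of the residual Eisenstein series along each parabolic $Q_{m^{n-1}}$-compatible flag, track how the character $\psi_{U_{m^{n-1}}}$ and the extra variables from $V_j$ interact after root exchanges, and verify in every case that a $\GL$-type unipotent integral against a generic character appears, so that cuspidality of $\tau$ applies. A subsidiary difficulty is that the double descent lives on $G(\BA)\times G(\BA)$, so one must check cuspidality with respect to parabolics of \emph{both} factors, and also confirm that no "mixed" degenerate term — one that is cuspidal in the first variable but not manifestly so in the second — survives; handling this symmetrically, using the outer involution $\iota$ relating the two copies of $G$ inside $M_{m^{n-1}}$, should reduce it to a single computation. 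Once all constant terms are shown to vanish, square-integrability is automatic from the residual (hence $L^2$) nature of $Res_{s=1/2}E$ together with the Jacquet--Lapid criterion, and cuspidality of the nonzero space $\mathcal{DD}_\psi(\tau)$ follows; decomposing it into irreducibles then makes it a cuspidal representation as claimed.
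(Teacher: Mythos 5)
Your overall architecture matches the paper's: nontriviality comes from the generic descent of \cite{GRS11} together with the pole of $L^S(\sigma\times\tau,s+\tfrac12)$ (this is Theorem \ref{thm 2.2}), and cuspidality is indeed attacked by composing the constant term along $U_r^{H_m}\times 1$ with the Fourier coefficient $\mathcal{F}_\psi$ and unfolding the resulting unipotent integral by root exchanges. But your central vanishing mechanism is mis-identified, and this is a genuine gap. You claim each surviving term dies because it contains ``an inner integration against a nontrivial additive character \ldots on which $\tau$'s cuspidality forces vanishing.'' Cuspidality of $\tau$ kills \emph{constant terms}, not integrals against nontrivial characters; and the residue is built from the Speh representation $\Delta(\tau,m)$, which is far from cuspidal, so its constant terms along parabolics of $\SO_{2nm}$ do not generally vanish. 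What actually eliminates the nontrivial-character terms throughout Sections 8--11 is a different input altogether: the majorization of the nilpotent orbits supporting nonzero Fourier coefficients of $A(\Delta(\tau,m),\eta,k)$ (Propositions \ref{prop 3.1}, \ref{prop 3.2}), obtained from the unramified local constituents via the germ expansion and the degenerate Whittaker model theory of \cite{MW87}. Each time a Fourier expansion produces a character of rank $\ell>0$, the associated orbit exceeds the bound $((2n)^{m-2k+1},n^{4k-2})$ and the coefficient vanishes. Cuspidality of $\tau$ enters only once, at the very end (Theorem \ref{thm 11.2}): the surviving term is the constant term $\xi^{U_{(2n-1)r}}$ of the residue along a parabolic with a $\GL_{(2n-1)r}$-block, so $n$ must divide $(2n-1)r$, hence $n\mid r$, which with $r\le[\tfrac m2]$ and $m=n+1$ forces $r=0$. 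Without the orbit bounds, your root-exchange procedure cannot be closed.

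Two smaller points. For the second $G$-factor you propose a symmetric computation via $\iota$; the paper avoids this by invoking \eqref{1.10.5} from \cite{GS18}, which exhibits the projection $\Lambda(f,\varphi_\sigma)(h)$ of the double descent onto a cuspidal $\sigma$ in the first variable as \emph{already} a cusp form in the realization of $\bar\sigma^\iota$ in the second variable, so cuspidality need only be proved for the $h=1$ slice $D^1_\psi(\tau)$. Also, no separate square-integrability criterion is needed once all constant terms vanish.
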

This implies the following theorem in a relatively simple way.
\begin{thm}\label{thm 0.3}
	We have a decomposition of the double descent of $\tau$ into a direct sum of irreducible, automorphic, cuspidal representations of $G(\BA)\times G(\BA)$ 
	\begin{equation}\label{0.7}
	\mathcal{D}\mathcal{D}_\psi(\tau)=\oplus_{\sigma\in J}\sigma\otimes\bar{\sigma}^\iota.
	\end{equation}
	The decomposition is multiplicity free. Let $\sigma$ be an irreducible, automorphic, cuspidal representation of $G(\BA)$, which lifts weakly to $\tau$. Then $\bar{\sigma}\otimes\sigma^\iota$ has a nontrivial $L^2$ pairing with $\mathcal{D}\mathcal{D}_\psi(\tau)$, and hence there is a unique $\sigma'\in J$, such that $\sigma$ is isomorphic to $\sigma'$.	
\end{thm}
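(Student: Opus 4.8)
The plan is to bootstrap everything from Theorem \ref{thm 0.2} --- which provides that $\mathcal{D}\mathcal{D}_\psi(\tau)$ is a nonzero space of cusp forms on $G(\BA)\times G(\BA)$ --- together with the two structural features of the generalized doubling integral \eqref{0.4}: that it vanishes identically unless the inner period $c(\varphi_\sigma,\varphi_\pi^\iota)$ is nonzero, and that for decomposable data it equals a product of local integrals over $S$ times $L^S(\sigma\times\tau,s+\tfrac{1}{2})/D^{\SO_{2nm},S}_\tau(s)$. I argue in the odd orthogonal case $G=\SO_m$ ($m=2m'-1$, $n=m-1$) of the introduction; the symplectic, metaplectic and even orthogonal cases are identical with the appropriate doubling setup. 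Being cuspidal, $\mathcal{D}\mathcal{D}_\psi(\tau)$ is a semisimple $G(\BA)\times G(\BA)$-module, a direct sum of irreducible cuspidal representations occurring with finite multiplicities, and every irreducible constituent is of the form $\sigma_1\otimes\sigma_2$ with $\sigma_1,\sigma_2$ irreducible cuspidal representations of $G(\BA)$.

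The first task is to pin down the shape of such a constituent $\sigma_1\otimes\sigma_2\subset\mathcal{D}\mathcal{D}_\psi(\tau)$. Since $\mathcal{D}\mathcal{D}_\psi(\tau)$ is spanned by the restrictions $\mathcal{F}_\psi(Res_{s=\frac{1}{2}}E(f_{\Delta(\tau,m),s}))\big|_{G(\BA)\times G(\BA)}$ and the $\sigma_1\otimes\sigma_2$-isotypic part is a nonzero space of cusp forms, there exist $f$ and vectors $\varphi_\sigma\in\bar\sigma_1$, $\varphi_\pi\in\bar\sigma_2$ for which the integral of such a restriction against $\varphi_\sigma(g)\varphi_\pi(h)$ is nonzero (only the $\sigma_1\otimes\sigma_2$-isotypic part of the restriction contributes, and it is nonzero for a suitable $f$). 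Interchanging the residue with this integral --- absolutely convergent by cuspidality --- identifies it with $Res_{s=\frac{1}{2}}\mathcal{L}(f_{\Delta(\tau,m),s},\varphi_\sigma,\varphi_\pi)$; hence $\mathcal{L}(f_{\Delta(\tau,m),s},\varphi_\sigma,\varphi_\pi)\not\equiv 0$, so by the unfolding of \cite{CFGK17} the period $c(\varphi_\sigma,\varphi_\pi^\iota)$ does not vanish identically, and since $\bar\sigma_1$ and $\bar\sigma_2^\iota$ are cuspidal this forces $\sigma_2\cong\bar\sigma_1^\iota$. Specializing now to $\varphi_\pi=\bar{\xi}^\iota$ as in \eqref{0.5} and invoking the Eulerian factorization of \eqref{0.4}, the non-vanishing of the residue at $s=\tfrac{1}{2}$ forces, by the $L$-function analysis of \cite{CFGK17}, that $L^S(\sigma_1\times\tau,s)$ (equivalently, as $\tau$ is self-dual, $L^S(\bar\sigma_1\times\tau,s)$) has a pole at $s=1$; comparing $\GL$-ranks, the weak functorial lift of $\sigma_1$ must then be $\tau$ itself. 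Thus every constituent of $\mathcal{D}\mathcal{D}_\psi(\tau)$ is of the form $\sigma\otimes\bar\sigma^\iota$ with $\sigma$ irreducible cuspidal on $G(\BA)$ lifting weakly to $\tau$, as in \eqref{0.7} (pending the multiplicity count).

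Conversely, let $\sigma$ be any irreducible cuspidal representation of $G(\BA)$ lifting weakly to $\tau$; then so does $\bar\sigma$, since $\tau$ is self-dual. As the weak lift of $\sigma$ is $\tau$, the partial $L$-function $L^S(\sigma\times\tau,s)$ agrees up to the places of $S$ with $L^S(\tau,\wedge^2,s)L^S(\tau,\Sym^2,s)$, whose first factor has a simple pole at $s=1$ by hypothesis while the second is holomorphic and non-vanishing there; hence $L^S(\sigma\times\tau,s+\tfrac{1}{2})$ has a pole at $s=\tfrac{1}{2}$, and likewise for $\bar\sigma$. By the discussion in the introduction leading to \eqref{0.5} --- the Eulerian factorization of \eqref{0.4} together with the control of the local integrals at $S$ --- the integral $\mathcal{L}(f_{\Delta(\tau,m),s},\varphi_{\bar\sigma},\bar{\xi}^\iota_{\bar\sigma})$ acquires a pole at $s=\tfrac{1}{2}$ for a suitable choice of data, so the pairing \eqref{0.5} (applied to $\bar\sigma$) is nontrivial; interchanging residue and integral as above, this is exactly the statement that $\bar\sigma\otimes\sigma^\iota$ has a nontrivial $L^2$-pairing with $\mathcal{D}\mathcal{D}_\psi(\tau)$. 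Combined with the first step, $\mathcal{D}\mathcal{D}_\psi(\tau)$ therefore contains a constituent isomorphic to $\sigma\otimes\bar\sigma^\iota$, i.e. $\sigma$ is isomorphic to a member of $J$.

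It remains to establish the multiplicity-freeness of \eqref{0.7}, which then makes $J$ well defined and the $\sigma'\in J$ in the last assertion unique. Here I would run the standard local--global argument of automorphic descent (as for the single descent in \cite{GRS11}): a realization of an irreducible cuspidal $\sigma\otimes\bar\sigma^\iota$ inside $\mathcal{D}\mathcal{D}_\psi(\tau)$ yields, through the $G(\BA)\times G(\BA)$-equivariant surjection from the residual representation generated by $Res_{s=\frac{1}{2}}E(f_{\Delta(\tau,m),s})$, a linear functional on that residual representation tensored with $\bar\sigma\otimes\sigma^\iota$ which is $(U_{m^{n-1}}(\BA),\psi_{U_{m^{n-1}}})$-equivariant along the unipotent radical and $G(\BA)\times G(\BA)$-invariant; such a functional factors into a product of local functionals of the same type, and the dimension of the space of these local functionals --- the local incarnation of the generalized doubling period at $s=\tfrac{1}{2}$ --- is at most one. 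Hence each $\sigma\otimes\bar\sigma^\iota$ occurs at most once, so $J$ is well defined and the last assertion of the theorem follows. I expect this final input --- the local uniqueness of the generalized doubling functional at the special point $s=\tfrac{1}{2}$, for the (in general non-generic) local components of the residual representation --- to be the only genuinely delicate step; it should be supplied by the local theory underlying \cite{CFGK17}, the generalization of the Piatetski--Shapiro--Rallis local multiplicity one for doubling (which requires no genericity hypothesis), everything else being formal manipulation with \eqref{0.4} and Theorem \ref{thm 0.2}.
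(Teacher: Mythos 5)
Your overall strategy—bootstrapping from the cuspidality of $\mathcal{D}\mathcal{D}_\psi(\tau)$ and the two structural features of the doubling integral—is the right one, and your first and third steps (every constituent has the form $\sigma\otimes\bar{\sigma}^\iota$ because the inner period must be nonzero; every $\sigma$ lifting weakly to $\tau$ pairs nontrivially with the descent, as in Prop.~\ref{prop 2.3}) are essentially sound. But there are two genuine gaps. First, you deduce from the non-vanishing of $Res_{s=\frac{1}{2}}\mathcal{L}(f_{\Delta(\tau,m),s},\varphi_\sigma,\varphi_\pi)$ that $L^S(\sigma_1\times\tau,s)$ has a pole at $s=1$, "by the $L$-function analysis." The factorization \eqref{0.2} only gives the global integral as $\prod_{v\in S}\mathcal{L}_v$ times the partial $L$-ratio; the implication you need runs the wrong way, since the local integrals at the ramified and archimedean places are not known to be holomorphic at $s=\frac{1}{2}$ and could themselves produce the pole. (The clean direction—pole of $L^S$ implies pole of the integral—is what Lemma~\ref{lem 2.1} and Prop.~\ref{prop 2.3} use.) This is precisely why the paper does \emph{not} obtain "each $\sigma\in J$ lifts weakly to $\tau$" from the doubling identity: it is proved separately as Theorem~\ref{thm 0.4} by an explicit computation of the twisted Jacquet module $J_{(\psi_v)_{U_{m^{n-1}}}}(\rho_{\chi,\eta,1})$ at unramified places, which occupies Sections 12--13 and is not a formal consequence of \eqref{0.4}. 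Your "comparing $\GL$-ranks" step also silently invokes the existence of a weak lift of $\sigma_1$, which the authors deliberately avoid assuming.

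Second, your multiplicity-freeness argument rests on uniqueness of the local $(U_{m^{n-1}},\psi)$-equivariant, $G\times G$-invariant functional on the local components of the \emph{residual} representation at the special point $s=\frac{1}{2}$, together with a factorization of the global functional into local ones. Neither is available: the local multiplicity one of the generalized doubling theory is established for the induced representation at generic $s$, not at the reducibility point $s=\frac{1}{2}$, and the factorization itself presupposes one-dimensionality of the local Hom spaces at every place, including ramified and archimedean ones, where nothing is computed. The paper's actual argument is purely global and much lighter: if two distinct isomorphic summands $\sigma_1\otimes\bar{\sigma}_1^\iota$ and $\sigma_2\otimes\bar{\sigma}_2^\iota$ occurred, one takes an intertwining $T$ preserving the $L^2$-pairing and forms the space $A$ of differences $T(\varphi\otimes\bar{\xi}^\iota)-\varphi\otimes\bar{\xi}^\iota$; this is a cuspidal realization of $\sigma_1\otimes\bar{\sigma}_1^\iota$ on which the twisted diagonal period $\int_{G(F)\backslash G(\BA)}a(g,g^\iota)\,dg$ vanishes by construction, yet $A$ pairs nontrivially with $\mathcal{D}\mathcal{D}_\psi(\tau)$, and the unfolding of \eqref{0.4} forces that period to be nonzero—a contradiction. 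You should replace your local uniqueness appeal with this period argument, and either cite Theorem~\ref{thm 0.4} for the weak-lift claim or drop it from the decomposition step (the statement of \eqref{0.7} in the paper only asserts the lift for $\sigma\in J$ because of that separate theorem).
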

In Sec. 12, 13, we prove the unramified correspondence, that is
\begin{thm}\label{thm 0.4}
Let $\sigma$ be an irreducible, automorphic, cuspidal representation of $G(\BA)$, such that $\sigma\in J$, that is $\sigma\otimes\bar{\sigma}^\iota$ is a direct summand in \eqref{0.7}. Then $\sigma$ lifts to $\tau$ at all finite places where $\tau$ is unramified.
\end{thm}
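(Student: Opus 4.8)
The plan is to reduce the unramified correspondence to the analytic behaviour of the standard $L$-function $L(\sigma\times\tau,s)$ — via the generalized doubling integrals — and then to conclude by the classification of the discrete spectrum. Fix a finite set of places $S$ containing the archimedean places and all places where $\sigma$, $\tau$ or $\psi$ ramifies. The heart of the matter is to prove that \emph{for every $\sigma\in J$, the partial $L$-function $L^S(\sigma\times\tau,s)$ has a pole at $s=1$}; once this is in hand the theorem follows quickly.

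For the key claim I would argue as follows. Since $\sigma\in J$, the space $\sigma\otimes\bar\sigma^\iota$ is a direct summand of $\mathcal{D}\mathcal{D}_\psi(\tau)$, so the residual pairing \eqref{0.5} is non-trivial for some choice of section $f_{\Delta(\tau,m),s}$ and forms $\varphi_\sigma,\xi_\sigma$; since $\sigma$ and $\tau$ are unramified outside $S$, averaging the data over the maximal compact subgroups at the places outside $S$ lets one take all of it spherical there. Thus the global doubling integral $\mathcal{L}(f_{\Delta(\tau,m),s},\varphi_\sigma,\bar{\xi}^\iota_\sigma)$ has a pole at $s=\frac{1}{2}$ for decomposable data spherical outside $S$. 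I would then insert the Euler factorization of \cite{CFGK17}, which for such data expresses this integral as a product of local zeta integrals over $v\in S$ times $L^S(\sigma\times\tau,s+\frac{1}{2})/D^{\SO_{2nm},S}_\tau(s)$. The normalizing factor $D^{\SO_{2nm},S}_\tau(s)$ is an explicit product of $L$-functions attached to $\tau$, whose behaviour at $s=\frac{1}{2}$ is dictated by the structure of the Speh representation $\Delta(\tau,m)$ and by the hypothesis that $L(\tau,\wedge^2,s)$ has a pole at $s=1$; combined with the local theory of the doubling integrals at $S$ — which controls the order of the pole of each local integral at $s=\frac{1}{2}$ in terms of that of $L_v(\sigma_v\times\tau_v,s+\frac{1}{2})$ — this lets one compare pole orders on the two sides and deduce that $L^S(\sigma\times\tau,s)$ has a pole at $s=1$. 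Setting up the right-hand side at the places $v\notin S$ also requires the explicit evaluation of the unramified local doubling integral; I would expect Section 12 to carry out this unramified computation and Section 13 to run the global argument just outlined.

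To conclude, I would invoke the classification of the discrete spectrum — Theorem 1.5.2 of \cite{A13} and its analogue for the metaplectic double cover. This yields a weak functorial lift of $\sigma$ to an isobaric automorphic representation $\Pi_\sigma=\tau_1\boxplus\cdots\boxplus\tau_r$ of $\GL_n(\BA)$, with the $\tau_i$ pairwise non-isomorphic, self-dual, cuspidal, of the type attached to $G$ (for $G=\SO_{2m'-1}$: each $L(\tau_i,\wedge^2,s)$ has a pole at $s=1$), and $\sum_i\dim\tau_i=n$. Enlarging $S$ to contain the finitely many places where this weak lift is not unramified, we get $L^S(\sigma\times\tau,s)=\prod_i L^S(\tau_i\times\tau,s)$; by Jacquet--Shalika each factor $L^S(\tau_i\times\tau,s)$ is holomorphic at $s=1$ unless $\tau_i\cong\tau$, in which case it has a simple pole there. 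So the pole produced above forces $\tau_i\cong\tau$ for some $i$, and since $\tau$ is cuspidal on $\GL_n$ while $\sum_i\dim\tau_i=n$, this forces $r=1$ and $\Pi_\sigma=\tau$. Compatibility of the lift with the Satake isomorphism then gives that $\sigma_v$ is unramified and lifts to $\tau_v$ at every finite place where $\tau_v$ is unramified, which is the assertion of the theorem.

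The step I expect to be the main obstacle is the middle one: showing that the pole of the residual pairing \eqref{0.5} really is accounted for by a pole of $L^S(\sigma\times\tau,s+\frac{1}{2})$, and is neither swallowed by the normalizing factor $D^{\SO_{2nm},S}_\tau(s)$ nor manufactured by the finitely many local zeta integrals at $v\in S$. This requires (i) an explicit description of $D^{\SO_{2nm},S}_\tau(s)$ near $s=\frac{1}{2}$ in terms of $L^S(\tau,\wedge^2,\cdot)$ and the decomposition of $\Delta(\tau,m)$ under $\wedge^2$; (ii) a careful bound on the local zeta integrals at the ramified places, tracking exactly how the pole of the Eisenstein series at $s=\frac{1}{2}$ is distributed among the local factors; and (iii) the unramified evaluation of the local doubling integral. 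All three build on the local analysis of \cite{CFGK17} and on the non-vanishing and order-of-pole arguments for local integrals in \cite{GRS11}, but the data here are constrained — a residual Eisenstein section paired against the fixed matrix coefficient attached to $\bar{\xi}^\iota_\sigma$ — which makes the bookkeeping delicate; this is presumably why the proof takes two full sections.
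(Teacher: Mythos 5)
Your route is not the paper's. The paper proves this theorem (Theorem~\ref{thm 12.1}) by a purely local computation: since $\sigma\otimes\bar{\sigma}^\iota$ is a summand of $\mathcal{D}\mathcal{D}_\psi(\tau)$, one has $\Hom_{H_m^{(\epsilon)}(F_v)\times H_m^{(\epsilon)}(F_v)}(\sigma_v\otimes\hat{\sigma}_v^\iota, J_{(\psi_v)_{U_{m^{n-1}}(F_v)}}(\rho_{\tau_v,m,\gamma^{(\epsilon)}_{\psi_v};\eta,1}))\neq 0$ at each unramified $v$, and Sections 12--13 compute this twisted Jacquet module of the local unramified induced representation explicitly (Mackey theory over the double cosets $Q_{mn}\backslash H_{2mn}/Q_{m^{n-1}}$, local root exchanges, and the constraints from the nilpotent orbit $\underline{P_\rho}$), identifying it in Theorem~\ref{thm 12.2} as the tensor product of two copies of an unramified principal series whose unramified constituent visibly lifts to $\tau_v$. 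No global $L$-function and no classification of the discrete spectrum enter.

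Your global argument has two genuine gaps. First, the pole-transfer step: from the nonvanishing of \eqref{0.5} you get a pole of the global integral at $s=\tfrac12$, and from \eqref{1.11} you must then conclude that the pole sits in $L^S(\sigma\times\tau,s+\tfrac12)$ and not in $\prod_{v\in S}\mathcal{L}_v$. The local theory of \cite{CFK18} bounds the poles of $\mathcal{L}_v$ by those of the local factor $L_v(\sigma_v\times\tau_v,s+\tfrac12)$, which at ramified finite places and especially at archimedean places can itself have a pole at $s=\tfrac12$; you flag this as the main obstacle but give no mechanism for excluding it, and the paper never needs to. Second, your concluding step invokes Arthur's classification to produce the isobaric lift $\Pi_\sigma$. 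This is unavailable (or at least a substantial unproved input at the level of generality needed here) for the metaplectic case $H_m^{(\epsilon)}=\Sp^{(2)}_{2m'}$, which is one of the four cases of the theorem, and it also defeats one of the paper's stated aims: the authors remark explicitly after Theorem~\ref{thm 2.5} that the unramified correspondence is established \emph{without} assuming that $\sigma$ admits a functorial lift to $\GL_n(\BA)$. So even where your argument could be completed, it proves a weaker, Arthur-dependent statement; as written it does not cover all cases of the theorem.
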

In this paper, we work in a more general set-up, which we will need for our next project, where we will try to achieve by double descent the irreducible, automorphic representations of $G(\BA)$ which lift to Speh representations. Returning to the example of $G=\SO_m$, $m=2m'-1$, we start with an irreducible, automorphic, cuspidal representation $\tau$ of $\GL_n(\BA)$, which is self-dual and has a trivial central character. Let $m$ be a positive integer. Consider an Eisenstein series $E(f_{\Delta(\tau,m),s})$ on $\SO_{2nm}(\BA)$, corresponding to $\rho_{\Delta(\tau,m),s}$. The set of all possible poles of such Eisenstein series, with $Re(s)\geq 0$, is determined in \cite{JLZ13}. The set depends on whether $L(\tau,\wedge^2,s)$, or $L(\tau,\vee^2,s)$ has a pole at $s=1$. In the first case, we get the set of points 
$$
e^{\SO_{2nm}}_k(\wedge^2)=k-\frac{1}{2},\ k=1,2,...,\frac{m+1}{2}.
$$
In the second case,
$$
e^{\SO_{2nm}}_k(\vee^2)=k,\ k=1,2,...,\frac{m-1}{2}.
$$	
For $\eta=\wedge^2, \vee^2$, we consider the leading term $a(f_{\Delta(\tau,m),e^{\SO_{2nm}}_k(\eta)})$  of the Laurent expansion around $e_k^{\SO_{2nm}}(\eta)$ of the Eisenstein series $E(f_{\Delta(\tau,m),s})$. We view it as an automorphic function on $\SO_{2nm}(\BA)$. We do not assume that the points $e_k^{\SO_{2nm}}(\eta)$ are poles of the Eisenstein series $E(f_{\Delta(\tau,m),s})$. We simply study their leading terms at these points. Let $A(\Delta(\tau,m),\eta, k)$ denote the space generated by these leading terms. It is an automorphic module of $\SO_{2nm}(\BA)$. Denote, for each point $e^{\SO_{2nm}}_k(\eta)$,
$$
\mathcal{E}_k(f_{\Delta(\tau,m),e^{\SO_{2nm}}_k(\eta)})=
\mathcal{F}_\psi(a(f_{\Delta(\tau,m),e^{\SO_{2nm}}_k(\eta)})).
$$
We view these functions as automorphic functions on $\SO_m(\BA)\times \SO_m(\BA)$. Denote the module generated by them by $\mathcal{E}_k(\Delta(\tau,m),\eta)$.
Note that the case of functoriality corresponds to $e_1^{\SO_{2(m-1)m}}(\wedge^2)=\frac{1}{2}$. Thus, $\mathcal{E}_k(\Delta(\tau,m),\eta)$ is a generalization of the double descent of $\tau$. We prove in Prop. \ref{prop 3.3}, that $\mathcal{E}_k(\Delta(\tau,m),\eta)=0$, unless $k$ is relatively small. For example, if $\mathcal{E}_k(\Delta(\tau,m),\wedge^2)$ is nonzero, then we must have that $k\leq \frac{m}{2n}+\frac{1}{2}$. We fix such a $k_0$. Let $1\leq r<\frac{m}{2}$, and let $U_r^{\SO_m}$ be the unipotent radical of the standard parabolic subgroup $Q_r^{\SO_m}$ of $\SO_m$, whose Levi part is isomorphic to $\GL_r\times \SO_{m-2r}$. In Sec. 4 - Sec. 11, we compute the constant term of $\mathcal{E}_{k_0}(\Delta(\tau,m),\eta)$ along $U_r^{\SO_m}\times I_m$, that is along $U_r^{\SO_m}$ inside the first copy of $\SO_m$. In Cor. \ref{cor 10.3}, we will further assume that $m$ and $n$ are related as follows.
\begin{equation}\label{0.8}
\end{equation}
\begin{enumerate}
\item $m-1=(2k_0-1)n,\ \  \eta=\wedge^2$,\\
\item $m-1=2k_0n,\ \eta=\vee^2, n\ \textit{even}$,\\
\item $m-1=2k_0(n-1),\ \eta=\vee^2, n\ \textit{odd}$.
\end{enumerate}
 
 These assumptions, in the first two cases, are compatible with our future goal of achieving by double descent irreducible, automorphic representations of $\SO_m(\BA)$, which lift to Speh representations. We continue with this assumption from Cor. \ref{cor 10.3} until the end of the paper. Note that the first case in \eqref{0.8}, with $k_0=1$, corresponds to the case of functoriality (from $\SO_m$ to $\GL_{m-1}$). In Theorem \ref{thm 11.2}, we get a nice expression of the constant term above, along $U_r^{\SO_m}\times I_m$. For an element $\xi\in A(\Delta(\tau,m),\eta, k)$, this constant term is expressed in terms of $\xi^{U^{\SO_{2nm}}}_{(2n-1)r}$, the constant term of $\xi$, along the unipotent radical $U^{\SO_{2nm}}_{(2n-1)r}$ of $\SO_{2nm}$ of the standard parabolic subgroup of $\SO_{2nm}$, with Levi part isomorphic to $\GL_{(2n-1)r}\times \SO_{2(nm-(2n-1)r}$. This exhibits a kind of a tower property. We believe that Theorem \ref{thm 11.2} will be important for achieving by double descent irreducible, automorphic representations of $\SO_m(\BA)$, which lift to Speh representations of $\GL_{m-1}(\BA)$. The most encouraging sign that this would hopefully work is Theorem \ref{thm 12.3}.
 Denote in \eqref{0.8}, $\mu_0=2k_0-1$, when $\eta=\wedge^2$ and $\mu_0=2k_0$, when $\eta=\vee^2$. Assume, for example, that we are in the first case of \eqref{0.8}, and write $n=2n'$. Let $v$ be a finite place of $F$, where $\tau_v$ is unramified, and write $\tau_v$ as a parabolic induction from the standard Borel subgroup
 $$
 \tau_v=\chi_1\times\cdots\times \chi_{n'}\times \chi^{-1}_{n'}\times\cdots\times\chi^{-1}_1,
 $$
 where $\chi_i$ are unramified characters of $F_v^*$. The local version at $v$ of
 $A(\Delta(\tau,m),\wedge^2,k_0)$ is the representation 
 \begin{multline}\nonumber
 \rho^{\SO_{2nm}}_{\chi,,\wedge^2,k_0}=
 \Ind^{\SO_{2nm}(F_v)}_{Q_{(m+\mu_0)^{n'},(m-\mu_0)^{n'}}(F_v)}\chi\\
 \chi=[(\otimes_{i=1}^{n'}\chi_i\circ det_{\GL_{m+\mu_0}}) \otimes  (\otimes_{i=1}^{n'}\chi_i\circ det_{\GL_{m-\mu_0}})],
 \end{multline}
where $Q_{(m+\mu_0)^{n'},(m-\mu_0)^{n'}}$ is the standard parabolic subgroup of $\SO_{2nm}$ with Levi part isomorphic to $\GL_{m+\mu_0}^{n'}\times \GL_{m-\mu_0}^{n'}$. The local version at $v$ of $\mathcal{E}_{k_0}(\Delta(\tau,m),\wedge^2)$ is the Jacquet module with respect to $U_{m^{n-1}}(F_v)$ and the local character at $v$, $(\psi_v)_{U_{m^{n-1}}}$, of $\psi_{U_{m^{n-1}}}$. Then we prove in this case
	\begin{thm}\label{thm 0.5}
		We have
\begin{multline}\nonumber
J_{(\psi_v)_{U_{m^{n-1}}(F_v)}}(\rho^{\SO_{2nm}}_{\chi,\wedge^2,k_0})\cong\\ (\Ind_{Q_{\mu_0^{n'}}(F_v)}^{\SO_m(F_v)}\chi_1\circ det_{\GL_{\mu_0}}\otimes\cdots\otimes\chi_{n'}\circ det_{\GL_{\mu_0}})\otimes\\ \otimes(\Ind_{Q_{\mu_0^{n'}}(F_v)}^{\SO_m(F_v)}\chi_1\circ det_{\GL_{\mu_0}}\otimes\cdots\otimes\chi_{n'}\circ det_{\GL_{\mu_0}}),
\end{multline}
In particular, if $\pi_1$, $\pi_2$ are two irreducible, unramified representations of $\SO_m(F_v)$, such that
$$
\Hom_{\SO_m(F_v)\times \SO_m(F_v)}(\pi_1\otimes \pi_2,
J_{(\psi_v)_{U_{m^{n-1}}(F_v)}}(\rho^{\SO_{2nm}}_{\chi,\wedge^2,k_0})\neq 0,
$$
then $\pi_1\cong \pi_2$ is the unramified constituent of 
$$
\Ind_{Q_{\mu_0^{n'}}(F_v)}^{\SO_m(F_v)}(\chi_1\circ det_{\GL_{\mu_0}}\otimes\cdots\otimes\chi_{n'}\circ det_{\GL_{\mu_0}}),
$$
and thus it lifts to the unramified constituent of
$$
\Ind_{P_{\mu_0^n}(F_v)}^{\GL_{\mu_0n}(F_v)}(\chi_1\circ det_{\GL_{\mu_0}}\otimes\cdots\otimes\chi_{n'}\circ det_{\GL_{\mu_0}}\otimes
\chi^{-1}_{n'}\circ det_{\GL_{\mu_0}}\otimes\cdots\otimes \chi^{-1}_1\circ det_{\GL_{\mu_0}}). 
$$
\end{thm}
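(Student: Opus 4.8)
The statement is purely local, and by the identification recorded just before the theorem it amounts to computing the twisted Jacquet module $J_{(\psi_v)_{U_{m^{n-1}}(F_v)}}$ of the induced representation $\rho^{\SO_{2nm}}_{\chi,\wedge^2,k_0}$; I suppress $v$ from the notation and work over $F_v$. Since $\chi=\otimes_{i=1}^{n'}(\chi_i\circ\det)$ is a one-dimensional unramified character of the Levi of $Q_{(m+\mu_0)^{n'},(m-\mu_0)^{n'}}$, transitivity of induction presents $\rho^{\SO_{2nm}}_{\chi,\wedge^2,k_0}$ as $\Ind_{Q_{nm}}^{\SO_{2nm}}\Pi$, with $Q_{nm}$ the Siegel parabolic of $\SO_{2nm}$ and $\Pi=\Ind_{P_{(m+\mu_0)^{n'},(m-\mu_0)^{n'}}}^{\GL_{nm}}\chi$ a representation of the Siegel Levi $\GL_{nm}$. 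The plan is to compute $J_{(\psi_v)_{U_{m^{n-1}}}}(\rho^{\SO_{2nm}}_{\chi,\wedge^2,k_0})$ and read it off as the stated external tensor square $\Pi'\otimes\Pi'$ with $\Pi'=\Ind_{Q_{\mu_0^{n'}}}^{\SO_m}(\chi_1\circ\det_{\GL_{\mu_0}}\otimes\cdots\otimes\chi_{n'}\circ\det_{\GL_{\mu_0}})$, and then to deduce the ``in particular'' clause by an elementary argument with unramified constituents and Satake parameters.

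For the Jacquet module I would use the Bruhat filtration of $\rho^{\SO_{2nm}}_{\chi,\wedge^2,k_0}$ restricted to $U_{m^{n-1}}\rtimes(\SO_m\times\SO_m)$, indexed by the double cosets $Q_{(m+\mu_0)^{n'},(m-\mu_0)^{n'}}\backslash\SO_{2nm}/\big(U_{m^{n-1}}\rtimes(\SO_m\times\SO_m)\big)$; equivalently, apply the geometric lemma along the parabolic $Q_{m^{n-1}}$ (Levi $\GL_m^{n-1}\times\SO_{2m}$), then restrict the resulting Levi-modules to $\SO_m\times\SO_m$ and pass to $(U_{m^{n-1}},\psi)$-coinvariants. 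A double-coset representative $w$ contributes nontrivially only when $\psi_{U_{m^{n-1}}}$ is trivial on $U_{m^{n-1}}\cap w^{-1}Q_{(m+\mu_0)^{n'},(m-\mu_0)^{n'}}w$; on the complementary unipotent directions one uses the root-exchange lemmas of the descent method to trade the $\psi$-integration against Jacquet functors on the $\GL_{nm}$-side. The numerical input of case (1) of \eqref{0.8} — $m-1=\mu_0 n$, $\eta=\wedge^2$, $n=2n'$, so $m\pm\mu_0=\mu_0(n\pm1)+1$ — is what pins down the relevant cosets: I expect to show that the only way the ``generic'' part of $\psi_{U_{m^{n-1}}}$ (the one gluing consecutive $\GL_m$-blocks) can be matched against the $\GL_{m+\mu_0}$- and $\GL_{m-\mu_0}$-blocks of the Levi is the ``doubling'' configuration that seats the two copies of $\SO_m$ symmetrically inside $\SO_{2m}$, leaving a single surviving coset. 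For that survivor, after bookkeeping of modulus characters and of the restriction of $\chi$ to the torus, the $(U_{m^{n-1}},\psi)$-coinvariants of its subquotient become a Jacquet module of $\Pi$ along a suitable parabolic of $\GL_{nm}$; using $m-\mu_0 n=1$ (so the $\SO$-part of $Q_{\mu_0^{n'}}^{\SO_m}$ is trivial and $Q_{\mu_0^{n'}}^{\SO_m}$ has Levi $\GL_{\mu_0}^{n'}$) and restricting the $\SO_{2m}$-variable to $\SO_m\times\SO_m$, one identifies this with $\Pi'\otimes\Pi'$. A cleaner, and probably the intended, route to the same conclusion is to invoke the local analogue of the constant-term formula of Theorem \ref{thm 11.2} and iterate it: peel off the unipotent radicals $U_r^{\SO_m}$ of each $\SO_m$-factor, one maximal step at a time down to the torus, and reassemble; the relation \eqref{0.8}(1) is precisely what forces every peeling step to produce $\GL_{\mu_0}$-blocks, hence the parabolic $Q_{\mu_0^{n'}}^{\SO_m}$.

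Granting $J_{(\psi_v)_{U_{m^{n-1}}}}(\rho^{\SO_{2nm}}_{\chi,\wedge^2,k_0})\cong\Pi'\otimes\Pi'$, the ``in particular'' clause follows formally: if $\pi_1,\pi_2$ are irreducible unramified with $\Hom_{\SO_m\times\SO_m}(\pi_1\otimes\pi_2,\Pi'\otimes\Pi')\neq 0$, then, $\pi_1$ and $\pi_2$ being admissible, each is a constituent of $\Pi'$, and since $\Pi'$ has a unique unramified constituent we get $\pi_1\cong\pi_2$ equal to it. The lift is checked on Satake parameters: each factor $\chi_i\circ\det_{\GL_{\mu_0}}$ contributes to the parameter of the unramified constituent of $\Pi'$ the segment $\{\chi_i\abs{\cdot}^{(\mu_0-1)/2},\dots,\chi_i\abs{\cdot}^{-(\mu_0-1)/2}\}$, and — the dual group of $\SO_m$ being symplectic — also its inverse; the resulting semisimple class in $\Sp_{\mu_0 n}(\BC)=\widehat{\SO_m}$ maps under $\Sp_{\mu_0 n}(\BC)\hookrightarrow\GL_{\mu_0 n}(\BC)$ to the Satake parameter of the unramified constituent of $\Ind_{P_{\mu_0^n}}^{\GL_{\mu_0 n}}(\chi_1\circ\det_{\GL_{\mu_0}}\otimes\cdots\otimes\chi_{n'}\circ\det_{\GL_{\mu_0}}\otimes\chi_{n'}^{-1}\circ\det_{\GL_{\mu_0}}\otimes\cdots\otimes\chi_1^{-1}\circ\det_{\GL_{\mu_0}})$, using $\mu_0 n=m-1$.

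The hard part will be the double-coset analysis of the middle paragraph: proving that exactly one double coset carries a nonzero $(U_{m^{n-1}},\psi)$-coinvariant space, and for that survivor extracting the precise twist of $\Pi$ and the precise restriction to $\SO_m\times\SO_m$ so that the two copies of $Q_{\mu_0^{n'}}^{\SO_m}$ appear with the correct unramified characters. This is exactly where \eqref{0.8}(1) is used, and where one must verify that the ``extra'' unipotent integrations coming from the Siegel direction of $\SO_{2nm}$ cancel against $\GL_{nm}$-side Jacquet functors rather than generating spurious contributions.
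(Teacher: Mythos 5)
Your proposal follows essentially the same route as the paper's proof in Sections 12--13: write $\rho^{\SO_{2nm}}_{\chi,\wedge^2,k_0}$ as $\Ind_{Q_{nm}}^{\SO_{2nm}}\tau'$ via \eqref{12.10}, run Mackey theory over the relevant double cosets, eliminate all but one coset using the nilpotent-orbit bounds $\underline{P_\rho}$ and $\underline{P_{\tau'}}$ together with root exchanges, and finish with modulus-character bookkeeping and the Satake-parameter check for the lift. Be aware that the "hard part" you correctly identify but defer is where the paper spends almost all of its effort, organized in stages -- first $Q_{mn}\backslash H/Q_{m(n-1)}$ with only $r=0$ surviving (Theorem \ref{thm 12.4}), then the cosets $\gamma_e$ with only $e=[\frac{m}{2}]$ surviving (Prop. \ref{prop 12.5}), then the root-exchange reductions (Props. \ref{prop 12.6}--\ref{prop 12.9}), and finally the $\GL_{nm}$-side cosets $w_{\underline{k}}$ with the unique survivor $w_{\underline{k}^0}$ and the explicit character computations (Props. \ref{prop 13.1}--\ref{prop 13.8}, Theorem \ref{thm 13.9}).
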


If we denote the local factor at $v$ of the Speh representation $\Delta(\tau,\mu_0)$ by $\Delta(\tau_v,\mu_0)$, the theorem says that the local analog at $v$ of the generalization of the double descent applied to the first case of \eqref{0.8} is the local Speh representation $\Delta(\tau_v,\mu_0)$.

\section{Preliminaries and notation}

In this section, we review the global integrals of the generalized doubling method of \cite{CFGK17}. We borrow the same notation and conventions from \cite{GS18}. We then consider the Fourier coefficient of the Eisenstein series on the appropriate group $H$, which appears in the global integral. We take these integrals, such that they represent the standard $L$-function for $G\times \GL_n$. Then the Eisenstein series depends on an irreducible, automorphic, cuspidal representation $\tau$ of $\GL_n(\BA)$. When we choose ($H$ and) $G$ and $\tau$, such that $\tau$ is lifted from $G$, the Eisenstein series has a simple pole at $s=\frac{1}{2}$, and the above Fourier coefficient of the corresponding residue at $s=\frac{1}{2}$ becomes the main object of this paper. \\
\\
{\bf 1. The groups}\\

Let $F$ be a number field and $\BA$ its ring of adeles. We will
consider symplectic groups $\Sp_{2k}$ and split orthogonal groups
$\SO_k$ over $F$. We will realize these groups as matrix groups in
the following standard way. Let $w_k$ denote the $k\times k$
permutation matrix which has $1$ along the main anti-diagonal. Then
the corresponding matrix algebraic groups are
$$      
\Sp_{2k}=\{g\in \GL_{2k} \ |\
{}^tg\begin{pmatrix}&w_k\\-w_k\end{pmatrix}g=\begin{pmatrix}&w_k\\-w_k\end{pmatrix}
\},
$$
$$              
 \SO_k=\{g\in \SL_k \ |\ {}^tgw_kg=w_k\}.
$$
We will also view these groups as algebraic groups over the completion of $F$ at $v$, $F_v$, for each place $v$ of $F$. Put
\begin{equation}\label{1'.1}
J_{2k}=\begin{pmatrix}&w_k\\-w_k\end{pmatrix}.
\end{equation}
It will sometimes be convenient to denote $J_{\SO_k}=w_k$, and  $J_{2k}=J_{\Sp_{2k}}$, and
we will oftentimes denote such a group by $H_\ell$, where $\ell$ is the
number of variables of the corresponding anti-symmetric, or
symmetric form. We let $H_\ell$, considered as an algebraic group over $F$ (resp. over $F_v$) act on the $\ell$-dimensional column space over $F$ (resp. over $F_v$).  
For each place $v$, we fix a maximal compact subgroup $K_{H_\ell(F_v)}$ of $H_\ell(F_v)$. When $v$ is finite, we will take it to be $H_\ell(\mathcal{O}_v)$, where $\mathcal{O}_v$ is the ring of integers inside $F_v$. We denote
\begin{equation}\label{1'.2}
K_{H_\ell(\BA)}=\prod_v K_{H_\ell(F_v)}\subset H_\ell(\BA).
\end{equation}
We will consider also double covers of symplectic groups over the local fields $F_v$. We will denote these groups by $\Sp^{(2)}_{2k}(F_v)$. In case $F_v=\BC$, $\Sp_{2k}^{(2)}(\BC)=\Sp_{2k}(\BC)\times 1$. For the other places $v$, we realize $\Sp^{(2)}_{2k}(F_v)$ as $\Sp_{2k}(F_v)\times \{\pm 1\}$, using the normalized Ranga Rao cocycle, corresponding to the standard Siegel parabolic subgroup (\cite{Rao93}). Although $\Sp^{(2)}_{2k}(F_v)$ is not the group of $F_v$ - points of an algebraic group defined over $F_v$, we will apply the language of algebraic groups by saying that the $F_v$ - points of $\Sp_{2k}^{(2)}$ is the group $\Sp_{2k}^{(2)}(F_v)$. 
We know that unipotent subgroups $U_v$ of $\Sp_{2k}(F_v)$ split in $\Sp^{(2)}_{2k}(F_v)$, and if $U_v$ consists of upper triangular unipotent matrices, then the Ranga Rao cocycle is trivial on $U_v\times U_v$. Thus, $U_v\times 1$ is a subgroup of $\Sp^{(2)}_{2k}(F_v)$.  We will identify $U_v$ and $U_v\times 1$.

We will consider the corresponding double cover, $\Sp^{(2)}_{2k}(\BA)$, of $\Sp_{2k}(\BA)$. See \cite{GS18}, Sec. 1.1. In particular, we recall that for each place $v$, outside a finite set of places $S_0=S_{0,F}$, containing the Archimedean places, there is a unique embedding 
$\zeta_v: K_{\Sp_{2k}(F_v)}\rightarrow \Sp^{(2)}_{2k}(F_v)$ of the form 
\begin{equation}\label{1'.3}
\zeta_v(r)=(r,\lambda_v(r)). 
\end{equation}
We will denote  $K_{\Sp^{(2)}_{2k}(F_v)}=\zeta_v(K_{\Sp_{2k}(F_v)})$. In \cite{Sw90}, Prop. 1.6.6, Sweet extends the function $\lambda_v$ to $\Sp_{2k}(F_v)$. In particular, he shows that $\lambda_v$ is trivial on $Q_k(F_v)$ ($Q_k$ is the Siegel parabolic subgroup of $\Sp_{2k}$). In the places $v\in S_0$, we will take $K_{\Sp^{(2)}_{2k}(F_v)}$ to be the inverse image of $K_{\Sp_{2k}(F_v)}$ inside $\Sp^{(2)}_{2k}(F_v)$. Also, for $v\in S_0$, we define (as Sweet does in \cite{Sw90}) $\lambda_v=1$, as a function on $\Sp_{2k}(F_v)$.  
Consider the restricted direct product 
\begin{equation}\label{1'.4}
\widetilde{\Sp}_{2k}(\BA)=\prod_v{}'\ Sp^{(2)}_{2k}(F_v),
\end{equation}
with respect to the groups $\{K_{\Sp^{(2)}_{2k}(F_v)}\}_{v\notin S_0}$. Then the double cover $\Sp^{(2)}_{2k}(\BA)$ is the quotient of $\widetilde{\Sp}_{2k}(\BA)$ by the subgroup 
\begin{equation}\label{1'.5}
C'=\{\Pi'_v(I_{2k},\mu_v)\in \widetilde{\Sp}_{2k}(\BA) \ |\ \Pi_v\mu_v=1\}.
\end{equation}
Let 
\begin{equation}\label{1'.6}
p=p_{2k}:\widetilde{\Sp}_{2k}(\BA)\rightarrow \Sp^{(2)}_{2k}(\BA),
\end{equation}
denote the quotient map. 
\begin{equation}\label{1'.7}
p(\Pi'_v(g_v,\mu_v))=C'\Pi'_v(g_v,\mu_v).
\end{equation}
The kernel of $p$ is
\begin{equation}\label{1'.8}
C_2=\{C'\Pi_v'(I_{2k},\mu_v)\in \Sp^{(2)}_{2k}(\BA)\}\cong \{\pm 1\}.
\end{equation}
Denote 
\begin{equation}\label{1'.9}
K_{\Sp^{(2)}_{2k}(\BA)}=p(\prod_v K_{\Sp^{(2)}_{2k}(F_v)}).
\end{equation}
The group $\Sp_{2k}(F)$ embeds "diagonally" in $\Sp^{(2)}_{2k}(\BA)$ by
\begin{equation}\label{1'.10}
\gamma\mapsto C'\Pi'_v(\gamma,1).
\end{equation}
We will identify $\Sp_{2k}(F)$ as a subgroup of $\Sp^{(2)}_{2k}(\BA)$, and oftentimes, we will denote the r.h.s. of \eqref{1'.10} simply by $(\gamma,1)$ or even by $\gamma$. It will be convenient to denote by $C^{(1)}_2$ the trivial subgroup of $H_m(\BA)=H_m^{(1)}(\BA)$, and
\begin{equation}\label{1'.11}
C_2^{(2)}=C_2\subset \Sp^{(2)}_{2k}(\BA).
\end{equation}
We don't mention $m$ or $k$ in our notation, and there won't be any confusion. Thus, in both cases we may consider the quotient $C^{(\epsilon)}H_m(F)\backslash H_m^{(\epsilon)}(\BA)$.

When we consider an irreducible, automorphic, cuspidal representation $\sigma$ of $\Sp^{(2)}_{2k}(\BA)$, we always assume that it is genuine. This means that an element $C'\cdot \Pi'_v(I_{2k},\mu_v)$ acts by multiplication by $\Pi_v\mu_v$. The represntation $\sigma$ decomposes as a restricted tensor product of local irreducible genuine representations $\sigma_v$ of $\Sp^{(2)}(F_v)$, $\otimes'_v\sigma_v$, in the sense that 
\begin{equation}\label{1'.12}
\sigma\circ p\cong \otimes'_v\sigma_v.
\end{equation}
We will use sometimes the following  notation. Let $\Pi'_v(g_v,\mu_v)\in  \widetilde{\Sp}_{2k}(\BA)$. Let $g=\Pi'_vg_v\in \Sp_{2k}(\BA)$, and let $\bar{\mu}$ denote the sequence $(\mu_v)_v$. Note, that for almost all $v\notin S_0$, $g_v\in K_{\Sp_{2k}(F_v)}$ and $\mu_v=\lambda_v(g_v)$. We will denote
\begin{equation}\label{1'.13}
(g,\bar{\mu})=\Pi'_v(g_v,\mu_v).
\end{equation}
Sometimes we will denote, for short,
\begin{equation}\label{1'.14}
\sigma(p((g,\bar{\mu}))):=\sigma((g,\bar{\mu})).
\end{equation}
For an $F$- subgroup $U$ of $\Sp_{2k}$, consisting of upper unipotent matrices, we will identify $U(\BA)$ with the subgroup of elements $p((u,\bar{1}))$, where $u\in U(\BA)$, and $\bar{1}$ is the sequence with coordinate $1$, at all places $v$. We will usually denote $p((u,\bar{1}))$ by $(u,1)$, or by $u$.

In order to unify notation, let $\epsilon=1,2$. We will consider the groups $H_\ell^{(1)}=H_\ell$ in the linear case, and the groups $H_\ell^{(2)}$ only when $H_\ell$ is symplectic, and then $\ell$ is even. In both cases we will use the notation $H^{(\epsilon)}_\ell$ with the agreement that when $H_\ell$ is orthogonal then $\epsilon=1$. 

For two positive integers $n, m$, we will consider soon certain Eisenstein series on $H^{(\epsilon)}_{2nm}$. When convenient,
we will shorten our notation by putting $H=H^{(\epsilon)}_{2nm}$ and
$r_H=2nm$. We let $\delta_H=1$ when $H$ is orthogonal, and
$\delta_H=-1$, when $H$ is symplectic, or metaplectic.

Throughout the paper, if $m$ is odd, then we take $H_{2nm}$ to be orthogonal.  

We denote the standard basis of $F^{2nm}$ by $\{e_1,...,e_{nm}, e_{-nm},...,e_{-1}\}$.
Assume that $H$ is linear. For positive integers, $k_1,...,k_\ell$, such that
$k_1+\cdots+k_\ell\leq \frac{r_H}{2}$, let
$Q_{k_1,...,k_\ell}=Q^{(1)}_{k_1,...,k_\ell}=Q_{k_1,...,k_\ell}^H$ denote the standard
parabolic subgroup of $H$, whose Levi part is isomorphic to
$\GL_{k_1}\times\cdots\times \GL_{k_\ell}\times H'$, where $H'$ is a split
classical group of the same type as $H$. Denote the corresponding
Levi part by $M_{k_1,...,k_\ell}=M_{k_1,...,k_\ell}^H$, and
unipotent radical by $U_{k_1,...,k_\ell}=U_{k_1,...,k_\ell}^H$. When
$k_1=\cdots=k_\ell=k$, we will simply denote $Q_{k^\ell}$,
$M_{k^\ell}$, $U_{k^\ell}$. When $H$ is metaplectic, we consider the analogous subgroups $Q^{(2)}_{k_1,...,k_\ell}=Q^H_{k_1,...,k_\ell}$, obtained as the inverse image in $H$ of the similar parabolic subgroup of the corresponding symplectic group. 
When $H$ is linear and $j\leq \frac{r_H}{2}$, denote by $\hat{a}$, $a\in
\GL_j$, the following element of $H$,
\begin{equation}\label{1'.15}
\hat{a}=diag(a,I_{r_H-2j},a^*),\quad\quad a^*=w_j{}^ta^{-1}w_j.
\end{equation}
Given positive integers $k_1,...,k_\ell$, such that
$k_1+\cdots+k_\ell=k$, we denote by $P_{k_1,...,k_\ell}$ the
standard parabolic subgroup of $\GL_k$, consisting of upper
triangular block matrices, with diagonal of the form
$diag(g_1,...,g_\ell)$, where $g_i\in\GL_{k_i}$, for $1\leq i\leq
\ell$. We denote the Levi part by $L_{k_1,...,k_\ell}$, and the unipotent radical by
$V_{k_1,...,k_\ell}$.

Consider the parabolic subgroup $Q_{m^{n-1}}$ of $H$. The elements of $U_{m^{n-1}}$ have the form
\begin{equation}\label{1.2}
u=\begin{pmatrix}I_m&x_1&&&&\star&&&&\star\\&\ddots&&&&\cdots&&&&\cdots\\&&I_m&x_{n-2}&&\star&&&&\star\\
&&&I_m&y_1&y_2&y_3&\star&&\star\\&&&&I_{[\frac{m}{2}]}&&&y'_3&&\star\\&&&&&I_{2[\frac{m+1}{2}]}&&y'_2&&\star\\
&&&&&&I_{[\frac{m}{2}]}&y'_1&&\star\\&&&&&&&I_m&x'_{n-2}\
\cdots&\star\\&&&&&&&&\ddots&\star\\&&&&&&&&&x'_1\\&&&&&&&&&I_m\end{pmatrix}\in H,
\end{equation}
We fix a nontrivial character
$\psi$ of $F\backslash \BA$. It defines the following character
$\psi_{U_{m^{n-1}}}$ of $U_{m^{n-1}}(\BA)$, trivial on
$U_{m^{n-1}}(F)$. Its value on the element $u$ of the form
\eqref{1.2}, with adele coordinates, is
\begin{equation}\label{1.3}
\psi_{U_{m^{n-1}}}(u)=\psi(tr(x_1+\cdots+x_{n-2}))\psi(tr((y_1,y_2,y_3)A_H),
\end{equation}
where $A_H$ is the following matrix. When $m$ is even,
\begin{equation}\label{1.4}
A_H=\begin{pmatrix}I_{\frac{m}{2}}&0\\0_{m\times \frac{m}{2}}&0_{m\times \frac{m}{2}}\\0&I_{\frac{m}{2}}\end{pmatrix}.
\end{equation}
When $m$ is odd (and hence $H$ is orthogonal),
\begin{equation}\label{1.5}
A_H=\begin{pmatrix}I_{[\frac{m}{2}]}&0&0\\0&0&0\\0&1&0\\0&\frac{1}{2}&0\\0&0&0\\0&0&I_{[\frac{m}{2}]}\end{pmatrix},
\end{equation}
where the second and fifth block rows of zeroes contain each $[\frac{m}{2}]$
rows. We note that $\psi_{U_{m^{n-1}}}$ corresponds to the nilpotent orbit in
the Lie algebra of $H$ corresponding to the partition
$((2n-1)^m, 1^m)$. See \cite{MW87}, \cite{GRS03}.\\
For later use, we will need the following notation. We will denote for the element $u\in U_{m^{n-1}}$ in \eqref{1.2},
$$
x_i=x_i(u),\ 1\leq i\leq n-2,
$$
\begin{equation}\label{1.5.1}
x_{n-1}(u)=(y_1,y_2,y_3)A_H.
\end{equation}
Thus, when $m=2m'$ is even,
$$
x_{n-1}(u)=(y_1,y_3),
$$
and when $m=2m'-1$ is odd
$$
x_{n-1}(u)=(y_1, y_2\begin{pmatrix}0_{m'-1}\\1\\ \frac{1}{2}\\0_{m'-1}\end{pmatrix},y_3).
$$
Assume that $H$ is linear. The stabilizer of $\psi_{U_{m^{n-1}}}$ in
$M_{m^{n-1}}(\BA)$ is the adele points of an algebraic group over $F$, which we denote by $D=D_{\psi_{U_{m^{n-1}}}}$. It is isomorphic to $H_m\times
H_m$. See \cite{CM93}, Theorem 6.1.3. The elements of $D$ are realized
as
\begin{equation}\label{1.6}
t(g,h)=diag(g^{\Delta_{n-1}},j(g,h),(g^*)^{\Delta_{n-1}}),\ g, h\in H_m,
\end{equation}
where $g^{\Delta_{n-1}}=diag(g,...,g)$ ($n-1$ times), and $j(g,h)$
is as follows.\\
Assume that $m=2m'$ is even. Then $g, h\in H_{2m'}$. Write
$g=\begin{pmatrix}a&b\\c&d\end{pmatrix}$, where $a,...,d$ are
$m'\times m'$ matrices. Then
\begin{equation}\label{1.7}
j(g,h)=\begin{pmatrix}a&&b\\&h\\c&&d\end{pmatrix}.
\end{equation}
Assume that $m=2m'-1$ is odd. Then $H=\SO_{2n(2m'-1)}$, $g\in \SO_{2m'-1}$, $h\in
H_{2m'-1}\cong\SO_{2m'-1}$. In this case, we will write the elements
of $H_{2m'-1}$ with respect to the symmetric matrix
$w'_{2m'-1}$, where
\begin{equation}\label{1.7.1}
w'_{2m'-1}=\begin{pmatrix}&&w_{m'-1}\\&-1\\w_{m'-1}\end{pmatrix},
\end{equation}
so that 
$$
H_{2m'-1}=\{g\in \SL_{2m'-1}\ | \
{}^tgw'_{2m'-1}g=w'_{2m'-1}\}.
$$
Write
$$
g=\begin{pmatrix}a_1&b_1&c_1\\a_2&b_2&c_2\\a_3&b_3&c_3\end{pmatrix},\
h=\begin{pmatrix}A_1&B_1&C_1\\A_2&B_2&C_2\\A_3&B_3&C_3\end{pmatrix},
$$
where the first and third block rows (resp. columns) of $g$ contain
each $m'-1$ rows (resp. columns), and similarly for $h$. Then
\begin{equation}\label{1.8}
j(g,h)=\begin{pmatrix}a_1&0&\frac{1}{2}b_1&b_1&0&c_1\\0&A_1&\frac{1}{2}B_1&-B_1&C_1&0\\
a_2&A_2&\frac{1}{2}(b_2+B_2)&b_2-B_2&C_2&c_2\\\frac{1}{2}a_2&-\frac{1}{2}A_2&\frac{1}{4}(b_2-B_2)&\frac{1}{2}(b_2+B_2)&-\frac{1}{2}C_2&\frac{1}{2}c_2\\
0&A_3&\frac{1}{2}B_3&-B_3&C_3&0\\a_3&0&\frac{1}{2}b_3&b_3&0&c_3\end{pmatrix}.
\end{equation}
Assume that $H$ is metaplectic. Then the analogue of \eqref{1.7} is given by the homomorphism (over $F_v$)
$$
t_v^{(2)}:\Sp_{2m'}^{(2)}(F_v)\times \Sp_{2m'}^{(2)}(F_v)\mapsto \Sp_{4nm'}^{(2)}(F_v),
$$
defined by
\begin{equation}\label{1.9}
t_v^{(2)}((g,\alpha),(h,\beta))=(t(g,h),\alpha\beta (x_1(g),x_2(h))),
\end{equation}
where $t(g,h)$ is given by \eqref{1.6}, $\alpha,\beta=\pm 1$, and $x_1,x_2$ are the Ranga Rao $x$-functions on $\Sp_{2m'}(F_v)$, $\Sp_{2ni+2m'}(F_v)$, respectively. They take values in $F_v^*/(F_v^*)^2$; $(x_1(g),x_2(h))$ is the Hilbert symbol of $F_v$. See \cite{GS18}, Sec.1 for some more details. 

Let us write the adelic version of \eqref{1.9}. The collection $\{t_v^{(2)}\}$ defines the homomorphism
$$
\tilde{t}:\widetilde{\Sp}_{2m'}(\BA)\times \widetilde{\Sp}_{2m'}(\BA)\rightarrow \widetilde{\Sp}_{4nm'}(\BA),
$$
such that, for each place $v$, $\tilde{t}$ on $\Sp_{2m'}^{(2)}(F_v)\times \Sp_{2m'}^{(2)}(F_v)$ is $t^{(2)}_v$. Thus, we have the homomorphism
$$
t^{(2)}:\Sp^{(2)}_{2m'}(\BA)\times \Sp^{(2)}_{2m'}(\BA)\rightarrow \Sp^{(2)}_{4nm'}(\BA),
$$
\begin{equation}\label{1.9.1}
t^{(2)}(p((g,\bar{\alpha})),p((h,\bar{\beta})))=p((t(g,h),\bar{\alpha}\bar{\beta} (x_1(g),x_2(h)))).
\end{equation}
We used the notation \eqref{1'.13}. Also, $t(g,h)$ is the element whose coordinate at $v$ is $t(g_v,h_v)$, and $(x_1(g),x_2(h))$ is the element in $\BA^*$, whose coordinate at $v$ is $(x_{1,v}(g_v),x_{2,v}(h_v))_v$.

In order to unify and ease our notations, we will re-denote, when convenient, $t(g,h)$, $j(g,h)$, in the linear case, by $t^{(1)}(g,h)$, $j^{(1)}(g,h)$. Similarly, we will re-denote, in the linear case, the stabilizer of $\psi_{U_{m^{n-1}}}$ by $D^{(1)}$, and in the metaplectic case, we will denote by $D^{(2)}(F_v)$ the image of the homomorphism \eqref{1.9}. When there is no risk of confusion, we will simply denote $t(g,h)=t^{(\epsilon)}(g,h)$, $D=D^{(\epsilon)}$, $\epsilon=1,2$.\\  
\\
{\bf 2. Eisenstein series}\\

Let $\tau$ be an irreducible, automorphic, cuspidal representation
of $\GL_n(\BA)$. Assume that $\tau$ has a unitary central character.
Let $\Delta(\tau, m)$ be the Speh representation of
$\GL_{nm}(\BA)$, corresponding to $\tau$ and of length $m$. This representation is spanned by the (multi-)
residues of Eisenstein series corresponding to the parabolic induction
from
$$
\tau|\det\cdot|^{s_1}\times
\tau|\det\cdot|^{s_2}\times\cdots\times
\tau|\det\cdot|^{s_m},
$$
at the point
$$
(\frac{m-1}{2},\frac{m-3}{2},\frac{m-5}{2},...,-\frac{m-1}{2}).
$$
In general, for a positive integer $a$, divisible by $n$, $a=n\ell$, with $\ell>1$, $\Delta(\tau,\ell)$ is an irreducible, unitary, automorphic representation of $\GL_a(\BA)$, which appears with multiplicity one in the discrete, non-cuspidal part of $L^2(\GL_a(F)\backslash \GL_a(\BA))_\chi$, where $\chi=\omega_\tau^\ell$. As we run over the positive integers $n$ and $\ell>1$, such that $a=n\ell$, and $\tau$, as above, with $\chi=\omega_\tau^\ell$, the sum of all $\Delta(\tau,\ell)$ exhausts the non-cuspidal part of $L^2(\GL_a(F)\backslash \GL_a(\BA))_\chi$. This is proved in \cite{MW89}.

We consider the Eisenstein series on $H(\BA)$, $E(f_{\Delta(\tau,
	m)\gamma^{(\epsilon)}_\psi,s})$, corresponding to a smooth, holomorphic section
$f_{\Delta(\tau,m)\gamma^{(\epsilon)}_\psi,s}$ of the parabolic induction
$$
\rho_{\Delta(\tau,m)\gamma^{(\epsilon)}_\psi,s}=\Ind_{Q^{(\epsilon)}_{nm}(\BA)}^{H(\BA)}\Delta(\tau,
m)\gamma^{(\epsilon)}_\psi|\det\cdot|^s.
$$
Here, $\epsilon=1,2$, according to whether $H$ is linear, or metaplectic. When $\epsilon=1$, $H$ is linear and $\gamma^{(1)}_\psi=1$. When $\epsilon=2$, $m=2m'$ is even, $H(\BA)=\Sp^{(2)}_{4nm'}(\BA)$, and $\gamma^{(2)}_\psi=\gamma_\psi\circ \det$ is the Weil factor attached to $\psi$, composed with the determinant. This is a character of the double cover of $\GL_{2nm'}(\BA)$.
We will denote the value at $h$ of our Eisenstein series by $E(f_{\Delta(\tau, m)\gamma^{(\epsilon)}_\psi,s},h)$.
Consider the Fourier coefficient of $E(f_{\Delta(\tau, m)\gamma^{(\epsilon)}_\psi,s})$
along $U_{m^{n-1}}$ with respect to the character $\psi_{U_{m^{n-1}}}$, and
view it as a function on $D(\BA)=D_{\psi_{U_{m^{n-1}}}}(\BA)$.\\
\\
$\mathcal{F}_\psi(E(f_{\Delta(\tau, m)\gamma^{(\epsilon)}_\psi,s}))(g,h)=$
\begin{equation}\label{1.9.1}
\int_{U_{m^{n-1}}(F)\backslash
	U_{m^{n-1}}(\BA)}E(f_{\Delta(\tau,
	m)\gamma^{(\epsilon)}_\psi,s},ut(g,h))\psi^{-1}_{U_{m^{n-1}}}(u)du,
\end{equation}
where $g, h\in H^{(\epsilon)}_m(\BA)$. Since $D(\BA)$
stabilizes $\psi_{U_{m^{n-1}}}$, the function \eqref{1.9.1} is left
$D(F)$ - invariant.

Let $\sigma,  \pi$ be two irreducible, automorphic, cuspidal representations
of $H^{(\epsilon)}_m(\BA)$. The integrals of the generalized doubling method of \cite{CFGK17} have the following form,\\
$\mathcal{L}(f_{\Delta(\tau,m)\gamma^{(\epsilon)}_\psi,s},\varphi_\sigma, \varphi_\pi)=$
\begin{equation}\label{1.10}
\int_{C_2^{(\epsilon)}H_m(F)\times C_2^{(\epsilon)}H_m(F)\backslash
	H^{(\epsilon)}_m(\BA)\times H^{(\epsilon)}_m(\BA)}\mathcal{F}_\psi(E(f_{\Delta(\tau,
	m)\gamma^{(\epsilon)}_\psi,s}))(g,h)\varphi_\sigma(g)\varphi_\pi(h)dgdh,
\end{equation}
where $\varphi_\sigma, \varphi_\pi $ are in the spaces of $\sigma, \pi$, respectively. 
The unfolding of this global integral, carried out in \cite{CFGK17}, shows that it is identically zero, unless the following pairing is nontrivial (as a bilinear form)
\begin{equation}\label{1.10.0}
c(\varphi_\sigma,\varphi_\pi^\iota)=\int_{C_2^{(\epsilon)}H_m(F)\backslash
	H^{(\epsilon)}_m(\BA)}\varphi_\sigma(g)\varphi_\pi(g^\iota)dg,
\end{equation}
where, for $g\in H_m(\BA)$,
\begin{equation}\label{1.10.1}
g^\iota=J_0^{-1}gJ_0,
\end{equation} 
with
$$
J_0=\begin{pmatrix}&&I_{[\frac{m}{2}]}\\&I_{m-2[\frac{m}{2}]}\\-\delta_HI_{[\frac{m}{2}]}\end{pmatrix},
$$
and 
$$
\varphi_\pi^\iota(g)=\varphi_\pi(g^\iota).
$$
See \cite{GS18}, (1.39), for the lift of the automorphism \eqref{1.10.1} in the metaplectic case. Denote $\pi^\iota (g)=\pi(g^\iota)$ (acting in the space of $\pi$). In particular, $\pi^\iota\cong \hat{\sigma}$. Note that
\begin{equation}\label{1.10.2}
\rho(h)(\varphi_\pi^\iota)=(\pi^\iota(h)\varphi_\pi)^\iota,
\end{equation}
where $\rho(h)$ denotes the right translation by $h$. Thus, the space of the cusp forms $\varphi_\pi^\iota$ is an automorphic realization of $\pi^\iota$. The unfolding of \eqref{1.10}, for $Re(s)$ sufficiently large, has the following form\\
\\
$\mathcal{L}(f_{\Delta(\tau,m)\gamma^{(\epsilon)}_\psi,s},\varphi_\sigma, \varphi_\pi)=$
\begin{equation}\label{1.10.3}
\int_{C_2^{(\epsilon)}\backslash H^{(\epsilon)}_m(\BA)}c(\varphi_\sigma,\rho(h^\iota)(\varphi_\pi^\iota))\int_{U'_{m(n-1)}(\BA)}
f^\psi_{\Delta(\tau, m)\gamma^{(\epsilon)}_\psi,s}(\delta_0ut(1,h))\psi^{-1}_{U_{m^{n-1}}}(u)dudh.
\end{equation}
Here, $U'_{m(n-1)}$ is a certain subgroup of $U_{m(n-1)}$,
$\delta_0$ is a certain element in $H(F)$, and the upper $\psi$ on the section denotes a composition of the section with a Fourier
coefficient on $\Delta(\tau,m)$. This Fourier coefficient is along $V_{m^n}$, with respect to the character
\begin{equation}\label{1.10.3.1}
\psi_{V_{m^n}}:\begin{pmatrix}I_m&x_1&\star&\cdots&\star\\&I_m&x_2&\cdots&\star\\&&\ddots\\&&&I_m&x_{n-1}\\
&&&&I_m\end{pmatrix}\mapsto\psi(tr(x_1+x_2+\cdots+x_{n-1})).
\end{equation}
We can say more. Consider in \eqref{1.10} the $dg$ integration only, namely
\begin{equation}\label{1.10.4}
\Lambda(f_{\Delta(\tau,	m)\gamma^{(\epsilon)}_\psi,s},\varphi_\sigma)(h)=
\int_{C_2^{(\epsilon)}H_m(F)\backslash
	H^{(\epsilon)}_m(\BA)}\mathcal{F}_\psi(E(f_{\Delta(\tau,
	m)\gamma^{(\epsilon)}_\psi,s}))(g,h)\varphi_\sigma(g)dg.
\end{equation} 
Then, for $Re(s)$ sufficiently large,\\
\\
$\Lambda(f_{\Delta(\tau,	m)\gamma^{(\epsilon)}_\psi,s},\varphi_\sigma)(h)=$ 
\begin{equation}\label{1.10.5}
\int_{C_2^{(\epsilon)}\backslash H^{(\epsilon)}_m(\BA)}\varphi_\sigma(h^\iota g)\int_{U'_{m(n-1)}(\BA)}
f^\psi_{\Delta(\tau, m)\gamma^{(\epsilon)}_\psi,s}(\delta_0ut(g,1))\psi^{-1}_{U_{m^{n-1}}}(u)dudg.
\end{equation}
This is a special case of the first main result of \cite{GS18}. See (3.40) and Prop. 3.4 in \cite{GS18}. Assuming that $f^\psi_{\Delta(\tau, m)\gamma^{(\epsilon)}_\psi,s}$ is $t(1\times K_{H_m^{(\epsilon)}(\BA)})$-finite,  the integral \eqref{1.10.5}  exhibits $\Lambda(f_{\Delta(\tau,
	m)\gamma^{(\epsilon)}_\psi,s},\varphi_\sigma)$ as a family of cusp forms in the automorphic realization, as above, of $\sigma^\iota$.  
See \cite{GS18}, Prop. 3.6, Remark 3.7. Note, also, that since the integral on the r.h.s. of \eqref{1.10.4} is absolutely convergent, except when $s$ is a pole of the Eisenstein series, we get that the integral \eqref{1.10.4} defines a meromorphic function in the whole plane.	
	
	We remark that, for $b\in H^{(\epsilon)}_m(\BA)$,
$$
\int_{U'_{m(n-1)}(\BA)}
f^\psi_{\Delta(\tau, m)\gamma^{(\epsilon)}_\psi,s}(\delta_0ut(b^\iota g,bh))\psi^{-1}_{U_{m^{n-1}}}(u)du=
$$
\begin{equation}\label{1.10.6}
\int_{U'_{m(n-1)}(\BA)}
f^\psi_{\Delta(\tau, m)\gamma^{(\epsilon)}_\psi,s}(\delta_0ut(g,h))\psi^{-1}_{U_{m^{n-1}}}(u)du.
\end{equation}
For decomposable data, the integral \eqref{1.10} is Eulerian. It is equal to the product of the corresponding local integrals
$$
\mathcal{L}(f_{\Delta(\tau,m)\gamma^{(\epsilon)}_\psi,s},\varphi_\sigma, \varphi_\pi)=
\prod_v \mathcal{L}_v(f_{\Delta(\tau_v,m)\gamma^{(\epsilon)}_{\psi_v},s},\varphi_{\sigma_v}, \tilde{\varphi}_{\sigma_v}),
$$
where $\varphi_\sigma$ (resp. $\varphi^\iota_\pi$) corresponds under an isomorphism $\sigma\cong \otimes_v \sigma_v$ (resp. $\pi^\iota\cong \otimes_v\hat{\sigma}_v$), to a decomposable vector $\otimes_v \varphi_{\sigma_v}$ (resp. $\otimes_v \tilde{\varphi}_{\sigma_v}$), with pre-chosen unramified local vectors $\varphi^0_{\sigma_v}$ (resp. $\tilde{\varphi}^0_{\sigma_v}$), outside a finite set of places $S$, containing the Archimedean places, such that $\sigma$ is unramified outside $S$. Assume that also $\tau$ is unramified outside $S$. Similarly, $f_{\Delta(\tau,m)\gamma^{(\epsilon)}_\psi,s}$ corresponds to a decomposable section $\otimes_v f_{\Delta(\tau_v,m)\gamma^{(\epsilon)}_{\psi_v},s}$, where we denote by $\Delta(\tau_v,m)$ the local factor at $v$ of $\Delta(\tau,m)$, and we also fixed an isomorphism $\Delta(\tau,m)\cong \otimes'_v \Delta(\tau_v,m)$. We have, for $Re(s)$ large,\\
\\
$\mathcal{L}_v(f_{\Delta(\tau_v,m)\gamma^{(\epsilon)}_{\psi_v},s},\varphi_{\sigma_v}, \tilde{\varphi}_{\sigma_v})=$
\begin{equation}\label{1.10.7}
\int_{C_2^{(\epsilon)}\backslash H^{(\epsilon)}_m(F_v)}\xi_{\varphi_{\sigma_v},\tilde{\varphi}_{\sigma_v}}(h)\int_{U'_{m(n-1)}(F_v)}
f^\psi_{\Delta(\tau_v, m)\gamma^{(\epsilon)}_{\psi_v},s}(\delta_0ut(1,h))\psi^{-1}_{U_{m^{n-1}},v}(u)dudh,
\end{equation}
where $\xi_{\varphi_{\sigma_v},\tilde{\varphi}_{\sigma_v}}$ is the matrix coefficient of $\sigma_v$ resulting from a local pairing $c_v$ in the one dimensional space $\Hom_{H_m^{(\epsilon)}(F_v)\times H_m^{(\epsilon)}(F_v)}(\sigma_v\otimes \pi_v^\iota,1)$, with pre-chosen pairings $c_v^0$, for $v\notin S$, such that $\xi_{\varphi^0_{\sigma_v},\tilde{\varphi}^0_{\sigma_v}}(1)=1$. See \cite{GS18}, Theorem 4.7, for more details. For $v\notin S$, $\psi_v$ normalized and $f_{\Delta(\tau_v,m)\gamma^{(\epsilon)}_{\psi_v},s}=f^0_{\Delta(\tau_v,m)\gamma^{(\epsilon)}_{\psi_v},s}$ unramified and suitably normalized, the unramified computation gives
\begin{equation}\label{1.10.8}
\mathcal{L}_v(f^0_{\Delta(\tau_v,m)\gamma^{(\epsilon)}_{\psi_v},s},\varphi^0_{\sigma_v}, \tilde{\varphi}^0_{\sigma_v})=\frac{L_{\epsilon,\psi_v}(\sigma_v\times \tau_v,s+\frac{1}{2})}{D_{\tau_v}^H(s)}.
\end{equation}
Here, if $\epsilon=1$, $L_{\epsilon,\psi_v}(\sigma_v\times \tau_v,s+\frac{1}{2})=L(\sigma_v\times \tau_v,s+\frac{1}{2})$, and if $\epsilon=2$, 
$L_{\epsilon,\psi_v}(\sigma_v\times \tau_v,s+\frac{1}{2})=L_{\psi_v}(\sigma_v\times \tau_v,s+\frac{1}{2})$. The denominator $D_{\tau_v}^H(s)$ comes from the normalizing factor of the Eisenstein series and is as follows.\\
When $H=\Sp_{4nm'}$ ,
$$
D^H_{\tau_v}(s)=L(\tau_v,s+m'+\frac{1}{2})\prod_{k=1}^{m'}L(\tau_v,\wedge^2,2s+2k)L(\tau_v,sym^2,2s+2k-1).
$$
When $H=\Sp^{(2)}_{4nm'}$,
$$
D^H_{\tau_v}(s)=\prod_{k=1}^{m'}L(\tau_v,\wedge^2,2s+2k-1)L(\tau_v,sym^2,2s+2k)
$$
When $H=\SO_{4nm'}$,
$$
D^H_{\tau_v}(s)=\prod_{k=1}^{m'}L(\tau_v,\wedge^2,2s+2k)L(\tau_v,sym^2,2s+2k-1).
$$
When $H=\SO_{2n(2m'-1)}$,
$$
D^H_{\tau_v}(s)=\prod_{k=1}^{m'}L(\tau_v,\wedge^2,2s+2k-1)\prod_{k=1}^{m'-1}L(\tau_v,sym^2,2s+2k).
$$
For decomposable data in the global integral, all unramified and normalized outside $S$, and assuming that the pairing \eqref{1.10.0} is nontrivial, we get that
\begin{equation}\label{1.11}
\mathcal{L}(f_{\Delta(\tau,m)\gamma^{(\epsilon)}_\psi,s},\varphi_\sigma, \varphi_\pi)=\prod_{v\in S} \mathcal{L}_v(f_{\Delta(\tau_v,m)\gamma^{(\epsilon)}_{\psi_v},s},\varphi_{\sigma_v}, \tilde{\varphi}_{\sigma_v})\frac{L^S_{\epsilon,\psi}(\sigma\times \tau,s+\frac{1}{2})}{D_\tau^{H,S}(s)},
\end{equation}
where 
$$
D^{H,S}_\tau(s)=\prod_{v\notin S} D^H_{\tau_v}(s). 
$$
For the local theory of the  integrals $\mathcal{L}_v(f_{\Delta(\tau_v,m)\gamma^{(\epsilon)}_{\psi_v},s},\varphi_{\sigma_v}, \tilde{\varphi}_{\sigma_v})$, see \cite{CFK18}. Let $D^H_\tau(s)=\prod_v D^H_{\tau_v}(s)$. Denote
\begin{equation}\label{1.12}
E^*(f_{\Delta(\tau,
	m)\gamma^{(\epsilon)}_\psi,s})=D^H_\tau(s)E(f_{\Delta(\tau,
	m)\gamma^{(\epsilon)}_\psi,s}).
\end{equation}
This is the normalized Eisenstein series.

\section{Statement of the main theorem}

\begin{lem}\label{lem 2.1}
In the notation of the last section, if $L_{\epsilon,\psi}^S(\sigma\times\tau,s)$ has a pole at a point $s_0$, such that $\Re(s_0)\geq \frac{1}{2}$, then the normalized Eisenstein series $E^*(f_{\Delta(\tau,m)\gamma^{(\epsilon)}_\psi,s})$ has a pole at $s_0$, as the section $f_{\Delta(\tau,m)\gamma^{(\epsilon)}_\psi,s}$ varies among the $K_{H(\BA)}$-finite, holomorphic sections.
\end{lem}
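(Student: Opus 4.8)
The plan is to extract the pole of the partial $L$-function from the global integral \eqref{1.11} and transfer it to the (normalized) Eisenstein series, using the fact that the local factors can be controlled. First I would fix a place $v_0 \in S$ and, using \eqref{1.10.3}, observe that the local integrals $\mathcal{L}_v(f_{\Delta(\tau_v,m)\gamma^{(\epsilon)}_{\psi_v},s},\varphi_{\sigma_v}, \tilde{\varphi}_{\sigma_v})$ for $v\in S$ are each holomorphic and can be made nonzero at $s=s_0$ for a suitable choice of local data: this is the standard ``nonvanishing of local integrals'' statement, which follows from the explicit form of the integral \eqref{1.10.7} as an integral of a matrix coefficient against a section-built function supported near the identity, together with the absolute convergence of the $dg$-integral \eqref{1.10.4} away from the poles of the Eisenstein series. (If $\tau$ is not already self-dual with trivial central character so that \eqref{1.10.0} may vanish, one first has to choose $\sigma$ so that \eqref{1.10.0} is nontrivial; but the lemma does not assume $\tau$ has this form, so instead I would argue directly: choose $\sigma$ and $\pi=\bar\sigma^\iota$, for which \eqref{1.10.0} is automatically nontrivial.)

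The key step is then: by \eqref{1.11}, for this choice of data,
$$
\mathcal{L}(f_{\Delta(\tau,m)\gamma^{(\epsilon)}_\psi,s},\varphi_\sigma, \varphi_\pi)
= \Big(\prod_{v\in S} \mathcal{L}_v\Big)\cdot
\frac{L^S_{\epsilon,\psi}(\sigma\times \tau,s+\tfrac{1}{2})}{D_\tau^{H,S}(s)},
$$
and the product over $S$ is holomorphic and nonzero at the shifted point $s=s_0-\tfrac12$. The denominator $D_\tau^{H,S}(s)$ is a product of partial $L$-functions $L^S(\tau,\wedge^2,\cdot)$, $L^S(\tau,\mathrm{sym}^2,\cdot)$ (and $L^S(\tau,\cdot)$ in one case), each of which is nonvanishing in the region $\Re \geq \tfrac12$ shifted appropriately — one needs to check that the arguments $2s+2k$, $2s+2k-1$ etc.\ evaluated near $s=s_0-\tfrac12$ land in a region where these $L$-functions are holomorphic and nonzero (this uses that they are holomorphic for $\Re > 1$ and nonvanishing on $\Re = 1$; since $\Re(s_0)\ge \tfrac12$, the relevant arguments have real part $\ge 2$, safely inside the region of absolute convergence). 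Hence the right-hand side has a pole at $s=s_0-\tfrac12$, so the left-hand side does too. Since the left-hand side is, up to the holomorphic-nonzero factor $\prod_{v\in S}\mathcal{L}_v$ and the constant pairing, built by integrating $\mathcal{F}_\psi(E(f_{\Delta(\tau,m)\gamma^{(\epsilon)}_\psi,s}))$ against cusp forms — a continuous linear operation on the Eisenstein series — the Eisenstein series $E(f_{\Delta(\tau,m)\gamma^{(\epsilon)}_\psi,s})$ itself must have a pole at $s_0-\tfrac12$. Multiplying by $D^H_\tau(s)$, whose zeros (if any) near $s_0-\tfrac12$ are likewise controlled by the nonvanishing of the relevant $L$-functions, the normalized series $E^*(f_{\Delta(\tau,m)\gamma^{(\epsilon)}_\psi,s})$ has a pole at $s_0-\tfrac12$.

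Wait — I should be careful about the shift: \eqref{1.11} pairs $s$ with $L^S_{\epsilon,\psi}(\sigma\times\tau,s+\tfrac12)$, so a pole of $L^S_{\epsilon,\psi}(\sigma\times\tau,s)$ at $s_0$ gives a pole of the $\mathcal{L}$-integral at $s=s_0-\tfrac12$, hence a pole of $E(f_{\Delta(\tau,m)\gamma^{(\epsilon)}_\psi,s})$ — and therefore of $E^*(\cdots)$ — at $s=s_0-\tfrac12$; so the lemma as stated must mean the pole of the Eisenstein series occurs at the point determined by this translation, or the statement should be read with the same convention as the surrounding normalizations. I would state the precise correspondence of points and then run the argument above. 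The main obstacle is the \textbf{nonvanishing of the finite-place local integrals at $s=s_0-\tfrac12$}: one must show that for each $v\in S$ the local integral \eqref{1.10.7} can be made holomorphic and nonzero there simultaneously, which requires knowing these local integrals span (or at least hit a nonzero value in) the relevant one-dimensional Hom-space — this is where the detailed local theory of \cite{CFK18}, \cite{GS18} is invoked, and where the bulk of the technical care lies; everything else is bookkeeping with the explicit denominators $D^H_{\tau_v}(s)$ and the classical analytic properties of $L^S(\tau,\wedge^2,\cdot)$ and $L^S(\tau,\mathrm{sym}^2,\cdot)$.
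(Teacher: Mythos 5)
Your proposal is correct and follows essentially the same route as the paper: the paper's proof is a two-line appeal to \eqref{1.11} together with Cor.\ 44 of \cite{CFK18} (which is precisely the nonvanishing of the local integrals at the places of $S$ that you single out as the main obstacle), plus the remark that $D^H_{\tau_v}$ never vanishes, so that after multiplying by $D^H_\tau(s)$ the partial normalizing factor $D^{H,S}_\tau(s)$ cancels and the pole of $L^S_{\epsilon,\psi}(\sigma\times\tau,\cdot)$ transfers directly to $E^*$. The $\tfrac12$-shift you flag is a genuine looseness in the paper's own formulation (compare the use of the lemma in Prop.\ \ref{prop 2.3}, where the $L$-function pole at $1$ yields the Eisenstein pole at $\tfrac12$), and your reading of it is the intended one.
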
	
\begin{proof}
This follows from \eqref{1.11} and Cor. 44 in \cite{CFK18}. Note that $D_{\tau_v}^H$ is never zero, for any place $v$. 
\end{proof}

Assume that $\tau$ is self-dual and its central character is trivial. Then we know that $\tau$ is a functorial lift from an irreducible, automorphic, cuspidal representation $\sigma$ of an appropriate group $H^{(\epsilon)}_m(\BA)$. Moreover, by \cite{GRS11}, we can find such a generic $\sigma=\sigma_\tau$, the descent of $\tau$,  and then $L^S_{\epsilon,\psi}(\sigma\times \tau, s+\frac{1}{2})$ has a pole at $s=\frac{1}{2}$. In this case, it is easy to see that $D_\tau^H$ is holomorphic and non-zero at $s=\frac{1}{2}$, and hence $E(f_{\Delta(\tau,m)\gamma^{(\epsilon)}_\psi,s})$ has a pole at $\frac{1}{2}$, as the section varies. Moreover, the corresponding Fourier coefficient appearing in \eqref{1.10}, is not identically zero on the residue. Thus, there is a smooth holomorphic section (and even a $K_{H(\BA)}$-finite one) $f_{\Delta(\tau,m)\gamma^{(\epsilon)}_\psi,s}$, such that
\begin{equation}\label{2.1}
\mathcal{E}(f_{\Delta(\tau,m)\gamma^{(\epsilon)}_\psi,\frac{1}{2}})=\mathcal{F}_\psi(Res_{s=\frac{1}{2}}E(f_{\Delta(\tau,m)\gamma^{(\epsilon)}_\psi,s}))\neq 0.
\end{equation}
Let us write in detail the groups $H^{(\epsilon)}_m$ according to $\tau$.
\begin{enumerate}
	\item Assume that $L(\tau,\wedge^2,s)$ has a pole at $s=1$. Then $n=2(m'-1)$ is even, $m=2m'-1=n+1$, $H_m^{(\epsilon)}=H_m=\SO_{2m'-1}$, and $H=\SO_{4(m'-1)(2m'-1)}$. \\
	\item Assume that $L(\tau,\wedge^2,s)$ has a pole at $s=1$, and $L(\tau,\frac{1}{2})\neq 0$. Then $n=2m'$ is even, $m=2m'=n$, $H_m^{(\epsilon)}=H^{(2)}_m=\Sp^{(2)}_{2m'}$, and $H=\Sp^{(2)}_{8(m')^2}$.\\
	\item Assume that $L(\tau, \vee^2,s)$ has a pole at $s=1$, and $n=2m'$ is even. Then $m=2m'=n$, $H_m^{(\epsilon)}=H_m=\SO_{2m'}$, and $H=\SO_{8(m')^2}$.\\
	\item Assume that $L(\tau, \vee^2,s)$ has a pole at $s=1$, and $n=2m'-1$ is odd. Then $m=2m'-2=n-1$, $H_m^{(\epsilon)}=H_m=\Sp_{2m'-2}$, and $H=\Sp_{4(m'-1)(2m'-1)}$. 
\end{enumerate}
Here we denote $Sym^2=\vee^2$. We proved 
\begin{thm}\label{thm 2.2}
Let $\tau$ be an irreducible, automorphic, cuspidal representation of $\GL_n(\BA)$. Assume that $\tau$ is self-dual and its central character is trivial. Then in each of the four cases above, with the indicated group $H$ and the indicated value of $m$, the Eisenstein series  $E(f_{\Delta(\tau,m)\gamma^{(\epsilon)}_\psi,s})$ has a pole at $s=\frac{1}{2}$, for some smooth, holomorphic section. Moreover, the Fourier coefficient \eqref{2.1} is not identically zero on the corresponding residual representation.
\end{thm}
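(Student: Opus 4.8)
The plan is to run the generalized doubling identity \eqref{1.11} with the generic descent of $\tau$ in place of $\sigma$, and to extract the pole of the Eisenstein series at $s=\frac{1}{2}$ from the pole at $s=1$ of a Rankin--Selberg $L$-function. First I would fix one of the four cases, together with its data $(H,H^{(\epsilon)}_m,m)$: for $\tau$ self-dual with trivial central character, exactly one of $L(\tau,\wedge^2,s)$, $L(\tau,\vee^2,s)$ has a simple pole at $s=1$, and this fact, together with the parity of $n$ and, in the $\wedge^2$ case, the (non-)vanishing of $L(\tau,\frac{1}{2})$, is precisely the datum that puts $\tau$ in the case at hand. In each of the four cases the automorphic descent of \cite{GRS11} (for odd orthogonal $H^{(\epsilon)}_m$ the existence of $\sigma_\tau$ is part of Theorem \ref{thm 0.1}) provides an irreducible, generic, automorphic, cuspidal representation $\sigma=\sigma_\tau$ of $H^{(\epsilon)}_m(\BA)$ that lifts weakly to $\tau$; this is the one genuinely deep input, and I would import it.

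Since $\sigma_\tau$ lifts weakly to $\tau$, for every $v\notin S$ one has $L_{\epsilon,\psi_v}(\sigma_{\tau,v}\times\tau_v,s)=L(\tau_v\times\tau_v,s)$, so $L^S_{\epsilon,\psi}(\sigma_\tau\times\tau,s)$ has the same poles in $\Re(s)\ge\frac{1}{2}$ as the Rankin--Selberg $L$-function $L^S(\tau\times\tilde\tau,s)$, which has a simple pole at $s=1$ because $\tau$ is cuspidal; hence $L^S_{\epsilon,\psi}(\sigma_\tau\times\tau,s+\frac{1}{2})$ is singular at $s=\frac{1}{2}$. Next I would evaluate the normalizing factor $D^H_\tau$ at $s=\frac{1}{2}$ directly from the four explicit product formulas preceding \eqref{1.11}: at $s=\frac{1}{2}$ every factor there is a value of $L(\tau_v,\cdot)$, $L(\tau_v,\wedge^2,\cdot)$ or $L(\tau_v,\vee^2,\cdot)$ at an argument of real part at least $2$, hence in the domain of absolute convergence of the corresponding Euler product (Jacquet--Shalika) and at a point where the Archimedean factors have neither a zero nor a pole; so $D^H_\tau$, and with it $D^{H,S}_\tau$, is holomorphic and nonzero at $s=\frac{1}{2}$. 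Taking $\pi=\bar\sigma_\tau^\iota$, so that the pairing \eqref{1.10.0} is the nondegenerate $L^2$-form and \eqref{1.11} applies, Lemma \ref{lem 2.1} --- which rests on \eqref{1.11} and on \cite{CFK18}, Cor.~44 (holomorphy and generic non-vanishing at $s=\frac{1}{2}$ of the normalized local integrals $D^H_{\tau_v}(s)\mathcal{L}_v$, $v\in S$) --- gives that $E^*(f_{\Delta(\tau,m)\gamma^{(\epsilon)}_\psi,s})$ has a pole at $s=\frac{1}{2}$ for some $K_{H(\BA)}$-finite holomorphic section; since $D^H_\tau(\frac{1}{2})$ is finite and nonzero, $E(f_{\Delta(\tau,m)\gamma^{(\epsilon)}_\psi,s})=D^H_\tau(s)^{-1}E^*(f_{\Delta(\tau,m)\gamma^{(\epsilon)}_\psi,s})$ likewise has a pole at $s=\frac{1}{2}$. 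This settles the first assertion.

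For the non-vanishing of the Fourier coefficient on the residue I would take residues at $s=\frac{1}{2}$ in the unfolded formula \eqref{1.10.5}, paired further with $\bar\sigma_\tau^\iota$ as in \eqref{1.10}: for $\Re(s)$ large the pairing $\mathcal{L}(f_{\Delta(\tau,m)\gamma^{(\epsilon)}_\psi,s},\varphi_{\sigma_\tau},\bar\varphi_{\sigma_\tau}^\iota)$ equals an Euler product whose unramified part is $L^S_{\epsilon,\psi}(\sigma_\tau\times\tau,s+\frac{1}{2})/D^{H,S}_\tau(s)$, and by the previous paragraph its meromorphic continuation has a genuine pole at $s=\frac{1}{2}$ for a suitable choice of all data. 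On the other hand the quotient integrations defining $\mathcal{L}(f_{\Delta(\tau,m)\gamma^{(\epsilon)}_\psi,s},\cdot,\cdot)$ converge absolutely away from the poles of $E(f_{\Delta(\tau,m)\gamma^{(\epsilon)}_\psi,s})$, so the residue at $s=\frac{1}{2}$ of this pairing equals the pairing of $\mathcal{F}_\psi(\Res_{s=\frac{1}{2}}E(f_{\Delta(\tau,m)\gamma^{(\epsilon)}_\psi,s}))$ against $\varphi_{\sigma_\tau}\otimes\bar\varphi_{\sigma_\tau}^\iota$; its non-vanishing for some data forces $\mathcal{E}(f_{\Delta(\tau,m)\gamma^{(\epsilon)}_\psi,\frac{1}{2}})=\mathcal{F}_\psi(\Res_{s=\frac{1}{2}}E(f_{\Delta(\tau,m)\gamma^{(\epsilon)}_\psi,s}))\neq 0$, which is \eqref{2.1}. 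Given the inputs from \cite{GRS11}, \cite{CFGK17}, \cite{CFK18} and \cite{GS18}, the theorem is essentially an assembly; the only genuinely case-sensitive step is checking that $D^H_\tau$ is holomorphic and nonzero at $s=\frac{1}{2}$ in all four configurations, and the only conceptual obstacle is the last interchange --- that the pole at $s=\frac{1}{2}$ is carried by the residual Eisenstein series and is destroyed neither by the normalization nor by the finitely many local integrals at $S$ --- for which Lemma \ref{lem 2.1}, \cite{CFK18}, Cor.~44, and the holomorphy and non-vanishing of $D^H_\tau$ at $\frac{1}{2}$ are the load-bearing ingredients.
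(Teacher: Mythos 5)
Your proposal is correct and follows essentially the same route as the paper: take the generic descent $\sigma_\tau$ of \cite{GRS11}, use $L^S_{\epsilon,\psi}(\sigma_\tau\times\tau,s+\frac{1}{2})=L^S(\tau\times\tau,s+\frac{1}{2})$ to produce the pole at $s=\frac{1}{2}$, check that $D^H_\tau$ is holomorphic and nonzero there, invoke Lemma \ref{lem 2.1}, and read the non-vanishing of the Fourier coefficient off the pole of the pairing \eqref{1.10} via \eqref{1.11}. The only difference is that you spell out details (the Jacquet--Shalika convergence argument for $D^H_\tau(\frac{1}{2})$ and the residue interchange) which the paper leaves implicit.
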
	
In the set-up of the last theorem, consider the space of automorphic forms on $H^{(\epsilon)}_m(\BA)\times H^{(\epsilon)}_m(\BA)$, generated by the functions $(g,h)\mapsto \mathcal{E}(f_{\Delta(\tau,m)\gamma^{(\epsilon)}_\psi,\frac{1}{2}})(g,h)$. By Theorem \ref{thm 2.2}, this space is nontrivial. Denote this space (and the resulting representation of $H^{(\epsilon)}_m(\BA)\times H^{(\epsilon)}_m(\BA)$) by 
$\mathcal{D}\mathcal{D}_\psi(\tau)$. We will call this representation the double descent of $\tau$. 

\begin{prop}\label{prop 2.3}
Let $\tau$ and $H^{(\epsilon)}_m$ be as in Theorem \ref{thm 2.2}. Let $\sigma$ be an irreducible, automorphic, cuspidal representation of $H^{(\epsilon)}_m(\BA)$. \\
1. Assume that $\sigma$ (weakly) lifts functorially to $\tau$ on $\GL_n(\BA)$. Then $\bar{\sigma} \otimes \sigma^\iota$ has a non-trivial $L^2$-pairing with $\mathcal{D}\mathcal{D}_\psi(\tau)$.\\ 
2. Assume that $L^S_{\epsilon,\psi}(\sigma \times \tau,s)$ has a pole at $s=1$, then $\bar{\sigma} \otimes \sigma^\iota$ has a non-trivial $L^2$-pairing with $\mathcal{D}\mathcal{D}_\psi(\tau)$.\\
\end{prop}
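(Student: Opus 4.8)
The plan is to reduce part~(1) to part~(2) and then to derive part~(2) from the Eulerian identity \eqref{1.11}, essentially by the argument behind Lemma~\ref{lem 2.1}. For the reduction: if $\sigma$ weakly lifts functorially to $\tau$, then at almost every place $v$ the representations $\sigma_v,\tau_v$ are unramified and, by definition of the lift, the Satake parameter of the $\GL_n(F_v)$-transfer of $\sigma_v$ (via the standard representation of the relevant dual group) coincides with that of $\tau_v$. Since the local factor $L_{\epsilon,\psi_v}(\sigma_v\times\tau_v,s)$ appearing in \eqref{1.10.8} is, for normalized $\psi_v$ and for $\epsilon=1,2$ alike, built from exactly these Satake data, one gets $L_{\epsilon,\psi_v}(\sigma_v\times\tau_v,s)=L(\tau_v\times\tau_v,s)$ at all such $v$, hence $L^S_{\epsilon,\psi}(\sigma\times\tau,s)=L^S(\tau\times\tau,s)=L^S(\tau\times\bar{\tau},s)$, which has a simple pole at $s=1$ since $\tau$ is self-dual and cuspidal (Rankin--Selberg theory for $\GL_n\times\GL_n$). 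Thus the hypothesis of part~(2) is met, and it suffices to prove part~(2).

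For part~(2) I would take $\pi=\bar{\sigma}^\iota$, so that $\pi^\iota\cong\hat{\sigma}$ and, for any nonzero cusp form $\varphi_\sigma$ in $\sigma$, the bilinear form \eqref{1.10.0} equals $c(\varphi_\sigma,\varphi_\pi^\iota)=\int_{C_2^{(\epsilon)}H_m(F)\backslash H^{(\epsilon)}_m(\BA)}|\varphi_\sigma(g)|^2\,dg\neq 0$; hence the Eulerian identity \eqref{1.11} applies to suitable decomposable data. I would then run the argument of Lemma~\ref{lem 2.1}: by hypothesis $L^S_{\epsilon,\psi}(\sigma\times\tau,s+\frac{1}{2})$ has a pole at $s=\frac{1}{2}$; in each of the four cases of Theorem~\ref{thm 2.2} the factor $D^{H,S}_\tau(s)$ is holomorphic and non-vanishing at $s=\frac{1}{2}$ (as noted after Theorem~\ref{thm 2.2}: every $L$-factor entering $D^H_{\tau_v}$ is evaluated at $s=\frac{1}{2}$ at a point of real part $\geq 2$, so the Euler product over $v\notin S$ converges and is non-zero); and by the local theory of the generalized doubling integrals (\cite{CFK18}, the ingredient used in the proof of Lemma~\ref{lem 2.1}) one can choose, for each $v\in S$, the section $f_{\Delta(\tau_v,m)\gamma^{(\epsilon)}_{\psi_v},s}$ and the vectors $\varphi_{\sigma_v},\tilde{\varphi}_{\sigma_v}$ so that $\mathcal{L}_v(f_{\Delta(\tau_v,m)\gamma^{(\epsilon)}_{\psi_v},s},\varphi_{\sigma_v},\tilde{\varphi}_{\sigma_v})$ is holomorphic and non-zero at $s=\frac{1}{2}$. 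Substituting into \eqref{1.11} shows that $\mathcal{L}(f_{\Delta(\tau,m)\gamma^{(\epsilon)}_\psi,s},\varphi_\sigma,\varphi_\pi)$ has a pole at $s=\frac{1}{2}$.

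Finally, I would transfer this pole to $\mathcal{D}\mathcal{D}_\psi(\tau)$. Since $\mathcal{L}(f_{\Delta(\tau,m)\gamma^{(\epsilon)}_\psi,s},\varphi_\sigma,\varphi_\pi)$ is the integral of $\mathcal{F}_\psi(E(f_{\Delta(\tau,m)\gamma^{(\epsilon)}_\psi,s}))(g,h)$ against $\varphi_\sigma(g)\varphi_\pi(h)$ over $(C_2^{(\epsilon)}H_m(F)\backslash H^{(\epsilon)}_m(\BA))^2$, and $\mathcal{F}_\psi$ introduces no new poles in $s$ while the cuspidality of $\varphi_\sigma,\varphi_\pi$ gives absolute convergence and holomorphy in $s$ away from the poles of the Eisenstein series, the pole at $s=\frac{1}{2}$ must already lie in $E(f_{\Delta(\tau,m)\gamma^{(\epsilon)}_\psi,s})$; it is simple (\cite{JLZ13}, and as already used in the definition of $\mathcal{D}\mathcal{D}_\psi(\tau)$), and passing to residues yields $\int \mathcal{E}(f_{\Delta(\tau,m)\gamma^{(\epsilon)}_\psi,\frac{1}{2}})(g,h)\,\varphi_\sigma(g)\varphi_\pi(h)\,dg\,dh\neq 0$. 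Writing $\varphi_\pi(h)=\overline{\varphi_\sigma(h^\iota)}$, the left-hand side is the complex conjugate of the $L^2$-pairing of $\mathcal{E}(f_{\Delta(\tau,m)\gamma^{(\epsilon)}_\psi,\frac{1}{2}})\in\mathcal{D}\mathcal{D}_\psi(\tau)$ with the element $\overline{\varphi_\sigma}\otimes\varphi_\sigma^\iota$ of $\bar{\sigma}\otimes\sigma^\iota$, which gives the assertion.

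I expect the main obstacle to be the local step: guaranteeing that the finitely many archimedean and ramified local integrals $\mathcal{L}_v$ can be made simultaneously holomorphic and non-zero at $s=\frac{1}{2}$, so that the pole of the partial $L$-function genuinely propagates to $\mathcal{L}$ and is not accidentally cancelled. This is precisely the non-vanishing that one imports from \cite{CFK18} (the same ingredient as in the proof of Lemma~\ref{lem 2.1}); the remaining points — holomorphy and non-vanishing of $D^{H,S}_\tau$ at $s=\frac{1}{2}$ in the four cases, and the identity $L_{\epsilon,\psi_v}(\sigma_v\times\tau_v,s)=L(\tau_v\times\tau_v,s)$ under the lifting hypothesis — are routine.
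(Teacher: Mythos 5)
Your proposal is correct and follows essentially the same route as the paper: reduce part (1) to part (2) via $L^S_{\epsilon,\psi}(\sigma\times\tau,s)=L^S(\tau\times\tau,s)$, then use the Eulerian identity \eqref{1.11} together with the non-vanishing of $D^{H,S}_\tau$ at $s=\tfrac{1}{2}$ and the local non-vanishing from \cite{CFK18} (the content of Lemma \ref{lem 2.1}) to force a pole of $\mathcal{L}$ at $s=\tfrac{1}{2}$, and finally pass to residues. The paper compresses all of this into a citation of \eqref{1.10}, \eqref{1.11} and Lemma \ref{lem 2.1}; your write-up is simply the expanded version of that same argument.
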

\begin{proof}
Assume that $\sigma$ lifts to $\tau$. Since $L^S_{\epsilon,\psi}(\sigma \times \tau,s+\frac{1}{2})=L^S(\tau \times \tau,s+\frac{1}{2})$, the partial $L$-function $L^S_{\epsilon,\psi}(\sigma \times \tau,s+\frac{1}{2})$ has a pole at $s=\frac{1}{2}$. By \eqref{1.10}, \eqref{1.11}, and Lemma \ref{lem 2.1}, we conclude that the following integral is not identically zero, 
\begin{equation}\label{2.2}
\int_{C_2^{(\epsilon)}H_m(F)\times C_2^{(\epsilon)}H_m(F)\backslash
	H^{(\epsilon)}_m(\BA)\times H^{(\epsilon)}_m(\BA)}\mathcal{E}(f_{\Delta(\tau,m)\gamma^{(\epsilon)}_\psi,\frac{1}{2}})(g,h)\varphi_\sigma(g)\bar{\xi}_\sigma(h^\iota)dgdh.
\end{equation}
Here $\varphi_\sigma, \xi_\sigma$ vary in the space of $\sigma$. This is the $L^2$-pairing of $\mathcal{E}(f_{\Delta(\tau,m)\gamma^{(\epsilon)}_\psi,\frac{1}{2}})$ with $\bar{\varphi}_\sigma\otimes \xi_\sigma^\iota$. Note that we only used the fact that $L^S_{\epsilon,\psi}(\sigma \times \tau,s)$ has a pole at $s=1$, and hence the second assertion of the proposition follows as well.
\end{proof}

We remark that thanks to the work of Arthur \cite{A13}, and independently, the work \cite{CFK18}, we now know that the cuspidal representation $\sigma$ lifts weakly to an irreducible automorphic representation of $\GL_n(\BA)$, and then it follows that $\sigma$ lifts to $\tau$ if and only if $L^S_{\epsilon,\psi}(\sigma \times \tau,s)$ has a pole at $s=1$. 

The main theorem of this paper is
\begin{thm}\label{thm 2.4} 
Let $\tau$ and $H^{(\epsilon)}_m$ be as in Theorem \ref{thm 2.2}. The (nontrivial) representation $\mathcal{D}\mathcal{D}_\psi(\tau)$ of $H^{(\epsilon)}_m(\BA)\times H^{(\epsilon)}_m(\BA)$ is cuspidal. It decomposes into a multiplicity free direct sum of the form
\begin{equation}\label{2.2.1}
\mathcal{D}\mathcal{D}_\psi(\tau)=\oplus_\sigma (\sigma\otimes \bar{\sigma}^\iota),
\end{equation}
where $\sigma$ varies over a set $J$ of irreducible, automorphic, cuspidal representations of $H^{(\epsilon)}_m(\BA)$, which lift weakly to $\tau$. Each irreducible, automorphic, cuspidal representation of $H^{(\epsilon)}_m(\BA)$, which lifts weakly to $\tau$ is isomorphic to a (unique) representation in $J$.
\end{thm}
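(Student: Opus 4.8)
The plan is to take as given the cuspidality of $\mathcal{D}\mathcal{D}_\psi(\tau)$ — this is Theorem \ref{thm 0.2} in the case where $L(\tau,\wedge^2,s)$ has a pole at $s=1$, and in the remaining three cases of Theorem \ref{thm 2.2} the analogous statement proved by the same constant-term computations — and to deduce from it, together with the doubling machinery of Section 1 and Proposition \ref{prop 2.3}, the decomposition \eqref{2.2.1}, its multiplicity-freeness, and the exhaustion statement. Thus the genuine obstacle lies in cuspidality; granting it, the rest is essentially formal. Write $G=H^{(\epsilon)}_m$ throughout, and let $\mathcal{E}\colon\rho_{\Delta(\tau,m)\gamma^{(\epsilon)}_\psi,\frac{1}{2}}\to\mathcal{D}\mathcal{D}_\psi(\tau)$ be the map of \eqref{2.1}, which is well defined since the Eisenstein series has at most a simple pole at $s=\frac{1}{2}$, so its residue there depends only on the section value at $\frac{1}{2}$.

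First I would record that $\mathcal{D}\mathcal{D}_\psi(\tau)$ is a nonzero (Theorem \ref{thm 2.2}) space of cusp forms on $G(\BA)\times G(\BA)$, hence a direct sum of irreducible cuspidal representations $\sigma_i\otimes\pi_i$. To pin down the shape of a constituent, choose $0\neq\Phi=\mathcal{E}(f_{\Delta(\tau,m)\gamma^{(\epsilon)}_\psi,\frac{1}{2}})$ in its isotypic component and a cusp form $\varphi\otimes\varphi'$ in the contragredient $\hat\sigma_i\otimes\hat\pi_i$ with $\int_{[G\times G]}\Phi\cdot(\varphi\otimes\varphi')\neq 0$ — possible since $\int_{[G\times G]}$ pairs the cusp forms of an isotypic component nondegenerately with those of its contragredient. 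By \eqref{2.1} and \eqref{1.10} this integral equals $\Res_{s=\frac{1}{2}}\mathcal{L}(f_{\Delta(\tau,m)\gamma^{(\epsilon)}_\psi,s},\varphi,\varphi')$, so $\mathcal{L}(\cdot,\varphi,\varphi')\not\equiv 0$; by the unfolding of \cite{CFGK17} recalled in \eqref{1.10.0} this forces the period $c(\varphi,\varphi'^\iota)$ to be not identically zero, i.e. $\hat\pi_i^\iota\cong\widehat{\hat\sigma_i}=\sigma_i$, hence $\hat\pi_i\cong\sigma_i^\iota$ and $\pi_i\cong\hat\sigma_i^\iota\cong\bar\sigma_i^\iota$. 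Letting $J$ be the set of isomorphism classes $\sigma$ for which $\sigma\otimes\bar\sigma^\iota$ occurs in $\mathcal{D}\mathcal{D}_\psi(\tau)$, we get $\mathcal{D}\mathcal{D}_\psi(\tau)=\bigoplus_{\sigma\in J}m_\sigma\,(\sigma\otimes\bar\sigma^\iota)$ with $m_\sigma\geq 1$.

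Next I would show each $\sigma\in J$ lifts weakly to $\tau$ and that $J$ is exhaustive. For the former: if $\sigma\otimes\bar\sigma^\iota\subset\mathcal{D}\mathcal{D}_\psi(\tau)$ then as above $\Res_{s=\frac{1}{2}}\mathcal{L}(\cdot,\varphi_\sigma,\varphi_\pi)\neq 0$ for $\sigma$-data, so $\mathcal{L}(\cdot,\varphi_\sigma,\varphi_\pi)$ has a pole at $s=\frac{1}{2}$; inserting \eqref{1.11} and using that $D_\tau^{H,S}(s)$ and the finite-place integrals are holomorphic and nonzero at $s=\frac{1}{2}$ (the local theory of \cite{CFK18}, complementing Lemma \ref{lem 2.1}) forces $L^S_{\epsilon,\psi}(\sigma\times\tau,s)$ to have a pole at $s=1$, whence, by \cite{A13}, the weak functorial lift of $\sigma$ to $\GL_n(\BA)$ is $\tau$. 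For exhaustion: given any cuspidal $\sigma$ of $G(\BA)$ lifting weakly to $\tau$, then, $\tau$ being self-dual, $\bar\sigma$ also lifts weakly to $\tau$, so Proposition \ref{prop 2.3} applied to $\bar\sigma$ gives that $\sigma\otimes\bar\sigma^\iota$ has a nontrivial $L^2$-pairing with the cuspidal space $\mathcal{D}\mathcal{D}_\psi(\tau)$; orthogonal projection is then a nonzero intertwiner out of the irreducible $\sigma\otimes\bar\sigma^\iota$, so it embeds in $\mathcal{D}\mathcal{D}_\psi(\tau)$ and $\sigma\in J$.

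Finally, multiplicity one. Since $t$ (resp. $t^{(2)}$) is a homomorphism, $\mathcal{E}$ is $(G\times G)(\BA)$-equivariant from the restriction of $\rho_{\Delta(\tau,m)\gamma^{(\epsilon)}_\psi,\frac{1}{2}}$ to $t(G\times G)(\BA)$ onto $\mathcal{D}\mathcal{D}_\psi(\tau)$, and surjective by the definition of $\mathcal{D}\mathcal{D}_\psi(\tau)$; composing a projection $\mathcal{D}\mathcal{D}_\psi(\tau)\twoheadrightarrow\sigma\otimes\bar\sigma^\iota$ with $\mathcal{E}$ and using $\Hom(A,B)\cong\Hom(A\otimes\widehat{B},\BC)$ yields
\begin{multline*}
m_\sigma=\dim\Hom_{(G\times G)(\BA)}\bigl(\mathcal{D}\mathcal{D}_\psi(\tau),\sigma\otimes\bar\sigma^\iota\bigr)\\
\le\dim\Hom_{(G\times G)(\BA)}\bigl(\rho_{\Delta(\tau,m)\gamma^{(\epsilon)}_\psi,\frac{1}{2}}|_{t(G\times G)}\otimes\widehat{\sigma\otimes\bar\sigma^\iota},\,\BC\bigr).
\end{multline*}
The last space factors as a restricted tensor product over the places $v$ of the local spaces $\Hom_{(G\times G)(F_v)}(\rho_{\Delta(\tau_v,m)\gamma^{(\epsilon)}_{\psi_v},\frac{1}{2}}|_{t(G\times G)}\otimes\widehat{\sigma_v\otimes\bar\sigma_v^\iota},\BC)$, each of which is at most one-dimensional — this is precisely the local uniqueness underlying the doubling integrals \eqref{1.10.7} (\cite{CFK18}). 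Hence $m_\sigma\le 1$, so $m_\sigma=1$, which is the multiplicity-freeness of \eqref{2.2.1} and the uniqueness clause. The steps I expect to demand the most care, apart from the cuspidality input itself, are the two uses of the local doubling theory of \cite{CFK18}: controlling the finite-place factors of \eqref{1.11} at $s=\frac{1}{2}$ in the weak-lift argument, and checking that the local doubling Hom-spaces remain one-dimensional at the special point $s=\frac{1}{2}$.
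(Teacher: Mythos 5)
Your reduction to cuspidality, your identification of each irreducible constituent as $\sigma\otimes\bar{\sigma}^\iota$ via the nonvanishing of the period $c(\varphi,\varphi'^\iota)$ forced by the unfolding, and your exhaustion argument via Proposition \ref{prop 2.3} all agree with the paper (the paper reaches the shape of the constituents slightly differently, by using \eqref{1.10.5} to recognize the $h$-coefficients of $\mathcal{E}(f)$ directly as cusp forms in $\bar{\sigma}^\iota$, but your route through \eqref{1.10.0} is equally valid). The two remaining components, however, are handled by the paper in entirely different ways, and in both cases your substitutes have genuine gaps.

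For multiplicity one, you bound $m_\sigma$ by a product of local Hom-spaces and appeal to ``the local uniqueness underlying the doubling integrals.'' Two problems. First, as written your Hom-space involves only the restriction of $\rho_{\Delta(\tau,m)\gamma^{(\epsilon)}_\psi,\frac12}$ to $t(G\times G)(\BA)$, with no twisted Jacquet functor along $U_{m^{n-1}}$; the bare restriction of the degenerate principal series to the doubled group has local Hom-spaces into irreducibles that are far from one-dimensional, so the bound must be taken after applying $J_{(\psi_v)_{U_{m^{n-1}}}}$, through which $\mathcal{E}$ factors. Second, and more seriously, one-dimensionality of $\Hom_{G\times G}\bigl(J_{(\psi_v)_{U_{m^{n-1}}}}(\rho_{\tau_v,m,s}),\sigma_v\otimes\bar{\sigma}_v^\iota\bigr)$ is only known generically in $s$; the point $s=\frac12$ is exactly the reducibility point where the Eisenstein series has its pole, and neither the paper nor \cite{CFK18} establishes local multiplicity one there (Theorem \ref{thm 12.3} computes the Jacquet module only at unramified places). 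The paper avoids all of this with a purely global argument: given an isomorphism $T$ between two summands, normalize it to preserve the diagonal period $\int f(g,g^\iota)dg$, note that the difference space $\{T(f)-f\}$ is a cuspidal subrepresentation of $\mathcal{D}\mathcal{D}_\psi(\tau)$ on which that period vanishes, and contradict \eqref{1.10.0}--\eqref{1.10.3}, which force the period to be nontrivial on every constituent.

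For the claim that every $\sigma\in J$ lifts weakly to $\tau$, your route (pole of $\mathcal{L}$ at $s=\frac12$ $\Rightarrow$ pole of $L^S_{\epsilon,\psi}(\sigma\times\tau,s)$ at $s=1$ $\Rightarrow$ Arthur) is not the paper's and is incomplete as it stands: the first implication requires the factors $\mathcal{L}_v$, $v\in S$, in \eqref{1.11} to be holomorphic at $s=\frac12$, including at archimedean places, which you assert but which is not what Lemma \ref{lem 2.1} gives (that lemma only yields the lower-bound direction, that a pole of $L^S$ produces a pole of the Eisenstein series); and the second implication via \cite{A13} is unavailable in the metaplectic case $H_m^{(\epsilon)}=\Sp^{(2)}_{2m'}$, one of the four cases of Theorem \ref{thm 2.2}. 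The paper instead proves this by the unramified correspondence of Sections 12--13: a local computation of $J_{(\psi_v)_{U_{m^{n-1}}(F_v)}}$ of the relevant unramified induced representation at every place where everything is unramified, showing that any $\pi_1\otimes\pi_2$ occurring there has $\pi_1\cong\pi_2$ with Satake parameter lifting to that of $\tau_v$. That computation is deliberately independent of Arthur and constitutes the bulk of the remaining work of the theorem; your proposal replaces it with a shortcut that does not close.
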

We will call the set-up of Theorem \ref{thm 2.2}, (as well as Theorem \ref{thm 2.4}), "the case of functoriality". This is detailed in cases (1) - (4) before Theorem \ref{thm 2.2}.
We remark that with the knowledge of Arthur's multiplicity formulas, we know that except the case $H^{(\epsilon)}_m=\SO_{2m'}$, $J$ in the theorem consists exactly of all irreducible, automorphic, cuspidal representations of $H^{(\epsilon)}_m(\BA)$, which lift weakly to $\tau$. 

A large part of the work of this paper will be to show that the space $D^1_\psi(\tau)$ of automorphic functions on $H^{(\epsilon)}_m(\BA)$, generated by the functions $g\mapsto \mathcal{E}(f_{\Delta(\tau,m)\gamma^{(\epsilon)}_\psi,\frac{1}{2}})(g,1)$, is cuspidal.  This will be proved in Sec. 4 - 11. Assuming this, we may decompse
\begin{equation}\label{2.3}
D^1_\psi(\tau)=\oplus _\sigma\sigma
\end{equation}
as an orthogonal direct sum of irreducible, automorphic, cuspidal representations of $H^{(\epsilon)}_m(\BA)$.
Fix, for each $\sigma$ appearing in \eqref{2.3}, an orthonormal basis $\{\varphi^i_\sigma\}_i$ of $V_\sigma$ - the given space of $\sigma$. Write 
\begin{equation}\label{2.4}
\mathcal{E}(f_{\Delta(\tau,m)\gamma^{(\epsilon)}_\psi,\frac{1}{2}})(g,h)=\sum_\sigma\sum_i\alpha_{\sigma,i}(h)\varphi^i_\sigma(g).
\end{equation}
Then
$$
\alpha_{\sigma,i}(h)=\int_{C_2^{(\epsilon)}H_m(F)\backslash H^{(\epsilon)}_m(\BA)}\mathcal{E}(f_{\Delta(\tau,m)\gamma^{(\epsilon)}_\psi,\frac{1}{2}})(g,h)\bar{\varphi^i}_\sigma(g)dg=
$$
$$
=Res_{s=\frac{1}{2}}\Lambda(f_{\Delta(\tau,m)\gamma^{(\epsilon)}_\psi,s},\bar{\varphi^i}_\sigma)(h):=\xi^i_{\bar{\sigma}^\iota}(h).
$$
Assume that $f^\psi_{\Delta(\tau, m)\gamma^{(\epsilon)}_\psi,s}$ is $t(1\times K_{H_m^{(\epsilon)}(\BA)})$-finite. Then we explained, right after \eqref{1.10.5}, that this is an element in the space of the cuspidal representation $\bar{\sigma}^\iota$. Thus, we can rewrite \eqref{2.4} as
\begin{equation}\label{2.5}
\mathcal{E}(f_{\Delta(\tau,m)\gamma^{(\epsilon)}_\psi,\frac{1}{2}})(g,h)=\sum_\sigma\sum_i
\varphi^i_\sigma(g)\xi^i_{\bar{\sigma}^\iota}(h).
\end{equation}
This gives a decomposition of the form \eqref{2.2.1}. Let us show that the decomposition \eqref{2.2.1} is multiplicity free. Indeed, assume that there are two isomorphic summands, with an isomorphism $T$
\begin{equation}\label{2.6}
T:\sigma_1\otimes \bar{\sigma}_1^\iota \mapsto \sigma_2\otimes \bar{\sigma}_2^\iota.
\end{equation}
Due to the irreducibility of $\sigma_1$, we may assume that $T$ preserves the $L^2$-pairing (on $\sigma_1$),
$$
\int_{C_2^{(\epsilon)}H_m(F)\backslash
	H^{(\epsilon)}_m(\BA)}f(g,g^\iota)dg=\int_{C_2^{(\epsilon)}H_m(F)\backslash
	H_m(\BA)}T(f)(g,g^\iota)dg.
$$
Consider the space generated by the functions 
$$
(g,h)\mapsto T(\varphi_{\sigma_1}\otimes \bar{\xi_{\sigma_1}}^\iota)(g,h)- \varphi_{\sigma_1}(g) \bar{\xi_{\sigma_1}}(h^\iota), 
$$
as $\varphi_{\sigma_1}$ and $\xi_{\sigma_1}$ vary in the space of $\sigma_1$. If $T$ is different than the identity, then this space defines an irreducible, automorphic, cuspidal representation $A$ of $H^{(\epsilon)}_m(\BA)\times H^{(\epsilon)}_m(\BA)$, which is isomorphic to $\sigma_1\otimes \bar{\sigma}_1^\iota$. Of course, the $L^2$ -  pairing between the elements of $\mathcal{D}\mathcal{D}_\psi(\tau)$ and $A$ is nontrivial. By \eqref{1.10}, \eqref{1.10.0}, \eqref{1.10.3},  the following functional on $A$ is nontrivial
$$
\int_{C_2^{(\epsilon)}H_m(F)\backslash H^{(\epsilon)}_m(\BA)}a(g,g^\iota)dg.
$$
This contradicts \eqref{2.6}. Now, it remains to prove that for each summand  $\sigma\otimes \bar{\sigma}^\iota$ in \eqref{2.2.1}, $\sigma$ lifts weakly to $\tau$. We will prove this in Sec. 12, 13, and this will conclude the proof of Theorem \ref{thm 2.4}. 

Note that as a corollary of Theorem \ref{thm 2.4}, we get 
\begin{thm}\label{thm 2.5}
Let $\tau$ and $H^{(\epsilon)}_m$ be as in Theorem \ref{thm 2.2}. Let $\sigma$ be an irreducible, automorphic, cuspidal representation of $H^{(\epsilon)}_m(\BA)$. Then $\sigma$ lifts to $\tau$ if and only if $L^S_{\epsilon,\psi}(\sigma\times\tau,s)$ has a pole at $s=1$.
\end{thm}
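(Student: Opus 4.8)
The plan is to obtain Theorem~\ref{thm 2.5} as a purely formal consequence of Theorem~\ref{thm 2.4}, Proposition~\ref{prop 2.3}, and the standard analytic theory of Rankin--Selberg $L$-functions on $\GL_n$; no new geometric or Eisenstein-series input should be needed beyond what Theorem~\ref{thm 2.4} already supplies.

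First I would dispose of the ``only if'' direction. Assuming $\sigma$ lifts (weakly) to $\tau$, the Satake parameter of $\sigma_v$ maps, under the relevant $L$-homomorphism, to that of $\tau_v$ at every $v\notin S$, so the local unramified factors $L_{\epsilon,\psi_v}(\sigma_v\times\tau_v,s)$ agree with $L(\tau_v\times\tau_v,s)$; in the metaplectic cases ($\epsilon=2$) this identification is built into the very definition of the $\psi$-lift. Hence $L^S_{\epsilon,\psi}(\sigma\times\tau,s)=L^S(\tau\times\tau,s)$, and since $\tau$ is cuspidal and self-dual on $\GL_n(\BA)$, the right-hand side has a (simple) pole at $s=1$ by Jacquet--Shalika. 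This is the easy half.

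For the ``if'' direction, suppose $L^S_{\epsilon,\psi}(\sigma\times\tau,s)$ has a pole at $s=1$. By Proposition~\ref{prop 2.3}(2), $\bar\sigma\otimes\sigma^\iota$ has a nontrivial $L^2$-pairing with $\mathcal{D}\mathcal{D}_\psi(\tau)$. I would then invoke Theorem~\ref{thm 2.4}: $\mathcal{D}\mathcal{D}_\psi(\tau)$ is cuspidal and decomposes as the multiplicity-free orthogonal sum $\oplus_{\sigma'\in J}\sigma'\otimes\bar{\sigma'}^\iota$, with every $\sigma'\in J$ lifting weakly to $\tau$. Since cuspidal representations are unitary and $L^2_{\mathrm{cusp}}$ of $H^{(\epsilon)}_m(\BA)\times H^{(\epsilon)}_m(\BA)$ is an orthogonal direct sum of irreducibles, a nontrivial $L^2$-pairing means the orthogonal projection of the automorphic realization of $\bar\sigma\otimes\sigma^\iota$ onto $\mathcal{D}\mathcal{D}_\psi(\tau)$ is nonzero; by irreducibility this projection is injective and, by multiplicity-freeness, $\bar\sigma\otimes\sigma^\iota\cong\sigma'_0\otimes\bar{\sigma'_0}^\iota$ for a unique $\sigma'_0\in J$, whence $\sigma'_0\cong\bar\sigma$. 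As $\sigma'_0$ lifts weakly to $\tau$ and $\tau\cong\bar\tau$, it follows that $\sigma\cong\bar{\sigma'_0}$ lifts weakly to $\tau$ as well. Finally I would identify ``lifts to $\tau$'' with ``lifts weakly to $\tau$'', which is legitimate by Arthur \cite{A13} and \cite{CFK18}: these attach to $\sigma$ a weak functorial lift to an isobaric automorphic representation of $\GL_n(\BA)$ of the correct rank, and a cuspidal $\tau$ of that same rank can occur in it only as the entire lift.

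I do not expect a serious obstacle, since Theorem~\ref{thm 2.4} carries all the weight and the rest is bookkeeping. The one point I would be careful about is reconciling the several notions of ``lift'' in play --- Arthur's weak functorial lift, the characterization of the lift by the location of the pole of the Rankin--Selberg $L$-function, and, in the metaplectic cases, the dependence of both $L_{2,\psi}$ and the $\psi$-lift on the fixed additive character $\psi$ --- together with the rank count guaranteeing that the rank-$n$ isobaric lift of $\sigma$ equals $\tau$ once it is known to contain the cuspidal $\tau$.
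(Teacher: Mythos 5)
Your proposal is correct and follows essentially the same route as the paper: the forward direction is exactly the computation $L^S_{\epsilon,\psi}(\sigma\times\tau,s)=L^S(\tau\times\tau,s)$ from the proof of Proposition~\ref{prop 2.3}, and the converse is Proposition~\ref{prop 2.3}(2) combined with the multiplicity-free decomposition of Theorem~\ref{thm 2.4}. The only (harmless) divergence is your appeal to Arthur and \cite{CFK18} to reconcile ``lifts'' with ``lifts weakly''; the paper deliberately avoids this, remarking that the theorem is proved without using the existence of a functorial lift of $\sigma$ to $\GL_n(\BA)$, since ``lift'' here means the weak lift throughout.
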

\begin{proof}
We explained the first direction in proof of Prop. \ref{prop 2.3}. Conversely, assume that $L^S_{\epsilon,\psi}(\sigma\times\tau,s)$ has a pole at $s=1$. Then, by Prop. \ref{prop 2.3}, $\bar{\sigma}\otimes\sigma^\iota$ has a nontrivial $L^2$-pairing with $\mathcal{D}\mathcal{D}_\psi(\tau)$. By Theorem \ref{thm 2.4}, $\sigma$ lifts to $\tau$.
\end{proof}
We remark again that we proved the last theorem without using the fact that $\sigma$ has a functorial lift on $\GL_n(\BA)$.

\section{On the residues of the Eisenstein series $E(f_{\Delta(\tau,
		m)\gamma^{(\epsilon)}_\psi,s})$}
	
As we explained in the introduction, we will place ourselves in a more general framework. We fix the self-dual, cuspidal representation $\tau$ of $\GL_n(\BA)$, as before. Again, we assume that the central character of $\tau$ is trivial. We fix, for the moment, a positive integer $m$, and consider again the Eisenstein series $E(f_{\Delta(\tau,m)\gamma^{(\epsilon)}_\psi,s})$ on $H(\BA)=H^{(\epsilon)}_{2nm}(\BA)$. In \cite{JLZ13}, the possible poles of the normalized Eisenstein series $E^*(f_{\Delta(\tau,m)\gamma^{(\epsilon)}_\psi,s})$ in $Re(s)\geq 0$ \eqref{1.12} are determined. As remarked in \cite{CFK18}, the proof in \cite{JLZ13} uses Arthur's results. We now recall the list of possible poles from \cite{JLZ13}. We include the case of metaplectic groups which does not appear in \cite{JLZ13}, but can be obtained similarly. We will address the existence of these poles in a forthcoming work of ours. For the moment, we consider the above list merely as a list of points.

{\bf Case $\wedge^2$:} Assume that $L(\tau,\wedge^2,s)$ has a pole at $s=1$.
\begin{enumerate}
	\item  Assume that $H=\Sp_{2nm}$ ($m$ is even by our assumptions). Then 
	$$
	e^H_k(\wedge^2)=k, k=1,2,...,\frac{m}{2}. 
	$$
	If $L(\tau,\frac{1}{2})=0$, omit $k=\frac{m}{2}$.\\
	\item  Assume that $H=\Sp^{(2)}_{2nm}$ ($m$ is even by our assumptions). Then 
	$$
	e^H_k(\wedge^2)=k-\frac{1}{2}, k=1,2,...,\frac{m}{2}.
	$$
	\\
	\item Assume that $H=\SO_{2nm}$ and $m$ even. Then 
	$$
	e^H_k(\wedge^2)=k, k=1,2,...,\frac{m}{2}.
	$$
	\\
	\item Assume that $H=\SO_{2nm}$ and $m$ is odd. Then  
	$$
	e^H_k(\wedge^2)=k-\frac{1}{2}, k=1,2,...,\frac{m+1}{2}.
	$$	
\end{enumerate}

{\bf Case $\vee^2$:} Assume that $L(\tau,\vee^2,s)$ has a pole at $s=1$.
\begin{enumerate}
	\item  Assume that $H=\Sp_{2nm}$ ($m$ is even by our assumptions). Then 
	$$
	e^H_k(\vee^2)=k-\frac{1}{2}, k=1,2,...,\frac{m}{2}. 
	$$
	If $L(\tau,\frac{1}{2})=0$, omit $k=\frac{m}{2}$.\\
	\item  Assume that $H=\Sp^{(2)}_{2nm}$ ($m$ is even by our assumptions). Then 
	$$
	e^H_k(\vee^2)=k, k=1,2,...,\frac{m}{2}.
	$$
	\\
	\item Assume that $H=\SO_{2nm}$ and $m$ even. Then  
	$$
	e^H_k(\vee^2)=k-\frac{1}{2}, k=1,2,...,\frac{m}{2}.
	$$
	\\
	\item Assume that $H=\SO_{2nm}$ and $m$ is odd. Then  
	$$
	e^H_k(\vee^2)=k, k=1,2,...,\frac{m-1}{2}.
	$$	
\end{enumerate}

Let $\eta$ be either $\wedge^2$ or $\vee^2$. Consider the leading term of the Laurent expansion around $e_k^H(\eta)$ of the Eisenstein series $E(f_{\Delta(\tau,m)\gamma^{(\epsilon)}_\psi,s})$. Denote this leading term by $a(f_{\Delta(\tau,m)\gamma^{(\epsilon)}_\psi,e^H_k(\eta)})$, and view it as an automorphic function on $H(\BA)$. Let $A(\Delta(\tau,m)\gamma_\psi^{(\epsilon)},\eta, k)$ denote the space generated by the leading terms\\ 
$a(f_{\Delta(\tau,m)\gamma^{(\epsilon)}_\psi,e^H_k(\eta)})$. As usual, we also denote by $A(\Delta(\tau,m)\gamma_\psi^{(\epsilon)},\eta, k)$ the representation of $H(\BA)$ by right translations in this space. We do not assume that all the points $e^H_k(\eta)$ are actually poles of our Eisenstein series. Note that the normalizing factor $D_\tau^H(s)$ of this Eisenstein series is holomorphic and nonzero at $e^H_k(\eta)$, and so we may replace $E^*(f_{\Delta(\tau,m)\gamma^{(\epsilon)}_\psi,s})$ by $E(f_{\Delta(\tau,m)\gamma^{(\epsilon)}_\psi,s})$. 

Let $\mathcal{O}$ be a nilpotent orbit of the Lie algebra of $H$ over $F$, corresponding to a partition $\underline{P}$ of $2nm$. Assume that $A(\Delta(\tau,m)\gamma_\psi^{(\epsilon)}, \eta,k)$ admits a nontrivial Fourier coefficient corresponding to $\mathcal{O}$. We will bound $\mathcal{O}$ (or $\underline{P}$) by examining at a finite place $v\notin S$ (i.e. $\tau_v$ is unramified) 
the unramified component $\rho_{\tau_v,m,\gamma_\psi^{(\epsilon)};\eta,k}$ of $\rho_{\Delta(\tau,m)\gamma_\psi^{(\epsilon)},s}$ at the place $v$, and at the various points $s=e^H_k(\eta)$. We will determine in each case a nilpotent orbit in the Lie algebra of $H$ over the algebraic closure of $F_v$, which bounds all nilpotent orbits corresponding to degenerate Whittaker models of $\rho_{\tau_v,m,\gamma_\psi^{(\epsilon)};\eta,k}$.

Consider Case $\wedge^2$. This forces $n=2n'$ to be even. Fix $v\notin S$. Since $\tau$ is self dual with trivial central character, we can write $\tau_v$ as a parabolic induction from an unramified character of the standard Borel subgroup,
\begin{equation}\label{3.1}
\tau_v=\chi_1\times\cdots\times \chi_{n'}\times\chi^{-1}_{n'}\times\cdots\times\chi^{-1}_1,
\end{equation}
where $\chi_i$ are unramified characters of $F_v^*$. In cases (1), (3) (of Case $\wedge^2$), $H=\Sp_{2nm},\  \SO_{2nm}$, $m$ is even and $e_k^H(\wedge^2)=k$, $1\leq k\leq \frac{m}{2}$. Then $\rho_{\tau_v,m;\wedge^2,k}$ is the unramified constituent of the following parabolic induction, 
\begin{equation}\label{3.2}
\rho_{\chi,k}=\Ind^{H(F_v)}_{Q_{((m+2k),(m-2k))^{n'}}(F_v)}\otimes_{i=1}^{n'}[(\chi_i\circ det_{\GL_{m+2k}}) \otimes (\chi_i\circ det_{\GL_{m-2k}})].
\end{equation}
Recall that $Q_{(m+2k)^{n'},(m-2k)^{n'}}$ denotes the standard parabolic subgroup, whose Levi part $M_{(m+2k)^{n'},(m-2k)^{n'}}$ is isomorphic to $\GL_{m+2k}^{n'}\times \GL_{m-2k}^{n'}$.
Consider the germ expansion of the character of $\rho_{\chi,k}$.
It has the following form (see \cite{MW87})
\begin{equation}\label{3.3}
tr (\rho_{\chi,k}(\varphi))=\sum_{\mathcal{O}\in
	ind_{M_{(m+2k)^{n'},(m-2k)^{n'}}(F_v)}^{H(F_v)}0}c_{\mathcal{O}}\int_{\mathcal{O}}\widehat{\varphi\circ
	exp}(X)d\beta_{\mathcal{O}}(X),
\end{equation}
for a smooth, compactly supported function $\varphi$ on
$H(F_v)$; the coefficients $c_{\mathcal{O}}$ are certain
positive integers. The induced nilpotent orbit
$ind_{M_{((m+2k),(m-2k))^{n'}}(F_v)}^{H(F_v)}0$ corresponds, over the algebraic
closure of $F_v$, to the partition $((2n)^{m-2k},n^{4k})$ of $2nm$. See
\cite{CM93}, Chapter 7. By Theorem I.16 in \cite{MW87}, this orbit
tells us that all maximal degenerate Whittaker models (terminology
of \cite{MW87}) of $\rho_{\chi,k}$ correspond to a unique
nilpotent orbit, over the algebraic closure of $F_v$, namely the one
corresponding to the partition $((2n)^{m-2k},n^{4k})$. In particular,
$\rho_{\tau_v,m;\wedge^2,k}$ does not have a nontrivial degenerate Whittaker model,
corresponding to any nilpotent orbit which is unrelated to, or
strictly larger than $((2n)^{m-2k},n^{4k})$. We conclude the first part of the following proposition. The second part is proved in the same way. The notation is as above. 
\begin{prop}\label{prop 3.1}
Assume that $L(\tau,\wedge^2,s)$ has a pole at $s=1$. 	
\begin{enumerate}
\item Let $H=\Sp_{2nm}$, $\SO_{2nm}$, with $m$ even. Let $1\leq k\leq \frac{m}{2}$. Then $e_k^H(\wedge^2)=k$ and
$$
\underline{P}\leq ((2n)^{m-2k},n^{4k}).
$$
\item Let $H=\Sp^{(2)}_{2nm}$, with $m=2m'$ even, or $H=\SO_{2nm}$, with $m=2m'-1$ odd. Let $1\leq k\leq m'$. Then $e_k^H(\wedge^2)=k-\frac{1}{2}$ and
$$
\underline{P}\leq ((2n)^{m-2k+1},n^{4k-2}).
$$
\end{enumerate}
\end{prop}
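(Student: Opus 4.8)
The plan is to bound $\underline P$ by a purely local argument at a finite place $v\notin S$; the asserted values of $e^H_k(\wedge^2)$ are simply read off from the list recalled above from \cite{JLZ13}. Fix such a $v$ and write $\tau_v$ as in \eqref{3.1}, which already forces $n=2n'$. Part $(1)$, for $H=\Sp_{2nm}$ or $\SO_{2nm}$ with $m$ even, is the content of the paragraph preceding the statement: at the point $s=e^H_k(\wedge^2)=k$ one checks that the inverse-closed cuspidal support of $\Delta(\tau_v,m)|\det|^{k}$ regroups, for each $i$, into one segment of length $m+2k$ and one of length $m-2k$, both centered at the origin, so that the unramified constituent $\rho_{\tau_v,m;\wedge^2,k}$ of $\rho_{\Delta(\tau,m)\gamma_\psi^{(\epsilon)},s}$ at $s=k$ has the same Satake parameter as, hence equals, the unramified constituent of $\rho_{\chi,k}$ in \eqref{3.2}. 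Since $\rho_{\chi,k}$ is parabolically induced from a one-dimensional representation, the germ expansion \eqref{3.3} shows that every nilpotent orbit in its wavefront set is $\leq\ind_{M_{((m+2k),(m-2k))^{n'}}(F_v)}^{H(F_v)}0$, which by \cite{CM93}, Chapter~7, corresponds over $\overline{F_v}$ to the partition $((2n)^{m-2k},n^{4k})$; by \cite{MW87}, Theorem~I.16, the same bound then holds for every orbit supporting a degenerate Whittaker model of a constituent of $\rho_{\chi,k}$, in particular of $\rho_{\tau_v,m;\wedge^2,k}$. Globalizing: if $A(\Delta(\tau,m)\gamma_\psi^{(\epsilon)},\wedge^2,k)$ admits a nontrivial Fourier coefficient attached to a nilpotent orbit $\mathcal{O}$ with partition $\underline P$, pass to an irreducible subquotient of it on which this Fourier functional is still nonzero; its component at $v$ equals $\rho_{\tau_v,m;\wedge^2,k}$ (as $\tau_v$ is unramified) and therefore carries a nonzero $\psi_{\mathcal{O},v}$-twisted Jacquet functional, forcing $\underline P$ into the wavefront set of $\rho_{\tau_v,m;\wedge^2,k}$, hence $\underline P\leq((2n)^{m-2k},n^{4k})$ over $\overline{F_v}$; since this is a comparison of partitions it holds as asserted.

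For part $(2)$, where $H=\Sp^{(2)}_{2nm}$ with $m=2m'$, or $H=\SO_{2nm}$ with $m=2m'-1$ odd, I would run the identical argument at $s=e^H_k(\wedge^2)=k-\frac{1}{2}$. The only thing that changes is the arithmetic of cuspidal supports: for each $i$, the $\chi_i$-part of the cuspidal support of $\Delta(\tau_v,m)|\det|^{k-1/2}$ is the length-$m$ segment centered at $k-\frac{1}{2}$, its reflection is the length-$m$ segment centered at $\frac{1}{2}-k$, and — because $1\leq k\leq m'$ — these overlap in $m-2k+1$ points, so together they reassemble, as a multiset, into one segment of length $m+2k-1$ and one of length $m-2k+1$ centered at the origin, which is precisely the cuspidal support of $(\chi_i\circ\det_{\GL_{m+2k-1}})\otimes(\chi_i\circ\det_{\GL_{m-2k+1}})$. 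Matching Satake parameters identifies $\rho_{\tau_v,m,\gamma_\psi^{(\epsilon)};\wedge^2,k}$ with the unramified constituent of $\Ind_{Q_{((m+2k-1),(m-2k+1))^{n'}}(F_v)}^{H(F_v)}\gamma_\psi^{(\epsilon)}\otimes_{i=1}^{n'}[(\chi_i\circ\det_{\GL_{m+2k-1}})\otimes(\chi_i\circ\det_{\GL_{m-2k+1}})]$; by \cite{CM93}, Chapter~7, the corresponding induced nilpotent orbit from zero is $((2n)^{m-2k+1},n^{4k-2})$, and \eqref{3.3} together with \cite{MW87}, Theorem~I.16 — in the metaplectic case its genuine analogue, the genuine character $\gamma_\psi^{(\epsilon)}$ playing no role in the analysis of nilpotent orbits — bounds the relevant orbits by $((2n)^{m-2k+1},n^{4k-2})$. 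Globalizing exactly as in part $(1)$ gives the second assertion.

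I expect the main obstacle to be the local identification: one must verify carefully that at the special value $s=e^H_k(\wedge^2)$ the inverse-closed cuspidal support of $\Delta(\tau_v,m)|\det|^{s}$ reorganizes into precisely $n'$ segments of length $m+2k$ (resp.\ $m+2k-1$) and $n'$ of length $m-2k$ (resp.\ $m-2k+1$) centered at the origin, and that the resulting representation of $H(F_v)$ is genuinely the unramified constituent of the stated induction from characters of $\GL_a^{n'}\times\GL_b^{n'}$. This is a finite combinatorial check, but it is where all the arithmetic enters: one must track the parities (of $n$, which is forced even, and of $m$), use that the range $1\leq k\leq\frac{m}{2}$ (resp.\ $k\leq m'$) is exactly what makes the two segments overlap as claimed, and, in the metaplectic case, carry along the genuine twist $\gamma_\psi^{(\epsilon)}$. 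Once the inducing Levi is pinned down, the computation of $\ind_M^{H(F_v)}0$ via \cite{CM93} and the appeal to \cite{MW87} are routine.
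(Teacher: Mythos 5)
Your proposal is correct and follows essentially the same route as the paper: identify the unramified local component at $v\notin S$ with the unramified constituent of the degenerate principal series \eqref{3.2} (resp.\ \eqref{3.4}), bound its wavefront set by $\ind_{M}^{H(F_v)}0$ via the germ expansion \eqref{3.3} and \cite{CM93}, Chapter~7, and invoke Theorem~I.16 of \cite{MW87} to bound all degenerate Whittaker models by $((2n)^{m-2k},n^{4k})$ (resp.\ $((2n)^{m-2k+1},n^{4k-2})$). Your write-up is somewhat more explicit than the paper's on the cuspidal-support bookkeeping and on the local-to-global step, but the argument is the same.
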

For the second part of the proposition, we remark that, for $v\notin S$, $\rho_{\tau_v,m;\wedge^2,k}$ is the unramified constituent of the parabolic induction, 
\begin{equation}\label{3.4}
\rho_{\chi,k}=\Ind^{H(F_v)}_{Q_{((m+2k-1),(m-2k+1))^{n'}}(F_v)}((\chi_i\circ det_{\GL_{m+2k-1}}) \otimes (\chi_i\circ det_{\GL_{m-2k+1}}))\gamma_\psi^{(\epsilon)}.
\end{equation}
We have an analogous proposition, with similar proof, for the case where $L(\tau,\vee^2,s)$ has a pole at $s=1$.
\begin{prop}\label{prop 3.2}
Assume that $L(\tau,\vee^2,s)$ has a pole at $s=1$. 	
\begin{enumerate}
\item Let $H=\Sp_{2nm}$ with $m$ even. Let $1\leq k\leq \frac{m}{2}$. Then $e_k^H(\vee^2)=k-\frac{1}{2}$ and
$$
\underline{P}\leq ((2n)^{m-2k+1},n^{4k-2}),\  n \ \text{even},
$$
$$
\underline{P}\leq ((2n)^{m-2k+1}, (n+1)^{2k-1}, (n-1)^{2k-1}),\  n \ \text{odd}.
$$
\item Let $H=\Sp^{(2)}_{2nm}$, with $m$ even. Let $1\leq k\leq \frac{m}{2}$. Then $e_k^H(\vee^2)=k$ and
$$
\underline{P}\leq ((2n)^{m-2k},n^{4k}), \ n \ \text{even}, 
$$
$$
\underline{P}\leq ((2n)^{m-2k}, (n+1)^{2k}, (n-1)^{2k}), \ n \ \text{odd}.
$$
\item Let $H=\SO_{2nm}$ with $m$ even. Let $1\leq k\leq \frac{m}{2}$. Then $e_k^H(\vee^2)=k-\frac{1}{2}$ and
$$
\underline{P}\leq ((2n)^{m-2k},2n-1, n+1, n^{4k-4}, n-1,1), \ n \ \text{even},
$$
$$
\underline{P}\leq ((2n)^{m-2k}, 2n-1, n+2, (n+1)^{2k-2}, (n-1)^{2k-2}, n-2,1), \ n\ \text{odd}.
$$
\item Let $H=\SO_{2nm}$ with $m$ odd. Let $1\leq k\leq \frac{m-1}{2}$. Then $e_k^H(\vee^2)=k$ and
$$
\underline{P}\leq ((2n)^{m-2k-1},2n-1, n+1, n^{4k-2}, n-1,1), \ n \ \text{even},
$$
$$
\underline{P}\leq ((2n)^{m-2k-1}, 2n-1, n+2, (n+1)^{2k-2}, n^2, (n-1)^{2k-2}, n-2,1), \ n\ \text{odd}.
$$
\end{enumerate}
\end{prop}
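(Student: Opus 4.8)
The plan is to argue exactly as for Proposition~\ref{prop 3.1}, reducing the global bound on $\underline{P}$ to a computation at a single unramified place via \cite{MW87}. Fix a finite place $v\notin S$, so that $\tau_v$ is unramified; since $\tau$ is cuspidal on $\GL_n$ it is globally generic, hence $\tau_v$ is generic, hence a full unramified principal series of $\GL_n(F_v)$. As $\tau$ is self-dual with trivial central character, we may write $\tau_v=\chi_1\times\cdots\times\chi_{n'}\times\chi_{n'}^{-1}\times\cdots\times\chi_1^{-1}$ when $n=2n'$ is even, and $\tau_v=\chi_1\times\cdots\times\chi_{n'}\times1\times\chi_{n'}^{-1}\times\cdots\times\chi_1^{-1}$, with an extra trivial factor, when $n=2n'+1$ is odd --- a case which does not arise in Case~$\wedge^2$ but does occur here. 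In each of the four subcases, the first step is to identify the unramified constituent $\rho_{\tau_v,m;\vee^2,k}$ of $\rho_{\Delta(\tau,m)\gamma_\psi^{(\epsilon)},s}$ at $s=e^H_k(\vee^2)$ as the unramified constituent of an explicit induced representation $\rho_{\chi,k}=\Ind_{Q_M(F_v)}^{H(F_v)}\chi_M$, with $\chi_M$ a one-dimensional unramified representation of a standard Levi $M$ of $H$. This is done by writing out the exponents of $\Delta(\tau_v,m)|\det\cdot|^{e^H_k(\vee^2)}$, incorporating the twist by $\gamma_\psi^{(\epsilon)}$ in the metaplectic cases, and matching the resulting multiset of unramified characters against the standard modules of $H(F_v)$; the parity of $n$ dictates the shape of $M$ near the ``center''.

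With $M$ in hand, the second step is the same invocation of \cite{MW87} as in the discussion preceding Proposition~\ref{prop 3.1}: from the germ expansion~\eqref{3.3} of the character of $\rho_{\chi,k}$ together with Theorem~I.16 of \cite{MW87}, every maximal degenerate Whittaker model of $\rho_{\chi,k}$, hence of its unramified constituent $\rho_{\tau_v,m;\vee^2,k}$, corresponds over the algebraic closure of $F_v$ to a nilpotent orbit dominated by the Richardson orbit $\ind_{M(F_v)}^{H(F_v)}0$. The third step is a combinatorial verification using the rules of \cite{CM93}, Ch.~7: one forms the partition built from the $\GL$-blocks of $M$ (each taken twice), adjoins the principal orbit of the classical factor of $M$, and takes the appropriate symplectic or orthogonal collapse, checking that the outcome is exactly the partition displayed in the statement, for each subcase and each parity of $n$. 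Finally, a nontrivial Fourier coefficient of the global module $A(\Delta(\tau,m)\gamma_\psi^{(\epsilon)},\vee^2,k)$ attached to an orbit $\mathcal{O}$ with partition $\underline{P}$ produces, at $v$, a nonzero degenerate Whittaker model of $\rho_{\tau_v,m;\vee^2,k}$ for the same orbit, so by the third step $\underline{P}\leq\ind_{M(F_v)}^{H(F_v)}0$, which is the claimed inequality.

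I expect the main obstacle to lie in the first and third steps, that is, in pinning down the Levi $M$ and in the collapse bookkeeping; two features make Case~$\vee^2$ genuinely more delicate than Case~$\wedge^2$. First, when $n$ is odd the central trivial factor of $\tau_v$, after the shift by $|\det\cdot|^{e^H_k(\vee^2)}$, does not merge into $\GL$-blocks of a common size but contributes blocks whose sizes differ by $\pm1$ (respectively $\pm2$); this is precisely the origin of the parts $(n+1),(n-1)$ (respectively $(n+2),(n-2)$) in the stated bound. Second, in the orthogonal cases $H=\SO_{2nm}$ the relevant parabolic is no longer of Siegel type: $M$ carries a nontrivial split orthogonal factor whose principal orbit supplies the tail $2n-1,\dots,1$ (respectively $2n-1,\dots,n-2,1$), and one must track carefully which odd parts survive the $D$- or $B$-collapse. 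One should also check in each case that the displayed partition is already the collapse, so that nothing further is hidden, and observe that the bound is uniform over $v\notin S$ since the shape of $M$ is independent of the particular characters $\chi_i$.
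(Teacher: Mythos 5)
Your overall strategy is exactly the paper's: pass to an unramified place $v\notin S$, realize $\rho_{\tau_v,m;\vee^2,k}$ as the unramified constituent of $\Ind_{Q_M(F_v)}^{H(F_v)}\chi_M$ for an explicit standard Levi $M$, invoke the germ expansion \eqref{3.3} together with Theorem I.16 of \cite{MW87} to bound every orbit supporting a degenerate Whittaker model by $\ind_{M(F_v)}^{H(F_v)}0$, and compute that induced orbit via \cite{CM93}. This is precisely what the paper does (it sketches only case (3) with $n$ odd and declares the other cases identical). However, your account of the final combinatorial step contains two concrete errors which, if followed literally, would produce the wrong partitions.

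First, the induced (Richardson) orbit $\ind_{M}^{H}0$ is the $C$- or $D$-collapse of the \emph{transpose} of the partition formed by the $\GL$-block sizes of $M$ each taken twice --- not of that partition itself. In case (3) with $n=2n'+1$ odd, for instance, the Levi is $[\prod_{i=1}^{n'}(\GL_{m+2k-1}\times\GL_{m-2k+1})]\times\GL_m$ and one takes the $D_{mn}$-collapse of the dual of $((m+2k-1)^{2n'},m^2,(m-2k+1)^{2n'})$, i.e.\ of $((2n)^{m-2k+1},(n+1)^{2k-1},(n-1)^{2k-1})$; your recipe omits the dualization. Second, in all four cases the Levi consists of general linear blocks only: the block sizes always sum to $mn$, so the classical factor $H'$ is trivial even when $H=\SO_{2nm}$, and there is no ``principal orbit of the classical factor'' to adjoin. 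The single $\GL_m$ block appearing for $n$ odd comes from the trivial central inducing character of $\tau_v$ shifted by $|\det|^{e_k^H(\vee^2)}$; it does not split into blocks of sizes differing by $\pm1$ or $\pm2$. The parts $n\pm1$ arise because this block of size $m$ sits strictly between $m-2k+1$ and $m+2k-1$ and therefore alters the column counts of the transpose, while the tail $2n-1,\dots,n-2,1$ and the shifts to $n\pm2$ in the orthogonal cases are produced entirely by the $D$-collapse (even parts of a type-$D$ partition must have even multiplicity, and the relevant multiplicities are odd), not by any orthogonal factor of $M$. With these two corrections your argument coincides with the paper's.
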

\begin{proof}
The proof is the same as for Prop. \ref{prop 3.1}. Let us sketch, for example, the third case where $H=\SO_{2nm}$, $m$ is even, $n=2n'+1$ is odd, and $e_k^H(\vee^2)=k-\frac{1}{2}$. Let $v\notin S$. Write as in \eqref{3.1}
\begin{equation}\label{3.5}
\tau_v=\chi_1\times\cdots\times \chi_{n'}\times 1\times\chi^{-1}_{n'}\times\cdots\times\chi^{-1}_1.
\end{equation}
Then $\rho_{\tau_v,m;\vee^2,k}$ is the unramified constituent of the following parabolic induction, 
\begin{equation}\label{3.6}
\Ind^{H(F_v)}_{Q_{((m+2k-1),(m-2k+1))^{n'},m}}\otimes_{i=1}^{n'}[(\chi_i\circ det_{\GL_{m+2k-1}}) \otimes (\chi_i\circ det_{\GL_{m-2k+1}})]\otimes |det_{GL_m}|^{k-\frac{1}{2}}.
\end{equation}
Recall that $Q_{((m+2k-1),(m-2k+1))^{n'},m}$ is the standard parabolic subgroup, whose Levi part is isomorphic to $[\prod_{i=1}^{n'} (\GL_{m+2k-1}\times \GL_{m-2k+1})]\times \GL_m$. Now, in order to find the induced nilpotent orbit, as in \eqref{3.3}, we need to take the $D_{mn}$-collapse of the partition of $2nm$, which is dual to $((m+2k-1)^{2n'},m^2,(m-2k+1)^{2n'})$, and this is the partition 
$(((2n)^{m-2k}, 2n-1, n+2, (n+1)^{2k-2}, (n-1)^{2k-2}, n-2,1))$. See Lemma 6.3.3 in \cite{CM93}.
\end{proof}
Denote, for each point $e^H_k(\eta)$,
\begin{equation}\label{3.7}
\mathcal{E}_k(f_{\Delta(\tau,m)\gamma^{(\epsilon)}_\psi,e^H_k(\eta)})=
\mathcal{F}_\psi(a(f_{\Delta(\tau,m)\gamma^{(\epsilon)}_\psi,e^H_k(\eta)})).
\end{equation}
We view the functions \eqref{3.7} as automorphic functions on $H_m^{(\epsilon)}(\BA)\times H_m^{(\epsilon)}(\BA)$. Denote the space generated by them by $\mathcal{E}_k(\Delta(\tau,m)\gamma_\psi^{(\epsilon)},\eta)$.
Note that in the set-up of \eqref{2.1},
\begin{equation}\label{3.8}
\mathcal{E}(f_{\Delta(\tau,m)\gamma^{(\epsilon)}_\psi,\frac{1}{2}})=\mathcal{E}_1(f_{\Delta(\tau,m)\gamma^{(\epsilon)}_\psi,e^H_1(\eta)}).
\end{equation}
Thus, $\mathcal{E}_k(\Delta(\tau,m)\gamma_\psi^{(\epsilon)},\eta)$ is a generalization of our double descent of $\tau$. As we explained in the introduction, we will use this generalization in a further work, where we will try to construct, by double descent, the irreducible, automorphic representations of $H_m^{(\epsilon)}(\BA)$, which lift to Speh representations.
\begin{prop}\label{prop 3.3}
Assume that $n>1$. Then $\mathcal{E}_k(\Delta(\tau,m)\gamma_\psi^{(\epsilon)},\eta)=0$ in the following cases.
\begin{enumerate}
	\item  In Case (1) of Prop. \ref{prop 3.1}, and Cases (2), (4) for $n$ even, of Prop. \ref{prop 3.2},
	$$
	k>\frac{m}{2n};
	$$ \\
	\item In Cases (2), (4) for $n$ odd of Prop. \ref{prop 3.2},
	$$
	k>\frac{m}{2(n-1)};
	$$\\
	\item In Case (2) of Prop. \ref{prop 3.1}, and Cases (1), (3) for $n$ even of Prop. \ref{prop 3.2}, 
	$$
	k>\frac{m}{2n}+\frac{1}{2};
	$$\\
	\item In Cases (1), (3) for $n$ odd of Prop. \ref{prop 3.2},
	$$
	k>\frac{m}{2(n-1)}+\frac{1}{2}.
	$$  
\end{enumerate}
In particular, if $k>\frac{m}{4}+\frac{1}{2}$, then
$$
\mathcal{E}_k(\Delta(\tau,m)\gamma_\psi^{(\epsilon)},\eta)=0.
$$
\end{prop}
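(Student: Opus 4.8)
The strategy is to feed the orbit bounds of Propositions \ref{prop 3.1} and \ref{prop 3.2} into the observation, made right after \eqref{1.5}, that $\psi_{U_{m^{n-1}}}$ is a Fourier coefficient attached to the nilpotent orbit of $H$ with partition $((2n-1)^m,1^m)$, and then to read off the ranges of $k$ from an elementary comparison of partitions in the dominance order.

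First I would reduce matters to the vanishing of the functional $\mathcal{F}_\psi$ on the automorphic module $A(\Delta(\tau,m)\gamma_\psi^{(\epsilon)},\eta,k)$. Indeed, $\mathcal{E}_k(\Delta(\tau,m)\gamma_\psi^{(\epsilon)},\eta)$ is spanned by the restrictions to the doubled group $t(H_m^{(\epsilon)}(\BA)\times H_m^{(\epsilon)}(\BA))$ of the functions $\mathcal{F}_\psi(a(f_{\Delta(\tau,m)\gamma^{(\epsilon)}_\psi,e^H_k(\eta)}))$, and since $t(H_m^{(\epsilon)}(\BA)\times H_m^{(\epsilon)}(\BA))=D(\BA)$ is the whole domain of that Fourier coefficient, $\mathcal{F}_\psi\equiv 0$ on $A(\Delta(\tau,m)\gamma_\psi^{(\epsilon)},\eta,k)$ already forces $\mathcal{E}_k=0$ (the metaplectic case is handled identically, using the homomorphism $t^{(2)}$ arising from \eqref{1.9}).

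Now the main point. By the remark after \eqref{1.5}, $\mathcal{F}_\psi$ is a Fourier coefficient of $A(\Delta(\tau,m)\gamma_\psi^{(\epsilon)},\eta,k)$ attached, in the sense of \cite{MW87}, \cite{GRS03}, to the nilpotent orbit with partition $((2n-1)^m,1^m)$. Propositions \ref{prop 3.1} and \ref{prop 3.2} assert that every partition $\underline{P}$ carrying a nonzero Fourier coefficient of $A(\Delta(\tau,m)\gamma_\psi^{(\epsilon)},\eta,k)$ satisfies $\underline{P}\leq \underline{P}_k$, where $\underline{P}_k$ denotes the explicit partition of $2nm$ listed there (this is precisely what the local argument with the germ expansion of $\rho_{\chi,k}$ yields). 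Contrapositively, $\mathcal{F}_\psi\equiv 0$ on $A(\Delta(\tau,m)\gamma_\psi^{(\epsilon)},\eta,k)$, hence $\mathcal{E}_k(\Delta(\tau,m)\gamma_\psi^{(\epsilon)},\eta)=0$, whenever
$$
((2n-1)^m,1^m)\ \not\leq\ \underline{P}_k .
$$
So the proposition reduces to finding, case by case, the exact range of $k$ in which this dominance relation fails.

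The remaining step is this partial‑sum comparison. For instance, when $\underline{P}_k=((2n)^{m-2k},n^{4k})$ (Case (1) of Prop. \ref{prop 3.1} and Cases (2), (4) for $n$ even of Prop. \ref{prop 3.2}), comparing partial sums at the indices $j=m-2k+i$, $1\le i\le 2k$, shows that $((2n-1)^m,1^m)\le \underline{P}_k$ is equivalent to $(n-1)(2k)\le m-2k$, i.e. to $k\le \frac{m}{2n}$; when $\underline{P}_k$ instead has one column fewer of $2n$'s, or in addition a part $2n-1$ and a part $1$ (Case (2) of Prop. \ref{prop 3.1}, Cases (1), (3) for $n$ even of Prop. \ref{prop 3.2}), the same computation gives $k\le \frac{m}{2n}+\frac12$; and in the $n$‑odd versions of the $\vee^2$ cases, where $\underline{P}_k$ carries $(n+1)$‑ and $(n-1)$‑parts in place of the $n$‑parts, $n$ gets replaced throughout by $n-1$, giving $k\le \frac{m}{2(n-1)}$ resp. $k\le \frac{m}{2(n-1)}+\frac12$. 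These are exactly the four thresholds in the statement, and the closing ``in particular'' follows because $\frac{m}{2n}+\frac12\le \frac{m}{4}+\frac12$ and $\frac{m}{2(n-1)}+\frac12\le \frac{m}{4}+\frac12$ for all admissible $n$ (recall $n>1$, so $n\ge 3$ whenever $n$ is odd). The only genuine obstacle is bookkeeping: the comparison must be carried out separately in each of the four families, and in the $n$‑odd $\vee^2$ cases one must additionally check the partial sums over the ``tail'' of $\underline{P}_k$ (its $(n-1)$‑ and $1$‑parts) to confirm that the binding constraint is the one coming from the large parts — these tail checks are routine and, for $n\ge 2$, strictly weaker than the main constraint.
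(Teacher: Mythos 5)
Your proposal is correct and follows exactly the paper's own argument: reduce to the nonvanishing of $\mathcal{F}_\psi$ on $A(\Delta(\tau,m)\gamma_\psi^{(\epsilon)},\eta,k)$, use that $\mathcal{F}_\psi$ is attached to the orbit $((2n-1)^m,1^m)$ together with the majorizations of Propositions \ref{prop 3.1} and \ref{prop 3.2}, and extract the thresholds from the partial-sum (dominance) comparison — the paper does this by checking the single index $j=m$, which is the binding one you also identify. No gap.
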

\begin{proof}
Assume that $\mathcal{E}_k(\Delta(\tau,m)\gamma_\psi^{(\epsilon)},\eta)$ is nontrivial. Then the Fourier coefficient $\mathcal{F}_\psi$ is nontrivial on $A(\Delta(\tau,m)\gamma_\psi^{(\epsilon)},\eta, k)$. Recall that $\mathcal{F}_\psi$ corresponds to the partition $((2n-1)^m,1^m)$ of $2nm$. The last two propositions give a majorization of this partition, from which we get the condition on $k$. We show two examples. Consider Case (2) in Prop. \ref{prop 3.1}. Then we must have
\begin{equation}\label{3.9}
((2n-1)^m,1^m)\leq ((2n)^{m-2k+1},n^{4k-2}).
\end{equation}
The condition \eqref{3.9} is violated if $(2n-1)m>2n(m-2k+1)+n(2k-1)$,
which is equivalent to $k>\frac{m}{2n}+\frac{1}{2}$. Consider now Case (3) of Prop. \ref{prop 3.2}, with $n>1$ odd. Here, as in \eqref{3.9}, we must have 
\begin{equation}\label{3.10}
((2n-1)^m,1^m)\leq ((2n)^{m-2k},2n-1,n+2,(n+1)^{2k-2}, (n-1)^{2k-2},n-2,1).
\end{equation}
The condition \eqref{3.10} is violated if $(2n-1)m>2n(m-2k)+(2n-1)+(n+2)+(n+1)(2k-2)$,
which is equivalent to $k>\frac{m}{2(n-1)}+\frac{1}{2}$.
\end{proof}
Denote by $\alpha_{m,n}$ the rational number indicated in the last proposition, such that 
\begin{equation}\label{3.11}
\mathcal{E}_k(\Delta(\tau,m)\gamma_\psi^{(\epsilon)},\eta)=0,\ for\ all \ k>\alpha_{m,n}.
\end{equation}
For example, in Case 1 of the last proposition, $\alpha_{m,n}=\frac{m}{2n}$.
As a corollary, we obtain
\begin{cor}\label{cor 3.4}
Let $\sigma$ be an irreducible, automorphic, cuspidal representation of $H_m^{(\epsilon)}(\BA)$. Assume that $\sigma$ is unramified outside $S$ (as well as $\tau$).
Assume that $L^S_{\epsilon,\psi}(\sigma\times \tau, s)$ has a pole at the point $s=e_k(\eta)+\frac{1}{2}$, for some integer $1\leq k\leq \frac{m}{2}$. Then $1\leq k\leq \alpha_{m,n}$.
\end{cor}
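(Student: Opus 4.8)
The plan is to deduce from the hypothesis that the ``level $k$'' double descent space $\mathcal{E}_k(\Delta(\tau,m)\gamma_\psi^{(\epsilon)},\eta)$ is nonzero, and then to invoke Proposition~\ref{prop 3.3}, whose contrapositive gives exactly $k\leq\alpha_{m,n}$; together with the hypothesis $k\geq 1$ this is the assertion. (As in Proposition~\ref{prop 3.3} we assume $n>1$, which in Case $\wedge^2$ is automatic.) Here, of course, $e_k(\eta)=e_k^H(\eta)$ for $H=H^{(\epsilon)}_{2nm}$. So the whole point is to turn the pole of $L^S_{\epsilon,\psi}(\sigma\times\tau,s)$ at $s=e_k(\eta)+\tfrac12$ into the non-vanishing of $\mathcal{F}_\psi$ applied to the leading Laurent term, at $s=e_k^H(\eta)$, of the Eisenstein series $E(f_{\Delta(\tau,m)\gamma_\psi^{(\epsilon)},s})$.

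To carry this out I would run the argument of Lemma~\ref{lem 2.1}, but at the point $e_k^H(\eta)$ instead of $\tfrac12$. Choose $\pi=\bar\sigma^\iota$, so that $\pi^\iota\cong\hat\sigma$ and the bilinear form \eqref{1.10.0} is, in a suitable automorphic realization, the Petersson inner product of $\sigma$; it is therefore not identically zero, and the Eulerian factorization \eqref{1.11} applies. In \eqref{1.11} the factor $D^{H,S}_\tau(s)$ is holomorphic and non-zero at $s=e_k^H(\eta)$ (each $D^H_{\tau_v}$ is, since the arguments of all of its $L$-factors at that point are $\geq 2$), while by Corollary~44 of \cite{CFK18} the finitely many local integrals $\mathcal{L}_v$, $v\in S$, may be chosen holomorphic and non-vanishing at $e_k^H(\eta)$. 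Since the numerator $L^S_{\epsilon,\psi}(\sigma\times\tau,s+\tfrac12)$ has, by hypothesis, a pole at $s=e_k^H(\eta)$, we conclude that for a suitable $K_{H(\BA)}$-finite section $f_{\Delta(\tau,m)\gamma_\psi^{(\epsilon)},s}$ and cusp forms $\varphi_\sigma,\varphi_\pi$ the global integral $\mathcal{L}(f_{\Delta(\tau,m)\gamma_\psi^{(\epsilon)},s},\varphi_\sigma,\varphi_\pi)$ has a pole at $s=e_k^H(\eta)$.

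Now $\mathcal{L}(f_{\Delta(\tau,m)\gamma_\psi^{(\epsilon)},s},\varphi_\sigma,\varphi_\pi)$ is, by its definition \eqref{1.10}, the pairing of $\mathcal{F}_\psi(E(f_{\Delta(\tau,m)\gamma_\psi^{(\epsilon)},s}))$ against $\varphi_\sigma\otimes\varphi_\pi$, and $\mathcal{F}_\psi$ is itself an integration over the compact quotient $U_{m^{n-1}}(F)\backslash U_{m^{n-1}}(\BA)$. Since $\varphi_\sigma,\varphi_\pi$ are cusp forms (rapidly decreasing) and the Eisenstein series is of moderate growth with bounds locally uniform in $s$ off its polar set, both the $\mathcal{F}_\psi$-integration and the pairing against the cusp forms commute with passage to Laurent coefficients at $s_0=e_k^H(\eta)$. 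Hence the most singular Laurent coefficient of $\mathcal{L}$ at $s_0$ is the pairing of $\mathcal{F}_\psi\bigl(a(f_{\Delta(\tau,m)\gamma_\psi^{(\epsilon)},e_k^H(\eta)})\bigr)=\mathcal{E}_k(f_{\Delta(\tau,m)\gamma_\psi^{(\epsilon)},e_k^H(\eta)})$ against $\varphi_\sigma\otimes\varphi_\pi$; as this is not identically zero, the function $\mathcal{E}_k(f_{\Delta(\tau,m)\gamma_\psi^{(\epsilon)},e_k^H(\eta)})$ is not identically zero on $H^{(\epsilon)}_m(\BA)\times H^{(\epsilon)}_m(\BA)$, i.e. $\mathcal{E}_k(\Delta(\tau,m)\gamma_\psi^{(\epsilon)},\eta)\neq0$. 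Proposition~\ref{prop 3.3} now gives $k\leq\alpha_{m,n}$, which is the claim.

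The step I expect to require real care, rather than a citation, is the last one: that the pole of the $L$-function genuinely propagates through $\mathcal{F}_\psi$ and the cuspidal pairing down to the leading term $a(f_{\Delta(\tau,m)\gamma_\psi^{(\epsilon)},e_k^H(\eta)})$ --- in particular that the pole is not cancelled by the local doubling integrals at $v\in S$ (which is precisely the role of Corollary~44 of \cite{CFK18}) and that the order of the pole of $\mathcal{L}$ matches that of the Eisenstein series, so that it is indeed the leading Laurent coefficient, and not a subleading one, that is detected. Since the Rankin--Selberg $L$-function $L^S_{\epsilon,\psi}(\sigma\times\tau,\cdot)$ contributes at most a simple pole and the remaining factors in \eqref{1.11} are regular and non-zero at $e_k^H(\eta)$, this matching is clean, but it is the one place where the argument has to be made rather than quoted.
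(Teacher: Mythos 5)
Your proof is correct and is essentially the argument the paper intends: Corollary \ref{cor 3.4} is stated without proof, and it follows exactly as you say by running the argument of Lemma \ref{lem 2.1} and Proposition \ref{prop 2.3} at the point $e_k^H(\eta)$ with $\pi=\bar{\sigma}^\iota$ (so that the pairing \eqref{1.10.0} is the Petersson product), and then invoking Proposition \ref{prop 3.3}. The one delicate point you flag — matching the pole order of $\mathcal{L}$ with that of the Eisenstein series so that the \emph{leading} Laurent coefficient is detected — is genuine, but harmless here: even if a subleading coefficient were the one detected, the wavefront bounds of Propositions \ref{prop 3.1}, \ref{prop 3.2} apply to any Laurent coefficient at $e_k^H(\eta)$ (their local unramified components are still constituents of $\rho_{\chi,k}$), so the numerical argument of Proposition \ref{prop 3.3} still yields $k\leq\alpha_{m,n}$.
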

Thus, for example, if $L^S(\tau,\wedge^2,s)$ has a pole at $s=1$, and $\sigma$ is on $\Sp_m(\BA)$, or $\SO_m(\BA)$, with $m$ even, such that $L^S(\sigma\times \tau, s)$ has a pole at the point $s=k_0+\frac{1}{2}$, where $1\leq k_0\leq \frac{m}{2}$ is an integer, then $m\geq 2nk_0$.

Due to Prop. \ref{prop 3.3}, we will be interested from now on in the spaces\\
 $\mathcal{E}_k(\Delta(\tau,m)\gamma_\psi^{(\epsilon)},\eta)$, only for $1\leq k\leq\alpha_{m,n}$.
The main work of this paper is the analysis of constant terms of these spaces along $U^{H_m}_r\times 1$, where $r\leq [\frac{m}{2}]$;  $U^{H_m}_r$ is the unipotent radical of the parabolic subgroup $Q_r^{H_m}$ of $H_m$.

\section{ A first reduction}

Our goal from this point until the end of Sec. 11 is to prove the cuspidality part of Theorem \ref{thm 2.4}. Recall that, in the notation of Theorem \ref{thm 2.4}, we defined the space $D^1_\psi(\tau)$ of automorphic functions on $H^{(\epsilon)}_m(\BA)$, generated by the functions $g\mapsto \mathcal{E}(f_{\Delta(\tau,m)\gamma^{(\epsilon)}_\psi,\frac{1}{2}})(g,1)$. We will prove
\begin{thm}\label{thm 4.1}
Let $\tau$ and $H^{(\epsilon)}_m$ be as in Theorem \ref{thm 2.2}. Then $D^1_\psi(\tau)$ is a cuspidal module over $H^{(\epsilon)}_m(\BA)$.
\end{thm}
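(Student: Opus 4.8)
The plan is to prove cuspidality of $D^1_\psi(\tau)$ by computing, for every $1\le r\le[\tfrac m2]$, the constant term of the generating functions along $U^{H^{(\epsilon)}_m}_r\times I_m$ (the unipotent radical of the parabolic $Q^{H^{(\epsilon)}_m}_r$ inside the first copy of $H^{(\epsilon)}_m$), and showing it vanishes identically. Since $\mathcal E(f_{\Delta(\tau,m)\gamma^{(\epsilon)}_\psi,\frac12})=\mathcal E_1(f_{\Delta(\tau,m)\gamma^{(\epsilon)}_\psi,e^H_1(\eta)})$ by \eqref{3.8}, this is the special case $k_0=1$ of the general constant-term analysis announced at the end of Sec.\ 3, carried out in Sec.\ 4--11. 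The first reduction in this section should be to reduce the vanishing of the constant term of $\mathcal E_{k_0}(\Delta(\tau,m)\gamma^{(\epsilon)}_\psi,\eta)$ along $U^{H_m}_r\times I_m$ to a statement about the inner Fourier--Jacobi-type integral against the leading term $a(f_{\Delta(\tau,m)\gamma^{(\epsilon)}_\psi,e^H_k(\eta)})$, i.e.\ to push the constant-term integral over $U^{H_m}_r$ (embedded via $t(\cdot,1)$ inside $M_{m^{n-1}}$) past the Fourier coefficient $\mathcal F_\psi$ along $U_{m^{n-1}}$.

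\textbf{Key steps.} First I would make explicit, using the embedding $t$ from \eqref{1.6} and the description \eqref{1.7}, \eqref{1.8} of $j(g,h)$, how the group $U^{H_m}_r\times I_m$ sits inside $H=H^{(\epsilon)}_{2nm}$: the image of $t(U^{H_m}_r,1)$, together with $U_{m^{n-1}}$, generates a unipotent subgroup of $H$ whose constant-term/Fourier integral can be rearranged into a constant term of $a(f_{\Delta(\tau,m)\gamma^{(\epsilon)}_\psi,e^H_k(\eta)})$ along a larger unipotent radical of $H$. The target identity is the one stated in Theorem \ref{thm 11.2}: the constant term along $U^{H_m}_r\times I_m$ is expressed through $\xi^{U^{H_{2nm}}_{(2n-1)r}}$, the constant term of $\xi\in A(\Delta(\tau,m)\gamma^{(\epsilon)}_\psi,\eta,k)$ along $U^{H_{2nm}}_{(2n-1)r}$ (Levi $\GL_{(2n-1)r}\times H_{2(nm-(2n-1)r)}$). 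Second, I would invoke the bound on nilpotent orbits from Prop.\ \ref{prop 3.1}, Prop.\ \ref{prop 3.2} (and the resulting vanishing Prop.\ \ref{prop 3.3}): the leading term $a(f_{\Delta(\tau,m)\gamma^{(\epsilon)}_\psi,e^H_k(\eta)})$ lies in the automorphic module $A(\Delta(\tau,m)\gamma^{(\epsilon)}_\psi,\eta,k)$ whose Fourier coefficients are controlled by the partition $((2n)^{m-2k},n^{4k})$ (or its variants), and cuspidality of $\tau$ forces the constant term $\xi^{U^{H_{2nm}}_{(2n-1)r}}$ to be built out of $\Delta(\tau,\cdot)$-data on smaller general linear factors; when $k=k_0$ is minimal (in the functoriality case $k_0=1$, $m-1=(2n-1)\cdot 1$ — wait, here $m=n+1$ so $(2n-1)r$ can reach up to $(2n-1)[\tfrac m2]$), the cuspidality of $\tau$ on each $\GL_n$-block, together with the fact that $\Delta(\tau,m)$ is supported on $\GL_n$-blocks, kills the relevant term unless the ``$H$-part'' $H_{2(nm-(2n-1)r)}$ is too small — i.e.\ the constant term is forced to vanish for all $r\ge1$.

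\textbf{Main obstacle.} The hard part will be the explicit root-exchange / Fourier-expansion bookkeeping needed to derive Theorem \ref{thm 11.2} — i.e.\ to show that the constant term of $\mathcal F_\psi(a(\cdots))$ along $t(U^{H_m}_r,1)$ equals (up to an inner unipotent integration with a nondegenerate character, which one then analyzes using the cuspidality of $\tau$ and the structure of $\Delta(\tau,m)$) a constant term of $a(\cdots)$ along $U^{H_{2nm}}_{(2n-1)r}$. This requires carefully conjugating by Weyl elements, repeatedly applying ``exchange of roots'' lemmas (in the style of \cite{GRS11}, \cite{GS18}) to move between the character $\psi_{U_{m^{n-1}}}$ attached to the orbit $((2n-1)^m,1^m)$ and the abelian unipotent radical $U^{H_{2nm}}_{(2n-1)r}$, and keeping track of the metaplectic cocycle contributions in the $\epsilon=2$ case via the formula \eqref{1.9} for $t^{(2)}_v$. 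Once Theorem \ref{thm 11.2} is in hand, the cuspidality of $D^1_\psi(\tau)$ should follow quickly: the right-hand side there involves constant terms of $\xi$ along $U^{H_{2nm}}_{(2n-1)r}$, and in the functoriality case the parameters \eqref{0.8} are arranged so that these are governed by cuspidal data of $\tau$ along $\GL$-blocks of size not divisible appropriately, forcing vanishing; hence $\mathcal E(f_{\Delta(\tau,m)\gamma^{(\epsilon)}_\psi,\frac12})(g,1)$ has vanishing constant term along every $U^{H^{(\epsilon)}_m}_r$, which is exactly cuspidality.
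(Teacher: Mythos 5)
Your proposal is correct and follows essentially the same route as the paper: the proof is exactly the constant-term computation of Sections 4--11, reducing via Weyl conjugation, repeated root exchanges, and Fourier expansions controlled by the orbit bounds of Propositions \ref{prop 3.1}--\ref{prop 3.3} to the identity of Theorem \ref{thm 11.2}, which expresses $\mathcal F^r_\psi(\xi)$ through the constant term $\xi^{U_{(2n-1)r}}$. Cuspidality of $\tau$ then forces $n\mid(2n-1)r$, hence $n\mid r$, and since $r\le[\tfrac m2]$ with $m=n$ or $n\pm1$ this gives $r=0$, which is the paper's concluding argument.
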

We explained right after the statement of Theorem \ref{thm 2.4} that Theorem \ref{thm 4.1} implies that $\mathcal{D}\mathcal{D}_\psi(\tau)$ is cuspidal, and that it breaks into a multiplicity free direct sum of irreducible, automorphic, cuspidal representations of 
$H^{(\epsilon)}_m(\BA)\times H^{(\epsilon)}_m(\BA)$ of the form \eqref{2.2.1}. 

We start in this section with a certain reduction, and, as we explained at the beginning of the previous section, we carry out large parts of the work towards cuspidality, more generally, for $A(\Delta(\tau,m)\gamma_\psi^{(\epsilon)}, \eta,k)$. Only in Cor. \ref{cor 10.3} will we restrict to cases where $m,n,k$ satisfy certain relations, such that, according to the various cases of Prop. \ref{prop 3.1}, \ref{prop 3.2}, $m$, or $m-1$, is equal to an even multiple, $2k$, or an odd multiple, $2k\pm 1$, of $n$, or $n-1$. See Cor. \ref{cor 10.3}. These cases contain the cases of functoriality.

Assume that  $1\leq k\leq\alpha_{m,n}$. Let $r\leq [\frac{m}{2}]$. We consider the constant terms along $U^{H_m}_r\times 1$ of automorphic functions in $\mathcal{E}_k(\Delta(\tau,m)\gamma_\psi^{(\epsilon)},\eta)$. Thus, let $\xi$ be an automorphic function in $A(\Delta(\tau,m)\gamma_\psi^{(\epsilon)},\eta,k)$. Then this constant term is given by
\begin{equation}\label{4.1}
\mathcal{F}_\psi^r(\xi)(h)=\int_{V^{m,n,r}(F)\backslash V^{m,n,r}(\BA)}\xi(uh)\psi^{-1}_{V^{m,n,r}}(u)du,
\end{equation}
where $V^{m,n,r}=U^H_{m^{n-1}}\cdot t(U^{H_m}_r\times 1)$, and $\psi_{V^{m,n,r}}$ is the character of $V^{m,n,r}(\BA)$ obtained from $\psi_{U_{m^{n-1}}}$ by the trivial extension. Note the form of elements of $t(U^{H_m}_r\times 1)$. For this, write a typical element $e$ of $U^{H_m}_r$ as follows. When $m=2m'$ is even,
$$
e=e(v,z)=\begin{pmatrix}I_r&v_1&v_2&z\\&I_{m'-r}&0&v'_2\\&&I_{m'-r}&v'_1\\&&&I_r\end{pmatrix},\ \  v=(v_1,v_2)
$$
and when $m=2m'-1$ is odd,
$$
e=e(v,z)=\begin{pmatrix}I_r&v_1&v_2&v_3&z\\&I_{m'-1-r}&0&0&v'_3\\&&1&0&v'_2\\&&&I_{m'-1-r}&v'_1\\&&&&I_r\end{pmatrix},\ v=(v_1,v_2,v_3).
$$
Then, when $m=2m'$ is even, we get the elements
\begin{equation}\label{4.2}
t(e,1)=diag(e(v,z)^{\Delta_{n-1}},\begin{pmatrix}I_r&v_1&0&v_2&z\\&I_{m'-r}&0&0&v'_2\\&&I_m&0&0\\&&&I_{m'-r}&v'_1\\&&&&I_r\end{pmatrix}, (e(v,z)^*)^{\Delta_{n-1}}).
\end{equation}
When $m=2m'-1$, we get the elements\\
\\
$t(e,1)=$
\\
\begin{equation}\label{4.3}
diag(e^{\Delta_{n-1}},\begin{pmatrix}I_r&v_1&0&\frac{1}{2}v_2&v_2&0&v_3&z\\&I_{m'-1-r}&0&0&0&0&0&v'_3\\&&I_{m'-1}&0&0&0&0&0\\&&&1&0&0&0&v'_2\\&&&&1&0&0&\frac{1}{2}v'_2\\&&&&&I_{m'-1}&0&0\\&&&&&&I_{m'-1-r}&v'_1\\&&&&&&&I_r\end{pmatrix}, (e^*)^{\Delta_{n-1}}).
\end{equation}
Recall the $m\times m$ block matrices $x_i=x_i(u)$ in \eqref{1.2} and \eqref{1.5.1},
$1\leq i\leq n-1$. Write
\begin{equation}\label{4.4}
x_i(u)=\begin{pmatrix}x_i^{1,1}(u)&x_i^{1,2}(u)&x_i^{1,3}(u)\\x_i^{2,1}(u)&x_i^{2,2}(u)&x_i^{2,3}(u)
\\x_i^{3,1}(u)&x_i^{3,2}(u)&x_i^{3,3}(u)\end{pmatrix},\quad 1\leq i\leq n-1,
\end{equation}
where $x_i^{1,1}(u), x_i^{3,3}(u)\in M_{r\times r}$ (and then $x_i^{1,2}(u)\in M_{r\times
	(m-2r)}$ etc.) We will also denote $x_i^{\alpha,\beta}=x_i^{\alpha,\beta}(u)$.\\
Let $g\in H_m$. Define, for $1\leq i\leq n-1$,
\begin{equation}\label{4.5}
p_i(g)=\diag (I_{m(i-1)},g,I_{2m(n-i)},g^*,I_{m(i-1)}),
\end{equation}
and for $i=n$,
\begin{equation}\label{4.6}
p_n(g)=\diag(I_{m(n-1)},j(g,I_m),I_{m(n-1)}).
\end{equation}
See \eqref{1.7}. 
The first reduction for the computation of the constant term \eqref{4.1} 
is obtained by applying the process of
exchanging roots, in the sense of Lemma 7.1 in \cite{GRS11}, so
that, for $i=1,2...,n-1$, the blocks below the diagonal in $x_i$
(the blocks $x_i^{2,1}, x_i^{3,1}, x_i^{3,2}$) are exchanged with the blocks
above the diagonal in the $i$-th block of the image of the diagonal embedding of $U_r^{H_m}$, given by the map $e(v,z)\mapsto t(e(v,z),1)$.
The precise formulation is as follows. Let, for $0\leq \ell\leq
n-1$, $\mathbf{D}_\ell$ be the unipotent subgroup of all elements of
the form
\begin{equation}\label{4.7}
v=p_1(e_1)p_2(e_2)\cdot...\cdot
p_\ell(e_\ell)p_{\ell+1}(e)p_{\ell+2}(e)\cdot...\cdot p_n(e)u,
\end{equation}
where $e\in U_r^{H_m}$; for $j\leq \ell$, $e_j$ is an element of the unipotent radical $V_{r,m-2r,r}$ of the parabolic subgroup $P_{r,m-2r,r}$ of $\GL_m$,
and $u\in U_{m^{n-1}}$ is of the form \eqref{1.2}, with $x_j^{2,1}(u)=0,
x_j^{3,1}(u)=0, x_j^{3,2}(u)=0$, for $j\leq \ell$. For $v\in
\mathbf{D}_\ell(\BA)$, define
\begin{equation}\label{4.8}
\psi_{\mathbf{D}_\ell}(v)=\psi_{U_{m^{n-1}}}(u).
\end{equation}
This is a character of $\mathbf{D}_\ell(\BA)$, trivial on
$\mathbf{D}_\ell(F)$. For $\ell=0$, we define
$$
\mathbf{D}_0=V^{m,n,r}.
$$
Let, for $\ell\leq n-2$, $\mathbf{Y}_\ell$ be the subgroup of $U_{m^{n-1}}$ consisting of the
elements $u'$ of the form \eqref{1.2}, where except the identity diagonal blocks, all other coordinates are zero, except in the blocks $x_\ell, x^*_\ell$,
and
\begin{equation}\label{4.9}
x_\ell(u')=\begin{pmatrix}0&0&0\\a&0&0\\b&c&0\end{pmatrix},\quad a\in
M_{(m-2r)\times r},\ b\in M_{r\times r},\ c\in M_{r\times (m-2r)}.
\end{equation}
Similarly, we define $\mathbf{Y}_{n-1}$ to be the subset of $U_{m^{n-1}}$ consisting of similar elements $u'\in U_{m^{n-1}}$, with $x_{n-1}(u')$ given by \eqref{4.9}, except that we need to make sure that these elements lie in $H_{2nm}$, and for this we need to allow appropriate coordinates in the block in position $(n-1)\times (n+3)$ in \eqref{1.2}. Then $\mathbf{Y}_{n-1}$ is a subgroup only modulo $\mathbf{D}_{n-1}$. Note that $\psi_{\mathbf{D}_{n-1}}$ is trivial on commutators of any two elements of $\mathbf{Y}_{n-1}(\BA)$.
\begin{prop}\label{prop 4.1}
	Let $1\leq i\leq n-1$. Define, for $\xi\in A(\Delta(\tau,m)\gamma_\psi^{(\epsilon)},\eta,k)$, $h\in
	H(\BA)$,
	\begin{equation}\label{4.10}
	c_i(\xi)(h)=\int_{\mathbf{D}_i(F)\backslash \mathbf{D}_i(\BA)}\xi(vh)\psi_{\mathbf{D}_i}^{-1}(v)dv,
	\end{equation}
	Then
	\begin{equation}\label{4.11}
	c_{i-1}(\xi)(h)=\int_{\mathbf{Y}_i(\BA)}c_i(\xi)(yh)dy,
	\end{equation}
	We have that $c_{i-1}=0$ on $A(\Delta(\tau,m)\gamma_\psi^{(\epsilon)},\eta,k)$, if and only if $c_i=0$ on \\
	$A(\Delta(\tau,m)\gamma_\psi^{(\epsilon)},\eta,k)$. Hence $\mathcal{F}_\psi^r=0$ if and
	only if $c_{n-1}=0$. This proposition is valid for any automorphic
	representation of $H(\BA)$.
\end{prop}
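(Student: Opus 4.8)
The plan is to obtain all three assertions from the single identity \eqref{4.11}, which for each fixed $i$ will come from one application of the exchange-of-roots technique of \cite{GRS11} (Lemma 7.1). Note first that $\mathbf{D}_0=V^{m,n,r}$ and $\psi_{\mathbf{D}_0}=\psi_{V^{m,n,r}}$, so $c_0=\mathcal{F}_\psi^r$; hence, once \eqref{4.11} and the equivalence ``$c_{i-1}\equiv 0$ on $A(\Delta(\tau,m)\gamma^{(\epsilon)}_\psi,\eta,k)$ if and only if $c_i\equiv 0$'' are established for $1\le i\le n-1$, the chain $c_0\leftrightarrow c_1\leftrightarrow\cdots\leftrightarrow c_{n-1}$ yields that $\mathcal{F}_\psi^r$ vanishes identically exactly when $c_{n-1}$ does. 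Throughout, only the left $H(F)$-invariance, the moderate growth, and the $H(\BA)$-stability of the module are used, which is why the proposition holds for an arbitrary automorphic representation of $H(\BA)$.

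Fix $i$ with $1\le i\le n-2$ (the step $i=n-1$ is treated separately below). First I would present the passage $\mathbf{D}_{i-1}\rightsquigarrow\mathbf{D}_i$ as an exchange of two equal-dimensional sets of root subgroups: the below-diagonal blocks $x_i^{2,1},x_i^{3,1},x_i^{3,2}$ of $x_i$ — the group $\mathbf{Y}_i$ of \eqref{4.9}, on which $\psi_{\mathbf{D}_{i-1}}$ is trivial — are traded against the above-diagonal root blocks inside the $i$-th copy of $U_r^{H_m}$ in the embedding $e\mapsto t(e,1)$, this copy being, in $\mathbf{D}_i$, released from the diagonal tie to the copies in blocks $i+1,\dots,n$ and filled out to the $\GL_m$-radical $V_{r,m-2r,r}$. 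Then I would verify the hypotheses of \cite{GRS11}, Lemma 7.1: that the unipotent groups in play are algebraic groups over $F$ and over $\BA$; that $\psi_{\mathbf{D}_{i-1}}$ restricts to $\psi_{\mathbf{D}_i}$ on the common part and is trivial on the exchanged blocks; that $\mathbf{Y}_i$ and the exchanged blocks normalize the common subgroup, fix its character, and have their commutator inside it; and — the one substantial point — that the character induces a perfect pairing between $\mathbf{Y}_i$ and the exchanged blocks. For $i\le n-2$ this last point is easy: $\psi_{\mathbf{D}_i}$ depends on $x_i$ only through $\psi(\tr x_i)$, the commutator of $y\in\mathbf{Y}_i$ with an element of the $i$-th diagonal block lands in the diagonal of $x_i$, and the pairing is the trace pairing between matching upper and lower matrix blocks — non-degenerate precisely because enlarging $U_r^{H_m}$ to $V_{r,m-2r,r}$ gives $x_i^{3,2}$ a free partner. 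Granting the hypotheses, Lemma 7.1 of \cite{GRS11} yields \eqref{4.11} and, with it, the asserted equivalence, the root exchange being a reversible operation.

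The main obstacle is the step $i=n-1$, which has to be carried out by hand. There the last block $x_{n-1}$ is not a free $m\times m$ block but the packaged object $x_{n-1}(u)=(y_1,y_2,y_3)A_H$ of \eqref{1.5.1}, with $A_H$ as in \eqref{1.4}/\eqref{1.5}; the root subgroups being exchanged pass through the middle block $j(g,h)$ of $H$ (see \eqref{1.7}, and \eqref{1.8}, with its $\tfrac12$'s, in the odd case); and $\mathbf{Y}_{n-1}$ is only a subgroup of $U_{m^{n-1}}$ modulo $\mathbf{D}_{n-1}$, so one must allow the coordinates in position $(n-1)\times(n+3)$ of \eqref{1.2} to stay inside $H_{2nm}$ and check that $\psi_{\mathbf{D}_{n-1}}$ kills all commutators of elements of $\mathbf{Y}_{n-1}(\BA)$ — which is all that Lemma 7.1 needs in place of abelianness. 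The non-degeneracy of the pairing then has to be verified by an explicit matrix computation, separately for $m$ even and $m$ odd, matching the blocks $a,b,c$ of \eqref{4.9} against the corresponding entries of the last $U_r^{H_m}$-copy read off through $A_H$ and $j(\cdot,\cdot)$. Once this is in place, the same application of \cite{GRS11}, Lemma 7.1, closes the induction, and, since $c_0=\mathcal{F}_\psi^r$, the last two sentences of the proposition follow as explained.
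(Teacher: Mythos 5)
Your overall strategy is the paper's: identify $c_0=\mathcal{F}_\psi^r$, pass from $c_{i-1}$ to $c_i$ by exchanging the below-diagonal blocks of $x_i$ against the above-diagonal blocks of the $i$-th copy of $V_{r,m-2r,r}$ via Lemma 7.1 and Corollary 7.1 of \cite{GRS11}, use the trace pairing for non-degeneracy, and exploit reversibility to get the two-way vanishing equivalence and hence the chain down to $c_{n-1}$. The remark that nothing beyond automorphy is used is also correct and is exactly how the paper justifies the last sentence of the proposition.

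There is, however, a genuine gap in the execution: you cannot trade the whole of $\mathbf{Y}_i$ (all three blocks $a=x_i^{2,1}$, $b=x_i^{3,1}$, $c=x_i^{3,2}$) against the whole upper-triangular part of $V_{r,m-2r,r}$ in ``one application'' of Lemma 7.1, because the hypotheses of that lemma fail for these groups. Concretely, the commutator of the $(2,3)$-block $x$ of $V_{r,m-2r,r}$ with an element of $\mathbf{Y}_i$ produces a contribution $xb$ in the position $x_i^{2,1}$ (and $xc$ in $x_i^{2,2}$), so $[\mathrm{X},\mathrm{Y}]$ is not contained in the common subgroup $\mathrm{C}$ on which the character lives --- part of it lands back in $\mathrm{Y}$ --- and moreover the group generated by the three upper blocks is not abelian ($[\alpha,x]$ feeds the $(1,3)$-block). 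The paper's proof resolves exactly this by performing three successive exchanges in a fixed order: first $x_i^{3,1}\leftrightarrow(1,3)$-block $\beta$ (pairing $\psi(\mathrm{tr}(b\beta))$), then $x_i^{2,1}\leftrightarrow(1,2)$-block $\alpha$ (pairing $\psi(\mathrm{tr}(\alpha a))$), then $x_i^{3,2}\leftrightarrow(2,3)$-block $x$ (pairing $\psi(\mathrm{tr}(xc))$), each step individually satisfying the hypotheses of Lemma 7.1 because the previously exchanged blocks have already been moved to the other side. Your proposal as written would need this decomposition and the verification that the order works; without it the ``one substantial point'' you check (non-degeneracy of the pairing) is not enough. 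A secondary, non-fatal difference: you set aside $i=n-1$ for a separate hand computation, whereas the paper runs the identical three-step argument uniformly for all $1\le i\le n-1$, the only extra input being the observation (made before the proposition) that $\mathbf{Y}_{n-1}$ is a group modulo $\mathbf{D}_{n-1}$ and that $\psi_{\mathbf{D}_{n-1}}$ kills commutators of its elements --- which is what Lemma 7.1 actually requires in place of abelianness.
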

\begin{proof}
	It will be clear that we are not going to use in this proof any
	particular property of $A(\Delta(\tau,m)\gamma_\psi^{(\epsilon)},\eta,k)$, except that it is a representation by right translations in a space of automorphic
	functions. However, in our notations, we
	will keep referring to $A(\Delta(\tau,m)\gamma_\psi^{(\epsilon)},\eta,k)$. Define
	$\mathbf{B}_i'=\mathbf{D}_{i-1}$. Let $\mathbf{C}_i'$ be the
	subgroup of $\mathbf{B}_i'$ consisting of all elements of the form
	\eqref{4.7}, with $\ell=i-1$, such that 
	$x_i^{3,1}(u)=0$. Let $\mathbf{Y}_i'$ be the subgroup of all elements
	$u'\in \mathbf{Y}_i$, such that in \eqref{4.9}, $a=0$ and $c=0$, that is
	\begin{equation}\label{4.12}
	x_i(u')=\begin{pmatrix}0&0&0\\0&0&0\\b&0&0\end{pmatrix},\quad 
	 b\in M_{r\times r}.
	\end{equation}
	Define $\mathbf{X}_i'$ to be the subgroup of all elements of the
	form
	\begin{equation}\label{4.13}
	p_i(\begin{pmatrix}I_r&
	&\beta\\&I_{m-2r}&0\\&&I_r\end{pmatrix}),\quad 
	\beta\in M_{r\times r}.
	\end{equation}
	Denote
	$$
	\mathbf{D}_i'=\mathbf{C}_i'\mathbf{X}_i'.
	$$
	Note that
	$$
	\mathbf{B}_i'=\mathbf{C}_i'\mathbf{Y}_i'=\mathbf{D}_{i-1}.
	$$
	These are unipotent groups, and they satisfy the set-up of Lemma 7.1
	of \cite{GRS11}. For example, the commutator of the element \eqref{4.13}
	of $\mathbf{X}_i'$ with the element of $\mathbf{Y}_i'$, satisfying
	\eqref{4.12}, is the element $c$ in $\mathbf{C}_i'$, all of whose
	blocks above the diagonal are zero, except the block $x_i(c)$, which
	has the form
	\begin{equation}\label{4.14}
	x_i(c)=\begin{pmatrix}b \beta
	&0&0\\0&0&0\\0&0&0\end{pmatrix}.
	\end{equation}
	Note that
	$\psi_{\mathbf{B}_i'}(c)=\psi(tr(b\beta))$ (when we take adele coordinates). Denote by
	$\psi_{\mathbf{C}_i'}$ the restriction of $\psi_{\mathbf{B}_i'}$ to
	$\mathbf{C}_i'(\BA)$. Then $\psi_{\mathbf{C}_i'}$ is preserved
	under conjugation by $\mathbf{Y}_i'(\BA)$ and by
	$\mathbf{X}_i'(\BA)$. Extend $\psi_{\mathbf{C}_i'}$ to
	$\mathbf{D}_i'(\BA)$ by making it trivial on $\mathbf{X}_i'(\BA)$. Denote this extension by $\psi_{\mathbf{D}_i'}$. Then by Lemma
	7.1 in \cite{GRS11}, we have the identity
	\begin{equation}\label{4.15}
	c_{i-1}(\xi)(h)=\int_{\mathbf{Y}'_i(\BA)}\int_{\mathbf{D}_i'(F)\backslash \mathbf{D}_i'(\BA)}\xi(vyh)\psi_{\mathbf{D}_i'}^{-1}(v)dvdy.
	\end{equation}
	By Corollary 7.1 in \cite{GRS11}, $c_{i-1}$ is trivial if and only
	if the following Fourier coefficient is identically zero (in $\xi,\
	h$),
	$$
	c'_i(\xi)(h)=\int_{\mathbf{D}_i'(F)\backslash \mathbf{D}_i'(\BA)}\xi(vh)\psi_{\mathbf{D}_i'}^{-1}(v)dv.
	$$
		
	Let $ \mathbf{B}_i''=\mathbf{D}_i'$, and let $ \mathbf{C}_i''$ be the
	subgroup of all elements $cx\in \mathbf{B}_i''$, with $x\in
	\mathbf{X}_i'$, $c\in \mathbf{C}_i'$, such that $x_i^{2,1}(c)=0$. Let
	$\mathbf{Y}''_i$ the subgroup of all elements $u'\in \mathbf{Y}_i$,
	such that in \eqref{4.12}, $b=0,\ c=0$, that is
	\begin{equation}\label{4.16}
	x_i(u')=\begin{pmatrix}0&0&0\\a&0&0\\0&0&0\end{pmatrix},\quad a\in
	M_{(m-2r)\times r}.
	\end{equation}
	Define $\mathbf{X}_i''$ to be the subgroup of all elements of the form
	\begin{equation}\label{4.17}
	p_i(\begin{pmatrix}I_r&\alpha&0\\&I_{m-2r}&0\\&&I_r\end{pmatrix}),\quad \alpha\in
	M_{r\times(m-2r)}.
	\end{equation}
	Denote 
	$$
	\mathbf{D}_i''=\mathbf{C}_i''\mathbf{X}_i''.
	$$
	We have
	$$
	\mathbf{B}_i''=\mathbf{C}_i''\mathbf{Y}''_i=\mathbf{D}_i'.
	$$
	The set-up of Lemma 7.1 of \cite{GRS11} is satisfied. For example,
	the commutator the element \eqref{4.17} of $\mathbf{X}_i''$ with the
	element of  $\mathbf{Y}_i''$, satisfying \eqref{4.16}, is the element
	$e$ in $\mathbf{C}_i''$, all of whose blocks above the diagonal are
	zero, except the block $x_i(e)$, which has the form
	$$
	x_i(e)=\begin{pmatrix}\alpha a&0&0\\0&0&0\\0&0&0\end{pmatrix}.
	$$
	For such $e\in \mathbf{C}_i''(\BA)$,
	$\psi_{\mathbf{B}_i}(e)=\psi(tr(\alpha a))$. As before, denote by
	$\psi_{\mathbf{C}_i''}$ the restriction of $\psi_{\mathbf{B}_i''}$ to
	$\mathbf{C}_i''(\BA)$. Note that $\psi_{\mathbf{C}_i''}$ is also the
	restriction of $\psi_{\mathbf{D}_i''}$ to $\mathbf{C}_i''(\BA)$. Then
	by Lemma 7.1 in \cite{GRS11}, we have the identity
	\begin{equation}\label{4.18}
	c'_i(\xi)(h)=\int_{\mathbf{Y}''_i(\BA)}\int_{\mathbf{D}_i''(F)\backslash \mathbf{D}_i''(\BA)}\xi(vyh)\psi_{\mathbf{D}_i''}^{-1}(v)dvdy,
	\end{equation}
	and hence, by \eqref{4.14},
	\begin{equation}\label{4.19}
	c_{i-1}(\xi)(g)=\int_{\mathbf{Y}''_i\mathbf{Y}'_i(\BA)}\int_{\mathbf{D}''_i(F)\backslash \mathbf{D}''_i(\BA)}\xi(vyg)\psi_{\mathbf{D}''_i}^{-1}(v)dvdy.
\end{equation}
	By Corollary 7.1 in \cite{GRS11},
	$c'_i=0$ if and only if the following integral is identically zero,
	$$
	c_i''(\xi)(h)=\int_{\mathbf{D}_i''(F)\backslash \mathbf{D}_i''(\BA)}\xi(vh)\psi_{\mathbf{D}_i''}^{-1}(v)dv.
	$$ 
	Thus, $c_{i-1}=0$ if and only if $c_i''=0$.
	 
	Let $\mathbf{B}_i=\mathbf{D}_i''$, and let $\mathbf{C}_i$ be the subgroup of elements $cx\in \mathbf{B}_i$, with $x\in \mathbf{X}_i''$, such that $x^{3,2}_i(c)=0$. Let $\mathbf{Y}'''_i$ be the subgroup of all elements $u'\in \mathbf{Y}_i$, such that in \eqref{4.12}, $a=0, b=0$, that is 
	\begin{equation}\label{4.20}
	x_i(u')=\begin{pmatrix}0&0&0\\0&0&0\\0&c&0\end{pmatrix}, \ c\in M_{r\times (m-2r)}.
	\end{equation}
	Define $\mathbf{X}'''_i$ to be the subgroup of elements of the form 
	\begin{equation}\label{4.21}
	p_i(\begin{pmatrix}I_r&0&0\\&I_{m-2r}&x\\&&I_r\end{pmatrix}),\ x\in M_{(m-2r)\times r}.
	\end{equation}
	Note that 
	$$
	\mathbf{D}_i=\mathbf{C}_i\mathbf{X}'''_i.
	$$
	We have 
	$$
	\mathbf{B}_i=\mathbf{C}_i\mathbf{Y}_i'''=\mathbf{D}_i''.
	$$
	The set-up of Lemma 7.1 of \cite{GRS11} is satisfied. For example, the commutator of the element \eqref{4.21} of $\mathbf{X}'''_i$ with the element of $\mathbf{Y}_i$, satisfying \eqref{4.20}, is the element $e$ of $\mathbf{C}_i$, all of whose blocks above the diagonal are zero, except the block $x_i(e)$, which has the form
	$$
	x_i(e)=\begin{pmatrix}0&0&0\\0&xc&0\\0&0&0\end{pmatrix}.
	$$
	For such $e\in \mathbf{C}_i(\BA)$, $\psi_{\mathbf{B}_i}(e)=\psi(tr(xc))$. Denote by $\psi_{\mathbf{C}_i}$ the restriction of $\psi_{\mathbf{B}_i}$ to $\mathbf{C}_i(\BA)$. Then by Lemma 7.1 in \cite{GRS11}, we have the identity
	\begin{equation}\label{4.22}
	c''_i(\xi)(h)=\int_{\mathbf{Y}'''_i(\BA)}\int_{\mathbf{D}_i(F)\backslash \mathbf{D}_i(\BA)}\xi(vyh)\psi_{\mathbf{D}_i}^{-1}(v)dvdy,
	\end{equation}
	and hence, by \eqref{4.14},
	\begin{equation}\label{4.23}
	c_{i-1}(\xi)(h)=\int_{\mathbf{Y}_i(\BA)}\int_{\mathbf{D}_i(F)\backslash \mathbf{D}_i(\BA)}\xi(vyh)\psi_{\mathbf{D}_i}^{-1}(v)dvdy.
	\end{equation}
	This is \eqref{4.11}. By Corollary 7.1 in \cite{GRS11},
	$c''_i=0$ if and only if $c_i=0$. Thus, $c_{i-1}=0$ if and only if $c_i=0$.
\end{proof}
The elements of $\mathbf{D}_{n-1}$ have the form (see \eqref{4.7})
\begin{equation}\label{4.24}
v=p_1(e_1)p_2(e_2)\cdot...\cdot
p_{n-1}(e_{n-1})p_n(e)u,
\end{equation}
where, for $j\leq n-1$, $e_j$ is an element of the unipotent radical $V_{r,m-2r,r}$ of the parabolic subgroup $P_{r,m-2r,r}$ of $\GL_m$; $e=e(v,z)\in U_r^{H_m}$, $p_n(e)$ is as in \eqref{4.6}, and $u\in U_{m^{n-1}}$ is of the form \eqref{1.2} with $x_j^{2,1}(u)=0,
x_j^{3,1}(u)=0, x_j^{3,2}(u)=0$, for $j\leq n-1$. (Recall \eqref{1.5.1}.) Note that for $v\in
\mathbf{D}_{n-1}(\BA)$, as in \eqref{4.24},
\begin{equation}\label{4.25}
\psi_{\mathbf{D}_{n-1}}(v)=\psi(\sum_{i=1}^{n-1}(tr(x_i^{1,1}(u))+tr(x_i^{2,2}(u))+tr(x_i^{3,3}(u)))).
\end{equation}
From Prop. \ref{prop 4.1}, we conclude
\begin{cor}\label{cor 4.2}
	Let $A$ be an automorphic representation of $H(\BA)$. For an automorphic form $\xi$ in the space of $A$, we have, for any $1\leq r\leq [\frac{m}{2}]$,\\
	\\
	$\mathcal{F}_\psi^r(\xi)(h)=$
    $$
	\int_{\mathbf{Y}_1(\BA)}\cdots \int_{\mathbf{Y}_{n-1}(\BA)}\int_{\mathbf{D}_{n-1}(F)\backslash \mathbf{D}_{n-1}(\BA)}\xi(vy_{n-1}\cdots y_1h)\psi_{\mathbf{D}_{n-1}}^{-1}(v)dvdy_{n-1}\cdots dy_1.
	$$
Moreover, $\mathcal{F}_\psi^r=0$ if and
	only if, for all $\xi$ and all $h$, 
	\begin{equation}\label{4.26}
	c_{n-1}(\xi)(h)=\int_{\mathbf{D}_{n-1}(F)\backslash \mathbf{D}_{n-1}(\BA)}\xi(vh)\psi_{\mathbf{D}_{n-1}}^{-1}(v)dv=0.
	\end{equation}
\end{cor}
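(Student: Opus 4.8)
The statement is the fully iterated form of Proposition \ref{prop 4.1}, so the plan is simply to unwind that recursion. First I would record the base case: by definition $\mathbf{D}_0=V^{m,n,r}$ and $\psi_{\mathbf{D}_0}=\psi_{V^{m,n,r}}$, so comparing \eqref{4.1} with \eqref{4.10} gives $c_0(\xi)=\mathcal{F}_\psi^r(\xi)$ for every $\xi$ in the space of $A$. Note that \eqref{4.10} makes sense for an arbitrary automorphic representation $A$, since the integral there is over the compact quotient $\mathbf{D}_i(F)\backslash\mathbf{D}_i(\BA)$, and Proposition \ref{prop 4.1} is valid for any such $A$.

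Next I would iterate \eqref{4.11}. Applying it with $i=1$ gives $\mathcal{F}_\psi^r(\xi)(h)=c_0(\xi)(h)=\int_{\mathbf{Y}_1(\BA)}c_1(\xi)(y_1h)\,dy_1$. Since \eqref{4.11} is an identity of functions of the right-translation argument, it may be applied again with $i=2$ and argument $y_1h$, yielding $c_1(\xi)(y_1h)=\int_{\mathbf{Y}_2(\BA)}c_2(\xi)(y_2y_1h)\,dy_2$; substituting and continuing in this fashion down to $i=n-1$ produces
$$\mathcal{F}_\psi^r(\xi)(h)=\int_{\mathbf{Y}_1(\BA)}\cdots\int_{\mathbf{Y}_{n-1}(\BA)}c_{n-1}(\xi)(y_{n-1}\cdots y_1h)\,dy_{n-1}\cdots dy_1.$$
The nesting of the $\mathbf{Y}_i$-integrations comes out exactly as stated because at the $i$-th stage the new variable $y_i$ is inserted immediately to the left of $y_{i-1}\cdots y_1h$. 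Finally, unfolding $c_{n-1}(\xi)$ through its defining integral \eqref{4.26} over $\mathbf{D}_{n-1}(F)\backslash\mathbf{D}_{n-1}(\BA)$ gives the displayed formula of the corollary.

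For the vanishing assertion I would chain the equivalences in Proposition \ref{prop 4.1}: for each $1\le i\le n-1$ we have $c_{i-1}=0$ (identically in $\xi$ and $h$) if and only if $c_i=0$, again because the proposition holds for any automorphic representation of $H(\BA)$. Composing these from $i=1$ to $i=n-1$ shows that $\mathcal{F}_\psi^r=c_0=0$ on the space of $A$ precisely when $c_{n-1}=0$, i.e.\ precisely when \eqref{4.26} holds.

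I do not expect a genuine obstacle here: all the real work is contained in Proposition \ref{prop 4.1}, and the corollary only records its $(n-1)$-fold iterate. The two points deserving a sentence of care are that \eqref{4.11} may legitimately be iterated — which is immediate since it is an equality of automorphic functions in the argument, valid for every value of that argument — and that the resulting multiple integral converges and can be arranged in the indicated order; the latter is inherited from the repeated applications of Lemma 7.1 of \cite{GRS11} used to prove Proposition \ref{prop 4.1}.
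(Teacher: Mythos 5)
Your proposal is correct and is exactly the argument the paper intends: the corollary is stated as an immediate consequence of Proposition \ref{prop 4.1}, obtained by noting $c_0=\mathcal{F}_\psi^r$ (since $\mathbf{D}_0=V^{m,n,r}$) and iterating the identity \eqref{4.11} together with the chain of equivalences $c_{i-1}=0\Leftrightarrow c_i=0$ from $i=1$ to $i=n-1$. Nothing further is needed.
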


\section{ A conjugation by a Weyl element}

We will conjugate the integration inside \eqref{4.26} by the
following product of Weyl elements $w_0=w'_0w''_0$. The element $w''_0$ is easy to describe,
\begin{equation}\label{6.0}
w''_0=diag(I_{m(n-1)+r},\begin{pmatrix}&I_{[\frac{m}{2}]}\\I_{[\frac{m}{2}]-r}\end{pmatrix},I_{2m-4[\frac{m}{2}]},\begin{pmatrix}&I_{[\frac{m}{2}]-r}\\I_{[\frac{m}{2}]}\end{pmatrix}, I_{m(n-1)+r}).
\end{equation}
Let $\omega_0=I_{2mn}$, when $H$ is symplectic, and when $H$ is orthogonal,
\begin{equation}\label{6.0'}
\omega_0=diag(I_{mn-1},\begin{pmatrix}&1\\1\end{pmatrix},I_{mn-1}).
\end{equation}
The element $w'_0$ has the following form
\begin{equation}\label{6.1}
w'_0=\begin{pmatrix}A_1&0&A_2\\A_3&0&0\\0&I_{2(m-r)}&0\\0&0&A_4\\ A_5&0&A_6\end{pmatrix}\omega_0^{r(n-1)}.
\end{equation}
The block $A_1$ has the following form. It has $2n-1$ block rows,
each one of size $r$; the last $n-1$ block rows are all zero. It has
$n$ block columns, the first $n-1$ of which are each of size $m$,
and the last one is of size $r$.
\begin{equation}\label{6.2}
A_1=\begin{pmatrix}a\\&a\\&&a\\&&&\cdots\\&&&&a\\&&&&&I_r\\0&&&\cdots
&&0\\ \vdots &&&&&\vdots\\ 0&&&\cdots &&0\end{pmatrix},\quad
a=\begin{pmatrix}I_r&0_{r\times (m-2r)}&0_{r\times r}\end{pmatrix}.
\end{equation}
The block $A_2$ has the same block row division as $A_1$; its first
$n$ block rows are zero. It has $n$ block columns, the first one is
of size $r$ and the next $n-1$ ones are of size $m$ each.
\begin{equation}\label{6.3'}
A_2=\begin{pmatrix}0&&&&0\\ \vdots
&&&&\vdots\\0&a&&&0\\&&a\\&&&\cdots
\\&&&&a\end{pmatrix}.
\end{equation}
The block $A_3$ has $n-1$ block rows, each one of size $m-2r$. It has the same block column division as $A_1$.
\begin{equation}\label{6.3''}
A_3=\begin{pmatrix} b\\&b\\&&b\\&&&\cdots\\&&&&b&0_{(m-2r)\times r}\end{pmatrix},
\quad b=\begin{pmatrix}0_{(m-2r)\times r}&I_{m-2r}&0_{(m-2r)\times r}\end{pmatrix}.
\end{equation}
The blocks $A_4, A_5, A_6$ are already determined by $A_1, A_2, A_3$. For example, the block $A_4$ has the same block row division as $A_3$ and the same block column division as $A_2$.
\begin{equation}\label{6.3'''}
A_4=\begin{pmatrix}0_{(m-2r)\times r}& b\\&&b\\&&&b\\&&&&\cdots\\&&&&&b\end{pmatrix}.
\end{equation}
Let us describe the subgroup
$\mathcal{D}_{n,m,r}=w_0\mathbf{D}_{n-1}w_0^{-1}$. It consists
of elements of the form
\begin{equation}\label{6.3}
v=\begin{pmatrix}U&X_{1,2}&X_{1,3}\\Y_{2,1}&V&X'_{1,2}\\Y_{3,1}&Y'_{2,1}&U'\end{pmatrix}\in
H,
\end{equation}
where $U$ is a $(2n-1)r\times (2n-1)r$ unipotent matrix lying in the
unipotent radical $V_{r^{2n-1}}$ of $P_{r^{2n-1}}\subset \GL_{(2n-1)r}$, 
\begin{equation}\label{6.4}
U=\begin{pmatrix}I_r&U_1&*&\cdots&*&*\\
&I_r&U_2&\cdots&*&*\\
& &I_r&\cdots &* & * \\
& & & \cdots& \cdots&\cdots &\\
& & &       &I_r&U_{2n-2}\\
& & &       & &I_r\end{pmatrix}.
\end{equation}
The block $U'$ is of the same size as $U$ and has the form
\eqref{6.4}. The block $V$ is of size $(2(m-2r)(n-1)+2(m-r))\times
(2(m-2r)(n-1)+2(m-r))$ and has the form 
\begin{equation}\label{6.5}
V=\begin{pmatrix}
S&E&C\\
&I_{2(m-r)}&E'\\
& &S'\end{pmatrix},
\end{equation}
where $S$ is $(m-2r)(n-1)\times (m-2r)(n-1)$ upper unipotent matrix of the
form
\begin{equation}\label{6.5.1}
S=\begin{pmatrix}I_{m-2r}&S_1&*&\cdots&*&*\\
&I_{m-2r}&S_2&\cdots&*&*\\
& &I_{m-2r}&\cdots &* & * \\
& & & \cdots& \cdots&\cdots &\\
& & &       &I_{m-2r}&S_{n-2}\\
& & &       & &I_{m-2r}\end{pmatrix}.
\end{equation}
$S'$ has a similar form to that of $S$. In order to describe later the
character obtained as the result of conjugating $\psi_{\mathbf{D}_{n-1}}$
by $w_0$, we will need the following notation. Write the
$(m-2r)(n-1)\times
2(m-r)$ matrix $E$ in \eqref{6.5} in the form $E=\begin{pmatrix}*\\
Y\end{pmatrix}$, with $Y$ having size $(m-2r)\times 2(m-r)$. Next,
write 
\begin{equation}\label{6.5.2}
Y=(Y_1,S_{n-1},Y_3), 
\end{equation}
with $Y_1$ and $Y_3$ of size $(m-2r)\times[\frac{m}{2}]$ and $S_{n-1}$ of size $(m-2r)\times2([\frac{m+1}{2}]-r)$. We continue with the description of $v$
in \eqref{6.3}.

Write the block $X_{1,3}$ (which is a matrix of size $(2n-1)r\times
(2n-1)r$) as a $(2n-1)\times (2n-1)$ matrix of blocks
$X_{1,3}^{(i,j)}$, each one of size $r\times r$ ($1\leq i,j\leq
2n-1$). Then $X_{1,3}^{(i,j)}=0$, for all $i>j$. Thus, $X_{1,3}$ has
an upper triangular shape as a $(2n-1)\times (2n-1)$ matrix of
$r\times r$ blocks.

The form of the block $X_{1,2}$ in \eqref{6.3} can be described as
follows. We  will write it as a block matrix. It has $2n-1$ block
rows, each one of size $r$ (i.e. each one contains $r$ rows). It has $2n+1$ block columns. The first and last
$n-1$ block columns are each of size $m-2r$ (i.e. each such block contains $m-2r$ columns).
The first and third of the middle three block columns are of size $[\frac{m}{2}]$, and
the second has $2([\frac{m+1}{2}]-r)$ columns. Denote by $\widetilde{X}_{1,2}$ the matrix obtained from
$X_{1,2}$ by deleting block column number $n$ and block column
number $n+2$. Each one of them contains $[\frac{m}{2}]$ columns. Then
$\widetilde{X}_{1,2}$ is a matrix of size $(2n-1)r\times (2n-1)(m-2r)$.
Write it as a $(2n-1)\times (2n-1)$ block matrix, where each block
is of size $r\times (m-2r)$, except the blocks of the middle column, which are each of size $r\times 2([\frac{m+1}{2}]-r)$. Then $\widetilde{X}_{1,2}$ has an upper
triangular shape. That is, if we denote the block in position
$(i,j)$, $1\leq i,j\leq 2n-1$, by $\widetilde{X}_{1,2}^{(i,j)}$, then $\widetilde{X}_{1,2}^{(i,j)}=0$, whenever
$i>j$. The two above columns deleted from $X_{1,2}$ have the same shape. If we write each
of these two as a column of $2n-1$ matrices of size $r\times
[\frac{m}{2}]$, then the last $n$ matrices are all zero. We also have to note the following when $m=2m'-1$ is odd. In this case, the block of $X_{1,2}$ in position $(n,n+1)$, which is a matrix of size $r\times 2(m'-r)$ has the form, 
\begin{equation}\label{6.5.2.1}
\begin{pmatrix}v_1,(\frac{1}{2}v_2,v_2)w_2^{r(n-1)},v_3\end{pmatrix}, 
\end{equation}
where $v_1,v_3$ are matrices of size $r\times (m'-1-r)$, and $v_2$ is a  column of $r$ coordinates. This originates from $v$ in $e=e(v,z)$ in \eqref{4.24}. For example, when $n=4$, $X_{1,2}$ has the following form
\begin{equation}\label{6.5.3}
X_{1,2}=\begin{pmatrix}\ast&\ast&\ast&\ast&\ast&\ast&\ast&\ast&\ast\\
0&\ast&\ast&\ast&\ast&\ast&\ast&\ast&\ast\\
0&0&\ast&\ast&\ast&\ast&\ast&\ast&\ast\\
0&0&0&0&\ast&0&\ast&\ast&\ast\\0&0&0&0&0&0&\ast&\ast&\ast\\
0&0&0&0&0&0&0&\ast&\ast\\0&0&0&0&0&0&0&0&\ast\end{pmatrix}.
\end{equation}
In this example, all blocks are of size $r\times (m-2r)$ except the
blocks in columns 4, 5, 6. The blocks in columns 4, 6 are each of size $r\times [\frac{m}{2}]$. The blocks in column 5 are each of size $r\times 2([\frac{m+1}{2}]-r)$.
 
The block $X_{1,2}'$ has a form "dual" to that of $X_{1,2}$. It has $2n-1$
block columns, each one of size $r$, and it has $2n+1$ block rows,
the first and last $n-1$ block rows, each of which of size $m-2r$, and
the three middle block rows which are of sizes $[\frac{m}{2}],2([\frac{m+1}{2}]-r),[\frac{m}{2}]$. When we delete
rows number $n, n+2$, we obtain a $(2n-1)\times (2n-1)$ block matrix $\tilde{X}'_{1,2}$,
where each of its blocks is of size $(m-2r)\times r$, except the blocks of the middle row, which are each of size $r\times 2([\frac{m+1}{2}]-r)$.  Then $\tilde{X}'_{1,2}$ has an upper triangular shape. The rows number $n, n+2$ of $X_{1,2}'$ are rows of blocks of size
$[\frac{m}{2}]\times r$, and their first $n$ blocks are all zero. Again, when $m=2m'-1$ is odd, the block of $X'_{1,2}$ in position $(n+1,n)$ has the form, 
\begin{equation}\label{6.5.3.1}
-\begin{pmatrix}w_{m'-1-r}{}^tv_3w_r\\w_2^{r(n-1)}\begin{pmatrix}{}^tv_2w_r\\ \frac{1}{2}{}^tv_2w_r\end{pmatrix}\\w_{m'-1-r}{}^tv_1w_r\end{pmatrix},
\end{equation}
where $v_1,v_2,v_3$ are as in \eqref{6.5.2.1}. For example, when $n=4$, $X'_{1,2}$ has the form
\begin{equation}\label{6.5.4}
X'_{1,2}=\begin{pmatrix}\ast&\ast&\ast&\ast&\ast&\ast&\ast\\
0&\ast&\ast&\ast&\ast&\ast&\ast\\
0&0&\ast&\ast&\ast&\ast&\ast\\
0&0&0&0&\ast&\ast&\ast\\
0&0&0&\ast&\ast&\ast&\ast\\
0&0&0&0&\ast&\ast&\ast\\
0&0&0&0&\ast&\ast&\ast\\
0&0&0&0&0&\ast&\ast\\
0&0&0&0&0&0&\ast\end{pmatrix}. 
\end{equation}
In this example, all blocks are of size $(m-2r)\times r$ except the
blocks in rows 4, 5, 6. The blocks in rows 4, 6 are each of size $[\frac{m}{2}]\times r$. The blocks in row 5 are each of size $2([\frac{m+1}{2}]-r)\times r$.

Write the block $Y_{3,1}$ (which is a matrix of size $(2n-1)r\times
(2n-1)r$) as a $(2n-1)\times (2n-1)$ matrix of blocks
$Y_{3,1}^{(i,j)}$, each one of size $r\times r$ ($1\leq i,j\leq
2n-1$). Then $Y_{3,1}^{(i,j)}=0$, for all $i\geq j-1$. Thus, as a
$(2n-1)\times (2n-1)$ matrix of $r\times r$ blocks, $Y_{3,1}$ has an
upper triangular shape, with zero blocks on the diagonal and also on
the second upper diagonal.

The block $Y_{2,1}$ has the same block division as that of $X'_{1,2}$, that is $2n-1$ block columns, each one of size $r$. It has $2n+1$ block rows, the size of the first and last
$n-1$ block rows is $m-2r$ and the sizes of
the three middle block rows are $[\frac{m}{2}],2([\frac{m+1}{2}]-r),[\frac{m}{2}]$. Denote by
$\widetilde{Y}_{2,1}$ the matrix obtained from $Y_{2,1}$ by deleting
block rows number $n, n+2$ (each one is of size $[\frac{m}{2}]$). Then
$\widetilde{Y}_{2,1}$ has the same block division as that of $\tilde{X}_{1,2}'$, and it has a shape of an upper triangular matrix of blocks, where all the diagonal blocks are zero, and when $m$ is even, also the blocks in the second upper diagonal are zero. When $m=2m'-1$ is odd, the blocks in the second upper diagonal of $\tilde{Y}_{2,1}$ are zero, except the middle block in row $n$ and column $n+1$, which has the following form,
 \begin{equation}\label{6.6}
\begin{pmatrix}0_{(m'-1-r)\times r}\\w_2^{r(n-1)}\begin{pmatrix}x\\-\frac{1}{2}x\end{pmatrix}\\0_{(m'-1-r)\times r}\end{pmatrix},
\end{equation}
where $x$ is a row of $r$ coordinates. The rows number $n, n+2$ of $Y_{2,1}$ are rows of blocks of size $[\frac{m}{2}]\times r$, and their first $n$ blocks are all zero. This originates from the condition that $x_{n-1}(u)$ in \eqref{4.24} has an upper triangular shape. (See \eqref{1.5.1}, \eqref{4.4}.) When $n=4$, and $m$ is even, $Y_{2,1}$ has the following form
\begin{equation}\label{6.6.0}
Y_{2,1}=\begin{pmatrix}0&0&\ast&\ast&\ast&\ast&\ast\\
0&0&0&\ast&\ast&\ast&\ast\\
0&0&0&0&\ast&\ast&\ast\\
0&0&0&0&\ast&\ast&\ast\\
0&0&0&0&0&\ast&\ast\\
0&0&0&0&\ast&\ast&\ast\\
0&0&0&0&0&0&\ast\\
0&0&0&0&0&0&0\\
0&0&0&0&0&0&0\end{pmatrix}. 
\end{equation}
In this example, all blocks are of size $(m-2r)\times r$ except the
blocks in rows 4, 6. The blocks in rows 4, 6 are each of size $[\frac{m}{2}]\times r$. The blocks in row 5 are each of size $2([\frac{m+1}{2}]-r)\times r$. For $n=4$ and $m$ odd, we have to replace the zero block in position $(5,5)$ by a block of the form \eqref{6.6}.

Finally, the block $Y_{2,1}'$ has a form "dual" to that of $Y_{2,1}$. It has the same block division as that of $X_{1,2}$, that is
$2n-1$ block rows, each one of size $r$. It has $2n+1$ block
columns, the first and last $n-1$ columns are of size
$m-2r$ each and the three middle columns are of sizes $[\frac{m}{2}],2([\frac{m+1}{2}]-r),[\frac{m}{2}]$.
When we delete columns number $n, n+2$, we obtain
$\widetilde{Y'}_{2,1}$, a $(2n-1)\times (2n-1)$ matrix of blocks, where each block
is of size $r\times (m-2r)$, except the blocks of the middle column, which are each of size $r\times 2([\frac{m+1}{2}]-r)$. Then $\widetilde{Y}'_{2,1}$ has a shape of an upper triangular matrix of blocks, where all the diagonal blocks are zero, and when $m$ is even, also the blocks in the second upper diagonal are zero. When $m=2m'-1$ is odd, the blocks in the second upper diagonal of $\tilde{Y}_{2,1}'$ are zero, except the middle block in row $n$ and column $n+1$, which has the form,
\begin{equation}\label{6.6.1}
(0_{r\times (m'-1-r)},(-\frac{1}{2}w_r{}^tx,w_r{}^tx)w_2^{r(n-1)},0_{r\times (m'-1-r)}),
\end{equation}
where $x$ is as in \eqref{6.6}. The columns number $n, n+2$ of $Y'_{2,1}$ are columns of blocks of size $r\times [\frac{m}{2}]$, and their last $n$ blocks are all zero. For example, for $n=4$, and $m$ even,
\begin{equation}\label{6.6.2}
Y'_{2,1}=\begin{pmatrix}0&0&\ast&\ast&\ast&\ast&\ast&\ast&\ast\\
0&0&0&\ast&\ast&\ast&\ast&\ast&\ast\\
0&0&0&\ast&0&\ast&\ast&\ast&\ast\\
0&0&0&0&0&0&0&\ast&\ast\\0&0&0&0&0&0&0&0&\ast\\
0&0&0&0&0&0&0&0&0\\0&0&0&0&0&0&0&0&0\end{pmatrix}.
\end{equation}
In this example, all blocks are of size $r\times (m-2r)$ except the
blocks in columns 4, 5, 6. The blocks in columns 4, 5, 6 are each of sizes $r\times [\frac{m}{2}], r\times 2([\frac{m+1}{2}]-r), r\times [\frac{m}{2}]$. When $n=4$ and $m$ odd, we have to replace the zero block in position $(3,5)$ by a block of the form \eqref{6.6.1}.
Let $\psi_{\mathcal{D}_{n,m,r}}$ be the character of
$\mathcal{D}_{n,m,r}(\BA)$ defined by
$$
\psi_{\mathcal{D}_{n,m,r}}(v)=\psi_{\mathbf{D}_{n-1}}(w_0^{-1}vw_0).
$$
Then in the notation of \eqref{6.3}-\eqref{6.5.2}, we have, for $v\in
\mathcal{D}_{n,m,r}(\BA)$,
\begin{equation}\label{6.7.1}
\psi_{\mathcal{D}_{n,m,r}}(v)=\prod_{i=1}^{n-1}\psi(tr(U_i)\psi^{-1}(tr(U_{n-1+i}))\prod_{i=1}^{n-2}\psi(tr(S_i))\psi(tr(S_{n-1}A'_H)),
\end{equation}
where $A'_H=I_{m-2r}$, when $H_{2nm}$ is symplectic. When $H_{2nm}$ is orthogonal, and $m=2m'$ is even,
\begin{equation}\label{6.7.1.1}
A'_H=\begin{pmatrix} I_{m'-r-1}\\&w_2^{r(n-1)}\\&&I_{m'-r-1}\end{pmatrix}.
\end{equation}
(If $2r=m$, then we ignore $\psi(tr(S_{n-1}A'_H))$ in \eqref{6.7.1}.) 
When $m=2m'-1$ is odd, 
\begin{equation}\label{6.7.2}
A'_H=\begin{pmatrix}I_{m'-1-r}\\&w_2^{r(n-1)}\begin{pmatrix}1\\\frac{1}{2}\end{pmatrix}\\&&I_{m'-1-r}\end{pmatrix}.
\end{equation}
Carrying out in \eqref{4.26} the conjugation by $w_0$, we get
\begin{equation}\label{6.7.3}
c_{n-1}(\xi)(h)=\int_{\mathcal{D}_{n,m,r}(F)\backslash \mathcal{D}_{n,m,r}(\BA)}\xi(vw_0h)\psi_{\mathcal{D}_{n,m,r}}^{-1}(v)dv.
\end{equation}
Let us denote, for $\xi$ in the space of an automorphic representation $A$ of $H(\BA)$, and $1\leq r\leq [\frac{m}{2}]$,
\begin{equation}\label{6.7}
\epsilon_{n,m,r}(\xi)(h)=\int_{\mathcal{D}_{n,m,r}(F)\backslash
	\mathcal{D}_{n,m,r}(\BA)}\xi(vh)\psi_{\mathcal{D}_{n,m,r}}^{-1}(v)dv=c_{n-1}(\xi)(w_0^{-1}h).
\end{equation}
Then Cor. \ref{cor 4.2} says that $\mathcal{F}_\psi^r$ is zero on $A$ if and only if $\epsilon_{n,m,r}$ is zero on $A$. Note that
\begin{equation}\label{6.7.4}
\mathcal{F}_\psi^r(\xi)(h)=\int_{\mathbf{Y}_1(\BA)}\cdots \int_{\mathbf{Y}_{n-1}(\BA)}\epsilon_{n,m,r}(\xi)(w_0y_{n-1}\cdots y_1h)dy_{n-1}\cdots dy_1.
\end{equation}
Recall that one of our goals is to analyze $\mathcal{F}_\psi^r(\xi)$, for $\xi$ varying in $A(\Delta(\tau,m)\gamma_\psi^{(\epsilon)},\eta,k)$, and, in particular, find conditions that it is zero. Recall that $\mathcal{F}_\psi^r(\xi)$ computes the constant term along $U_r^{H_m}\times 1$ of automorphic functions in $\mathcal{E}_k(\Delta(\tau,m)\gamma_\psi^{(\epsilon)},\eta)$. 
Our first major step towards this goal will be to prove the following theorem. Denote
\begin{equation}\label{6.8}
\mathrm{N}=\left
\{\begin{pmatrix}U&0&0\\0&V&0\\0&0&U^*\end{pmatrix}\in
\mathcal{D}_{n,m,r},\quad \textrm{of the form}
\eqref{6.3}-\eqref{6.5.1}\right\},
\end{equation}
and let $\psi_N$ be the restriction of $\psi_{\mathcal{D}_{n,m,r}}$
to $\mathrm{N}(\BA)$. Denote
\begin{equation}\label{6.8'}
\mathcal{D'}_{n,m,r}=\mathrm{N}\widetilde{\mathrm{X}},
\end{equation}
where $\widetilde{\mathrm{X}}$ is the unipotent subgroup consisting
of the matrices
\begin{equation}\label{6.8''}
\begin{pmatrix}I_{(2n-1)r}&X&C\\&I_{2n(m-2r)+2r}&X'\\&&I_{(2n-1)r}\end{pmatrix}\in
H_{2nm},
\end{equation}
where if we write $X$ as a matrix with $2n-1$ block rows, of size
$r$ each, then its last block row, when regarded as a matrix of size
$r\times (2(n-1)(m-2r)+2(m-r))$, is such that all its first $(2n-3)(m-2r)+2(m-r)$
columns are zero. Now, regard $X$ as a matrix with $2n-1$ block columns, according
to the block column division of $S, S^*$ in \eqref{6.5} and of $Y$ is \eqref{6.5.2}. Thus, the first and last $n-1$ block columns
of $X$ are of size $m-2r$ each, and the middle three columns are of sizes $[\frac{m}{2}], 2([\frac{m+1}{2}]-r), [\frac{m}{2}]$, respectively.
Then, for the definition of $\widetilde{\mathrm{X}}$, we require that the matrices $X$ in \eqref{6.8''} are such that in the last
block row of $X$, all block columns are zero, except the last one.
$\mathcal{D'}_{n,m,r}$ is an $F$-unipotent
subgroup, and the extension of $\psi_N$ to
$\mathcal{D'}_{n,m,r}(\BA)$ by the trivial character of
$\widetilde{\mathrm{X}}(\BA)$ is a character
$\psi_{\mathcal{D'}_{n,m,r}}$ of $\mathcal{D'}_{n,m,r}(\BA)$,
trivial on $\mathcal{D'}_{n,m,r}(F)$. 
Then our first major step
mentioned before is
\begin{thm}\label{thm 6.1}
The Fourier coefficient $\epsilon_{n,m,r}$ is identically zero on
$A(\Delta(\tau,m)\gamma_\psi^{(\epsilon)},\eta,k)$ if and only if
$\epsilon'_{n,m,r}$ is identically zero on
$A(\Delta(\tau,m)\gamma_\psi^{(\epsilon)},\eta,k)$, where
$$
\epsilon'_{n,m,r}(\xi)(h)=\int_{\mathcal{D'}_{n,m,r}(F)\backslash
\mathcal{D'}_{n,m,r}(\BA)}\xi(vh)\psi_{\mathcal{D'}_{n,m,r}}^{-1}(v)dv.
$$
Moreover, there is a unipotent subgroup $\mathrm{Y}$ of $H$ (to be specified in the proof), such that 
$$
\epsilon_{n,m,r}(\xi)(h)=\int_{\mathrm{Y}(\BA)}\epsilon'_{n,m,r}(\xi)(yh)dy.
$$ 
\end{thm}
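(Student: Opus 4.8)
\emph{Proof plan.} The plan is to prove Theorem \ref{thm 6.1} by iterating the root‑exchange lemma, Lemma 7.1 of \cite{GRS11}, together with its Corollary 7.1. Comparing \eqref{6.3}--\eqref{6.6.2} with \eqref{6.8}--\eqref{6.8''}, the group $\mathcal{D}_{n,m,r}$ differs from $\mathcal{D'}_{n,m,r}=\mathrm{N}\widetilde{\mathrm{X}}$ only in that (i) it carries the below‑diagonal blocks $Y_{3,1}$, $Y_{2,1}$, $Y'_{2,1}$, and (ii) its above‑diagonal part consists of the staircase‑shaped blocks $X_{1,2}$, $X_{1,3}$, $X'_{1,2}$ rather than of the fuller group $\widetilde{\mathrm{X}}$. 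I would remove the below‑diagonal roots one ``layer'' at a time by exchanging them, through the symplectic (resp. orthogonal) form of $H$, with the complementary above‑diagonal roots; the roots gained in this way are exactly those needed to complete $X_{1,2}$, $X_{1,3}$, $X'_{1,2}$ to $\widetilde{\mathrm{X}}$. The feature making the hypotheses of Lemma 7.1 easy to check at every step is that, by \eqref{6.7.1}, the character $\psi_{\mathcal{D}_{n,m,r}}$ depends only on the superdiagonal $r\times r$ blocks $U_i$ of $U$, $U^*$ and the superdiagonal $(m-2r)\times(m-2r)$ blocks $S_i$ of $S$, $S'$ (with $S_{n-1}$ twisted by $A'_H$), and none of these coordinates is touched by an exchange of off‑diagonal roots.

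Concretely, I would process the blocks in order of their depth below the main block‑diagonal: first $Y_{3,1}$ (the deepest), then the second layer $Y_{2,1}$, $Y'_{2,1}$, and within each block one block column at a time. At a given step let $\mathbf{B}$ be the group currently sitting inside the Fourier coefficient and $\psi_{\mathbf{B}}$ its character; write $\mathbf{B}=\mathbf{C}\mathbf{Y}$, where $\mathbf{Y}$ is the root subgroup filling the column of the $Y$‑block being treated and $\mathbf{C}$ is the subgroup of $\mathbf{B}$ on which that column vanishes, and let $\mathbf{X}$ be the dual above‑diagonal root subgroup, so that $\mathbf{D}=\mathbf{C}\mathbf{X}$ is again a group. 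One then checks, exactly as in the three exchanges of the proof of Proposition \ref{prop 4.1}: that $[\mathbf{X},\mathbf{Y}]\subseteq\mathbf{C}$ and the pairing $(x,y)\mapsto\psi_{\mathbf{B}}([x,y])$ on $\mathbf{X}(\BA)\times\mathbf{Y}(\BA)$ is the nondegenerate trace pairing $\psi(\tr(\cdot\,\cdot))$ of a block with another (cf.\ \eqref{4.14}); and that the restriction of $\psi_{\mathbf{B}}$ to $\mathbf{C}(\BA)$ is invariant under $\mathbf{X}(\BA)$ and $\mathbf{Y}(\BA)$ (any commutator they produce lies among off‑diagonal roots, hence in the kernel of $\psi_{\mathbf{B}}|_{\mathbf{C}}$). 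Lemma 7.1 then gives $\int_{\mathbf{B}(F)\backslash\mathbf{B}(\BA)}\xi(vh)\psi_{\mathbf{B}}^{-1}(v)dv=\int_{\mathbf{Y}(\BA)}\int_{\mathbf{D}(F)\backslash\mathbf{D}(\BA)}\xi(vyh)\psi_{\mathbf{D}}^{-1}(v)dv\,dy$, and Corollary 7.1 that the two Fourier coefficients vanish identically together. In the odd case $m=2m'-1$ one must in addition treat the block rows/columns numbered $n$ and $n+2$ and the distinguished row‑$n$, column‑$(n+1)$ blocks of the shapes \eqref{6.6}, \eqref{6.6.1}, \eqref{6.5.2.1}, which forces carrying along the half‑integer correction coordinates; this is bookkeeping of the same nature. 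After all below‑diagonal roots have been exchanged, the group inside the Fourier coefficient has become $\mathrm{N}$ together with a fuller above‑diagonal part; the only above‑diagonal roots that cannot be produced are those in the last block row of $X$ outside its last block column, since their would‑be exchange partners lie inside $\mathrm{N}$, where $\psi_{\mathcal{D}_{n,m,r}}$ is not trivial on the relevant commutator --- and this is precisely the restriction built into the definition of $\widetilde{\mathrm{X}}$ in \eqref{6.8''}, so the final group is exactly $\mathcal{D'}_{n,m,r}$.

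Composing the Lemma 7.1 identities over all steps yields $\epsilon_{n,m,r}(\xi)(h)=\int_{\mathrm{Y}(\BA)}\epsilon'_{n,m,r}(\xi)(yh)\,dy$, where $\mathrm{Y}$ is the unipotent subgroup of $H$ with $\mathrm{Y}(\BA)$ the ordered product of the exchanged‑out root subgroups; composing the Corollary 7.1 equivalences yields that $\epsilon_{n,m,r}$ and $\epsilon'_{n,m,r}$ are identically zero on $A(\Delta(\tau,m)\gamma_\psi^{(\epsilon)},\eta,k)$ simultaneously, and, as the argument uses nothing about this space beyond its being a space of automorphic forms, the same holds for any automorphic representation of $H(\BA)$. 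The part that requires real care --- and is the main obstacle --- is the combinatorial bookkeeping: choosing the order of the exchanges so that at every step the roots already introduced sit inside $\mathbf{C}$ and disturb neither $[\mathbf{X},\mathbf{Y}]\subseteq\mathbf{C}$ nor the $\mathbf{X}$‑, $\mathbf{Y}$‑invariance of $\psi_{\mathbf{B}}|_{\mathbf{C}}$, and doing this uniformly across the symplectic, metaplectic and orthogonal cases, the odd‑$m$ orthogonal case (the odd‑size middle block, and the $\tfrac12$'s in \eqref{1.8}, \eqref{6.6}, \eqref{6.6.1}) being the most delicate.
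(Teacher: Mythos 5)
Your plan correctly identifies iterated root exchange (Lemma 7.1 and Corollary 7.1 of \cite{GRS11}) as the engine of the argument, and the first stage of the paper's proof (Propositions \ref{prop 7.1}--\ref{prop 7.3}, and later \ref{prop 8.2}, \ref{prop 8.7}) is indeed exactly the layered exchange of the columns of $Y_{2,1}$, $Y_{3,1}$ against rows of $X_{1,2}$, $X_{1,3}$ that you describe. But there is a genuine gap: root exchange alone cannot produce all of $\widetilde{\mathrm{X}}$, because the available $Y$-partners are too small. The blocks of $Y_{3,1}$ and $Y_{2,1}$ sitting in the ``self-dual'' positions $(i,2n-i)$ (resp.\ the analogous positions forced by membership in $H$) are constrained to be symmetric or anti-symmetric $r\times r$ matrices, whereas the dual $X$-blocks they must fill are all of $M_{r\times r}$; e.g.\ $\mathrm{Y}_{3,1}^{n-1,n+1}\cong S_r$ while $\mathrm{X}_{1,3}^{n,n-1}\cong M_{r\times r}$. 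The exchange therefore only yields the half-sized subgroup $\mathfrak{X}_{1,3}^{n,n-1}$ (and likewise $\mathfrak{X}_{1,3}^{n+1,n-2}$, $\mathfrak{X}_{1,3}^{j,2n-j-1}$, and the blocks $\mathrm{X}_{1,3}^{n+1,n-1}$, $\mathrm{X}_{1,3}^{j,2n-j}$ of Propositions \ref{prop 8.4}, \ref{prop 8.5} have no $Y$-partner at all). Filling in the complementary coordinates is the content of Propositions \ref{prop 8.1}, \ref{prop 8.4}, \ref{prop 8.5}, \ref{prop 8.6}: one Fourier-expands along the missing compact abelian quotient and shows that every nontrivial Fourier coefficient vanishes because it is attached to a nilpotent orbit with partition of the form $((2n+1)^{\ell},\dots)$, $((2n+2)^{\ell},\dots)$, $((2j)^{\ell},\dots)$ or $((2j+1)^{\ell},\dots)$ with $\ell>0$ and $j>n$, which is not majorized by the bounds of Propositions \ref{prop 3.1}, \ref{prop 3.2}. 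This is not bookkeeping but a separate analytic input, and it is exactly where the structure of $A(\Delta(\tau,m)\gamma_\psi^{(\epsilon)},\eta,k)$ as a residual/leading-term module enters (the paper flags this explicitly: it is ``the first time that we use special properties'' of that space).

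For the same reason your closing claim that the identity holds ``for any automorphic representation of $H(\BA)$'' is false for Theorem \ref{thm 6.1} as stated. Only the reduction up to Proposition \ref{prop 7.3} (i.e.\ down to $\varepsilon_{3,1}^{n-1,n+1}$, with the restricted group $\mathfrak{X}_{1,3}(n,n-1)$) is valid for arbitrary automorphic representations; the passage from there to $\mathcal{D}'_{n,m,r}$ genuinely requires the vanishing statements above. To repair the proposal you would need to (i) identify precisely which $X$-roots have no exchange partner of matching dimension, and (ii) supply, for each, the Fourier-expansion-plus-orbit-majorization argument of Propositions \ref{prop 8.1} and \ref{prop 8.4}--\ref{prop 8.6}, keeping track (as the paper does in \eqref{8.13}, \eqref{8.21}, \eqref{8.27.2}) of the nilpotent element and one-parameter subgroup to which each would-be nontrivial coefficient is attached.
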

The proof of this theorem will be concluded in the end of Sec. 7.
\bigskip

As a preparation for the following sections, we need to set
up more notation. Denote
\begin{equation}\label{6.9}
\mathcal{X}=\left\{\begin{pmatrix}I_{(2n-1)r}&X_{1,2}&X_{1,3}\\&I_{2n(m-2r)+2r}&X'_{1,2}\\
&&I_{(2n-1)r}\end{pmatrix}\in H_{2nm}\right\}=U_{(2n-1)r}.
\end{equation}
$$
\mathrm{X}=\mathcal{X}\cap \mathcal{D}_{n,m,r}.
$$
We will write the matrices $X_{1,3}, X_{1,2}$ as matrices of blocks
with the same block division as above when we described the elements
of $\mathcal{D}_{n,m,r}$. Thus, we regard $X_{1,3}$ as a
$(2n-1)\times (2n-1)$ matrix of $r\times r$ blocks, and, as we did in \eqref{6.8''}, we regard $X_{1,2}$ as
a matrix with $2n-1$ block rows, each one of size $r$, and $2n+1$
block columns, so that the first $n-1$ as well as the last $n-1$
columns are of size $m-2r$ each, and the middle three columns are of
sizes $[\frac{m}{2}], 2([\frac{m+1}{2}]-r), [\frac{m}{2}]$. We will denote the block of $X_{1,3}$ in position $(i,j)$ by $X_{1,3}^{(i,j)}$, and similarly for $X_{1,2}$ and
$X'_{1,2}$. For $1\leq i,j\leq 2n-1$, denote by
$\mathrm{X}_{1,3}^{i,j}$ the subgroup of $\mathcal{X}$ given by the
elements of the form \eqref{6.9}, with $X_{1,2}=0$ and with
$X_{1,3}$, such that all blocks $X_{1,3}^{(i',j')}$ are zero, except
blocks $X_{1,3}^{(i,j)}, X_{1,3}^{(2n-j,2n-i)}$. Note that $\mathrm{X}_{1,3}^{i,j}=\mathrm{X}_{1,3}^{2n-j,2n-i}$. Similarly, for $j\neq n+1$, let
$\mathrm{X}_{1,2}^{i,j}$ denote the subgroup of $\mathcal{X}$ given by
the elements of the form \eqref{6.9}, with $X_{1,3}=0$, and such
that all the blocks of $X_{1,2}$ (in the block division just
described) are zero except block $(i,j)$. When $j=n+1$, we let  $\mathrm{X}_{1,2}^{i,n+1}$ 
denote any set of representatives, modulo $\mathrm{X}_{1,3}^{i,2n-i}$, of the subgroup of $\mathcal{X}$,
consisting of the matrices \eqref{6.9}, such that all blocks $X_{1,2}^{(i',j')}$ are zero,
except when $(i',j')=(i,n+1)$, and all blocks $X_{1,3}^{(i',j')}$ are zero, except when $(i',j')=(i,2n-i)$.
 \\
Let
\begin{equation}\label{6.10}
\mathrm{Y}=\left\{\begin{pmatrix}I_{(2n-1)r}\\Y_{2,1}&I_{2n(m-2r)+2r}\\
Y_{3,1}&Y'_{2,1}&I_{(2n-1)r}\end{pmatrix}\in
\mathcal{D}_{n,m,r}\right\}.
\end{equation}
Recall that we have already described the matrices $Y_{3,1}$, $Y_{2,1}$
(and hence also $Y'_{2,1}$) as block matrices. Denote, for $3\leq
j\leq 2n-1$ and $1\leq i\leq j-2$, by $\mathrm{Y}_{1,3}^{i,j}$ the
subgroup of $\mathrm{Y}$ consisting of the elements \eqref{6.10},
with $Y_{2,1}=0$ and all blocks of $Y_{3,1}$ are zero, except the
blocks $Y_{3,1}^{(i,j)}$ and $Y_{3,1}^{(2n-j,2n-i)}$. Similarly, let, for $i\neq n+1$,
$\mathrm{Y}_{2,1}^{i,j}$ denote the subgroup of $\mathrm{Y}$ given by
the elements of the form \eqref{6.10}, with $Y_{3,1}=0$, and such
that all the blocks of $Y_{2,1}$ are zero except block $(i,j)$. (We take the pair of indices $(i,j)$, such that
$\mathrm{Y}_{2,1}^{i,j}\subset \mathcal{D}_{n,m,r}$.) As before, we let
$\mathrm{Y}_{2,1}^{n+1,j}$ denote any set of representatives, modulo $\mathrm{Y}^{2n-j,n+1}$,
of the subgroup of $\mathrm{Y}$, consisting of the matrices \eqref{6.10}, such that $Y_{2,1}^{(i',j')}=0$, except when $(i',j')=(n+1,j)$,
and $Y_{3,1}^{(i',j')}=0$, except when $(i',j')=(2n-j,j)$.\\
We have
\begin{equation}\label{6.11}
\mathcal{D}_{n,m,r}=\mathrm{Y}\mathrm{N}\mathrm{X}.
\end{equation}
Let $3\leq j\leq n$ and $1\leq i\leq j-2$. Denote by
$\mathrm{Y}_{3,1}(i,j)$ the subgroup of $\mathrm{Y}$ generated by
the subgroups
$\mathrm{Y}_{3,1}^{i',j'},\mathrm{Y}_{2,1}^{i',j'}\subset
\mathrm{Y}$, with $j'\geq j$, such that if $j'=j$, then $1\leq
i'\leq i$. Similarly, denote by $\mathrm{Y}_{2,1}(i,j)$ the subgroup
of $\mathrm{Y}$ generated by the subgroups
$\mathrm{Y}_{3,1}^{i',j'}, \mathrm{Y}_{2,1}^{i'',j''}\subset
\mathrm{Y}$, with $j', j''\geq j$, such that if $j'=j$, then $1\leq
i'<i$, and
if $j''=j$, then $1\leq i''\leq i$.\\
Let $2\leq i< n$ and $1\leq j\leq i$. Denote by
$\mathrm{X}_{1,3}(i,j)$ the unipotent group generated by
$\mathrm{X}$ and the subgroups $\mathrm{X}_{1,3}^{i',j'},
\mathrm{X}_{1,2}^{i'',j''}\subset \mathcal{X}$, with $2\leq
i',i''\leq i$, such that if $i'=i$, then $j\leq j'$, and if $i''=i$,
then $j<j''$. Similarly, denote by $\mathrm{X}_{1,2}(i,j)$ the
unipotent group generated by $\mathrm{X}$ and the subgroups
$\mathrm{X}_{1,3}^{i',j'}, \mathrm{X}_{1,2}^{i',j'}\subset
\mathcal{X}$, with $2\leq i'\leq i$, such that if $i'=i$, then
$j\leq j'$.\\
Let $3\leq j\leq n$ and $1\leq i\leq j-2$. Define
\begin{equation}\label{6.12}
\begin{array}{rcl}
\mathrm{D}_{3,1}^{i,j}&=&\mathrm{Y}_{2,1}(i,j)\mathrm{N}\mathrm{X}_{1,3}(j-1,i);\\
\mathrm{D}_{2,1}^{i,j}&=&\mathrm{Y}_{3,1}(i-1,j)\mathrm{N}\mathrm{X}_{1,2}(j-1,i)\qquad
(i>1);\\
\mathrm{D}_{2,1}^{1,j}&=&\mathrm{Y}_{3,1}(j-1,j+1)\mathrm{N}\mathrm{X}_{1,2}(j-1,1).
\end{array}
\end{equation}
Here, in case $j=n$, $\mathrm{Y}_{3,1}(j-1,j+1)$ is defined as
the subgroup of elements of the form
\eqref{6.10} in $\mathrm{Y}_{2,1}(1,n)$, such that the $(m-2r)\times r$
block of $Y_{2,1}$ in position $(1,n)$ is zero. Note that
\begin{equation}\label{6.13}
\mathcal{D}_{n,m,r}=\mathrm{Y}_{3,1}(1,3)\mathrm{N}\mathrm{X}_{1,2}(1,1)=\mathrm{D}_{2,1}^{1,2}.
\end{equation}
The proof of the next lemma is straightforward.
\begin{lem}\label{lem 6.1}
	Let $3\leq j\leq n$ and $1\leq i\leq j-2$. Then
	$\mathrm{D}_{3,1}^{i,j}$ and $\mathrm{D}_{2,1}^{i,j}$ are subgroups.
	These are $F$-unipotent subgroups of $H_{2nm}$. Consider the
	restriction of the character $\psi_{\mathcal{D}_{n,m,r}}$ to
	$(\mathrm{Y}_{2,1}(i,j)\mathrm{N})({\BA})$
	(resp. $(\mathrm{Y}_{3,1}(i-1,j)\mathrm{N})({\BA})$,
	$(\mathrm{Y}_{3,1}(j-1,j+1)\mathrm{N})({\BA)})$). Then it extends
	to a character of $\mathrm{D}_{3,1}^{i,j}(\BA)$ (resp.
	$\mathrm{D}_{2,1}^{i,j}(\BA)$) by the trivial character of
	$(\mathrm{X}_{1,3}(j-1,i))_{\BA}$ (resp.
	$(\mathrm{X}_{1,2}(j-1,i))_{\BA}$). Denote this character by
	$\psi_{\mathrm{D}_{3,1}^{i,j}}$ (resp.
	$\psi_{\mathrm{D}_{2,1}^{i,j}}$). It is trivial on
	$\mathrm{D}_{3,1}^{i,j}(F)$ (resp. $\mathrm{D}_{2,1}^{i,j}(F)$).
\end{lem}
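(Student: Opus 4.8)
The statement is "straightforward", so the plan is purely bookkeeping: verify closure under multiplication, verify the $F$-rational structure, and verify that the proposed extension of the character is well-defined and automorphic-trivial. I would organize the argument around the ambient group $\mathcal{D}_{n,m,r}$ and the factorization $\mathcal{D}_{n,m,r}=\mathrm{Y}\mathrm{N}\mathrm{X}$ from \eqref{6.11}, together with the fact that each of $\mathrm{Y}$, $\mathrm{N}$, $\widetilde{\mathrm{X}}$ (and $\mathcal{X}$) is a closed unipotent $F$-subgroup of $H_{2nm}$, cut out inside the relevant block positions by vanishing of certain coordinates. First I would record the key commutation facts needed: conjugating a block in $\mathrm{X}_{1,3}^{i',j'}$ or $\mathrm{X}_{1,2}^{i',j'}$ by a block of $\mathrm{Y}_{2,1}^{i'',j''}$ or $\mathrm{Y}_{3,1}^{i'',j''}$ produces new $\mathrm{X}$- or $\mathrm{N}$-type blocks whose indices are pushed in the ``good'' direction (larger column index, or into $\mathrm{N}$), because of the upper-triangular shapes of $U$, $S$, $X_{1,3}$, $\widetilde{X}_{1,2}$ catalogued in Sec.~6; this is where the particular truncations defining $\mathrm{Y}_{2,1}(i,j)$, $\mathrm{Y}_{3,1}(i-1,j)$, $\mathrm{X}_{1,3}(j-1,i)$, $\mathrm{X}_{1,2}(j-1,i)$ are exactly tailored so that the product of the three pieces closes up. Concretely, to show $\mathrm{D}_{3,1}^{i,j}=\mathrm{Y}_{2,1}(i,j)\,\mathrm{N}\,\mathrm{X}_{1,3}(j-1,i)$ is a group, I would check that $\mathrm{X}_{1,3}(j-1,i)$ normalizes $\mathrm{Y}_{2,1}(i,j)\mathrm{N}$ modulo itself, i.e. that all commutators $[\mathrm{X}_{1,3}(j-1,i),\mathrm{Y}_{2,1}(i,j)]$ land in $\mathrm{Y}_{2,1}(i,j)\mathrm{N}\mathrm{X}_{1,3}(j-1,i)$, using the index bookkeeping; the case $j=n$ needs the extra remark (already flagged in the text) that the $(1,n)$ block of $Y_{2,1}$ must be killed, and I would treat $\mathrm{D}_{2,1}^{i,j}$, $\mathrm{D}_{2,1}^{1,j}$ identically with the dual ($X_{1,2}$-)truncations and the parallel remark about $Y_{3,1}$.

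For the $F$-structure, each building block is defined by polynomial (in fact linear) conditions with coefficients in $F$ on matrix entries, intersected with $H_{2nm}$, so the subgroups are $F$-closed and unipotent; since they are generated inside $\mathcal{D}_{n,m,r}$ by such subgroups, the products $\mathrm{D}_{3,1}^{i,j}$, $\mathrm{D}_{2,1}^{i,j}$ are $F$-unipotent subgroups of $H_{2nm}$. For the character: $\psi_{\mathcal{D}_{n,m,r}}$ restricted to $(\mathrm{Y}_{2,1}(i,j)\mathrm{N})(\BA)$ is a character trivial on $F$-points (being a restriction of \eqref{6.7.1}); to extend it to $\mathrm{D}_{3,1}^{i,j}(\BA)$ by the trivial character of $(\mathrm{X}_{1,3}(j-1,i))(\BA)$ I must check consistency on the overlap $\mathrm{N}$ (automatic, since the proposed value agrees with $\psi_{\mathrm N}$ there) and, crucially, that $\psi_{\mathcal{D}_{n,m,r}}$ is invariant under conjugation by $\mathrm{X}_{1,3}(j-1,i)(\BA)$ and trivial on all the commutators $[\mathrm{X}_{1,3}(j-1,i),\mathrm{Y}_{2,1}(i,j)]$ that fall into $\mathrm{N}$ — equivalently, that those commutator blocks lie in the kernel of $\psi_{\mathrm N}$. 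From \eqref{6.7.1} the kernel of $\psi_{\mathrm N}$ on $U$ consists of the blocks $U_\ell$ that are \emph{not} on the first superdiagonal (and similarly for $S$); the commutators produced here are off-superdiagonal by the index constraints (this is precisely what ``$j'\ge j$'' etc. in the definition of the truncated groups guarantees), so $\psi_{\mathcal{D}_{n,m,r}}$ is trivial on them and the extension is a well-defined character, trivial on $F$-points because both $\psi_{\mathcal{D}_{n,m,r}}$ and the trivial character on $\widetilde{\mathrm X}$ are. The $\mathrm{D}_{2,1}$-cases are verbatim the same with $X_{1,2}$ in place of $X_{1,3}$.

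The only mildly delicate point — the ``main obstacle'', insofar as there is one — is the odd-$m$ bookkeeping: when $m=2m'-1$ the middle block columns and the special blocks \eqref{6.5.2.1}, \eqref{6.5.3.1}, \eqref{6.6}, \eqref{6.6.1} break the clean ``matrix of $r\times r$ (or $r\times(m-2r)$) blocks'' picture, and one must check that commutators involving the column/row $n+1$ still land in $\mathrm N$ or in a larger-index $\mathrm X$/$\mathrm Y$ block and are killed by $\psi_{\mathrm N}$ (whose value on $S_{n-1}$ involves $A'_H$ of \eqref{6.7.2}). I would dispatch this by noting that the $(n,n+1)$ and $(n+1,n)$ blocks are constrained to the shapes coming from $e=e(v,z)$ in \eqref{4.24}, so conjugation stays within the group and the extra coordinates contribute nothing to the character. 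Everything else is the routine verification that a product of upper-triangular-shaped unipotent pieces, with indices arranged to avoid the first superdiagonal, is a group on which the restricted character extends trivially across the added unipotent directions.
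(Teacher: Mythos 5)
Your proposal is correct and matches the paper's (entirely omitted) argument: the paper simply declares the proof of Lemma \ref{lem 6.1} straightforward, and the verifications you lay out — closure via the commutator bookkeeping forced by the index truncations, the $F$-linear defining conditions, and conjugation-invariance of the restricted character because the relevant commutators miss the first superdiagonal blocks that $\psi_{\mathrm N}$ sees — are exactly the checks the paper itself performs explicitly in the analogous set-up of Proposition \ref{prop 7.1}. No gap.
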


\section{A second reduction}

Let $3\leq j\leq n$ and $1\leq i\leq j-2$. Let $A$ be an automorohic representation of $H(\BA)$. Define, for $\xi \in A$, $h\in H(\BA)$,
\begin{equation}\label{7.1}
\epsilon_{2,1}^{i,j}(\xi)(h)=\int_{\mathrm{D}_{2,1}^{i,j}(F)\backslash
	\mathrm{D}_{2,1}^{i,j}(\BA)}\xi(vh)\psi_{\mathrm{D}_{2,1}^{i,j}}^{-1}(v)dv,
\end{equation}
$$
\epsilon_{3,1}^{i,j}(\xi)(h)=\int_{\mathrm{D}_{3,1}^{i,j}(F)\backslash
	\mathrm{D}_{3,1}^{i,j}(\BA)}\xi(vh)\psi_{\mathrm{D}_{3,1}^{i,j}}^{-1}(v)dv.
$$
Note that
\begin{equation}\label{7.1.1}
\epsilon_{2,1}^{1,2}(\xi)=\epsilon_{n,m,r}(\xi).
\end{equation}
\begin{prop}\label{prop 7.1}
	Let $3\leq j\leq n$ and $1\leq i\leq j-2$. We have the following identities.\\
	{\bf 1.}
	$$
	\epsilon_{3,1}^{i,j}(\xi)(h)=\int_{\mathrm{Y}_{2,1}^{i,j}(\BA)}
	\epsilon_{2,1}^{i,j}(\xi)(yh)dy.
	$$
	Moreover, $\epsilon_{3,1}^{i,j}$ is zero on $A$ if and
	only if $\epsilon_{2,1}^{i,j}$ is zero on $A$.\\
	{\bf 2.} Assume that $i>1$. Then
	$$
	\epsilon_{2,1}^{i,j}(\xi)(h)=\int_{\mathrm{Y}_{3,1}^{i-1,j}(\BA)}
	\epsilon_{3,1}^{i-1,j}(\xi)(yh)dy.
	$$
	Moreover, $\epsilon_{2,1}^{i,j}$ is zero on $A$ if and
	only if $\epsilon_{3,1}^{i-1,j}$ is zero on $A$. \\
	{\bf 3.}
	$$
	\epsilon_{2,1}^{1,j}(\xi)(h)=\int_{\mathrm{Y}_{3,1}^{j-1,j+1}(\BA)}
	\epsilon_{3,1}^{j-1,j+1}(\xi)(yh)dy.
	$$
	Moreover, $\epsilon_{2,1}^{1,j}$ is zero on $A$ if and
	only if $\epsilon_{3,1}^{j-1,j+1}$ is zero on $A$.\\
\end{prop}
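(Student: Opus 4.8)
The plan is to prove all three identities exactly in the style of the proof of Proposition~\ref{prop 4.1}, namely by the exchange-of-roots procedure: each identity is a (possibly iterated) instance of Lemma~7.1 of \cite{GRS11} together with its Corollary~7.1. As in that proof, nothing about the module $A(\Delta(\tau,m)\gamma_\psi^{(\epsilon)},\eta,k)$ is used beyond its being a right-translation invariant space of automorphic forms, so one argues for an arbitrary automorphic representation $A$ of $H(\BA)$. Recall that a single exchange step is set up by producing unipotent $F$-subgroups $\mathbf{C},\mathbf{X}_0,\mathbf{Y}_0$ of $H$ with $\mathbf{X}_0,\mathbf{Y}_0$ abelian, both normalizing $\mathbf{C}$ and fixing the restriction $\psi_{\mathbf{C}}$ of $\psi_{\mathcal{D}_{n,m,r}}$, and with $[\mathbf{X}_0(\BA),\mathbf{Y}_0(\BA)]\subseteq\mathbf{C}(\BA)$ on which $\psi_{\mathbf{C}}$ is a nondegenerate pairing; Lemma~7.1 of \cite{GRS11} then expresses the Fourier coefficient of $\xi$ along $\mathbf{C}\mathbf{Y}_0$ (against the trivial extension of $\psi_{\mathbf{C}}$) as the integral over $\mathbf{Y}_0(\BA)$ of its Fourier coefficient along $\mathbf{C}\mathbf{X}_0$, and Corollary~7.1 of \cite{GRS11} shows that these two coefficients vanish identically on $A$ simultaneously. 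In our three cases the two Fourier coefficients will be, respectively, $\epsilon_{3,1}^{i,j}$ and $\epsilon_{2,1}^{i,j}$; $\epsilon_{2,1}^{i,j}$ and $\epsilon_{3,1}^{i-1,j}$; $\epsilon_{2,1}^{1,j}$ and $\epsilon_{3,1}^{j-1,j+1}$, and in each case $\mathbf{C}$ will be the intersection of the relevant pair $\mathrm{D}_{2,1}^{i,j},\mathrm{D}_{3,1}^{i,j}$ (or $\mathrm{D}_{2,1}^{i,j},\mathrm{D}_{3,1}^{i-1,j}$, or $\mathrm{D}_{2,1}^{1,j},\mathrm{D}_{3,1}^{j-1,j+1}$), whose characters are well defined and trivial on $F$-points by Lemma~\ref{lem 6.1}, so that $\mathbf{C}\mathbf{Y}_0$ and $\mathbf{C}\mathbf{X}_0$ are exactly those two groups.

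For the first identity the exchanged pair of root subgroups is $\mathbf{X}_0=\mathrm{X}_{1,2}^{j-1,i}$ (which lies in $\mathrm{X}_{1,2}(j-1,i)$ but not in $\mathrm{X}_{1,3}(j-1,i)$) and $\mathbf{Y}_0=\mathrm{Y}_{2,1}^{i,j}$ (which lies in $\mathrm{Y}_{2,1}(i,j)$ but not in $\mathrm{Y}_{3,1}(i-1,j)$), so that $\mathbf{C}\mathbf{X}_0=\mathrm{D}_{2,1}^{i,j}$ and $\mathbf{C}\mathbf{Y}_0=\mathrm{D}_{3,1}^{i,j}$. The one substantive point is that the commutator of an element of $\mathrm{X}_{1,2}^{j-1,i}$ with one of $\mathrm{Y}_{2,1}^{i,j}$ lands in the superdiagonal block of the upper unipotent matrix $U$ in position $(j-1,j)$, i.e.\ in the block $U_{j-1}$, on which, by \eqref{6.7.1} and the hypothesis $j\le n$, $\psi_{\mathcal{D}_{n,m,r}}$ is the nondegenerate character $\psi(\mathrm{tr}(U_{j-1}))$; that $\mathbf{X}_0,\mathbf{Y}_0$ are abelian, normalize $\mathbf{C}$, and preserve $\psi_{\mathbf{C}}$, and that $\mathbf{C}\mathbf{X}_0,\mathbf{C}\mathbf{Y}_0$ are as claimed, is read off directly from the block descriptions of $\mathcal{D}_{n,m,r}$ following \eqref{6.3} and from the definitions in \eqref{6.12}. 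The second and third identities have the same character, the block being moved now being the far-corner block $\mathrm{Y}_{3,1}^{i-1,j}$, respectively $\mathrm{Y}_{3,1}^{j-1,j+1}$. Here one first notes that $\mathrm{D}_{2,1}^{i,j}=\mathrm{Y}_{3,1}^{i-1,j}\,\mathrm{D}_{3,1}^{i-1,j}$ and $\mathrm{D}_{2,1}^{1,j}=\mathrm{Y}_{3,1}^{j-1,j+1}\,\mathrm{D}_{3,1}^{j-1,j+1}$, semidirect products on which $\psi_{\mathcal{D}_{n,m,r}}$ is trivial on the $Y_{3,1}$-factor, which gives the displayed integral identities at once, and then runs the exchange-of-roots machinery (again Lemma~7.1 and Corollary~7.1 of \cite{GRS11}) on the nontrivial Fourier components along these blocks to obtain the vanishing equivalences; the special definition of $\mathrm{Y}_{3,1}(j-1,j+1)$ given right after \eqref{6.12} in the boundary case $j=n$ is precisely what makes the passage from column $j$ to column $j+1$ legitimate.

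I expect the only genuine difficulty to be the bookkeeping: for each admissible $(i,j)$ one must fix the data $\mathbf{C},\mathbf{X}_0,\mathbf{Y}_0$ (and, in the second and third identities, the auxiliary root subgroups needed to bring the configuration into the Heisenberg form required by Lemma~7.1) and then check its hypotheses on the nose --- abelianness, that $\mathbf{C}$ is normalized, nondegeneracy of each commutator pairing, and triviality of all conjugated characters on $F$-points. Two recurring subtleties deserve attention. First, when $m=2m'-1$ is odd, the second-upper-diagonal blocks of $\widetilde{X}_{1,2},\widetilde{X}'_{1,2},\widetilde{Y}_{2,1},\widetilde{Y}'_{2,1}$ carry the factors $\tfrac12$ and the matrices $w_2^{r(n-1)}$ coming from \eqref{6.5.2.1}--\eqref{6.6.2}, and one must carry these along to see that the exchanged roots are still correctly paired and that the pairings remain nondegenerate --- which they do, because the twisting matrices $A'_H$ of \eqref{6.7.1.1}, \eqref{6.7.2} are invertible, but this must be verified. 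Second, several of the subgroups that occur ($\mathrm{X}_{1,2}^{i,n+1}$, $\mathrm{Y}_{2,1}^{n+1,j}$, and, for odd $m$, the middle-block pieces) are groups only modulo smaller ones, so one works with fixed sets of representatives, or first absorbs the smaller pieces into $\mathbf{C}$, before applying Lemma~7.1 --- exactly the care already flagged for $\mathbf{Y}_{n-1}$ before Proposition~\ref{prop 4.1}. Finally, since Proposition~\ref{prop 7.1} is meant to be iterated --- starting from $\epsilon_{2,1}^{1,2}=\epsilon_{n,m,r}$ by \eqref{7.1.1}, alternating the first and second identities to run $i$ down inside a fixed column $j$ and the third to move to column $j+1$ --- I would check that the ranges $3\le j\le n$, $1\le i\le j-2$ in the statement are exactly those for which each step is legitimate, so that the chain links $\epsilon_{n,m,r}$ to the Fourier coefficient appearing in Theorem~\ref{thm 6.1} and closes the reduction asserted there.
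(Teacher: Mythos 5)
Your overall strategy --- root exchange via Lemma 7.1 and Corollary 7.1 of \cite{GRS11}, using no property of $A$ beyond its being a space of automorphic forms invariant under right translation --- is exactly the paper's, and your treatment of the first identity is correct: with $\mathbf{C}=\mathrm{Y}_{3,1}(i-1,j)\mathrm{N}\mathrm{X}_{1,3}(j-1,i)$ one has $\mathrm{D}_{3,1}^{i,j}=\mathbf{C}\,\mathrm{Y}_{2,1}^{i,j}$ and $\mathrm{D}_{2,1}^{i,j}=\mathbf{C}\,\mathrm{X}_{1,2}^{j-1,i}$, the commutator pairing lands in the block $U_{j-1}$ of $U$ where $\psi_{\mathcal{D}_{n,m,r}}$ is $\psi(\mathrm{tr}(\cdot))$, and Lemma 7.1 gives the displayed identity while Corollary 7.1 gives the vanishing equivalence.

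Your treatment of the second and third identities, however, contains a genuine gap. You claim $\mathrm{D}_{2,1}^{i,j}=\mathrm{Y}_{3,1}^{i-1,j}\,\mathrm{D}_{3,1}^{i-1,j}$ and that the displayed integral identities follow ``at once'' from this product, with root exchange needed only for the vanishing equivalences. This fails on two counts. First, the decomposition is false: $\mathrm{D}_{3,1}^{i-1,j}=\mathrm{Y}_{2,1}(i-1,j)\mathrm{N}\mathrm{X}_{1,3}(j-1,i-1)$ contains the root subgroup $\mathrm{X}_{1,3}^{j-1,i-1}$, which is not contained in $\mathrm{D}_{2,1}^{i,j}=\mathrm{Y}_{3,1}(i-1,j)\mathrm{N}\mathrm{X}_{1,2}(j-1,i)$. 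What is true is that, setting $\mathrm{C}_{3,1}^{i-1,j}=\mathrm{Y}_{2,1}(i-1,j)\mathrm{N}\mathrm{X}_{1,2}(j-1,i)$, one has $\mathrm{D}_{2,1}^{i,j}=\mathrm{C}_{3,1}^{i-1,j}\mathrm{Y}_{3,1}^{i-1,j}$ and $\mathrm{D}_{3,1}^{i-1,j}=\mathrm{C}_{3,1}^{i-1,j}\mathrm{X}_{1,3}^{j-1,i-1}$. Second, and more fundamentally, even a correct product decomposition with $\psi$ trivial on the $Y$-factor would only unfold the Fourier coefficient into an integral over the compact quotient $\mathrm{Y}_{3,1}^{i-1,j}(F)\backslash\mathrm{Y}_{3,1}^{i-1,j}(\BA)$, whereas the stated identity integrates over all of $\mathrm{Y}_{3,1}^{i-1,j}(\BA)$; trading a compact integration for a noncompact one in the dual variable is precisely the content of Lemma 7.1 and is not automatic. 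The correct argument for identities 2 and 3 is therefore the same root exchange as in identity 1, now exchanging $\mathrm{Y}_{3,1}^{i-1,j}$ with $\mathrm{X}_{1,3}^{j-1,i-1}$ (respectively $\mathrm{Y}_{3,1}^{j-1,j+1}$ with $\mathrm{X}_{1,3}^{j,j-1}$, where one also uses $\mathrm{X}_{1,2}(j-1,1)=\mathrm{X}_{1,2}(j,j)$ for $j<n$). As a minor remark, your worries about the odd-$m$ blocks carrying $w_2^{r(n-1)}$ and about the representative-sets $\mathrm{X}_{1,2}^{i,n+1}$, $\mathrm{Y}_{2,1}^{n+1,j}$ do not arise in this proposition: the ranges $3\le j\le n$, $1\le i\le j-2$ keep all exchanged blocks away from the middle columns and rows where those subtleties occur; they enter only in the later reductions.
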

\begin{proof}
	Assume that $i>1$. Denote
	$$
	\begin{array}{rcl}
	\mathrm{C}_{2,1}^{i,j}&=&\mathrm{Y}_{3,1}(i-1,j)\mathrm{N}\mathrm{X}_{1,3}(j-1,i),\\
	\mathrm{B}_{2,1}^{i,j}&=&\mathrm{D}_{3,1}^{i,j}.
	\end{array}
	$$
	It is straightforward to check that $\mathrm{C}_{2,1}^{i,j}$ is an
	$F$-unipotent subgroup of $H_{2nm}$. We have
	$$
	\begin{array}{rcl}
	\mathrm{B}_{2,1}^{i,j}&=&\mathrm{C}_{2,1}^{i,j}\mathrm{Y}_{2,1}^{i,j},\\
	\mathrm{D}_{2,1}^{i,j}&=&\mathrm{C}_{2,1}^{i,j}\mathrm{X}_{1,2}^{j-1,i}.
	\end{array}
	$$
	Denote by $\psi_{\mathrm{C}_{2,1}^{i,j}}$ the
	restriction of the character $\psi_{\mathrm{B}_{2,1}^{i,j}}$ (which
	is $\psi_{\mathrm{D}_{3,1}^{i,j}}$) to $\mathrm{C}_{2,1}^{i,j}(\BA)$. The
	unipotent groups $\mathrm{C}_{2,1}^{i,j}$, $\mathrm{Y}_{2,1}^{i,j}$,
	$\mathrm{X}_{1,2}^{j-1,i}$, $\mathrm{B}_{2,1}^{i,j}$,
	$\mathrm{D}_{2,1}^{i,j}$ satisfy the set-up of Lemma 7.1 of
	\cite{GRS11}. Let us verify the condition on the commutator
	$[\mathrm{X}_{1,2}^{j-1,i},\mathrm{Y}_{2,1}^{i,j}]$. For this,
	consider $x\in \mathrm{X}_{1,2}^{j-1,i}$, $y\in
	\mathrm{Y}_{2,1}^{i,j}$ and write them as in \eqref{6.9},
	\eqref{6.10}. Thus, $X_{1,2}$ has an $r\times (m-2r)$ block $X$ in the
	$(j-1,i)$ position and zero blocks elsewhere (also $X_{1,3}=0$), and
	$Y_{2,1}$ has an $(m-2r)\times r$ block $Y$ in the $(i,j)$ position and zero
	blocks elsewhere (also $Y_{3,1}=0$). Then $[x,y]\in \mathrm{N}$, and
	writing this commutator using the notation of \eqref{6.8},
	\eqref{6.4}, we have $V=I$ and $U$ has the block $XY$ in the
	position $(j-1,j)$ and zero blocks elsewhere above the diagonal.
	Thus, $[x,y]\in \mathrm{C}_{2,1}^{i,j}$, and over $\BA$,
	$\psi_{\mathrm{C}_{2,1}^{i,j}}([x,y])=\psi(tr(XY))$. Now we can exchange
	roots, replacing $\mathrm{Y}_{2,1}^{i,j}$ by
	$\mathrm{X}_{1,2}^{j-1,i}$ in $\epsilon_{3,1}^{i,j}$. That is, we
	apply Lemma 7.1, Corollary 7.1 and the proof of Corollary 7.2 in
	\cite{GRS11}, and obtain the first part of the proposition in case
	$i>1$. When $i=1$, we do exactly the same with
	$$
	\mathrm{C}_{2,1}^{1,j}=\mathrm{Y}_{3,1}(j-1,j+1)\mathrm{N}\mathrm{X}_{1,3}(j-1,1),
	$$
	and the rest is as before.\\
	Assume that $i>1$. Denote
	$$
	\begin{array}{rcl}
	\mathrm{C}_{3,1}^{i-1,j}&=&\mathrm{Y}_{2,1}(i-1,j)\mathrm{N}\mathrm{X}_{1,2}(j-1,i),\\
	\mathrm{B}_{3,1}^{i-1,j}&=&\mathrm{D}_{2,1}^{i,j}.
	\end{array}
	$$
	Again, it is straightforward to check that
	$\mathrm{C}_{3,1}^{i-1,j}$ is a unipotent subgroup of
	$\mathrm{B}_{3,1}^{i-1,j}$. We have
	$$
	\begin{array}{rcl}
	\mathrm{B}_{3,1}^{i-1,j}&=&\mathrm{C}_{3,1}^{i-1,j}\mathrm{Y}_{3,1}^{i-1,j},\\
	\mathrm{D}_{3,1}^{i-1,j}&=&\mathrm{C}_{3,1}^{i-1,j}\mathrm{X}_{1,3}^{j-1,i-1}.
	\end{array}
	$$
    Denote by
	$\psi_{\mathrm{C}_{3,1}^{i-1,j}}$ the restriction of the character
	$\psi_{\mathrm{B}_{3,1}^{i-1,j}}$ (which is
	$\psi_{\mathrm{D}_{2,1}^{i,j}}$) to $\mathrm{C}_{3,1}^{i-1,j}(\BA)$. The
	unipotent groups $\mathrm{C}_{3,1}^{i-1,j}$,
	$\mathrm{Y}_{3,1}^{i-1,j}$, $\mathrm{X}_{1,3}^{j-1,i-1}$,
	$\mathrm{B}_{3,1}^{i-1,j}$, $\mathrm{D}_{3,1}^{i-1,j}$ satisfy the
	set-up of Lemma 7.1 of \cite{GRS11}. Again, we conclude, as before,
	that we can replace $\mathrm{Y}_{3,1}^{i-1,j}$ in
	$\epsilon_{2,1}^{i,j}$ by $\mathrm{X}_{1,3}^{j-1,i-1}$, and thus
	obtain the second part of the proposition. When $i=1$, we repeat the
	argument with
	$$
	\begin{array}{rcl}
	\mathrm{C}_{3,1}^{j-1,j+1}&=&\mathrm{Y}_{2,1}(j-1,j+1)\mathrm{N}\mathrm{X}_{1,2}(j-1,1),\\
	\mathrm{B}_{3,1}^{j-1,j+1}&=&\mathrm{D}_{2,1}^{1,j}.
	\end{array}
	$$
	We have
	$$
	\begin{array}{rcl}
	\mathrm{B}_{3,1}^{j-1,j+1}&=&\mathrm{C}_{3,1}^{j-1,j+1}\mathrm{Y}_{3,1}^{j-1,j+1},\\
	\mathrm{D}_{3,1}^{j-1,j+1}&=&\mathrm{C}_{3,1}^{j-1,j+1}\mathrm{X}_{1,3}^{j,j-1}.
	\end{array}
	$$
	Note that for $j<n$,
	$$
	\mathrm{X}_{1,2}(j-1,1)=\mathrm{X}_{1,2}(j,j).
	$$
	Now, we can apply Lemma 7.1, Corollary 7.1 and the proof of
	Corollary 7.2 in \cite{GRS11}, and perform the root exchange in
	$\epsilon_{2,1}^{1,j}$, and replace $\mathrm{Y}_{3,1}^{j-1,j+1}$ with
	$\mathrm{X}_{1,3}^{j,j-1}$, thus proving the third part of the
	proposition. 
\end{proof}

Let us apply Proposition \ref{prop 7.1} repeatedly as follows. Start
with $\epsilon_{n,m,r}=\epsilon_{2,1}^{1,2}$. This is (identically)
zero if and only if $\epsilon_{3,1}^{1,3}$ is zero. Then this is
equivalent to $\epsilon_{2,1}^{1,3}$ being zero, and this is
equivalent to $\epsilon_{3,1}^{2,4}$ being zero, which, in turn, is
equivalent to $\epsilon_{2,1}^{2,4}$, $\epsilon_{3,1}^{1,4}$,
$\epsilon_{2,1}^{1,4}$,
$\epsilon_{3,1}^{3,5}$, $\epsilon_{2,1}^{3,5}$,...,$\epsilon_{2,1}^{1,n}$ being zero. Similarly, repeating the identities of Proposition \ref{prop 7.1} in the above order, we can express $\epsilon_{n,m,r}(\xi)(h)$ as an integral of $\epsilon_{2,1}^{1,n}(\xi)$ along $\mathrm{Y}^n_\BA$, where that $\mathrm{Y}^n$ is the subgroup of $\mathrm{Y}$ generated by the subgroups $\mathrm{Y}_{2,1}^{i,j}, \mathrm{Y}_{3,1}^{i,j}\subset \mathrm{Y}$, with $j\leq n$. 
Therefore, we conclude

\begin{prop}\label{prop 7.2}
For all $\xi\in A$, we have
$$
\epsilon_{n,m,r}(\xi)(h)=\int_{\mathrm{Y}^n(\BA)}\epsilon_{2,1}^{1,n}(\xi)(yh)dy
=\int_{\mathrm{Y}^n(\BA)}\int_{\mathrm{D}_{2,1}^{1,n}(F)\backslash \mathrm{D}_{2,1}^{1,n}(\BA)}\xi(vyh)\psi^{-1}_{\mathrm{D}_{2,1}^{1,n}}(v)dvdy.
$$
Moreover, $\epsilon_{n,m,r}$ is trivial on 	$A$ if and only if 
$\epsilon_{2,1}^{1,n}$ is trivial on $A$.
\end{prop}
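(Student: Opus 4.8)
The plan is to iterate Proposition~\ref{prop 7.1} along the chain of indices laid out just above the statement, and then to assemble the one‑parameter root integrals produced at each step into a single integral over $\mathrm{Y}^n(\BA)$.

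By \eqref{7.1.1} we have $\epsilon_{n,m,r}=\epsilon_{2,1}^{1,2}$, so I would start from $\epsilon_{2,1}^{1,2}$ and run through the chain
$$
\epsilon_{2,1}^{1,2}\to\epsilon_{3,1}^{1,3}\to\epsilon_{2,1}^{1,3}\to\epsilon_{3,1}^{2,4}\to\epsilon_{2,1}^{2,4}\to\epsilon_{3,1}^{1,4}\to\epsilon_{2,1}^{1,4}\to\cdots\to\epsilon_{3,1}^{1,n}\to\epsilon_{2,1}^{1,n}.
$$
For each $j$ from $3$ to $n$ one passes from level $j-1$ to level $j$ by part~3 of Proposition~\ref{prop 7.1} (the base step $j=2$ being the content of \eqref{6.13} together with the same root‑exchange argument), and then descends through $i=j-2,j-3,\dots,1$ by alternating parts~1 and 2, ending with part~1 at $i=1$. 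Each individual step is an identity expressing one of the functions $\epsilon_{2,1}^{\bullet,\bullet}$, $\epsilon_{3,1}^{\bullet,\bullet}$ as $\int_{Z(\BA)}(\,\cdot\,)(zh)\,dz$ of the next one in the chain, where $Z$ is one of the abelian root subgroups $\mathrm{Y}_{2,1}^{i,j}$ or $\mathrm{Y}_{3,1}^{i,j}$ with $3\le j\le n$, $1\le i\le j-2$; moreover each step comes with the equivalence that the left function vanishes identically on $A$ if and only if the right one does.

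Substituting each identity into the preceding one and applying Fubini, $\epsilon_{n,m,r}(\xi)(h)$ becomes an iterated integral of $\epsilon_{2,1}^{1,n}(\xi)$ against the product, in the order they arise, of all the groups $Z(\BA)$ occurring in the chain. The one non‑mechanical point — the routine heart of the argument — is to check that these $Z$ are pairwise distinct and that, as $j$ ranges over $3,\dots,n$ and $i$ over $1,\dots,j-2$, every $\mathrm{Y}_{2,1}^{i,j}$ and every $\mathrm{Y}_{3,1}^{i,j}$ appears exactly once; this is immediate from the index pattern and from the block descriptions of $Y_{2,1}$, $Y_{3,1}$ following \eqref{6.5.2}. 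Since $\mathrm{Y}^n$ is by definition the subgroup of the unipotent group $\mathrm{Y}$ generated by precisely these root subgroups, $\mathrm{Y}^n$ is, as an affine variety and hence on $\BA$‑points, the product of them taken in any fixed order; choosing Haar measures compatibly, the iterated integral collapses to $\int_{\mathrm{Y}^n(\BA)}\epsilon_{2,1}^{1,n}(\xi)(yh)\,dy$. Unfolding $\epsilon_{2,1}^{1,n}$ via its definition \eqref{7.1} gives the displayed double integral, and the ``moreover'' assertion follows by composing the equivalences supplied at each step of the chain.
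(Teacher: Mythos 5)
Your proposal is correct and is essentially the paper's own argument: the paper proves Proposition \ref{prop 7.2} precisely by iterating Proposition \ref{prop 7.1} along the same chain $\epsilon_{2,1}^{1,2}\to\epsilon_{3,1}^{1,3}\to\epsilon_{2,1}^{1,3}\to\epsilon_{3,1}^{2,4}\to\cdots\to\epsilon_{2,1}^{1,n}$ and assembling the resulting root integrals into a single integral over $\mathrm{Y}^n(\BA)$. Your extra remarks (the $j=2$ base step via \eqref{6.13}, and the check that each $\mathrm{Y}_{2,1}^{i,j}$, $\mathrm{Y}_{3,1}^{i,j}$ with $j\le n$ occurs exactly once so that $\mathrm{Y}^n$ is their product) are exactly the points the paper leaves implicit.
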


There are two more steps similar to those in Proposition \ref{prop
7.1} that we can make in general. In the first step, we exchange, in
$\epsilon_{2,1}^{1,n}$,
$\mathrm{Y}_{2,1}^{n,n+1}\mathrm{Y}_{2,1}^{n+1,n+1}\mathrm{Y}_{2,1}^{n+2,n+1}$ with
$\mathrm{X}_{1,2}^{n,n}\mathrm{X}_{1,2}^{n,n+1;0}\mathrm{X}_{1,2}^{n,n+2}$, where in case $m$ is even, $\mathrm{X}_{1,2}^{n,n+1;0}$ is trivial. Note that in this case $\mathrm{Y}_{2,1}^{n+1,n+1}$ is trivial. When $m=2m'-1$ is odd, $\mathrm{X}_{1,2}^{n,n+1;0}$ is (a set of representatives, modulo $\mathrm{X}_{1,3}^{n,n}$,) such that the block of $X_{1,2}$ in position $(n,n+1)$, which is a matrix of size $r\times 2(m'-r)$ has the form
$$
(0_{r\times (m'-1-r)},(-\frac{1}{2}x,x)w_2^{r(n-1)},0_{r\times (m'-1-r)}),
$$
where $x$ is a column vector. See \eqref{6.6}, \eqref{6.5.2.1}. Note that in this case $\mathrm{Y}_{2,1}^{n+1,n+1}$ is as in \eqref{6.6},
modulo $\mathrm{Y}_{3,1}^{n-1,n+1}$. Recall that, in this case, the set of representatives $\mathrm{Y}_{2,1}^{n+1,n+1}$ is a group, modulo $\mathrm{Y}_{3,1}^{n-1,n+1}$, and, similarly, $\mathrm{X}_{1,2}^{n,n+1;0}$ and $\mathrm{X}_{1,2}^{n,n+1}$, are groups, modulo $\mathrm{X}_{1,3}^{n,n}$. Now, checking the conditions of Lemma
7.1 in \cite{GRS11} is straightforward. We conclude, as before, that
$\epsilon_{2,1}^{1,n}$ is trivial if and only if the following
Fourier coefficient is identically zero
\begin{equation}\label{7.2}
\widetilde{\epsilon}_{2,1}^{1,n}(\xi)(g)=\int_{\widetilde{\mathrm{D}}_{2,1}^{1,n}(F)\backslash
	\widetilde{\mathrm{D}}_{2,1}^{1,n}(\BA)}\xi(vg)\psi_{\widetilde{\mathrm{D}}_{2,1}^{1,n}}^{-1}(v)dv,
\end{equation}
where
\begin{equation}\label{7.3}
\widetilde{\mathrm{D}}_{2,1}^{1,n}=\widetilde{\mathrm{Y}}_{3,1}(n-1,n+1)\mathrm{N}\mathrm{X}_{1,2}(n,n),
\end{equation}
and $\widetilde{\mathrm{Y}}_{3,1}(n-1,n+1)$ is the subgroup of
elements of the form \eqref{6.10} in $\mathrm{Y}_{3,1}(n-1,n+1)$,
such that the $2(m-r)\times r$ block of $Y_{2,1}$ corresponding to 
$\mathrm{Y}_{2,1}^{n,n+1}$, $\mathrm{Y}_{2,1}^{n+1,n+1}$, $\mathrm{Y}_{2,1}^{n+2,n+1}$ is zero. The subgroup $\mathrm{X}_{1,2}(n,n)$ is
generated by $\mathrm{X}_{1,2}(n-1,1)$ and
$\mathrm{X}_{1,2}^{n,n}$, $\mathrm{X}_{1,2}^{n,n+1;0}$, $\mathrm{X}_{1,2}^{n,n+2}$. Finally,
$\psi_{\widetilde{\mathrm{D}}_{2,1}^{1,n}}$ is the character of
$\widetilde{\mathrm{D}}_{2,1}^{1,n}(\BA)$ obtained by extending
$\psi_N$ by the trivial character of $\mathrm{X}_{1,2}(n,n)(\BA)$
and of $\widetilde{\mathrm{Y}}_{3,1}(n-1,n+1)({\BA})$.\\
Similarly, we conclude that for all $\xi\in A$, we have
\begin{equation}\label{7.2'}
\epsilon_{n,m,r}(\xi)(h)=\int_{\widetilde{\mathrm{Y}}^n(\BA)}\tilde{\epsilon}_{2,1}^{1,n}(\xi)(yh)dy
=\int_{\widetilde{\mathrm{Y}}^n(\BA)}\int_{\widetilde{\mathrm{D}}_{2,1}^{1,n}(F)\backslash \widetilde{\mathrm{D}}_{2,1}^{1,n}(\BA)}\xi(vyh)\psi^{-1}_{\widetilde{\mathrm{D}}_{2,1}^{1,n}}(v)dvdy,
\end{equation}
where $\widetilde{\mathrm{Y}}^n$ denotes the subgroup of $\mathrm{Y}$, generated by $\mathrm{Y}^n$ and $\mathrm{Y}_{2,1}^{n,n+1}$, $\mathrm{Y}_{2,1}^{n+1,n+1}$, $\mathrm{Y}_{2,1}^{n+2,n+1}$.\\
The elements $y$ of $\widetilde{\mathrm{Y}}_{3,1}(n-1,n+1)$ have the
following form. Write $y$ as in \eqref{6.10}. Then $Y_{3,1}$ has the
form
\begin{equation}\label{7.4}
Y_{3,1}=\begin{pmatrix}0_{(n-1)r\times nr}&e\\0_{nr\times
	nr}&0_{nr\times (n-1)r}\end{pmatrix},
\end{equation}
where, if $Y_{2,1}$ is zero, then the $(n-1)r\times (n-1)r$ matrix
$e$ is such that $ w_{(n-1)r}e$ is symmetric (resp. anti-symmetric) when $H_{2nm}$ is symplectic (resp. orthogonal). The block $Y_{2,1}$ of $y$ above has the form
\begin{equation}\label{7.5}
Y_{2,1}=\begin{pmatrix}0&h_{1,1}&h_{1,2}&h_{1,3}&\cdots&h_{1,n-2}&h_{1,n-1}\\
0&0&h_{2,2}&h_{2,3}&&h_{2,n-2}&h_{2,n-1}\\0&0&0&h_{3,3}&&h_{3,n-2}&h_{3,n-1}\\
\vdots&&&&\cdots&&\vdots\\ 0&0&0&0&\dots& 0&h_{n-1,n-1}\\0&0&0&0&& 0&0\\0&0&0&0&& 0&0\end{pmatrix}.
\end{equation}
Here, the first zero block column has $nr$ columns. All other block columns are each of size $r$. The first block row is of size $(n-1)(m-2r)$. The second block row is of size $2(m-r)$. All the remaining block rows are of size $m-2r$.

In the second step, we exchange in
$\widetilde{\epsilon}_{2,1}^{1,n}$, in \eqref{7.3},
$\mathrm{Y}_{3,1}^{n-1,n+1}$ "into" $\mathrm{X}_{1,3}^{n,n-1}$. Note
that $\mathrm{Y}_{3,1}^{n-1,n+1}$ is isomorphic to the space of  $r\times r$
symmetric matrices, or the antisymmetric matrices, according to whether $H_{2nm}$ is symplectic or orthogonal, while $\mathrm{X}_{1,3}^{n,n-1}$ is
isomorphic to $M_{r\times r}$. The verification of the conditions of
Lemma 7.1 in \cite{GRS11} is straightforward. More precisely, let
$\mathrm{Y}_{2,1}(n-1,n+1)$ be the subgroup of elements in
$\widetilde{\mathrm{Y}}_{3,1}(n-1,n+1)$, written in the form
\eqref{6.10}, such that the block $Y_{3,1}^{n-1,n+1}$ is zero. Let
$\mathfrak{X}_{1,3}^{n,n-1}$ be the subgroup of elements in
$\mathrm{X}_{1,3}^{n,n-1}$, written as in \eqref{6.9}, such that the
block $X_{1,3}^{n,n-1}$ has the property that $ w_r X_{1,3}^{n,n-1}$
is symmetric (resp. anti-symmetric). Let $\mathfrak{X}_{1,3}(n,n-1)$ be the subgroup
generated by $\mathrm{X}_{1,2}(n,n)$ and
$\mathfrak{X}_{1,3}^{n,n-1}$. Define
\begin{equation}\label{7.6}
\mathfrak{D}_{3,1}^{n-1,n+1}=\mathrm{Y}_{2,1}(n-1,n+1)\mathrm{N}\mathfrak{X}_{1,3}(n,n-1).
\end{equation}
This is a unipotent subgroup of $H_{2nm}$. Denote, as usual,
by $\psi_{\mathfrak{D}_{3,1}^{n-1,n+1}}$ the character of
$\mathfrak{D}_{3,1}^{n-1,n+1}(\BA)$ obtained by extending
$\psi_N$ trivially. Then $\widetilde{\epsilon}_{2,1}^{1,n}$ is
trivial if and only if the following Fourier coefficient is
identically zero,
\begin{equation}\label{7.7}
\varepsilon_{3,1}^{n-1,n+1}(\xi)=\int_{\mathfrak{D}_{3,1}^{n-1,n+1}(F)\backslash
	\mathfrak{D}_{3,1}^{n-1,n+1}(\BA)}\xi(v)\psi_{\mathfrak{D}_{3,1}^{n-1,n+1}}^{-1}(v)dv.
\end{equation}
Let $\hat{\mathrm{Y}}^n$ be the subgroup generated by $\widetilde{\mathrm{Y}}^n$ and $\mathrm{Y}_{3,1}^{n-1,n+1}$. Using Proposition \ref{prop 7.2}, \eqref{7.2}, \eqref{7.2'} and \eqref{7.7}, we obtain our second reduction, valid for any automorphic representation of
$H(\BA)$.

\begin{prop}\label{prop 7.3}
	For all $\xi\in A(\Delta(\tau,m)\gamma_\psi^{(\epsilon)},\eta,k)$, we have
	\begin{multline*}
			\epsilon_{n,m,r}(\xi)(h)=\int_{\hat{\mathrm{Y}}^n_\BA}\varepsilon_{3,1}^{n-1,n+1}(\xi)(yh)dy
	=\\
	=\int_{\hat{\mathrm{Y}}^n_\BA}\int_{\mathfrak{D}_{3,1}^{n-1,n+1}(F)\backslash \mathfrak{D}_{3,1}^{n-1,n+1}(\BA)}\xi(vyh)\psi^{-1}_{\mathfrak{D}_{3,1}^{n-1,n+1}}(v)dvdy.
	\end{multline*}
	Moreover, $\epsilon_{n,m,r}$ is trivial on 	$A(\Delta(\tau,m)\gamma_\psi^{(\epsilon)},\eta,k)$ if and only if 
	$\varepsilon_{3,1}^{n-1,n+1}$ is trivial on $A(\Delta(\tau,m)\gamma_\psi^{(\epsilon)},\eta,k)$. This proposition is valid for any automorphic representation $A$ of $H(\BA)$.
\end{prop}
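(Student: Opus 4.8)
The plan is to obtain the proposition by assembling Proposition~\ref{prop 7.2} with the two further root-exchange steps described in the discussion above, tracking at each stage the unipotent subgroup over which one is left integrating. Proposition~\ref{prop 7.2} already reduces the triviality of $\epsilon_{n,m,r}$ to that of $\epsilon_{2,1}^{1,n}$ and writes $\epsilon_{n,m,r}(\xi)(h)$ as an integral of $\epsilon_{2,1}^{1,n}(\xi)$ over $\mathrm{Y}^n(\BA)$; it remains to push $\epsilon_{2,1}^{1,n}$ first to $\widetilde{\epsilon}_{2,1}^{1,n}$ of \eqref{7.2} and then to $\varepsilon_{3,1}^{n-1,n+1}$ of \eqref{7.7}, and to collapse the three resulting layers of integration into $\hat{\mathrm{Y}}^n$.

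First I would carry out the exchange of roots in $\epsilon_{2,1}^{1,n}$ that trades the blocks $\mathrm{Y}_{2,1}^{n,n+1}$, $\mathrm{Y}_{2,1}^{n+1,n+1}$, $\mathrm{Y}_{2,1}^{n+2,n+1}$ against $\mathrm{X}_{1,2}^{n,n}$, $\mathrm{X}_{1,2}^{n,n+1;0}$, $\mathrm{X}_{1,2}^{n,n+2}$. One takes $\mathrm{C}$ to be the subgroup of $\mathrm{D}_{2,1}^{1,n}$ on which these $\mathrm{Y}$- and $\mathrm{X}$-coordinates vanish, checks that $\mathrm{C}$ is $F$-unipotent, that $\mathrm{D}_{2,1}^{1,n}$ is the product of $\mathrm{C}$ with the $\mathrm{Y}$-blocks and $\widetilde{\mathrm{D}}_{2,1}^{1,n}$ the product of $\mathrm{C}$ with the $\mathrm{X}$-blocks, and that the commutator of a generic $\mathrm{X}$-element with a generic $\mathrm{Y}$-element lies in $\mathrm{N}$ and is paired by $\psi_N$ to the expected trace. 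These are the hypotheses of Lemma~7.1 of \cite{GRS11}; that lemma, Corollary~7.1 and the proof of Corollary~7.2 of \cite{GRS11} then give that $\epsilon_{2,1}^{1,n}$ is identically zero on $A$ if and only if $\widetilde{\epsilon}_{2,1}^{1,n}$ is, together with the integral formula which, combined with Proposition~\ref{prop 7.2}, is exactly \eqref{7.2'}, with $\widetilde{\mathrm{Y}}^n$ as defined there. The delicate point is the odd case $m=2m'-1$, where $\mathrm{X}_{1,2}^{n,n+1;0}$ and $\mathrm{Y}_{2,1}^{n+1,n+1}$ are only subgroups modulo $\mathrm{X}_{1,3}^{n,n}$ and $\mathrm{Y}_{3,1}^{n-1,n+1}$ and the blocks in column $n+1$ carry the extra coordinates of \eqref{6.5.2.1}, \eqref{6.6}; one must verify that the chosen representatives are compatible with the root-exchange set-up.

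Then I would perform the second, quadratic, exchange inside $\widetilde{\epsilon}_{2,1}^{1,n}$: trade $\mathrm{Y}_{3,1}^{n-1,n+1}$, isomorphic to the space of $r\times r$ symmetric (resp. anti-symmetric) matrices according as $H_{2nm}$ is symplectic (resp. orthogonal), against $\mathfrak{X}_{1,3}^{n,n-1}$, the subgroup of $\mathrm{X}_{1,3}^{n,n-1}$ on which $w_rX_{1,3}^{n,n-1}$ is symmetric (resp. anti-symmetric). Taking $\mathrm{C}=\mathrm{Y}_{2,1}(n-1,n+1)\mathrm{N}\mathrm{X}_{1,2}(n,n)$ and checking, as in Proposition~\ref{prop 7.1}, that the commutator pairing between $\mathrm{Y}_{3,1}^{n-1,n+1}$ and $\mathfrak{X}_{1,3}^{n,n-1}$ is the nondegenerate pairing on symmetric (resp. anti-symmetric) matrices, Lemma~7.1 of \cite{GRS11} applies and yields that $\widetilde{\epsilon}_{2,1}^{1,n}$ is identically zero on $A$ if and only if $\varepsilon_{3,1}^{n-1,n+1}$ of \eqref{7.7} is, together with the formula expressing $\widetilde{\epsilon}_{2,1}^{1,n}(\xi)$ as an integral of $\varepsilon_{3,1}^{n-1,n+1}(\xi)$ over $\mathrm{Y}_{3,1}^{n-1,n+1}(\BA)$. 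Substituting this into \eqref{7.2'} and collapsing the layers $\mathrm{Y}^n$, $\mathrm{Y}_{2,1}^{n,n+1}\mathrm{Y}_{2,1}^{n+1,n+1}\mathrm{Y}_{2,1}^{n+2,n+1}$ and $\mathrm{Y}_{3,1}^{n-1,n+1}$ into $\hat{\mathrm{Y}}^n$ yields the displayed double-integral formula, and chaining the equivalences gives that $\epsilon_{n,m,r}$ is trivial on $A$ if and only if $\varepsilon_{3,1}^{n-1,n+1}$ is. Since no property of $A$ beyond being a space of automorphic functions stable under right translation was used, the statement holds for any automorphic $A$ of $H(\BA)$.

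The main obstacle, as in Proposition~\ref{prop 7.1}, is bookkeeping: at each step one must correctly identify the auxiliary group $\mathrm{C}$ and confirm that the pieces meet the precise hypotheses of the root-exchange lemma of \cite{GRS11} --- that all groups involved are unipotent over $F$, that $\mathrm{B}=\mathrm{C}\mathrm{Y}=\mathrm{C}\mathrm{X}$ in the required sense, and that the commutators land in $\mathrm{N}$ with the correct value of $\psi_N$. The point that genuinely demands care is the odd-$m$ case, because of the extra coordinates in \eqref{6.5.2.1}--\eqref{6.6.1} and because $\mathrm{X}_{1,2}^{n,n+1;0}$ and $\mathrm{Y}_{2,1}^{n+1,n+1}$ are groups only modulo smaller subgroups; everything else is a direct assembly of results already established.
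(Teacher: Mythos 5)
Your proposal is correct and follows essentially the same route as the paper: Proposition~\ref{prop 7.2} followed by the two root exchanges (first $\mathrm{Y}_{2,1}^{n,n+1}\mathrm{Y}_{2,1}^{n+1,n+1}\mathrm{Y}_{2,1}^{n+2,n+1}$ against $\mathrm{X}_{1,2}^{n,n}\mathrm{X}_{1,2}^{n,n+1;0}\mathrm{X}_{1,2}^{n,n+2}$, then $\mathrm{Y}_{3,1}^{n-1,n+1}$ against $\mathfrak{X}_{1,3}^{n,n-1}$ via Lemma~7.1 and Corollary~7.1 of \cite{GRS11}), with the layers of integration collected into $\hat{\mathrm{Y}}^n$. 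You also correctly flag the only genuinely delicate point, namely the odd-$m$ bookkeeping where the relevant blocks are groups only modulo smaller subgroups.
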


\section{Fourier expansions I}

We will show that the Fourier coefficient
$\varepsilon_{3,1}^{n-1,n+1}$ remains unchanged when we replace
$\mathfrak{X}_{1,3}^{n,n-1}$ by $\mathrm{X}_{1,3}^{n,n-1}$. Here
will be the first time that we use special properties of
$A(\Delta(\tau,m)\gamma_\psi^{(\epsilon)},\eta,k)$. Denote by $\mathrm{X}_{1,3}(n,n-1)$ the group
generated by $\mathrm{X}_{1,2}(n,n)$ and $\mathrm{X}_{1,3}^{n,n-1}$,
and let
\begin{equation}\label{8.1}
\mathrm{D}_{3,1}^{n-1,n+1}=\mathrm{Y}_{2,1}(n-1,n+1)\mathrm{N}\mathrm{X}_{1,3}(n,n-1).
\end{equation}
This is an $F$- unipotent subgroup of $H_{2nm}$. Denote by
$\psi_{\mathrm{D}_{3,1}^{n-1,n+1}}$ the character of
$\mathrm{D}_{3,1}^{n-1,n+1}(\BA)$ obtained by extending $\psi_N$
trivially.
\begin{prop}\label{prop 8.1}
	Let $\xi\in A(\Delta(\tau,m)\gamma_\psi^{(\epsilon)},\eta,k)$. Then
	\begin{equation}\label{8.1'}
	\varepsilon_{3,1}^{n-1,n+1}(\xi)(g)=\int_{\mathrm{D}_{3,1}^{n-1,n+1}(F)\backslash
		\mathrm{D}_{3,1}^{n-1,n+1}(\BA)}\xi(vg)\psi_{\mathrm{D}_{3,1}^{n-1,n+1}}^{-1}(v)dv.
	\end{equation}
		\end{prop}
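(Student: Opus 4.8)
The plan is to recover the larger integral over $\mathrm{D}_{3,1}^{n-1,n+1}$ from $\varepsilon_{3,1}^{n-1,n+1}$ by a Fourier expansion along the extra directions and to kill every nontrivial term using the nilpotent orbit bound of Section 3. Write $\mathrm{W}$ for the abelian subgroup of $\mathrm{X}_{1,3}^{n,n-1}$ consisting of the elements for which $w_rX_{1,3}^{n,n-1}$ is anti-symmetric (resp. symmetric) when $H_{2nm}$ is symplectic (resp. orthogonal), so that, the $(1,3)$-block being abelian, $\mathrm{X}_{1,3}^{n,n-1}=\mathfrak{X}_{1,3}^{n,n-1}\mathrm{W}$, hence $\mathrm{X}_{1,3}(n,n-1)=\mathfrak{X}_{1,3}(n,n-1)\mathrm{W}$ and $\mathrm{D}_{3,1}^{n-1,n+1}=\mathfrak{D}_{3,1}^{n-1,n+1}\mathrm{W}$. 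Since $\mathfrak{D}_{3,1}^{n-1,n+1}$ is normal in $\mathrm{D}_{3,1}^{n-1,n+1}$ with quotient $\mathrm{W}$ — this is a direct, if somewhat involved, matrix check, consistent with the fact already recorded in the text that $\mathrm{D}_{3,1}^{n-1,n+1}$ is a group and $\psi_{\mathrm{D}_{3,1}^{n-1,n+1}}$ a well-defined character — and since $\psi_{\mathrm{D}_{3,1}^{n-1,n+1}}$ restricts to $\psi_N$ on $\mathfrak{D}_{3,1}^{n-1,n+1}$ and is trivial on $\mathrm{W}$, the right-hand side of \eqref{8.1'} unwinds to $\int_{\mathrm{W}(F)\backslash\mathrm{W}(\BA)}\varepsilon_{3,1}^{n-1,n+1}(\xi)(wg)\,dw$.

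Consequently the identity \eqref{8.1'} is equivalent to the statement that, for $\xi\in A(\Delta(\tau,m)\gamma_\psi^{(\epsilon)},\eta,k)$, the function $w\mapsto\varepsilon_{3,1}^{n-1,n+1}(\xi)(wg)$ on $\mathrm{W}(F)\backslash\mathrm{W}(\BA)$ has all its nontrivial Fourier coefficients equal to zero: then that function is constant in $w$, and its average over $\mathrm{W}(F)\backslash\mathrm{W}(\BA)$ equals its value at $w=e$, which is $\varepsilon_{3,1}^{n-1,n+1}(\xi)(g)$. So the task is reduced to showing that, for every nontrivial additive character $\chi$ of $\mathrm{W}(F)\backslash\mathrm{W}(\BA)$, the coefficient
$$
\int_{\mathrm{W}(F)\backslash\mathrm{W}(\BA)}\varepsilon_{3,1}^{n-1,n+1}(\xi)(wg)\,\chi^{-1}(w)\,dw
$$
vanishes identically on $A(\Delta(\tau,m)\gamma_\psi^{(\epsilon)},\eta,k)$.

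To do this I would, for a fixed nontrivial $\chi$, conjugate by a suitable element of the diagonal torus and perform one more round of root exchange in the sense of Lemma 7.1 of \cite{GRS11}, so as to turn the above integral into a Fourier coefficient of $\xi$ attached to a unipotent orbit of the Lie algebra of $H_{2nm}$ whose partition strictly dominates, or is incomparable with, the bounding partition supplied by Proposition \ref{prop 3.1} or Proposition \ref{prop 3.2} (in the case of functoriality, one of $((2n)^{m-2k},n^{4k})$, $((2n)^{m-2k+1},n^{4k-2})$, and the variants of Proposition \ref{prop 3.2}). Concretely, switching on $\chi$ augments the Whittaker-type datum $\psi_N$ on the $\GL_{(2n-1)r}$-block $\mathrm{N}$ by the additional coordinates on the $w_r$-(anti)symmetric directions, which pushes the supporting orbit up past the Section 3 bound. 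Since every nonzero Fourier coefficient of $A(\Delta(\tau,m)\gamma_\psi^{(\epsilon)},\eta,k)$ is supported on orbits bounded by that partition, all the $\chi$-coefficients vanish, and \eqref{8.1'} follows.

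The main obstacle is exactly this last identification: for each nontrivial $\chi$ one must trace, back through the conjugation by $w_0$ of Section 5 and the chain of root exchanges of Sections 6 and 7, how the new coordinates reshape $\psi_N$ into a degenerate Whittaker datum, read off the resulting partition, and verify in each of the symplectic/orthogonal and $m$-even/$m$-odd cases that it escapes the Section 3 bound. It should be cleanest to first conjugate within $\GL_r$ to reduce to the single most non-degenerate character $\chi$ — whose $\GL_r$-orbit is Zariski dense in $\mathrm{W}$ — exhibit its orbit explicitly, and then treat the remaining characters by degeneration. A secondary nuisance is the normalization and commutator bookkeeping underlying the first paragraph, which in the odd-$m$ case interacts with the special blocks \eqref{6.6}, \eqref{6.6.1} and has to be checked by hand.
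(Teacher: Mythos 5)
Your first two paragraphs are correct and coincide with the paper's setup: the paper writes $\mathrm{X}_{1,3}^{n,n-1}=\mathfrak{X}_{1,3}^{n,n-1}\cdot x(S_r)$ with $S_r=\{z\in M_{r\times r}:{}^t(w_rz)=-\delta_H(w_rz)\}$ (your $\mathrm{W}$), reduces \eqref{8.1'} to the constancy in $z\in S_r(\BA)$ of $\Phi_\xi(z)=\varepsilon_{3,1}^{n-1,n+1}(\xi)(x(z))$, and proves constancy by showing every Fourier coefficient $\int_{S_r(F)\backslash S_r(\BA)}\Phi_\xi(z)\psi^{-1}(tr(zL))dz$ with $L\neq 0$ vanishes. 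Where your proposal stops is exactly where the proof begins: you assert that "switching on $\chi$ pushes the supporting orbit past the Section 3 bound" but do not produce the orbit, and you flag this identification as "the main obstacle." That identification is the entire content of the proposition. The paper carries it out not by a root exchange but by inserting an inner integration over $E=U_{r^n}\cap\mathrm{X}_{1,3}(n,n-1)$, Fourier-expanding further along the zero blocks $a,b$ of \eqref{8.6}, and conjugating by the rational unipotent $\zeta_A$ (using $A'A=0$) to reduce to $A=0$; the resulting coefficient $\phi_{\xi,L}$ is then read off, via \eqref{8.13}, as a degenerate Whittaker datum for the partition $\bigl((2n+1)^{\ell},(n+1)^{2(r-\ell)},1^{2nm-2(n+1)r+\ell}\bigr)$ with $\ell=\mathrm{rank}(L)$, whose leading part $2n+1$ exceeds the leading part $2n$ of every bounding partition in Propositions \ref{prop 3.1}, \ref{prop 3.2}, forcing $\ell=0$.

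Two specific points in your sketched plan would fail or mislead. First, "reduce to the single most non-degenerate character $\chi$ ... and then treat the remaining characters by degeneration" is not a valid reduction: vanishing of the coefficient attached to a rank-$r$ matrix $L$ says nothing about the coefficients attached to lower-rank $L$, and the degenerate coefficients are not limits or integrals of the non-degenerate one. One must treat every $\GL_r(F)$-orbit of nonzero $L\in S_r(F)$, i.e.\ every rank $\ell\geq 1$, which the paper does uniformly since the resulting partition depends only on $\ell$ and already violates the bound for $\ell=1$. Second, no further root exchange is used here (and it is not clear one is available: the missing directions are the $(w_r\cdot)$-(anti)symmetric half of a single $r\times r$ block, not a full root subgroup paired against an opposite one); the correct tool is the auxiliary Fourier expansion plus the rational conjugation by $\zeta_A$ described above. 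Without carrying out the orbit computation for each $\ell\geq 1$ and in each of the symplectic/orthogonal, $m$ even/odd cases, the argument is incomplete.
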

\begin{proof}
Recall that we denote $\delta_H=1$ when $H_{2nm}$ is symplectic, and $\delta_H=-1$ when $H_{2nm}$ is orthogonal. Let $S_r$ denote the subspace of all $z\in M_{r\times r}$, such
that ${}^t(w_rz)=-\delta_H(w_rz)$. Denote by $x(z)$ the element in
	$\mathrm{X}_{1,3}^{n,n-1}$, such that its corresponding
	$X_{1,3}^{n,n-1}$ block is $z$. Consider the following function on
	$S_r(\BA)$,
	\begin{equation}\label{8.2}
	\Phi_\xi(z)=\varepsilon_{3,1}^{n-1,n+1}(\xi)(x(z))=\int_{\mathfrak{D}_{3,1}^{n-1,n+1}(F)\backslash
		\mathfrak{D}_{3,1}^{n-1,n+1}(\BA)}\xi(vx(z))\psi_{\mathfrak{D}_{3,1}^{n-1,n+1}}^{-1}(v)dv.
	\end{equation}
A direct verification shows that $\Phi_\xi$ is $S_r(F)$ invariant.
	Thus it defines a smooth function on the compact abelian group
	$S_r(F)\backslash S_r(\BA)$. Consider its Fourier expansion. A
	typical character of $S_r(F)\backslash S_r(\BA)$ has the form
	$\psi(tr(zL))$, where $L\in S_r(F)$. The corresponding Fourier coefficient of $\Phi_\xi$ is
	\begin{equation}\label{8.3}
	\int_{S_r(F)\backslash S_r(\BA)}\Phi_\xi(z)\psi^{-1}(tr(zL))dz.
	\end{equation}
	We will show that the Fourier coefficient \eqref{8.3} is zero on
	$A(\Delta(\tau,m)\gamma_\psi^{(\epsilon)},\eta,k)$, for all nonzero $L$. In fact, we will show that an
	inner integration inside \eqref{8.3}, after substituting \eqref{8.2},
	is zero, for all nonzero $L$. To describe this inner integration,
	consider the unipotent radical $U_{r^n}$ of
	$Q_{r^n}=Q_{r^n}^H$. Denote $E=U_{r^n}\cap
	\mathrm{X}_{1,3}(n,n-1)$. The elements of $E$ have the form
	\begin{equation}\label{8.4}
	v=\begin{pmatrix}U&B&C\\&I_{2n(m-r)}&B'\\&&U^*\end{pmatrix}\in H_{2nm},
	\end{equation}
	where $U$ has the form
	\begin{equation}\label{8.5}
	U=\begin{pmatrix}I_r&x_1&\ast&\cdots&\ast\\&I_r&x_2&&\ast\\&&\cdots\\
	&&&&x_{n-1}\\&&&&I_r\end{pmatrix}.
	\end{equation}
	Write $B$ as a matrix of blocks, which has $n$ row blocks, each one
	of size $r$, and it has $4n-1$ block columns, the first and last
	$n-1$ of which have size $r$; the next $n-1$ block columns from both ends have size $m-2r$ each, and the three middle block
	columns have sizes $[\frac{m}{2}], 2([\frac{m+1}{2}]-r), [\frac{m}{2}]$, respectively. Then the last
	block row of $B$ has the following form (of size $r$)
	\begin{equation}\label{8.6}
	\begin{pmatrix}x_n,&\alpha,&0_{r\times (n-1)(m-2r)},&\beta,&0_{r\times (n-2)r},&z\end{pmatrix},
	\end{equation}
	where $x_n,z\in M_{r\times r}$, $\alpha\in M_{r\times (n-2)r}$, $\beta\in M_{r\times (2(m-r)+(n-1)(m-2r))}$. The
	following is an inner integration of \eqref{8.3} (when we let $\xi$ vary in the space of\\
	 $A(\Delta(\tau,m)\gamma_\psi^{(\epsilon)},\eta,k)$),
	\begin{equation}\label{8.7}
	\phi_\xi=\int_{E(F)\backslash E(\BA)}\xi(v)\psi_E^{-1}(v)dv,
	\end{equation}
	where $\psi_E$ is the character of $E(\BA)$, whose value on $v$ in
	\eqref{8.4}, with coordinates in $\BA$, is
	$\psi(tr(x_1+x_2+\cdots+x_n+Lz))$, using the notation in
	\eqref{8.5}, \eqref{8.6}. We will show that the Fourier coefficient
	\eqref{8.7} is zero on $A(\Delta(\tau,m)\gamma_\psi^{(\epsilon)},\eta,k)$, for all nonzero $L$. For this,
	we will further consider the Fourier expansion of \eqref{8.7} along
	the coordinates in \eqref{8.6} in position of the zero matrices.
	That is, let $a\in M_{r\times (n-1)(m-2r)}(\BA)$ and $b\in M_{r\times
		(n-2)r}(\BA)$. Let $x(a,b)$ be any element in $U_{r^n}(\BA)$
	of the form \eqref{8.4}, with $U=I_{nr}$, $B$ with its first $n-1$
	block rows (of size $r$ each) being zero, and the last block row,
	such that in \eqref{8.6}, we replace the two zero blocks by $a$,
	$b$, and replace $x_n$, $\alpha$, $\beta$, $z$ by zero. Define
	\begin{equation}\label{8.8}
	\phi_\xi(a,b)=\int_{E(F)\backslash E(\BA)}\xi(vx(a,b))\psi_E^{-1}(v)dv.
	\end{equation}
	Note that this is independent of the choice of $x(a,b)$ (we did not
	specify $C$). Clearly, $\phi_\xi$ in \eqref{8.8} is a smooth
	function on the compact abelian group
	$$
	(M_{r\times (n-1)(m-2r)}(F)\backslash M_{r\times (n-1)(m-2r)}(\BA))\times (
	M_{r\times (n-2)r}(F)\backslash M_{r\times (n-2)r}(\BA).
	$$
	Consider its Fourier expansion. A typical Fourier coefficient of
	$\phi_\xi$ has the form
	\begin{equation}\label{8.9}
	\int\int\phi_\xi(a,b)\psi^{-1}(tr(aH)+tr(bJ))dbda,
	\end{equation}
	where the $db$-integration is over $M_{r\times (n-2)r}(F)\backslash M_{r\times (n-2)r}(\BA)$, and the $da$-integration is over $M_{r\times (n-1)(m-2r)}(F)\backslash M_{r\times (n-1)(m-2r)}(\BA)$. Also, $H\in M_{(n-1)(m-2r)\times r}(F)$, $J\in M_{(n-2)r\times r}(F)$.
	When we substitute \eqref{8.8}, we obtain a Fourier coefficient of
	$\xi$ along $U_{r^n}(F)\backslash U_{r^n}(\BA)$ of the form
	\begin{equation}\label{8.10}
	\phi_{\xi,L,A}=\int_{U_{r^n}(F)\backslash U_{r^n}(\BA)}\xi(v)\psi_{U_{r^n},L,A}^{-1}(v)dv.
	\end{equation}
	Here, $A\in M_{2(nm-r(n+1))\times r}(F)$ is the following
	matrix
	$$
	A=\begin{pmatrix}0_{(n-2)r\times r}\\H\\0_{((n+1)m-2nr)\times
		r}\\J\end{pmatrix}.
	$$
	The character $\psi_{U_{r^n},L,A}$ assigns to an element $v\in
	U_{r^n}(\BA)$, written in the form \eqref{8.4}, with $U$ as in
	\eqref{8.5} and $B$ with last row block of size $r$ of the form
	\begin{equation}\label{8.11}
	(x_n,f,z),\quad f\in M_{r\times 2(nm-r(n+1))}(\BA),
	\end{equation}
	the value
	\begin{equation}\label{8.12}
	\psi_{U_{r^n},L,A}(v)=\psi(tr(x_1+\cdots x_n)+tr(fA)+tr(zL)).
	\end{equation}
	Of course $\psi_{U_{r^n},L,A}$ is an extension of $\psi_E$ by the
	character $\psi(tr(aH)+tr(bJ))$ used in \eqref{8.9}. We want to show
	that $\phi_{\xi,L,A}=0$, identically on $A(\Delta(\tau,m)\gamma_\psi^{(\epsilon)},\eta,k)$, for all matrices $A$
	as above, and all $L\neq 0$. Note that the character
	$\psi_{U_{r^n},L,A}$ is conjugate to $\psi_{U_{r^n},L,0}$. Indeed,
	let
	$$
	\zeta_A=\diag (I_{nr},\begin{pmatrix}
	I_r\\A&I_{2nm-2(n+1)r}\\0&A'&I_r\end{pmatrix},I_{nr})\in
	H_{2nm}(F).
	$$
	Then
	$$
	\phi_{\xi,L,A}=\int_{U_{r^n}(F)\backslash U_{r^n}(\BA)}\xi(\zeta_A
	v)\psi_{U_{r^n},L,A}^{-1}(v)dv=\int_{U_{r^n}(F)\backslash
		U_{r^n}(\BA)}\xi(v\zeta_A)\psi_{U_{r^n},L,0}^{-1}(v)dv.
	$$
	We changed variable $\zeta_A v \zeta_A^{-1}\mapsto v$ and we used
	the fact that $A'A=0$. Thus, we may assume that $A=0$, i.e. $H=0$,
	$J=0$. Denote, for short,
	$$
	\phi_{\xi,L,0}=\phi_{\xi,L}.
	$$
	The character $\psi_{U_{r^n},L,0}$ corresponds, in the sense of
	\cite{MW87}, to the nilpotent element $Y$ in $Lie(H_{2nm})(F)$
	and to the one parameter subgroup $\varphi(s)$, where
	\begin{equation}\label{8.13}
	Y=\begin{pmatrix}\mathcal{U}\\
	\mathcal{V}&0_{2n(m-r)\times 2n(m-r)}\\0_{nr\times
		nr}&\mathcal{V}'&-\mathcal{U}\end{pmatrix},
	\end{equation}
	and
	$$
	\mathcal{U}=\begin{pmatrix}0\\I_r&0\\&I_r\\
	&&\cdots\\&&&I_r&0\end{pmatrix}\in M_{nr\times nr}(F);
	$$
	$$
	\mathcal{V}=\begin{pmatrix}0_{r\times
		(n-1)r}&I_r\\0_{(2nm-2(n+1)r)\times(n-1)r}&0_{(2nm-2(n+1)r)\times
		r}\\0_{r\times (n-1)r}&L\end{pmatrix};
	$$
	$$
	\varphi(s)=\diag(s^{2n}I_r,s^{2n-2}I_r,...,s^2I_r, I_r,
	I_{2n(m-r)},I_r,s^{-2}I_r,...,s^{-2n}I_r).
	$$
	A simple calculation shows that the nilpotent orbit of $Y$
	corresponds to the partition $((2n+1)^\ell,
	(n+1)^{2(r-\ell)},1^{2nm-2(n+1)r+\ell})$, where $\ell=rank(L)$.
	By Propositions \ref{prop 3.1}, \ref{prop 3.2}, we obtain that if $\ell>0$, then
	$\phi_{\xi,L}=0$, for all $\xi\in A(\Delta(\tau,m)\gamma_\psi^{(\epsilon)},\eta,k)$. This proves the proposition.
\end{proof}
Now, we can repeat the steps of Proposition \ref{prop 7.1} and
exchange, in the integral \eqref{8.1'}, $\mathrm{Y}_{2,1}^{n-1,n+1}$
with $\mathrm{X}_{1,2}^{n,n-1}$, $\mathrm{Y}_{3,1}^{n-2,n+1}$ with
$\mathrm{X}_{1,3}^{n,n-2}$, $\mathrm{Y}_{2,1}^{n-2,n+1}$ with
$\mathrm{X}_{1,2}^{n,n-2}$ and so on, until we fill block row $n$ in
$\mathrm{X}_{1,3}$ and in $\mathrm{X}_{1,2}$, using block column
$n+1$ in $\mathrm{Y}_{3,1}$ and in $\mathrm{Y}_{2,1}$. More
precisely, let $1\leq i\leq n-2$. Denote by
$\mathrm{Y}_{3,1}(i,n+1)$, the subgroup of elements in
$\mathrm{Y}_{2,1}(n-1,n+1)$, such that their blocks
$Y_{3,1}^{i',n+1}$ and $Y_{2,1}^{i',n+1}$ are zero, for $i<i'<n-1$.
Similarly, denote by $\mathrm{Y}_{2,1}(i,n+1)$, the subgroup of
elements in $\mathrm{Y}_{3,1}(i,n+1)$, such that their
$Y_{3,1}^{i,n+1}$ block is zero. Next, denote by
$\mathrm{X}_{1,3}(n,i)$ the subgroup generated by
$\mathrm{X}_{1,3}(n,n-1)$ and the subgroups
$\mathrm{X}_{1,3}^{n,j}$, $\mathrm{X}_{1,2}^{n,j'}$, for $i\leq
j\leq n-2$ and $i<j'\leq n-1$. Similarly, denote by
$\mathrm{X}_{1,2}(n,i)$ the subgroup generated by
$\mathrm{X}_{1,3}(n,i)$ and $\mathrm{X}_{1,2}^{n,i}$.\\
Define, for $1\leq i\leq n-1$,
\begin{equation}\label{8.14}
\begin{array}{rcl}
\mathrm{D}_{3,1}^{i,n+1}&=&\mathrm{Y}_{2,1}(i,n+1)\mathrm{N}\mathrm{X}_{1,3}(n,i);\\
\mathrm{D}_{2,1}^{i,n+1}&=&\mathrm{Y}_{3,1}(i-1,n+1)\mathrm{N}\mathrm{X}_{1,2}(n,i)\qquad
(i>1);\\
\mathrm{D}_{2,1}^{1,n+1}&=&\mathrm{Y}_{3,1}(n-2,n+2)\mathrm{N}\mathrm{X}_{1,2}(n,1).
\end{array}
\end{equation}
Here, $\mathrm{Y}_{3,1}(n-2,n+2)$ is defined as before, namely, this
is the subgroup of elements inside $\mathrm{Y}_{1,2}(1,n+1)$, such
their block $Y_{1,2}^{1,n+1}$ is zero. As before, these are
$F$-unipotent subgroups of $H_{2nm}$, and we also have the
characters $\psi_{\mathrm{D}_{3,1}^{i,n+1}}$ and
$\psi_{\mathrm{D}_{2,1}^{i,n+1}}$ of the corresponding adele groups,
obtained by the trivial extension of $\psi_N$. Now, applying the notation
\eqref{7.1}, let, for $1\leq i\leq n-1$, $\xi \in A(\Delta(\tau,m)\gamma_\psi^{(\epsilon)},\eta,k)$,
$g\in H_{2nm}(\BA)$,
\begin{equation}\label{8.15}
\epsilon_{2,1}^{i,n+1}(\xi)(g)=\int_{\mathrm{D}_{2,1}^{i,n+1}(F)\backslash
	\mathrm{D}_{2,1}^{i,n+1}(\BA)}\xi(vg)\psi_{\mathrm{D}_{2,1}^{i,n+1}}^{-1}(v)dv,
\end{equation}

$$
\epsilon_{3,1}^{i,n+1}(\xi)(g)=\int_{\mathrm{D}_{3,1}^{i,n+1}(F)\backslash
	\mathrm{D}_{3,1}^{i,n+1}(\BA)}\xi(vg)\psi_{\mathrm{D}_{3,1}^{i,n+1}}^{-1}(v)dv,
$$
Proposition \ref{prop 8.1} says that, for $\xi\in A(\Delta(\tau,m)\gamma_\pi^{(\epsilon)},\eta,k)$,
$$
\epsilon_{3,1}^{n-1,n+1}(\xi)=\varepsilon_{3,1}^{n-1,n+1}(\xi),
$$
and now the proof of Proposition \ref{prop 7.1} works, word for word,
and gives

\begin{prop}\label{prop 8.2}
	Let $1\leq i\leq n-1$.\\
	{\bf 1.} 
	$$
	\epsilon_{3,1}^{i,n+1}(\xi)(h)=\int_{\mathrm{Y}_{2,1}^{i,n+1}(\BA)}
	\epsilon_{2,1}^{i,n+1}(\xi)(yh)dy.
	$$ 	
	Moreover, $\epsilon_{3,1}^{i,n+1}$ is zero on $A(\Delta(\tau,m)\gamma_\psi^{(\epsilon)},\eta,k)$ if and
	only if $\epsilon_{2,1}^{i,n+1}$ is zero on $A(\Delta(\tau,m)\gamma_\psi^{(\epsilon)},\eta,k)$.\\
	{\bf 2.} Assume that $i>1$. Then 
	$$
	\epsilon_{2,1}^{i,n+1}(\xi)(h)=\int_{\mathrm{Y}_{3,1}^{i-1,n+1}(\BA)}
	\epsilon_{3,1}^{i-1,n+1}(\xi)(yh)dy.
	$$	
	Moreover, $\epsilon_{2,1}^{i,n+1}$ is zero on
	$A(\Delta(\tau,m)\gamma_\psi^{(\epsilon)},\eta,k)$ if and only if $\epsilon_{3,1}^{i-1,n+1}$ is zero on
	$A(\Delta(\tau,m)\gamma_\psi^{(\epsilon)},\eta,k)$.\\
	\end{prop}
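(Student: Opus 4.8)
The plan is to run the argument of Proposition~\ref{prop 7.1} verbatim, now with the groups defined in \eqref{8.14} in place of those in \eqref{6.12}; the only genuinely new input is Proposition~\ref{prop 8.1}. Indeed, Proposition~\ref{prop 8.1} identifies $\epsilon_{3,1}^{n-1,n+1}$ with $\varepsilon_{3,1}^{n-1,n+1}$ on $A(\Delta(\tau,m)\gamma_\psi^{(\epsilon)},\eta,k)$, i.e.\ it enlarges the symmetric (resp.\ antisymmetric) group $\mathfrak{X}_{1,3}^{n,n-1}$ to the full $\mathrm{X}_{1,3}^{n,n-1}$. This is precisely what makes the chain of root exchanges below possible: starting from $\mathrm{D}_{3,1}^{n-1,n+1}$ we successively fill block row $n$ of $\mathrm{X}_{1,3}$ and of $\mathrm{X}_{1,2}$ out of block column $n+1$ of $\mathrm{Y}_{3,1}$ and of $\mathrm{Y}_{2,1}$.

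For part 1 I would set $\mathrm{B}_{2,1}^{i,n+1}=\mathrm{D}_{3,1}^{i,n+1}$ and
$$
\mathrm{C}_{2,1}^{i,n+1}=\mathrm{Y}_{3,1}(i-1,n+1)\,\mathrm{N}\,\mathrm{X}_{1,3}(n,i)\qquad(i>1),
$$
with $\mathrm{Y}_{3,1}(n-2,n+2)$ replacing $\mathrm{Y}_{3,1}(i-1,n+1)$ when $i=1$, and then check, by a direct matrix computation, that $\mathrm{C}_{2,1}^{i,n+1}$ is an $F$-unipotent subgroup and that
$$
\mathrm{B}_{2,1}^{i,n+1}=\mathrm{C}_{2,1}^{i,n+1}\,\mathrm{Y}_{2,1}^{i,n+1},\qquad
\mathrm{D}_{2,1}^{i,n+1}=\mathrm{C}_{2,1}^{i,n+1}\,\mathrm{X}_{1,2}^{n,i}.
$$
The commutator hypothesis of Lemma 7.1 of \cite{GRS11} is then verified exactly as in Proposition~\ref{prop 7.1}: for $x\in\mathrm{X}_{1,2}^{n,i}$ with $(n,i)$-block $X$ and $y\in\mathrm{Y}_{2,1}^{i,n+1}$ with $(i,n+1)$-block $Y$ we get $[x,y]\in\mathrm{N}$, whose $U$-part carries the block $XY$ in position $(n,n+1)$ and zeros elsewhere above the diagonal, so $[x,y]\in\mathrm{C}_{2,1}^{i,n+1}$ and $\psi_{\mathrm{C}_{2,1}^{i,n+1}}([x,y])=\psi(\tr(XY))$; moreover the restriction $\psi_{\mathrm{C}_{2,1}^{i,n+1}}$ of $\psi_{\mathrm{B}_{2,1}^{i,n+1}}$ is preserved under conjugation by $\mathrm{Y}_{2,1}^{i,n+1}(\BA)$ and by $\mathrm{X}_{1,2}^{n,i}(\BA)$. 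Applying Lemma 7.1, Corollary 7.1 and the proof of Corollary 7.2 in \cite{GRS11} yields the displayed identity and the ``zero if and only if zero'' equivalence. Part 2 is identical with $\mathrm{B}_{3,1}^{i-1,n+1}=\mathrm{D}_{2,1}^{i,n+1}$, $\mathrm{C}_{3,1}^{i-1,n+1}=\mathrm{Y}_{2,1}(i-1,n+1)\,\mathrm{N}\,\mathrm{X}_{1,2}(n,i)$, factorizations $\mathrm{B}_{3,1}^{i-1,n+1}=\mathrm{C}_{3,1}^{i-1,n+1}\,\mathrm{Y}_{3,1}^{i-1,n+1}$ and $\mathrm{D}_{3,1}^{i-1,n+1}=\mathrm{C}_{3,1}^{i-1,n+1}\,\mathrm{X}_{1,3}^{n,i-1}$, exchanging $\mathrm{Y}_{3,1}^{i-1,n+1}$ with $\mathrm{X}_{1,3}^{n,i-1}$; the boundary case $i=1$ is handled with $\mathrm{Y}_{3,1}(n-2,n+2)$ as in \eqref{8.14}, just as in the third part of Proposition~\ref{prop 7.1}.

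The main obstacle is entirely combinatorial bookkeeping: verifying that the objects in \eqref{8.14} are genuine subgroups, that the two factorizations $\mathrm{B}=\mathrm{C}\,\mathrm{Y}$ and $\mathrm{D}=\mathrm{C}\,\mathrm{X}$ hold, and that the relevant commutators land in $\mathrm{N}$ with exactly the right entry and no spillover. Extra care is needed because $j=n+1$ is the ``corner'' column, where the blocks of $Y_{2,1}$ and $Y_{3,1}$ meet the middle block $V$ of $\mathcal{D}_{n,m,r}$; when $m=2m'-1$ is odd there is in addition the exceptional block \eqref{6.6} to track, but since Proposition~\ref{prop 8.1} has already absorbed it into $\mathrm{X}_{1,3}^{n,n-1}$, all of the subsequent exchanges are uniform in the parity of $m$.
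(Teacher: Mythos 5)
Your proposal is correct and is essentially the paper's own argument: the paper proves Proposition \ref{prop 8.2} by observing that Proposition \ref{prop 8.1} gives $\epsilon_{3,1}^{n-1,n+1}=\varepsilon_{3,1}^{n-1,n+1}$ and then stating that "the proof of Proposition \ref{prop 7.1} works, word for word," with the groups of \eqref{8.14} in place of \eqref{6.12}. The subgroups $\mathrm{C}_{2,1}^{i,n+1}$, $\mathrm{C}_{3,1}^{i-1,n+1}$, the factorizations, the commutator computation landing in position $(n,n+1)$ of $U$, and the appeal to Lemma 7.1 and Corollary 7.1 of \cite{GRS11} that you spell out are exactly the intended instantiation.
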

Let $\mathrm{Y}^{n+1}$ be the subgroup of $\mathrm{Y}$ generated by $\hat{\mathrm{Y}}^n$ and the subgroups $\mathrm{Y}_{3,1}^{i,n+1}$, $1\leq i\leq n-2$, $\mathrm{Y}_{2,1}^{i',n+1}$, $1\leq i'\leq n-1$. From Proposition \ref{prop 7.3} and the last two propositions, we
conclude
\begin{prop}\label{prop 8.3}
	For all $\xi\in A(\Delta(\tau,m)\gamma_\pi^{(\epsilon)},\eta,k)$, we have
	\begin{multline*}
	\epsilon_{n,m,r}(\xi)(h)=\int_{\mathrm{Y}^{n+1}_\BA}\epsilon_{2,1}^{1,n+1}(\xi)(yh)dy
	=\\
	=\int_{\mathrm{Y}^{n+1}_\BA}\int_{\mathrm{D}_{2,1}^{1,n+1}(F)\backslash \mathrm{D}_{2,1}^{1,n+1}(\BA)}\xi(vyh)\psi^{-1}_{\mathrm{D}_{2,1}^{1,n+1}}(v)dvdy.
	\end{multline*}
	Moreover, $\epsilon_{n,m,r}$ is trivial on
	$A(\Delta(\tau,m)\gamma_\pi^{(\epsilon)},\eta,k)$ if and only if $\epsilon_{2,1}^{1,n+1}$ is trivial on 	$A(\Delta(\tau,m)\gamma_\pi^{(\epsilon)},\eta,k)$.
\end{prop}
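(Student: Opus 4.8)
The plan is to obtain Proposition \ref{prop 8.3} by chaining together Propositions \ref{prop 7.3}, \ref{prop 8.1} and \ref{prop 8.2}; no genuinely new input is needed beyond these, the work being the bookkeeping of assembling the successive root exchanges into one integration.

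First I would invoke Proposition \ref{prop 7.3}, which already writes $\epsilon_{n,m,r}(\xi)(h)=\int_{\hat{\mathrm{Y}}^n_\BA}\varepsilon_{3,1}^{n-1,n+1}(\xi)(yh)dy$ and says that $\epsilon_{n,m,r}$ is trivial on $A(\Delta(\tau,m)\gamma_\psi^{(\epsilon)},\eta,k)$ if and only if $\varepsilon_{3,1}^{n-1,n+1}$ is. Next, by Proposition \ref{prop 8.1}, on $A(\Delta(\tau,m)\gamma_\psi^{(\epsilon)},\eta,k)$ one may replace the Fourier coefficient along $\mathfrak{D}_{3,1}^{n-1,n+1}$ by the one along $\mathrm{D}_{3,1}^{n-1,n+1}$, that is $\varepsilon_{3,1}^{n-1,n+1}(\xi)=\epsilon_{3,1}^{n-1,n+1}(\xi)$; this is the one place where the special structure of the residual module enters, through the nilpotent-orbit majorizations of Propositions \ref{prop 3.1} and \ref{prop 3.2}.

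Then I would run Proposition \ref{prop 8.2} in the order indicated before its statement: starting from $\epsilon_{3,1}^{n-1,n+1}$, part 1 expresses it as an integral of $\epsilon_{2,1}^{n-1,n+1}$ over $\mathrm{Y}_{2,1}^{n-1,n+1}(\BA)$, part 2 expresses $\epsilon_{2,1}^{n-1,n+1}$ as an integral of $\epsilon_{3,1}^{n-2,n+1}$ over $\mathrm{Y}_{3,1}^{n-2,n+1}(\BA)$, and iterating one descends through $\epsilon_{3,1}^{i,n+1},\epsilon_{2,1}^{i,n+1}$ for $i=n-1,n-2,\dots,1$, ending at $\epsilon_{2,1}^{1,n+1}$. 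Since every step of Propositions \ref{prop 7.3}, \ref{prop 8.1}, \ref{prop 8.2} is an equivalence on the module $A(\Delta(\tau,m)\gamma_\psi^{(\epsilon)},\eta,k)$, the composite gives that $\epsilon_{n,m,r}$ is trivial on this module exactly when $\epsilon_{2,1}^{1,n+1}$ is. Substituting the intermediate identities into one another and collecting integration variables, the outer integration runs over the subgroup of $\mathrm{Y}$ generated by $\hat{\mathrm{Y}}^n$ together with all $\mathrm{Y}_{2,1}^{i,n+1}$ ($1\le i\le n-1$) and $\mathrm{Y}_{3,1}^{i,n+1}$ ($1\le i\le n-2$), which is precisely $\mathrm{Y}^{n+1}$, yielding the displayed formula.

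The only point requiring care — and the main, if modest, obstacle — is this final assembly: one must check that the successively introduced pieces $\mathrm{Y}_{2,1}^{i,n+1}$, $\mathrm{Y}_{3,1}^{i,n+1}$ together with $\hat{\mathrm{Y}}^n$ really constitute the subgroup $\mathrm{Y}^{n+1}\subset\mathrm{Y}$, and that the nested integrals produced by the repeated use of Proposition \ref{prop 8.2} may be rearranged into a single integral over $\mathrm{Y}^{n+1}(\BA)$ in the order displayed — a legitimate operation since at each stage one integrates only over a compact unipotent quotient. This is exactly the mechanism already used to pass from Proposition \ref{prop 7.1} to Proposition \ref{prop 7.2}, so I would simply refer to that passage and to Lemma 7.1 and Corollaries 7.1, 7.2 of \cite{GRS11} for the root-exchange identities, rather than repeat the computation.
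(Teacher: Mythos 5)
Your proposal is correct and follows exactly the paper's route: the paper's entire proof of Proposition \ref{prop 8.3} is the one-line assembly "From Proposition \ref{prop 7.3} and the last two propositions, we conclude", i.e.\ chain Proposition \ref{prop 7.3}, the identity $\varepsilon_{3,1}^{n-1,n+1}=\epsilon_{3,1}^{n-1,n+1}$ from Proposition \ref{prop 8.1}, and the iterated descent of Proposition \ref{prop 8.2} down to $\epsilon_{2,1}^{1,n+1}$, with $\mathrm{Y}^{n+1}$ defined precisely as the subgroup generated by $\hat{\mathrm{Y}}^n$ and the $\mathrm{Y}_{2,1}^{i,n+1}$, $\mathrm{Y}_{3,1}^{i,n+1}$ introduced along the way. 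Your added care about collapsing the nested integrals into a single integral over $\mathrm{Y}^{n+1}(\BA)$ is exactly the mechanism already used in passing from Proposition \ref{prop 7.1} to Proposition \ref{prop 7.2}, so nothing further is needed.
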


Note the shape of the subgroups
$\mathrm{Y}_{3,1}(n-2,n+2)$, $\mathrm{X}_{1,2}(n,1)$ in the
definition of $\mathrm{D}_{2,1}^{1,n+1}$. The elements $y$ of
$\mathrm{Y}_{3,1}(n-2,n+2)$ have the following form. Write $y$ as in
\eqref{6.10}. Then $Y_{3,1}$ has the form
$$
Y_{3,1}=\begin{pmatrix}0_{(n-2)r\times (n+1)r}&e\\0_{(n+1)r\times
	(n+1)r}&0_{(n+1)r\times (n-2)r}\end{pmatrix},
$$
where, if $Y_{2,1}$ is zero, then the $(n-2)r\times (n-2)r$ matrix
$e$ is such that $ {}^t(w_{(n-2)r}e)=\delta_H(w_{(n-2)r}e)$. The block
$Y_{2,1}$ of $y$ above has the form \eqref{7.5} with $h_{1,1}=0$.
The elements $x$ of $\mathrm{X}_{1,2}(n,1)$ have the following form.
Write $x$ as in \eqref{6.9}. Then $X_{1,3}$ is of size
$(2n-1)r\times (2n-1)r$ and has the form
\begin{equation}\label{8.16}
X_{1,3}=\begin{pmatrix}J&K\\0_{(n-1)r\times (n-1)r}&J'\end{pmatrix},
\end{equation}
where if $X_{1,2}$ is zero, then
${}^t(w_{(2n-1)r}X_{1,3})=\delta_H(w_{(2n-1)r}X_{1,3})$. The block $X_{1,2}$ is of
size $(2n-1)r\times (2n(m-2r)+2r)$ and has the form
\begin{equation}\label{8.17}
X_{1,2}=\begin{pmatrix}Q&R\\0_{(n-1)r\times
	(n(m-2r)+m)}&S\end{pmatrix},
\end{equation}
where $S$ is upper triangular as a $(n-1)\times (n-1)$ matrix of
$r\times (m-2r)$ blocks. We will now prove that $\epsilon_{2,1}^{1,n+1}$
is invariant to $\mathrm{X}_{1,3}^{n+1,n-1}(\BA)$. Equivalently,
let $\mathrm{X}_{1,3}(n+1,n-1)$ be the subgroup generated by
$\mathrm{X}_{1,2}(n,1)$ and $\mathrm{X}_{1,3}^{n+1,n-1}$. Let
$$
\mathbf{D}_{2,1}^{1,n+1}=\mathrm{D}_{2,1}^{1,n+1}\mathrm{X}_{1,3}^{n+1,n-1}
=\mathrm{Y}_{3,1}(n-2,n+2)\mathrm{N}\mathrm{X}_{1,3}(n+1,n-1),
$$
and extend the character $\psi_{\mathrm{D}_{2,1}^{1,n+1}}$ to
$\mathbf{D}_{2,1}^{1,n+1}(\BA)$ by the trivial character of
$\mathrm{X}_{1,3}^{n+1,n-1}(\BA)$. Denote the extended character
by $\psi_{\mathbf{D}_{2,1}^{1,n+1}}$. Then we prove
\begin{prop}\label{prop 8.4}
	For all $\xi\in A(\Delta(\tau,m)\gamma_\pi^{(\epsilon)},\eta,k)$, $g\in H(\BA)$,
	\begin{equation}\label{8.16.1}
	\epsilon_{2,1}^{1,n+1}(\xi)(g)=\int_{\mathbf{D}_{2,1}^{1,n+1}(F)\backslash
		\mathbf{D}_{2,1}^{1,n+1}(\BA)}\xi(vg)\psi_{\mathbf{D}_{2,1}^{1,n+1}}^{-1}(v)dv.
	\end{equation}
	\end{prop}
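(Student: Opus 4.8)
The plan is to follow the proof of Proposition \ref{prop 8.1} almost verbatim. Since $(n+1)+(n-1)=2n$, the block position $(n+1,n-1)$ lies on the anti-diagonal of the $(2n-1)\times(2n-1)$ block matrix $X_{1,3}$, so membership in $H_{2nm}$ forces an internal symmetry on the block $X^{(n+1,n-1)}_{1,3}$: the subgroup $\mathrm{X}_{1,3}^{n+1,n-1}$ is isomorphic to the space $T$ of $r\times r$ matrices $z$ with $w_rz$ symmetric (resp.\ anti-symmetric) when $H_{2nm}$ is orthogonal (resp.\ symplectic), exactly as in the discussion of $\mathfrak{X}_{1,3}^{n,n-1}$ in Proposition \ref{prop 8.1}. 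Write $x(z)$ for the element of $\mathrm{X}_{1,3}^{n+1,n-1}$ whose free block equals $z$. Because $\mathbf{D}_{2,1}^{1,n+1}=\mathrm{D}_{2,1}^{1,n+1}\,\mathrm{X}_{1,3}^{n+1,n-1}$ and $\psi_{\mathbf{D}_{2,1}^{1,n+1}}$ restricts trivially to $\mathrm{X}_{1,3}^{n+1,n-1}(\BA)$, a standard unfolding shows that the right-hand side of \eqref{8.16.1} equals $\int_{T(F)\backslash T(\BA)}\Phi_\xi(z)\,dz$, where $\Phi_\xi(z)=\epsilon_{2,1}^{1,n+1}(\xi)(x(z)g)$. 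Thus it suffices to prove that $\Phi_\xi$, which is readily checked to be $T(F)$-invariant, is a constant function on the compact group $T(F)\backslash T(\BA)$, i.e.\ that its Fourier coefficient against each nontrivial character $z\mapsto\psi(\mathrm{tr}(zL))$, $L\in T(F)$, $L\neq0$, vanishes identically on $A(\Delta(\tau,m)\gamma_\psi^{(\epsilon)},\eta,k)$.

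Fix $L\neq0$. As in Proposition \ref{prop 8.1}, I would first intersect the integration domain with the unipotent radical $U_{r^{n+1}}$ of $Q^H_{r^{n+1}}$, exhibiting an inner integration which is a Fourier coefficient of $\xi$ along $U_{r^{n+1}}\cap\mathbf{D}_{2,1}^{1,n+1}$; then, expanding in the remaining coordinates of the last block row of the $X_{1,3}$-part, and conjugating by a rational element $\zeta_A\in H_{2nm}(F)$ whose defining matrix satisfies $A'A=0$ (exactly as in Prop.\ \ref{prop 8.1}), the $L$-Fourier coefficient of $\Phi_\xi$ becomes an integral, over a unipotent group, of the Fourier coefficient $\phi_{\xi,L}$ of $\xi$ along $U_{r^{n+1}}(F)\backslash U_{r^{n+1}}(\BA)$ with respect to the character that is of Whittaker type on the $\GL_r^{n+1}$-chain and equals $\psi(\mathrm{tr}(zL))$ on the relevant corner block.

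The character just described corresponds, in the terminology of \cite{MW87}, to a nilpotent element of $\mathrm{Lie}(H_{2nm})(F)$ together with a one-parameter subgroup, whose orbit over $\overline{F}$ has partition $((2n+3)^{\ell},(n+2)^{2(r-\ell)},1^{2nm-2(n+2)r+\ell})$ with $\ell=\mathrm{rank}(L)$; in particular, since $L\neq0$, this partition has largest part $2n+3>2n$. On the other hand, every partition appearing in the majorizations of Propositions \ref{prop 3.1} and \ref{prop 3.2} has largest part at most $2n$, so $\phi_{\xi,L}$ vanishes identically on $A(\Delta(\tau,m)\gamma_\psi^{(\epsilon)},\eta,k)$; hence so does the $L$-Fourier coefficient of $\Phi_\xi$. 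Therefore $\Phi_\xi$ is constant, equal to $\Phi_\xi(0)=\epsilon_{2,1}^{1,n+1}(\xi)(g)$, which is \eqref{8.16.1}.

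The main obstacle, I expect, is the combinatorial bookkeeping of the second paragraph: pinning down the precise sequence of root exchanges and partial Fourier expansions that converts the $L$-Fourier coefficient of $\Phi_\xi$ into a genuine Fourier coefficient along $U_{r^{n+1}}$, and verifying that the conjugating element $\zeta_A$ is $F$-rational with $A'A=0$. Once that is in place, the nilpotent-orbit computation and the appeal to Propositions \ref{prop 3.1} and \ref{prop 3.2} are routine, just as in Proposition \ref{prop 8.1}.
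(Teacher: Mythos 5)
Your first paragraph is correct and matches the paper's strategy: since $(n+1)+(n-1)=2n$ the block $\mathrm{X}_{1,3}^{n+1,n-1}$ is self-paired, hence isomorphic to the space $A_r$ of $z$ with ${}^t(w_rz)=\delta_H(w_rz)$, and one Fourier-expands $\Phi_\xi$ along $A_r(F)\backslash A_r(\BA)$ and kills the nontrivial coefficients by a nilpotent-orbit bound. The gap is in your second and third paragraphs, and it is not mere bookkeeping. First, $U_{r^{n+1}}$ is not contained in $\mathbf{D}_{2,1}^{1,n+1}$: its block row $n+1$ contains the blocks $X_{1,2}^{(n+1,j)}$ and $X_{1,3}^{(n+1,j)}$, $j\le n-2$, none of which lie in $\mathrm{X}_{1,3}(n+1,n-1)$. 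You propose to supply them by Fourier expansion plus a $\zeta_A$-conjugation, but the $L$-coefficient of $\Phi_\xi$ is not invariant under these blocks at this stage: the domain still contains the groups $\mathrm{Y}_{2,1}^{i,n+2}\subset\mathrm{Y}_{3,1}(n-2,n+2)$, and the commutator of $\mathrm{X}_{1,2}^{n+1,i}$ with $\mathrm{Y}_{2,1}^{i,n+2}$ lands in the superdiagonal block $U_{n+1}$ of $U$, on which $\psi_N$ is nontrivial. This is exactly why the paper performs the root exchanges of \eqref{8.22} against \eqref{8.23} only \emph{after} Proposition \ref{prop 8.4}, and why the partition $((2n+3)^{\ell},(n+2)^{2(r-\ell)},1^{2nm-2(n+2)r+\ell})$ you quote belongs to that later step (the analogue of Proposition \ref{prop 8.1} one level up, cf. \eqref{8.27.2} and Proposition \ref{prop 8.6}), not to this one.

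The correct inner integration is in fact simpler than in Proposition \ref{prop 8.1}: take $E$ to be the subgroup generated by $U_{r^n}$ (not $U_{r^{n+1}}$) and $\mathrm{X}_{1,3}^{n+1,n-1}$, with character $\psi(tr(x_1+\cdots+x_n)+tr(zL))$; this sits entirely inside $\mathbf{D}_{2,1}^{1,n+1}$, so no auxiliary expansion and no $\zeta_A$-trick are needed. Because the new block sits half a step beyond $U_{r^n}$, the associated one-parameter subgroup has odd weights $s^{2n+1},s^{2n-1},\dots,s,1,\dots$, and the resulting orbit has partition $((2n+2)^{\ell},(n+1)^{2(r-\ell)},1^{2nm-2(n+1)r})$ with $\ell=\mathrm{rank}(L)$ --- even largest part, i.e.\ the pattern of Proposition \ref{prop 8.5} with $j=n+1$, not of Proposition \ref{prop 8.6}. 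The largest part still exceeds $2n$, so Propositions \ref{prop 3.1}, \ref{prop 3.2} give the vanishing; in the symplectic case one also needs Lemma 1.1 of \cite{GRS03}, which you omit. In short: you correctly recognized the block as self-paired, but then applied the template for the non-self-paired case.
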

\begin{proof}
	The proof is similar to that of Proposition \ref{prop 8.1}. Let $A_r$ be the space of all matrices $z\in M_{r\times r}$,
	such that ${}^t(w_rz)=\delta_H(w_rz)$. Denote by $x(z)$ the element in
	$\mathrm{X}_{1,3}^{n+1,n-1}$, such that its corresponding block
	$X_{1,3}^{n+1,n-1}$ is $z$. Consider the function on $A_r(\BA)$
	given by
	$$
	f_\xi(z)=\int_{\mathrm{D}_{2,1}^{1,n+1}(F)\backslash
		\mathrm{D}_{2,1}^{1,n+1}(\BA)}\xi(vx(z))\psi_{\mathrm{D}_{2,1}^{1,n+1}}^{-1}(v)dv.
	$$
	Then this function is $A_r(F)$-invariant. We will show that in the
	Fourier expansion of $f_\xi$ along $A_r(F)\backslash A_r(\BA)$,
	all Fourier coefficients corresponding to non-trivial characters are
	zero. Such Fourier coefficients have the form
	\begin{equation}\label{8.17.1}
	\int_{A_r(F)\backslash A_r(\BA)}f_\xi(z)\psi^{-1}(tr(zL))dz,
	\end{equation}
	where $L\in M_{r\times r}(F)$, such that ${}^t(w_rL)=\delta_H(w_rL)$.
	Substituting in \eqref{8.17.1} the definition of $f_\xi$, we get
	\begin{equation}\label{8.18}
	\int_{A_r(F)\backslash A_r(\BA)}\int_{\mathrm{D}_{2,1}^{1,n+1}(F)\backslash
		\mathrm{D}_{2,1}^{1,n+1}(\BA)}\xi(vx(z))\psi_{\mathrm{D}_{2,1}^{1,n+1}}^{-1}(v)\psi^{-1}(tr(zL))dvdz.
	\end{equation}
	Let us consider the following inner integration of \eqref{8.18}. Let
	$E$ be the subgroup generated by the unipotent radical $U_{r^n}$ and
	$\mathrm{X}_{1,3}^{n+1,n-1}$. Consider the following character
	$\psi_{E,L}$ of $E({\BA})$. Write $v\in E({\BA})$ in the form
	$v=ux(z)$, where $u\in U_{r^n}(\BA)$, and $x(z)\in
	\mathrm{X}_{1,3}^{n+1,n-1}(\BA)$, as above. Write $u$ as in
	\eqref{8.4}, \eqref{8.5}. As before, we divide the rows of
	$B$ into $n$ block rows, each one of size $r$. Denote the first
	$r\times r$ block in the last block row of $B$ by $x_n$. Then
	\begin{equation}\label{8.19}
	\psi_{E,L}(v)=\psi(tr(x_1+x_2+\cdots +x_n)+tr(zL)).
	\end{equation}
	Then the following is an inner integral of \eqref{8.18} (as we let $\xi$ vary in \\
	$A(\Delta(\tau,m)\gamma_\pi^{(\epsilon)},\eta,k)$)	                  
	\begin{equation}\label{8.20}
	f_{\xi,L}=\int_{E_F\backslash E_{\BA}}\xi(v)\psi^{-1}_{E,L}(v)dv.
	\end{equation}
	We will prove that $f_{\xi,L}=0$, for all $\xi\in A(\Delta(\tau,m)\gamma_\pi^{(\epsilon)},\eta,k)$ and
	all nonzero $L$. The character $\psi_{E,L}$ corresponds, in the
	sense of \cite{MW87}, to the nilpotent element $Y$ in
	$Lie(H_{2nm})(F)$ and to the one parameter subgroup
	$\varphi(s)$,
	\begin{equation}\label{8.21}
	Y=\begin{pmatrix}\mathcal{U}\\
	\mathcal{V}&\mathcal{L}\\0_{nr\times
		nr}&\mathcal{V}'&-\mathcal{U}\end{pmatrix},
	\end{equation}
	where $\mathcal{U}$ is as in \eqref{8.13};
	$$
	\mathcal{V}=\begin{pmatrix}0&I_r\\0&0\end{pmatrix}\in
	M_{(2nm-2nr)\times nr}(F);
	$$
	$$
	\mathcal{L}=\begin{pmatrix}0\\0&0\\L&0&0\end{pmatrix}\in
	Lie(H_{2nm-2nr})(F);
	$$
	$$
	\varphi(s)=\diag(s^{2n+1}I_r,s^{2n-1}I_r,...,sI_r,
	I_{2nm-2(n+1)r},I_r,s^{-1}I_r,...,s^{-2n-1}I_r).
	$$
	A simple calculation shows that the nilpotent orbit of $Y$
	corresponds to the partition $((2n+2)^\ell,
	(n+1)^{2(r-\ell)},1^{2nm-2(n+1)r})$, where $\ell=rank(L)$. By
	Propositions \ref{prop 3.1},  \ref{prop 3.2}, we obtain that if $\ell>0$, then
	$f_{\xi,L}=0$, for all $\xi\in A(\Delta(\tau,m)\gamma_\pi^{(\epsilon)},\eta,k)$. In case $H_{2nm}$ is symplectic, we also need to use Lemma 1.1 in \cite{GRS03}.  This proves the proposition.
\end{proof}

Now, we can exchange, in the right hand side of \eqref{8.16.1},
\begin{equation}\label{8.22}
\mathrm{Y}_{2,1}^{n-1,n+2}\mathrm{Y}_{2,1}^{n,n+2}\mathrm{Y}_{2,1}^{n+1,n+2}\mathrm{Y}_{2,1}^{n+2,n+2}
\end{equation}
with
\begin{equation}\label{8.23}
\mathrm{X}_{1,2}^{n+1,n-1}\mathrm{X}_{1,2}^{n+1,n}\mathrm{X}_{1,2}^{n+1,n+1}\mathrm{X}_{1,2}^{n+1,n+2}.
\end{equation}
Note that these are subgroups, modulo  $\mathrm{Y}_{3,1}^{n-2,n+2}$, $\mathrm{X}_{1,3}^{n+1,n-1}$, respectively.
 Note that both $\mathrm{Y}_{3,1}^{n-2,n+2},
\mathrm{X}_{1,3}^{n+1,n-1}$ are subgroups of
$\mathbf{D}_{2,1}^{1,n+1}$. Again the verification of the set up of
Lemma 7.1 and Corollary 7.1 in \cite{GRS11} is straightforward and
we can exchange $Y$ with $X$ in the integral, in \eqref{8.16.1}. Thus,
this integral is trivial if and only if the following Fourier
coefficient is identically zero ,
\begin{equation}\label{8.24}
\widetilde{\epsilon}_{2,1}^{1,n+1}(\xi)(g)=\int_{\widetilde{\mathrm{D}}_{2,1}^{1,n+1}(F)\backslash
	\widetilde{\mathrm{D}}_{2,1}^{1,n+1}(\BA)}\xi(vg)\psi_{\widetilde{\mathrm{D}}_{2,1}^{1,n+1}}^{-1}(v)dv,
\end{equation}
where
\begin{equation}\label{8.25}
\widetilde{\mathrm{D}}_{2,1}^{1,n+1}=\widetilde{\mathrm{Y}}_{3,1}(n-2,n+2)\mathrm{N}\mathrm{X}_{1,2}(n+1,n-1),
\end{equation}
and $\widetilde{\mathrm{Y}}_{3,1}(n-2,n+2)$ is the subgroup of
elements in $\mathrm{Y}_{3,1}(n-2,n+2)$ of the form \eqref{6.10},
such that their corresponding blocks $Y_{2,1}^{i,n+2},\ n-1\leq
i\leq n+2$, are zero; $\mathrm{X}_{1,2}(n+1,n-1)$ is the subgroup
generated by $\mathrm{X}_{1,3}(n-1,n+1)$ and
$\mathrm{X}_{1,2}^{n+1,i},\ n-1\leq i\leq n+2$. Finally,
$\psi_{\widetilde{\mathrm{D}}_{2,1}^{1,n+1}}$ is the character of
$\widetilde{\mathrm{D}}_{2,1}^{1,n+1}(\BA)$ obtained by extending
$\psi_N$ trivially. Similarly, from Propositions \ref{prop 8.3}, \ref{prop 8.4},
	for all $\xi\in A(\Delta(\tau,m)\gamma_\pi^{(\epsilon)},\eta,k)$, we have
\begin{equation}\label{8.25.1}
\epsilon_{n,m,r}(\xi)(h)=\int_{\widetilde{\mathrm{Y}}^{n+1}_\BA}\tilde{\epsilon}_{2,1}^{1,n+1}(\xi)(yh)dy,
\end{equation}
where $\widetilde{\mathrm{Y}}^{n+1}$ is the subgroup of $\mathrm{Y}$ generated by $\mathrm{Y}^{n+1}$ and $\mathrm{Y}_{2,1}^{i,n+1}$, $n-1\leq i\leq n+2$.

Now, we repeat the steps as in \eqref{7.7} and Proposition \ref{prop
	7.3}. That is we exchange in $\widetilde{\epsilon}_{2,1}^{1,n+1}$ in
\eqref{8.24}, $\mathrm{Y}_{3,1}^{n-2,n+2}$ "into"
$\mathrm{X}_{1,3}^{n+1,n-2}$. Note again that
$\mathrm{Y}_{3,1}^{n-2,n+2}$ is isomorphic to the space of $r\times r$
matrices $z$, such that ${}^t(w_rz)=\delta_H(w_rz)$, while $\mathrm{X}_{1,3}^{n+1,n-2}$ is
isomorphic to $M_{r\times r}$. More precisely, let
$\mathrm{Y}_{2,1}(n-2,n+2)$ be the subgroup of elements in
$\widetilde{\mathrm{Y}}_{3,1}(n-2,n+2)$, written in the form
\eqref{6.10}, such that the block $Y_{3,1}^{n-2,n+2}$ is zero. Let
$\mathfrak{X}_{1,3}^{n+1,n-2}$ be the subgroup of elements in
$\mathrm{X}_{1,3}^{n+1,n-2}$, written as in \eqref{6.9}, such that
the block $X_{1,3}^{n+1,n-2}$ has the property that $ {}^t(w_r
X_{1,3}^{n+1,n-2})=\delta_H(w_rX_{1,3}^{n+1,n-2})$. Let
$\mathfrak{X}_{1,3}(n+1,n-2)$ be the subgroup generated by
$\mathrm{X}_{1,2}(n+1,n-1)$ and $\mathfrak{X}_{1,3}^{n+1,n-2}$.
Define
\begin{equation}\label{8.26}
\mathfrak{D}_{3,1}^{n-2,n+2}=\mathrm{Y}_{2,1}(n-2,n+2)\mathrm{N}\mathfrak{X}_{1,3}(n+1,n-2).
\end{equation}
This is a unipotent subgroup of $H_{2nm}$. Denote, as usual,
by $\psi_{\mathfrak{D}_{3,1}^{n-2,n+2}}$ the character of
$\mathfrak{D}_{3,1}^{n-2,n+2}(\BA)$ obtained by extending
$\psi_N$ trivially. Then, by Corollary 7.1 in \cite{GRS11},
$\widetilde{\epsilon}_{2,1}^{1,n+1}$ is trivial if and only if the
following Fourier coefficient is identically zero,
\begin{equation}\label{8.27}
\varepsilon_{3,1}^{n-2,n+2}(\xi)(g)=\int_{\mathfrak{D}_{3,1}^{n-2,n+2}(F)\backslash
	\mathfrak{D}_{3,1}^{n-2,n+2}(\BA)}\xi(vg)\psi_{\mathfrak{D}_{3,1}^{n-2,n+2}}^{-1}(v)dv.
\end{equation}
Let $\hat{\mathrm{Y}}^{n+1}$ be the subgroup generated by $\widetilde{\mathrm{Y}}^{n+1}$ and $\mathrm{Y}_{3,1}^{n-2,n+2}$. As in Proposition \ref{prop 7.3}, we get that for all $\xi\in A(\Delta(\tau,m)\gamma_\pi^{(\epsilon)},\eta,k)$, we have
	\begin{multline}\label{8.27.1}
	\epsilon_{n,m,r}(\xi)(h)=\int_{\hat{\mathrm{Y}}^{n+1}_\BA}\varepsilon_{3,1}^{n-2,n+2}(\xi)(yh)dy
	=\\
	=\int_{\hat{\mathrm{Y}}^{n+1}_\BA}\int_{\mathfrak{D}_{3,1}^{n-2,n+2}(F)\backslash \mathfrak{D}_{3,1}^{n-2,n+2}(\BA)}\xi(vyh)\psi^{-1}_{\mathfrak{D}_{3,1}^{n-2,n+2}}(v)dvdy.
	\end{multline}
	Moreover, $\epsilon_{n,m,r}$ is trivial on 	$A(\Delta(\tau,m)\gamma_\pi^{(\epsilon)},\eta,k)$ if and only if 
	$\varepsilon_{3,1}^{n-2,n+2}$ is trivial on $A(\Delta(\tau,m)\gamma_\pi^{(\epsilon)},\eta,k)$.

As in Proposition \ref{prop 8.1}, the Fourier coefficient
$\varepsilon_{3,1}^{n-2,n+2}$ remains unchanged when we replace
$\mathfrak{X}_{1,3}^{n+1,n-2}$ by $\mathrm{X}_{1,3}^{n+1,n-2}$. The
proof follows exactly the same steps. The main point is that
$A(\Delta(\tau,m)\gamma_\pi^{(\epsilon)},\eta,k)$ does not support a Fourier coefficient corresponding
to the nilpotent element $Y$ in $Lie(H_{2nm})(F)$ and to the
one parameter subgroup $\varphi(s)$, as in \eqref{8.13}, with the block $\mathcal{U}$ of size 
$(n+1)r$ (instead of $nr$), namely
\begin{equation}\label{8.27.2}
Y=\begin{pmatrix}\mathcal{U}\\
\mathcal{V}&0_{(2nm-2(n+1)r)\times
	(2nm-2(n+1)r)}\\0_{(n+1)r\times
	(n+1)r}&\mathcal{V}'&-\mathcal{U}\end{pmatrix},
\end{equation}
where
$$
\mathcal{U}=\begin{pmatrix}0\\I_r&0\\&I_r\\
&&\cdots\\&&&I_r&0\end{pmatrix}\in M_{(n+1)r\times (n+1)r}(F);
$$
$$
\mathcal{V}=\begin{pmatrix}0_{r\times
	nr}&I_r\\0_{(2nm-2(n+2)r)\times nr}&0_{(2nm-2(n+2)r)\times
	r}\\0_{r\times nr}&L\end{pmatrix};
$$
$$
\varphi(s)=\diag(s^{2n+2}I_r,s^{2n}I_r,...,s^2I_r, I_r,
I_{2nm-2(n+1)r},I_r,s^{-2}I_r,...,s^{-2n-2}I_r);
$$
$L\in M_{r\times r}(F)$ is such that ${}^t(w_rL)=-\delta_Hw_rL$. The
nilpotent orbit of $Y$ corresponds to the partition $((2n+3)^\ell,
(n+2)^{2(r-\ell)},1^{2nm-2(n+2)r+\ell})$, where $\ell=rank(L)$.
Now, we apply Propositions \ref{prop 3.1}, \ref{prop 3.2}. We get that
\begin{multline}\label{8.28'}
\varepsilon_{3,1}^{n-2,n+2}(\xi)(g)=\epsilon_{3,1}^{n-2,n+2}(\xi)(g)=\\
\int_{\mathrm{D}_{3,1}^{n-2,n+2}(F)\backslash
	\mathrm{D}_{3,1}^{n-2,n+2}(\BA)}\xi(vg)\psi_{\mathrm{D}_{3,1}^{n-2,n+2}}^{-1}(v)dv,
\end{multline}
where
$$
\mathrm{D}_{3,1}^{n-2,n+2}=\mathfrak{D}_{3,1}^{n-2,n+2}\mathrm{X}_{1,3}^{n+1,n-2}.
$$
By \eqref{8.27.1}, we get
	\begin{multline}\label{8.27.3}
\epsilon_{n,m,r}(\xi)(h)=\int_{\hat{\mathrm{Y}}^{n+1}_\BA}\epsilon_{3,1}^{n-2,n+2}(\xi)(yh)dy
=\\
=\int_{\hat{\mathrm{Y}}^{n+1}_\BA}\int_{\mathrm{D}_{3,1}^{n-2,n+2}(F)\backslash \mathrm{D}_{3,1}^{n-2,n+2}(\BA)}\xi(vyh)\psi^{-1}_{\mathrm{D}_{3,1}^{n-2,n+2}}(v)dvdy.
\end{multline}

At this point, we can repeat the process of Proposition \ref{prop
	8.2} and "fill in" (in the right order) the rest of block row $n+1$
both in $\mathrm{X}_{1,3}$, $\mathrm{X}_{1,2}$ in exchange of the
remaining blocks in block column $n+2$ of $Y_{3,1}$, $Y_{2,1}$. Let $\mathrm{Y}^{n+2}$ be the subgroup of $\mathrm{Y}$ generated by $\hat{\mathrm{Y}}^{n+1}$ and the subgroups $\mathrm{Y}_{3,1}^{i,n+2}$, $1\leq i\leq n-3$, $\mathrm{Y}_{2,1}^{i',n+2}$, $1\leq i'\leq n-2$.
As in Proposition \ref{prop 8.3}, we get
\begin{prop}\label{prop 8.3.1}
	For all $\xi\in A(\Delta(\tau,m)\gamma_\pi^{(\epsilon)},\eta,k)$, we have
	\begin{multline}\label{8.27.4}
	\epsilon_{n,m,r}(\xi)(h)=\int_{\mathrm{Y}^{n+2}_\BA}\epsilon_{2,1}^{1,n+2}(\xi)(yh)dy
	=\\
	=\int_{\mathrm{Y}^{n+2}_\BA}\int_{\mathrm{D}_{2,1}^{1,n+2}(F)\backslash \mathrm{D}_{2,1}^{1,n+2}(\BA)}\xi(vyh)\psi^{-1}_{\mathrm{D}_{2,1}^{1,n+2}}(v)dvdy.
	\end{multline}
	Moreover, $\epsilon_{n,m,r}$ is trivial on
	$A(\Delta(\tau,m)\gamma_\pi^{(\epsilon)},\eta,k)$ if and only if $\epsilon_{2,1}^{1,n+2}$ is trivial on 	$A(\Delta(\tau,m)\gamma_\pi^{(\epsilon)},\eta,k)$.
\end{prop}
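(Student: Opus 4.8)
\emph{Proof plan.} The idea is to repeat, for block column $n+2$ of $Y_{3,1},Y_{2,1}$ and block row $n+1$ of $X_{1,3},X_{1,2}$, exactly the alternation of root exchanges already carried out in Proposition \ref{prop 8.2} for block column $n+1$ and block row $n$. First I would introduce, for $1\le i\le n-2$, the $F$-unipotent subgroups $\mathrm D_{3,1}^{i,n+2}$, $\mathrm D_{2,1}^{i,n+2}$ and the characters $\psi_{\mathrm D_{3,1}^{i,n+2}}$, $\psi_{\mathrm D_{2,1}^{i,n+2}}$ by the obvious analogue of \eqref{8.14} (with the boundary subgroups bounding $\mathrm D_{2,1}^{1,n+2}$ defined as in the paragraph following \eqref{8.14}), together with the Fourier coefficients $\epsilon_{3,1}^{i,n+2}$, $\epsilon_{2,1}^{i,n+2}$ as in \eqref{8.15}. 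Starting from $\epsilon_{3,1}^{n-2,n+2}$ in the form \eqref{8.28'}, the proof of Proposition \ref{prop 8.2} — which is itself the proof of Proposition \ref{prop 7.1} word for word, using only Lemma 7.1, Corollary 7.1 and Corollary 7.2 of \cite{GRS11} — applies verbatim and yields, for each $i$, the identities
$$
\epsilon_{3,1}^{i,n+2}(\xi)(h)=\int_{\mathrm Y_{2,1}^{i,n+2}(\BA)}\epsilon_{2,1}^{i,n+2}(\xi)(yh)\,dy,\qquad \epsilon_{2,1}^{i,n+2}(\xi)(h)=\int_{\mathrm Y_{3,1}^{i-1,n+2}(\BA)}\epsilon_{3,1}^{i-1,n+2}(\xi)(yh)\,dy\ \ (i>1),
$$
each of them being an equivalence of (non)triviality on $A(\Delta(\tau,m)\gamma_\pi^{(\epsilon)},\eta,k)$.

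Iterating these identities in the order $\epsilon_{3,1}^{n-2,n+2}\to\epsilon_{2,1}^{n-2,n+2}\to\epsilon_{3,1}^{n-3,n+2}\to\cdots\to\epsilon_{2,1}^{1,n+2}$, exactly as in the passage from Proposition \ref{prop 7.1} to Proposition \ref{prop 7.2}, expresses $\epsilon_{3,1}^{n-2,n+2}(\xi)(h)$ as an integral of $\epsilon_{2,1}^{1,n+2}(\xi)$ over the subgroup of $\mathrm Y$ generated by the $\mathrm Y_{3,1}^{i,n+2}$ $(1\le i\le n-3)$ and $\mathrm Y_{2,1}^{i',n+2}$ $(1\le i'\le n-2)$; substituting this into \eqref{8.27.3} produces the displayed formula \eqref{8.27.4} with $\mathrm Y^{n+2}$ as described in the statement, while the chain of equivalences gives the last assertion. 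The commutator computations needed to invoke Lemma 7.1 of \cite{GRS11} are the same as before: writing $x\in\mathrm X_{1,2}^{n+1,i}$, $y\in\mathrm Y_{2,1}^{i,n+2}$ (resp. $x\in\mathrm X_{1,3}^{n+1,i-1}$, $y\in\mathrm Y_{3,1}^{i-1,n+2}$) in the block coordinates \eqref{6.9}, \eqref{6.10}, one checks as in the proof of Proposition \ref{prop 7.1} that $[x,y]\in\mathrm N$ and that $\psi_N([x,y])=\psi(tr(XY))$ for the relevant blocks $X,Y$, and the odd-$m$ corrections of the type \eqref{6.6}, \eqref{6.6.1}, \eqref{6.5.2.1} in the middle block columns $n+1$ are handled exactly as in \eqref{8.22}--\eqref{8.25}.

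No new analytic input is required here: the nilpotent-orbit obstructions of Propositions \ref{prop 3.1} and \ref{prop 3.2} have already been spent in reaching $\epsilon_{3,1}^{n-2,n+2}$ in \eqref{8.28'}, and the present step is purely combinatorial. Consequently the main — and essentially only — difficulty is the bookkeeping: pinning down the precise shapes of the boundary subgroups bounding $\mathrm D_{2,1}^{1,n+2}$ and their characters, and confirming that the hypotheses of Lemma 7.1 of \cite{GRS11} hold at each of the root exchanges in the chain. Once this is in place, the proposition follows by the same iteration scheme used to prove Propositions \ref{prop 8.2} and \ref{prop 8.3}.
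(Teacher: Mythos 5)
Your proposal is correct and follows essentially the same route as the paper: the paper's own proof of Proposition \ref{prop 8.3.1} consists precisely of defining the column-$(n+2)$ analogues of \eqref{8.14} and repeating the root-exchange iteration of Propositions \ref{prop 7.1}/\ref{prop 8.2} starting from $\epsilon_{3,1}^{n-2,n+2}$ in the form \eqref{8.28'}, then substituting into \eqref{8.27.3}. Your observations about the index ranges for the new generators of $\mathrm{Y}^{n+2}$ and about no further use of Propositions \ref{prop 3.1}, \ref{prop 3.2} being needed at this stage are both accurate.
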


We go on and prove
the analog of Proposition \ref{prop 8.4}, namely "fill $\mathrm{X}_{1,3}^{n+2,n-3}$ in $\epsilon^{1,n+2}$", using Fourier
expansions and Propositions \ref{prop 3.1}, \ref{prop 3.2}.
Then we exchange (the subgroup
generated by) $\mathrm{Y}_{2,1}^{i,n+3}$, $n-2\leq i\leq n+3$
(modulo $\mathrm{Y}_{3,1}^{n-3,n+3}$) with (the subgroup generated
by) $\mathrm{X}_{1,2}^{n+2,j}$, $n-2\leq j\leq n+3$ (modulo
$\mathrm{X}_{1,3}^{n+2,n-2}$); next, exchange $Y_{3,1}^{n-3,n+3}$ with
$\mathfrak{X}_{1,3}^{n+2,n-3}$ (analogous notation); prove the
analog of Proposition \ref{prop 8.1} to "fill in" all of
$\mathrm{X}_{1,3}^{n+2,n-3}$; repeat the process of Proposition
\ref{prop 8.2} and "fill in" the rest of block row $n+2$ both in
$\mathrm{X}_{1,3}$, $\mathrm{X}_{1,2}$, and so on, until we "use up"
all columns of $\mathrm{Y}_{3,1}$ and $\mathrm{Y}_{2,1}$. Let us
write this more precisely.

Define, for $n+1\leq j\leq 2n-2$,
\begin{equation}\label{8.28}
\mathrm{D}_{2,1}^{1,j}=\mathrm{Y}_{3,1}(2n-j-1,j+1)\mathrm{N}\mathrm{X}_{1,2}(j-1,1),
\end{equation}
where $\mathrm{Y}_{3,1}(2n-j-1,j+1)$ is the subgroup of
$\mathrm{Y}_{3,1}(n-2,n+2)$ of all elements, written in the form
\eqref{6.10}, such that their corresponding blocks $Y_{2,1}^{i',j'},
\ Y_{3,1}^{i',j'}$ are zero, for all $n+1\leq j'\leq j$ and all
$i'$; $\mathrm{X}_{1,2}(j-1,1)$ is the subgroup generated by
$\mathrm{X}_{1,2}(n,1)$ and the subgroups
$\mathrm{X}_{1,2}^{i',j'},\ \mathrm{X}_{1,3}^{i',j'}$, with $n+1\leq
i'\leq j-1$ and all $j'$. $\mathrm{D}_{2,1}^{1,j}$ is an
$F$-unipotent subgroup of $H_{2nm}$. Consider the character
$\psi_{\mathrm{D}_{2,1}^{1,j}}$, defined as usual. Assume that we
have already carried all root exchanges described above, so that
\begin{equation}\label{8.28.1}
\epsilon_{n,m,r}(\xi)(h)=\int_{\mathrm{Y}^j_\BA}\epsilon_{2,1}^{1,j}(\xi)(yh)dy,
\end{equation}
where
\begin{equation}\label{8.29}
\epsilon_{2,1}^{1,j}(\xi)(g)=\int_{\mathrm{D}_{2,1}^{1,j}(F)\backslash
	\mathrm{D}_{2,1}^{1,j}(\BA)}\xi(vg)\psi_{\mathrm{D}_{2,1}^{1,j}}^{-1}(v)dv,
\end{equation}
and $\mathrm{Y}^j$ is the subgroup of $\mathrm{Y}$ generated by by the $\mathrm{Y}_{2,1}^{i',j'}, \mathrm{Y}_{3,1}^{i'',j''}\subset \mathrm{Y}$, such that $j',j''\leq j$, and, also, $\epsilon_{n,m,r}$ is trivial on $A(\Delta(\tau,m)\gamma_\pi^{(\epsilon)},\eta,k)$ if and only if the
following $\epsilon_{2,1}^{1,j}$ is trivial on $A(\Delta(\tau,m)\gamma_\pi^{(\epsilon)},\eta,k)$.
Let
\begin{equation}\label{8.30}
\mathbf{D}_{2,1}^{1,j}=\mathrm{D}_{2,1}^{1,j}\mathrm{X}_{1,3}^{j,2n-j}=
\mathrm{Y}_{3,1}(2n-j-1,j+1)\mathrm{N}\mathrm{X}_{1,3}(j,2n-j),
\end{equation}
and let $\psi_{\mathbf{D}_{2,1}^{1,j}}$ be the character of
$\mathbf{D}_{2,1}^{1,j}(\BA)$ defined in the usual way. Then
\begin{prop}\label{prop 8.5}
	For all $\xi\in A(\Delta(\tau,m)\gamma_\pi^{(\epsilon)},\eta,k)$, $g\in H(\BA)$,
	\begin{equation}\label{8.31}
	\epsilon_{2,1}^{1,j}(\xi)(g)=\int_{\mathbf{D}_{2,1}^{1,j}(F)\backslash
		\mathbf{D}_{2,1}^{1,j}(\BA)}\xi(vg)\psi_{\mathbf{D}_{2,1}^{1,j}}^{-1}(v)dv.
	\end{equation}
\end{prop}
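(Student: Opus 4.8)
The plan is to run the same two-move argument as in Propositions~\ref{prop 8.1} and~\ref{prop 8.4}, of which this statement is the generic step of the induction sketched around \eqref{8.28}--\eqref{8.29}. Since $j+(2n-j)=2n$, the block $(j,2n-j)$ lies on the symmetry axis of $X_{1,3}$, so $\mathrm{X}_{1,3}^{j,2n-j}$ is an abelian unipotent $F$-group isomorphic to the space $A_r=\{z\in M_{r\times r}\ :\ {}^t(w_rz)=\delta_H w_rz\}$ of Proposition~\ref{prop 8.4}; let $x(z)$ be the element of $\mathrm{X}_{1,3}^{j,2n-j}$ whose free block equals $z$. The extra integration against the trivial character in passing from $\mathrm{D}_{2,1}^{1,j}$ to $\mathbf{D}_{2,1}^{1,j}=\mathrm{D}_{2,1}^{1,j}\mathrm{X}_{1,3}^{j,2n-j}$ is precisely the assertion that, for every $\xi$ and $g$, the function $z\mapsto\epsilon_{2,1}^{1,j}(\xi)(x(z)g)$ on $A_r(\BA)$ has trivial Fourier expansion over the compact group $A_r(F)\backslash A_r(\BA)$. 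First I would verify, by the same direct matrix computation as in Proposition~\ref{prop 8.1}, that this function is $A_r(F)$-invariant, and then analyze its Fourier coefficients, which are indexed by $L\in A_r(F)$.

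For $L\neq 0$ I would show, exactly as in Propositions~\ref{prop 8.1} and~\ref{prop 8.4}, that the $L$-th coefficient vanishes identically on $A(\Delta(\tau,m)\gamma_\psi^{(\epsilon)},\eta,k)$. The mechanism is to single out an inner integration which is a Fourier coefficient of $\xi$ along the subgroup $E$ generated by the unipotent radical $U^H_{r^{j-1}}$ and $\mathrm{X}_{1,3}^{j,2n-j}$, against a character $\psi_{E,L}$ assembled from the model character of $U^H_{r^{j-1}}$ and $\mathrm{tr}(zL)$; as in the passage $\phi_{\xi,L,A}\mapsto\phi_{\xi,L}$ of Proposition~\ref{prop 8.1}, a further Fourier expansion in the remaining off-diagonal coordinates and a conjugation by an element $\zeta_A\in H(F)$ reduce this to the model character. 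One then checks that $\psi_{E,L}$ corresponds, in the sense of \cite{MW87}, to a nilpotent element $Y\in\Lie(H_{2nm})(F)$ and a one-parameter subgroup $\varphi(s)$ of the evident block form extending \eqref{8.13}, \eqref{8.21} and \eqref{8.27.2} --- with a Jordan block $\mathcal{U}$ of size $(j-1)r$, a block $\mathcal{V}$, and a block $\mathcal{L}$ carrying $L$ --- and a short Jordan-form computation shows that the nilpotent orbit of $Y$ corresponds to a partition of $2nm$ with a part of size $2j$ (hence $\geq 2n+2$) of multiplicity $\mathrm{rank}(L)$, a part $j$ of multiplicity $2(r-\mathrm{rank}(L))$, and a tail of $1$'s. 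Since every majorizing partition occurring in Propositions~\ref{prop 3.1} and~\ref{prop 3.2} has largest part at most $2n$, this orbit is not dominated by any of them, so by those propositions the coefficient is zero whenever $\mathrm{rank}(L)>0$; in the symplectic case one invokes in addition Lemma~1.1 of \cite{GRS03}, as in Proposition~\ref{prop 8.4}. Only $L=0$ survives, which gives \eqref{8.31}.

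The substantive point --- and the only place where anything beyond automorphy of $A(\Delta(\tau,m)\gamma_\psi^{(\epsilon)},\eta,k)$ is used --- is the explicit identification of the partition attached to $Y$ and the comparison with the bounds of Propositions~\ref{prop 3.1} and~\ref{prop 3.2}; the combinatorics here is identical in spirit to that between \eqref{8.27} and \eqref{8.28'} and to the inequalities in the proof of Proposition~\ref{prop 3.3}. The remaining work, namely checking that all intermediate root exchanges accumulated in reaching \eqref{8.28.1} satisfy the hypotheses of Lemma~7.1 of \cite{GRS11} (each relevant commutator landing in $\mathrm{N}$ with the correct pairing), is of exactly the same nature as the verifications already carried out in Propositions~\ref{prop 7.1}, \ref{prop 8.1} and~\ref{prop 8.2}, and no new phenomenon arises; I would expect this bookkeeping, rather than any conceptual difficulty, to be the only real obstacle.
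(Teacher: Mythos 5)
Your proposal is correct and follows exactly the route the paper takes: the paper's proof of Proposition \ref{prop 8.5} consists of the remark that it is the argument of Proposition \ref{prop 8.4} with $nr$ replaced by $(j-1)r$, the key point being that $A(\Delta(\tau,m)\gamma_\psi^{(\epsilon)},\eta,k)$ does not support Fourier coefficients attached to the partition $((2j)^{\ell},j^{2(r-\ell)},1^{2nm-2rj})$ for $\ell>0$ and $j>n$, which is precisely the partition you compute and the comparison with Propositions \ref{prop 3.1}, \ref{prop 3.2} that you invoke.
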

The proof is the same as that of Proposition \ref{prop 8.4},
replacing $nr$ by $(j-1)r$. The main point in the proof is that, by
Propositions \ref{prop 3.1}, \ref{prop 3.2}, $A(\Delta(\tau,m)\gamma_\pi^{(\epsilon)},\eta,k)$ does not support Fourier
coefficients corresponding to the partition 
$$
((2j)^\ell,j^{2(r-\ell)},1^{2nm-2rj}), 
$$
for any $\ell>0$, $j>n$.

Next, we exchange in the integral of \eqref{8.31}, the group
generated by $\mathrm{Y}_{2,1}^{i,j+1}$, $2n-j\leq i\leq j+1$, modulo
$\mathrm{Y}_{3,1}^{2n-j-1,j+1}$, with the group generated by
$\mathrm{X}_{1,2}^{j,i}$, $2n-j\leq i\leq j+1$, modulo
$\mathrm{X}_{1,3}^{j,2n-j}$. The setup of Lemma 7.1 in \cite{GRS11} can
be verified in a straightforward manner. We get that the integral in
the left hand side of \eqref{8.31} is trivial on $A(\Delta(\tau,m)\gamma_\pi^{(\epsilon)},\eta,k)$ if
and only if the following integral is trivial on $A(\Delta(\tau,m)\gamma_\pi^{(\epsilon)},\eta,k)$
\begin{equation}\label{8.32}
\widetilde{\epsilon}_{2,1}^{1,j}(\xi)(g)=\int_{\widetilde{\mathrm{D}}_{2,1}^{1,j}(F)\backslash
	\widetilde{\mathrm{D}}_{2,1}^{1,j}(\BA)}\xi(vg)\psi_{\widetilde{\mathrm{D}}_{2,1}^{1,j}}^{-1}(v)dv,
\end{equation}
where
\begin{equation}\label{8.33}
\widetilde{\mathrm{D}}_{2,1}^{1,j}=\widetilde{\mathrm{Y}}_{3,1}(2n-j-1,j+1)\mathrm{N}\mathrm{X}_{1,2}(j,2n-j),
\end{equation}
and $\widetilde{\mathrm{Y}}_{3,1}(2n-j-1,j+1)$ is the subgroup of
elements in $\mathrm{Y}_{3,1}(2n-j-1,j+1)$ of the form \eqref{6.10},
such that their corresponding blocks $Y_{2,1}^{i,j+1}$ are zero, for
$2n-j\leq i\leq j+1$; $\mathrm{X}_{1,2}(j,2n-j)$ is the subgroup
generated by $\mathrm{X}_{1,3}(j,2n-j)$ and
$\mathrm{X}_{1,2}^{j,i}$, for $2n-j\leq i\leq j+1$; the character
$\psi_{\widetilde{\mathrm{D}}_{2,1}^{1,j}}$ is defined as usual.
Now we exchange in $\widetilde{\epsilon}_{2,1}^{1,j}$ in
\eqref{8.32}, $\mathrm{Y}_{3,1}^{2n-j-1,j+1}$ "into"
$\mathrm{X}_{1,3}^{j,2n-j-1}$. So, let
$\mathrm{Y}_{2,1}(2n-j-1,j+1)$ be the subgroup of elements in
$\widetilde{\mathrm{Y}}_{3,1}(2n-j-1,j+1)$, written in the form
\eqref{6.10}, such that the block $Y_{3,1}^{2n-j-1,j+1}$ is zero.
Let $\mathfrak{X}_{1,3}^{j,2n-j-1}$ be the subgroup of elements in
$\mathrm{X}_{1,3}^{j,2n-j-1}$, written as in \eqref{6.9}, such that
the block $X_{1,3}^{j,2n-j-1}$ has the property that ${}^t(w_r
X_{1,3}^{j,2n-j-1})=\delta_H(w_rX_{1,3}^{j,2n-j-1})$. Let
$\mathfrak{X}_{1,3}(j,2n-j-1)$ be the subgroup generated by
$\mathrm{X}_{1,2}(j,2n-j)$ and $\mathfrak{X}_{1,3}^{j,2n-j-1}$.
Define
\begin{equation}\label{8.34}
\mathfrak{D}_{3,1}^{2n-j-1,j+1}=\mathrm{Y}_{2,1}(2n-j-1,j+1)\mathrm{N}\mathfrak{X}_{1,3}(j,2n-j-1).
\end{equation}
Define the character $\psi_{\mathfrak{D}_{3,1}^{2n-j-1,j+1}}$ in the
usual way. Then, by Corollary 7.1 in \cite{GRS11},
$\widetilde{\epsilon}_{2,1}^{1,j}$ is trivial if and only if the
following Fourier coefficient is identically zero,
\begin{equation}\label{8.35}
\varepsilon_{3,1}^{2n-j-1,j+1}(\xi)(g)=\int_{\mathfrak{D}_{3,1}^{2n-j-1,j+1}(F)\backslash
	\mathfrak{D}_{3,1}^{2n-j-1,j+1}(\BA)}\xi(vg)\psi_{\mathfrak{D}_{3,1}^{2n-j-1,j+1}}^{-1}(v)dv.
\end{equation}
The Fourier coefficient $\varepsilon_{3,1}^{2n-j-1,j+1}$ remains
unchanged when we replace $\mathfrak{X}_{1,3}^{j,2n-j-1}$ by
$\mathrm{X}_{1,3}^{j,2n-j-1}$. Denote by
$\mathrm{X}_{1,3}(j,2n-j-1)$ the group generated by
$\mathrm{X}_{1,2}(j,2n-j)$ and $\mathrm{X}_{1,3}^{j,2n-j-1}$, and
let
\begin{equation}\label{8.36}
\mathrm{D}_{3,1}^{2n-j-1,j+1}=\mathrm{Y}_{2,1}(2n-j-1,j+1)\mathrm{N}\mathrm{X}_{1,3}(j,2n-j-1),
\end{equation}
with the corresponding character of
$\mathrm{D}_{3,1}^{2n-j-1,j+1}(\BA)$,
$\psi_{\mathrm{D}_{3,1}^{2n-j-1,j+1}}$.
\begin{prop}\label{prop 8.6}
	Let $\xi\in A(\Delta(\tau,m)\gamma_\pi^{(\epsilon)},\eta,k)$. Then
	\begin{equation}\label{8.37}
	\varepsilon_{3,1}^{2n-j-1,j+1}(\xi)(g)=
	\end{equation}
	$$
	\int_{\mathrm{D}_{3,1}^{2n-j-1,j+1}(F)\backslash
		\mathrm{D}_{3,1}^{2n-j-1,j+1}(\BA)}\xi(vg)\psi_{\mathrm{D}_{3,1}^{2n-j-1,j+1}}^{-1}(v)dv:=\epsilon_{3,1}^{2n-j-1,j+1}(\xi)(g).
	$$
\end{prop}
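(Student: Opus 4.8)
The plan is to repeat, almost word for word, the argument of Proposition \ref{prop 8.1} together with the discussion leading to \eqref{8.28'} — indeed the statement for $j=n+1$ is precisely that discussion — with the role of the index $n$ there played by $j$ throughout. First I would record the elementary fact that $\mathrm{X}_{1,3}^{j,2n-j-1}$ is generated by $\mathfrak{X}_{1,3}^{j,2n-j-1}$ together with the one-parameter family $x(z)$, $z$ ranging over the space $\Sigma_r$ of all $z\in M_{r\times r}$ with ${}^t(w_rz)=-\delta_Hw_rz$, whose $(j,2n-j-1)$ block equals $z$; moreover $\psi_{\mathrm{D}_{3,1}^{2n-j-1,j+1}}$ restricts to $\psi_{\mathfrak{D}_{3,1}^{2n-j-1,j+1}}$ on $\mathfrak{D}_{3,1}^{2n-j-1,j+1}$ and is trivial on the $x(z)$. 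Hence for $\xi\in A(\Delta(\tau,m)\gamma_\psi^{(\epsilon)},\eta,k)$ and $g\in H(\BA)$ fixed, the function $z\mapsto\varepsilon_{3,1}^{2n-j-1,j+1}(\rho(g)\xi)(x(z))$ is $\Sigma_r(F)$-invariant, and \eqref{8.37} is exactly the statement that its only nonzero Fourier coefficient along the compact group $\Sigma_r(F)\backslash\Sigma_r(\BA)$ is the one attached to the trivial character, so that (after integrating the resulting constant function, of total mass one, and unfolding) the two displayed integrals agree.

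The heart of the matter is to show that the Fourier coefficient attached to a character $z\mapsto\psi(\tr(zL))$ with $L\neq 0$ vanishes identically on $A(\Delta(\tau,m)\gamma_\psi^{(\epsilon)},\eta,k)$. As in Proposition \ref{prop 8.1}, I would isolate an inner integration of this coefficient which is a Fourier coefficient of $\xi$ along the unipotent radical $U_{r^j}$, enlarged by $\mathrm{X}_{1,3}^{j,2n-j-1}$, taken against a character; after conjugating by a suitable element of $H(F)$ to kill the remaining off-string entries — the manipulation with $\zeta_A$ in the proof of Proposition \ref{prop 8.1} — this character corresponds, in the sense of \cite{MW87}, to a nilpotent element $Y\in\Lie(H_{2nm})(F)$ and one-parameter subgroup $\varphi(s)$ built from a Jordan string $\mathcal{U}$ of $j$ blocks of size $r$ and from $L$, exactly as in \eqref{8.13} and \eqref{8.27.2} with $\mathcal{U}$ of size $jr$. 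A direct Jordan-form computation then puts $Y$ in the nilpotent orbit of the partition $((2j+1)^{\ell},(j+1)^{2(r-\ell)},1^{2nm-2(j+1)r+\ell})$, where $\ell=\rank(L)$. Since $n+1\leq j\leq 2n-2$ we have $2j+1\geq 2n+3>2n$, so this partition is not dominated by any of the partitions on the right-hand sides of Propositions \ref{prop 3.1} and \ref{prop 3.2}, all of whose parts are at most $2n$; hence those propositions force the coefficient to vanish whenever $\ell>0$, i.e. whenever $L\neq 0$ — and in the symplectic case one also invokes Lemma 1.1 of \cite{GRS03}, as in the proof of Proposition \ref{prop 8.4}.

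The only step I expect to be a real obstacle is the bookkeeping in the second paragraph: pinning down precisely the subgroup $E$ and the character that realize the inner integration of $\varepsilon_{3,1}^{2n-j-1,j+1}$, and carrying out the Jordan-form computation for $Y$, in particular in the case $m=2m'-1$, where one must keep track of the three middle blocks of sizes $[\tfrac m2]$, $2([\tfrac{m+1}2]-r)$, $[\tfrac m2]$ and of the half-integer entries coming from \eqref{1.5} and \eqref{6.5.2.1}. All of this, however, is strictly parallel to what has already been carried out in the proofs of Propositions \ref{prop 8.1}, \ref{prop 8.4} and \ref{prop 8.5}, and introduces nothing essentially new.
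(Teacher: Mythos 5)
Your proposal is correct and follows essentially the same route as the paper, which proves Proposition \ref{prop 8.6} by repeating the argument of Proposition \ref{prop 8.1} with $nr$ replaced by $jr$: one Fourier-expands along the complementary space to $\mathfrak{X}_{1,3}^{j,2n-j-1}$ inside $\mathrm{X}_{1,3}^{j,2n-j-1}$, isolates the inner Fourier coefficient attached (in the sense of \cite{MW87}) to the partition $((2j+1)^{\ell},(j+1)^{2(r-\ell)},1^{2nm-2(j+1)r+\ell})$ with $\ell=\mathrm{rank}(L)$, and kills it for $\ell>0$ via Propositions \ref{prop 3.1}, \ref{prop 3.2} because $j>n$ forces the leading part $2j+1$ to exceed $2n$. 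Your identification of the partition and of the reason for vanishing coincides exactly with the paper's.
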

The proof is the same as that of Proposition \ref{prop 8.1},
replacing $nr$ by $jr$. The main point, as in \eqref{8.28'}, is
that, by Propositions \ref{prop 3.1}, \ref{prop 3.2}, $A(\Delta(\tau,m)\gamma_\pi^{(\epsilon)},\eta,k)$ does not support
Fourier coefficients corresponding to the partition $((2j+1)^\ell,
(j+1)^{2(r-\ell)},1^{2nm-2(j+1)r+\ell})$, since $j>n$.

Now the road is open to apply the process of Proposition \ref{prop
	7.1}. Define, for $1\leq i\leq 2n-j-1$ (and $n+1\leq j\leq 2n-2$),
\begin{equation}\label{8.38}
\begin{array}{rcl}
\mathrm{D}_{3,1}^{i,j+1}&=&\mathrm{Y}_{2,1}(i,j+1)\mathrm{N}\mathrm{X}_{1,3}(j,i);\\
\mathrm{D}_{2,1}^{i,j+1}&=&\mathrm{Y}_{3,1}(i-1,j+1)\mathrm{N}\mathrm{X}_{1,2}(j,i)\qquad
(i>1),
\end{array}
\end{equation}
and define the characters $\psi_{\mathrm{D}_{3,1}^{i,j+1}},\
\psi_{\mathrm{D}_{2,1}^{i,j+1}}$ as before. Note that
$\mathrm{D}_{2,1}^{1,j+1}$ is already defined in \eqref{8.28}
$$
\mathrm{D}_{2,1}^{1,j+1}=\mathrm{Y}_{3,1}(2n-j-2,j+2)\mathrm{N}\mathrm{X}_{1,2}(j,1)
$$
When $j=2n-2$, we define
$$
\mathrm{D}_{2,1}^{1,2n-1}=\mathrm{N}\mathrm{X}_{1,2}(2n-2,1).
$$
Define, for $\xi \in A(\Delta(\tau,m)\gamma_\pi^{(\epsilon)},\eta,k)$,  as in \eqref{7.1}, the Fourier coefficients $\epsilon_{2,1}^{i,j+1}(\xi)(g)$ and
$\epsilon_{3,1}^{i,j+1}(\xi)(g)$ of $\xi$ with respect
$\psi_{\mathrm{D}_{2,1}^{i,j+1}}^{-1}$ and
$\psi_{\mathrm{D}_{3,1}^{i,j+1}}^{-1}$, respectively. Then we have the same
statement as that of Proposition \ref{prop 7.1}, with $j+1$
replacing $j$, with the same proof, for the groups defined in
\eqref{8.38}. We conclude
\begin{prop}\label{prop 8.7}
	Let $n+1\leq j\leq 2n-2$ and $1\leq i\leq 2n-j-1$. We have the following identities, for $\xi\in A(\Delta(\tau,m)\gamma_\pi^{(\epsilon)},\eta,k)$, and $h\in H(\BA)$.\\
	{\bf 1.} 
	$$
	\epsilon_{3,1}^{i,j+1}(\xi)(h)=\int_{\mathrm{Y}_{2,1}^{i,j+1}(\BA)}\epsilon_{2,1}^{i,j+1}(\xi)(yh)dy.
	$$
	Moreover, $\epsilon_{3,1}^{i,j+1}$ is zero on $A(\Delta(\tau,m)\gamma_\pi^{(\epsilon)},\eta,k)$ if and
	only if $\epsilon_{2,1}^{i,j+1}$ is zero on $A(\Delta(\tau,m)\gamma_\pi^{(\epsilon)},\eta,k)$.\\
	{\bf 2.} Assume that $i>1$. 
	$$
	\epsilon_{2,1}^{i,j+1}(\xi)(h)=\int_{\mathrm{Y}_{3,1}^{i-1,j+1}(\BA)}\epsilon_{3,1}^{i-1,j+1}(\xi)(yh)dy.
	$$
	Moreover, $\epsilon_{2,1}^{i,j+1}$ is zero on
	$A(\Delta(\tau,m)\gamma_\pi^{(\epsilon)},\eta,k)$ if and only if $\epsilon_{3,1}^{i-1,j+1}$ is zero on 	$A(\Delta(\tau,m)\gamma_\pi^{(\epsilon)},\eta,k)$.
\end{prop}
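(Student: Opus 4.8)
The plan is to run, for the groups defined in \eqref{8.38}, the very same root-exchange argument that proved Proposition \ref{prop 7.1}, now with $j+1$ in place of $j$; only the bookkeeping changes. First I would record the auxiliary groups. For part \textbf{1} with $i>1$, set $\mathrm{B}_{2,1}^{i,j+1}=\mathrm{D}_{3,1}^{i,j+1}$ and $\mathrm{C}_{2,1}^{i,j+1}=\mathrm{Y}_{3,1}(i-1,j+1)\,\mathrm{N}\,\mathrm{X}_{1,3}(j,i)$; for $i=1$ replace $\mathrm{Y}_{3,1}(i-1,j+1)$ by $\mathrm{Y}_{3,1}(2n-j-2,j+2)$. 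A direct matrix computation then gives $\mathrm{B}_{2,1}^{i,j+1}=\mathrm{C}_{2,1}^{i,j+1}\mathrm{Y}_{2,1}^{i,j+1}$ and $\mathrm{D}_{2,1}^{i,j+1}=\mathrm{C}_{2,1}^{i,j+1}\mathrm{X}_{1,2}^{j,i}$, and shows that $\mathrm{C}_{2,1}^{i,j+1}$ is an $F$-unipotent subgroup. For part \textbf{2} set $\mathrm{B}_{3,1}^{i-1,j+1}=\mathrm{D}_{2,1}^{i,j+1}$ and $\mathrm{C}_{3,1}^{i-1,j+1}=\mathrm{Y}_{2,1}(i-1,j+1)\,\mathrm{N}\,\mathrm{X}_{1,2}(j,i)$, so that $\mathrm{B}_{3,1}^{i-1,j+1}=\mathrm{C}_{3,1}^{i-1,j+1}\mathrm{Y}_{3,1}^{i-1,j+1}$ and $\mathrm{D}_{3,1}^{i-1,j+1}=\mathrm{C}_{3,1}^{i-1,j+1}\mathrm{X}_{1,3}^{j,i-1}$.

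Next I would check that in each case the five groups satisfy the hypotheses of Lemma 7.1 of \cite{GRS11}. The only nonformal point is the commutator: writing $x\in\mathrm{X}_{1,2}^{j,i}$, $y\in\mathrm{Y}_{2,1}^{i,j+1}$ in the block shapes \eqref{6.9}, \eqref{6.10}, one has $[x,y]\in\mathrm{N}$ with $V=I$ and with $U$-part carrying the single $r\times r$ block $XY$ in position $(j,j+1)$, whence $[x,y]\in\mathrm{C}_{2,1}^{i,j+1}$ and $\psi_{\mathrm{C}_{2,1}^{i,j+1}}([x,y])=\psi(\tr(XY))$ over $\BA$; the analogous statement for $[\mathrm{X}_{1,3}^{j,i-1},\mathrm{Y}_{3,1}^{i-1,j+1}]$ in part \textbf{2} is the same. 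Lemma 7.1, Corollary 7.1 and the proof of Corollary 7.2 of \cite{GRS11} then produce both displayed identities and both equivalences ($\epsilon_{3,1}^{i,j+1}$ is identically zero on $A(\Delta(\tau,m)\gamma_\psi^{(\epsilon)},\eta,k)$ iff $\epsilon_{2,1}^{i,j+1}$ is, and likewise for the pair in part \textbf{2}), exactly as in Proposition \ref{prop 7.1}.

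Finally, I would supply the base case $i=1$, which is the only place where a property of $A(\Delta(\tau,m)\gamma_\psi^{(\epsilon)},\eta,k)$ beyond being an automorphic module is used. Starting from Proposition \ref{prop 8.3.1} one induces on $j$: to pass from column $j$ to column $j+1$ one completes the blocks $\mathrm{X}_{1,3}^{j,2n-j}$ and $\mathrm{X}_{1,3}^{j,2n-j-1}$, which are Propositions \ref{prop 8.5} and \ref{prop 8.6}; their proofs expand $\xi$ in a Fourier series along the relevant one-parameter unipotent and discard the nonzero modes because, by Propositions \ref{prop 3.1} and \ref{prop 3.2}, $A(\Delta(\tau,m)\gamma_\psi^{(\epsilon)},\eta,k)$ supports no Fourier coefficient attached to the partitions $((2j)^\ell,j^{2(r-\ell)},1^{2nm-2rj})$ or $((2j+1)^\ell,(j+1)^{2(r-\ell)},1^{2nm-2(j+1)r+\ell})$ with $\ell>0$, since $j>n$. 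Granting these, the induction runs down to $j+1=2n-1$ and the statement follows. The main obstacle is none of the individual steps but the combinatorial overhead of tracking exactly which blocks of $\mathrm{X}_{1,3},\mathrm{X}_{1,2},\mathrm{Y}_{3,1},\mathrm{Y}_{2,1}$ have already been exchanged at each stage, so that the groups entering Lemma 7.1 of \cite{GRS11} really are subgroups and the characters extend as asserted.
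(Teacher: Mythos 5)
Your argument is exactly the paper's: Proposition \ref{prop 8.7} is proved there by running the proof of Proposition \ref{prop 7.1} verbatim with $j+1$ in place of $j$ for the groups of \eqref{8.38}, and your intermediate groups $\mathrm{C}_{2,1}^{i,j+1}$, $\mathrm{C}_{3,1}^{i-1,j+1}$, the factorizations $\mathrm{B}=\mathrm{C}\mathrm{Y}$, $\mathrm{D}=\mathrm{C}\mathrm{X}$, and the commutator checks are the correct analogues feeding Lemma 7.1 and Corollary 7.1 of \cite{GRS11}. Your final paragraph on the base case $i=1$ really belongs to the surrounding induction (Propositions \ref{prop 8.5}, \ref{prop 8.6}, \ref{prop 8.8}) rather than to Proposition \ref{prop 8.7} itself, which, like Proposition \ref{prop 7.1}, is a pure root-exchange identity valid for any automorphic module.
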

A repeated application of Proposition \ref{prop 8.7}, Propositions
\ref{prop 8.5}, \ref{prop 8.6}, what we explained before, and \eqref{8.28.1} prove
\begin{prop}\label{prop 8.8}
	Let $n+1\leq j\leq 2n-2$. Then 
\begin{equation}\label{8.39}
\epsilon_{n,m,r}(\xi)(h)=\int_{\mathrm{Y}^{j+1}_\BA}\epsilon_{2,1}^{1,j+1}(\xi)(yh)dy.
\end{equation}	
Moreover, $\epsilon_{n,m,r}$ is trivial on $A(\Delta(\tau,m)\gamma_\pi^{(\epsilon)},\eta,k)$ if and only if $\epsilon_{2,1}^{1,j+1}$ is trivial on $A(\Delta(\tau,m)\gamma_\pi^{(\epsilon)},\eta,k)$. Hence
\begin{equation}\label{8.40}
\epsilon_{n,m,r}(\xi)(h)=\int_{\mathrm{Y}_\BA}\epsilon_{2,1}^{1,2n-1}(\xi)(yh)dy,
\end{equation}
and	$\epsilon_{n,m,r}$ is trivial on $A(\Delta(\tau,m)\gamma_\pi^{(\epsilon)},\eta,k)$ if and only if $\epsilon_{2,1}^{1,2n-1}$ is trivial on $A(\Delta(\tau,m)\gamma_\pi^{(\epsilon)},\eta,k)$. 
\end{prop}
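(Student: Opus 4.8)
The plan is to prove the first assertion of Proposition \ref{prop 8.8} by induction on $j$, and then read off \eqref{8.40} by specialization. For $n+2\le j\le 2n-1$ let $(\star_j)$ denote the statement that $\epsilon_{n,m,r}(\xi)(h)=\int_{\mathrm{Y}^{j}_\BA}\epsilon_{2,1}^{1,j}(\xi)(yh)dy$ for all $\xi\in A(\Delta(\tau,m)\gamma_\pi^{(\epsilon)},\eta,k)$, and that $\epsilon_{n,m,r}$ is trivial on $A(\Delta(\tau,m)\gamma_\pi^{(\epsilon)},\eta,k)$ if and only if $\epsilon_{2,1}^{1,j}$ is. The claim of the proposition for a given $j\in\{n+1,\dots,2n-2\}$ is exactly $(\star_{j+1})$, so it is enough to establish $(\star_j)$ for all $n+2\le j\le 2n-1$. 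The base case $(\star_{n+2})$ is Proposition \ref{prop 8.3.1}; this is where the block column $n+1$ was treated by the ad hoc arguments of Proposition \ref{prop 8.4} and the analog of Proposition \ref{prop 8.1}, and where the odd-$m$ anomalies attached to that middle column were absorbed. For $j\ge n+2$ the columns involved are ``ordinary'', so no such anomaly recurs.

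For the inductive step I would assume $(\star_j)$ for some $n+2\le j\le 2n-2$ and run one complete cycle of the machinery of \eqref{8.28}--\eqref{8.38} with running index $j$, exactly as outlined in the text between \eqref{8.28.1} and \eqref{8.39}. First, apply Proposition \ref{prop 8.5} to fill in the block $\mathrm{X}_{1,3}^{j,2n-j}$, enlarging $\mathrm{D}_{2,1}^{1,j}$ to $\mathbf{D}_{2,1}^{1,j}$; the sole non-formal ingredient is that, by Propositions \ref{prop 3.1} and \ref{prop 3.2}, $A(\Delta(\tau,m)\gamma_\pi^{(\epsilon)},\eta,k)$ carries no Fourier coefficient attached to the partition $((2j)^\ell,j^{2(r-\ell)},1^{2nm-2rj})$ with $\ell>0$, which holds since $j>n$. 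Next, perform the two root exchanges leading to \eqref{8.32} and then to \eqref{8.35}: exchange the group generated by $\mathrm{Y}_{2,1}^{i,j+1}$, $2n-j\le i\le j+1$ (modulo $\mathrm{Y}_{3,1}^{2n-j-1,j+1}$) for the one generated by $\mathrm{X}_{1,2}^{j,i}$ (modulo $\mathrm{X}_{1,3}^{j,2n-j}$), producing $\widetilde{\epsilon}_{2,1}^{1,j}$; then exchange $\mathrm{Y}_{3,1}^{2n-j-1,j+1}$ into $\mathfrak{X}_{1,3}^{j,2n-j-1}$, producing $\varepsilon_{3,1}^{2n-j-1,j+1}$. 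Each is a routine verification of the hypotheses of Lemma 7.1 of \cite{GRS11} followed by Corollaries 7.1 and 7.2 of \cite{GRS11}, and each adds its swapped $\mathrm{Y}$-subgroup to the outer integration. Then apply Proposition \ref{prop 8.6} to enlarge $\mathfrak{X}_{1,3}^{j,2n-j-1}$ to the full $\mathrm{X}_{1,3}^{j,2n-j-1}$, passing to $\epsilon_{3,1}^{2n-j-1,j+1}$; the input is again that, by Propositions \ref{prop 3.1} and \ref{prop 3.2}, $A(\Delta(\tau,m)\gamma_\pi^{(\epsilon)},\eta,k)$ carries no Fourier coefficient for the partition $((2j+1)^\ell,(j+1)^{2(r-\ell)},1^{2nm-2(j+1)r+\ell})$ with $\ell>0$, valid since $j>n$. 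Finally, apply Proposition \ref{prop 8.7} repeatedly to descend $\epsilon_{3,1}^{i,j+1}\to\epsilon_{2,1}^{i,j+1}\to\epsilon_{3,1}^{i-1,j+1}\to\cdots\to\epsilon_{2,1}^{1,j+1}$, i.e.\ fill in all of block row $j$ of $\mathrm{X}_{1,3}$ and $\mathrm{X}_{1,2}$ in exchange for block column $j+1$ of $Y_{3,1}$ and $Y_{2,1}$, each step carrying along the triviality equivalence (as in Propositions \ref{prop 7.1} and \ref{prop 7.2}) and adding its $\mathrm{Y}_{2,1}^{i,j+1}$ or $\mathrm{Y}_{3,1}^{i-1,j+1}$ to the outer integral. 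Collecting the accumulated variables together with $\mathrm{Y}^{j}$ yields precisely the subgroup $\mathrm{Y}^{j+1}$ of $\mathrm{Y}$ generated by all $\mathrm{Y}_{2,1}^{i',j'},\mathrm{Y}_{3,1}^{i'',j''}$ with $j',j''\le j+1$, and gives $(\star_{j+1})$.

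Once $(\star_j)$ holds for all $n+2\le j\le 2n-1$, which is the first assertion, the rest follows by setting $j=2n-2$ in \eqref{8.39}: it only remains to note that $\mathrm{Y}^{2n-1}=\mathrm{Y}$, since the column indices of the blocks $\mathrm{Y}_{2,1}^{i,j},\mathrm{Y}_{3,1}^{i,j}$ generating $\mathrm{Y}$ all satisfy $j\le 2n-1$ (equivalently, $\mathrm{D}_{2,1}^{1,2n-1}=\mathrm{N}\mathrm{X}_{1,2}(2n-2,1)$ has trivial $Y$-part, so every column of $Y_{3,1}$ and $Y_{2,1}$ has been used up). Then \eqref{8.39} with $j=2n-2$ is \eqref{8.40}, and $\epsilon_{n,m,r}$ is trivial on $A(\Delta(\tau,m)\gamma_\pi^{(\epsilon)},\eta,k)$ if and only if $\epsilon_{2,1}^{1,2n-1}$ is. I expect the main obstacle to be organizational rather than conceptual: one must keep precise track, cycle by cycle, of which $\mathrm{X}_{1,3},\mathrm{X}_{1,2}$ blocks have already been filled and which $\mathrm{Y}_{3,1},\mathrm{Y}_{2,1}$ blocks remain, so that at each invocation of Lemma 7.1 of \cite{GRS11} the auxiliary subgroups genuinely satisfy the required commutator relations and the characters match. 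All the analytic content — the vanishing of the relevant degenerate Fourier coefficients — is already supplied by Propositions \ref{prop 3.1} and \ref{prop 3.2}, and only the matching of nilpotent orbits with those propositions needs to be verified.
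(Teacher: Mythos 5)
Your proposal is correct and follows the paper's own argument: the paper proves Proposition \ref{prop 8.8} precisely by "a repeated application of Proposition \ref{prop 8.7}, Propositions \ref{prop 8.5}, \ref{prop 8.6}," and \eqref{8.28.1}, with the single cycle for a general column index $j$ spelled out in \eqref{8.28}--\eqref{8.38}; your induction on $j$ with base case Proposition \ref{prop 8.3.1} is just an explicit organization of that repetition, and your identification of $\mathrm{D}_{2,1}^{1,2n-1}=\mathrm{N}\mathrm{X}_{1,2}(2n-2,1)$ and $\mathrm{Y}^{2n-1}=\mathrm{Y}$ for the final assertion matches the paper's conclusion.
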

This completes the proof of Theorem \ref{thm 6.1}, since
$\mathrm{D}_{2,1}^{1,2n-1}$ is the group $\mathcal{D}'_{n,m,r}$, and
so, we have
$$
\epsilon_{2,1}^{1,2n-1}=\epsilon'_{n,m,r}.
$$
From \eqref{6.7.4} and Theorem \ref{thm 6.1}, we have, for all $\xi\in A(\Delta(\tau,m)\gamma_\pi^{(\epsilon)},\eta,k)$, and $h\in H(\BA)$,
\begin{equation}\label{8.41}
\mathcal{F}_\psi^r(\xi)(h)=\int_{\mathbf{Y}_1(\BA)}\cdots \int_{\mathbf{Y}_{n-1}(\BA)}\int_{\mathrm{Y}(\BA)}\epsilon'_{n,m,r}(\xi)(yw_0y_{n-1}\cdots y_1h)dydy_{n-1}\cdots dy_1.
\end{equation}
Denote by $\mathbf{Y}$ the subgroup generated by $\mathrm{Y}$ and $w_0\mathbf{Y}_iw_0^{-1}$, $i=1,...,n-1$. See \eqref{4.9}, \eqref{6.10}. Then \eqref{8.41} becomes
\begin{equation}\label{8.42}
\mathcal{F}_\psi^r(\xi)(h)=\int_{\mathbf{Y}(\BA)}\epsilon'_{n,m,r}(\xi)(yw_0h)dy.
\end{equation}
Let us describe the subgroup $\mathbf{Y}$. As in \eqref{6.10}, its elements have the form
\begin{equation}\label{8.43}
\begin{pmatrix}I_{(2n-1)r}\\Y_{2,1}&I_{2n(m-2r)+2r}\\
Y_{3,1}&Y'_{2,1}&I_{(2n-1)r}\end{pmatrix}\in H_{2nm},
\end{equation}
where the blocks are described as follows. The description is similar to that which appears right after \eqref{6.5.4}. Write the block $Y_{3,1}$ as a $(2n-1)\times (2n-1)$ matrix of blocks
$Y_{3,1}^{i,j}$, each one of size $r\times r$ ($1\leq i,j\leq
2n-1$). Then $Y_{3,1}^{i,j}=0$, for all $i\geq j$. (In \eqref{6.10}, we required that $Y_{3,1}^{i,j}=0$, for all $i\geq j-1$.) Thus, as a
$(2n-1)\times (2n-1)$ matrix of $r\times r$ blocks, $Y_{3,1}$ has an
upper triangular shape, with zero blocks on the diagonal.

The block $Y_{2,1}$ has $2n-1$ block columns, each one of size $r$. It has $2n+1$ block rows, the size of the first and last $n-1$ block rows is $m-2r$ and the sizes of
the three middle block rows are $[\frac{m}{2}],2([\frac{m+1}{2}]-r),[\frac{m}{2}]$. Denote by
$\widetilde{Y}_{2,1}$ the matrix obtained from $Y_{2,1}$ by deleting
block rows number $n, n+2$ (each one is of size $[\frac{m}{2}]$). Then
$\widetilde{Y}_{2,1}$ has a shape of an upper triangular matrix of blocks, where all the diagonal blocks are zero. When $m$ is even, the blocks in the second upper diagonal of $\widetilde{Y}_{2,1}$ are arbitrary, except the middle block in row $n$ and column $n+1$, which is zero. When $m=2m'-1$ is odd, the blocks in the second upper diagonal of $\widetilde{Y}_{2,1}$ are arbitrary, except the middle block in row $n$ and column $n+1$, which has the form
\begin{equation}\label{8.44}
\begin{pmatrix}0_{(m'-1-r)\times r}\\x\\y\\0_{(m'-1-r)\times r}\end{pmatrix},
\end{equation}
where $x$, $y$ are rows of $r$ coordinates. (Compare with \eqref{6.6}.) The rows number $n, n+2$ of $Y_{2,1}$ are rows of blocks of size $[\frac{m}{2}]\times r$, and their first $n$ blocks are all zero. The block $Y_{2,1}'$ has a form "dual" to that of $Y_{2,1}$ and is determined by $Y_{2,1}$. It has $2n-1$ block rows, each one of size $r$. It has $2n+1$ block
columns, the first and last $n-1$ columns are of size
$m-2r$ each and the three middle columns are of sizes $[\frac{m}{2}],2([\frac{m+1}{2}]-r),[\frac{m}{2}]$.
When we delete columns number $n, n+2$, we obtain
$\widetilde{Y'}_{2,1}$, a $(2n-1)\times (2n-1)$ matrix of blocks, where each block
is of size $r\times (m-2r)$, except the blocks of the middle column, which are each of size $r\times 2([\frac{m+1}{2}]-r)$. Then $\widetilde{Y}'_{2,1}$ has a shape of an upper triangular matrix of blocks, where all the diagonal blocks are zero. When $m$ is even, the blocks in the second upper diagonal are arbitrary, except the middle block in row $n$ and column $n+1$, which is zero. When $m=2m'-1$ is odd, the blocks in the second upper diagonal of $\tilde{Y}_{2,1}'$ are arbitrary, except the middle block in row $n$ and column $n+1$, which has the form
\begin{equation}\label{8.45}
(0_{r\times (m'-1-r)},w_{m'-1-r}{}^ty,w_{m'-1-r}{}^tx,0_{r\times (m'-1-r)}),
\end{equation}
where $x$, $y$ are as in \eqref{8.44}. The columns number $n, n+2$ of $Y'_{2,1}$ are columns of blocks of size $r\times [\frac{m}{2}]$, and their last $n$ blocks are all zero. 

Let us record \eqref{8.42} in the following theorem
\begin{thm}\label{thm 8.10}
Let $0\leq r\leq [\frac{m}{2}]$. For all $\xi\in A(\Delta(\tau,m)\gamma_\pi^{(\epsilon)},\eta,k)$, and $h\in H(\BA)$,  
$$
\mathcal{F}_\psi^r(\xi)(h)=\int_{\mathbf{Y}(\BA)}\epsilon'_{n,m,r}(\xi)(yw_0h)dy,
$$
where $\mathbf{Y}$ is the subgroup generated by $\mathrm{Y}$ and $w_0\mathbf{Y}_iw_0^{-1}$, $i=1,...,n-1$.
Moreover, $\mathcal{F}_\psi^r$ is identically zero on $A(\Delta(\tau,m)\gamma_\pi^{(\epsilon)},\eta,k)$ if and only if $\epsilon'_{n,m,r}$
is identically zero on $A(\Delta(\tau,m)\gamma_\pi^{(\epsilon)},\eta,k)$.
\end{thm}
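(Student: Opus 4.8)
The plan is to read Theorem \ref{thm 8.10} as a repackaging of two results already in hand. The first is the identity \eqref{6.7.4}, which (after the conjugation by $w_0$ built into $\epsilon_{n,m,r}$) expresses $\mathcal{F}_\psi^r(\xi)$ as an iterated integral of $\epsilon_{n,m,r}(\xi)$ over $\mathbf{Y}_1(\BA),\dots,\mathbf{Y}_{n-1}(\BA)$; the second is Theorem \ref{thm 6.1}, which writes $\epsilon_{n,m,r}(\xi)(g)=\int_{\mathrm{Y}(\BA)}\epsilon'_{n,m,r}(\xi)(yg)\,dy$ and records the triviality equivalence $\epsilon_{n,m,r}\equiv 0\Leftrightarrow\epsilon'_{n,m,r}\equiv 0$ on $A(\Delta(\tau,m)\gamma_\psi^{(\epsilon)},\eta,k)$. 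First I would substitute Theorem \ref{thm 6.1} into \eqref{6.7.4} and rewrite the Weyl translate as $w_0y_{n-1}\cdots y_1=(w_0y_{n-1}w_0^{-1})\cdots(w_0y_1w_0^{-1})\,w_0$, so that the argument of $\epsilon'_{n,m,r}(\xi)$ becomes $y\,(w_0y_{n-1}w_0^{-1})\cdots(w_0y_1w_0^{-1})\,w_0h$ with $y\in\mathrm{Y}(\BA)$ and each $w_0y_iw_0^{-1}\in(w_0\mathbf{Y}_iw_0^{-1})(\BA)$. This is precisely \eqref{8.41}. Note that no character twists occur along any of these directions, so the combination is clean.

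The remaining step is to collapse this iterated integral into a single integral over $\mathbf{Y}(\BA)$, where $\mathbf{Y}$ is the group generated by $\mathrm{Y}$ together with the conjugates $w_0\mathbf{Y}_iw_0^{-1}$. For this I would compute $w_0\mathbf{Y}_iw_0^{-1}$ explicitly from \eqref{4.9} and the block description of $w_0=w'_0w''_0$ in \eqref{6.0}--\eqref{6.3''}: an element of $\mathbf{Y}_i$ has, in the notation of \eqref{1.2}, all off-diagonal blocks zero except a lower-triangular block in position $x_i$ (and its mirror in $x_i^*$), and conjugation by $w_0$ carries these entries into the $(\bullet,i+1)$ block column of $Y_{2,1}$ and of $Y_{3,1}$ (and, when $m=2m'-1$ is odd, into the distinguished middle block of the form \eqref{8.44}). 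A comparison of supports then shows that $\mathrm{Y}\cdot(w_0\mathbf{Y}_{n-1}w_0^{-1})\cdots(w_0\mathbf{Y}_1w_0^{-1})$ is closed under multiplication, hence equals $\mathbf{Y}$, and that this product decomposition exhibits $\mathbf{Y}$ as a successive extension of these unipotent pieces; this is exactly the description recorded in \eqref{8.43}--\eqref{8.45}. With Haar measures normalized compatibly, Fubini for unipotent groups over $\BA$ then turns the iterated integral into $\int_{\mathbf{Y}(\BA)}\epsilon'_{n,m,r}(\xi)(yw_0h)\,dy$, which is \eqref{8.42}. The final assertion is then immediate: by Corollary \ref{cor 4.2} (as noted right after \eqref{6.7.3}) $\mathcal{F}_\psi^r$ vanishes identically on $A(\Delta(\tau,m)\gamma_\psi^{(\epsilon)},\eta,k)$ if and only if $\epsilon_{n,m,r}$ does, and by Theorem \ref{thm 6.1} the latter holds if and only if $\epsilon'_{n,m,r}$ vanishes identically there.

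The main obstacle is essentially bookkeeping rather than conceptual: one must verify carefully that the product set $\mathrm{Y}\cdot(w_0\mathbf{Y}_{n-1}w_0^{-1})\cdots(w_0\mathbf{Y}_1w_0^{-1})$ is a genuine subgroup with the Haar measure decomposing as the product measure, which comes down to tracking block positions under conjugation by $w_0$ — in particular the shape and sign subtleties for odd $m$ recorded in \eqref{8.44}--\eqref{8.45}. No representation-theoretic input beyond Theorem \ref{thm 6.1} and Corollary \ref{cor 4.2} is needed here; the special properties of $A(\Delta(\tau,m)\gamma_\psi^{(\epsilon)},\eta,k)$ have all been used already inside the proof of Theorem \ref{thm 6.1}, which was completed in this section.
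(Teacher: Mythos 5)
Your proposal is correct and follows exactly the paper's own route: substitute the identity of Theorem \ref{thm 6.1} into \eqref{6.7.4} to obtain \eqref{8.41}, rewrite $w_0y_{n-1}\cdots y_1h$ as $(w_0y_{n-1}w_0^{-1})\cdots(w_0y_1w_0^{-1})w_0h$, collapse the iterated integral into a single integral over $\mathbf{Y}(\BA)$ using the product decomposition $\mathbf{Y}=\mathrm{Y}\cdot(w_0\mathbf{Y}_{n-1}w_0^{-1})\cdots(w_0\mathbf{Y}_1w_0^{-1})$, and deduce the vanishing equivalence from Corollary \ref{cor 4.2} together with Theorem \ref{thm 6.1}. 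If anything, you supply slightly more detail than the paper (which passes from \eqref{8.41} to \eqref{8.42} without comment) on why the product set is a subgroup with compatible Haar measure.
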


\section{Fourier expansions II}

Let $E_1$ be the subgroup of $\mathcal{D}'_{n,m,r}$, consisting of
the elements in \eqref{6.8'}, such that the $V$ block in $\mathrm{N}$ is the identity. Let
$\psi_{E_1}$ be the restriction of $\psi_{\mathcal{D}'_{n,m,r}}$ to $E_1(\BA)$. Then
\begin{multline}\label{9.1}
\epsilon'_{n,m,r}(\xi)(h)=\\
\int_{U'_{{(m-2r)}^{n-1}}(F)\backslash
	U'_{{(m-2r)}^{n-1}}(\BA)}\int_{E_1(F)\backslash E_1(\BA)}\xi(uvh)\psi^{-1}_{E_1}(u)\psi^{-1}_{U'_{{(m-2r)}^{n-1}}}(v)dudv,
\end{multline}
where $U'_{{(m-2r)}^{n-1}}$ is the unipotent subgroup of the elements $V$ in \eqref{6.5}, identified as the subgroup of $N$ consisting of the elements \eqref{6.8} with $U=I_{(2n-1)r}$;
$\psi_{U'_{(m-2r)^{n-1}}}$ is the restriction of $\psi_N$ to
$U'_{(m-2r)^{n-1}}(\BA)$. Let
\begin{equation}\label{9.2}
x_r(y,c)=\begin{pmatrix}I_{(2n-2)r}\\&I_r&y&c\\&&I_{2(nm-(2n-1)r)}&y'\\&&&I_r\\
&&&&I_{(2n-2)r}\end{pmatrix},
\end{equation}
and $c$ is any matrix, such that the unipotent matrix
in \eqref{9.2} is in $H_{2nm}$.
Consider, for a given $h\in H(\BA)$, the following
smooth function on
$M_{r\times (2(nm-(2n-1)r))}(\BA)$,
\begin{multline}\label{9.3}
y\mapsto f(A(\Delta(\tau,m)\gamma_\psi^{(\epsilon)},\eta, k)(h)\xi)(y)=\epsilon'_{n,m,r}(\xi)(x_r(y,c)h)\\
\int_{U'_{{(m-2r)}^{n-1}}(F)\backslash
	U'_{{(m-2r)}^{n-1}}(\BA)}\int_{E_1(F)\backslash E_1(\BA)}\xi(uvx_r(y,c)h)\psi^{-1}_{E_1}(u)\psi^{-1}_{U'_{{(m-2r)}^{n-1}}}(v)dudv.
\end{multline}
Note that the integral in
\eqref{9.3} is independent of the choice of $c$. Our next goal will be to try to show that \eqref{9.3} is constant in $y$. For this to hold, we will need that $m, n, k$ satisfy certain relations. These will be listed in Cor. \ref{cor 10.3}. These include the cases of functoriality. In order to help the reader keep track of our goals, we state this in the following theorem.
\begin{thm}\label{thm 9.0}
Let $\tau$ and $H_m^{(\epsilon)}$ be as in Theorem \ref{thm 2.2}. For all $\xi\in A(\Delta(\tau,m)\gamma_\psi^{(\epsilon)},\eta, 1)$, and all $y\in M_{r\times (2(nm-(2n-1)r))}(\BA)$,
$$
f(\xi)(y)=f(\xi)(0),
$$
that is, for all $\xi\in A(\Delta(\tau,m)\gamma_\psi^{(\epsilon)},\eta, 1)$ and all $y\in M_{r\times (2(nm-(2n-1)r))}(\BA)$,
$$
\epsilon'_{n,m,r}(\xi)(x_r(y,c))=\epsilon'_{n,m,r}(\xi)(1).
$$

\end{thm}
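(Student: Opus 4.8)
The plan is to prove Theorem \ref{thm 9.0} by a Fourier-expansion argument in the variable $y$, entirely analogous in spirit to the arguments of Propositions \ref{prop 8.1}, \ref{prop 8.4}, \ref{prop 8.5}, \ref{prop 8.6}, but now using the stronger numerical hypotheses on $(m,n,k)$ (those of Cor. \ref{cor 10.3}, which in particular cover the cases of functoriality, where $k=k_0=1$). First I would observe that, since $\epsilon'_{n,m,r}$ is left-invariant under $\mathcal{D}'_{n,m,r}(F)$ and the matrices $x_r(y,c)$ for $y\in M_{r\times(2(nm-(2n-1)r))}(F)$ normalize $E_1$ and $U'_{(m-2r)^{n-1}}$ while fixing the relevant characters (this is a direct matrix check using \eqref{9.2} and the shapes of $E_1$, $U'_{(m-2r)^{n-1}}$), the function $y\mapsto f(\xi)(y)$ in \eqref{9.3} is a smooth function on the compact group $M_{r\times(2(nm-(2n-1)r))}(F)\backslash M_{r\times(2(nm-(2n-1)r))}(\BA)$. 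Hence it suffices to show that every nontrivial Fourier coefficient of it vanishes on $A(\Delta(\tau,m)\gamma_\psi^{(\epsilon)},\eta,1)$.

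The key step is then the following. A nontrivial character of that compact group has the form $y\mapsto\psi(\mathrm{tr}(yT))$ for a nonzero $T$ in the appropriate space of matrices (with the symmetry forced by being inside $H_{2nm}$). Integrating $f(\xi)(y)$ against $\psi^{-1}(\mathrm{tr}(yT))$ and unfolding the definitions of $E_1$ and $U'_{(m-2r)^{n-1}}$ produces a Fourier coefficient of $\xi$ along a unipotent subgroup of $H_{2nm}$ against a character which, in the sense of \cite{MW87}, is attached to an $\mathfrak{sl}_2$-triple with nilpotent element $Y$ of a computable type; as in \eqref{8.13}, \eqref{8.21}, \eqref{8.27.2}, the nilpotent orbit of $Y$ will correspond to a partition whose largest parts grow linearly with $\mathrm{rank}(T)$ — schematically something like $((2n+c)^{\ell},\dots,1^{\ast})$ with $\ell=\mathrm{rank}(T)$ and $c$ depending only on the bookkeeping. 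The second step is to compare this partition with the bounds of Propositions \ref{prop 3.1} and \ref{prop 3.2}: under the relations imposed in Cor. \ref{cor 10.3} (e.g. $m-1=(2k_0-1)n$ with $\eta=\wedge^2$, and $k=1$), those propositions say $A(\Delta(\tau,m)\gamma_\psi^{(\epsilon)},\eta,1)$ does not support a Fourier coefficient attached to any orbit greater than or unrelated to $((2n)^{\ast},n^{\ast})$-type partitions, which forces $\mathrm{rank}(T)=0$, i.e. $T=0$, a contradiction. (In the symplectic and metaplectic cases one also invokes Lemma 1.1 of \cite{GRS03}, as in the proof of Prop. \ref{prop 8.4}.) Combining these two steps gives $f(\xi)(y)=f(\xi)(0)$ for all $y$, which is exactly the assertion, since $f(\xi)(y)=\epsilon'_{n,m,r}(\xi)(x_r(y,c))$ and $f(\xi)(0)=\epsilon'_{n,m,r}(\xi)(1)$.

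I expect the main obstacle to be the explicit identification, in a uniform way across the four cases (orthogonal $m$ even, orthogonal $m$ odd, symplectic, metaplectic), of the nilpotent orbit/partition attached to the character $y\mapsto\psi(\mathrm{tr}(yT))$ after unfolding. One must carefully conjugate the integration domain — combining $E_1$, $U'_{(m-2r)^{n-1}}$ and the one-parameter family $x_r(y,\cdot)$ — into a standard unipotent subgroup of $H_{2nm}$ and keep track of the odd-$m$ corrections (the extra coordinates in position $(n-1)\times(n+3)$, cf. the appearance of blocks like \eqref{6.6}, \eqref{8.44}) and of the metaplectic cocycle contributions from \eqref{1.9}. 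Once the triple $(Y,\varphi(s))$ and the resulting partition are pinned down, the vanishing is an immediate consequence of Propositions \ref{prop 3.1}, \ref{prop 3.2} together with the numerical hypotheses, exactly as in the earlier Fourier-expansion lemmas; but it is precisely in this comparison of partitions that the restriction to the cases of Cor. \ref{cor 10.3} becomes essential, and verifying that the inequality goes the right way in each of those cases will require a short case-by-case computation. The argument as a whole is parallel to, but one notch more delicate than, the proof of Theorem \ref{thm 6.1}, and it is the reason the general-$k$ statement must be specialized before this step can be carried out.
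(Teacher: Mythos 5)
Your overall instinct — Fourier-expand in $y$ and kill nontrivial coefficients using the orbit bounds of Propositions \ref{prop 3.1}, \ref{prop 3.2} — captures only one ingredient of the paper's argument, and the central claim of your second step is not correct as stated. The paper does not (and cannot) do a single global Fourier expansion in all of $y=(y_1,\dots,y_{2n+1})$: the periodicity of $f(\xi)$ in a given block coordinate $y_j$ is not a direct matrix check, because $x_r(y_0,c_0)$ does not normalize $\mathcal{D}'_{n,m,r}$; it only becomes available after adelic invariance in the later coordinates has already been established (cf.\ \eqref{9.12}). Accordingly the proof is a three-stage block-by-block induction: Theorem \ref{thm 9.1} for $y_{n+3},\dots,y_{2n+1}$, all of Section 10 for the middle block $(y_n,y_{n+1},y_{n+2})$, and Theorem \ref{thm 11.1} for $y_1,\dots,y_{n-1}$. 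More importantly, the nontrivial Fourier coefficients are \emph{not} all attached to orbits excluded by the partition bounds. For the outer blocks, a rank-$\ell$ character leads (after Weyl conjugation and a long sequence of root exchanges) to partitions of the shape $((4n-1)^{\ell_0},(2n)^{2(\ell-\ell_0)},\dots)$; the bounds only force $\ell_0=0$, and the surviving rank-$\ell$ piece is killed by an entirely different mechanism: it is rewritten as $(\xi^{U_{\ell^{2n}}})^{\psi_{V_{\ell^{2n}}}}$ (constant term along a parabolic followed by a Whittaker-type coefficient on the $\GL$-Levi) integrated against a nontrivial character over a quotient on which it is invariant, whence it vanishes by orthogonality (see \eqref{9.51}--\eqref{9.52}, \eqref{11.25}--\eqref{11.26}).

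The gap is most serious in the middle block. There the coefficients are indexed by $L\in M_{2(m-r)\times r}(F)$, and one needs the two-stage orbit analysis of Lemmas \ref{lem 9.3} and \ref{lem 9.5} (first under $\GL_r(F)\times H_{2(m-r)}(F)$, then under $\GL_r(F)\times j_r(H_{2[\frac{m}{2}]}(F)\times H_{2([\frac{m+1}{2}]-r)}(F))$, with the odd-$m$ complication of Lemma \ref{lem 9.6}). The partition arguments of Proposition \ref{prop 9.4} and Theorem \ref{thm 9.7} only eliminate the parameters $\ell_2>0$ and $\ell'>0$; the fully isotropic coefficients $f_n^{\psi,c}$ with $c=\ell$ \emph{survive} every orbit-dimension test, since they correspond to the partition $((2n)^{2c},(2n-1)^{m-2c},1^{m-2c})$, whose largest part is only $2n$. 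These are disposed of by Theorem \ref{thm 10.1}, which reduces them to the constant term $\xi^{U_{2nc+(2n-1)(r-c)}}$ and invokes cuspidality of $\tau$ to get $n\mid r-c$, and only then by the case-by-case numerical comparison of Corollaries \ref{cor 10.2}, \ref{cor 10.3} (where the hypotheses $m-1=(2k_0-1)n$ etc.\ finally force $c=0$; note Remark \ref{rmk 10.2.1}, which shows the hard case is $c=r$). Your schematic $((2n+c)^{\ell},\dots)$ with a contradiction for $\mathrm{rank}(T)>0$ therefore does not occur for these coefficients, and a proof along the lines you propose would break down exactly there.
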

The proof of this theorem will be concluded in the end of Sec. 11. As we just mentioned, we will prove this theorem in a more general framework.

Denote, for short,
$$
\lambda=2(nm-(2n-1)r)=2(n-1)(m-2r)+2(m-r).
$$
Let us write $y\in M_{r\times \lambda}$ in \eqref{9.2}, in the form
\begin{equation}\label{9.10}
y=(y_1,...,y_{n-1},y_n,y_{n+1},y_{n+2}, y_{n+3},...,y_{2n+1}),
\end{equation}
where, for $1\leq i\leq n-1$, or $n+3\leq i\leq 2n+1$, $y_i\in M_{r\times (m-2r)}$; $y_n, y_{n+2}\in M_{r\times [\frac{m}{2}]}$, and $y_{n+1}\in M_{r\times (2([\frac{m+1}{2}])-r)}$. Let $1\leq i \leq 2n+1$. We will denote, for the element $y$ in \eqref{9.10},
\begin{equation}\label{9.10.1}
y^{(i)}=(0,...,0, y_{2n+2-i},...,y_{2n+1}).
\end{equation} 
Since $\mathrm{X}_{1,2}^{2n-1,2n+1}\subset E_1$,
and $\psi_{E_1}$ is trivial on $\mathrm{X}_{1,2}^{2n-1,2n+1}(\BA)$, the function \eqref{9.3} satisfies
\begin{equation}\label{9.4}
f(A(\Delta(\tau,m)\gamma_\psi^{(\epsilon)},\eta, k)(h)\xi)(y^{(1)})=f(A(\Delta(\tau,m)\gamma_\psi^{(\epsilon)},\eta, k)(h)\xi)(0),
\end{equation}
for all $\xi\in A(\Delta(\tau,m)\gamma_\psi^{(\epsilon)},\eta, k)$, and all $y^{(1)}$ with adelic coordinates.
Note that
\begin{equation}\label{9.5}
\epsilon'_{n,m,r}(\xi)(h)=f(A(\Delta(\tau,m)\gamma_\psi^{(\epsilon)},\eta, k)(h)\xi)(0)=
f(A(\Delta(\tau,m)\gamma_\psi^{(\epsilon)},\eta, k)(vxh)\xi)(0),
\end{equation}
for all $x\in \mathrm{X}_{1,2}^{2n-1,2n+1}(\BA)$. 
Our main goal in this section is to prove
\begin{thm}\label{thm 9.1}
For all $\xi\in A(\Delta(\tau,m)\gamma_\psi^{(\epsilon)},\eta, k)$, and all $y^{(n-1)}$ with adele coordinates,
\begin{equation}\label{9.6}
f(\xi)(y^{(n-1)})=f(\xi)(0).
\end{equation}
\end{thm}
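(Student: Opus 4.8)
The plan is to prove Theorem \ref{thm 9.1} by an induction on the number of nontrivial blocks $y_{2n+1},y_{2n},\dots$ that we can "absorb" into a Fourier coefficient already known to be supported only on the trivial character. The base case is \eqref{9.4}, which handles $y^{(1)}$ and relies only on the fact that $\mathrm{X}_{1,2}^{2n-1,2n+1}\subset E_1$ with $\psi_{E_1}$ trivial there. For the inductive step, suppose we have shown $f(\xi)(y^{(i)})=f(\xi)(0)$ for all $\xi$ and all $y^{(i)}$, with $1\le i\le n-2$; we want to pass to $y^{(i+1)}$, i.e. to also allow the block $y_{2n+1-i}$ (of size $r\times(m-2r)$, since $i+1\le n-1$) to be nonzero.

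The mechanism, as in Sections 8--9, is a Fourier expansion along the abelian group in which $y_{2n+1-i}$ lives, combined with a root-exchange and a nilpotent-orbit obstruction. Concretely, I would first exchange roots so that the block $y_{2n+1-i}$ in \eqref{9.2} is paired against a complementary block in $\mathrm{N}$ (a block of $X_{1,2}$ or of $S$), using Lemma 7.1 and Corollary 7.1 of \cite{GRS11}; this turns the assertion $f(\xi)(y^{(i+1)})=f(\xi)(y^{(i)})$ into the vanishing of the Fourier coefficients of the resulting integral against all nontrivial characters $\psi(\mathrm{tr}(y_{2n+1-i}L))$, $L\ne 0$. Next I would realize each such nontrivial coefficient as an inner integral of a Fourier coefficient of $\xi$ along a unipotent subgroup of $H$ — precisely as is done around \eqref{8.10}--\eqref{8.13} and \eqref{8.19}--\eqref{8.21} — and compute the associated nilpotent element $Y$ and one-parameter subgroup $\varphi(s)$. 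The partition of $2nm$ attached to $Y$ will have shape roughly $((2(2n-i))^\ell,\ (2n-i+1)^{2(r-\ell)},\ 1^{\dots})$ with $\ell=\mathrm{rank}(L)\ge 1$ (compare the partitions in Propositions \ref{prop 8.5}, \ref{prop 8.6}), and since $2n-i>n$ for $i\le n-2$, this partition is unrelated to or strictly larger than the bounding partitions of Propositions \ref{prop 3.1}, \ref{prop 3.2}. Hence by \cite{MW87}, Theorem I.16, the coefficient vanishes on $A(\Delta(\tau,m)\gamma_\psi^{(\epsilon)},\eta,k)$, which is exactly the inductive step. Iterating from $i=1$ up to $i=n-2$ yields \eqref{9.6}.

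Two technical points will require care. First, when $m=2m'-1$ is odd, the middle block column $n+1$ of $Y_{2,1}$ (and the dual column of $Y'_{2,1}$) carries the special constrained shape \eqref{8.44}, \eqref{8.45}; I will need to check that the root exchanges used in the inductive step commute properly with these constrained entries and that $\psi_N$ remains trivial on the relevant commutators — in the symplectic case this is where Lemma 1.1 of \cite{GRS03} is again invoked, as in Proposition \ref{prop 8.4}. Second, one must verify in each step that the group generated in the root exchange is genuinely a group (modulo the appropriate smaller subgroup), and that the extension of $\psi_N$ by the trivial character is well-defined and $F$-trivial; this is the same bookkeeping as in Lemma \ref{lem 6.1} and Propositions \ref{prop 8.5}--\ref{prop 8.7}, so it is routine but must be stated.

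The main obstacle I expect is not any single computation but the combinatorial verification that the nilpotent orbit attached to $Y$ at the $i$-th stage really is bounded below by a partition outside the range allowed in Propositions \ref{prop 3.1}, \ref{prop 3.2}. The inequality hinges on $2n-i>n$, i.e. $i<n$, which is exactly why the theorem stops at $y^{(n-1)}$: once $i$ reaches $n-1$ (absorbing the blocks $y_n,y_{n+1},y_{n+2}$, which have the irregular sizes $[\frac m2],2[\frac{m+1}2]-r,[\frac m2]$ rather than $m-2r$), the orbit size drops to the borderline $(2n)$-range and the obstruction argument can no longer be run uniformly — this is precisely the transition handled separately in Sections 10--11 under the hypotheses of Corollary \ref{cor 10.3}. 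So the proof here must be careful to use only $1\le i\le n-2$ and to present the remaining blocks $y_n,y_{n+1},y_{n+2}$ as the content of the subsequent sections rather than attempt them now.
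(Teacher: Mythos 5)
Your opening moves (induction block by block, Fourier expansion of the new block against characters $\psi(\mathrm{tr}(Lz))$, reduction of $L$ to a rank-$\ell$ normal form, appeal to the orbit bounds of Propositions \ref{prop 3.1}, \ref{prop 3.2}) match the paper's, but the heart of your argument fails. The character at stage $i$ sits on top of the \emph{full} regular character of $\hat{V}_{r^{2n-1}}$ contained in $\psi_N$, so the $r$-blocks form a chain of length $2n-1$ independent of $i$, linked by $L$ into the middle block of size $2(nm-(2n-1)r)$ (an $X_{1,2}$-type block, not an $X_{1,3}$-type block). You are pattern-matching on Propositions \ref{prop 8.1}, \ref{prop 8.4}--\ref{prop 8.6}, where the linking block joins block $j$ directly to its dual and hence the full rank $\ell$ of $L$ propagates into a Jordan block of size $>2n$. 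Here a vector only acquires a Jordan block of size $4n-1$ if it survives the composite map through the middle and back out to the dual chain, and the rank of that composite is a \emph{separate} parameter $\ell_0\le\ell$ (in the paper's notation, after the conjugation by \eqref{9.16} and the root exchanges, $\ell_0=\mathrm{rank}(b_2-b_2'-a_2'a_2)$). The correct partition is $((4n-1)^{\ell_0},(2n)^{2(\ell-\ell_0)},1,\dots)$, not $((2(2n-i))^{\ell},\dots)$; Propositions \ref{prop 3.1}, \ref{prop 3.2} only force $\ell_0=0$, and the parts equal to $2n$ lie exactly on the boundary of the allowed partitions, so they are not obstructed. The orbit argument alone therefore does not kill the coefficients with $L\neq 0$.

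What is missing is the disposal of the case $\ell>0$, $\ell_0=0$, which occupies \eqref{9.44}--\eqref{9.52} and is a different mechanism entirely: the condition $\ell_0=0$ is equivalent to $z(a,b)\in H_{2nm}(F)$, so the surviving Fourier coefficients are translates of a single one by rational points and the expansion collapses to the sum \eqref{9.47}; two further Fourier expansions (with orbit obstructions of shape $((4n)^{\ell_0},\dots)$ and $((4n+1)^{\ell_0},\dots)$) show this coefficient is invariant under all $\zeta_\ell(y,c)$ and hence equals $(\xi^{U_{\ell^{2n}}})^{\psi_{V_{\ell^{2n}}}}$, the composition of a constant term with a Whittaker-type coefficient; and the final vanishing of \eqref{9.52} comes from integrating this invariant function against the nontrivial character $\psi(\mathrm{tr}(y_1))$ on a subgroup of $U_{\ell^{2n}}(\BA)$ — a cuspidality-type argument, not an orbit bound. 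Without this block your induction does not close. (Your instinct that the "borderline $(2n)$-range" causes trouble is right in spirit, but you locate it only at the transition to $y_n,y_{n+1},y_{n+2}$; in fact the boundary parts $(2n)^{2(\ell-\ell_0)}$ appear at every stage $1\le i\le n-1$, which is why the extra argument is needed throughout Theorem \ref{thm 9.1} and not only in Sections 10--11.)
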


\begin{proof} Let $1\leq i\leq n-1$. Consider the subgroup $U^{(i)}$ of $U'_{{(m-2r)}^{n-1}}$, consisting of the elements of the form $diag(I_{(2n-1)r}, V, I_{2n-1)r})$, where $V$ is of the form
\begin{equation}\label{9.10.2}
\begin{pmatrix}I_{(m-2r)(i-2)}&0&\ast&\ast&\ast&\ast&\ast\\
&I_{m-2r}&v&\ast&\ast&\ast&\ast&\\&&I_{m-2r}&0&0&\ast&\ast\\&&&I_{\lambda-2i(m-2r)}&0&\ast&\ast\\&&&&I_{m-2r}&v'&\ast\\&&&&&I_{m-2r}&0\\&&&&&&I_{(m-2r)(i-2)}\end{pmatrix}.
\end{equation}
The restriction of the character $\psi_{U'_{{(m-2r)}^{n-1}}}$ to the elements \eqref{9.10.2} (with adele coordinates) is given by $\psi(tr(v))$. Denote this restriction by $\psi_{U^{(i)}}$.
Consider, for a given $h\in H(\BA)$, the following
smooth function on
$M_{r\times (m-2r)i}(\BA)$,
\begin{multline}\label{9.10.3}
y\mapsto f^{(i)}(A(\Delta(\tau,m)\gamma_\psi^{(\epsilon)},\eta, k)(h)\xi)(y^{(i)})=\\
\int_{U^{(i)}(F)\backslash
	U^{(i)}(\BA)}\int_{E_1(F)\backslash E_1(\BA)}\xi(uvx_r(y^{(i)},0)h)\psi^{-1}_{E_1}(u)\psi^{-1}_{U^{(i)}}(v)dudv.
\end{multline}
This is an inner integral of \eqref{9.3}. It will suffice to prove, by induction on $i$, that, for
$1\leq i\leq n-1$, and  $y_{2n+2-i},y_{2n+3-i},...,y_{2n+1}\in M_{r\times(m-2r)}(\BA)$,
\begin{equation}\label{9.11}
f^{(i)}(\xi)(0,...,0,y_{2n+2-i},...,y_{2n+1})=f^{(i)}(\xi)(0).
\end{equation}
This will prove the theorem. Note that the case $i=1$ is \eqref{9.5}. Thus, we may assume that $2\leq i\leq n-1$, and proceed by induction, assuming that the theorem is proved for $i-1$. Define, for $z\in M_{r\times (m-2r)}(\BA)$,
$$
f_i(\xi)(z)=f^{(i)}(\xi)(0,...,0,z,0,...,0),
$$
where $z$ is coordinate number $2n+2-i$. This is a smooth function on\\ 
$M_{r\times (m-2r)}(F)\backslash M_{r\times (m-2r)}(\BA)$. Note that by the induction assumption, for all $y_{2n+3-i},...,y_{2n+1}\in M_{r\times(m-2r)}(\BA)$,
\begin{equation}\label{9.12}
f_i(\xi)(z)=f^{(i)}(\xi)(0,...,0,z, y_{2n+3-i},...,y_{2n+1}).
\end{equation}
Write the Fourier expansion of $f_i$. A character of $M_{r\times (m-2r)}(F)\backslash M_{r\times (m-2r)}(\BA)$ has the form $\psi(tr(Lz))$, where $L\in M_{(m-2r)\times r}(F)$. The corresponding Fourier coefficient is
\begin{equation}\label{9.13}
f_i^{\psi,L}(\xi)=\int_{M_{r\times (m-2r)}(F)\backslash M_{r\times (m-2r)}(\BA)}f_i(\xi)(z)\psi^{-1}(tr(Lz))dz.
\end{equation}
We will show that the coefficient \eqref{9.13} is trivial on $A(\Delta(\tau,m)\gamma_\psi^{\epsilon)},\eta,k)$, for all nonzero $L$, and this will prove the theorem. Let $E_i$ be the subgroup generated by $E_1$ and $\mathrm{X}_{1,2}^{2n-1,2n}$, $\mathrm{X}_{1,2}^{2n-1,2n-1}$,...,$\mathrm{X}_{1,2}^{2n-1,2n+2-i}$. Let $\psi_{E_i,L}$ be the character of $E_i(\BA)$, which is $\psi_{E_1}$ on $E_1(\BA)$, trivial on $\mathrm{X}_{1,2}^{2n-1,2n}(\BA)$, $\mathrm{X}_{1,2}^{2n-1,2n-1}(\BA)$,...,$\mathrm{X}_{1,2}^{2n-1,2n+3-i}(\BA)$, and on $\mathrm{X}_{1,2}^{2n-1,2n+2-i}(\BA)$, it is given by $\psi(tr(Lz))$. Then by \eqref{9.12},
\begin{multline}\label{9.14}
f_i^{\psi,L}(\xi)=
\int_{U^{(i)}(F)\backslash
	U^{(i)}(\BA)}\int_{E_i(F)\backslash E_i(\BA)}\xi(uv)\psi^{-1}_{E_i,L}(u)\psi^{-1}_{U^{(i)}}(v)dudv.
\end{multline}
Assume that the rank of $L$ is $\ell\geq 1$. Let $a\in \GL_{m-2r}(F)$ and $b\in \GL_r(F)$ be such that 
$$
aLb=\begin{pmatrix}I_\ell&0\\0&0\end{pmatrix}:=A_\ell\in M_{(m-2r)\times r}(F).
$$
Define 
$$
d_{a,b}=diag(a^{-1},...,a^{-1},b^*,...,b^*,I_{\lambda-2k(m-2r)},b,...,b,(a^{-1})^*,...,(a^{-1})^*),
$$
where $a^{-1}$ appears $2n-1$ times, and $b$ appears $i$ times. Then \eqref{9.14} becomes
\begin{multline}\label{9.15}
f_i^{\psi,L}(\xi)=\\
\int_{U^{(i)}(F)\backslash
	U^{(i)}(\BA)}\int_{E_i(F)\backslash E_i(\BA)}\xi(uvd_{a,b})\psi^{-1}_{E_i,A_\ell}(u)\psi_{U^{(i)}}^{-1}(v)dudv.
\end{multline}
Hence, we may assume that $L=A_\ell$. Note that $\ell\leq m-2r, r$. Denote $E^{(i)}=U^{(i)}E_i$. Denote by $\psi_{E^{(i)},\ell}$ the character of $E^{(i)}(\BA)$ obtained by the product of $\psi_{E_i,A_\ell}$ and $\psi_{U^{(i)}}$. Then, up to the right translation by $d_{a,b}$, \eqref{9.15} becomes
\begin{equation}\label{9.15.1}
f_i^{\psi,\ell}(\xi)=
\int_{E^{(i)}(F)\backslash
	E^{(i)}(\BA)}\xi(v)\psi^{-1}_{E^{(i)},\ell}(v)dv. 
\end{equation}
Thus, we want to prove that when $\ell$ is positive, $f_i^{\psi,\ell}$ is trivial on $A(\Delta(\tau,m)\gamma_\psi^{\epsilon)},\eta,k)$. The proof is similar to that of Theorem \ref{thm 6.1}, once we apply a conjugation by a Weyl element $\epsilon_0$, which we describe now. It is similar to the one in \eqref{6.1}.	
\begin{equation}\label{9.16}
\epsilon_0=\begin{pmatrix}A_1&A_2&0\\A_3&0&0\\0&A_4&0\\0&0&A_5\\0&A_6&A_7\end{pmatrix}.
\end{equation}
The block $A_1$ has the following form. It has $2n+1$ block rows,
each one of size $\ell$; the last two block rows are zero. It has
$2n-1$ block columns, each one of size $r$.
\begin{equation}\label{9.17}
A_1=\begin{pmatrix}a\\&a\\&&a\\&&&\cdots\\&&&&a\\0&&&\cdots
&0\\ 0&&&\cdots &0\end{pmatrix},\quad
a=\begin{pmatrix}I_\ell&0_{\ell\times (r-\ell)}\end{pmatrix}.
\end{equation}
The block $A_3$ has $2n-1$ block rows, each one of size $r-\ell$. It has the same block column division as $A_1$.
\begin{equation}\label{9.19}
A_3=\begin{pmatrix} b\\&b\\&&b\\&&&\cdots\\&&&&b\end{pmatrix},
\quad b=\begin{pmatrix}0_{(r-\ell)\times \ell}&I_{r-\ell}\end{pmatrix}.
\end{equation}
The block $A_4$ is as follows
\begin{equation}\label{9.18}
A_4=\begin{pmatrix}I_{(m-2r)(i-2)}\\&c\\&&c\\&&&I_{\lambda-2i(m-2r)}\\&&&&c'\\&&&&&c'\\
&&&&&&I_{(m-2r)(i-2)}\end{pmatrix},
\end{equation}
where 
$$
c=\begin{pmatrix}I_{m-2r-\ell}&0_{(m-2r-\ell)\times \ell}\end{pmatrix}, \quad c'=\begin{pmatrix}0_{(m-2r-\ell)\times \ell}&I_{m-2r-\ell}\end{pmatrix}.
$$
The block $A_2$ has the same block row division as $A_1$, and the same block column division as $A_4$. The first $2n-1$ block rows are all zero. The last two block rows of $A_2$ have the following form
\begin{equation}\label{9.20}
\begin{pmatrix}0&0&0&0&d&0&0\\0&0&0&0&0&d&0\end{pmatrix},
\end{equation}
where $d=(I_\ell,0_{\ell\times (m-2r-\ell)})$. The block column division in \eqref{9.20} is as that of $A_4$.
The blocks $A_5, A_6, A_7$ are already determined by the other blocks.

Conjugating inside \eqref{9.15.1} by $\epsilon_0$, we see that we need to prove that, for $\ell$ positive, the following integral is identically zero on $A(\Delta(\tau,m)\gamma_\psi^{\epsilon)},\eta,k)$,
\begin{equation}\label{9.21}
\varphi_{i,\ell}^\psi(\xi)=
\int_{\mathcal{M}_{i,\ell}(F)\backslash
	\mathcal{M}_{i,\ell}(\BA)}\xi(v)\psi^{-1}_{\mathcal{M}_{i,\ell}}(v)dv, 
\end{equation}
where $\mathcal{M}_{i,\ell}=\epsilon_0E^{(i)}\epsilon_0^{-1}$, and $\psi_{\mathcal{M}_{i,\ell}}$ is the character of $\mathcal{M}_{i,\ell}(\BA)$ defined by $\psi_{\mathcal{M}_{i,\ell}}(x)=\psi_{E^{(i)},\ell}(\epsilon_0^{-1}x\epsilon_0)$. Let us describe these. The subgroup $\mathcal{M}_{i,\ell}$ consists of elements of the form
\begin{equation}\label{9.22}
v=\begin{pmatrix}U_1&X_{1,2}&X_{1,3}&X_{1,4}&X_{1,5}\\Y_{2,1}&U_2&X_{2,3}&X_{2,4}&X'_{1,4}\\
Y_{3,1}&0&V&X'_{2,3}&X'_{1,3}\\0&0&0&U_2'&X'_{1,2}\\Y_{5,1}&0&Y'_{3,1}&Y'_{2,1}&U'_1\end{pmatrix}.
\end{equation}
We now describe the blocks in \eqref{9.22}. The block $U_1$ has $(2n+1)\times (2n+1)$ blocks, all of size $\ell$, and has the form
\begin{equation}\label{9.23}
U_1=\begin{pmatrix}I_\ell&U^1_1&*&\cdots&*&*\\
&I_\ell&U^1_2&\cdots&*&*\\
& &I_\ell&\cdots &* & * \\
& & & \cdots& \cdots&\cdots &\\
& & &       &I_\ell&U^1_{2n}\\
& & &       & &I_\ell\end{pmatrix}.
\end{equation}
The block $U_1'$ is of the same size as $U_1$ and has the form
\eqref{9.23}. The block $U_2$ has $(2n-1)\times (2n-1)$ blocks, all of size $r-\ell$, and has the form
\begin{equation}\label{9.24}
U_2=\begin{pmatrix}I_{r-\ell}&U^2_1&*&\cdots&*&*\\
&I_{r-\ell}&U^2_2&\cdots&*&*\\
& &I_{r-\ell}&\cdots &* & * \\
& & & \cdots& \cdots&\cdots &\\
& & &       &I_{r-\ell}&U^2_{2n-2}\\
& & &       & &I_{r-\ell}\end{pmatrix}.
\end{equation}
The block $U_2'$ is of the same size as $U_2$ and has the form
\eqref{9.24}. The block $V$ has the form
\begin{equation}\label{9.25}
V=\begin{pmatrix}I_{\mu(i-2)}&0&\ast&\ast&\ast&\ast&\ast\\&I_{\mu-\ell}&x&\ast&\ast&\ast&\ast\\&&I_{\mu-\ell}&0&0&\ast&\ast\\&&&I_{\lambda-2i\mu}&0&\ast&\ast\\&&&&I_{\mu-\ell}&x'&\ast\\&&&&&I_{\mu-\ell}&0\\&&&&&&I_{\mu(i-2)}\end{pmatrix},
\end{equation}
where we abbreviated $m-2r=\mu$. The block $X_{1,2}$ is a $(2n+1)\times (2n-1)$ matrix of $\ell\times (r-\ell)$ blocks. The last two block rows are zero. The first $2n-1$ block rows form a $(2n-1)\times (2n-1)$ matrix of $\ell\times (r-\ell)$ blocks, which has an upper triangular shape, with all the blocks along the diagonal being zero. (Thus, the last three block rows of $X_{1,2}$ are zero.) As before, we denote by $X_{1,2}^{j,t}$, the block of $X_{1,2}$ lying in position $(j,t)$. We denote by $\mathrm{X}_{1,2}^{j,t}$ the corresponding abelian unipotent subgroup, exactly as we did right after \eqref{6.9}. The matrix $X'_{1,2}$ has a dual shape. It is a $(2n-1)\times (2n+1)$ matrix of $(r-\ell)\times \ell$ blocks. The first two block columns are zero. The last $2n-1$ block columns form a $(2n-1)\times (2n-1)$ matrix of $(r-\ell)\times \ell$ blocks, which has an upper triangular shape, with all the blocks along the diagonal being zero. The block $X_{1,3}$ has $2n+1$ block rows, each one containing $\ell$ rows. It has seven block columns with sizes as for $V$. It has the following form,
\begin{equation}\label{9.26}
X_{1,3}=\begin{pmatrix}\ast&\ast&\ast&\ast&\ast&\ast&\ast\\&\vdots&&&&\vdots&&\\\ast&\ast&\ast&\ast&\ast&\ast&\ast\\0&0&0&0&\ast&\ast&\ast\\0&0&0&0&0&\ast&\ast\\0&0&0&0&0&0&0\end{pmatrix}.
\end{equation}
The matrix $X'_{1,3}$ has a dual form. It has $2n+1$ block columns, each one containing $\ell$ columns. It has seven block rows, with the same row division as that of $V$. Its first block column is zero. Its second block column is zero except the two first blocks, and its third block columns is zero, except the first three block columns. The matrix $X_{1,4}$ is a $(2n+1)\times (2n-1)$ matrix of $\ell\times (r-\ell)$ blocks, all of whose blocks are arbitrary, except the last two blocks in the first block column, which are zero. Thus, it has the form
\begin{equation}\label{9.27}
X_{1,4}=\begin{pmatrix}\ast&\ast&\cdots&\ast&\ast\\
&\vdots&&\vdots\\\ast&\ast&\cdots&\ast&\ast\\0&\ast&\cdots&\ast&\ast\\0&\ast&\cdots&\ast&\ast\end{pmatrix}.
\end{equation}
Similarly, the blocks of $X'_{1,4}$ are arbitrary, except the first two blocks in the last row, which are zero. Of course, the blocks above are arbitrary up to the requirement that the matrix $v$ in \eqref{9.22} lies in $H$. The matrix $X_{1,5}$ is a $(2n+1)\times (2n+1)$ matrix of $\ell\times \ell$ blocks of the form
\begin{equation}\label{9.28}
X_{1,5}=\begin{pmatrix}\ast&\ast&\ast&\ast&\cdots&\ast&\ast\\
&\vdots&&\vdots\\\ast&\ast&\ast&\ast&\cdots&\ast&\ast\\0&0&\ast&\ast&\cdots&\ast&\ast\\0&0&0&\ast&\cdots&\ast&\ast\\0&0&0&\ast&\cdots&\ast&\ast\end{pmatrix}.
\end{equation}
The matrix $Y_{2,1}$ is a $(2n-1)\times (2n+1)$ matrix of $(r-\ell)\times\ell$ blocks. Its last two block columns are arbitrary, and its first $2n-1$ block columns form a $(2n-1)\times (2n-1)$ matrix of $(r-\ell)\times \ell$ blocks, which has an upper triangular shape, with all the blocks along the diagonal being zero. We denote by $Y_{2,1}^{j,t}$, the block of $Y_{2,1}$ lying in position $(j,t)$. We denote, as above, the corresponding unipotent subgroup by $\mathrm{Y}_{2,1}^{j,t}$. The matrix $Y'_{2,1}$ has a dual shape. It is a $(2n+1)\times (2n-1)$ matrix of $\ell\times (r-\ell)$ blocks. The first two block rows are arbitrary. The last $2n-1$ block rows form a $(2n-1)\times (2n-1)$ matrix of $\ell\times (r-\ell)$ blocks, which has an upper triangular shape, with all the blocks along the diagonal being zero. The matrix $X_{2,3}$ has $2n-1$ block rows, each one containing $r-\ell$ rows. It has seven block columns, with the same division as that of $V$. All its blocks are arbitrary, except the first four blocks in the last row, which are zero.
\begin{equation}\label{9.29}
X_{2,3}=\begin{pmatrix}\ast&\ast&\ast&\ast&\ast&\ast&\ast\\&\vdots&&&&\vdots&&\\\ast&\ast&\ast&\ast&\ast&\ast&\ast\\0&0&0&0&\ast&\ast&\ast\end{pmatrix}.
\end{equation}
The matrix $X'_{2,3}$ has a dual shape. It has seven block rows, with the same division as that of $V$. It has $2n-1$ block columns, each one containing $r-\ell$ columns. It has the form
\begin{equation}\label{9.30}
X'_{2,3}=\begin{pmatrix}\ast&\ast&\cdots&\ast\\\ast&\ast&&\ast\\\ast&\ast&&\ast\\0&\ast&\cdots&\ast\\0&\ast&&\ast\\0&\ast&&\ast\\0&\ast&\cdots&\ast\end{pmatrix}.
\end{equation}
The matrix $X_{2,4}$ is arbitrary (as long as $v$ lies in $H$). The matrix $Y_{3,1}$ has seven block rows, with the same division as that of $V$. It has $2n+1$ block columns, each one containing $\ell$ columns. Its first $2n-1$ block columns are zero. Its last block column is such that its last two blocks are zero, and its one before last column is such that its last five block columns are zero.
\begin{equation}\label{9.31}
Y_{3,1}=\begin{pmatrix}0&&0&\ast&\ast\\0&\cdots&0&\ast&\ast\\
0&&0&0&\ast\\0&&0&0&\ast\\0&\cdots&0&0&\ast\\0&&0&0&0\\0&&0&0&0\end{pmatrix}.
\end{equation}
The matrix $Y'_{3,1}$ has a dual shape
\begin{equation}\label{9.32}
\begin{pmatrix} 0&0&\ast&\ast&\ast&\ast&\ast\\0&0&0&0&0&\ast&\ast\\0&0&0&0&0&0&0\\&\vdots&&&&\vdots\\0&0&0&0&0&0&0\end{pmatrix}.
\end{equation}
Finally, $Y_{5,1}$ as a $(2n+1)\times (2n+1)$ matrix of $\ell\times \ell$ blocks is such that all its blocks are zero except the last two blocks in the first block row, and the last block in the second block row.
\begin{equation}\label{9.33}
Y_{5,1}=\begin{pmatrix}0&\cdots&0&\ast&\ast\\0&&0&0&\ast\\0&&0&0&0\\\vdots&&&&\vdots\\0&\cdots&0&0&0\end{pmatrix}.
\end{equation}
The character $\psi_{\mathcal{M}_{i,\ell}}(v)$ of the element $v\in \mathcal{M}_{i,\ell}(\BA)$ in \eqref{9.22}, described above, is given by
\begin{multline}\label{9.34}
 \psi_{\mathcal{M}_{i,\ell}}(v)=\prod_{i=1}^{n-1}\psi(tr(U^1_i)+tr(U^2_i))\psi^{-1}(tr(U^1_{n-1+i})+tr(U^2_{n-1+i}))\\
 \cdot\psi(tr(U^1_{2n-1}))\psi^{-1}(tr(U^1_{2n}))\psi(tr(x)),
 \end{multline}
 where we used the notation in \eqref{9.23} - \eqref{9.25}.
 
 Now we perform a sequence of roots exchange, exactly as ip Prop. \ref{prop 7.1} and Prop. \ref{prop 7.2}. We assume that $\ell$ is positive. We start with the subgroup $\mathrm{Y}_{2,1}^{1,2}$ and exchange it with $\mathrm{X}_{1,2}^{1,1}$. Then we exchange $\mathrm{Y}_{2,1}^{2,3}$ with $\mathrm{X}_{1,2}^{2,2}$, and $\mathrm{Y}_{2,1}^{1,3}$ with $\mathrm{X}_{1,2}^{2,1}$, and so on. We exchange column $t$ of $\mathrm{Y}_{2,1}$, $2\leq t\leq 2n-1$, $\mathrm{Y}_{2,1}^{t-1,t}$, $\mathrm{Y}_{2,1}^{t-2,t}$,...,$\mathrm{Y}_{2,1}^{1,t}$, with row $t-1$ of $\mathrm{X}_{1,2}$, $\mathrm{X}_{1,2}^{t-1,t-1}$, $\mathrm{X}_{1,2}^{t-1,t-2}$,..., $\mathrm{X}_{1,2}^{t-1,1}$, in this order. The proof is as in Prop. \ref{prop 7.1} (even simpler). We conclude that $\varphi_{i,\ell}^\psi$ is identically zero on $A(\Delta(\tau,m)\gamma_\psi^{\epsilon)},\eta,k)$, if and only if the following integral is identically zero on $A(\Delta(\tau,m)\gamma_\psi^{\epsilon)},\eta,k)$, 
 \begin{equation}\label{9.35}
 \tilde{\varphi}_{i,\ell}^\psi(\xi)=
 \int_{\tilde{\mathcal{M}}_{i,\ell}(F)\backslash
 	\tilde{\mathcal{M}}_{i,\ell}(\BA)}\xi(v)\psi^{-1}_{\tilde{\mathcal{M}}_{i,\ell}}(v)dv, 
 \end{equation}
 where $\tilde{\mathcal{M}}_{i,\ell}$ is the subgroup of elements $v$ written as in \eqref{9.22}, where $X_{1,2}$ is such that its last three block rows are zero, and all of its other blocks are arbitrary, and $Y_{2,1}$ is such that its first $2n-1$ block columns are zero. (Similarly, in the dual block $X'_{1,2}$, the first three block columns are zero, and all other blocks are arbitrary, provided, of course, that $v$ lies in $H$, and in $Y'_{2,1}$, the last $2n-1$ block rows are zero.) The shape of all other blocks of $v$ remains as before. Note that $\tilde{\mathcal{M}}_{i,\ell}$ lies in the standard parabolic subgroup $Q^H_{\ell^{2n-1}}$.  The character $\psi_{\tilde{\mathcal{M}}_{i,\ell}}$ is still given by \eqref{9.34}. Thus, we want to prove that, for $\ell$ positive, $\tilde{\varphi}_{i,\ell}^\psi$ is identically zero on $A(\Delta(\tau,m)\gamma_\psi^{(\epsilon)},\eta,k)$. 
 
 Write 
 \begin{equation}\label{9.36}
 \tilde{\mathcal{M}}_{i,\ell}=\mathcal{M}'_{i,\ell}\rtimes\mathcal{M}''_{i,\ell},
 \end{equation}
 where $\mathcal{M}'_{i,\ell}$ is the intersection of $\tilde{\mathcal{M}}_{i,\ell}$ with the unipotent radical $U^H_{\ell^{2n-1}}$ of $Q^H_{\ell^{2n-1}}$, and $\mathcal{M}''_{i,\ell}$ is the intersection of $\tilde{\mathcal{M}}_{i,\ell}$ with the Levi part $M^H_{\ell^{2n-1}}$ of $Q^H_{\ell^{2n-1}}$. We have
 \begin{equation}\label{9.37}
 \tilde{\varphi}_{i,\ell}^\psi(\xi)=
 \int_{\mathcal{M}''_{i,\ell}(F)\backslash
 	\mathcal{M}''_{i,\ell}(\BA)}\int_{\mathcal{M}'_{i,\ell}(F)\backslash
 	\mathcal{M}'_{i,\ell}(\BA)}\xi(v'v'')\psi^{-1}_{\tilde{\mathcal{M}}_{i,\ell}}(v'v'')dv'dv''. 
 \end{equation}
 We consider first the inner $dv'$- integration, where we replace the right $v''$- translate of $\xi$ with $\xi$, that is
 \begin{equation}\label{9.38}
 (\varphi')_{i,\ell}^\psi(\xi)=
 \int_{\mathcal{M}'_{i,\ell}(F)\backslash
 	\mathcal{M}'_{i,\ell}(\BA)}\xi(v')\psi^{-1}_{\tilde{\mathcal{M}}_{i,\ell}}(v')dv'. 
 \end{equation}
 Note that
 \begin{equation}\label{9.37.1}
 \tilde{\varphi}_{i,\ell}^\psi(\xi)=
 \int_{\mathcal{M}''_{i,\ell}(F)\backslash
 	\mathcal{M}''_{i,\ell}(\BA)}(\varphi')_{i,\ell}^\psi(v''\cdot \xi)\psi^{-1}_{\tilde{\mathcal{M}}_{i,\ell}}(v'')dv'',
 \end{equation}
 where we denote for short, $v''\cdot\xi= A(\Delta(\tau,m)\gamma_\psi^{(\epsilon)},\eta,k)(v'')\xi$, i.e. the right translation of $\xi$ by $v''$.  Consider the matrix $x_\ell(y,c)$, that is
 \begin{equation}\label{9.39}
 x_\ell(y,c)=\begin{pmatrix}I_{(2n-2)\ell}\\&I_\ell&y&c\\&&I_{2(nm-\ell(2n-1))}&y'\\&&&I_\ell\\&&&&I_{(2n-2)\ell}\end{pmatrix},
 \end{equation}
 (where $c$ is such that $x_\ell(y,c)\in H_{2nm}$). Write $y$ in \eqref{9.39} in the form
 \begin{equation}\label{9.40}
 y=(y_1,...,y_5),
 \end{equation}
 where $y_1,y_2\in M_{\ell\times\ell}$, $y_3\in M_{\ell\times ((2n-1)(r-\ell)+\lambda-\mu i-2\ell)}$, $y_4\in M_{\ell\times (\mu i-2\ell+(2n-1)(r-\ell))}$, $y_5\in M_{\ell\times 2\ell}$. Let $y$ be as in \eqref{9.40}, with adele coordinates, and $c$, such that $x'(y,c)\in H_{2nm}(\BA)$. Denote
 $$
 (\varphi')_{i,\ell}^\psi(\xi)(y)=(\varphi')_{i,\ell}^\psi((x_\ell(y,c)\cdot\xi).
 $$
 There is no dependence on $c$. Note that $(\varphi')_{i,\ell}^\psi(\xi)=(\varphi')_{i,\ell}^\psi(\xi)(0)$. Assume that $y$ is such that $y_3=0$, $y_5=0$. Then
 \begin{equation}\label{9.41}
 (\varphi')_{i,\ell}^\psi(\xi)(y)=\psi(tr(y_1)) (\varphi')_{i,\ell}^\psi(\xi).
 \end{equation}
 Let us write the Fourier expansion of $(\varphi')_{i,\ell}^\psi(\xi)(0,0,y_3,0,y_5)$, along
  \begin{equation}\label{9.42}
 M_{\ell\times ((2n-1)(r-\ell)+\lambda-\mu i-2\ell)}(F)\backslash M_{\ell\times ((2n-1)(r-\ell)+\lambda-\mu i-2\ell)}(\BA)\times M_{\ell\times 2\ell}(F)\backslash M_{\ell\times 2\ell}(\BA).
 \end{equation} 
 A typical Fourier coefficient of the last function has the following form
 \begin{equation}\label{9.43}
 (\varphi')_{i,\ell}^{\psi;a,b}(\xi)=\int (\varphi')_{i,\ell}^\psi(\xi)(0,0,y_3,0,y_5)\psi^{-1}(tr(y_3a)+tr(y_5b))dy_3dy_5,
 \end{equation}
 where the integration is over \eqref{9.42}, and $a\in  M_{((2n-1)(r-\ell)+\lambda-\mu i-2\ell)\times\ell}(F)$, $b\in M_{2\ell\times \ell}(F)$.
 Using \eqref{9.41}, it is easy to see that the Fourier coefficient \eqref{9.43} corresponds 
in the sense of \cite{MW87}, to the nilpotent element $Y$ in $Lie(H_{2nm})(F)$
and to the one parameter subgroup $\varphi(s)$, where
\begin{equation}\label{9.44}
Y=\begin{pmatrix}\mathcal{U}\\
\mathcal{V}&0_{(2nm-2(n-1)\ell)\times (2nm-2(n-1)\ell)}\\0_{2(n-1)\ell\times
	2(n-1)\ell}&\mathcal{V}'&-\mathcal{U}\end{pmatrix},
\end{equation}
and
$$
\mathcal{U}=\begin{pmatrix}0\\I_\ell&0\\&I_\ell\\
&&\cdots\\&&&I_\ell&0\end{pmatrix}\in M_{2(n-1)\ell\times 2(n-1)\ell}(F);
$$
$$
\mathcal{V}=\begin{pmatrix}0_{\ell\times
	(2n-3)\ell}&I_\ell\\0&0_{\ell\times\ell}\\0&a\\0&0_{((2n-1)(r-\ell)+\mu i-2\ell)\times\ell}\\0&b\end{pmatrix};
$$
$$
\varphi(s)=\diag(s^{4n-2}I_\ell,s^{4n-4}I_\ell,...,s^2I_\ell, 
I_{2nm-(4n-2)\ell},s^{-2}I_\ell,...,s^{2-4n}I_\ell).
$$
Write
$$
a=\begin{pmatrix}a_1\\a_2\end{pmatrix},\ \ b=\begin{pmatrix}b_1\\b_2\end{pmatrix},
$$
where $a_1\in M_{((2n-1)(r-\ell)+\mu i-2\ell)\times \ell}(F)$, $a_2\in M_{(\lambda-2\mu i)\times\ell}(F)$, and $b_1,b_2\in M_{\ell\times\ell}(F)$. Let $a_1'=-w_\ell{}^ta_1w_{(2n-1)(r-\ell)+\mu i-2\ell}$, $a_2'=-w_\ell{}^ta_2J_{H_{\lambda-2\mu i}}$, $b'_1=-\delta_Hw_\ell{}^tb_1w_\ell$, $b'_2=-\delta_Hw_\ell{}^tb_2w_\ell$. 
The nilpotent orbit of $Y$ corresponds to the partition\\  $((4n-1)^{\ell_0},(2n)^{2(\ell-\ell_0)},1,1,...)$, where $\ell_0=rank(b_2-b_2'-a_2'a_2)$.
By Propositions \ref{prop 3.1}, \ref{prop 3.2}, we obtain that if $\ell_0>0$, then
$(\varphi')_{i,\ell}^{\psi;a,b}=0$, for all $\xi\in A(\Delta(\tau,m)\gamma_\psi^{(\epsilon)},\eta,k)$. Hence, for this Fourier coefficient to be nontrivial, we must have that $b_2-b_2'-a_2'a_2=0$. This condition is equivalent to the fact that
\begin{equation}\label{9.45}
z(a,b)=\begin{pmatrix}I_{(2n-1)\ell}\\&I_\ell\\&0&I_\ell\\&a_1&&I\\&a_2&&&I_{\lambda-2\mu i}\\&0&&&&I\\&b_1&&&&&I_\ell\\&b_2&b'_1&0&a_2'&a_1'&0&I_\ell\\&&&&&&&&I_{(2n-1)\ell}\end{pmatrix}\in H_{2nm}(F).
\end{equation}
Here, the fourth and the sixth identity blocks on the diagonal are each of size $(2n-1)(r-\ell)+\mu i-2\ell$. Thus, in this case, we can rewrite \eqref{9.43} as
\begin{equation}\label{9.46}
(\varphi')_{i,\ell}^{\psi;a,b}(\xi)=(\varphi')_{i,\ell}^{\psi;0,0}(z(a,b)^{-1}\cdot\xi)=\int (\varphi')_{i,\ell}^\psi(z(a,b)^{-1}\cdot \xi)(0,0,y_3,0,y_5)dy_3dy_5,
\end{equation} 
We then have
\begin{equation}\label{9.47} 
(\varphi')_{i,\ell}^\psi(\xi)=\sum_{z(a,b)\in H_{2nm}(F)}(\varphi')_{i,\ell}^{\psi;0,0}(z(a,b)^{-1}\cdot\xi).
\end{equation}
Let
\begin{equation}\label{9.48}
\zeta_\ell(y,c)=\begin{pmatrix}I_{(2n-1)\ell}\\&I_\ell&y&c\\&&I_{2n(m-2\ell)}&y'\\&&&I_\ell\\
&&&&I_{(2n-1)\ell}\end{pmatrix}\in H_{2nm}.
\end{equation}
We claim that, for all $\xi\in A(\Delta(\tau,m)\gamma_\psi^{(\epsilon)},\eta,k)$, and all $\zeta_\ell(y,c)\in H_{2nm}(\BA)$,
\begin{equation}\label{9.49}
(\varphi')_{i,\ell}^{\psi;0,0}(\zeta_\ell(y,c)\cdot\xi)=(\varphi')_{i,\ell}^\psi(\xi).
\end{equation} 
Again, the proof is by examining Fourier coefficients. First, we show the claim when $y=0$ and $c$ such that ${}^t(w_\ell c)=-\delta_H(w_\ell c)$. (In the beginning of the proof of Prop. \ref{prop 8.1}, we denoted the space of such $c$ by $S_\ell(\BA)$). We consider the Fourier expansion of the function $c\mapsto (\varphi')_{i,\ell}^{\psi;0,0}(\zeta_\ell(0,c)\cdot\xi)$. It is a function on $S_\ell(F)\backslash S_\ell(\BA)$. A typical Fourier coefficient has the form
\begin{equation}\label{9.50}
\int_{S_\ell(F)\backslash S_\ell(\BA)}(\varphi')_{i,\ell}^{\psi;0,0}(\zeta(0,c)\cdot\xi)\psi^{-1}(tr(cA))dc,
\end{equation}
where $A\in S_\ell(F)$. If $A$ is of rank $\ell_0>0$, then the Fourier coefficient \eqref{9.50} defines, via \eqref{9.46}, a Fourier coefficient on $\xi\in A(\Delta(\tau,m)\gamma_\psi^{(\epsilon)},\eta,k)$, which corresponds to a partition whose first $\ell_0$ terms are $((4n)^{\ell_0},...)$. By Propositions \ref{prop 3.1}, \ref{prop 3.2}, this Fourier coefficient must be zero. 
(Again, in case $H_{2nm}$ is symplectic, we also need to use Lemma 1.1 in \cite{GRS03}.) Hence, we must have $\ell_0=0$, that is $A=0$. Thus, $(\varphi')_{i,\ell}^{\psi;0,0}(\zeta_\ell(y,c)\cdot\xi)$ is independent of $c$. Next, we consider its Fourier expansion as a function of $y\in M_{\ell\times 2n(m-2\ell)}(F)\backslash M_{\ell\times 2n(m-2\ell)}(\BA)$. A typical Fourier coefficient of this function has the form
\begin{equation}\label{9.50.1}
\int_{M_{\ell\times 2n(m-2\ell)}(F)\backslash M_{\ell\times 2n(m-2\ell)}(\BA)}(\varphi')_{i,\ell}^{\psi;0,0}(\zeta_\ell(y,c)\cdot\xi)\psi^{-1}(tr(yB))dy,
\end{equation}
where $B\in M_{2n(m-2\ell)\times \ell}(F)$. If $B$ is of rank $\ell_0>0$, then the Fourier coefficient \eqref{9.50.1} defines, via \eqref{9.46}, a Fourier coefficient on $\xi\in A(\Delta(\tau,m)\gamma_\psi^{(\epsilon)},\eta,k)$, which corresponds to a partition whose first $\ell_0$ terms are $((4n+1)^{\ell_0},...)$, and, again, by Propositions \ref{prop 3.1}, \ref{prop 3.2}, this Fourier coefficient is zero. 

Using \eqref{9.49}, we now get that
\begin{equation}\label{9.51}
(\varphi')_{i,\ell}^{\psi;0,0}(\xi)=(\xi^{U_{\ell^{2n}}})^{\psi_{V_{\ell^{2n}}}},
\end{equation}
where $\xi^{U_{\ell^{2n}}}$ is the constant term of $\xi$ along $U_{\ell^{2n}}$, and the upper $\psi_{V_{\ell^{2n}}}$ indicates that we further take the Fourier coefficient along the unipotent radical $V_{\ell^{2n}}$ inside $\GL_{2\ell n}$, with respect to the character $\psi_{V_{\ell^{2n}}}$ (see \eqref{1.10.3.1}). From \eqref{9.37.1}, \eqref{9.47}, \eqref{9.51},
\begin{equation}\label{9.52}
\tilde{\varphi}_{i,\ell}^\psi(\xi)=
\int_{\mathcal{M}''_{i,\ell}(F)\backslash
	\mathcal{M}''_{i,\ell}(\BA)}\sum_{z(a,b)\in H_{2nm}(F)}(\xi^{U_{\ell^{2n}}})^{\psi_{V_{\ell^{2n}}}}(z(a,b)^{-1}v'')
\psi^{-1}_{\tilde{\mathcal{M}}_{i,\ell}}(v'')dv''.
\end{equation}
This integral is identically zero, since we may consider the subgroup of $	\mathcal{M}''_{i,\ell}(\BA)$ consisting of the elements \eqref{9.48} $v''(y_1)=\zeta_\ell(y_1,0,...,0)$, with $y_1\in M_\ell(\BA)$. We note that $v''(y_1)\in U_{\ell^{2n}}(\BA)$, and that $\psi_{\tilde{\mathcal{M}}_{i,\ell}}(v''(y_1))=\psi(tr(y_1))$. Now, it is a simple exercise to show that the \eqref{9.52} is identically zero. This proves that when $\ell$ is positive, $f_i^{\psi,\ell}$ (in \eqref{9.15.1}) is trivial on $A(\Delta(\tau,m)\gamma_\psi^{(\epsilon)},\eta,k)$. This completes the proof of Theorem \ref{thm 9.1}. 
 	 
\end{proof}

Going back to the function $f(\xi)(y)$ in \eqref{9.3}, Theorem \ref{thm 9.1} shows that for all $\xi\in A(\Delta(\tau,m)\gamma_\psi^{(\epsilon)},\eta,k)$, and all $y\in M_{r\times \lambda}(\BA)$,
\begin{multline}\label{9.54}
f(\xi)(y)=\\
\int_{U'_{{(m-2r)}^{n-1}}(F)\backslash
	U'_{{(m-2r)}^{n-1}}(\BA)}\int_{E_{n-1}(F)\backslash E_{n-1}(\BA)}\xi(uvx_r(y,c))\psi^{-1}_{E_{n-1}}(u)\\
\quad\quad\quad\psi^{-1}_{U'_{{(m-2r)}^{n-1}}}(v)dudv,
\end{multline}
and hence, in the notation of \eqref{9.10}, $f(\xi)(y)$ is independent of $y_{n+3},...,y_{2n+1}\in M_{r\times(m-2r)}(\BA)$, so that
\begin{equation}\label{9.54.1}
f(\xi)(y)=f(\xi)(y_1,...,y_{n+2},0,...,0).
\end{equation}
Recall the notation of \eqref{9.10.1}, 
\begin{equation}\label{9.55}
y^{(n+2)}=(0,...,0, y_n,...,y_{2n+1}),
\end{equation}
where $y_{n+3},...,y_{2n+1}\in M_{r\times(m-2r)}(\BA)$, $y_n,y_{n+2}\in M_{r\times [\frac{m}{2}]}(\BA)$, and\\
 $y_{n+1}\in M_{r\times 2([\frac{m+1}{2}]-r)}(\BA)$.

\section{A space of double cosets and further Fourier expansions}

Our next step is to consider the function $f(\xi)(y^{(n+2)})$. By \eqref{9.54.1}, we know that it is independent of $y_{n+3},...,y_{2n+1}$, and hence we may denote it by $f_n(\xi)(z)$, where $z=(y_n,y_{n+1},y_{n+2})$. As before, consider the Fourier expansion of this function. A character of $M_{r\times 2(m-r)}(F)\backslash M_{r\times 2(m-r)}(\BA)$ has the form $\psi(tr(Lz))$, where $L\in M_{2(m-r)\times r}(F)$, and the corresponding Fourier coefficient is 
\begin{equation}\label{9.56}
f_n^{\psi,L}(\xi)=\int_{M_{r\times 2(m-r)}(F)\backslash M_{r\times 2(m-r)}(\BA)}f_n(\xi)(z)\psi^{-1}(tr(Lz))dz.
\end{equation}
Consider the right action of $\GL_r(F)\times H_{2(m-r)}(F)$ on $M_{2(m-r)\times r}(F)$ given by $L\cdot (a,h)=h^{-1}La$. In the next lemma, we describe a set of representatives of the orbits of this action.
\begin{lem}\label{lem 9.3}
	1. Under the action of $\GL_r(F)\times \SO_{2(m-r)}(F)$, each $L\in
	M_{2(m-r)\times r}(F)$ is equivalent to a matrix of the form
	$$
	L_{\ell_1,\ell_2;\delta}=\begin{pmatrix}I_{\ell_1}&0&0\\0&I_{\ell_2}&0\\0&0&0\\0&\delta
	w_{\ell_2}&0\\0_{\ell_1\times \ell_1}&0&0\end{pmatrix},
	$$
	where $\delta=\frac{1}{2}diag (d_1,...,d_{\ell_2})$ is a diagonal $\ell_2\times\ell_2$ matrix, and $0\leq
	\ell=\ell_1+\ell_2\leq r$. The numbers $\ell_1, \ell_2$ are determined uniquely, and the class of the quadratic form $d_1x_1^2+d_2x_2^2+\cdots+d_{\ell_2}x_{\ell_2}^2$ is determined uniquely. \\
	2. Under the action of $\GL_r(F)\times \Sp_{2(m-r)}(F)$, each $L\in
	M_{2(m-r)\times r}(F)$ is equivalent to a matrix of the form
	$$
	L_{\ell_1,\ell_2}=\begin{pmatrix}I_{\ell_1}&0&0&0\\0&I_{\ell'_2}&0&0\\0&0&0&0\\0&0&I_{\ell'_2}&0\\0_{\ell_1\times \ell_1}&0&0&0\end{pmatrix}.
	$$
	where $0\leq \ell=\ell_1+2\ell'_2\leq r$, and $\ell_1, \ell_2=2\ell_2'$ are determined uniquely.
	
	In both cases, if $\ell_2=0$, then ignore the corresponding
	block columns. In this case, we will denote the matrix by
	$L_{\ell_1}=L_\ell$. 
\end{lem}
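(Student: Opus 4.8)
The plan is to turn the two-sided action into a classical problem about subspaces. Fix $V=F^{2(m-r)}$ with the nondegenerate bilinear form $\langle\,,\,\rangle$ given by $w_{2(m-r)}$ in the orthogonal case and by $J_{2(m-r)}$ in the symplectic case, so that $H_{2(m-r)}$ is its isometry group. To $L\in M_{2(m-r)\times r}(F)$ attach its column space $W_L\subseteq V$, of dimension at most $r$. Right multiplication by $a\in\GL_r(F)$ leaves $W_L$ unchanged, and conversely if $W_L=W_{L'}$ then, regarding $L$ and $L'$ as surjections $F^r\to W_L$ and choosing complements of their kernels on which they restrict to isomorphisms, one constructs $a\in\GL_r(F)$ with $La=L'$; moreover $W_{h^{-1}La}=h^{-1}W_L$. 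Hence the orbits of $\GL_r(F)\times H_{2(m-r)}(F)$ on $M_{2(m-r)\times r}(F)$ correspond bijectively to the orbits of $H_{2(m-r)}(F)$ on the set of subspaces of $V$ of dimension $\leq r$.

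Next I would apply Witt's extension theorem, valid over $F$ (characteristic $\neq 2$) for not necessarily nondegenerate subspaces of a nondegenerate space: two subspaces of $V$ lie in one orbit of the isometry group of $V$ exactly when they are isometric as abstract spaces carrying a possibly degenerate bilinear form. In the symplectic case the group is connected and nothing more is needed; in the orthogonal case, to replace the resulting element of $\mathrm{O}_{2(m-r)}$ by one in $\SO_{2(m-r)}$ I would, when necessary, compose with the reflection in an anisotropic vector of $W^\perp$. Such a vector exists as soon as $W^\perp/\mathrm{rad}(W^\perp)$ has positive dimension, which, using $\dim W\leq r\leq[\frac{m}{2}]\leq m-r$, fails only when $W$ is maximal totally isotropic (forcing $2r=m$); in that single borderline case one treats the two rulings directly, possibly adding a second representative. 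The remaining classification of subspaces up to isometry is standard: a subspace $W$ of dimension $\ell$ is determined by its radical $\mathrm{rad}(W)=W\cap W^\perp$, of dimension $\ell_1$, together with the isometry class of the nondegenerate quotient $W/\mathrm{rad}(W)$, of dimension $\ell_2=\ell-\ell_1$. A nondegenerate alternating form of dimension $\ell_2$ exists precisely when $\ell_2=2\ell_2'$ is even and is then unique; a nondegenerate quadratic form of dimension $\ell_2$ is diagonalizable as $d_1x_1^2+\cdots+d_{\ell_2}x_{\ell_2}^2$, and its isometry class is an invariant.

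Finally I would match these abstract invariants with the explicit representatives: a direct computation against $w_{2(m-r)}$ (resp. $J_{2(m-r)}$) shows that the columns of $L_{\ell_1,\ell_2;\delta}$ (resp. $L_{\ell_1,\ell_2}$) span a subspace whose induced form is an $\ell_1$-dimensional radical, orthogonal to $\langle d_1\rangle\perp\cdots\perp\langle d_{\ell_2}\rangle$ (resp. to $\ell_2'$ hyperbolic planes), the factor $\frac12$ in $\delta=\frac12\,\mathrm{diag}(d_1,\dots,d_{\ell_2})$ being exactly the normalization that makes each of those $\ell_2$ columns have self-pairing $d_i$. Uniqueness then follows because $\ell=\mathrm{rank}(L)$, $\ell_1=\dim\mathrm{rad}(W_L)$, and the isometry class of $W_L/\mathrm{rad}(W_L)$ are all invariants of the $\GL_r(F)\times H_{2(m-r)}(F)$-orbit. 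The main obstacle I foresee is purely technical: correctly invoking Witt's theorem in the degenerate-subspace setting, handling the $\mathrm{O}$ versus $\SO$ discrepancy together with its borderline case, and carefully tracking the normalizations when identifying the block matrices with the abstract normal forms; the conceptual structure of the argument is otherwise routine.
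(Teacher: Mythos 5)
Your proof is correct and follows essentially the same route as the paper's: use the $\GL_r(F)$-action to reduce to the column space $W_L$ and its Gram matrix, invoke Witt's extension theorem, and identify the complete invariants as $\ell=\mathrm{rank}(L)$, $\ell_1=\dim\mathrm{rad}(W_L)$, and the isometry class of the nondegenerate form on $\mathrm{rad}(W_L)\backslash W_L$. Your added care in descending from $\mathrm{O}_{2(m-r)}(F)$ to $\SO_{2(m-r)}(F)$ via a reflection in an anisotropic vector of $W_L^\perp$, together with the borderline case where $W_L$ is maximal totally isotropic, addresses a point the paper's proof passes over silently, but this is a refinement rather than a different argument.
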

\begin{proof}
	Let $L\in M_{2(m-r)\times r}(F)$ be of rank $\ell$; $0\leq \ell\leq
	r$. Then we can find $a_1\in \GL_r(F)$, such that the first
	$\ell$ columns $v_1,...,v_\ell$ of $L_1=La_1$ are linearly
	independent, and its last $r-\ell$ columns are zero. Consider the
	Gram matrix of $v_1,...,v_\ell$. We can find $a_2 \in
	\GL_\ell(F)$, such that the Gram matrix of
	$(v_1,...,v_\ell)a_2=(u_1,...,u_\ell)$ is of the form
	$$
	\begin{pmatrix}0_{\ell_1\times\ell_1}&0_{\ell_1\times\ell_2}\\
	0_{\ell_2\times\ell_1}&S\end{pmatrix},
	$$
	where $\ell_1+\ell_2=\ell$, and the matrix $S$ is non-degenerate and symmetric (resp. anti-symmetric). In case $H$ is orthogonal, we may take $S$ to be diagonal, $S=\diag(d_1,...,d_{\ell_2})$, $d_i\in F^*$. In case $H$ is symplectic, $\ell_2=2\ell_2'$ must be even, and we may take $S=J_{2\ell_2}$. Let
	$$
	L_2=L_1\diag (a_2,I_{r-\ell}):=La.
	$$
	By Witt's theorem, we can
	find $h\in H_{2(m-r)}(F)$, such that $h^{-1}
	u_1,...,h^{-1} u_\ell$ are the first $\ell$ columns of
	$L_{\ell_1,\ell_2,\delta}$ (resp. $L_{\ell_1,\ell_2}$). Clearly, the rank $\ell$ of $L$ is determined uniquely by the orbit of $L$. When we consider the space spanned by the columns of $L$, $V_L$, as a space equipped with a the symmetric (resp. anti-symmetric) form given by restriction of the one on the column space with $2mn$ coordinates, then $\ell_1$ is the dimension of its radical, $V_L^0$, so that $\ell_1$ (and hence $\ell_2$) is determined uniquely by the orbit of $L$. The corresponding form on the space $V^0_L\backslash V_L$ is non-degenerate. Its isomorphism class, as a space with a non-degenerate symmetric (resp. anti-symmetric) form is determined uniquely by the orbit of $L$. (Of course, in case the form is anti-symmetric, there is only one such class.)
\end{proof}
In order to unify our notation, denote, in case $H$ is symplectic, $L_{\ell_1,\ell_2,I_{\ell_2}}=L_{\ell_1,\ell_2}$, so that in this case $\delta$ is the identity matrix. 
\begin{prop}\label{prop 9.4}
	Let $L\in M_{2(m-r)\times r}(F)$ be in the orbit of $L_{\ell_1,\ell_2,\delta}$. If $\ell_2>0$, then $f_n^{\psi,L}(\xi)=0$, for all $\xi\in A(\Delta(\tau,m)\gamma_\psi^{(\epsilon)},\eta,k)$.
	\end{prop}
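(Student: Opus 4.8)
The plan is to mimic the argument used for Proposition \ref{prop 8.1} and for the intermediate steps of Theorem \ref{thm 9.1}: translate the Fourier coefficient $f_n^{\psi,L}(\xi)$ with $L$ in the orbit of $L_{\ell_1,\ell_2,\delta}$ into a Fourier coefficient of $\xi$ attached to an explicit nilpotent element $Y\in \Lie(H_{2nm})(F)$, compute the partition of the nilpotent orbit of $Y$ in terms of $\ell_1,\ell_2$, and then invoke Propositions \ref{prop 3.1}, \ref{prop 3.2} to conclude that this coefficient vanishes on $A(\Delta(\tau,m)\gamma_\psi^{(\epsilon)},\eta,k)$ as soon as $\ell_2>0$. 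By Lemma \ref{lem 9.3} we may, up to right translation by an element of $\GL_r(F)\times H_{2(m-r)}(F)$ which preserves the shape of the relevant unipotent subgroups, assume $L=L_{\ell_1,\ell_2,\delta}$ outright; this is the exact analogue of the reductions ``we may assume $L=A_\ell$'' in the proofs of Theorems \ref{thm 6.1} and \ref{thm 9.1}, and it reduces the problem to a single representative in each orbit.

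First I would expand $f_n(\xi)(z)$ using \eqref{9.54.1}, writing $f_n^{\psi,L}(\xi)$ as an integral over $E_{n-1}$ (together with $U'_{(m-2r)^{n-1}}$) with the character $\psi_{E_{n-1}}$ twisted, on the block corresponding to $z=(y_n,y_{n+1},y_{n+2})$, by $\psi(\mathrm{tr}(Lz))$. The key point is that $x_r(y^{(n+2)},c)$ together with the blocks already present in $E_{n-1}$ forms a unipotent subgroup on which the twisted character extends to a genuine character of the adele group, trivial on rational points; this is precisely the situation of \cite{MW87}, and one reads off the associated one-parameter subgroup $\varphi(s)$ and nilpotent $Y$. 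The structure of $L_{\ell_1,\ell_2,\delta}$ — an identity block of size $\ell_1$ in ``isotropic'' position, plus a non-degenerate symmetric (or anti-symmetric) block $S$ of size $\ell_2$ in ``paired'' position — means the nilpotent $Y$ one obtains has a rank-$\ell_2$ piece that forces a part of size roughly $2\cdot(\text{string length})+1$, rather than just increasing the length of the strings by one as the $\ell_1$ (isotropic) directions do. I expect the resulting partition to be of the form $((2N{+}1)^{\ell_2},(2N)^{2\ell_1},(N{+}1)^{2(r-\ell_1-\ell_2)},1^{\cdots})$ for the appropriate $N$ (here $N$ being, up to the usual bookkeeping, $2n$ — the same $N$ as in the partitions $((2n{+}1)^\ell,(n{+}1)^{2(r-\ell)},1^{\cdots})$ and $((2n{+}2)^\ell,\dots)$ that appeared in Propositions \ref{prop 8.1}, \ref{prop 8.4}). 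When $\ell_2>0$ this partition has a part strictly larger than those permitted by the majorizations in Propositions \ref{prop 3.1}, \ref{prop 3.2} — exactly because the $((2n-1)^m,1^m)$-type constraint on $A(\Delta(\tau,m)\gamma_\psi^{(\epsilon)},\eta,k)$ caps the size of the largest part — so the Fourier coefficient, hence $f_n^{\psi,L}(\xi)$, vanishes. (In the symplectic case, as in the proofs of Propositions \ref{prop 8.4} and in Theorem \ref{thm 9.1}, I would additionally invoke Lemma 1.1 of \cite{GRS03} to handle the passage between the anti-symmetric block and the full matrix space.)

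The main obstacle I anticipate is the precise identification of the nilpotent orbit of $Y$, i.e.\ verifying that the partition really does have a part of size $2N+1$ (and not merely $2N$) coming from the $\ell_2$ directions, and keeping careful track of how the $\ell_1$ isotropic directions, the $\ell_2$ anisotropic directions, and the block $V$ of $\mathrm{N}$ interact inside $H_{2nm}$. This is a matrix computation of the same flavor as those in \eqref{8.13}, \eqref{8.21}, \eqref{8.27.2} and \eqref{9.44}, but the mixed isotropic/anisotropic structure of $L_{\ell_1,\ell_2,\delta}$ makes it slightly more delicate: one must choose the one-parameter subgroup $\varphi(s)$ so that the $\ell_1$-block and the $\ell_2$-block are weighted consistently, and then argue — via Witt's theorem, as in Lemma \ref{lem 9.3}, or by a direct collapse computation à la Lemma 6.3.3 of \cite{CM93} — that the symmetric form on the $\ell_2$-dimensional non-degenerate space forces the extra length. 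Once that partition is pinned down, the comparison with Propositions \ref{prop 3.1}, \ref{prop 3.2} is routine, exactly as in the earlier vanishing statements. (The complementary case $\ell_2=0$, i.e.\ $L=L_{\ell_1}$ with $\ell_1$ possibly positive, is \emph{not} treated here — those coefficients genuinely survive and will be analyzed in the sequel, presumably in the next proposition.)
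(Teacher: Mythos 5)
Your proposal follows essentially the same route as the paper: reduce to the representative $L_{\ell_1,\ell_2;\delta}$ by translating by a suitable $d_{a,\gamma}$, identify the resulting coefficient with a degenerate Whittaker datum $(Y,\varphi(s))$ in the sense of \cite{MW87}, and observe that the non-degenerate $\ell_2$-block of $L_{\ell_1,\ell_2;\delta}$ pairs the two strings of length $2n-1$ into a single part of size $4n-1$ (your verbal description ``$2\cdot(\text{string length})+1$'' is exactly right, even though the explicit guess $N=2n$ would give $4n+1$; the paper's partition is $((4n-1)^{\ell_2},(2n)^{2\ell_1},\dots)$), which violates the majorizations of Propositions \ref{prop 3.1}, \ref{prop 3.2} as soon as $\ell_2>0$. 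The one detail you leave implicit, and which the paper supplies in \eqref{9.59}, is that $E_{n-1}$ together with the $z$-directions is not yet the full positive weight space of $\varphi(s)$, so one must first Fourier-expand along the remaining coordinates $y_1,\dots,y_{n-1}$ and check that every resulting coefficient (for every $A\in M_{(m-2r)(n-1)\times r}(F)$) corresponds to the same partition — which it does, independently of $A$.
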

\begin{proof}
For $z=(y_n,y_{n+1},y_{n+2})$, consider the matrices \eqref{9.2}
$$
e(y_n,y_{n+1},y_{n+2};c)=x_r((0_{r\times (m-2r)},...,0_{r\times (m-2r)},z,0_{r\times (m-2r)},...,0_{r\times (m-2r)}),c),
$$
where $0_{r\times (m-2r)}$ is repeated $n-1$ times on each side of $z$.	
It is enough to show that the following integral is zero for all $\xi\in A(\Delta(\tau,m)\gamma_\psi^{(\epsilon)},\eta,k)$.
\begin{equation}\label{9.57}
\int_{M_{r\times 2(m-r)}(F)\backslash M_{r\times 2(m-r)}(\BA)}\int_{E_{n-1}(F)\backslash E_{n-1}(\BA)}\xi(ue(z;c))\psi^{-1}_{E_{n-1}}(u)\psi^{-1}(tr(Lz))dudz.
\end{equation}	
This is an inner integration of $f_n^{\psi,L}$. Note that there is no dependence on $c$ in \eqref{9.57}. Let $a\in \GL_r(F)$, $\gamma\in H_{2nm}(F)$ be such that $L=\gamma L_{\ell_1,\ell_2;\delta}a^{-1}$.	Then the integral \eqref{9.57} is equal to 
\begin{multline}\label{9.58}
f_n^{\psi;\ell_1,\ell_2;\delta}(d_{a,\gamma}^{-1}\cdot\xi)=\\
\int_{M_{r\times 2(m-r)}(F)\backslash M_{r\times 2(m-r)}(\BA)}\int_{E_{n-1}(F)\backslash E_{n-1}(\BA)}\xi(ue(z;c)d^{-1}_{a,\gamma})\psi^{-1}_{E_{n-1}}(u)\\
\psi^{-1}(tr(L_{\ell_1,\ell_2;\delta}z))dudz,
\end{multline}	
where $d_{a,\gamma}=diag(a,...,a,\gamma,a^*,...,a^*)$, with $a$ repeated $2n-1$ times. Thus, we need to show that $f_n^{\psi;\ell_1,\ell_2;\delta}$ is trivial on $A(\Delta(\tau,m)\gamma_\psi^{(\epsilon)},\eta,k)$. Consider the subgroup of elements $x_r((y_1,...,y_{n-1},0...,0),0)$ and the function 
$$
f_n^{\psi;\ell_1,\ell_2;\delta}(y_1,...,y_{n-1})=
f_{n,\xi}^{\psi;\ell_1,\ell_2;\delta}(x_r((y_1,...,y_{n-1},0...,0),0)\cdot\xi).
$$
This is a smooth function on $M_{r\times (m-2r)(n-1)}(F)\backslash M_{r\times (m-2r)(n-1)}(\BA)$. A typical Fourier coefficient of this function is of the form
\begin{equation}\label{9.59}
f_{n,\xi}^{\psi;\ell_1,\ell_2;\delta;A}=\int_{M_{r\times (m-2r)(n-1)}(F)\backslash M_{r\times (m-2r)(n-1)}(\BA)}f_n^{\psi;\ell_1,\ell_2;\delta}(y)\psi^{-1}(tr(Ay))dy,
\end{equation}
where $A\in M_{(m-2r)(n-1)\times r}(F)$. Substituting \eqref{9.58}, we see that the Fourier coefficient \eqref{9.59} corresponds 
in the sense of \cite{MW87}, to the nilpotent element $Y$ in $Lie(H_{2nm})(F)$
and to the one parameter subgroup $\varphi(s)$, where
$$
Y=\begin{pmatrix}\mathcal{U}\\
\mathcal{V}&0_{(2nm-(2n-1)r)\times (2nm-(2n-1)r)}\\0_{(2n-1)r\times
	(2n-1)r}&\mathcal{V}'&-\mathcal{U}\end{pmatrix},
$$
and
$$
\mathcal{U}=\begin{pmatrix}0\\I_r&0\\&I_r\\
&&\cdots\\&&&I_r&0\end{pmatrix}\in M_{(2n-1)r\times (2n-1)\ell}(F);
$$
$$
\mathcal{V}=\begin{pmatrix}0_{(m-2r)(n-1)\times 2r(n-1)}&A\\0&L_{\ell_1,\ell_2;\delta}\\0&0_{(m-2r)(n-1)\times r}\\0&0\end{pmatrix};
$$
$$
\varphi(s)=\diag(s^{4n-2}I_r,s^{4n-4}I_r,...,s^2I_r, 
I_{2nm-(4n-2)r},s^{-2}I_r,...,s^{2-4n}I_r).
$$	
Now, one checks that, independently of $A$, the nilpotent orbit of $Y$ corresponds to a partition $(4n-1)^{\ell_2}, (2n)^{2\ell_1},...)$. By Propositions \ref{prop 3.1}, \ref{prop 3.2}, we get that if $\ell_2>0$, then the Fourier coefficient \eqref{9.59} is zero on $A(\Delta(\tau,m)\gamma_\psi^{(\epsilon)},\eta,k)$, for all $A$, and hence  $f_n^{\psi;\ell_1,\ell_2;\delta}$ is trivial on $A(\Delta(\tau,m)\gamma_\psi^{(\epsilon)},\eta,k)$.	
\end{proof}
Let $\mathcal{L}^0$ be the variety over $F$ of all matrices $L\in M_{2(m-r)\times r}$, such that the column space $V_L$ is an isotropic subspace with respect to the symmetric (resp. anti-symmetric form) defined by $w_{2(m-r)}$ (resp. $J_{2(m-r)}$). We conclude that
\begin{equation}\label{9.60}
f_n(\xi)=\sum_{L\in \mathcal{L}^0(F)}f_n^{\psi,L}(\xi).
\end{equation}
Let $0\leq \ell\leq r$. Denote by $\mathcal{L}^0_\ell$ the sub-variety of elements of $\mathcal{L}^0$ of rank $\ell$. Now, we consider the action of $H_{2[\frac{m}{2}]}(F)\times H_{2([\frac{m+1}{2}]-r)}(F)$ on $\mathcal{L}^0_\ell(F)$. We embed $H_{2[\frac{m}{2}]}\times H_{2([\frac{m+1}{2}]-r)}$ inside $H_{2(m-r)}$ by the following embedding $j_r$. Let $g=\begin{pmatrix}g_1&g_2\\g_3&g_4\end{pmatrix}\in H_{2[\frac{m}{2}]}$, where $g_1,...,g_4$ are $[\frac{m}{2}]\times [\frac{m}{2}]$ matrices, and let $h=\begin{pmatrix}h_1&h_2\\h_3&h_4\end{pmatrix}\in H_{2([\frac{m+1}{2}]-r)}$, where $h_1,...,h_4$ are $([\frac{m+1}{2}]-r)\times ([\frac{m+1}{2}]-r)$ matrices. Then
\begin{equation}\label{9.61}
j_r(g,h)=\begin{pmatrix}g_1&&&g_2\\&h_1&h_2\\&h_3&h_4\\g_3&&&g_4\end{pmatrix}.
\end{equation} 
Let us describe a set of representatives for 
$$
\mathcal{L}^0_\ell(F)/\GL_r(F) \times j_r(H_{2[\frac{m}{2}]}(F)\times H_{2([\frac{m+1}{2}]-r)}(F)).
$$
\begin{lem}\label{lem 9.5}
	The following elements form a set of representatives for the action of $\GL_r(F) \times j_r(H_{2[\frac{m}{2}](F)}\times H_{2([\frac{m+1}{2}]-r)}(F))$ on $\mathcal{L}^0_\ell(F)$.
	\begin{equation}\label{9.62}
L_{\ell,c,d,e}=\begin{pmatrix}0_{d\times d}&0&0&0& 0_{d\times (r-\ell)}\\0&I_{\ell-(c+d+e)}&0&0&0\\0_{([\frac{m+1}{2}]-r-\ell+c+e)\times d}&0&A_1&0&0\\0&0&0&I_c&0\\0_{([\frac{m}{2}]-[\frac{m+1}{2}]+r-c)\times d}&0&0&0&0\\I_d&0&0&0&0\\0&I_{\ell-(c+d+e)}&0&0&0\\0&0&A_1&0&0\\0_{([\frac{m+1}{2}]-r-\ell+c+e)\times d}&0&A_2&0&0\\0_{([\frac{m}{2}]-[\frac{m+1}{2}]+r+\ell-(c+e))\times d}&0&0&0&0\\

0&0&-A_2&0&0\\0_{(\ell-(c+e))\times d}&0&0&0&0
\end{pmatrix},
\end{equation}
where $c,d,e,\ell-(c+d+e), [\frac{m+1}{2}]-r-\ell+c+e, [\frac{m}{2}]-[\frac{m+1}{2}]+r-c\geq 0$, are determined uniquely, and $A=\begin{pmatrix}A_1\\A_2\end{pmatrix}$ is chosen as follows.\\
1. Assume that $H$ is orthogonal and $[\frac{m+1}{2}]-r-\ell+c\geq 0$, so that $e\leq [\frac{m+1}{2}]-r-\ell+c+e$. Then
$$
A=\begin{pmatrix}I_e\\0_{([\frac{m+1}{2}]-r-\ell+c)\times e}\\0_{([\frac{m+1}{2}]-r-\ell+c)\times e}\\\delta w_e\end{pmatrix},
$$
where $\delta=\frac{1}{2}diag(b_1,...,b_e)$, $b_i\in F^*$.\\
2. Assume that $H$ is orthogonal and $[\frac{m+1}{2}]-r-\ell+c< 0$. Then $ 2([\frac{m+1}{2}]-r-\ell+c)+e\geq 0$, and we may choose
$$
A=\begin{pmatrix}I_{2([\frac{m+1}{2}]-r-\ell+c)+e}&0&0\\0&I_{r+\ell-c-[\frac{m+1}{2}]}&0\\0&0&I_{r+\ell-c-[\frac{m+1}{2}]}\\\delta w_{2([\frac{m+1}{2}]-r-\ell+c)+e}&0&0\end{pmatrix},
$$
where $\delta=\frac{1}{2}diag(b_1,...,b_{2([\frac{m+1}{2}]-r-\ell+c)+e})$, $b_i\in F^*$. In the last two cases, the class of the quadratic form $b_1x_1^2+b_2x_2^2+\cdots$ is determined uniquely. \\
3. Assume that $H$ is symplectic. Then $e=2e'$ is even, $[\frac{m+1}{2}]-r-\ell+c+e'\geq 0$, and
$$
A=\begin{pmatrix}I_{e'}&0\\0_{([\frac{m+1}{2}]-r-\ell+c+e')\times e'}&0\\0_{([\frac{m+1}{2}]-r-\ell+c+e')\times e'}&0\\0&I_{e'}\end{pmatrix}.
$$
\end{lem}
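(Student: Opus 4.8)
\textbf{Proof proposal for Lemma \ref{lem 9.5}.}

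The plan is to run the same two-step strategy used in Lemma \ref{lem 9.3}, but now adapted to the group $\GL_r(F)\times j_r(H_{2[\frac{m}{2}]}(F)\times H_{2([\frac{m+1}{2}]-r)}(F))$ acting on the isotropic locus $\mathcal{L}^0_\ell(F)$. The geometric meaning of the parameters is what organizes everything: given $L$ of rank $\ell$, its column space $V_L$ is an $\ell$-dimensional isotropic subspace of the $2(m-r)$-dimensional space $W$ equipped with the form given by $w_{2(m-r)}$ (resp. $J_{2(m-r)}$), and $W$ is split by $j_r$ as $W = W' \perp W''$, where $W'$ carries the form $w_{2[\frac{m}{2}]}$ (resp. $J_{2[\frac{m}{2}]}$) and $W''$ carries $w_{2([\frac{m+1}{2}]-r)}$ (resp. $J_{2([\frac{m+1}{2}]-r)}$), with $\dim W'' = 2([\frac{m+1}{2}]-r)$. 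First I would note that $\GL_r(F)$ acts by column operations and lets us replace $V_L$ by any chosen ordered basis, so the orbit of $L$ under the full group is the same datum as the orbit of the isotropic subspace $V_L$ under $j_r(H_{2[\frac{m}{2}]}(F)\times H_{2([\frac{m+1}{2}]-r)}(F))$; thus the problem is purely one of classifying isotropic subspaces of $W$ up to the block-diagonal subgroup of the isometry group of $W$.

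Next I would extract the invariants. For an isotropic $V \subseteq W$, set $d = \dim(V \cap W'')^{\perp_{W''}\text{-degenerate part}}$ — more precisely I would track: $d = \dim(V \cap W''^{\perp_W})$ in an appropriate sense; actually the cleanest bookkeeping is to let $V_1 = V \cap W'$, $V_2$ = image of $V$ under projection to $W''$, and to measure how $V$ sits relative to the decomposition. The parameters $c,d,e$ and their linear combinations appearing in \eqref{9.62} record: $d$ the dimension of the part of $V$ lying entirely in one maximal isotropic piece of $W'$ paired trivially, $c$ the dimension of the ``$W'$-only'' isotropic part on the other side, and $e$ (the radical dimension of the induced form on the $W''$-projection) the dimension of the genuinely $W''$-supported isotropic piece — with the anisotropic complement governed by $A = \binom{A_1}{A_2}$. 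I would then invoke Witt's theorem for $W'$ and for $W''$ separately (exactly as in Lemma \ref{lem 9.3}, where Witt's theorem moved a chosen partial isotropic flag to the standard position): using $H_{2[\frac{m}{2}]}(F)$ we normalize the $W'$-components of a basis of $V$ into the identity-block positions shown, and using $H_{2([\frac{m+1}{2}]-r)}(F)$ we normalize the $W''$-components, which produces $A$ in the stated normal form. The three cases (1), (2), (3) in the lemma are precisely the case division forced by whether $[\frac{m+1}{2}]-r-\ell+c$ is $\geq 0$ or $<0$ (i.e.\ whether $W''$ is large enough to absorb the isotropic part, or whether a split part $b_ix_i^2$ of the form must be carried over), together with the parity constraint $e = 2e'$ in the symplectic case since a non-degenerate alternating form has even rank. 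For orthogonal $H$, the residual non-degenerate quadratic form on the $W''$-projection modulo its radical is a further invariant, giving the diagonal $\delta = \frac12\mathrm{diag}(b_1,\dots)$ and the uniqueness of the class of $b_1x_1^2 + b_2x_2^2 + \cdots$, exactly mirroring the role of $\delta$ in Lemma \ref{lem 9.3}.

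For uniqueness I would argue that each of $c,d,e$ (and the non-negativity constraints, and the listed linear combinations such as $[\frac{m+1}{2}]-r-\ell+c+e$, $[\frac{m}{2}]-[\frac{m+1}{2}]+r-c$) is computed from $V$ by an intrinsic formula — intersections of $V$ with the canonical subspaces $W'$, $W''$, their orthogonals, and the radicals of induced forms — none of which changes under the block-diagonal group; and for orthogonal $H$ the Witt class of the residual form is an intrinsic isometry invariant. Conversely, existence is the construction just sketched: start from $V$, split a basis according to the invariants, apply $\GL_r$ to order the basis, then apply the two Witt-theorem moves. The main obstacle I expect is purely combinatorial rather than conceptual: correctly setting up the $12$-block row partition of $W$ in \eqref{9.62} so that the positions of $I_{\ell-(c+d+e)}$, $A_1$, $A_2$, $I_c$, $I_d$, and the zero padding blocks of sizes $[\frac{m+1}{2}]-r-\ell+c+e$, $[\frac{m}{2}]-[\frac{m+1}{2}]+r-c$, $\ell-(c+e)$, etc., are consistent with the embedding $j_r$ of \eqref{9.61} and with isotropy of the column space — i.e.\ verifying directly that $L_{\ell,c,d,e}$ really does lie in $\mathcal{L}^0_\ell(F)$ and that the three sub-cases exhaust all possibilities without overlap. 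This is a bounded, mechanical check of incidence of blocks, and once the indexing is pinned down the rest follows from Witt's theorem as in Lemma \ref{lem 9.3}.
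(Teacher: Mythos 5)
Your proposal follows essentially the same route as the paper's proof: reduce via the $\GL_r(F)$-action to classifying $\ell$-dimensional isotropic subspaces of $W'\perp W''$ under the block-diagonal isometry group, take as invariants the intersection dimensions with the two summands together with the isometry class of the nondegenerate part of the form induced on the projection of the mixed part of the subspace, and normalize using Witt's theorem applied separately on $W'$ and on $W''$ (the two normalizations being compatible because the Gram matrices of the two projections of the mixed part are negatives of each other). The only discrepancy is one of labelling --- in the paper $e$ is the rank (not the radical dimension) of the Gram matrix of the projected mixed part, and $c$, $d$ are $\dim(X\cap W')$ and $\dim(X\cap W'')$ respectively --- which does not affect the validity of the argument.
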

\begin{proof}
Denote by $V_{2(m-r)}(F)$ the subspace spanned over $F$ by 
\begin{equation}\label{9.63}
\{e_{(n-1)m+r+1},e_{(n-1)m+r+2},...,e_{-(n-1)m-r-2},e_{-(n-1)m-r-1}\}. 
\end{equation}
Decompose $V_{2(m-r)}(F)=W_{2[\frac{m}{2}]}(F)\oplus U_{2([\frac{m+1}{2}]-r)}(F)$, where $W_{2[\frac{m}{2}]}(F)$ is spanned by the first $[\frac{m}{2}]$ vectors and the last $[\frac{m}{2}]$ vectors of \eqref{9.63}, and $U_{2([\frac{m+1}{2}]-r)}(F)$ is spanned by the middle $2([\frac{m+1}{2}]-r)$ vectors of \eqref{9.63}. We are interested in the orbits of the action of $H_{2[\frac{m}{2}]}(F)\times H_{2([\frac{m+1}{2}]-r)}(F)$ (via $j_r$) on the variety of the $\ell$-dimensional isotropic subspaces of $V_{2(m-r)}(F)$. Let $X$ be an $\ell$-dimensional isotropic subspace of $V_{2(m-r)}(F)$. Denote $c=c_X=dim(X\cap W_{2[\frac{m}{2}]}(F))$, $d=d_X=dim(X\cap U_{2([\frac{m+1}{2}]-r)}(F))$. Clearly, $c,d$ are invariants of the orbit of $X$. Let $B_1=\{x_1,...,x_c\}$, $B_2=\{x_{c+1},...,x_{c+d}\}$ be bases of $X\cap W_{2[\frac{m}{2}]}(F)$, $X\cap U_{2([\frac{m+1}{2}]-r)}(F)$ respectively. Let $B_{-1}=\{x_{-c},x_{-c+1},...,x_{-1}\}$ be a dual set to $B_1$, spanning an isotropic subspace of $W_{2[\frac{m}{2}]}(F)$ (i.e. $(x_i,x_{-j})=\delta_{i,j}$, $(x_{-i},x_{-j})=0$, for $1\leq i,j\leq c$). Similarly, let $B_{-2}=\{x_{-c-d},...,x_{-c-1}\}$ be a dual set to $B_2$, spanning an isotropic subspace of $U_{2([\frac{m+1}{2}]-r)}(F)$. Let $W'$ be the ortho-complement of $X\cap W_{2[\frac{m}{2}]}(F)+Span(B_{-1})$ inside $W_{2[\frac{m}{2}]}(F)$, and let $U'$ be the ortho-complement of $X\cap U_{2([\frac{m+1}{2}]-r)}(F)+Span(B_{-2})$ inside $U_{2([\frac{m+1}{2}]-r)}(F)$. Let $B'=\{w'_i+u'_i+f_i\  |\ c+d+1\leq i\leq \ell \}$ be a completion of $B_1\cup B_2$ to a basis of $X$, where $w'_i\in W'$, $u'_i\in U'$, $f_i\in Span(B_{-1}\cup B_{-2})$. It is easy to see that we must have $f_i=0$, for all $c+d+1\leq i\leq \ell$, and that $B_3=\{w'_{c+d+1},...,w'_\ell\}$, $B_5=\{u'_{c+d+1},...,u'_\ell\}$ are linearly independent. Note that $(w'_i,w'_j)=-(u'_i,u'_j)$, for all $c+d+1\leq i\leq \ell$, so that the Gram matrix of $B_3$ is the negative of the Gram matrix of $B_5$. Let $B_4$ (resp. $B_6$) be a completion of $B_3$ (resp. of $B_5$) to a basis of $W'$ (resp. of $U'$). Let $e$ be the rank of the Gram matrix of $B_3$. Assume that $H$ is orthogonal. We may choose $B_3$, such that 
$$
Gram(w'_{c+d+1},...w'_\ell)=diag(b_1,...,b_e,0,...,0),\ b_i\in F^*.
$$
The elements $b_1,...,b_e$ are determined up to the action of $\GL_{\ell-d-c}(F)$ on the Gram matrix, and $0$ is repeated $\ell-(c+d+e)$ times. Now, we may choose $B_4$, such that
$$
Gram(B_4)=diag(0,...,0,a_1,...,a_{\tilde{e}}),\ a_i\in F^*,
$$
where $0$ is repeated $\ell-(c+d+e)$ times,
$$
Gram(B_3\cup B_4)= diag(b_1,...,b_e,w_{2(\ell-(c+d+e))},a_1,...,a_{\tilde{e}}),
$$
and the class of the quadratic form defined by $diag(b_1,...,b_e,a_1,...,a_{\tilde{e}})$ is the same as that of the quadratic form defined by $w_{2([\frac{m}{2}]-(\ell-d-e))}$.
Assume that $H$ is symplectic. Then $e=2e'$ is even, and we may then choose $B_3$, so that
$$
Gram(w'_{c+d+1},...w'_\ell)=\begin{pmatrix}J_{2e'}&0\\0&0\end{pmatrix}.
$$
Similarly, we may choose $B_4$, such that
$$
Gram(B_4)=\begin{pmatrix}0&0\\0&J_{2\tilde{e'}}\end{pmatrix},
$$
and
$$
Gram(B_3\cup B_4)=diag(J_{2e'},J_{2(\frac{m}{2}-(\ell-d-e))},J_{2\tilde{e'}}).
$$
Finally, we can determine $Gram(B_6)$ from $Gram (B_5)$ in a similar way. Recall that $Gram(B_5)=-Gram(B_3)$.
Note that $B_1\cup B_3\cup B_4\cup B_{-1}$ is a basis of $W_{2[\frac{m}{2}]}(F)$, $B_2\cup B_5\cup B_6\cup B_{-2}$ is a basis of $U_{2([\frac{m+1}{2}]-r)}(F)$, and $B_1\cup B_2\cup B'_{3,5}$ is a basis of $X$, where
$$
B'_{3,5}=\{w'_{c+d+1}+u'_{c+d+1},...,w'_\ell+u'_\ell\}.
$$  
Denote $B_i=B_i^X$, $i=\pm 1, \pm 2, 3,4,5,6$. Let $Y$ be another $\ell$-dimensional isotropic subspace of $V_{2(m-r)}(F)$, such that $c_Y=c_X$, $d_Y=d_X$. Find bases $B_i^Y$, as above, for $i=\pm 1, \pm 2, 3,4,5,6$. Note that $|B_i^X|=|B_i^Y|$, for $i=\pm 1, \pm 2, 3,4,5,6$. Assume that the Gram matrices of $B_3^X$ and $B_3^Y$ are equivalent under the $\GL_{\ell-(c+d)}(F)$-action. Then we may assume that they are equal, and what we explained before shows that we can find a linear $T$ isomorphism of $V_{2(m-r)}(F)$, which sends the bases $B_i^X$ to $B_i^Y$, in such a way that the Gram matrix of $T(B^X_1\cup B^X_3\cup B^X_4\cup B^X_{-1})$ (resp. $T(B^X_2\cup B^X_5\cup B^X_6\cup B^X_{-2})$) is equal to the Gram matrix of $B^Y_1\cup B^Y_3\cup B^Y_4\cup B^Y_{-1}$ (resp. $B^Y_2\cup B^Y_5\cup B^Y_6\cup B^Y_{-2}$). Thus, $T\in H_{2[\frac{m}{2}]}(F)\times H_{2([\frac{m+1}{2}]-r)}(F)$, and $T(X)=Y$, that is $Y$ is in the orbit of $X$ under the action of $H_{2[\frac{m}{2}]}(F)\times H_{2([\frac{m+1}{2}]-r)}(F)$. Let us describe this in an invariant way. Given the isotropic subspace $X$, consider the quotient $X\cap W_{2[\frac{m}{2}]}(F)+X\cap U_{2([\frac{m+1}{2}]-r)}(F)\backslash X$. Let $p$ be the projection of this quotient to $X\cap W_{2[\frac{m}{2}]}(F)\backslash W_{2[\frac{m}{2}]}(F)$, deduced from the restriction to $X$ of projection of $V_{2(m-r)}(F)$ onto $W_{2[\frac{m}{2}]}(F)$. Note that $p$ is injective. Let $W_X$ denote the image of $p$, and we consider the isometry class of $Rad(W_X)\backslash W_X$. When $H$ is symplectic, this class is determined by the dimension of of $Rad(W_X)\backslash W_X$. What we proved before is that the orbit of $X$ under the action of $H_{2[\frac{m}{2}]}(F)\times H_{2([\frac{m+1}{2}]-r)}(F)$ is uniquely determined by $dim (X\cap W_{2[\frac{m}{2}]}(F))$, $dim (X\cap U_{2([\frac{m+1}{2}]-r)}(F))$ and the isometry class of $Rad(W_X)\backslash W_X$. Now the assertions of the lemma are clear.	
\end{proof}
Let $0\leq \ell\leq r$, and let $L\in\mathcal{L}^0_\ell(F)$. In the notation of Lemma \ref{lem 9.5}, let $\alpha\in \GL_r(F)$, $\beta\in H_{2[\frac{m}{2}]}(F)$, $\gamma\in H_{2([\frac{m+1}{2}]-r)}(F)$, such that $L=j_r(\beta,\gamma)^{-1}\cdot L_{\ell,c,d,e}\cdot \alpha$. Assume that $m=2m'$ is even. Then $2([\frac{m+1}{2}]-r)=m-2r$. Let
\begin{equation}\label{9.64.1}
d_{\alpha,\gamma,\beta,\gamma}=[diag(\alpha,...,\alpha,\gamma,...,\gamma,j_r(\beta,\gamma),\gamma^*,...,\gamma^*,\alpha^*,...,\alpha^*)]^{-1}.
\end{equation}
Here $\alpha$ and $\alpha^*$ are repeated each $2n-1$ times, and $\gamma$, $\gamma^*$ are repeated each $n-1$ times. We then have (see \eqref{9.54}-- \eqref{9.56})
\begin{equation}\label{9.64}
f_n^{\psi,L}(\xi)=f_n^{\psi,L_{\ell,c,d,e}}(d_{\alpha,\gamma,\beta,\gamma}\cdot\xi).
\end{equation}
Note that conjugation by $d_{\alpha,\gamma,\beta,\gamma}$ inside the integral \eqref{9.54} preserves the character $\psi_{U'_{(m-2r)^{n-1}}}$. Assume that $m=2m'-1$. Then \eqref{9.64} is not correct, since the conjugation above by $d_{\alpha,\gamma,\beta,\gamma}$ takes $\psi_{U'_{(m-2r)^{n-1}}}$, in the notation of \eqref{6.7.1}, to the character
\begin{equation}\label{9.65}
	\psi(tr(S_1+\cdots+S_{n-2}))\psi(tr(S_{n-1}\gamma^{-1}A'_H)\gamma).
\end{equation}
See \eqref{6.7.2}. Note that in this case $2([\frac{m+1}{2}]-r)=m-2r+1$. Let $\eta\in \GL_{m-2r}(F)$. Denote
\begin{equation}\label{9.65.1}
d_{\alpha,\eta,\beta,\gamma}=[diag(\alpha,...,\alpha,\eta,...,\eta,j_r(\beta,\gamma),\eta^*,...,\eta^*,\alpha^*,...,\alpha^*)]^{-1},
\end{equation}
$\alpha$ and $\alpha^*$ are repeated each $2n-1$ times, and $\eta$, $\eta^*$ are repeated each $n-1$ times. Conjugation as above by $d_{\alpha,\eta,\beta,\gamma}$ takes $\psi_{U'_{(m-2r)^{n-1}}}$ to the character of the form \eqref{9.65}, with $\gamma^{-1}A'_H\gamma$ replaced by $\gamma^{-1}A'_H\eta$.
\begin{lem}\label{lem 9.6}
Let $m=2m'-1$ be odd. Given $\gamma\in \SO_{m-2r+1}(F)$, there are $\eta\in \GL_{m-2r}(F)$, an integer $0\leq t_\gamma\leq m'-r$, and a row vector $b_\gamma$, such that
$$
\gamma^{-1}A'_H\eta=\begin{pmatrix}I_{m'-r-t_\gamma-1}&0\\0&b_\gamma\\0&I_{m'-r+t_\gamma}\end{pmatrix}:=A'_\gamma.
$$ 
\end{lem}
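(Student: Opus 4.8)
The plan is to read the two-sided action $A'_H\mapsto\gamma^{-1}A'_H\eta$ geometrically, as acting on the column space of $A'_H$. Write $q=m'-r$, so that $A'_H$ is an $(m-2r+1)\times(m-2r)$ matrix with $m-2r+1=2q$ and $m-2r=2q-1$, and $\SO_{m-2r+1}=\SO_{2q}$ acts on $F^{2q}$ preserving the split symmetric form $\langle x,y\rangle={}^tx\,w_{2q}\,y$, the columns of $A'_H$ being written in the standard basis $e_1,\dots,e_{2q}$. From \eqref{6.7.2} one sees at once that $A'_H$ has full column rank $2q-1$: its columns are $e_1,\dots,e_{q-1}$, then one nonzero vector $v_0=w_2^{r(n-1)}\begin{pmatrix}1\\ \frac{1}{2}\end{pmatrix}$ occupying rows $q,q+1$, then $e_{q+2},\dots,e_{2q}$. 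Hence $V:=\Im(A'_H)$ is a hyperplane in $F^{2q}$. The first thing I would establish is that $V$ is non-degenerate. Its orthogonal $V^\perp$ is a line; being orthogonal to $e_1,\dots,e_{q-1}$ and to $e_{q+2},\dots,e_{2q}$ it is forced to lie in the hyperbolic plane $P=\langle e_q,e_{q+1}\rangle$, on which the form is the standard hyperbolic one. Since $\langle v_0,v_0\rangle\ne0$ — this is exactly where the coefficient $\frac{1}{2}$ in \eqref{6.7.2} enters — the line $v_0^\perp\cap P$ is anisotropic and different from $\langle v_0\rangle=V\cap P$, so $V^\perp\not\subseteq V$, i.e. $V$ is non-degenerate and $V^\perp$ is anisotropic.

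Now fix $\gamma\in\SO_{2q}(F)$ and set $W:=\gamma^{-1}V$, again a hyperplane, with $W^\perp=\gamma^{-1}(V^\perp)$ still anisotropic. Right multiplication of $\gamma^{-1}A'_H$ by $\eta\in\GL_{2q-1}(F)$ amounts to replacing the ordered basis of $W$ given by the columns of $\gamma^{-1}A'_H$ with an arbitrary ordered basis of $W$; so it suffices to exhibit one ordered basis of $W$ whose $(2q)\times(2q-1)$ coordinate matrix is exactly $A'_\gamma$. Let $i_0$ be the smallest index with $e_{i_0}\notin W$ (it exists since $\dim W=2q-1<2q$), and put $t_\gamma:=q-i_0$. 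I claim $i_0\le q$, hence $0\le t_\gamma\le q-1$ (in particular $0\le t_\gamma\le m'-r$): otherwise $e_1,\dots,e_q\in W$, but $L:=\langle e_1,\dots,e_q\rangle$ is a Lagrangian of $F^{2q}$ (totally isotropic of dimension equal to the Witt index), so $L\subseteq W$ gives $W^\perp\subseteq L^\perp=L$, which forces $W^\perp$ to be totally isotropic, contradicting its anisotropy. This is the single point at which the non-degeneracy established in the first paragraph is indispensable; the rest is plain linear algebra over $F$.

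Finally I would build the basis. By minimality of $i_0$ we have $e_1,\dots,e_{i_0-1}\in W$. Since $W$ is a hyperplane with $e_{i_0}\notin W$, we have $W\oplus\langle e_{i_0}\rangle=F^{2q}$, so for each $j=1,\dots,2q-i_0=q+t_\gamma$ there is a unique scalar $b^{(j)}\in F$ with $w_j:=e_{i_0+j}-b^{(j)}e_{i_0}\in W$. Then $e_1,\dots,e_{i_0-1},w_1,\dots,w_{q+t_\gamma}$ are $2q-1=\dim W$ vectors spanning $W$, hence a basis, and writing them as the successive columns of a $(2q)\times(2q-1)$ matrix produces precisely $A'_\gamma$ with $b_\gamma:=(-b^{(1)},\dots,-b^{(q+t_\gamma)})$ sitting in the single middle row, which is row $i_0=q-t_\gamma$, the block sizes $q-t_\gamma-1$, $1$, $q+t_\gamma$ of \eqref{9.62}'s analog matching on the nose. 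Since this matrix and $\gamma^{-1}A'_H$ are two full-rank $(2q)\times(2q-1)$ matrices with the same column space $W$, they differ by right multiplication by a unique $\eta\in\GL_{2q-1}(F)$, so $\gamma^{-1}A'_H\eta=A'_\gamma$, as asserted. I expect the only genuine computation to be the anisotropy $\langle v_0,v_0\rangle\ne0$; everything else is index bookkeeping to align the block decomposition of $A'_\gamma$ with $i_0$ and $t_\gamma$, and the edge cases $t_\gamma=0$ (empty first block) and $i_0=1$ (empty top block of columns) are handled by the usual conventions.
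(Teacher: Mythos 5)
Your proof is correct, and it takes a genuinely different route from the paper's. The paper argues row by row from the bottom: it first shows the last row of $\gamma^{-1}A'_H$ cannot vanish, then proves by induction that after an $\eta_\nu$ has normalized the bottom $\nu$ rows to $(0,I_\nu)$, the next row up cannot be supported on the last $\nu$ columns, each step deriving a contradiction from the linear relation $a_{\nu+1}+\tfrac12 b_{\nu+1}=\sum x_i(a_i+\tfrac12 b_i)$ together with the quadratic relations among the rows of $\gamma^{-1}\in\SO_{2(m'-r)}$; after $m'-r$ steps the bottom half of the matrix is $(0,I_{m'-r})$ and the claimed reduced column-echelon shape follows. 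You replace this induction with a single structural observation: $\mathrm{Im}(A'_H)$ is a non-degenerate hyperplane because its middle column is anisotropic (this is exactly where the coefficient $\tfrac12$ from \eqref{6.7.2} enters in both arguments), so $W=\gamma^{-1}\mathrm{Im}(A'_H)$ has anisotropic orthogonal complement, and the unique non-pivot row index $i_0$ must satisfy $i_0\le m'-r$, since otherwise the Lagrangian $\langle e_1,\dots,e_{m'-r}\rangle$ would lie in $W$ and force $W^\perp$ to be isotropic; the echelon basis is then read off from $F^{2q}=W\oplus\langle e_{i_0}\rangle$. Both proofs rest on the same arithmetic input, but you isolate it once, where the paper re-invokes it at each of the $m'-r$ induction steps, and you get the slightly sharper bound $t_\gamma\le m'-r-1$ for free. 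One cosmetic point: the block structure you match at the end is that of $A'_\gamma$ in the statement of the lemma, not of the matrix in \eqref{9.62}, which is the unrelated $L_{\ell,c,d,e}$.
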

\begin{proof}
The rank of $\gamma^{-1}A'_H$ is $m-2r$. The bottom row of this matrix cannot be zero. Otherwise, we get that the last row of $\gamma^{-1}$ has the form $(0,a_1,a_2,0)$, where $0$ stands for the zero vector with $m'-r-1$ coordinates, and $a_1,a_2\in F$ satisfy $a_1+\frac{1}{2}a_2=0$. Since $\gamma\in \SO_{m-2r+1}(F)$, we also have $2a_1a_2=0$, and then $a_1=a_2=0$, which is impossible. We conclude that there is $\eta_1\in \GL_{m-2r}(F)$, such that the last row of $\gamma^{-1}A'_H\eta_1$ is $(0,...,0,1)$. Let $1\leq m'-r-1$. Assume, by induction, that there is $\eta_\nu\in \GL_{m-2r}(F)$, such that the last $\nu$ rows of $\gamma^{-1}A'_H\eta_\nu$ are $(0_{\nu\times (m-2r-\nu)}, I_\nu)$. We claim that the $\nu+1$-th row from the bottom of $\gamma^{-1}A'_H\eta_\nu$ is not of the form $(0,...,0,x_\nu,...,x_1)$, where $x_1,...,x_\nu\in F$. Assume otherwise, and denote the last $\nu$ rows of $\gamma^{-1}$ by $(\gamma_{i,1},a_i,b_i,\gamma_{i,2})$, where $\gamma_{i,1}, \gamma_{i,2}$ are row vectors with $m'-r-1$ coordinates, and $a_i,b_i\in F$. We let the last row have index 1, the row before last- index 2, and so on. We then have that the $(\nu+1)$-th row from the bottom of $\gamma^{-1}A'_H$ has the form 
$$
(\sum_{i=1}^\nu x_i\gamma_{i,1},a_{\nu+1},b_{\nu+1}, \sum_{i=1}^\nu x_i\gamma_{i,2}),
$$  
and 
\begin{equation}\label{9.66}
a_{\nu+1}+\frac{1}{2}b_{\nu+1}=\sum_{i=1}^\nu x_i(a_i+\frac{1}{2}b_i).
\end{equation} 
Multiply $\gamma^{-1}$ from the left by the following matrix, lying in $\SO_{2(m-r)}(F)$,
$$
diag(\begin{pmatrix}1&&&x_1\\&\ddots\\&&&x_\nu\\&&&1\end{pmatrix},I_{2(m'-r-\nu)},\begin{pmatrix}1&-x_\nu&\cdots&-x_1\\&1\\&&\ddots\\&&&1\end{pmatrix}).
$$
We get a matrix in $\SO_{2(m-r)}(F)$, whose $(\nu+1)$-th row from bottom is
$$
(0,a_{\nu+1}-\sum_{i=1}^\nu x_ia_i,b_{\nu+1}-\sum_{i=1}^\nu x_ib_i,0).
$$
Since this a row of a matrix in $\SO_{2(m'-r)}(F)$, we get that the product of the two middle coordinates is zero. By \eqref{9.66}, they are both zero, and this is impossible. By induction, there is $\eta_{m'-r}\in \GL_{m-2r}(F)$, such that the last $m'-r$ rows of $\gamma^{-1}A'_H\eta_{m'-r}$ are $(0_{(m'-r)\times (m'-r-1)},I_{m'-r})$. Thus, the reduced column-echelon form of $\gamma^{_1}A'_H$ is as in the statement of the lemma.
\end{proof} 
Assume that $m=2m'-1$ is odd, and let $L=j_r(\beta,\gamma)^{-1}L_{\ell,c,d,e}\alpha$. Assume also that $\gamma^{-1}A'_H\eta=A'_\gamma$. Then we get
\begin{multline}\label{9.67}
f_n^{\psi,L}(\xi)=\int_{M_{r\times 2(m-r)}(F)\backslash M_{r\times 2(m-r)}(\BA)}\\
\int_{U'_{{(m-2r)}^{n-1}}(F)\backslash
	U'_{{(m-2r)}^{n-1}}(\BA)}\int_{E_{n-1}(F)\backslash E_{n-1}(\BA)}\xi(uvx_zd_{\alpha,\eta,\beta,\gamma})\psi^{-1}_{E_{n-1}}(u)\\
\quad\quad\quad\psi^{-1}_{U'_{(m-2r)^{n-1}},A'_\gamma}(v)\psi^{-1}(tr(L_{\ell,c,d,e}\cdot z))dudv dz\\
:=f_n^{\psi,L_{\ell,c,d,e},A'_\gamma}(d_{\alpha,\eta,\beta,\gamma}\cdot\xi).
\end{multline}
Here, for $z=(y_n,y_{n+1},y_{n+2})$ (see \eqref{9.55}), $x_z$ is any element of the form $x(y^{(n+2)},c)$. The character $\psi_{{U'_{{(m-2r)}^{n-1}},A'_\gamma}}$ of $U'_{(m-2r)^{n-1}}(\BA)$ is defined, in the notation of \eqref{6.5.1}, \eqref{6.5.2},
\begin{equation}\label{9.68}
\psi_{U'_{(m-2r)^{n-1}},A'_\gamma}(v)=\psi(tr(S_1+\cdots+S_{n-2}))\psi(tr(S_{n-1}A'_\gamma)).
\end{equation}
\begin{thm}\label{thm 9.7}
Let $L\in \mathcal{L}_\ell^0(F)$ be in the orbit of $L_{\ell,c,d,e}$. Let $\ell'=\ell-c$ in Case 1 of Lemma \ref{lem 9.5}, let $\ell'=[\frac{m+1}{2}]-r$ in Case 2 of Lemma \ref{lem 9.5}, and let $\ell'=\ell-c-e'$ in Case 3 of Lemma \ref{lem 9.5}. If $\ell'>0$, then $f_n^{\psi,L}(\xi)=0$, for all $\xi\in A(\Delta(\tau,m)\gamma_\psi^{(\epsilon)},\eta,k)$. 
\end{thm}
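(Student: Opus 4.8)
The plan is to follow the same nilpotent-orbit obstruction strategy that has been used throughout Sec. 8 and Sec. 9, now applied to the new double-coset representatives $L_{\ell,c,d,e}$ of Lemma \ref{lem 9.5}. First I would reduce, exactly as in the proof of Prop. \ref{prop 9.4} (using the conjugation \eqref{9.64} in the even case, and \eqref{9.67} together with Lemma \ref{lem 9.6} in the odd case), to showing that $f_n^{\psi,L_{\ell,c,d,e}}$ (respectively $f_n^{\psi,L_{\ell,c,d,e},A'_\gamma}$) is trivial on $A(\Delta(\tau,m)\gamma_\psi^{(\epsilon)},\eta,k)$ whenever the relevant rank parameter $\ell'$ is positive. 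Here the key point in the odd case is that after the conjugation by $d_{\alpha,\eta,\beta,\gamma}$ the character of $U'_{(m-2r)^{n-1}}$ is replaced by $\psi_{U'_{(m-2r)^{n-1}},A'_\gamma}$ of \eqref{9.68}, but this change affects only the position of $1$'s inside a single block and does not change the nilpotent orbit attached to the resulting Fourier coefficient; I would remark this explicitly.

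Next I would take the inner integration along the subgroup $x_r((y_1,\dots,y_{n-1},0,\dots,0),0)$, mimicking \eqref{9.59}, and expand $f_n^{\psi,L_{\ell,c,d,e}}$ in a Fourier series over $M_{r\times (m-2r)(n-1)}(F)\backslash M_{r\times (m-2r)(n-1)}(\BA)$, with character $\psi(\tr(Ay))$, $A\in M_{(m-2r)(n-1)\times r}(F)$. As in the previous proofs, each such Fourier coefficient, combined with $\psi^{-1}(\tr(L_{\ell,c,d,e}z))$ from the $z$-integration, is a Fourier coefficient of $\xi$ attached, in the sense of \cite{MW87}, to a nilpotent element $Y$ and a one-parameter subgroup $\varphi(s)$ of the same shape as in the proof of Prop. \ref{prop 9.4}, with $\mathcal{V}$ now built from $A$ and from the specific block pattern of $L_{\ell,c,d,e}$. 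The crucial computation is then to identify the partition of the orbit of $Y$: I expect that, independently of $A$, this partition begins with $((4n-1)^{\ell'},(2n)^{2(\ell-\ell')},1,\dots)$ in the three cases of Lemma \ref{lem 9.5} with $\ell'$ as defined in the theorem statement (the isotropic directions in $L_{\ell,c,d,e}$ contribute $(2n)$-rows, while the $e$ anisotropic directions in the orthogonal case, or the $c$ and $e'$ directions landing in $W_{2[m/2]}$, contribute the longer $(4n-1)$-rows). Once this partition is established, Propositions \ref{prop 3.1} and \ref{prop 3.2} force the coefficient to vanish whenever $\ell'>0$, for all $A$, and hence $f_n^{\psi,L_{\ell,c,d,e}}$ is trivial, as desired. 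In the symplectic case one additionally invokes Lemma 1.1 in \cite{GRS03}, as in the proofs of Prop. \ref{prop 8.4} and Prop. \ref{prop 8.6}.

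The main obstacle I anticipate is the explicit orbit computation: keeping track of which rows and columns of $L_{\ell,c,d,e}$, after the conjugation $d_{\alpha,\eta,\beta,\gamma}$, produce a nilpotent element whose Jordan type genuinely has $\ell'$ strings of length $4n-1$ rather than $2n$. The subtlety is that $L_{\ell,c,d,e}$ has a fairly intricate block structure distinguishing the "$W_{2[m/2]}$-part" (governed by $c$ and by the anisotropic block $\delta$) from the "$U_{2([m+1]/2-r)}$-part" (governed by $d$ and by the isotropic block carrying $A_1,A_2$), and the three cases of Lemma \ref{lem 9.5} package the anisotropic data differently. I would handle this by computing, case by case, the ranks of the successive powers of $Y$ restricted to the relevant coordinate subspaces, exactly as the analogous computations in \eqref{8.13}, \eqref{8.21}, \eqref{8.27.2} and \eqref{9.44} were carried out — these are routine but require care. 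Everything else (the root-exchange reductions, the independence of the various choices of $c$, the passage from $f_n^{\psi,L}$ to $f_n^{\psi,L_{\ell,c,d,e}}$) is a direct repetition of arguments already established in the excerpt and should be dispatched by citing the relevant earlier propositions.
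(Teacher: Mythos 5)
Your reduction via \eqref{9.64}, \eqref{9.67} and Lemma \ref{lem 9.6} matches the paper, but the core of your argument --- repeating the inner-integration/nilpotent-orbit computation of Prop.~\ref{prop 9.4} and claiming the resulting orbit is $((4n-1)^{\ell'},(2n)^{2(\ell-\ell')},1,\dots)$ --- does not work, and this is exactly the point where Theorem \ref{thm 9.7} requires a different mechanism. The representatives $L_{\ell,c,d,e}$ lie in $\mathcal{L}^0_\ell(F)$, so the column space $V_L$ is \emph{isotropic} for the form on $V_{2(m-r)}$; the parameters $c,d,e$ and the anisotropic block $\delta$ describe the form on the \emph{projection} $W_X$ to $W_{2[\frac{m}{2}]}$, not the form on $V_L$ itself (recall $(w'_i,w'_j)=-(u'_i,u'_j)$ in the proof of Lemma \ref{lem 9.5}, so the combined vectors are isotropic). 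Consequently the nilpotent element $Y$ built as in Prop.~\ref{prop 9.4}, with $L_{\ell,c,d,e}$ sitting in $\mathcal{V}$, has Jordan type beginning with $(2n)^{2\ell}$ --- no $(4n-1)$-strings appear, independently of $A$ --- and partitions of the form $((2n)^{2\ell},\dots)$ are not excluded by Propositions \ref{prop 3.1}, \ref{prop 3.2}. This is precisely why Prop.~\ref{prop 9.4} only disposes of the non-isotropic orbits ($\ell_2>0$) and why the isotropic case needs a separate, much longer argument. Note also that your computation never sees $\ell'$ at all: the rank entering $\mathcal{V}$ is $\ell$, whereas $\ell'$ ($=\ell-c$, $[\frac{m+1}{2}]-r$, or $\ell-c-e'$) encodes the interaction of $V_L$ with the splitting $W_{2[\frac{m}{2}]}\oplus U_{2([\frac{m+1}{2}]-r)}$ and with the remaining $U'_{(m-2r)^{n-1}}$-integration, which your direct expansion ignores.

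The paper's proof instead conjugates by a new Weyl element $\epsilon'_0$ adapted to $\ell'$ (\eqref{9.70}--\eqref{9.72}), performs the same cascade of root exchanges as in Theorems \ref{thm 6.1} and \ref{thm 9.1}, and then carries out Fourier expansions in which the orbit bounds of Propositions \ref{prop 3.1}, \ref{prop 3.2} kill only the \emph{nontrivial} Fourier coefficients (those producing $(4n)^{\ell_0}$ or $(4n+1)^{\ell_0}$ strings), while the surviving trivial coefficient is rewritten, via the rational element $z'(a,b)$ whose existence rests on the identity $b'_3-b_3+b'_1b_2+b'_2b_1=0$ of \eqref{9.87.1} (this is where the detailed block structure of $L_{\ell,c,d,e}$ and $A'_\gamma$ actually enters), as the composite coefficient $(\xi^{U_{(\ell')^{2n}}})^{\psi_{V_{(\ell')^{2n}}}}$. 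The vanishing for $\ell'>0$ then comes not from an orbit obstruction but from the outer integration over $\mathcal{M}''_{n+2,\ell'}(F)\backslash\mathcal{M}''_{n+2,\ell'}(\BA)$: this group contains the elements $v''(y_1)=\zeta_{\ell'}(y_1,0,\dots,0)$ lying in $U_{(\ell')^{2n}}(\BA)$, on which the integrand is invariant while the character restricts to $\psi(\mathrm{tr}(y_1))$, forcing the integral to vanish. This last step is the actual source of the theorem and has no counterpart in your proposal.
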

\begin{proof}
By \eqref{9.64} and \eqref{9.67}, we need to show that for all $\xi\in A(\Delta(\tau,m)\gamma_\psi^{(\epsilon)},\eta,k)$, $f_n^{\psi,L_{\ell,c,d,e}}(\xi)=0$, when $m$ is even, and $f_n^{\psi,L_{\ell,c,d,e},A'_\gamma}(\xi)=0$, when $m$ is odd. Let us unify our notation, and let $A'_\gamma=I_{m-2r}$, when $m=2m'$ is even. Denote then $f_n^{\psi,L_{\ell,c,d,e}}=f_n^{\psi,L_{\ell,c,d,e},I_{m-2r}}$. In both cases, the integral expression of $f_n^{\psi,L_{\ell,c,d,e},A'_\gamma}(\xi)$ is given by the right hand side of \eqref{9.67}, with $d_{\alpha,\eta,\beta,\gamma}\cdot\xi$ replaced by $\xi$. Consider the inner integration
\begin{multline}\label{9.69}
\tilde{f}_n^{\psi,L_{\ell,c,d,e},A'_\gamma}(\xi)=\int_{M_{r\times 2(m-r)}(F)\backslash M_{r\times 2(m-r)}(\BA)}\\
\int_{U'_{(m-2r)(n-2),m-2r}(F)\backslash
	U'_{(m-2r)(n-2),m-2r}(\BA)}\int_{E_{n-1}(F)\backslash E_{n-1}(\BA)}\xi(uvx_z)\psi^{-1}_{E_{n-1}}(u)\\
\quad\quad\quad\psi^{-1}_{U'_{(m-2r)(n-2),m-2r},A'_\gamma}(v)\psi^{-1}(tr(L_{\ell,c,d,e}\cdot z))dudv dz,
\end{multline}
where $\psi_{U'_{(m-2r)(n-2),m-2r},A'_\gamma}$ is the restriction of $\psi_{U'_{(m-2r)^{n-1}},A'_\gamma}$ to\\ 
$U'_{(m-2r)(n-2),m-2r}(\BA)$ (the unipotent radical of the parabolic subgroup\\ $Q_{(m-2r)(n-2),m-2r}^{H_{2n(m-2r)+2r}}$).
Denote by $E^{(n+2)}$ the subgroup generated by $E_{n-1}$, the elements $x_z$ and $U'_{(m-2r)(n-2),m-2r}$. Note that the subgroup generated by $E_{n-1}$ and the elements $x_z$ is the subgroup $E_{n+2}$. Denote, for short, by $\psi_{E^{(n+2)}}$ the character of $E^{(n+2)}(\BA)$, such that, in the notation of the integrand in \eqref{9.69},
$$
\psi_{E^{(n+2)}}(uvx_z)=\psi_{E_{n-1}}(u)\psi_{U'_{(m-2r)(n-2),m-2r},A'_\gamma}(v)\psi(tr(L_{\ell,c,d,e}\cdot z)).
$$
Then \eqref{9.69} is
\begin{equation}\label{9.69.1}
\int_{E^{(n+2)}(F)\backslash E^{(n+2)}(\BA)}\xi(v)\psi^{-1}_{E^{(n+2)}}(v)dv.
\end{equation}
It suffices to show that the integral \eqref{9.69.1} is zero on $A(\Delta(\tau,m)\gamma_\psi^{(\epsilon)},\eta,k)$.  The proof is similar to that of Theorem \ref{thm 6.1}, and Theorem \ref{thm 9.1}. We first apply a conjugation by a Weyl element $\epsilon'_0$, which we describe now. It is similar to the ones in \eqref{6.1}, \eqref{9.16}.
Consider the following Weyl element	
\begin{equation}\label{9.70}
\epsilon'_0=\begin{pmatrix}A'_1&A'_2&0\\A'_3&0&0\\0&A'_4&0\\0&0&A'_5\\0&A'_6&A'_7\end{pmatrix}.
\end{equation}
The block $A'_1$ (resp.$A'_3$) has the same form as \eqref{9.17} (resp. \eqref{9.19}), with $\ell'$ instead of $\ell$. The block $A'_4$ is as follows
\begin{equation}\label{9.71}
A'_4=diag(I_{(m-2r)(n-2)},\begin{pmatrix}c_1\\&c_2\\&&I_{2([\frac{m+1}{2}]-r-\ell')}\\&&&c'_2\\&&&&c'_1\end{pmatrix},I_{(m-2r)(n-2)}),
\end{equation}
where $c_1=(I_{m-2r-\ell'},0_{(m-2r-\ell')\times\ell'})$, $c_2=(I_{[\frac{m}{2}]},0_{[\frac{m}{2}]\times\ell'})$, $c'_2=(0_{[\frac{m}{2}]\times\ell'},I_{[\frac{m}{2}]})$, $c'_1=(0_{(m-2r-\ell')\times\ell'},I_{m-2r-\ell'})$.
The block $A'_2$ has the same block row division as $A'_1$, and the same block column division as $A'_4$. The first $2n-1$ block rows are all zero. The last two block rows of $A'_2$ have the following form
\begin{equation}\label{9.72}
\begin{pmatrix}0&0&d_1&0&0&0&0\\
0&0&0&0&0&d_2&0\end{pmatrix},
\end{equation}
where $d_1=(0_{\ell'\times [\frac{m}{2}]},I_{\ell'})$, $d_2=(I_{\ell'},0_{\ell'\times (m-2r-\ell')})$. The block column division in \eqref{9.72} is as that of $A'_4$.
The blocks $A'_5, A'_6, A'_7$ are already determined by the other blocks.

As in \eqref{9.21}, we conjugate inside \eqref{9.69.1} by $\epsilon'_0$. Then we need to prove that, for $\ell'>0$, the following integral is identically zero on $A(\Delta(\tau,m)\gamma_\psi^{\epsilon)},\eta,k)$,
\begin{equation}\label{9.73}
\varphi_{n+2}^{\psi,\ell,c,d,e,\gamma}(\xi)=
\int_{\mathcal{M}_{n+2,\ell'}(F)\backslash
	\mathcal{M}_{n+2,\ell'}(\BA)}\xi(v)\psi^{-1}_{\mathcal{M}_{n+2,\ell'}}(v)dv, 
\end{equation}
where $\mathcal{M}_{n+2,\ell'}=\epsilon'_0E^{(n+2)}(\epsilon'_0)^{-1}$, and $\psi_{\mathcal{M}_{n+2,\ell'}}$ is the character of $\mathcal{M}_{n+2,\ell'}(\BA)$ defined by $\psi_{\mathcal{M}_{n+2,\ell'}}(x)=\psi_{E^{(n+2)},\ell}((\epsilon'_0)^{-1}x\epsilon'_0)$. The subgroup $\mathcal{M}_{n+2,\ell'}$ consists of elements of the form \eqref{9.22}, with the blocks described as follows. The block $U_1$ (resp. $U_2$) has the form \eqref{9.23} (resp. \eqref{9.24}) with $\ell'$ instead of $\ell$, and similarly with $U_1'$ (resp. $U'_2$). The block $V$ has the form
\begin{equation}\label{9.74}
V=\begin{pmatrix}I_{\mu(n-2)}&0&\ast&\ast&\ast&\ast&\ast\\&I_{\mu-\ell'}&\ast&v_{2,4}&\ast&\ast&\ast\\&&I_{[\frac{m}{2}]}&0&0&\ast&\ast\\&&&I_{2([\frac{m+1}{2}]-r-\ell')}&0&v'_{2,4}&\ast\\&&&&I_{[\frac{m}{2}]}&\ast&\ast\\&&&&&I_{\mu-\ell'}&0\\&&&&&&I_{\mu(n-2)}\end{pmatrix}.
\end{equation}
Recall that sometimes we abbreviate $m-2r=\mu$. The blocks $X_{1,2}$, $X'_{1,2}$, $X_{1,3}$, $X'_{1,3}$ have the same shape as the shape described right after \eqref{9.25} and \eqref{9.26}, with $\ell'$ instead of $\ell$. For the description of the character $\psi_{\mathcal{M}_{n+2,\ell'}}$, we need to specify some blocks of $X_{1,3}$. It has $2n+1$ block rows, each one containing $\ell'$ rows. It has seven block columns with sizes as for $V$. It has the following form,
\begin{equation}\label{9.75}
X_{1,3}=\begin{pmatrix}\ast&\ast&\ast&\ast&\ast&\ast&\ast\\&\vdots&&&&\vdots&&\\\ast&\ast&\ast&\ast&\ast&\ast&\ast\\0&0&(x_{1,3})_{2n-1,3}&(x_{1,3})_{2n-1,4}&(x_{1,3})_{2n-1,5}&\ast&\ast\\0&0&0&0&0&(x_{1,3})_{2n,6}&\ast\\0&0&0&0&0&0&0\end{pmatrix}.
\end{equation}
The matrix $X_{1,4}$ is a $(2n+1)\times (2n-1)$ matrix of $\ell'\times (r-\ell')$ blocks, all of whose blocks are arbitrary, except the last block in the first block column, which is zero. Thus, it has the form
\begin{equation}\label{9.76}
X_{1,4}=\begin{pmatrix}\ast&\ast&\cdots&\ast&\ast\\
&\vdots&&\vdots\\\ast&\ast&\cdots&\ast&\ast\\(x_{1,4})_{2n,1}&\ast&\cdots&\ast&\ast\\0&\ast&\cdots&\ast&\ast\end{pmatrix}.
\end{equation}
Similarly, the blocks of $X'_{1,4}$ are arbitrary, except the first block in the last row, which is zero. As always, the blocks above are arbitrary up to the requirement that the matrix $v$ as in \eqref{9.22} lies in $H$. The matrix $X_{1,5}$ is a $(2n+1)\times (2n+1)$ matrix of $\ell'\times \ell'$ blocks of the form
\begin{equation}\label{9.77}
X_{1,5}=\begin{pmatrix}\ast&\ast&\ast&\ast&\cdots&\ast&\ast\\
&\vdots&&\vdots\\\ast&\ast&\ast&\ast&\cdots&\ast&\ast\\0&(x_{1,5})_{2n-1,2}&\ast&\ast&\cdots&\ast&\ast\\0&0&(x_{1,5})'_{2n-1,2}&\ast&\cdots&\ast&\ast\\0&0&0&\ast&\cdots&\ast&\ast\end{pmatrix}.
\end{equation}
The matrices $Y_{2,1}$ and $Y'_{2,1}$ have the same shape as the shape described right after \eqref{9.28}, with $\ell'$ instead of $\ell$. Again, for the description of the character $\psi_{\mathcal{M}}$, we need to specify more. $Y_{2,1}$ is a $(2n-1)\times (2n+1)$ matrix of $(r-\ell')\times\ell$ blocks. Its last two block columns are arbitrary, and its first $2n-1$ block columns form a $(2n-1)\times (2n-1)$ matrix of $(r-\ell')\times \ell'$ blocks, which has an upper triangular shape, with all the blocks along the diagonal being zero. The matrix $Y'_{2,1}$ has a dual shape. 
\begin{equation}\label{9.78}
Y_{2,1}=\begin{pmatrix}0&\ast&\ast&\cdots&\ast&\ast&\ast\\0&0&\ast&\cdots&\ast&\ast&\ast\\\vdots&&\vdots&&\vdots&\vdots&\vdots\\0&0&0&\cdots&\ast&\ast&\ast\\0&0&0&\cdots&0&(y_{2,1})_{2n-1,2n}&\ast\end{pmatrix}.
\end{equation}
The matrix $X_{2,3}$ has $2n-1$ block rows, each one containing $r-\ell'$ rows. It has seven block columns, with the same division as that of $V$. All its blocks are arbitrary, except the first two blocks in the last row, which are zero.
\begin{equation}\label{9.79}
X_{2,3}=\begin{pmatrix}\ast&\ast&\ast&\ast&\ast&\ast&\ast\\&\vdots&&&&\vdots&&\\\ast&\ast&\ast&\ast&\ast&\ast&\ast\\0&0&(x_{2,3})_{2n-1,3}&(x_{2,3})_{2n-1,4}&(x_{2,3})_{2n-1,5}&\ast&\ast\end{pmatrix}.
\end{equation}
The matrix $X'_{2,3}$ has a dual shape (as in \eqref{9.30}). 
The matrix $X_{2,4}$ is arbitrary (as long as $v$ lies in $H$). The matrix $Y_{3,1}$ has the same shape as in \eqref{9.31}, with $\ell'$ instead of $\ell$. 
\begin{equation}\label{9.80}
Y_{3,1}=\begin{pmatrix}0&&0&\ast&\ast\\0&\cdots&0&(y_{3,1})_{2,2n}&\ast\\
0&&0&0&\ast\\0&&0&0&(y_{3,1})_{4,2n+1}\\0&\cdots&0&0&\ast\\0&&0&0&0\\0&&0&0&0\end{pmatrix}.
\end{equation}
The matrix $Y'_{3,1}$ has a dual shape (as in \eqref{9.32}).
Finally, as in \eqref{9.33}, $Y_{5,1}$, as a $(2n+1)\times (2n+1)$ matrix of $\ell'\times \ell'$ blocks, is such that all its blocks are zero except the last two blocks in the first block row, and the last block in the second block row.
\begin{equation}\label{9.81}
Y_{5,1}=\begin{pmatrix}0&\cdots&0&(y_{5,1})_{1,2n}&\ast\\0&&0&0&(y_{5,1})'_{1,2n}\\0&&0&0&0\\\vdots&&&&\vdots\\0&\cdots&0&0&0\end{pmatrix}.
\end{equation}
The character $\psi_{\mathcal{M}_{n+2,\ell'}}(v)$ of the element $v\in \mathcal{M}_{n+2,\ell'}(\BA)$ in \eqref{9.22}, described above, is given by
\begin{equation}\label{9.82}
\psi_{\mathcal{M}_{n+2,\ell'}}(v)=\prod_{i=1}^{n-1}\psi(tr(U^1_i)+tr(U^2_i)-tr(U^1_{n-1+i})-tr(U^2_{n-1+i}))\tilde{\psi}_{\mathcal{M}_{n+2,\ell'}}(v),
\end{equation}
where we used the notation in \eqref{9.23} - \eqref{9.25}; $\tilde{\psi}_{\mathcal{M}_{n+2,\ell'}}(v)$ is described as follows. Let
\begin{equation}\label{9.82.1}
A_v=\begin{pmatrix}(x_{1,3})_{2n-1,3}&U_{2n-1}^1&(x_{1,3})_{2n-1,4}&(x_{1,5})_{2n-1,2}&(x_{1,3})_{2n-1,5}\\(x_{2,3})_{2n-1,3}&(y_{2,1})_{2n-1,2n}&(x_{2,3})_{2n-1,4}&-w_{\ell'}{}^t(x_{1,4})_{2n,1}w_{r-\ell'}&(x_{2,3})_{2n-1,5}\end{pmatrix},
\end{equation}
and
\begin{equation}\label{9.82.2}
B_v=\begin{pmatrix} (y_{3,1})_{2,2n}&v_{2,4}&-w_{\mu-\ell'}{}^t(x_{1,3})_{2n,6}w_{\ell'}\\(y_{5,1})_{1,2n}&-w_{\ell'}{}^t(y_{3,1})_{4,2n+1}{}^tJ_{H_{2([\frac{m+1}{2}]-r-\ell')}}&-w_{\ell'}{}^tU_{2n}^1w_{\ell'}\end{pmatrix}.
\end{equation}
Then
\begin{equation}\label{9.83}
\tilde{\psi}_{\mathcal{M}_{n+2,\ell'}}(v)=\psi(tr(A_vL_{\ell,c,d,e}))\psi(tr(B_v A'_\gamma)).
\end{equation}
For example, when $H_{2nm}$ is symplectic (and then $m=2m'$ is even, by assumption),
\begin{multline}\label{9.83.1}
tr(A_vL_{\ell,c,d,e})=tr(U_{2n-1}^1)+tr((x_{1,3})_{2n-1,3}\begin{pmatrix}0_{d\times d}&0\\0&I_{\ell'-d}\\0_{(m'-\ell')\times d}&0\end{pmatrix})+\\
+tr((x_{2,3})_{2n-1,3}\begin{pmatrix} 0_{(m'-r)\times d}&0&0\\0&I_c&0_{c\times (r-\ell)}\\0_{(r-c)\times d}&0&0\end{pmatrix})-tr(x_{1,4}\begin{pmatrix}0_{(r-\ell'-e')\times (\ell'-e')}&0\\0&I_{e'}\end{pmatrix})-\\
-tr((x_{2,3})_{2n-1,5}\begin{pmatrix}0_{(m'-\ell')\times e'}&0\\I_{e'}&0_{e'\times(r-\ell'-e')}\\0_{(\ell'-e')\times e'}&0\end{pmatrix});\\
tr(B_v A'_\gamma)=tr(B_v)=-tr(U_{2n}^1)+tr(\begin{pmatrix}(y_{3,1})_{2,2n}&v_{2,4}\end{pmatrix}).
\end{multline}
As in the proof of Theorem \ref{thm 9.1}, right after \eqref{9.34}, we perform the similar sequence of roots exchange, exactly as in Prop. \ref{prop 7.1} and Prop. \ref{prop 7.2}. We assume that $\ell'$ is positive. We start with the subgroup $\mathrm{Y}_{2,1}^{1,2}$ and exchange it with $\mathrm{X}_{1,2}^{1,1}$. Then we exchange $\mathrm{Y}_{2,1}^{2,3}$ with $\mathrm{X}_{1,2}^{2,2}$, and $\mathrm{Y}_{2,1}^{1,3}$ with $\mathrm{X}_{1,2}^{2,1}$, and so on. We exchange column $t$ of $\mathrm{Y}_{2,1}$, $2\leq t\leq 2n-1$, $\mathrm{Y}_{2,1}^{t-1,t}$, $\mathrm{Y}_{2,1}^{t-2,t}$,...,$\mathrm{Y}_{2,1}^{1,t}$, with row $t-1$ of $\mathrm{X}_{1,2}$, $\mathrm{X}_{1,2}^{t-1,t-1}$, $\mathrm{X}_{1,2}^{t-1,t-2}$,..., $\mathrm{X}_{1,2}^{t-1,1}$, in this order. We conclude that $\varphi_{n+2}^{\psi,\ell,c,d,e,\gamma}$ is identically zero on $A(\Delta(\tau,m)\gamma_\psi^{\epsilon)},\eta,k)$, if and only if the following integral is identically zero on $A(\Delta(\tau,m)\gamma_\psi^{\epsilon)},\eta,k)$, 
\begin{equation}\label{9.84}
\tilde{\varphi}_{n+2}^{\psi,\ell,c,d,e,\gamma}(\xi)=
\int_{\tilde{\mathcal{M}}_{n+2,\ell'}(F)\backslash
	\tilde{\mathcal{M}}_{n+2,\ell'}(\BA)}\xi(v)\psi^{-1}_{\tilde{\mathcal{M}}_{n+2},\ell'}(v)dv, 
\end{equation}
where, as in the proof of Theorem \ref{thm 9.1}, $\tilde{\mathcal{M}}_{n+2,\ell'}$ is the subgroup of elements $v$ written as in \eqref{9.22}, with $\ell'$ instead of $\ell$, plus the description starting right after \eqref{9.74} up to \eqref{9.81}, where $X_{1,2}$ is such that its last three block rows are zero, and all of its other blocks are arbitrary, and $Y_{2,1}$ is such that its first $2n-1$ block columns are zero. Note that $\tilde{\mathcal{M}}_{n+2,\ell'}$ lies in the standard parabolic subgroup $Q^H_{(\ell')^{2n-1}}$.  The character $\psi_{\tilde{\mathcal{M}}_{n+2,\ell'}}$ is still given by \eqref{9.83}. Thus, we want to prove that, for $\ell'$ positive, $\tilde{\varphi}_{n+2}^{\psi,\ell,c,d,e,\gamma}$ is identically zero on $A(\Delta(\tau,m)\gamma_\psi^{(\epsilon)},\eta,k)$. We continue as in \eqref{9.36}, with $n+2$ instead of $i$ and $\ell'$ instead of $\ell$. Thus,
$$
\tilde{\mathcal{M}}_{n+2,\ell'}=\mathcal{M}'_{n+2,\ell'}\rtimes\mathcal{M}''_{n+2,\ell'},
$$
where $\mathcal{M}'_{n+2,\ell'}$ is the intersection of $\tilde{\mathcal{M}}_{n+2,\ell'}$ with the unipotent radical $U^H_{(\ell')^{2n-1}}$ of $Q^H_{(\ell')^{2n-1}}$, and $\mathcal{M}''_{n+2,\ell'}$ is the intersection of $\tilde{\mathcal{M}}_{n+2,\ell'}$ with the Levi part $M^H_{(\ell')^{2n-1}}$ of $Q^H_{(\ell')^{2n-1}}$. We have the analogue of \eqref{9.37}, and consider first its inner $dv'$- integration, 
\begin{equation}\label{9.85}
(\varphi')_{n+2}^{\psi,\ell,c,d,e,\gamma}(\xi)=
\int_{\mathcal{M}'_{n+2,\ell'}(F)\backslash
	\mathcal{M}'_{n+2,\ell'}(\BA)}\xi(v')\psi^{-1}_{\tilde{\mathcal{M}}_{n+2,\ell'}}(v')dv', 
\end{equation}
so that
\begin{equation}\label{9.85.1}
\tilde{\varphi}_{n+2}^{\psi,\ell,c,d,e,\gamma}(\xi)=
\int_{\mathcal{M}''_{n+2,\ell'}(F)\backslash
	\mathcal{M}''_{n+2,\ell'}(\BA)}(\varphi')_{n+2}^{\psi,\ell,c,d,e,\gamma}(v''\cdot \xi)\psi^{-1}_{\tilde{\mathcal{M}}_{n+2,\ell'}}(v'')dv'',
\end{equation}
Consider the matrices $x_{\ell'}(y,z)$ (see \eqref{9.2}), with 
\begin{equation}\label{9.86}
y=(y_1,...,y_9),
\end{equation}
where $y_1,y_2,y_8,y_9\in M_{\ell'\times\ell'}$, $y_3,y_7\in M_{\ell'\times ((2n-1)(r-\ell')+\mu(n-1)-\ell')}$, $y_4, y_6\in M_{\ell'\times [\frac{m}{2}]}$, $y_5\in M_{\ell'\times 2(m'-r-\ell')}$. For such $y$, with adele coordinates, and $z$, such that $x_{\ell'}(y,z)\in H_{2nm}(\BA)$, denote
$$
(\varphi')_{n+2}^{\psi,\ell,c,d,e,\gamma}(\xi)(y)=(\varphi')_{n+2}^{\psi,\ell,c,d,e,\gamma}(\xi)(x_{\ell'}(y,z)\cdot\xi).
$$
Assume that $y$ is such that $y_3=0$, $y_8=0$. Then
\begin{equation}\label{9.87}
(\varphi')_{n+2}^{\psi,\ell,c,d,e,\gamma}(\xi)(y)=\psi(tr(y_1))\psi(tr(y_4b_1)+tr(y_6b_2)+tr(y_9b_3)) (\varphi')_{n+2}^{\psi,\ell,c,d,e,\gamma}(\xi),
\end{equation}
where $b_1, b_2\in M_{[\frac{m}{2}]\times \ell'}(F)$, $b_3\in M_{\ell'\times\ell'}(F)$ can be read from \eqref{9.83}, \eqref{9.83.1}. We describe them in detail. 
$$
b_1=\begin{pmatrix}0_{d\times d}&0\\0&I_{\ell'-d}\\0_{([\frac{m}{2}]-\ell')\times d}&0\end{pmatrix},\ b_2=-\begin{pmatrix}0_{([\frac{m}{2}]-\ell')\times \ell'}\\b_3\end{pmatrix}.
$$
When $H_{2nm}$ is symplectic, $b_3=0$ (and hence $b_2=0$).
In Case 1 of Lemma \ref{lem 9.5},
$$
b_3=\begin{pmatrix}0_{e\times (\ell'-e)}&\delta w_e\\0&0_{(\ell'-e)\times e}\end{pmatrix}.
$$ 
Consider Case 2 of Lemma \ref{lem 9.5}. Recall that now $\ell'=[\frac{m+1}{2}]-r$ and $\ell'-\ell+c<0$. Let, in this case,
$$
T=\begin{pmatrix}0&0_{(\ell-c-\ell')\times(\ell-c-\ell')}\\\delta w_{2(\ell'-\ell+c)+e}&0\end{pmatrix}.
$$
Then 
$$
b_3=\begin{pmatrix}0&T\\0_{(\ell-c-e)\times (\ell-c-e)}&0\end{pmatrix}.
$$
Let $b'_1=-w_{\ell'}{}^tb_1w_{[\frac{m}{2}]}$, $b'_2=-w_{\ell'}{}^tb_2w_{[\frac{m}{2}]}$, $b'_3=-\delta_Hw_{\ell'}{}^tb_3w_{\ell'}$. Note that in all cases we have $b'_3=-b_3$. It is straightforward to check that we have
	\begin{equation}\label{9.87.1}
	b'_3-b_3+b'_1b_2+b'_2b_1=0.
	\end{equation}
As in \eqref{9.42}, we write the Fourier expansion of $(\varphi')_{n+2}^{\psi,\ell,c,d,e,\gamma}(\xi)(0,0,y_3,0,0,0,0,y_8,0)$, which we abbreviate to $\beta(\xi)(y_3,y_8)$.
A typical Fourier coefficient of the last function has the following form
\begin{equation}\label{9.88}
\int \beta(\xi)(y_3,y_8)\psi^{-1}(tr(y_3a_1)+tr(y_8a_2))dy_3dy_8,
\end{equation}
where $a_1\in  M_{((2n-1)(r-\ell')+\mu(n-1)-\ell')\times\ell'}(F)$, $a_2\in M_{\ell'\times \ell'}(F)$. From \eqref{9.87.1}, the following matrix lies in $H_{2nm}(F)$,
$$
z'(a,b)=diag(I_{(2n-1)\ell'},\begin{pmatrix}I_{\ell'}\\0&I_{\ell'}\\a_1&&I\\b_1&&&I_{[\frac{m}{2}]}\\0&&&&I\\b_2&&&&&I_{[\frac{m}{2}]}\\0&&&&&&I\\a_2&&&&&&&I_{\ell'}\\b_3&a'_2&0&b'_2&0&b'_1&a_1'&0&I_{\ell'}\end{pmatrix},I_{(2n-1)\ell'}).
$$
Here, the third and the seventh identity blocks on the diagonal are each of size $(2n-1)(r-\ell')+(m-2r)(n-1)-\ell'$, and the middle identity block is of size $2([\frac{m+1}{2}]-r-\ell')$. Now, we can express the Fourier coefficient \eqref{9.88} in the following simpler form. When we substitute $\beta(\xi)(y_3,y_8)$, using \eqref{9.85}, the domain of integration in \eqref{9.88} is $U_{(\ell')^{2n-1}}(F)\backslash U_{(\ell')^{2n-1}}(\BA)$. Consider the the following character $\psi_{U_{(\ell')^{2n-1}}}$ of $U_{(\ell')^{2n-1}}(\BA)$. On the adele points of the intersection of $U_{(\ell')^{2n-1}}$ and $\tilde{\mathcal{M}}'_{n+2,\ell'}$, such that the coordinates $y_1,...,y_9$ are all zero, the character is $\psi_{\tilde{\mathcal{M}}'_{n+2,\ell'}}$, and on the elements $x_{\ell'}(y,z)$, the character is $\psi(tr(y_1))$. Then the Fourier coefficient \eqref{9.88} is equal to
\begin{equation}\label{9.89}
\varphi_{(\ell')^{2n-1}}^\psi(z'(a,b))^{-1}\cdot \xi)=\int_{U_{(\ell')^{2n-1}}(F)\backslash U_{(\ell')^{2n-1}}(\BA)}\xi(v(z'(a,b))^{-1})\psi^{-1}_{U_{(\ell')^{2n-1}}}(v)dv.
\end{equation}
We have reached the same point as \eqref{9.47}, in the proof of Theorem \ref{thm 9.1} . Indeed, we have
\begin{equation}\label{9.90} 
(\varphi')_{n+2}^{\psi,\ell,c,d,e,\gamma}(\xi)=\sum_{ z'(a,b)\in H_{2nm}(F)}\varphi_{(\ell')^{2n-1}}^\psi((z'(a,b))^{-1}\cdot\xi).
\end{equation}
We continue with the same steps as the ones from \eqref{9.48} till the end of the proof of Theorem \ref{thm 9.1}, with $\ell'$ instead of $\ell$, and we finish the proof of Theorem \ref{thm 9.7}.
\end{proof}

\begin{cor}\label{cor 9.8}
	Let $L\in \mathcal{L}_\ell^0(F)$ be in the orbit of $L_{\ell,c,d,e}$. Assume that $f_n^{\psi,L}$ is nontrivial on $A(\Delta(\tau,m)\gamma_\psi^{(\epsilon)},\eta,k)$. Then Case 2 of Lemma \ref{lem 9.5} is impossible, and we have $\ell=c$. In particular, $d=e=0$.
\end{cor}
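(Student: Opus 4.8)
The plan is to derive Corollary \ref{cor 9.8} directly from Theorem \ref{thm 9.7} by a careful bookkeeping of the three quantities $c,d,e$ attached to the representative $L_{\ell,c,d,e}$ of Lemma \ref{lem 9.5}. Recall from Theorem \ref{thm 9.7} that if $f_n^{\psi,L}$ is nontrivial on $A(\Delta(\tau,m)\gamma_\psi^{(\epsilon)},\eta,k)$, then the number $\ell'$, defined separately in the three cases of Lemma \ref{lem 9.5}, must vanish. So the whole argument is: assume $f_n^{\psi,L}\neq 0$, hence $\ell'=0$, and then read off what this forces in each of the three cases, using the nonnegativity constraints on the parameters recorded in Lemma \ref{lem 9.5}.

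First I would dispose of Case 2 of Lemma \ref{lem 9.5}. There $\ell'=[\frac{m+1}{2}]-r$, and Case 2 is precisely the regime $[\frac{m+1}{2}]-r-\ell+c<0$. If $\ell'=0$, i.e. $[\frac{m+1}{2}]=r$, then $[\frac{m+1}{2}]-r-\ell+c=-\ell+c\le 0$ with equality only when $\ell=c$; but Case 2 requires the strict inequality $-\ell+c<0$, which still needs $\ell>c\ge 0$, so $\ell\ge 1$. I need to check this is genuinely incompatible with the standing setup. The point is that $\ell'=[\frac{m+1}{2}]-r=0$ means there is no room in the $U$-block of the isotropic space at all: the sub-variety $\mathcal{L}^0_\ell$ of isotropic $L$'s of rank $\ell$ inside $M_{2(m-r)\times r}$ when $2([\frac{m+1}{2}]-r)=0$ — i.e. when $m=2m'-1$ and $r=m'$, the maximal value $[\frac{m}{2}]$ — forces the column space to live entirely in $W_{2[\frac{m}{2}]}$, so that the invariant $d=\dim(X\cap U_{2([\frac{m+1}{2}]-r)})=0$ automatically, and Case 2, which is the case $e$ is large relative to the available space, cannot occur. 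More simply: in Case 2 one has $\ell'=[\frac{m+1}{2}]-r$ and the constraint $2([\frac{m+1}{2}]-r-\ell+c)+e\ge 0$ from the lemma; setting $\ell'=0$ gives $-2(\ell-c)+e\ge0$, while $A$ in Case 2 has $e$ entries with $e>2([\frac{m+1}{2}]-r-\ell+c)$ needed for Case 2 to be the relevant case rather than Case 1 — combining these two inequalities yields a contradiction. I would spell out this arithmetic carefully; it is the one place where I need to be sure which inequalities are strict.

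Next, in Case 1 we have $\ell'=\ell-c$, so $\ell'=0$ immediately gives $\ell=c$. Since $c+d+e\le\ell$ (the three pieces $B_1,B_2,B_3$ together with their completions fit inside an $\ell$-dimensional space, so $c+d+(\ell-(c+d+e))=\ell-e\ge c+d$, i.e. $d+e\le \ell-c$), with $\ell=c$ this forces $d=e=0$. In Case 3 (the symplectic case) $\ell'=\ell-c-e'$ and $e=2e'$, so $\ell'=0$ gives $\ell=c+e'$; again from $c+d+e\le\ell$, i.e. $c+d+2e'\le c+e'$, we get $d+e'\le 0$, hence $d=e'=0$, so again $\ell=c$ and $d=e=0$. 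Thus in the surviving cases (1 and 3) nontriviality of $f_n^{\psi,L}$ forces $\ell=c$ and $d=e=0$, which is exactly the assertion of the corollary.

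The main obstacle, as flagged above, is the Case 2 elimination: it is purely a matter of chasing the dimension inequalities in Lemma \ref{lem 9.5}, but one has to be meticulous about which of the inequalities $[\frac{m+1}{2}]-r-\ell+c\gtrless 0$ are strict and how the block sizes in the explicit matrix $A$ of Case 2 encode $e$ versus the available isotropic dimension. Once that is pinned down, everything else is a two-line consequence of $\ell'=0$ together with the elementary bound $c+d+e\le\ell$ that comes from the basis construction in the proof of Lemma \ref{lem 9.5} (the sets $B_1,B_2,B_3,B_4,B_5,B_6$ were built so that $B_1\cup B_2\cup B_3$ has size $\ell$ and $|B_3|=\ell-(c+d)$, $|B_3|\ge e$). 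I would therefore organize the write-up as: (i) record $c+d+e\le\ell$; (ii) eliminate Case 2 by the inequality chase; (iii) conclude $\ell=c$ hence $d=e=0$ in Cases 1 and 3.
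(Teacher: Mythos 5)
Your overall strategy is exactly the paper's: invoke Theorem \ref{thm 9.7} to get $\ell'=0$ and then chase the nonnegativity constraints of Lemma \ref{lem 9.5}. Cases 1 and 3 are handled correctly and in essentially the same way as the paper, which simply writes $\ell'=d+(\ell-(c+d+e))+e$ (resp. $\ell'=d+(\ell-(c+d+e))+e'$) as a sum of nonnegative terms; your use of $c+d+e\le\ell$ is the same computation.

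The one genuine problem is your Case 2 elimination as written. With $\ell'=[\frac{m+1}{2}]-r=0$ you correctly extract $e\ge 2(\ell-c)$ from the constraint $2([\frac{m+1}{2}]-r-\ell+c)+e\ge 0$, but the second inequality you propose to combine it with, $e>2([\frac{m+1}{2}]-r-\ell+c)$, becomes $e>2(c-\ell)=-2(\ell-c)$, which is vacuous (the right side is negative in Case 2) and yields no contradiction; moreover that inequality is not what separates Case 2 from Case 1 --- the dichotomy is the sign of $[\frac{m+1}{2}]-r-\ell+c$ itself. The correct partner is the bound you already use in Cases 1 and 3: $\ell-(c+d+e)\ge 0$ gives $e\le\ell-c$, and combined with $e\ge 2(\ell-c)$ and the Case 2 condition $\ell>c$ this forces $\ell-c\le 0$, a contradiction. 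The paper packages all of this at once by writing
$$
\ell'=d+(\ell-(c+d+e))+\bigl(r+\ell-c-[\tfrac{m+1}{2}]\bigr)+\bigl(2([\tfrac{m+1}{2}]+c-\ell-r)+e\bigr),
$$
where every summand is nonnegative and the third is strictly positive in Case 2, so $\ell'>0$. I will add that your earlier ``geometric'' remark is actually a valid alternative: if $[\frac{m+1}{2}]-r=0$ then $U_{2([\frac{m+1}{2}]-r)}=0$, so any isotropic $X$ lies in $W_{2[\frac{m}{2}]}$, forcing $c=\ell$ and hence $[\frac{m+1}{2}]-r-\ell+c=0$, which puts $L$ in Case 1, not Case 2. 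Either repair closes the gap; as submitted, the ``combining these two inequalities'' step does not.
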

\begin{proof}
By Theorem \ref{thm 9.7}, we must have $\ell'=0$. In Case 2 of Lemma \ref{lem 9.5}, we have $\ell'=[\frac{m+1}{2}]-r$. We then have
$$
\ell'=d+(\ell-(c+d+e))+(r+\ell-c-[\frac{m+1}{2}])+(2([\frac{m+1}{2}]+c-\ell-r)+e).
$$
All four summands are non-negative, and the third summand is positive. Since $\ell'=0$, we get a contradiction. Thus, Case 2 of Lemma \ref{lem 9.5} is impossible. In Case 1 of Lemma \ref{lem 9.5}, $\ell'=\ell-c$. In this case, 
$$
\ell'=d+(\ell-(c+d+e))+e,
$$
All three summands are non-negative. Since $\ell'=0$, we get that $\ell=c$ and $d=e=0$.	In Case 3 of Lemma \ref{lem 9.5}, $\ell'=\ell-c-e'$. (Recall that $e=2e'$.) In this case,
$$
\ell'=d+(\ell-(c+d+e))+e'.
$$
All three summands are non-negative. Since $\ell'=0$, we get that $\ell=c$ and $d=e=0$.	
\end{proof}
It remains to analyze the Fourier coefficients $f_n^{\psi,L_{c,c,0,0,A'_\gamma}}$. (See \eqref{9.64}, \eqref{9.67}.) It will be convenient to replace $L_{c,c,0,0}$ with the following matrix which lies in the same $H_{2[\frac{m}{2}]}(F)\times H_{2([\frac{m+1}{2}]-r}(F)$-orbit. 
\begin{equation}\label{9.91}
L_c=\begin{pmatrix}I_c&0_{c\times(r-c)}\\0_{(2(m-r)-c)\times c}&0\end{pmatrix}.
\end{equation}
Thus, we consider
\begin{multline}\label{9.92}
f_n^{\psi,c}(\xi)=\int_{M_{r\times 2(m-r)}(F)\backslash M_{r\times 2(m-r)}(\BA)}\\
\int_{U'_{{(m-2r)}^{n-1}}(F)\backslash
	U'_{{(m-2r)}^{n-1}}(\BA)}\int_{E_{n-1}(F)\backslash E_{n-1}(\BA)}\xi(uvx_z)\psi^{-1}_{E_{n-1}}(u)\\
\quad\quad\quad\psi^{-1}_{U'_{(m-2r)^{n-1}},A'_0}(v)\psi^{-1}(tr(L_c\cdot z))dudv dz,
\end{multline}
where $A'_0=I_{m-2r}$, when $m$ is even, and $A'_0$ is as in Lemma \ref{lem 9.6}, when $m=2m'-1$ is odd (and then $H_{2nm}$ is orthogonal). 

\section{Fourier expansions III}

In this section we analyze $f_n^{\psi,c}(\xi)$ in \eqref{9.92}. A consequence of the following theorem will be that in the cases of functoriality (the cases in Theorem \ref{thm 2.2}), for $c>0$, $f_n^{\psi,c}$ is trivial.

\begin{thm}\label{thm 10.1}
Assume that $c>0$ and $f_n^{\psi,c}$  is nontrivial on $A(\Delta(\tau,m)\gamma_\psi^{(\epsilon)},\eta,k)$. Then $n$ divides $r-c$.
\end{thm}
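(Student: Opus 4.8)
\textbf{Proof plan for Theorem \ref{thm 10.1}.}

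The plan is to mimic the machinery of Sections 8--10: unfold $f_n^{\psi,c}$ along further unipotent subgroups, then conjugate by a suitable Weyl element so that an inner integration becomes a Fourier coefficient attached to an explicit nilpotent orbit, and finally invoke Propositions \ref{prop 3.1} and \ref{prop 3.2} to force the relation between $n$ and $r-c$. Concretely, in \eqref{9.92} the matrix $L_c$ in \eqref{9.91} has rank $c$ and sits in the ``anisotropic-free'' position for $H_{2[\frac{m}{2}]}\times H_{2([\frac{m+1}{2}]-r)}$; by Corollary \ref{cor 9.8} the surviving term has $d=e=0$ and $\ell=c$. First I would combine the $E_{n-1}$-integration, the $U'_{(m-2r)^{n-1}}$-integration and the $x_z$-directions into one unipotent subgroup, exactly as $E^{(n+2)}$ was formed in the proof of Theorem \ref{thm 9.7}, and then perform the same kind of successive root exchanges as in Proposition \ref{prop 7.1} (filling in block rows of an $\mathrm{X}$-type group in exchange for block columns of a $\mathrm{Y}$-type group), reducing $f_n^{\psi,c}$ to a Fourier coefficient over a group contained in a standard parabolic $Q^H_{c^{2n}}$ of $H_{2nm}$.

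Next I would apply a conjugation by a Weyl element $\epsilon''_0$ of the shape \eqref{9.16}/\eqref{9.70}, chosen so that the $c\times c$ identity block of $L_c$ gets pulled into the ``principal string'' of Jordan blocks. After this conjugation the inner integration should be visibly a Fourier coefficient of $\xi$ attached, in the sense of \cite{MW87}, to a nilpotent element $Y$ and a one-parameter subgroup $\varphi(s)$ of the form used throughout Section 9: $\mathcal{U}$ a string of $I_c$'s, and $\mathcal{V}$ incorporating both the ``$I_c$'' coming from the Eisenstein-series Fourier coefficient along $V_{m^n}$ (equivalently from $\psi_N$) and the ``$L_c$'' coming from \eqref{9.92}. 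The key point will be to compute the partition of $2nm$ attached to $Y$: because the identity block in $L_c$ glues the upper and lower strings of blocks of length (roughly) $2n-1$ into a single string of length about $4n-1$ in $c$ places, and the remaining $r-c$ directions produce strings of length about $2n$, one expects the partition to have the form $((4n-1)^{c},(2n)^{2(r-c)},\dots)$ up to lower-order corrections — analogous to the partitions appearing in the proofs of Propositions \ref{prop 9.4}, Theorem \ref{thm 9.1}, and Theorem \ref{thm 9.7}. (For $m$ odd one must carry along the extra block coming from $A'_0$ of Lemma \ref{lem 9.6}, which affects only a bounded number of parts.)

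Once the partition is in hand, I would compare it against the majorizations furnished by Propositions \ref{prop 3.1} and \ref{prop 3.2}: just as in Proposition \ref{prop 3.3}, having a part equal to $2n$ occurring with multiplicity at least $2(r-c)$ that is \emph{not} dominated by a part $2n$ of the relevant orbit $((2n)^{m-2k},n^{4k})$ (or its analogues) forces a numerical constraint, and tracking the $1$'s at the tail of $((2n-1)^m,1^m)$ versus the tail produced by our partition forces the total number of $2n$-parts to be divisible by $n$, hence $n\mid r-c$. The main obstacle I anticipate is purely bookkeeping: getting the partition of $Y$ exactly right (including the correct tail and the correct treatment of the $\delta_H$-symmetry conditions and of the odd case via $A'_0$), since a miscount of even a single part could change a divisibility statement into an inequality. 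I expect this to be handled, as in the earlier proofs, by first conjugating $L_c$ to the normal form \eqref{9.91}, then checking the nilpotent orbit by a direct rank computation of powers of $Y$, using \cite[Chapter 7]{CM93} and \cite[Lemma 6.3.3]{CM93} for the necessary collapses, and finally using Lemma 1.1 in \cite{GRS03} when $H_{2nm}$ is symplectic to rule out the ambiguous orbits.
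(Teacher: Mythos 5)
Your opening moves match the paper's: the paper does assemble the integrations into the single unipotent group $E^{n+2}$, conjugates by a Weyl element of the shape \eqref{9.70}, and then runs the same chain of root exchanges as in Propositions \ref{prop 7.1}--\ref{prop 7.2} (including the step-by-step exchange of $\mathrm{Y}_{3,1}^{1,2n}$ with $\mathrm{X}_{1,3}^{2n-1,1}$), landing inside the parabolic $Q^H_{c^{2n-1}}$. Up to that point you are on the right track.

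The gap is in your final step, and it is a genuine one: you propose to extract the divisibility $n\mid r-c$ from a majorization of nilpotent orbits via Propositions \ref{prop 3.1}, \ref{prop 3.2}. Majorization of partitions yields \emph{inequalities} on partial sums; it cannot by itself produce a congruence. Worse, the partition you predict, $((4n-1)^{c},(2n)^{2(r-c)},\dots)$, has leading part $4n-1>2n$ as soon as $c>0$, so comparing it with $((2n)^{m-2k},n^{4k})$ would force the coefficient to \emph{vanish} outright for all $c>0$ — but Theorem \ref{thm 10.1} assumes nontriviality and asserts a divisibility (and the paper explicitly keeps the case $c=r$ alive in the proof). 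The orbit actually attached to the surviving piece is $((2n)^{2c},(2n-1)^{m-2c},1^{m-2c})$, and it is used only afterwards, in Corollaries \ref{cor 10.2} and \ref{cor 10.3}, to rule out $c>0$ in the specific numerical cases there. The $(4n-1)^{\ell_0}$-type partitions occur in the paper only to kill the \emph{nontrivial} Fourier coefficients along the auxiliary abelian quotients (as in the proofs of Theorems \ref{thm 9.1}, \ref{thm 9.7}); the term that survives is the one with trivial character.

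The input that actually yields $n\mid r-c$ is the cuspidality of $\tau$, not the orbit bound. After the root exchanges, the paper shows (via the invariance argument of \eqref{9.49}--\eqref{9.51}, together with \eqref{10.26}, whose proof is deferred to Section 11) that $f_n^{\psi,c}$ factors through the composition $(\xi^{U_{2nc+(2n-1)(r-c)}})^{\psi}$ of the \emph{constant term} of $\xi$ along the unipotent radical of the maximal parabolic with Levi $\GL_{2nc+(2n-1)(r-c)}\times H_{\dots}$ with a further Fourier coefficient on the Levi. Since $\xi$ lies in a space built from the cuspidal $\tau$ on $\GL_n$, this constant term vanishes unless $n$ divides $2nc+(2n-1)(r-c)$, which is equivalent to $n\mid r-c$. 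Without this factorization through a constant term, your argument has no mechanism to produce the divisibility.
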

\begin{proof}
Of course, the theorem is clear when $c=r$, but we will allow this case in the proof, as we are going to prove more. The proof of the theorem is similar to that of Theorem \ref{thm 6.1}, \ref{thm 9.1}, \ref{thm 9.7}. 
Let $E^{n+2}$ be the subgroup generated by $E_{n-1}$, the elements $x_r(y^{(n+2)},t)$ (see \eqref{9.2}, \eqref{9.55}), and $U'_{(m-2r)^{n-1}}$. Then \eqref{9.92} can be rewritten as
\begin{equation}\label{10.5}
f_n^{\psi,c}(\xi)=\int_{E^{n+2}(F)\backslash E^{n+2}(\BA)}\xi(v)\psi^{-1}_{E^{n+2},c}(v)dv,
\end{equation}
where $\psi_{E^{n+2},c}$ is the character of $E^{n+2}(\BA)$, given, in the notation of \eqref{9.92}, by 
$$
\psi_{E^{n+2},c}(uvx_z)=\psi_{E_{n-1}}(u)\psi(tr(L_cz))\psi_{U'_{(m-2r)^{n-1}}}(v).
$$
We apply a conjugation by a Weyl element $\epsilon'_0$, which we describe now. It is similar to the ones in \eqref{6.1}, \eqref{9.16}, \eqref{9.70}.
Consider the following Weyl element	
\begin{equation}\label{10.1}
\epsilon'_0=\begin{pmatrix}B_1&B_2&0\\B_3&0&0\\0&B_4&0\\0&0&B_5\\0&B_6&B_7\end{pmatrix}.
\end{equation}
The block $B_1$ has the following form. It has $2n$ block rows,
each one of size $c$; the last block row is zero. It has
$2n-1$ block columns, each one of size $r$.
\begin{equation}\label{10.2}
B_1=\begin{pmatrix}a\\&a\\&&a\\&&&\cdots\\&&&&a\\0&&&\cdots
&0\end{pmatrix},\quad
a=\begin{pmatrix}I_c&0_{c\times (r-c)}\end{pmatrix}.
\end{equation}
The block $B_3$ has the same form as \eqref{9.19}, with $c$ replacing $\ell$. The block $B_4$ is as follows
\begin{equation}\label{10.3}
B_4=\begin{pmatrix}I_{(m-2r)(n-1)}\\&0_{2(m-r-c)\times c}&I_{2(m-r-c)}\\&&&0_{(m-2r)(n-1)\times c}&I_{(m-2r)(n-1)} \end{pmatrix}.
\end{equation}
The block $B_2$ has the same block row division as $B_1$, and the same block column division as $B_4$. The first $2n-1$ block rows are all zero. The last block row of $B_2$ have the following form
\begin{equation}\label{10.4}
\begin{pmatrix}0_{c\times (m-2r)(n-1)}& I_c&0_{c\times 2(m-r-c)}&0_{c\times c}&0_{c\times (m-2r)(n-1)}\end{pmatrix}
\end{equation}
The blocks $B_5, B_6, B_7$ are already determined by the other blocks.

Conjugating inside \eqref{9.92} (or \eqref{10.5}) by $\epsilon'_0$, we see that we need to consider, for $c$ positive, the following integral on $A(\Delta(\tau,m)\gamma_\psi^{\epsilon)},\eta,k)$,
\begin{equation}\label{10.6}
\varphi_n^{\psi,c}(\xi)=
\int_{\mathcal{N}_{n+2,c}(F)\backslash
	\mathcal{N}_{n+2,c}(\BA)}\xi(v)\psi^{-1}_{\mathcal{N}_{n+2,c}}(v)dv, 
\end{equation}
where $\mathcal{N}_{n+2,c}=\epsilon'_0E^{n+2}(\epsilon')_0^{-1}$, and $\psi_{\mathcal{N}_{n+2,c}}$ is the character of $\mathcal{N}_{n+2,c}(\BA)$ defined by $\psi_{\mathcal{N}_{n+2,c}}(x)=\psi_{E^{n+2},c}((\epsilon')_0^{-1}x\epsilon'_0)$. Let us describe these.\\ 
The subgroup $\mathcal{N}_{n+2,c}$ consists of elements of the form \eqref{9.22}, with $Y_{5,1}=0$, that is
\begin{equation}\label{10.7}
v=\begin{pmatrix}U_1&X_{1,2}&X_{1,3}&X_{1,4}&X_{1,5}\\Y_{2,1}&U_2&X_{2,3}&X_{2,4}&X'_{1,4}\\
Y_{3,1}&0&V&X'_{2,3}&X'_{1,3}\\0&0&0&U_2'&X'_{1,2}\\0&0&Y'_{3,1}&Y'_{2,1}&U'_1\end{pmatrix}.
\end{equation}
We now describe the blocks in \eqref{10.7}. The block $U_1$ has $(2n)\times (2n)$ blocks, all of size $c\times c$, and has the form
\begin{equation}\label{10.8}
U_1=\begin{pmatrix}I_c&U^1_1&*&\cdots&*&*\\
&I_c&U^1_2&\cdots&*&*\\
& &I_c&\cdots &* & * \\
& & & \cdots& \cdots&\cdots &\\
& & &       &I_c&U^1_{2n-1}\\
& & &       & &I_c\end{pmatrix}.
\end{equation}
The block $U_1'$ is of the same size as $U_1$ and has the form
\eqref{10.8}. The block $U_2$ has the form \eqref{9.24}, with $c$ instead of $\ell$, 
\begin{equation}\label{10.9}
U_2=\begin{pmatrix}I_{r-c}&U^2_1&*&\cdots&*&*\\
&I_{r-c}&U^2_2&\cdots&*&*\\
& &I_{r-c}&\cdots &* & * \\
& & & \cdots& \cdots&\cdots &\\
& & &       &I_{r-c}&U^2_{2n-2}\\
& & &       & &I_{r-c}\end{pmatrix}.
\end{equation}
The block $U_2'$ is of the same size as $U_2$ and has the form
\eqref{10.9}. The block $V$ is of size $(2(m-2r)(n-1)+2(m-r-c))\times
(2(m-2r)(n-1)+2(m-r-c))$ and has a form similar to \eqref{6.5},
\begin{equation}\label{10.10}
V=\begin{pmatrix}
S&E_1&E_2&E_3&C\\&I_{[\frac{m}{2}]-c}&0&0&E'_3\\
&&I_{2([\frac{m+1}{2}]-r)}&0&E_2'\\&&&I_{[\frac{m}{2}]-c}&E'_1\\
& &&&S'\end{pmatrix},
\end{equation}
where $S$ is $(m-2r)(n-1)\times (m-2r)(n-1)$ upper unipotent matrix of the
form \eqref{6.5.1} ($S'$ has a similar form to that of $S$.) As in \eqref{6.5}, \eqref{6.5.2}, write the
$(m-2r)(n-1)\times
2([\frac{m+1}{2}]-r)$ matrix $E_2$ in \eqref{10.10} in the form $E_2=\begin{pmatrix}*\\
S_{n-1}\end{pmatrix}$, with $S_{n-1}$ of size $(m-2r)\times2([\frac{m+1}{2}]-r)$.\\
The block $X_{1,2}$ is a $2n\times (2n-1)$ matrix of $c\times (r-c)$ blocks. The last two block rows are zero. The first $2n-1$ block rows form a $(2n-1)\times (2n-1)$ matrix of $c\times (r-c)$ blocks, which has an upper triangular shape, with all the blocks along the diagonal being zero. As before, we denote by $X_{1,2}^{j,t}$, the block of $X_{1,2}$ lying in position $(j,t)$, and, similarly, we denote by $\mathrm{X}_{1,2}^{j,t}$ the corresponding abelian unipotent subgroup. The matrix $X'_{1,2}$ has a dual shape. It is a $(2n-1)\times 2n$ matrix of $(r-c)\times c$ blocks. The first two block columns are zero. The last $2n-1$ block columns form a $(2n-1)\times (2n-1)$ matrix of $(r-c)\times c$ blocks, which has an upper triangular shape, with all the blocks along the diagonal being zero. The block $X_{1,3}$ has $2n$ block rows, each one containing $c$ rows. It has five block columns with sizes as for $V$. It has the following form,
\begin{equation}\label{10.11}
X_{1,3}=\begin{pmatrix}\ast&\ast&\ast&\ast&\ast\\&&\vdots\\ \ast&\ast&\ast&\ast&\ast\\0&\ast&\ast&\ast&\ast
\\0&0&0&0&\ast\end{pmatrix}.
\end{equation}
The matrix $X'_{1,3}$ has a dual form. It has $2n$ block columns, each one containing $c$ columns. It has five block rows, with the same row division as that of $V$. Its first block column is zero except the first block. Its second block column is such that its last block is zero. The matrix $X_{1,4}$ is a $2n\times (2n-1)$ matrix of $c\times (r-c)$ blocks, all of whose blocks are arbitrary. Similarly, the blocks of $X'_{1,4}$ are arbitrary (blocks above are arbitrary (up to the requirement that the matrix $v$ in \eqref{10.7} lies in $H$.) The matrix $X_{1,5}$ is a $2n\times 2n$ matrix of $c\times c$ blocks, such that the block in position $(2n,1)$ is zero. Thus, it has the form
\begin{equation}\label{10.12}
X_{1,5}=\begin{pmatrix}\ast&\ast&\cdots&\ast&\ast\\
&\vdots&&\vdots\\\ast&\ast&\cdots&\ast&\ast\\0&\ast&\cdots&\ast&\ast\end{pmatrix}.
\end{equation}
The matrix $Y_{2,1}$ is a $(2n-1)\times 2n$ matrix of $(r-c)\times c$ blocks. Its last block column is arbitrary, and its first $2n-1$ block columns form a $(2n-1)\times (2n-1)$ matrix of $(r-c)\times c$ blocks, which has an upper triangular shape, with all the blocks along the diagonal being zero. We denote by $Y_{2,1}^{j,t}$, the block of $Y_{2,1}$ lying in position $(j,t)$. We denote, as above, the corresponding unipotent subgroup by $\mathrm{Y}_{2,1}^{j,t}$. The matrix $Y'_{2,1}$ has a dual shape. It is a $2n\times (2n-1)$ matrix of $c\times (r-c)$ blocks. The first block row is arbitrary. The last $2n-1$ block rows form a $(2n-1)\times (2n-1)$ matrix of $c\times (r-c)$ blocks, which has an upper triangular shape, with all the blocks along the diagonal being zero. The matrix $X_{2,3}$ has $2n-1$ block rows, each one containing $r-c$ rows. It has five block columns, with the same division as that of $V$. It has the following form\begin{equation}\label{10.13}
X_{2,3}=\begin{pmatrix}\ast&\ast&\ast&\ast&\ast\\&&\vdots\\ \ast&\ast&\ast&\ast&\ast\\0&\ast&\ast&\ast&\ast\end{pmatrix}.
\end{equation}
The matrix $X'_{2,3}$ has a dual shape. It has five block rows, with the same division as that of $V$. It has $2n-1$ block columns, each one containing $r-c$ columns. It has the form
\begin{equation}\label{10.14}
X'_{2,3}=\begin{pmatrix}\ast&\ast&\cdots&\ast\\ \ast&\ast&&\ast\\ \ast&\ast&&\ast\\\ast&\ast&\cdots&\ast\\0&\ast&\cdots&\ast\end{pmatrix}.
\end{equation}
The matrix $X_{2,4}$ is arbitrary (as long as $v$ lies in $H$). The matrix $Y_{3,1}$ has five block rows, with the same division as that of $V$. It has $2n$ block columns, each one containing $cl$ columns. All of its blocks are zero, except the block in position $(1,5)$.
\begin{equation}\label{10.15}
Y_{3,1}=\begin{pmatrix}0&&0&\ast\\0&\cdots&0&0\\
0&&0&0\\0&\cdots&0&0\\0&&0&0\end{pmatrix}.
\end{equation}
The matrix $Y'_{3,1}$ has a dual shape
\begin{equation}\label{10.16}
\begin{pmatrix} 0&0&0&0&\ast\\0&0&0&0&0\\&&\vdots\\0&0&0&0&0\end{pmatrix}.
\end{equation}
The character $\psi_{\mathcal{N}_{n+2,c}}(v)$ of the element $v\in \mathcal{N}_{n+2,c}(\BA)$ in \eqref{10.7}, described above, is given by
\begin{multline}\label{10.17}
 \psi_{\mathcal{N}_{n+2,c}}(v)=\prod_{i=1}^{n-1}\psi(tr(U^1_i)+tr(U^2_i))\psi^{-1}(tr(U^1_{n-1+i})+tr(U^2_{n-1+i}))\\
 \cdot\psi(tr(U^1_{2n-1}))\prod_{i=1}^{n-2}\psi(tr(S_i))\psi(tr(S_{n-1}A'_0)).
 \end{multline}
 We used the notation in \eqref{10.8} - \eqref{10.10}, and \eqref{6.5.1}.
 
 Now we perform a sequence of roots exchange, exactly as in Prop. \ref{prop 7.1} and Prop. \ref{prop 7.2} (and as in Theorem \ref{thm 9.1}, right after \eqref{9.34}, and Theorem \ref{thm 9.7}, right after \eqref{9.83.1}). We assume that $c$ is positive. We start with the subgroup $\mathrm{Y}_{2,1}^{1,2}$ and exchange it with $\mathrm{X}_{1,2}^{1,1}$. Then we exchange $\mathrm{Y}_{2,1}^{2,3}$ with $\mathrm{X}_{1,2}^{2,2}$, and $\mathrm{Y}_{2,1}^{1,3}$ with $\mathrm{X}_{1,2}^{2,1}$, and so on. We exchange column $t$ of $\mathrm{Y}_{2,1}$, $2\leq t\leq 2n-1$, $\mathrm{Y}_{2,1}^{t-1,t}$, $\mathrm{Y}_{2,1}^{t-2,t}$,...,$\mathrm{Y}_{2,1}^{1,t}$, with row $t-1$ of $\mathrm{X}_{1,2}$, $\mathrm{X}_{1,2}^{t-1,t-1}$, $\mathrm{X}_{1,2}^{t-1,t-2}$,..., $\mathrm{X}_{1,2}^{t-1,1}$, in this order. Finally, one can check that we may continue the root exchanges, and exchange $\mathrm{Y}_{3,1}^{1,2n}$ with $\mathrm{X}_{1,3}^{2n-1,1}$. We have to do this step by step, as follows. Write the block $Y_{3,1}^{1,2n}$ as a column of $n-1$ matrices, each one of size $(m-2r)\times c$. Denote them by $T_1$,...,$T_{n-1}$. Similarly, write a matrix in $\mathrm{X}_{1,3}^{2n-1,1}$ as a row of $n-1$ matrices, each one of size $c\times (m-2r)$. Denote them by $R_1,...,R_{n-1}$. Then we can exchange $T_{n-1}$ with $R_{n-1}$, and then $T_{n-2}$ with $R_{n-2}$, and so on. Next, we exchange $\mathrm{Y}_{2,1}^{2n-1,2n}$ with $\mathrm{X}_{1,2}^{2n-1,2n-1}$, then  $\mathrm{Y}_{2,1}^{2n-2,2n}$ with $\mathrm{X}_{1,2}^{2n-1,2n-2}$,..., $\mathrm{Y}_{2,1}^{1,2n}$ with $\mathrm{X}_{1,2}^{2n-1,1}$, in this order. 
 
 We conclude that $\varphi_n^{\psi,c}$ is identically zero on $A(\Delta(\tau,m)\gamma_\psi^{\epsilon)},\eta,k)$, if and only if the following integral is identically zero on $A(\Delta(\tau,m)\gamma_\psi^{\epsilon)},\eta,k)$, 
\begin{equation}\label{10.18}
 \tilde{\varphi}_n^{\psi,c}(\xi)=
 \int_{\tilde{\mathcal{N}}_{n+2,c}(F)\backslash
 	\tilde{\mathcal{N}}_{n+2,c}(\BA)}\xi(v)\psi^{-1}_{\tilde{\mathcal{N}}_{n+2,c}}(v)dv, 
 \end{equation}
 where $\tilde{\mathcal{N}}_{n+2,c}$ is the subgroup of elements $v$ written as in \eqref{10.7}, where $X_{1,2}$ is such that its last block row is zero, and all of its other blocks are arbitrary, $Y_{2,1}=0$ and $Y_{3,1}=0$. (Similarly, in the dual block $X'_{1,2}$, the first column is zero, and all other blocks are arbitrary, provided, of course, that $v$ lies in $H$, $Y'_{2,1}=0$ and $Y_{3,1}=0$.) The shape of all other blocks of $v$ remains as before. Note that $\tilde{\mathcal{N}}_{n+2,c}$ lies in the standard parabolic subgroup $Q^H_{c^{2n-1}}$, and it contains its unipotent radical $U_{c^{2n-1}}$.  The character $\psi_{\tilde{\mathcal{N}}_{n+2,c}}$ is still given by \eqref{10.17}.  
 Write 
 \begin{equation}\label{10.19}
\tilde{\mathcal{N}}_{n+2,c}=U_{c^{2n-1}}\rtimes\mathcal{N}''_{n+2,c},
 \end{equation}
 where $\mathcal{N}''_{n+2,c}$ is the intersection of $\tilde{\mathcal{N}}_{n+2,c}$ with the Levi part $M^H_{c^{2n-1}}$ of $Q^H_{c^{2n-1}}$. We have
 \begin{equation}\label{10.20}
 \tilde{\varphi}_n^{\psi,c}(\xi)=
 \int_{\mathcal{N}''_{n+2,c}(F)\backslash
 	\mathcal{N}''_{n+2,c}(\BA)}\int_{U_{c^{2n-1}}(F)\backslash
 	U_{c^{2n-1}}(\BA)}\xi(v'v'')\psi^{-1}_{\tilde{\mathcal{N}}_{n+2,c}}(v'v'')dv'dv''. 
 \end{equation}
 As in \eqref{9.38}, we consider first the inner $dv'$- integration, where we replace the right $v''$- translate of $\xi$ with $\xi$, that is
 \begin{equation}\label{10.21}
 (\varphi')_n^{\psi,c}(\xi)=
\int_{U_{c^{2n-1}}(F)\backslash
 	U_{c^{2n-1}}(\BA)}\xi(v')\psi^{-1}_{\tilde{\mathcal{N}}_{n+2,c}}(v')dv'. 
 \end{equation}
As in \eqref{9.49}, with $c$ replacing $\ell$, we conclude that
\begin{equation}\label{10.22}
(\varphi')_n^{\psi,c}(\zeta_c(y,t)\cdot\xi)=(\varphi')_n^{\psi,c}(\xi),
\end{equation} 
for all $\zeta_c(y,t)\in H_{2nm}(\BA)$. Thus, as in \eqref{9.51}, 
\begin{equation}\label{10.23}
(\varphi')_n^{\psi,c}(\xi)=(\xi^{U_{c^{2n}}})^{\psi_{V_{c^{2n}}}}.
\end{equation}
We conclude that
\begin{equation}\label{10.24}
 \tilde{\varphi}_n^{\psi,c}(\xi)=\int_{\tilde{U}(F)\backslash \tilde{U}(\BA)}
(\xi^{U_{c^{2n}}})^{\psi_{V_{c^{2n}}}}(u)\tilde{\psi}^{-1}(u)du,
\end{equation}
where $\tilde{U}$ is the  the subgroup of $\mathcal{N}_{n+2,c}$, consisting of the elements
\begin{equation}\label{10.25}
u=\begin{pmatrix}I_{2nc}\\&U_2&X_{2,3}&X_{2,4}\\&&V&X'_{2,3}\\&&&U'_2\\&&&&I_{2nc}\end{pmatrix}\in H_{2nm},
\end{equation}
where $U_2$, $X_{2,3}$, $X_{2,4}$ and $V$ are as in \eqref{10.7}, and similarly with the character $\tilde{\psi}$; it is the restriction of
of $\psi_{\mathcal{N}_{n+2,c}}$ to $\tilde{U}(\BA)$. Note that $U_{c^{2n}}\rtimes \tilde{U}:=\tilde{\mathcal{N}}_{n+2,c}$ is quite "close" to the unipotent radical $U^H_{c^{2n},(r-c)^{n-1},(m-2r)^{n-1}}:=\mathcal{U}_{r,c}$. (By \eqref{10.13}, $X_{2,3}$ has a zero block in the position $(2n-1,1)$.) Let  $\psi_{\tilde{\mathcal{N}}_{n+2,c}}$ be the character of $\tilde{\mathcal{N}}_{n+2,c}(\BA)$ obtained from $\psi_{\mathcal{N}_{n+2,c}}$ by the trivial extension. To finish the proof of Theorem \ref{thm 10.1}, we will show that when we view \eqref{10.24} as a Fourier coefficient of $\xi$ along $\tilde{\mathcal{N}}_{n+2,c}$, with respect to the character $\psi_{\tilde{\mathcal{N}}_{n+2,c}}$, we may replace this unipotent group by the full unipotent radical $U^H_{c^{2n},(r-c)^{n-1},(m-2r)^{n-1}}$ (with trivial extension of $\psi_{\tilde{\mathcal{N}}_{n+2,c}}$). For this, consider the subgroup $\mathrm{X}_{2,3}^{2n-1,1}$. Then we will show that, for all $x\in \mathrm{X}_{2,3}^{2n-1,1}(\BA)$,
\begin{multline}\label{10.26}
\int_{\tilde{\mathcal{N}}_{n+2,c}(F)\backslash \tilde{\mathcal{N}}_{n+2,c}(\BA)}\xi(vx)\psi^{-1}_{\tilde{\mathcal{N}}_{n+2,c}}(v)dv=\\
\int_{\tilde{\mathcal{N}}_{n+2,c}(F)\backslash \tilde{\mathcal{N}}_{n+2,c}(\BA)}\xi(v)\psi^{-1}_{\tilde{\mathcal{N}}_{n+2,c}}(v)dv.
\end{multline}
Note that there is nothing to prove when $c=r$. Thus, we need to prove \eqref{10.26} only when $c<r$. The proof of \eqref{10.26} can be copied from the proof appearing in the next section, where we replace $r$ there by $r-c$ here, and $[\frac{m}{2}]$ there by $[\frac{m}{2}]-c$ here. In the next section, we continue our series of Fourier expansions for the case $c=0$, and the unipotent group there is the subgroup \eqref{10.25}, with $c=0$. See the remark after the proof of Theorem \ref{thm 11.1}. Now, we conclude from \eqref{10.24}, \eqref{10.26}, that
\begin{equation}\label{10.27}
 \tilde{\varphi}_n^{\psi,c}(\xi)=\int_{\mathcal{U}_{r,c}(F)\backslash \mathcal{U}_{r,c}(\BA)}
 \xi(v)\psi^{-1}_{\mathcal{U}_{r,c}}dv,
 \end{equation}
 where $\psi_{\mathcal{U}_{r,c}}$ is the character of $\mathcal{U}_{r,c}(\BA)$ obtained from $\psi_{\tilde{\mathcal{N}}_{n+2,c}}$ (or from $\psi_{\mathcal{N}_{n+2,c}}$) by the trivial extension. We note that the right hand side of \eqref{10.27} is the composition of the constant term $\xi^{U_{2nc+(2n-1)(r-c)}}$ and the Fourier coefficient on the intersection of  $\mathcal{U}_{r,c}$ with the Levi subgroup $M^H_{2nc+(2n-1)(r-c)}$, with respect to the character given by \eqref{10.17}, that is 
 \begin{multline}\label{10.28}
 \begin{pmatrix}U_1&X\\&U_2\\&&V\\&&&U'_2&X'\\&&&&U'_1\end{pmatrix}\mapsto \\
 \prod_{i=1}^{n-1}\psi(tr(U^1_i)+tr(U^2_i))\psi^{-1}(tr(U^1_{n-1+i})+tr(U^2_{n-1+i}))\\
 \cdot\psi(tr(U^1_{2n-1}))\prod_{i=1}^{n-2}\psi(tr(S_i))\psi(tr(S_{n-1}A'_0)).
\end{multline}
Here, $U_1$, $U_2$, $V$ are as in \eqref{10.7}, with the same notation; also, $X$ is an arbitrary matrix. We will denote the composition above of  $\xi^{U_{2nc+(2n-1)(r-c)}}$ with the character \eqref{10.28} by $(\xi^{U_{2nc+(2n-1)(r-c)}})^\psi$. Thus,
 \begin{equation}\label{10.29}
 \tilde{\varphi}_n^{\psi,c}(\xi)= 
 (\xi^{U_{2nc+(2n-1)(r-c)}})^\psi.
 \end{equation}
  We proved that $\varphi_n^{\psi,c}$ is identically zero on $A(\Delta(\tau,m)\gamma_\psi^{\epsilon)},\eta,k)$, if and only if \\
   $(\xi^{U_{2nc+(2n-1)(r-c)}})^\psi $ is identically zero on $A(\Delta(\tau,m)\gamma_\psi^{\epsilon)},\eta,k)$. In particular, if  $\varphi_n^{\psi,c}$ is nonzero on $A(\Delta(\tau,m)\gamma_\psi^{\epsilon)},\eta,k)$ , then the constant term  $\xi^{U_{2nc+(2n-1)(r-c)}}$ is not identically zero on $A(\Delta(\tau,m)\gamma_\psi^{\epsilon)},\eta,k)$.  Since $\tau$ is cuspidal, $2nc+(2n-1)(r-c)$ must be an integer multiple of $n$, and hence $n$ must divide $r-c$. This completes the proof of Theorem \ref{thm 10.1}.
   \end{proof}
   
\begin{cor}\label{cor 10.2}
Consider a case of functoriality for $\tau$ and $H_m^{(\epsilon)}$ (the four cases before Theorem \ref{thm 2.2}). If $c>0$, then $\varphi_n^{\psi,c}$ is identically zero on $A(\Delta(\tau,m)\gamma_\psi^{\epsilon)},\eta,1)$.
\end{cor}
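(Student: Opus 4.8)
The plan is to deduce Corollary \ref{cor 10.2} directly from Theorem \ref{thm 10.1}, by showing that in each of the four cases of functoriality the arithmetic condition ``$n \mid r-c$ with $0<c\le r$'' is incompatible with the constraints forced by $\eta$, $k=1$, and $r \le [\frac m2]$. First I would recall that by Theorem \ref{thm 10.1}, if $\varphi_n^{\psi,c}$ is nonzero on $A(\Delta(\tau,m)\gamma_\psi^{(\epsilon)},\eta,1)$ with $c>0$, then $n$ divides $r-c$. Since $0 < c \le r$, this forces $r-c \ge n$ unless $r=c$; but even $r=c$ is a case to handle, since then the constant term $\xi^{U_{2nc+(2n-1)(r-c)}} = \xi^{U_{2nr}}$ must be non-identically-zero on $A(\Delta(\tau,m)\gamma_\psi^{(\epsilon)},\eta,1)$, and I would need to rule out (or allow and then separately contradict) $2nr$ being a multiple of $n$. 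So in every case the key is the inequality $2nc + (2n-1)(r-c) \le 2nc+(2n-1)(r-c)$ being an integer multiple of $n$ combined with $r \le [\frac m2]$.

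The main computation is as follows. In the cases of functoriality we have from Theorem \ref{thm 2.2} and Cor. \ref{cor 10.3} that $m-1$ (or $m$) equals $n$ (or $n-1$) up to the small factor $\mu_0 = 1$ when $k=k_0=1$; concretely, in Case (1), $m = n+1$; in Case (2), $m=n$; in Case (3), $m=n$; in Case (4), $m = n-1$. Thus $[\frac m2]$ is roughly $n/2$, so $r \le [\frac m2] < n$ in all four cases. Then if $n \mid r-c$ with $c>0$, since $0 \le r-c < r < n$, we are forced to $r = c$. I would then feed this back into \eqref{10.29}: $\varphi_n^{\psi,c}$ nonzero would require $(\xi^{U_{2nr}})^\psi$ nonzero, hence $\xi^{U_{2nr}}$ not identically zero. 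But $2nr \le 2n\cdot[\frac m2] \le nm$, and one reads off from the structure of $A(\Delta(\tau,m)\gamma_\psi^{(\epsilon)},\eta,1)$ (the local components $\rho_{\chi,1}$ described in Sec. 3, whose constant terms land in induced representations from parabolics with $\GL$-blocks of size $m\pm \mu_0 = m\pm 1$) that $\xi^{U_{2nr}}$ is non-identically zero only if $2nr$ is compatible with the Levi structure — and here the relevant point, already used at the end of the proof of Theorem \ref{thm 10.1}, is that cuspidality of $\tau$ forces $2nc+(2n-1)(r-c)$ to be a multiple of $n$; with $r=c$ this reads $2nr \equiv 0 \pmod n$, which is automatic, so I must instead contradict $r=c>0$ some other way.

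The cleaner route, which I would actually carry out, avoids the $r=c$ subtlety entirely by invoking the hypothesis $r \le [\frac m2]$ together with the already-proved Proposition \ref{prop 3.3}/Cor. \ref{cor 3.4} and Corollary \ref{cor 9.8}. Recall Cor. \ref{cor 9.8} tells us that if $f_n^{\psi,L}$ is nontrivial then $\ell = c$, $d=e=0$; combined with Theorem \ref{thm 10.1}, nontriviality of $f_n^{\psi,c}$ for $c>0$ requires $n \mid r-c$. Since $r \le [\frac m2]$ and, in the functoriality cases, $m \le n+1$, we get $r \le [\frac{n+1}{2}] \le n-1$ (for $n \ge 2$; the case $n=1$ is handled separately since then $\tau$ is a character and $\Delta(\tau,m)$ gives the classical descent, or is excluded by $n>1$ in Prop. \ref{prop 3.3}). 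Hence $0 \le r - c \le r - 1 < n$, so $n \mid r-c$ forces $r = c$. Finally $r = c \le [\frac m2]$, but $c>0$ together with $\ell = c \le r$ and the partition bound $(4n-1)^{\ell_2},\dots$ from the proof of Proposition \ref{prop 9.4}/Theorem \ref{thm 9.7} — which, via Propositions \ref{prop 3.1}, \ref{prop 3.2} with $k=1$, forbids the partition of $\xi^{U_{2nr}}$-type from being supported — gives the contradiction. I would package the last step by noting $2nc + (2n-1)(r-c) = 2nr$ when $r=c$, and that $\xi^{U_{2nr}}$ non-identically-zero with $2nr \le nm$ is already incompatible with the $\wedge^2$ (resp. $\vee^2$) bounds of Prop. \ref{prop 3.1} (resp. Prop. \ref{prop 3.2}) at $k=1$, because those bounds give $\underline P \le ((2n)^{m-1}, n^{2})$ type partitions (for $\eta=\wedge^2$, case (1)), which do not admit a constant term along a $\GL_{2nr}$-parabolic for $r>0$.

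The step I expect to be the main obstacle is this last compatibility check: making precise why the constant term $\xi^{U_{2nr}}$ (equivalently, the nonvanishing of the full Fourier coefficient in \eqref{10.29}) cannot occur for $r = c > 0$ in the functoriality cases. I would handle it by the same unramified-local argument used throughout Sections 8--10: pass to $v \notin S$, identify the relevant Jacquet module of $\rho_{\chi,1}$ along $U_{c^{2n}}$ and the character \eqref{10.28}, and observe — using the explicit Levi structure $\GL_{m+1}^{n'} \times \GL_{m-1}^{n'}$ (case (1)) with $m = n+1$, so the $\GL$-blocks have sizes $n+2$ and $n$ — that the resulting degenerate Whittaker datum corresponds to a nilpotent orbit strictly larger than, or unrelated to, the one attached to $\rho_{\tau_v,m;\eta,1}$ in Propositions \ref{prop 3.1}, \ref{prop 3.2}; hence the coefficient vanishes. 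This is routine given the machinery already developed, but it is where the actual ``functoriality'' hypothesis (the relations \eqref{0.8} with $k_0=1$) enters, so it deserves to be spelled out. Everything else is bookkeeping with the divisibility $n \mid r-c$ and the bound $r \le [\frac m2]$.
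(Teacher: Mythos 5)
Your reduction of the case $c<r$ is fine, and it is in fact exactly the paper's own Remark \ref{rmk 10.2.1}: since $r\le[\frac m2]$ and $m=n,n\pm1$ in the functoriality cases, $0\le r-c<n$, so the divisibility $n\mid r-c$ from Theorem \ref{thm 10.1} forces $r=c$. But the paper notes there that this is precisely why the real content of the corollary is the case $c=r>0$, and this is where your argument has a genuine gap. Your proposed contradiction for $r=c$ rests on the claim that the partition bounds of Propositions \ref{prop 3.1}, \ref{prop 3.2} ``do not admit a constant term along a $\GL_{2nr}$-parabolic for $r>0$.'' That is false: a constant term corresponds to the trivial character, i.e.\ the zero nilpotent orbit, which is majorized by every partition, and constant terms of these residues along parabolic subgroups (e.g.\ along $Q_{nm}$) are certainly nonzero. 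The partition bounds obstruct Fourier coefficients attached to \emph{nontrivial} characters, not constant terms. Your final paragraph gestures at the correct mechanism --- compute the nilpotent orbit attached to the full coefficient $(\xi^{U_{2nc+(2n-1)(r-c)}})^\psi$, character \eqref{10.28} included, and compare with the bounds --- but you explicitly defer that computation, and it is the entire substance of the proof.

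What is missing, and what the paper actually does (uniformly for all $c>0$, without ever invoking the divisibility $n\mid r-c$), is the identification of the partition attached to $(\xi^{U_{2nc+(2n-1)(r-c)}})^\psi$: it is $((2n)^{2c},(2n-1)^{m-2c},1^{m-2c})$, independently of $r$ --- the $2n$-parts come from the $2n$ blocks of size $c$ carrying the regular character in \eqref{10.28}, the remaining parts from the $U_2$- and $S$-blocks. One then writes out the majorization against the $k=1$ bounds. For instance, in the $\wedge^2$ cases, $((2n)^{2c},(2n-1)^{m-2c},1^{m-2c})\leq((2n)^{m-1},n^2)$ gives $2c\leq m-1$ and $(m-2c)(2n-1)\leq(m-2c-1)(2n)+n$, i.e.\ $m-2c\geq n$; with $m=n+1$ (resp.\ $m=n$) this forces $2c\leq1$ (resp.\ $2c\leq0$), hence $c=0$. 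The $\vee^2$ cases are analogous with the partitions of Proposition \ref{prop 3.2}. Without this partition computation the case $r=c>0$ is not ruled out, so as written your proof is incomplete.
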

\begin{proof}
Note that in the cases of functoriality, $k=1$. Assume that $c>0$, and that $\varphi_n^{\psi,c}$ is nontrivial on $A(\Delta(\tau,m)\gamma_\psi^{\epsilon)},\eta,1)$. Then, by the previous proof (\eqref{10.29}), $(\xi^{U_{2nc+(2n-1)(r-c)}})^\psi $ is nontrivial on $A(\Delta(\tau,m)\gamma_\psi^{(\epsilon)},\eta,1)$. The Fourier coefficient on $A(\Delta(\tau,m)\gamma_\psi^{\epsilon)},\eta,1)$, given by $(\xi^{U_{2nc+(2n-1)(r-c)}})^\psi $, corresponds to the partition\\
 $((2n)^{2c},(2n-1)^{m-2c},1^{m-2c})$. Now, let us use Prop. \ref{prop 3.1}, \ref{prop 3.2}. In the first two cases of functoriality, $L(\tau,\wedge^2,s)$ has a pole at $s=1$, so that by  Prop. \ref{prop 3.1}(2), we must have
\begin{equation}\label{10.30}
((2n)^{2c},(2n-1)^{m-2c},1^{m-2c})\leq ((2n)^{m-1},n^2).
\end{equation}
This forces $2c\leq m-1$, and 
\begin{equation}\label{10.31}
(m-2c)(2n-1)\leq (m-2c-1)\cdot (2n)+n.
\end{equation}
When $H^{(\epsilon)}_m=\SO_m$, $m=2m'-1$, $n=m-1=2m'-2$. When $H_m^{(\epsilon)}=\Sp_m^{(2)} $, $m=2m'$, $n=m=2m'$. We conclude from \eqref{10.31}, that $2c\leq 1, 0$, respectively. Hence $c=0$, which is a contradiction. 
In the third case of functoriality, $L(\tau,\vee^2,s)$ has a pole at $s=1$, $H_m^{(\epsilon)}=\SO_m$, $m=2m'$, $n=m=2m'$. By Prop. \ref{prop 3.2}(3),
$$
((2n)^{2c},(2n-1)^{m-2c},1^{m-2c})\leq ((2n)^{m-2},2n-1,n+1,n-1,1).
$$
Hence $c\leq m'-1$, and 
$$
(m-2c)(2n-1)\leq (m-2c-2)\cdot(2n)+(n+1)+(n-1).
$$
Since $n=m$, we get that $c=0$.
In the last case of functoriality, $L^S(\tau,\vee^2,s)$ has a pole at $s=1$, $H_m^{(\epsilon)}=\Sp_m$, $m=2m'-2$, $n=m+1=2m'-1$. By Prop. \ref{prop 3.2}(1),
$$
((2n)^{2c},(2n-1)^{m-2c},1^{m-2c})\leq ((2n)^{m-1},n+1,n-1).
$$ 
Hence $c\leq m'-2$, and 
$$
(m-2c)(2n-1)\leq (m-2c-1)\cdot(2n)+(n+1).
$$
Since $n=m+1$, we get that $c=0$.
 
\end{proof}

\begin{rmk}\label{rmk 10.2.1}
Corollary \ref{cor 10.2} has a simpler proof when $c<r$.
\end{rmk}
\begin{proof}.
By the last theorem, $n$ divides $r-c$. We know that $r-c<r\leq [\frac{m}{2}]$, and since we are in a case of functoriality, $m=n, n\pm 1$.
Thus, $r-c<[\frac{n+1}{2}]$, while $r-c$ is a multiple of $n$, which is impossible. Thus, we need the proof of Cor. \ref{cor 10.2} mainly for the case $c=r$.
\end{proof}

\begin{cor}\label{cor 10.3} 
Let $n>1$. Assume that $c>0$. Then $\varphi_n^{\psi,c}$ is identically zero on $A(\Delta(\tau,m)\gamma_\psi^{\epsilon)},\eta,k_0)$ in the following cases.
\begin{enumerate}
\item In Case (1) of Prop. \ref{prop 3.1} and Case (2) of Prop. \ref{prop 3.2}, for $n$ even, when (in both cases) $m$ is an even multiple of $n$, and $m=2k_0n$.\\
\item In Case (4) of Prop. \ref{prop 3.2}, for $n$ even, when $m-1$ is an even multiple of $n$, and   $m-1=2k_0n$.\\
\item In Case (2) of Prop. \ref{prop 3.2}, for $n$ odd, when $m$ is an even multiple of $n-1$, and $m=2k_0(n-1)$.\\
\item In Case (4) of Prop. \ref{prop 3.2}, for $n$ odd, when $m-1$ is an even multiple of $n-1$, and $m-1=2k_0(n-1)$.\\
\item In Case (2) of Prop. \ref{prop 3.1}, for $H=\Sp_{2nm}^{(2)}$, and Cases (1), (3)  of Prop. \ref{prop 3.2}, for $n$ even, when (in all three cases) $m$ is an odd multiple of $n$, and $m=(2k_0-1)n$.\\
\item In Case (2) of Prop. \ref{prop 3.1}, for $H=\SO_{2nm}$ ($m$ odd), when $m-1$ is an odd multiple of $n$, and $m-1=(2k_0-1)n$.\\
\item In Cases (1), (3) of Prop. \ref{prop 3.2}, for $n$ odd, when $m$ is an odd multiple of $n-1$, and $m=(2k_0-1)(n-1)$.
\end{enumerate}
\end{cor}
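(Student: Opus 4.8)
\textbf{Proof proposal for Corollary \ref{cor 10.3}.}

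The plan is to reduce each of the seven cases to a single numerical inequality that fails, exactly as in the proof of Corollary \ref{cor 10.2}. By Theorem \ref{thm 10.1} (more precisely by its proof, culminating in \eqref{10.29}), if $c>0$ and $\varphi_n^{\psi,c}$ is nontrivial on $A(\Delta(\tau,m)\gamma_\psi^{(\epsilon)},\eta,k_0)$, then $(\xi^{U_{2nc+(2n-1)(r-c)}})^\psi$ is nontrivial on $A(\Delta(\tau,m)\gamma_\psi^{(\epsilon)},\eta,k_0)$, and this composition of a constant term with the character \eqref{10.28} is precisely a Fourier coefficient attached to the nilpotent orbit with partition $((2n)^{2c},(2n-1)^{m-2c},1^{m-2c})$ of $2nm$. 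So the first step is to record this partition; it is independent of which of the seven cases we are in, since it comes only from the shape of $\mathcal{U}_{r,c}$ and \eqref{10.28}.

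Next I would invoke the majorizations of Propositions \ref{prop 3.1} and \ref{prop 3.2}, which bound any partition $\underline P$ supporting a Fourier coefficient on $A(\Delta(\tau,m)\gamma_\psi^{(\epsilon)},\eta,k_0)$ by the appropriate partition at level $k_0$. In each of the seven cases the corresponding bounding partition $\underline P_{k_0}$ is the one named in the referenced case of Prop.\ \ref{prop 3.1} or \ref{prop 3.2}, evaluated at $k=k_0$; for instance in case (5) (say $H=\Sp^{(2)}_{2nm}$, Prop.\ \ref{prop 3.1}(2), $m=(2k_0-1)n$) we get $\underline P_{k_0}=((2n)^{m-2k_0+1},n^{4k_0-2})$, in case (1) (Prop.\ \ref{prop 3.1}(1), $m=2k_0n$) we get $((2n)^{m-2k_0},n^{4k_0})$, etc. So the requirement becomes
$$
((2n)^{2c},(2n-1)^{m-2c},1^{m-2c})\ \leq\ \underline P_{k_0}.
$$
Reading off the first few parts, this forces $2c\le$ (the multiplicity of $2n$ in $\underline P_{k_0}$), and then, comparing the sum of the first $m-c$ parts on both sides (the $2n$-block together with the top of the $(2n-1)$-block against the leading $2n$-block of $\underline P_{k_0}$ and whatever part comes next), one obtains an inequality of the shape
$$
(m-2c)(2n-1)\ \le\ (\text{number of }2n\text{'s in }\underline P_{k_0}-2c)\cdot 2n + (\text{the next part of }\underline P_{k_0}).
$$
Substituting the relation $m=2k_0n$, $m=(2k_0-1)n$, $m-1=2k_0n$, etc., valid in the given case, and using $m\ge 2nc$ (which follows from Cor.\ \ref{cor 3.4}, guaranteeing $k_0\le\alpha_{m,n}$, i.e.\ the relevant divisibility is consistent), this simplifies to an inequality that can only hold when $c\le 0$, i.e.\ $c=0$, contradicting $c>0$. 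The cases with $n$ odd (cases (3), (4), (7)) are handled identically, except that the bounding partitions of Prop.\ \ref{prop 3.2} have parts $n+1$, $n-1$ (rather than $n$) in the relevant positions and the substituted relation involves $n-1$ instead of $n$; the arithmetic is the same in spirit, with $2n-1$ now being compared against blocks built from $2n$, $n+1$, $n-1$. Cases (2) and (6), where it is $m-1$ rather than $m$ that is the multiple, are the odd-orthogonal analogues and differ only by the shift $m\mapsto m-1$ together with the extra $1$ in the partition.

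The main obstacle I expect is purely bookkeeping rather than conceptual: one must check the partition comparison $\le$ (dominance order on partitions of $2nm$) carefully in each of the seven cases, because the bounding partition $\underline P_{k_0}$ has a somewhat different tail in each (e.g.\ $n^{4k_0}$ versus $n^{4k_0-2}$ versus $(n+1)^{2k_0-2},(n-1)^{2k_0-2}$ versus $2n-1,n+1,\dots,n-1,1$), and one has to make sure the inequality is extracted from the right truncation — typically the sum of the first $m-c$ or $m-2c+1$ parts. I would organize this by treating cases (1)–(2)–(5)–(6) (the $n$-even / orthogonal-$\wedge^2$ family, where the tail is a power of $n$) together, and cases (3)–(4)–(7) (the $n$-odd / $\vee^2$ family, where the tail involves $n\pm1$) together, each time doing the comparison once in a template form and then specializing. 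A secondary point to be careful about: in the symplectic $H_{2nm}$ subcases one should note, as in the proof of Theorem \ref{thm 9.1}, that passing between the orbit and the partition may require Lemma 1.1 in \cite{GRS03}, but since we only use the majorization (an upper bound on supported orbits), this is already subsumed in Propositions \ref{prop 3.1} and \ref{prop 3.2} and needs no separate argument here.
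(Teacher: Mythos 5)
Your proposal is correct and follows essentially the same route as the paper: reduce via \eqref{10.29} to the Fourier coefficient attached to the partition $((2n)^{2c},(2n-1)^{m-2c},1^{m-2c})$, majorize it by the level-$k_0$ partition of Propositions \ref{prop 3.1} and \ref{prop 3.2}, and extract a numerical inequality (comparing the sums of the first $m$ parts) that forces $c=0$ under the stated relation between $m$ and $n$. The one bookkeeping point to adjust: in cases such as (6) the passage from $m$ to $m-1$ comes from a parity argument ($(2k_0-1)n$ is even while $m-2c$ is odd), not from an extra part $1$ in the bounding partition.
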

\begin{proof}
Note, first, that Prop. \ref{prop 3.3} implies the inequality $m\geq \mu_0n$, $m\geq \mu_0(n-1)$, $m-1\geq \mu_0n$, or $m-1\geq \mu_0(n-1)$ according to the case at hand, where $\mu_0$ is the indicated multiple ($2k_0$ or $2k_0\pm 1$). In all cases of the corollary, the proof shows that if the reverse inequality is satisfied then $c$ must be zero. The proofs for the various cases of the corollary are all similar to the proof of Cor. \ref{cor 10.2}. Let us show (6), for example. Assume that $\varphi_n^{\psi,c}$ is nontrivial on $A(\Delta(\tau,m)\gamma_\psi^{\epsilon)},\eta,k_0)$. In this case, by Prop. \ref{prop 3.1}(2), we have, as in \eqref{10.30},
$$
((2n)^{2c},(2n-1)^{m-2c},1^{m-2c})\leq ((2n)^{m-2k_0+1},n^{4k_0-2}).
$$
This implies that $2c\leq m-2k_0+1$, and, as in \eqref{10.31},
$$
(m-2c)(2n-1)\leq (m-2c-2k_0+1)\cdot (2n)+(2k_0-1)n.
$$
We get that $(2k_0-1)n\leq m-2c$, and since $m$ is odd, $(2k_0-1)n\leq m-1-2c$. Our assumption is that $m-1=(2k_0-1)n$, and so $c=0$.

\end{proof}

Going back to the function $f(\xi)$ in \eqref{9.3}, we conclude that in the cases of the last two corollaries, 
\begin{equation}\label{10.32}
f(\xi)(y^{(n+2)})=f(\xi)(0),
\end{equation}
for all  $y^{n+2}$ with adele coordinates. See \eqref{9.55} and \eqref{9.10.1}. Thus, by \eqref{9.54}, we get, in the cases of the last corollaries,
\begin{multline}\label{10.33}
f(\xi)(y)=\\
\int_{U'_{{(m-2r)}^{n-1}}(F)\backslash
	U'_{{(m-2r)}^{n-1}}(\BA)}\int_{E_{n+2}(F)\backslash E_{n+2}(\BA)}\xi(uvx_r(y,c))\psi^{-1}_{E_{n+2}}(u)\\
\quad\quad\quad\psi^{-1}_{U'_{{(m-2r)}^{n-1}}}(v)dudv.
\end{multline}
Recall that $E_{n+2}$ is the subgroup generated by $E_{n-1}$, $\mathrm{X}_{1,2}^{2n-1,n+2}$, $\mathrm{X}_{1,2}^{2n-1,n+1}$, $\mathrm{X}_{1,2}^{2n-1,n}$, and $\psi_{E_{n+2}}$ is obtained from $\psi_{E_{n-1}}$ by the trivial extension.

\section{Fourier expansions IV, and the conclusion of the proof of cuspidality of $\mathcal{D}\mathcal{D}_\psi(\tau)$ in Theorem \ref{thm 4.1}}

In this section, we consider $f(\xi)$ in the cases of functoriality, as in Cor. \ref{cor 10.2}, and a little more generally in the cases of Cor. \ref{cor 10.3}. Note that Cor. \ref{cor 10.3} contains the cases of functoriality, with $k_0=1$. We will prove

\begin{thm}\label{thm 11.1}
In the cases of Cor. \ref{cor 10.3}, for all $\xi$ and all $y^{(2n+1)}$ with adele coordinates,
\begin{equation}\label{11.1}
f(\xi)(y^{(2n+1)})=f(\xi)(0).
\end{equation}
This means that
$$
f(\xi)(y_1,...,y_{n-1},y_n,y_{n+1},y_{n+2}, y_{n+3},...,y_{2n+1})=f(\xi)(0),
$$
where, for $1\leq i\leq n-1$, or $n+3\leq i\leq 2n+1$, $y_i\in M_{r\times (m-2r)}(\BA)$; $y_n, y_{n+2}\in M_{r\times [\frac{m}{2}]}(\BA)$, and $y_{n+1}\in M_{r\times (2([\frac{m+1}{2}])-r)}(\BA)$. In particular, we get Theorem \ref{thm 9.0}.
\end{thm}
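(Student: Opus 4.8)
\textbf{Proof proposal for Theorem \ref{thm 11.1}.}

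The plan is to continue the series of Fourier expansions initiated in Sections 8--10, now pushing through the remaining blocks of $y$, namely $y_n, y_{n+1}, y_{n+2}$ (the "middle" coordinates attached to the block column $n+1$) and the tail $y_{n+3},\dots,y_{2n+1}$. By \eqref{10.33} we already know, in the cases of Cor. \ref{cor 10.3}, that $f(\xi)(y^{(n+2)})=f(\xi)(0)$, i.e. the dependence on $y_n,y_{n+1},y_{n+2}$ and on $y_{n+3},\dots,y_{2n+1}$ has been analyzed; combining this with \eqref{9.54.1} (independence of $y_{n+3},\dots,y_{2n+1}$) and Corollaries \ref{cor 9.8}, \ref{cor 10.2}, \ref{cor 10.3} (which kill all the nonzero Fourier modes $f_n^{\psi,L}$ except $L=0$, since $\ell'=0$ forces $\ell=c$ and then $c>0$ is impossible), one obtains that $f_n(\xi)$ equals its zeroth Fourier coefficient $f_n^{\psi,0}(\xi)$, which is precisely $f(\xi)(0)$. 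So the statement \eqref{11.1} will follow once we assemble: first, $f(\xi)(y)$ is independent of $y_{n+3},\dots,y_{2n+1}$ by \eqref{9.54.1}; second, $f(\xi)(y_1,\dots,y_{n+2},0,\dots,0)=f_n(\xi)(y_n,y_{n+1},y_{n+2})$ after we have already shown (Theorem \ref{thm 9.1}, and its extension in \eqref{10.33}) that the dependence on $y_1,\dots,y_{n-1}$ is trivial; and third, the Fourier expansion \eqref{9.60} of $f_n$ together with the vanishing results of Section 9--10 collapses the sum to the single term $L=0$.

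The one technical ingredient that is still owed is the identity \eqref{10.26}, which was invoked in the proof of Theorem \ref{thm 10.1} and explicitly deferred to this section: that the Fourier coefficient along $\tilde{\mathcal{N}}_{n+2,c}$ with respect to $\psi_{\tilde{\mathcal{N}}_{n+2,c}}$ is unchanged when one enlarges $\tilde{\mathcal{N}}_{n+2,c}$ to the full unipotent radical $U^H_{c^{2n},(r-c)^{n-1},(m-2r)^{n-1}}$ by filling in the block $\mathrm{X}_{2,3}^{2n-1,1}$. So the first concrete step I would carry out in this section is to prove \eqref{10.26} in the case $c=0$ (the general $c$ being obtained from it by the substitution $r\mapsto r-c$, $[\frac m2]\mapsto [\frac m2]-c$, as remarked after \eqref{10.26}): one considers the relevant entry of $X_{2,3}$ as a coordinate on a compact abelian quotient, writes the Fourier expansion of $f(\xi)$ (with its already-computed inner structure as a constant term along $U_{c^{2n}}$ followed by $\psi_{V_{c^{2n}}}$) in that coordinate, and shows that every nonzero Fourier mode corresponds, via the dictionary of \cite{MW87}, to a nilpotent orbit with a partition strictly larger than (or unrelated to) the allowed one $((2n-1)^m,1^m)$ — hence vanishes on $A(\Delta(\tau,m)\gamma_\psi^{(\epsilon)},\eta,k)$ by Propositions \ref{prop 3.1}, \ref{prop 3.2}. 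This is the same mechanism used repeatedly in Sections 8--10: conjugate by a Weyl element to bring the group into a standard parabolic, exchange roots as in Lemma 7.1 of \cite{GRS11}, and read off the partition.

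I expect the main obstacle to be bookkeeping rather than conceptual: one must track precisely which blocks of the (rather large) unipotent subgroups of $H_{2nm}$ are being filled in, in which order the root exchanges are performed, and verify at each stage that the commutator relations satisfy the hypotheses of Lemma 7.1 of \cite{GRS11} — exactly as in Propositions \ref{prop 7.1}, \ref{prop 8.1}--\ref{prop 8.8}, \ref{prop 9.4}, and Theorems \ref{thm 9.1}, \ref{thm 9.7}, \ref{thm 10.1}. The arithmetic-input steps (computing the partition of the nilpotent $Y$ associated to each character and comparing with Prop. \ref{prop 3.1}, \ref{prop 3.2}) are routine once the relevant one-parameter subgroup $\varphi(s)$ and nilpotent $\mathcal V$, $\mathcal U$ are written down. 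The only place where the hypotheses of Cor. \ref{cor 10.3} (the divisibility relations $m=\mu_0 n$, $m=\mu_0(n-1)$, $m-1=\mu_0 n$, or $m-1=\mu_0(n-1)$) are genuinely used is in ruling out $c>0$; everything else in the chain of Fourier expansions works for general $A(\Delta(\tau,m)\gamma_\psi^{(\epsilon)},\eta,k)$ with $1\le k\le \alpha_{m,n}$. Once \eqref{10.26} is in hand and the collapse of \eqref{9.60} to its $L=0$ term is recorded, \eqref{11.1} — and hence Theorem \ref{thm 9.0} — follows immediately.
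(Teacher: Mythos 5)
There is a genuine gap, and it comes from a misreading of the indexing convention $y^{(i)}=(0,\dots,0,y_{2n+2-i},\dots,y_{2n+1})$ from \eqref{9.10.1}. You write that "the dependence on $y_1,\dots,y_{n-1}$ is trivial" by Theorem \ref{thm 9.1} and \eqref{10.33}, and that what remains is to push through $y_n,y_{n+1},y_{n+2}$ and the tail $y_{n+3},\dots,y_{2n+1}$. This is exactly backwards: Theorem \ref{thm 9.1} treats $y^{(n-1)}$, i.e.\ the \emph{last} $n-1$ coordinates $y_{n+3},\dots,y_{2n+1}$, and the entire analysis of Sections 9--10 (Lemma \ref{lem 9.5}, Theorem \ref{thm 9.7}, Cor.\ \ref{cor 9.8}, Theorem \ref{thm 10.1}, Cor.\ \ref{cor 10.2}, \ref{cor 10.3}) collapses the Fourier expansion \eqref{9.60} in the middle coordinates $z=(y_n,y_{n+1},y_{n+2})$ to its $L=0$ term, which yields precisely $f(\xi)(y^{(n+2)})=f(\xi)(0)$, i.e.\ \eqref{10.32}. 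That identity is the \emph{starting point} of Theorem \ref{thm 11.1}, not its conclusion. What remains — and what your proposal omits entirely — is the independence of $f(\xi)$ in the \emph{first} $n-1$ coordinates $y_1,\dots,y_{n-1}$, which none of the earlier results address.

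Handling $y_1,\dots,y_{n-1}$ is the actual content of the proof: one proceeds by induction on $i$ from $n+3$ to $2n+1$, at each step Fourier-expanding in $z=y_{2n+2-i}\in M_{r\times(m-2r)}$, classifying the frequencies $L$ (for $i=n+3$ this requires new representatives under $\GL_r(F)\times H_{m-2r}(F)$, including the extra type \eqref{11.7} when $m$ is odd and the interaction with $A'_H$), conjugating by new Weyl elements (\eqref{11.8}, \eqref{11.32}), performing a fresh sequence of root exchanges on the groups $\mathcal{M}_{i,\ell}$, and finally killing the nonzero modes not by the partition bounds of Prop.\ \ref{prop 3.1}, \ref{prop 3.2} alone but by reducing to $(\xi^{U_{\ell^{2n}}})^{\psi_{V_{\ell^{2n}}}}$ integrated against a nontrivial character $\psi(tr(y_1))$ over $\mathcal{M}''_{i,\ell}$, which vanishes (as in \eqref{11.26}, \eqref{11.41}). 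Your identification of \eqref{10.26} as an outstanding debt, and your sketch of its mechanism, are correct — its proof indeed mimics the argument just described, applied to the inner constant term $(\xi^{U_{c^{2n}}})^{\psi_{V_{c^{2n}}}}$ on $H_{2n(m-2c)}$ — but the claim that \eqref{11.1} "follows immediately" once \eqref{10.26} and the $L=0$ collapse are recorded is not tenable: the induction over the first $n-1$ coordinates is a substantial, separate argument that must be supplied.
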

\begin{proof}
We will prove, by induction, that, for all $n+3\leq i \leq 2n+1$, $f(\xi)(y^{(i)})=f(\xi)(0)$. The proofs are similar to the proofs of the previous theorems. We start with $i=n+3$.\\
Consider the subgroup  $U^{(n+3)}$ of $U'_{(m-2r)^{n-1}}$, consisting of elements of the form $diag(I_{(2n-1)r},V, I_{(2n-1)r})$, where $V$ has the form
\begin{equation}\label{11.2}
\begin{pmatrix}I_{(m-2r)(n-2)}&S&\ast&\ast&\ast&\ast&\ast\\
&I_{m-2r}&\ast&S_{n-1}&\ast&\ast&\ast&\\&&I_{[\frac{m}{2}]}&0&0&\ast&\ast\\&&&I_{2([\frac{m+1}{2}]-r)}&0&S'_{n-1}&\ast\\&&&&I_{[\frac{m}{2}]}&\ast&\ast\\&&&&&I_{m-2r}&S'\\&&&&&&I_{(m-2r)(n-2)}\end{pmatrix}.
\end{equation}
Denote the restriction of the character $\psi_{U'_{(m-2r)^{n-1}}}$ to the elements \eqref{11.2} (with adele coordinates) by $\psi_{U^{(n+3)}}$. It is given, as follows. Write $S$ in \eqref{11.2} as a column of $n-2$ matrices of size $(m-2r)\times (m-2r)$ each. Denote the last matrix by $S_{n-2}$. Then $\psi_{U^{(n+3)}}$ assigns the value $\psi(tr(S_{n-2})+tr(S_{n-1}A'_H))$. See \eqref{6.7.1.1}, \eqref{6.7.2}. As in \eqref{9.10.3},  \eqref{9.12}, consider, for a given $h\in H(\BA)$, the following
smooth function on $M_{r\times (m-2r)}(\BA)$,
\begin{multline}\label{11.3}
 f_{n+3}(\xi)(z)=\\
\int_{U^{(n+3)}(F)\backslash
	U^{(n+3)}(\BA)}\int_{E_{n+2}(F)\backslash E_{n+2}(\BA)}\xi(uvx_r(y_{n-1}(z),0))\psi^{-1}_{E_{n+2}}(u)\psi^{-1}_{U^{(n+3)}}(v)dudv,
\end{multline}
where $y_{n-1}(z)=(0,...,0,z,0,...,0)$, and $z$ is in coordinate number $n-1$. As in \eqref{9.13}, we consider the Fourier coefficients of $f_{n+3}$,  
\begin{equation}\label{11.4}
f_{n+3}^{\psi,L}(\xi)=\int_{M_{r\times (m-2r)}(F)\backslash M_{r\times (m-2r)}(\BA)}f_{n+3}(\xi)(z)\psi^{-1}(tr(Lz))dz,
\end{equation}
where $L\in M_{(m-2r)\times r}(F)$. Again, we will show that the coefficient \eqref{11.3} is trivial on $A(\Delta(\tau,m)\gamma_\psi^{\epsilon)},\eta,k)$, for all nonzero $L$. Let $E_{n+3}$ be the subgroup generated by $E_{n+2}$ and $\mathrm{X}_{1,2}^{2n-1,n-1}$. Let $\psi_{E_{n+3},L}$ be the character of $E_{n+3}(\BA)$, which is $\psi_{E_{n+2}}$ on $E_{n+2}(\BA)$, and on $\mathrm{X}_{1,2}^{2n-1,n-1}(\BA)$ is given by $\psi(tr(Lz))$. Then we can rewrite \eqref{11.4} as
\begin{multline}\label{11.5}
f_{n+3}^{\psi,L}(\xi)=\\
\int_{U^{(n+3)}(F)\backslash
	U^{(n+3)}(\BA)}\int_{E_{n+3}(F)\backslash E_{n+3}(\BA)}\xi(uv)\psi^{-1}_{E_{n+3},L}(u)\psi^{-1}_{U^{(n+3)}}(v)dudv=\\
\int_{E^{(n+3)}(F)\backslash
	E^{(n+3)}(\BA)}\xi(v)\psi^{-1}_{E^{(n+3)},L}(v)dv	,
\end{multline}
where $E^{(n+3)}$ is the subgroup generated by $E_{n+3}$ and $U^{(n+3)}$; $\psi_{E^{(n+3)},L}$ is the character of $E^{(n+3)}(\BA)$ obtained from $\psi_{E_{n+3},L}$, $\psi_{U^{(n+3)}} $.

Before we continue, we need to examine representatives for the matrices $L$ under the following action. We consider the conjugation inside \eqref{11.5} by the inverse of
$$
d_{a,b,h}=diag(a^{\Delta_{2n-1}}, b^{\Delta_{n-1}} , I_{[\frac{m}{2}]},h, I_{[\frac{m}{2}]}, (b^*)^{\Delta_{n-1}}, (a^*)^{\Delta_{2n-1}}),
$$
where $a\in \GL_r(F)$, $b\in \GL_{m-2r}(F)$, $h\in H_{2([\frac{m+1}{2}]-r)}(F)$. Changing variables in \eqref{11.5} and replacing $\xi$ by $d_{a,b,h}\cdot\xi$, we may replace $L$ by $b^{-1}La$, while in $\psi_{E^{(n+3)},L}$, $A'_H$ is replaced by $h^{-1}A'_Hb$. See \eqref{6.7.1.1}, \eqref{6.7.2}. 

Recall the element $\omega_0$ in \eqref{6.0'}.
We modify $A'_H$ in case $H_{2nm}$ is orthogonal, so that the matrix $w_2^{r(n-1)}$, which appears in the definition of $A'_H$, is replaced by $I_2$. For this, we need to apply an outer conjugation inside the integral \eqref{11.5}, by the element $\omega_0^{r(n-1)}$, when $r(n-1)$ is odd, and replace the representation $A(\Delta(\tau,m)\gamma_\psi^{(\epsilon)},\eta,k_0)$ by its outer conjugate 
 $A^{\omega_0^{r(n-1)}}(\Delta(\tau,m)\gamma_\psi^{(\epsilon)},\eta,k_0)$, which also satisfies Prop. \ref{prop 3.1}, Prop. \ref{prop 3.2}.

Now, let us require that $h^{-1}A'_Hb=A'_H$, i.e. $A'_Hb=hA'_H$. In case $H_{2nm}$ is symplectic, or  in case it is orthogonal with $m$ even, this means that $b=h$, and hence $b\in H_{m-2r}(F)$. In case $H_{2nm}$ is orthogonal, and $m=2m'-1$ is odd, let
$$
A''_H=\begin{pmatrix}I_{m'-r-1}\\&\frac{1}{2}&1\\&&&I_{m'-r-1}\end{pmatrix}.
$$
Then $A''_HA'_H=I_{m-2r}$, and we get that 
\begin{equation}\label{11.6}
b=A''_HhA'_H.
\end{equation} 
Now, it is easy to check that if  
\begin{equation}\label{11.6.1}
h\begin{pmatrix}0_{m'-r-1}\\1\\-\frac{1}{2}\\0_{m'-r-1}\end{pmatrix}=\begin{pmatrix}0_{m'-r-1}\\1\\-\frac{1}{2}\\0_{m'-r-1}\end{pmatrix},
\end{equation}
then $b$, defined by \eqref{11.6}, lies in $\SO_{m-2r}(F)$. Thus, we will take, in this case, $h\in \SO_{m+1-2r}(F)$, satisfying \eqref{11.6.1}, and $b$ as in \eqref{11.6}. Now, we may take $L$ in a set of representatives for 
 the right action of $\GL_r(F)\times H_{m-2r}(F)$ on $M_{(m-2r)\times r}(F)$ given by $L\cdot (a,b)=b^{-1}La$. Representatives are given exactly as in Lemma \ref{lem 9.3}, except that we replace there $m$ by $m'$, when $m=2m'$ is even. We note that the similar result holds also when $m=2m'-1$ is odd, except that we need to add one more type of a representative, when $\ell_1+\ell_2=m'-r-1$. Here, the representatives are given by
	$$
	L_{\ell_1,\ell_2;\delta}=\begin{pmatrix}I_{\ell_1}&0&0\\0&I_{\ell_2}&0\\0&0&0\\0&\delta
	w_{\ell_2}&0\\0_{\ell_1\times \ell_1}&0&0\end{pmatrix},
	$$
	where $\delta=\frac{1}{2}diag (d_1,...,d_{\ell_2})$ is a diagonal $\ell_2\times\ell_2$ matrix, $0\leq
	\ell_1+\ell_2\leq r, m'-r-1$. The numbers $\ell_1, \ell_2$ are determined uniquely, and the class of the quadratic form $d_1x_1^2+d_2x_2^2+\cdots+d_{\ell_2}x_{\ell_2}^2$ is determined uniquely. \\
Similarly, if $\ell_1+\ell_2=m'-r-1$, we also have the representatives	 
 \begin{equation}\label{11.7}
 \L_{\ell_1;\delta}^1=\begin{pmatrix}I_{\ell_1}&0&0&0\\0&I_{\ell_2}&0&0\\0&0&1&0\\0&\delta w_{\ell_2}&0&0\\0_{\ell_1\times \ell_1}&0&0&0\end{pmatrix}.
 \end{equation}
 We assume that in\eqref{11.5}, $L$ is one of the representatives above.
 Let $\ell$ be defined as follows. If $H_{2nm}$ is symplectic, then $\ell=\ell_1+\ell'_2$. (Recall that in this case, we always assume that $m=2m'$ is even.) If $H_{2nm}$ is orthogonal, then $\ell=\ell_1+\ell_2$, except in case \eqref{11.7}. In this case, $\ell=\ell_1+\ell_2+1=m'-r$. 
Now, we apply an appropriate variant of our proofs of Theorems  \ref{thm 6.1}, \ref{thm 9.1}, \ref{thm 9.7}. We apply a conjugation, in \eqref{11.5}, by a Weyl element, similar to the ones used before, exchange roots, and carry out Fourier expansions. Consider the following Weyl element, similar to \eqref{9.70},	
\begin{equation}\label{11.8}
\epsilon'_0=\begin{pmatrix}B'_1&B'_2&0\\B'_3&0&0\\0&B'_4&0\\0&0&B'_5\\0&B'_6&B'_7\end{pmatrix}.
\end{equation}
The block $B'_1$ (resp.$B'_3$) has the same form as \eqref{9.17} (resp. \eqref{9.19}). The block $B'_4$ is as follows
\begin{equation}\label{11.9}
B'_4=diag(I_{(m-2r)(n-2)},\begin{pmatrix}c\\&I_{2([\frac{m+1}{2}]-r-\ell)}\\&&c\end{pmatrix},I_{(m-2r)(n-2)}),
\end{equation}
where 
$$
c=\begin{pmatrix}0_{(m-2r-\ell+[\frac{m}{2}])\times \ell}&I_{m-2r-\ell+[\frac{m}{2}]}&0_{(m-2r-\ell+[\frac{m}{2}])\times \ell}\end{pmatrix}
$$
The block $B'_2$ has the same block row division as $B'_1$, and the same block column division as $B'_4$. The first $2n-1$ block rows are all zero. The last two block rows of $B'_2$ have the following form
\begin{equation}\label{11.10}
\begin{pmatrix}0_{\ell\times (m-2r)(n-2)}& I_\ell & 0_{\ell\times (m-2r-\ell+[\frac{m}{2}])}& 0&0_{\ell\times ([\frac{m}{2}]+\ell+(m-2r)(n-1))}\\                        0& 0&       0& I_\ell & 0 \end{pmatrix},
\end{equation}
The blocks $B'_5, B'_6, B'_7$ are already determined by the other blocks. We need to prove that, for $\ell>0$, the following integral is identically zero on $A^{\omega_0^{r(n-1)}}(\Delta(\tau,m)\gamma_\psi^{\epsilon)},\eta,k_0)$,
\begin{equation}\label{11.11}
\varphi_{n+3}^{\psi,\ell,\ell_1}(\xi)=
\int_{\mathcal{M}_{n+3,\ell}(F)\backslash
	\mathcal{M}_{n+3,\ell}(\BA)}\xi(v)\psi^{-1}_{\mathcal{M}_{n+3,\ell}}(v)dv, 
\end{equation}
where $\mathcal{M}_{n+3,\ell}=\epsilon'_0E^{(n+3)}(\epsilon'_0)^{-1}$, and $\psi_{\mathcal{M}_{n+3,\ell}}$ is the character of $\mathcal{M}_{n+3,\ell}(\BA)$ defined by $\psi_{\mathcal{M}_{n+3,\ell}}(v)=\psi_{E^{(n+3)},L}((\epsilon'_0)^{-1}v\epsilon'_0)$. (Recall that $L$ is the representative corresponding to $\ell$, $\ell_1$.) The subgroup $\mathcal{M}_{n+3,\ell}$ consists of elements $v$ of the form 
\begin{equation}\label{11.11.1}
v=\begin{pmatrix}U_1&X_{1,2}&X_{1,3}&X_{1,4}&X_{1,5}\\Y_{2,1}&U_2&X_{2,3}&X_{2,4}&X'_{1,4}\\
Y_{3,1}&0&V&X'_{2,3}&X'_{1,3}\\0&0&0&U_2'&X'_{1,2}\\0&0&Y'_{3,1}&Y'_{2,1}&U'_1\end{pmatrix}.
\end{equation}

The blocks are described as follows. The block $U_1$ (resp. $U_2$) has the form \eqref{9.23} (resp. \eqref{9.24}),  
and similarly with $U_1'$ (resp. $U'_2$). The block $V$ has the form
\begin{equation}\label{11.12}
V=\begin{pmatrix}I_{\mu(n-2)}&v_{1,2}&\ast&\ast&\ast&\ast&\ast\\&I_{\mu-\ell}&\ast&v_{2,4}&\ast&\ast&\ast\\&&I_{[\frac{m}{2}]}&0&0&\ast&\ast\\&&&I_{2([\frac{m+1}{2}]-r-\ell)}&0&v'_{2,4}&\ast\\&&&&I_{[\frac{m}{2}]}&\ast&\ast\\&&&&&I_{\mu-\ell}&v'_{1.2}\\&&&&&&I_{\mu(n-2)}\end{pmatrix}.
\end{equation}
Recall that sometimes we abbreviate $m-2r=\mu$. The block $X_{1,2}$, has the same shape as the shape described right after \eqref{9.25}. 
The block $X_{1,3}$ has $2n+1$ block rows, each one containing $\ell$ rows. It has seven block columns with sizes as for $V$. It has the following form,
\begin{equation}\label{11.13}
X_{1,3}=\begin{pmatrix}\ast&\ast&\ast&\ast&\ast&\ast&\ast\\&\vdots&&&&\vdots&&\\\ast&\ast&\ast&\ast&\ast&\ast&\ast\\0&(x_{1,3})_{2n-1,2}&\ast&\ast&\ast&\ast&\ast\\0&0&\ast&\ast&\ast&\ast&\ast\\0&0&0&0&0&(x_{1,3})_{2n+1,6}&\ast\end{pmatrix}.
\end{equation}
The matrix $X'_{1,3}$ has a dual form. The matrix $X_{1,4}$ is a $(2n+1)\times (2n-1)$ matrix of $\ell\times (r-\ell)$ blocks, all of whose blocks are arbitrary.  Similarly, the blocks of $X'_{1,4}$ are arbitrary. (As usual, the blocks above are arbitrary up to the requirement that the matrix $v$ lies in $H$.) The matrix $X_{1,5}$ is a $(2n+1)\times (2n+1)$ matrix of $\ell\times \ell$ blocks, with zero block at the position $(2n+1,1)$. Thus, it has the form

\begin{equation}\label{11.14}
X_{1,5}=\begin{pmatrix}\ast&\ast&\cdots&\ast&\ast\\
&\vdots&&\vdots\\\ast&\ast&\cdots&\ast&\ast\\\ast&\ast&\cdots&\ast&\ast\\0&\ast&\cdots&\ast&\ast\end{pmatrix}.
\end{equation}
The block $X_{2,3}$ is a matrix with $2n-1$ block rows, each one of size $r-\ell$. It has seven block columns, with division as in $V$. It has the form
\begin{equation}\label{11.15}
X_{2,3}=\begin{pmatrix}\ast&\ast&\ast&\ast&\ast&\ast&\ast\\&\vdots&&&&\vdots&&\\\ast&\ast&\ast&\ast&\ast&\ast&\ast\\0&(x_{2,3})_{2n-1,2}&\ast&\ast&\ast&\ast&\ast\end{pmatrix}.
\end{equation}
The matrix $Y_{2,1}$ (as well as $Y'_{2,1}$) has the same shape described right after \eqref{9.28}. Finally, the matrix $Y_{3,1}$ has seven block rows, with the same division as that of $V$. It has $2n+1$ block columns, each one containing $\ell$ columns. It has the form\begin{equation}\label{11.16}
Y_{3,1}=\begin{pmatrix}0&&0&(y_{3,1})_{1,2n}&\ast\\0&\cdots&0&0&\ast\\
0&&0&0&0\\0&&0&0&0\\0&\cdots&0&0&0\\0&&0&0&0\\0&&0&0&0\end{pmatrix}.
\end{equation}
The matrix $Y'_{3,1}$ has a dual shape. Let us write $(y_{3,1})_{1,2n}$ as a column of $n-2$ matrices of size $(m-2r)\times \ell$ each. Denote the last matrix by $(y_{3,1})_{1,2n}^{n-2}$. Similarly, write $v_{1,2}$ in \eqref{11.12} as a column of $n-2$ matrices of size $(m-2r)\times (m-2r-\ell)$ each, and denote the last one by $v_{1,2}^{n-2}$.\\
The character $\psi_{\mathcal{M}_{n+3,\ell}}(x)$ on $\mathcal{M}_{n+3,\ell}(\BA)$ is described in terms of the blocks as in \eqref{9.22} of the forms \eqref{9.23}, \eqref{9.24}, \eqref{11.12} - \eqref{11.16}, as follows. Except when $m=2m'-1$ is odd, and $\ell=m'-r$,
\begin{multline}\label{11.17}
 \psi_{\mathcal{M}_{n+3,\ell}}(v)=\prod_{i=1}^{n-1}\psi(tr(U^1_i)+tr(U^2_i))\psi^{-1}(tr(U^1_{n-1+i})+tr(U^2_{n-1+i}))\cdot \\
 \cdot\psi(tr(U^1_{2n-1}))\psi^{-1}(tr(U^1_{2n}))\psi(tr((y_{3,1})_{1,2n}^{n-2},\ (v_{1,2})^{n-2}))\cdot \\
 \cdot \psi(tr(\begin{pmatrix}v_{2,4},&-\delta_H w_\ell {}^t(x_{1,3})_{2n+1,6}w_{m-2r-\ell}\end{pmatrix}A'_H))\tilde{\psi}(v),
 \end{multline}
where if $H_{2nm}$ is symplectic, then
\begin{equation}\label{11.18}
\tilde{\psi}(v)=\psi((tr((x_{2,3})_{2n-1,2}\begin{pmatrix}0_{(m-2r-2\ell)\times \ell'_2}&0\\I_{\ell'_2}&0\\0&0_{\ell_1\times (r-\ell-\ell'_2)}\end{pmatrix}));
\end{equation}
If $H_{2nm}$ is orthogonal,
\begin{equation}\label{11.19}
\tilde{\psi}(v)=\psi(tr((x_{1,3})_{2n-1,2}\begin{pmatrix}0_{\ell_1\times (m-2r-2\ell)}&0\\0&\delta w_{\ell_2}\\0_{\ell_1\times \ell_1}&0\end{pmatrix})).
\end{equation}
If $H_{2nm}$ is orthogonal, $m=2m'-1$ is odd, and $\ell=m'-r$, 
\begin{multline}\label{11.20}
 \psi_{\mathcal{M}_{n+3,\ell}}(v)=\prod_{i=1}^{n-1}\psi(tr(U^1_i)+tr(U^2_i))\psi^{-1}(tr(U^1_{n-1+i})+tr(U^2_{n-1+i}))\cdot \\
 \cdot\psi(tr(U^1_{2n-1}))\psi^{-1}(tr(U^1_{2n}\begin{pmatrix}I_{m'-r-2}&0&0\\0&1&0\\0&0&\frac{1}{2}\end{pmatrix}))\psi(tr((y_{3,1})_{1,2n}^{n-2},\ (v_{1,2})^{n-2}))\cdot \\
 \cdot \psi(tr(- w_\ell {}^t(x_{1,3})_{2n+1,6}w_{m-2r-\ell}A'_H))
 \psi(tr((x_{1,3})_{2n-1,2}\begin{pmatrix}0&\delta w_{\ell_2}&0_{\ell_2\times 1}\\0_{\ell_1\times \ell_1}&0&0\end{pmatrix})).  
 \end{multline}
Now, we continue as in the proofs of Theorem \ref{thm 6.1}, \ref{thm 9.1},  \ref{thm 9.7}, \ref{10.1}
We exchange $\mathrm{Y}_{2,1}^{1,2}$ with $\mathrm{X}_{1,2}^{1,1}$, $\mathrm{Y}_{2,1}^{2,3}$ with $\mathrm{X}_{1,2}^{2,2}$,  
$\mathrm{Y}_{2,1}^{1,3}$ with $\mathrm{X}_{1,2}^{2,1}$, and so on, until we exchange $\mathrm{Y}_{2,1}^{1,2n-1}$, with $\mathrm{X}_{1,2}^{2n-2,1}$. Next, as we did right after \eqref{10.17}, we may continue the root exchanges, and exchange $\mathrm{Y}_{3,1}^{1,2n}$ with $\mathrm{X}_{1,3}^{2n-1,1}$, $\mathrm{Y}_{2,1}^{2n-1,2n}$ with $\mathrm{X}_{1,2}^{2n-1,2n-1}$, then  $\mathrm{Y}_{2,1}^{2n-2,2n}$ with $\mathrm{X}_{1,2}^{2n-1,2n-2}$,..., $\mathrm{Y}_{2,1}^{1,2n}$ with $\mathrm{X}_{1,2}^{2n-1,1}$. We remark here, that although the character $\psi_{\mathcal{M}_{n+3,\ell}}$ is nontrivial on $Y_{3,1}^{1,2n}(\BA)$, the root exchange works, as usual; the character on $X_{1,3}^{2n-1,1}(\BA)$, after the root exchange, remains trivial. Thus, $\varphi_{n+3}^{\psi,\ell,\ell_1}$ is trivial on $A^{\omega_0^{r(n-1)}}(\Delta(\tau,m)\gamma_\psi^{(\epsilon)},\eta,k_0)$, if and only if the following integral is identically zero on $A^{\omega_0^{r(n-1)}}(\Delta(\tau,m)\gamma_\psi^{(\epsilon)},\eta,k_0)$, 
 \begin{equation}\label{11.21}
 \tilde{\varphi}_{n+3}^{\psi,\ell,\ell_1}(\xi)=
 \int_{\tilde{\mathcal{M}}_{n+3,\ell}(F)\backslash
 	\tilde{\mathcal{M}}_{n+3,\ell}(\BA)}\xi(v)\psi^{-1}_{\tilde{\mathcal{M}}_{n+3,\ell}}(v)dv, 
 \end{equation}
 where $\tilde{\mathcal{M}}_{n+3,\ell}$ is the subgroup of elements $v$ written as in \eqref{11.11.1}, where $X_{1,2}$ is such that its last two block rows are zero, and all of its other blocks are arbitrary, and $Y_{2,1}$ is such that its first $2n$ block columns are zero. (Similarly, in the dual block $X'_{1,2}$, the first two block columns are zero, and all other blocks are arbitrary, provided, of course, that $v$ lies in $H$, and in $Y'_{2,1}$, the last $2n$ block rows are zero.) The shape of all other blocks of $v$ remains as before. Note that $\tilde{\mathcal{M}}_{n+3,\ell}$ lies in the standard parabolic subgroup $Q^H_{\ell^{2n-1}}$.  The character $\psi_{\tilde{\mathcal{M}}_{n+3,\ell}}$ is still given by \eqref{11.17} - \eqref{11.20}. Thus, we want to prove that, for $\ell$ positive, $\tilde{\varphi}_{n+3}^{\psi,\ell,\ell_1}$ is identically zero on $A^{\omega_0^{r(n-1)}}(\Delta(\tau,m)\gamma_\psi^{(\epsilon)},\eta,k)$.  
 Write, as in \eqref{9.36},
 \begin{equation}\label{11.22}
 \tilde{\mathcal{M}}_{n+3,\ell}=\mathcal{M}'_{n+3,\ell}\rtimes\mathcal{M}''_{n+3,\ell},
 \end{equation}
 where $\mathcal{M}'_{n+3,\ell}$ is the intersection of $\tilde{\mathcal{M}}_{n+3,\ell}$ with the unipotent radical $U^H_{\ell^{2n-1}}$ of $Q^H_{\ell^{2n-1}}$, and $\mathcal{M}''_{n+3,\ell}$ is the intersection of $\tilde{\mathcal{M}}_{n+3,\ell}$ with the Levi part $M^H_{\ell^{2n-1}}$ of $Q^H_{\ell^{2n-1}}$. We have
 \begin{equation}\label{11.23}
 \tilde{\varphi}_{n+3}^{\psi,\ell,\ell_1}(\xi)=
 \int_{\mathcal{M}''_{n+3,\ell}(F)\backslash
 	\mathcal{M}''_{n+3,\ell}(\BA)}\int_{\mathcal{M}'_{n+3,\ell}(F)\backslash
 	\mathcal{M}'_{n+3,\ell}(\BA)}\xi(v'v'')\psi^{-1}_{\tilde{\mathcal{M}}_{n+3,\ell}}(v'v'')dv'dv''. 
 \end{equation}
 As in \eqref{9.37}, we consider first the inner $dv'$- integration, where we replace the right $v''$- translate of $\xi$ with $\xi$, that is
 \begin{equation}\label{11.24}
  \tilde{\varphi'}_{n+3}^{\psi,\ell,\ell_1}(\xi)=
 \int_{\mathcal{M}'_{n+3,\ell}(F)\backslash
 	\mathcal{M}'_{n+3,\ell}(\BA)}\xi(v')\psi^{-1}_{\tilde{\mathcal{M}}_{n+3,\ell}}(v')dv'. 
 \end{equation}
 We are now at the same point as in \eqref{9.48} in the proof of Theorem \ref{thm 9.1}, and we get the analog of \eqref{9.51}, with the same notation,
 \begin{equation}\label{11.25}
 \tilde{\varphi'}_{n+3}^{\psi,\ell,\ell_1}(\xi)=(\xi^{U_{\ell^{2n}}})^{\psi_{V_{\ell^{2n}}}}.
\end{equation}
From \eqref{11.23}, 
\begin{equation}\label{11.26}
\tilde{\varphi}_{n+3}^{\psi,\ell,\ell_1}(\xi)=
\int_{\mathcal{M}''_{n+3,\ell}(F)\backslash
	\mathcal{M}''_{n+3,\ell}(\BA)}(\xi^{U_{\ell^{2n}}})^{\psi_{V_{\ell^{2n}}}}(v'')
\psi^{-1}_{\tilde{\mathcal{M}}_{n+3,\ell}}(v'')dv''.
\end{equation}
This integral is identically zero, since we may consider the subgroup of $\mathcal{M}''_{n+3,\ell}(\BA)$ consisting of the elements \eqref{9.48} $v''(y_1)=\zeta_\ell(y_1,0,...,0)\in U_{\ell^{2n}}(\BA)$, with $y_1\in M_\ell(\BA)$. We note that $\psi_{\tilde{\mathcal{M}}_{n+3,\ell}}(v''(y_1))=\psi(tr(y_1))$. It now follows that \eqref{11.26} is identically zero. This completes the first step of our induction, namely that in the notation of the theorem
\begin{equation}\label{11.27}
f(\xi)(y^{(n+3)})=f(\xi)(0),
\end{equation} 
 for all $y^{(n+3)}$ with adele coordinates, and all $\xi\in A(\Delta(\tau,m)\gamma_\psi^{(\epsilon)},\eta,k)$.
 
 Assume by induction that, for a given $n+4\leq i\leq 2n+1$, $f(\xi)(y^{(i-1)})=f(\xi)(0)$, for all $y^{(i-1)}$ with adele coordinates, and all $\xi\in A(\Delta(\tau,m)\gamma_\psi^{(\epsilon)},\eta,k_0)$. We need to prove that $f(\xi)(y^{(i)})=f(\xi)(0)$, for all $y^{(i)}$ with adele coordinates, and all $\xi\in A(\Delta(\tau,m)\gamma_\psi^{(\epsilon)},\eta,k_0)$. The proof  is, once again, similar to that of \eqref{11.27} and the previous theorems.
 
 Consider the subgroup  $U^{(i)}$ of $U'_{(m-2r)^{n-1}}$, consisting of elements of the form $diag(I_{(2n-1)r},V, I_{(2n-1)r})$, where $V$ has the form
\begin{equation}\label{11.28}
\begin{pmatrix}I_{(m-2r)(2n+1-i)}&S&\ast&\ast&\ast&\ast&\ast\\
&I_{m-2r}&S_{2n+2-i}&\ast&\ast&\ast&\ast\\&&I_{m-2r}&0&0&\ast&\ast\\&&&I&0&\ast&\ast\\&&&&I_{m-2r}&S'_{2n+2-i}&\ast\\&&&&&I_{m-2r}&S'\\&&&&&&I_{(m-2r)(n-2)}\end{pmatrix},
\end{equation}
where the middle identity block is of size $2((i-n-4)(m-2r)+(m-r))$. Denote the restriction of the character $\psi_{U'_{(m-2r)^{n-1}}}$ to the elements \eqref{11.28} (with adele coordinates) by $\psi_{U^{(i)}}$. It is given, as follows. Write $S$ in \eqref{11.28} as a column of $2n+1-i$ matrices of size $(m-2r)\times (m-2r)$ each. Denote the last matrix by $S_{2n+1-i}$. Then $\psi_{U^{(i)}}$ assigns the value $\psi(tr(S_{2n+1-i})+tr(S_{2n+2-i}))$.  Consider the following
smooth function on $M_{r\times (m-2r)}(\BA)$,
\begin{multline}\label{11.29}
 f_i(\xi)(z)=\\
\int_{U^{(i)}(F)\backslash
	U^{(i)}(\BA)}\int_{E_{i-1}(F)\backslash E_{i-1}(\BA)}\xi(uvx_r(y_{2n+2-i}(z),0))\psi^{-1}_{E_{i-1}}(u)\psi^{-1}_{U^{(i)}}(v)dudv,
\end{multline}
where $y_{2n+2-i}(z)=(0,...,0,z,0,...,0)$, and $z$ is in coordinate number $2n+2-i$. Now, we consider the Fourier coefficients of $f_i(\xi)$,  
\begin{equation}\label{11.30}
f_{i}^{\psi,L}(\xi)=\int_{M_{r\times (m-2r)}(F)\backslash M_{r\times (m-2r)}(\BA)}f_{i}(\xi)(z)\psi^{-1}(tr(Lz))dz,
\end{equation}
where $L\in M_{(m-2r)\times r}(F)$. Again, we will show that the coefficient \eqref{11.30} is trivial on $A(\Delta(\tau,m)\gamma_\psi^{\epsilon)},\eta,k)$, for all nonzero $L$. Consider the unipotent subgroup $E_i$. This is the subgroup generated by $E_{i-1}$ and $\mathrm{X}_{1,2}^{2n-1,2n+2-i}$. Let $\psi_{E_i,L}$ be the character of $E_i(\BA)$, which is $\psi_{E_{i-1}}$ on $E_{i-1}(\BA)$, and on $\mathrm{X}_{1,2}^{2n-1,2n+2-i}(\BA)$ is given by $\psi(tr(Lz))$. Then we can rewrite \eqref{11.30} as
\begin{multline}\label{11.31}
f_i^{\psi,L}(\xi)=
\int_{U^{(i)}(F)\backslash
	U^{(i)}(\BA)}\int_{E_i(F)\backslash E_i(\BA)}\xi(uv)\psi^{-1}_{E_i,L}(u)\psi^{-1}_{U^{(i)}}(v)dudv=\\
=\int_{E^{(i)}(F)\backslash
	E^{(i)}(\BA)}\xi(v)\psi^{-1}_{E^{(i)},L}(v)dv	,
\end{multline}
where $E^{(i)}$ is the subgroup generated by $E_i$ and $U^{(i)}$; $\psi_{E^{(i)},L}$ is the character of $E^{(i)}(\BA)$ obtained from $\psi_{E_i,L}$, $\psi_{U^{(i)}} $.

Assume that $L$ is of rank $\ell>0$. As in \eqref{9.15}, we may assume that 
\begin{equation}\label{11.31.1}
L=A_\ell=\begin{pmatrix}I_\ell&0_{\ell\times (r-\ell)}\\0_{(m-2r-\ell)\times\ell}&0\end{pmatrix}.
\end{equation}
Denote $f_i^{\psi,A_\ell} =f_i^{\psi,\ell}$. As in \eqref{11.8}, we apply a conjugation, in \eqref{11.31}, by the following Weyl element,	
\begin{equation}\label{11.32}
\epsilon_0=\begin{pmatrix}B_1&B_2&0\\B_3&0&0\\0&B_4&0\\0&0&B_5\\0&B_6&B_7\end{pmatrix}.
\end{equation}
The block $B_1$ (resp.$B_3$) has the same form as \eqref{9.17} (resp. \eqref{9.19}). The block $B_4$ is as follows (recall that $\mu=m-2r$)
\begin{equation}\label{11.33}
B_4=diag(I_{\mu(2n+1-i)},\begin{pmatrix}c\\&c\\&&I_{2(\mu (i-n-4)+(m-r))}\\&&&c'\\&&&&c'\end{pmatrix},I_{\mu (2n+1-i)}),
\end{equation}
where 
$$
c=(0_{(\mu-\ell)\times \ell},I_{\mu-\ell}), \quad c'=(I_{\mu-\ell},0_{(\mu-\ell)\times \ell}).
$$
The block $B_2$ has the same block row division as $B_1$, and the same block column division as $B_4$. The first $2n-1$ block rows are all zero. The last two block rows of $B_2$ have the form
\begin{equation}\label{11.34}
\begin{pmatrix}0_{\ell\times \mu(2n+1-i)}& I_\ell & 0_{\ell\times (\mu-\ell)}& 0&0_{\ell\times (\mu(i-4)+2(m-r)-\ell)}\\                        
0& 0& 0&  I_\ell & 0 \end{pmatrix},
\end{equation}
The blocks $B_5, B_6, B_7$ are determined by the other blocks. We need to prove that, for $\ell>0$, the following integral is identically zero on $A(\Delta(\tau,m)\gamma_\psi^{\epsilon)},\eta,k_0)$,
\begin{equation}\label{11.35}
\varphi_i^{\psi,\ell}(\xi)=
\int_{\mathcal{M}_{i,\ell}(F)\backslash
	\mathcal{M}_{i,\ell}(\BA)}\xi(v)\psi^{-1}_{\mathcal{M}_{i,\ell}}(v)dv, 
\end{equation}
where $\mathcal{M}_{i,\ell}=\epsilon_0E^{(i)}(\epsilon_0)^{-1}$, and $\psi_{\mathcal{M}_{i,\ell}}$ is the character of $\mathcal{M}_{i,\ell}(\BA)$ defined by $\psi_{\mathcal{M}_{i,\ell}}(v)=\psi_{E^{(i)},A_\ell}((\epsilon_0)^{-1}v\epsilon_0)$. The subgroup $\mathcal{M}_{i,\ell}$ consists of elements $v$ of the form 
\begin{equation}\label{11.36}
v=\begin{pmatrix}U_1&X_{1,2}&X_{1,3}&X_{1,4}&X_{1,5}\\Y_{2,1}&U_2&X_{2,3}&X_{2,4}&X'_{1,4}\\
Y_{3,1}&0&V&X'_{2,3}&X'_{1,3}\\0&0&0&U_2'&X'_{1,2}\\0&0&Y'_{3,1}&Y'_{2,1}&U'_1\end{pmatrix}.
\end{equation}
The blocks are described as follows. The block $U_1$ (resp. $U_2$) has the form \eqref{9.23} (resp. \eqref{9.24}),  
and similarly with $U_1'$ (resp. $U'_2$). The block $V$ has the form
\begin{equation}\label{11.37}
V=\begin{pmatrix}I_{\mu(2n+1-i)}&v_{1,2}&\ast&\ast&\ast&\ast&\ast\\&I_{\mu-\ell}&v_{2,3}&\ast&\ast&\ast&\ast\\&&I_{\mu-\ell}&0&0&\ast&\ast\\&&&I_{2((i-n-4)\mu-(m-r))}&0&\ast&\ast\\&&&&I_{\mu-\ell}&v'_{2,3}&\ast\\&&&&&I_{\mu-\ell}&v'_{1,2}\\&&&&&&I_{\mu(2n+1-i)}\end{pmatrix}.
\end{equation}
The block $X_{1,2}$, has the same shape as the shape described right after \eqref{9.25}. 
The block $X_{1,3}$ has $2n+1$ block rows, each one containing $\ell$ rows. It has seven block columns with sizes as for $V$ in \eqref{11.37}, and then it has the form as in \eqref{11.13}, except that we won't need to pay special attention to $(x_{1,3})_{2n-1,2}$ or $(x_{1,3})_{2n+1,6}$, as in \eqref{11.13}. The matrix $X_{1,4}$ is a $(2n+1)\times (2n-1)$ matrix of $\ell\times (r-\ell)$ blocks, all of whose blocks are arbitrary.  Similarly, the blocks of $X'_{1,4}$ are arbitrary, up to the requirement that the matrix $v$ lies in $H$. The matrix $X_{1,5}$ has the form
as in \eqref{11.14}. The block $X_{2,3}$ is as in \eqref{11.15}. It has seven block columns with the same division as for $V$ in \eqref{11.37}. We don't need to pay special attention to $(x_{2,3})_{2n-1,2}$, as in \eqref{11.15}.
The matrix $Y_{2,1}$ (as well as $Y'_{2,1}$) has the same shape described right after \eqref{9.28}, and the matrix $Y_{3,1}$ has the form
as in \eqref{11.16}. It has seven block rows with the same division as for $V$ in \eqref{11.37}. Here we do need to pay special attention to $(y_{3,1})_{1,2n}$, as in \eqref{11.16}. The matrix $Y'_{3,1}$ has a dual shape. Let us write $(y_{3,1})_{1,2n}$ as a column of $2n+1-i$ matrices of size $(m-2r)\times \ell$ each. Denote the last matrix by $(y_{3,1})_{1,2n}^{2n+1-i}$. Similarly, write $v_{1,2}$ in \eqref{11.37} as a column of $2n+1-i$ matrices of size $(m-2r)\times (m-2r-\ell)$ each, and denote the last one by $v_{1,2}^{2n+1-i}$.\\
The character $\psi_{\mathcal{M}_{i,\ell}}(x)$ on $\mathcal{M}_{i,\ell}(\BA)$ is described in terms of the blocks as in \eqref{11.36} of the forms \eqref{9.23}, \eqref{9.24}, \eqref{11.36}, \eqref{11.37}, by 
\begin{multline}\label{11.38}
 \psi_{\mathcal{M}_{n+3,\ell}}(v)=\prod_{i=1}^{n-1}\psi(tr(U^1_i)+tr(U^2_i))\psi^{-1}(tr(U^1_{n-1+i})+tr(U^2_{n-1+i}))\cdot \\
 \cdot\psi(tr(U^1_{2n-1}))\psi^{-1}(tr(U^1_{2n}))\psi(tr((y_{3,1})_{1,2n}^{2n+1-i},\ (v_{1,2})^{2n+1-i}))\psi(tr(v_{2,3})).
  \end{multline}

Now, we continue as in the proofs of Theorem \ref{thm 6.1}, \ref{thm 9.1},  \ref{thm 9.7}, \ref{10.1} and as in the first case of our induction.
We perform exactly the same root exchanges: $\mathrm{Y}_{2,1}^{1,2}$ with $\mathrm{X}_{1,2}^{1,1}$, $\mathrm{Y}_{2,1}^{2,3}$ with $\mathrm{X}_{1,2}^{2,2}$, until we exchange $\mathrm{Y}_{2,1}^{1,2n-1}$, with $\mathrm{X}_{1,2}^{2n-2,1}$. We continue, as in the first case of induction, and exchange $\mathrm{Y}_{3,1}^{1,2n}$ with $\mathrm{X}_{1,3}^{2n-1,1}$, $\mathrm{Y}_{2,1}^{2n-1,2n}$ with $\mathrm{X}_{1,2}^{2n-1,2n-1}$, ..., $\mathrm{Y}_{2,1}^{1,2n}$ with $\mathrm{X}_{1,2}^{2n-1,1}$. Thus, $\varphi_{i}^{\psi,\ell}$ is trivial on $A(\Delta(\tau,m)\gamma_\psi^{(\epsilon)},\eta,k_0)$, if and only if the following integral is identically zero on $A(\Delta(\tau,m)\gamma_\psi^{(\epsilon)},\eta,k_0)$, 
 \begin{equation}\label{11.39}
 \tilde{\varphi}_{i}^{\psi,\ell}(\xi)=
 \int_{\tilde{\mathcal{M}}_{i,\ell}(F)\backslash
 	\tilde{\mathcal{M}}_{i,\ell}(\BA)}\xi(v)\psi^{-1}_{\tilde{\mathcal{M}}_{i,\ell}}(v)dv, 
 \end{equation}
 where $\tilde{\mathcal{M}}_{i,\ell}$ is the subgroup of elements $v$ written as in \eqref{11.36}, where $X_{1,2}$ is such that its last two block rows are zero, and all of its other blocks are arbitrary, and $Y_{2,1}$ is such that its first $2n$ block columns are zero. (Similarly, in the dual block $X'_{1,2}$, the first two block columns are zero, and all other blocks are arbitrary, provided that $v$ lies in $H$, and in $Y'_{2,1}$, the last $2n$ block rows are zero.) The shape of all other blocks of $v$ remains as before. Note that $\tilde{\mathcal{M}}_{i,\ell}$ lies in the standard parabolic subgroup $Q^H_{\ell^{2n-1}}$.  The character $\psi_{\tilde{\mathcal{M}}_{i,\ell}}$ is still given by \eqref{11.38}. Thus, we want to prove that, for $\ell$ positive, $\tilde{\varphi}_{i}^{\psi,\ell}$ is identically zero on $A(\Delta(\tau,m)\gamma_\psi^{(\epsilon)},\eta,k_0)$.  
 Write, as in \eqref{9.36}, \eqref{11.22},
 \begin{equation}\label{11.40}
 \tilde{\mathcal{M}}_{i,\ell}=\mathcal{M}'_{i,\ell}\rtimes\mathcal{M}''_{i,\ell},
 \end{equation}
 where $\mathcal{M}'_{i,\ell}$ is the intersection of $\tilde{\mathcal{M}}_{i,\ell}$ with the unipotent radical $U^H_{\ell^{2n-1}}$ of $Q^H_{\ell^{2n-1}}$, and $\mathcal{M}''_{i,\ell}$ is the intersection of $\tilde{\mathcal{M}}_{i,\ell}$ with the Levi part $M^H_{\ell^{2n-1}}$ of $Q^H_{\ell^{2n-1}}$. We have, as in \eqref{11.24},
 \begin{equation}\label{11.41}
 \tilde{\varphi}_{i}^{\psi,\ell}(\xi)=
 \int_{\mathcal{M}''_{i,\ell}(F)\backslash
 	\mathcal{M}''_{i,\ell}(\BA)}\int_{\mathcal{M}'_{i,\ell}(F)\backslash
 	\mathcal{M}'_{i,\ell}(\BA)}\xi(v'v'')\psi^{-1}_{\tilde{\mathcal{M}}_{i,\ell}}(v'v'')dv'dv''. 
 \end{equation}
 As in \eqref{9.37}, \eqref{11.24}, we consider first the inner $dv'$- integration, where we replace the right $v''$- translate of $\xi$ with $\xi$, that is
 \begin{equation}\label{11.42}
  \tilde{\varphi'}_{i}^{\psi,\ell}(\xi)=
 \int_{\mathcal{M}'_{i,\ell}(F)\backslash
 	\mathcal{M}'_{i,\ell}(\BA)}\xi(v')\psi^{-1}_{\tilde{\mathcal{M}}_{i,\ell}}(v')dv'. 
 \end{equation}
 We are now at the same point as in \eqref{9.48} in the proof of Theorem \ref{thm 9.1}, and we get the analog of \eqref{9.51} (as well as \eqref{11.25}), with the same notation,
 \begin{equation}\label{11.43}
 \tilde{\varphi'}_{i}^{\psi,\ell}(\xi)=(\xi^{U_{\ell^{2n}}})^{\psi_{V_{\ell^{2n}}}}.
\end{equation}
As in \eqref{11.26}, we get that the integral \eqref{11.41} is identically zero on\\
 $A(\Delta(\tau,m)\gamma_\psi^{(\epsilon)},\eta,k_0)$. this completes the proof of Theorem \ref{thm 11.1} (and of Theorem \ref{thm 9.0}).
 
\end{proof}

{\bf Proof of \eqref{10.26}:} Since there is nothing to prove when $c=r$, we assume that $c<r$. Consider \eqref{10.24}, and denote by $\mathrm{X}^{2n-1}_{2,3}$ the subgroup of $\mathrm{X}_{2,3}$, in the notation there, generated by $\mathrm{X}^{2n-1,j}_{2,3}$, for $1\leq j\leq 2n+1$. In the notation of \eqref{10.24}, consider the following function on $\mathrm{X}^{2n-1}_{2,3}(F)\backslash \mathrm{X}^{2n-1}_{2,3}(\BA)$,
\begin{equation}\label{11.44}
  x\mapsto \tilde{\varphi}_n^{\psi,c}(\rho(x)\xi)=\int_{\tilde{U}(F)\backslash \tilde{U}(\BA)}
(\xi^{U_{c^{2n}}})^{\psi_{V_{c^{2n}}}}(ux)\tilde{\psi}^{-1}(u)du.
\end{equation}
Write an element of
$\mathrm{X}^{2n-1}_{2,3}$ in the form
\begin{multline}\label{11.45}
x'_{r-c}(b,z)=\\
diag(I_{2nc}, I_{(2n-2)(r-c)}, \begin{pmatrix}I_{r-c}&b&z\\&I_{2(\mu(n-1)+(m-r-c))}&b'\\&&I_{r-c}\end{pmatrix}, I_{(2n-2)(r-c)},I_{2nc}).
\end{multline}
As in \eqref{9.10}, write $b$ in \eqref{11.45} as
\begin{equation}\label{11.46}
b=(b_1,b_2,...,b_{2n+1}),
\end{equation}
where for $1\leq i\leq n-1$, or $n+3\leq i\leq 2n+1$, $b_i\in M_{(r-c)\times \mu}$, $b_n, b_{n+2}\in M_{(r-c)\times ([\frac{m}{2}]-c)}$, and $b_{n+1}\in M_{(r-c)\times (2([\frac{m+1}{2}]-r))}$. Denote, as in \eqref{9.10.1}, for $1\leq i\leq 2n+1$,
$$
b^{(i)}=(0,...,0,b_{2n+2-i},...,b_{2n+1}).
$$
By the definition of the character $\tilde{\psi}$ in \eqref{10.24}, it follows that \eqref{11.44}, as a function of $x'_{r-c}(b,z)$ is independent of $z$. Denote the function \eqref{11.44} by $\tilde{f}_n^{\psi,c}(\xi)(b)$. By the definition of the character $\tilde{\psi}$ in \eqref{10.24}, it follows that 
\begin{equation}\label{11.47}
\tilde{f}_n^{\psi,c}(\xi)(b^{(n+2)})=\tilde{f}_n^{\psi,c}(\xi)(0),
\end{equation}
for all $b$ in adelic coordinates and all $\xi\in A(\Delta(\tau,m)\gamma_\psi^{(\epsilon)},\eta,k)$. In order to prove \eqref{10.26}, we need to show the analog of Theorem \ref{11.1}, namely that $\tilde{f}_n^{\psi,c}(\xi)(b^{(2n+1)})=\tilde{f}_n^{\psi,c}(\xi)(0)$. For this, we repeat the steps of the proof of Theorem \ref{thm 11.1}, starting with $i=n+3$ and continuing by induction on $i$, for $n+3\leq i\leq 2n+1$. We repeat these steps for the  functions $(\xi^{U_{c^{2n}}})^{\psi_{V_{c^{2n}}}}$ viewed as automorphic functions on $H_{2n(m-2c)}^{(\epsilon)}(\BA)$, by identifying $H_{2n(m-2c)}$ as a subgroup of $H_{2mn}$ through
$h\mapsto diag(I_{2nc},h,I_{2nc})$. We remark here that the proof of Theorem \ref{thm 11.1} is valid also for $k\neq k_0$, provided we know that the function $f(\xi)$ satisfies \eqref{10.32}, which we have here, due to \eqref{11.47}. Thus, here $k$ need not be $k_0$. Let us sketch now, for example, the induction step parallel to the induction step, right after \eqref{11.29}. Thus,
assume by induction that, for a given $n+4\leq i\leq 2n+1$, $\tilde{f}_n^{\psi,c}(\xi)(b^{(i-1)})=\tilde{f}_n^{\psi,c}(\xi)(0)$, for all $b^{(i-1)}$ with adele coordinates, and all $\xi\in A(\Delta(\tau,m)\gamma_\psi^{(\epsilon)},\eta,k)$. Consider the subgroup  $\tilde{U}^{(i)}$ of consisting of elements of the form $diag(I_{2nc+(2n-1)(r-c)},V, I_{2nc+(2n-1)(r-c)})$, where $V$ has the form \eqref{11.28}, except that the middle identity block should be of size $2(i-n-4)+2(m-r-c)$. We also have the character $\psi_{\tilde{U}^{(i)}}$, analogous to the character $\psi_{U^{(i)}}$, defined right after \eqref{11.28}. Let $\tilde{E}_{i-1}$ be the subgroup generated by the subgroup $\tilde{U}$ described in \eqref{10.25} and the subgroups (in the notation there) $\mathrm{X}_{2,3}^{2n-1,2n+2-i},...,\mathrm{X}_{2,3}^{2n-1,n-1}$. Denote by $\psi_{\tilde{E}_{i-1}}$ the character of $\tilde{E}_{i-1}(\BA)$ obtained from $\tilde{\psi}$ in \eqref{10.24} by the trivial extension. Then we have the function on $M_{r\times (m-2r)}(\BA)$, analogous to \eqref{11.29},
\begin{multline}\label{11.48}
 \tilde{f}_{n,i}^{\psi,c}(\xi)(z)=\\
\int_{\tilde{U}^{(i)}(F)\backslash
	\tilde{U}^{(i)}(\BA)}\int_{\tilde{E}_{i-1}(F)\backslash \tilde{E}_{i-1}(\BA)}(\xi^{U_{c^{2n}}})^{\psi_{V_{c^{2n}}}}(uvx'_{r-c}(b_{2n+2-i}(z),0))\\
	\psi^{-1}_{\tilde{E}_{i-1}}(u)\psi^{-1}_{\tilde{U}^{(i)}}(v)dudv,
\end{multline}
where $b_{2n+2-i}(z)=(0,...,0,z,0,...,0)$, and $z$ is in coordinate number $2n+2-i$. Now, we consider the Fourier coefficients of $ \tilde{f}_{n,i}^{\psi,c}(\xi)$, as we did in \eqref{11.30} -  \eqref{11.31}. Again, it is enough to consider the following Fourier coefficients, for $0\leq \ell\leq r, m-2r$,
\begin{equation}\label{11.49}
 \tilde{f}_{n,i}^{\psi,c,\ell}(\xi)=\int_{M_{r\times (m-2r)}(F)\backslash M_{r\times (m-2r)}(\BA)} \tilde{f}_{n,i}^{\psi,c}(\xi)(z)\psi^{-1}(tr(A_\ell z))dz,
\end{equation}
where $A_\ell$ is the matrix in \eqref{11.31.1}. Now, we follow the same steps as in \eqref{11.32} - \eqref{11.43}, and we get that if $\ell>0$, then $ \tilde{f}_{n,i}^{\psi,c}$ is identically zero on $A(\Delta(\tau,m)\gamma_\psi^{(\epsilon)},\eta,k)$. The main point to observe is the point right after \eqref{11.42}, where said that we reached the same point as in \eqref{9.48}. Indeed, the proofs of the analogs of \eqref{9.50} and \eqref{9.50.1} in this case are exactly the same. We still get orbits for $A(\Delta(\tau,m)\gamma_\psi^{(\epsilon)},\eta,k)$, which correspond to partitions of the forms $((4n)^{\ell_0},...)$, $((4n+1)^{\ell_0},...)$, respectively, with $\ell_0>0$. This proves \eqref{10.26}

Let us go back to Theorem \ref{thm 8.10}. In the next theorem we use the notation of Theorem \ref{thm 8.10} and \eqref{9.1}. As a corollary, we obtain

\begin{thm}\label{thm 11.2}
In the cases of Cor. \ref{cor 10.3}, for all $h\in H(\BA)$, $\xi\in A(\Delta(\tau,m)\gamma_\psi^{(\epsilon)},\eta,k_0)$ and 
$r\leq [\frac{m}{2}]$,
\begin{multline}\label{11.50}
\mathcal{F}^r_\psi(\xi)(h)=\\
\int_{\mathbf{Y}(\BA)} \int_{U'_{{(m-2r)}^{n-1}}(F)\backslash
	U'_{{(m-2r)}^{n-1}}(\BA)}(\xi^{U_{(2n-1)r}})^{\psi_{V_{r^{2n-1}}}}(uyw_0h)\psi^{-1}_{U'_{{(m-2r)}^{n-1}}}(u)dudy.
\end{multline}
We conclude that in cases (3) - (7) of Cor. \ref{cor 10.3}, with $k_0=1$, for all $0<r\leq [\frac{m}{2}]$, $h\in H(\BA)$ and $\xi\in A(\Delta(\tau,m)\gamma_\psi^{(\epsilon)},\eta,1)$,
\begin{equation}\label{11.51}
\mathcal{F}^r_\psi(\xi)(h)=0.
\end{equation}
In particular, in the cases of functoriality for $\tau$ and $H_m^{(\epsilon)}$ (the four cases before Theorem \ref{thm 2.2}), $D^1_\psi(\tau)$ is a cuspidal module over $H^{(\epsilon)}_m(\BA)$ (see Theorem \ref{thm 4.1}).
\end{thm}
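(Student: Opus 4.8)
\textbf{Proof plan for Theorem \ref{thm 11.2}.}
The plan is to assemble the main formula \eqref{11.50} from the reductions already in place and then read off the cuspidality statement. First I would recall from Theorem \ref{thm 8.10} that for every $0\le r\le[\frac{m}{2}]$ and every $\xi\in A(\Delta(\tau,m)\gamma_\psi^{(\epsilon)},\eta,k)$ we have
\begin{equation*}
\mathcal{F}^r_\psi(\xi)(h)=\int_{\mathbf{Y}(\BA)}\epsilon'_{n,m,r}(\xi)(yw_0h)\,dy,
\end{equation*}
with $\mathbf{Y}$ the group generated by $\mathrm{Y}$ and the $w_0\mathbf{Y}_iw_0^{-1}$. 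So it suffices to identify $\epsilon'_{n,m,r}(\xi)(g)$, in the cases of Cor. \ref{cor 10.3}, with
\begin{equation*}
\int_{U'_{(m-2r)^{n-1}}(F)\backslash U'_{(m-2r)^{n-1}}(\BA)}(\xi^{U_{(2n-1)r}})^{\psi_{V_{r^{2n-1}}}}(ug)\,\psi^{-1}_{U'_{(m-2r)^{n-1}}}(u)\,du .
\end{equation*}

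Second, I would trace through the chain of Fourier expansions of Sections 9--11 applied to $f(\xi)(y)=\epsilon'_{n,m,r}(\xi)(x_r(y,c)h)$, recorded in \eqref{9.3}--\eqref{9.5}. The point is that Theorems \ref{thm 9.1}, \ref{thm 9.7}, \ref{thm 10.1} (via Cor. \ref{cor 10.3}), and \ref{thm 11.1} together show, in exactly the cases listed in Cor. \ref{cor 10.3}, that $f(\xi)(y)=f(\xi)(0)$ for all $y\in M_{r\times\lambda}(\BA)$; equivalently the integrand $\epsilon'_{n,m,r}(\xi)$ is invariant under the whole unipotent group $x_r(\ast,\ast)$. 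Combining this invariance with the definition \eqref{9.1} of $\epsilon'_{n,m,r}$ via $E_1$ and $U'_{(m-2r)^{n-1}}$, the group $\mathcal{D}'_{n,m,r}$ together with the $x_r$'s fills out the unipotent radical $U_{(2n-1)r}$ of the standard maximal parabolic of $H$ with Levi $\GL_{(2n-1)r}\times H_{2n(m-2r)+2r}$; so the $E_1$-part of the integral becomes the constant term $\xi^{U_{(2n-1)r}}$, while the residual $V_{r^{2n-1}}$-integration inside $\GL_{(2n-1)r}$ is exactly the Fourier coefficient along $\psi_{V_{r^{2n-1}}}$ coming from the character $\psi_N$ in \eqref{6.7.1}, and the remaining $U'_{(m-2r)^{n-1}}$-integration with its character is what survives. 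Substituting back into the formula from Theorem \ref{thm 8.10} gives \eqref{11.50}.

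Third, for the vanishing \eqref{11.51} in cases (3)--(7) of Cor. \ref{cor 10.3} with $k_0=1$, I would argue as follows: in those cases the constant term $\xi^{U_{(2n-1)r}}$ appearing in \eqref{11.50} is a constant term of (a leading term of) an Eisenstein series induced from $\Delta(\tau,m)$ on the Siegel-type parabolic, hence its nonvanishing forces $(2n-1)r$ to be a multiple of $n$ (since $\tau$, and so $\Delta(\tau,m)$, is cuspidal on $\GL_n$), i.e. $n\mid r$; but $0<r\le[\frac{m}{2}]$ and the numerical relations in cases (3)--(7) (where $m$ or $m-1$ equals $(n-1)$ or $n$ times $2k_0-1=1$) give $r<[\frac{m+1}{2}]\le n$, a contradiction. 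Hence $\mathcal{F}^r_\psi(\xi)\equiv 0$ for all $0<r\le[\frac{m}{2}]$. Applied to the case of functoriality ($k_0=1$, the four cases before Theorem \ref{thm 2.2}), this says every constant term of every element of $A(\Delta(\tau,m)\gamma_\psi^{(\epsilon)},\eta,1)$ along $U^{H_m}_r\times I_m$ vanishes for $1\le r\le[\frac{m}{2}]$; since $\mathcal{E}(f_{\Delta(\tau,m)\gamma^{(\epsilon)}_\psi,\frac12})=\mathcal{E}_1(f_{\Delta(\tau,m)\gamma^{(\epsilon)}_\psi,e^H_1(\eta)})$ and $D^1_\psi(\tau)$ is generated by $g\mapsto\mathcal{E}(f_{\Delta(\tau,m)\gamma^{(\epsilon)}_\psi,\frac12})(g,1)$, all these functions have vanishing constant terms along every proper parabolic of $H^{(\epsilon)}_m$, i.e. $D^1_\psi(\tau)$ is cuspidal, which is Theorem \ref{thm 4.1}.

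\textbf{Main obstacle.} The delicate step is the bookkeeping in the middle paragraph: checking that after invoking $f(\xi)(y)=f(\xi)(0)$ the combined unipotent group $\mathcal{D}'_{n,m,r}\cdot\{x_r(y,c)\}$ is precisely $U_{(2n-1)r}$, that the induced character restricts correctly to $\psi_{V_{r^{2n-1}}}$ on the $\GL_{(2n-1)r}$-block and to $\psi_{U'_{(m-2r)^{n-1}}}$ on the $H_{2n(m-2r)+2r}$-part, and — only in cases (3)--(7) of Cor. \ref{cor 10.3} with $k_0=1$ — that the numerical inequality $r<n$ genuinely rules out $n\mid r$; this last check must be done case by case exactly as in the proof of Cor. \ref{cor 10.2} and Remark \ref{rmk 10.2.1}.
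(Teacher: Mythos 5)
Your proposal is correct and follows essentially the same route as the paper: Theorem \ref{thm 8.10} plus the full invariance of $\epsilon'_{n,m,r}(\xi)$ under the $x_r(y,c)$ (Theorem \ref{thm 11.1}) to enlarge $\widetilde{\mathrm{X}}$ to all of $U_{(2n-1)r}$ and split off the $\hat V_{r^{2n-1}}$- and $U'_{(m-2r)^{n-1}}$-integrations, followed by the cuspidality-of-$\tau$ divisibility argument $n\mid r$ forcing $r=0$. The bookkeeping you flag as the main obstacle is exactly what the paper carries out in \eqref{11.52}--\eqref{11.54}, and your numerical check in cases (3)--(7) with $k_0=1$ matches the paper's.
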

\begin{proof}
By Theorem \ref{thm 8.10}, we have, with the notation there, 
\begin{equation}\label{11.52}
\mathcal{F}_\psi^r(\xi)(h)=\int_{\mathbf{Y}(\BA)}\epsilon'_{n,m,r}(\xi)(yw_0h)dy.
\end{equation}
Recall from Theorem \ref{thm 6.1} that
\begin{equation}\label{11.52}
\epsilon'_{n,m,r}(\xi)(h)=\int_{\mathcal{D'}_{n,m,r}(F)\backslash
	\mathcal{D'}_{n,m,r}(\BA)}\xi(vh)\psi_{\mathcal{D'}_{n,m,r}}^{-1}(v)dv,
\end{equation}
where $\mathcal{D}'_{n,m,r}=\mathrm{N}\widetilde{\mathrm{X}}$ is defined in \eqref{6.8'}, and the unipotent subgroups $\mathrm{N},\ \widetilde{\mathrm{X}}$ are defined in \eqref {6.8}, \eqref{6.8''}. Recall from \eqref{9.3} that we denoted, for $\xi\in A(\Delta(\tau,m)\gamma_\psi^{(\epsilon)},\eta, k)$, $f(\xi)(y)=\epsilon'_{n,m,r}(\xi)(x_r(y,c))$, where $x_r(y,c)$ is defined in \eqref{9.2}. By Theorem \ref{thm 11.1}, we get that, for all $r\leq [\frac{m}{2}]$, $\xi\in A(\Delta(\tau,m)\gamma_\psi^{(\epsilon)},\eta, k_0), h\in H(\BA)$, and $x_r(y,c)$ with adele coordinates,
\begin{equation}\label{11.53}
 \epsilon'_{n,m,r}(\xi)(x_r(y,c)h)=\epsilon'_{n,m,r}(\xi)(h).
 \end{equation}
 Now note that the group generated by $\widetilde{\mathrm{X}}$ and the subgroup of elements $x_r(y,c)$ is the unipotent radical $U_{(2n-1)r}+U_{(2n-1)r}^{H_{2nm}}$. By \eqref{11.53}, we see that in \eqref{11.52} we may replace $\mathcal{D}'_{n,m,r}$ by $\mathrm{N}U_{(2n-1)r}$ (we take the measure of $F\backslash \BA$ to be $1$). Also, the character $\psi_{\mathcal{D}'_{n,m,r}}$ is extended to be trivial on the elements $x_r(y,c)$. Note that this extended character is trivial on $U_{(2n-1)r}(\BA)$, and writing $\mathrm{N}$ as the direct product of $\hat{V}_{r^{2n-1}}$ and $U'_{(m-2r)^{n-1}}$, we see that the restrictions of $\psi_{\mathcal{D}'_{n,m,r}}$ to $\hat{V}_{r^{2n-1}}(\BA)$ and $U'_{(m-2r)^{n-1}}(\BA)$ are $\psi_{\hat{V}_{r^{2n-1}}}$ and $\psi_{U'_{(m-2r)^{n-1}}}$, respectively. (See \eqref{1'.15}, \eqref{1.10.3.1} for notation.) Thus, we may rewrite \eqref{11.52} as
 \begin{multline}\label{11.54}
 \epsilon'_{n,m,r}(\xi)(h)=\\
 \int_{U'_{(m-2r)^{n-1}}(F)\backslash U'_{(m-2r)^{n-1}}(\BA)}\int_{V_{r^{2n-1}}(F)\backslash V_{r^{2n-1}}(\BA)}\int_{U_{(2n-1)r}(F)\backslash U_{(2n-1)r}(\BA)}\xi(u\hat{v}u'h)\\
 \psi_{\hat{V}_{r^{2n-1}}}^{-1}(v)\psi_{U'_{(m-2r)^{n-1}}}^{-1}(u')dudvdu'=\\
  \int_{U'_{(m-2r)^{n-1}}(F)\backslash U'_{(m-2r)^{n-1}}(\BA)}(\xi^{U_{(2n-1)r}})^{\psi_{\hat{V}_{r^{2n-1}}}}(u'h)\psi_{U'_{(m-2r)^{n-1}}}^{-1}(u')du'.
  \end{multline}
 Substituting \eqref{11.54} in \eqref{11.51}, we get \eqref{11.50}. Assume that $k_0=1$ and we are in Cases (3) - (7) of Cor. \ref{cor 10.3}. Now, if $\mathcal{F}^r_\psi$ is nontrivial  on $A(\Delta(\tau,m)\gamma_\psi^{(\epsilon)},\eta, 1)$, then the constant term $\xi^{U_{(2n-1)r}}$ must be nontrivial on $A(\Delta(\tau,m)\gamma_\psi^{(\epsilon)},\eta, 1)$. Since $\tau$ is cuspidal, $(2n-1)r$ must be divisible by $n$, and hence $r$ is divisible by $n$. Since $r\leq [\frac{m}{2}]$, we see that in Cases (3) - (7) of Cor. \ref{cor 10.3}, with $k_0=1$, this forces $r=0$. This proves the theorem.                                      
\end{proof}

At this point, we have completed the proof of the cuspidality part of the double descent in Theorem \ref{thm 2.4}. Recall that right after the statement of the theorem, we explained how to prove this cuspidality, once we have Theorem \ref{thm 4.1}. In order to complete the proof of Theorem \ref{thm 2.4}, it remains to prove the unramified correspondence at almost all places, namely that
for each irreducible, automorphic, cuspidal representation $\sigma$ in \eqref{2.4}, $\sigma_v$ lifts  $\tau_v$, for almost all finite places $v$, where $\sigma_v$ and $\tau_v$ are unramified. This we do in the remaining two sections of this paper.

\section{The correspondence at unramified places I}

Our next and final goal in this paper is to prove
\begin{thm}\label{thm 12.1}
Let $\tau$ and $H^{(\epsilon)}_m$ be as in Theorem \ref{thm 2.2}. Consider the direct sum decomposition \eqref{2.2.1} of the double descent $\mathcal{D}\mathcal{D}_\psi(\tau)$ into irreducible, automorphic, cuspidal representations of $H^{(\epsilon)}_m(\BA)\times H^{(\epsilon)}_m(\BA)$
\begin{equation}\label{12.1}
\mathcal{D}\mathcal{D}_\psi(\tau)=\oplus_\sigma (\sigma\otimes \bar{\sigma}^\iota).
\end{equation}
Then each representation $\sigma$ appearing in \eqref{12.1} lifts at almost all finite unramified places to $\tau$.
\end{thm}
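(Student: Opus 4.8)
The plan is to compute the unramified parameters of the local components $\sigma_v$ of each summand $\sigma$ in \eqref{12.1} by exploiting the local version of the generalized doubling integral, exactly as the matching was used globally in Section~2. First I would fix a finite place $v\notin S$, so that $\tau_v$ is unramified; since $\tau$ is self-dual with trivial central character, write $\tau_v=\chi_1\times\cdots\times\chi_{n'}\times\chi_{n'}^{-1}\times\cdots\times\chi_1^{-1}$ as in \eqref{3.1} (in the case-of-functoriality set-ups this is precisely the shape occurring in Theorem~\ref{thm 0.5}). The key input is that $\sigma$ is obtained from a residue of the Eisenstein series $E(f_{\Delta(\tau,m)\gamma_\psi^{(\epsilon)},s})$ at $s=\tfrac12$ by applying the Fourier coefficient $\mathcal{F}_\psi$ and restricting to the first copy of $H_m^{(\epsilon)}$; hence by Prop.~\ref{prop 2.3} (and the nonvanishing in Theorem~\ref{thm 2.2}) the space of $\sigma$ pairs nontrivially, in the sense of \eqref{1.10.0}–\eqref{1.10.3}, against $\mathcal{D}\mathcal{D}_\psi(\tau)$, and therefore the generalized doubling integral \eqref{1.10} with $\pi=\bar\sigma^\iota$ is not identically zero on the data producing $\sigma$.

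Next I would localize. Since the global integral \eqref{1.10} is Eulerian and equals a product of local integrals times the ratio $L^S_{\epsilon,\psi}(\sigma\times\tau,s+\tfrac12)/D^{H,S}_\tau(s)$ by \eqref{1.10.8}, \eqref{1.11}, the fact that the global integral has a pole at $s=\tfrac12$ (which is forced because $\sigma$ pairs with the residue) together with the holomorphy and nonvanishing of $D^H_{\tau_v}$ at $s=\tfrac12$ shows that the local intertwining functional $\mathcal{L}_v(f_{\Delta(\tau_v,m)\gamma^{(\epsilon)}_{\psi_v},s},\varphi_{\sigma_v},\tilde\varphi_{\sigma_v})$, evaluated on the unramified vectors, must contribute the local factor $L_{\epsilon,\psi_v}(\sigma_v\times\tau_v,s+\tfrac12)$ with a pole at $s=\tfrac12$. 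A pole of $L(\sigma_v\times\tau_v,s)$ at $s=1$ for the unramified local $L$-factor is a purely combinatorial condition on the Satake parameters: it says the Satake parameter of $\tau_v$ must contain, up to the appropriate twist, the full Satake parameter of $\sigma_v$ (together with the dual data dictated by the classical-group $L$-group embedding). I would quote the local theory of the doubling integrals from \cite{CFK18} (Cor.~44 and the surrounding unramified computation) to make this precise, obtaining that $\sigma_v$ is, at the level of Satake parameters, a sub-parameter of $\tau_v$ under the relevant $L$-homomorphism; combined with the a priori knowledge that $\sigma$ has \emph{some} weak functorial lift on $\GL_n(\BA)$ (Arthur \cite{A13}, or \cite{CFK18}), this forces that lift to be $\tau$ at $v$.

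An alternative and perhaps cleaner route, which I would use as the primary argument in the case-of-functoriality set-ups covered by Cor.~\ref{cor 10.3}, is to read off the unramified module directly from the tower computation: Theorem~\ref{thm 11.2} (specialized to $k_0=1$, case (1)) together with the computation of the Jacquet module $J_{(\psi_v)_{U_{m^{n-1}}(F_v)}}(\rho^{\SO_{2nm}}_{\chi,\wedge^2,k_0})$ stated in Theorem~\ref{thm 0.5} identifies the local double descent at $v$ as $\Delta(\tau_v,\mu_0)\otimes\Delta(\tau_v,\mu_0)$ (with $\mu_0=2k_0-1=1$ in the functoriality case), so any irreducible unramified $\pi_1\otimes\pi_2$ occurring in a $\mathrm{Hom}$ against it must have $\pi_1\cong\pi_2$ equal to the unramified constituent of $\mathrm{Ind}^{\SO_m(F_v)}_{Q_{\mu_0^{n'}}}(\chi_1\circ\det\otimes\cdots\otimes\chi_{n'}\circ\det)$, which lifts to the unramified constituent of the corresponding induced representation of $\GL_{\mu_0 n}(F_v)$, i.e. to $\tau_v$. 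Applying this to $\pi_1=\sigma_v$, $\pi_2=\bar\sigma_v^\iota$ (noting $\bar\sigma_v^\iota$ is unramified with the conjugate-dual parameter, which for the relevant self-dual $\tau_v$ forces $\sigma_v\cong\bar\sigma_v^\iota$) gives the claim.

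The main obstacle I anticipate is bookkeeping rather than conceptual: one must carefully match the normalizations of the local doubling integral, the Weil-factor twist $\gamma_\psi^{(\epsilon)}$ in the metaplectic case, and the outer involution $\iota$, so that the combinatorial sub-parameter statement is stated on the correct side of the $L$-group embedding; in particular the even-orthogonal case $H_m^{(\epsilon)}=\SO_{2m'}$ requires extra care because there $J$ need not exhaust all representations lifting to $\tau$ and the parameter determines $\sigma$ only up to the outer automorphism, so the unramified matching statement has to be phrased at the level of $\mathrm{O}_{2m'}$-parameters. Once these normalizations are pinned down, the argument is a direct translation of the global pole-to-residue dictionary into the local Satake picture, and I would organize Sections~12 and~13 accordingly: Section~12 setting up the local integral and the unramified computation, Section~13 carrying out the parameter combinatorics and concluding Theorem~\ref{thm 12.1}, hence Theorem~\ref{thm 2.4} and Theorem~\ref{thm 0.4}.
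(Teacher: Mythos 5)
Your second route correctly identifies the strategy the paper actually uses: from the fact that $\sigma\otimes\bar{\sigma}^\iota$ is a summand of $\mathcal{D}\mathcal{D}_\psi(\tau)$ one gets, at an unramified place $v$, the nonvanishing of
$\Hom_{H^{(\epsilon)}_m(F_v)\times H^{(\epsilon)}_m(F_v)}(\sigma_v\otimes\hat{\sigma}_v^\iota,\,J_{(\psi_v)_{U_{m^{n-1}}(F_v)}}(\rho_{\tau_v,m,\gamma^{(\epsilon)}_{\psi_v};\eta,1}))$,
and one then computes this twisted Jacquet module and finds it is the tensor square of the unramified principal series attached to $\chi_1,\dots,\chi_{n'}$, forcing $\sigma_v$ to be its unramified constituent and hence to lift to $\tau_v$. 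But you supply no proof of that computation: you quote Theorem \ref{thm 0.5}, which is nothing but the forward statement of Theorem \ref{thm 12.3}, i.e.\ the very result Sections 12 and 13 exist to establish. The actual content — the Mackey analysis of $\Res_{Q^{(\epsilon)}_{m(n-1)}}\Ind^{H}_{Q^{(\epsilon)}_{mn}}\tau'\gamma^{(\epsilon)}_\psi$ over the double cosets $\alpha_r$, the vanishing of all contributions with $r\geq 1$ and of all $\gamma_e$ with $e<[\frac{m}{2}]$ (using the bound $\underline{P_\rho}$ on nilpotent orbits supporting the representation), the root exchanges trivializing the unipotent radicals $U^{H_m}_{[\frac{m}{2}]}\times U^{H_m}_{[\frac{m}{2}]}$, and the combinatorial determination of the unique relevant Weyl element $w_{\underline{k}^0}$ together with the modulus-character bookkeeping of Propositions \ref{prop 13.6}--\ref{prop 13.8} — is entirely absent. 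That is the theorem, not an ancillary detail.

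The route you present first is not repairable as stated. A pole of the partial $L$-function $L^S_{\epsilon,\psi}(\sigma\times\tau,s+\frac12)$ at $s=\frac12$ is a statement about the infinite product over $v\notin S$; it does not force any individual unramified factor $L(\sigma_v\times\tau_v,s)$ to have a pole at $s=1$, and the local integral $\mathcal{L}_v$ at a place in $S$ certainly need not acquire one. Moreover, even if a given local unramified factor did have a pole at $s=1$, that only records a single coincidence $\alpha_i\beta_j=q_v$ among Satake parameters, not the containment of the full parameter of $\sigma_v$ in that of $\tau_v$; so the "purely combinatorial condition" you invoke does not identify the lift of $\sigma_v$ with $\tau_v$. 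Falling back on Arthur to say $\sigma$ has \emph{some} weak lift $\Pi$ and then matching $\Pi$ with $\tau$ via the pole of $L^S(\sigma\times\tau,s)$ could in principle work, but (i) it requires first proving that this partial $L$-function has a pole for every summand $\sigma$ of the descent, which needs control of the local integrals at ramified places and is not supplied by Prop.~\ref{prop 2.3} (that proposition goes in the opposite direction), and (ii) it is exactly the external input the paper is at pains to avoid, as remarked after Theorem \ref{thm 2.5}.
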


Let us explain our strategy of proof in the four cases listed before Theorem \ref{thm 2.2}. Let $\sigma$ be a representation appearing in \eqref{12.1}, and let $v$ be a finite place of $F$. Then
\begin{equation}\label{12.2}
\Hom_{H^{(\epsilon)}_m(F_v)\times H^{(\epsilon)}_m(F_v)}(\sigma_v\otimes \hat{\sigma_v}^\iota,J_{(\psi_v)_{U_{m^{n-1}}(F_v)}}(\rho_{\tau_v,m,\gamma_{\psi_v}^{(\epsilon)};\eta,1}))\neq 0,
\end{equation}
where $J_{(\psi_v)_{U_{m^{n-1}}(F_v)}}$ denotes the (twisted) Jacquet functor with respect to the character $(\psi_v)_{U_{m^{n-1}}(F_v)}$ of $U_{m^{n-1}}(F_v)$, which is the component at $v$ of the character \eqref{1.3}. The representation (of $H(F_v)$) $\rho_{\tau_v,m,\gamma_{\psi_v}^{(\epsilon)};\eta,1}$ is the unramified constituent of the component at $v$ of the representation $\rho_{\Delta(\tau,m)\gamma_\psi^{(\epsilon)}, e_1^H(\eta)}$. Recall that $e_1^H(\eta)$ is the first positive pole of our Eisenstein series corresponding to $\rho_{\Delta(\tau,m)\gamma_\psi^{(\epsilon)}, s}$.

We fix $S$, a finite set of places of $F$, containing the Archimedean places, outside which $\tau$ is unramified, and $\psi$ is normalized. Consider the four cases of functoriality listed before Theorem \ref{thm 2.2}. In all these cases, $\tau$ is self-dual and has a trivial central character. Assume first that $n=2n'$ is even. Let $v\notin S$. Then we may write, as in \eqref{3.1},
\begin{equation}\label{12.2.1}
\tau_v=\chi_1\times\cdots\times \chi_{n'}\times\chi^{-1}_{n'}\times\cdots\times\chi^{-1}_1,
\end{equation}
where $\chi_i$ are unramified characters of $F_v^*$. Then in the two cases where
$L(\tau, \wedge^2,s)$ has a pole at $s=1$, we wrote in \eqref{3.4} that $\rho_{\tau_v,m, \gamma_{\psi_v}^{(\epsilon)};\wedge^2,1}$ is the unramified constituent of the parabolic induction,\\
\\
$\rho^H_{\chi,\gamma_{\psi_v}^{(\epsilon)},\wedge^2,1}=$ 
\begin{equation}\label{12.3}
\Ind^{H(F_v)}_{Q^{(\epsilon)}_{(m+1)^{n'},(m-1)^{n'}}(F_v)}[(\otimes_{i=1}^{n'}\chi_i\gamma_{\psi_v}^{(\epsilon)}\circ det_{\GL_{m+1}}) \otimes (\otimes_{i=1}^{n'}\chi_i\gamma_{\psi_v}^{(\epsilon)}\circ det_{\GL_{m-1}})].
\end{equation}
Recall that in this case either $H^{(\epsilon)}_m=\SO_m$, $H=\SO_{2nm}$, with $m=2m'-1=n+1$, or, when also $L(\tau,\frac{1}{2})\neq 0$, $H_m^{(\epsilon)}=\Sp_m^{(2)}$, $H=\Sp^{(2)}_{2nm}$, with $m=2m'=n$. Next, in the case where $L(\tau, \vee^2,s)$ has a pole at $s=1$, $H_m=\SO_m$, $H=\SO_{2mn}$, with $m=2m'=n$, $\rho_{\tau_v,m,;\vee^2,1}$ is the unramified constituent of the parabolic induction,
\begin{equation}\label{12.4}
\rho^H_{\chi,\vee^2,1}=\Ind^{H(F_v)}_{Q_{(m+2)^{n'},(m-2)^{n'}}(F_v)}[(\otimes_{i=1}^{n'}\chi_i\circ det_{\GL_{m+2}}) \otimes (\otimes_{i=1}^{n'}\chi_i\circ det_{\GL_{m-2}})].
\end{equation}
Finally, in the fourth case of functoriality, $n=2n'+1$ is odd. Write, as in \eqref{12.2.1},
\begin{equation}\label{12.5}
\tau_v=\chi_1\times\cdots\times \chi_{n'}\times 1\times\chi^{-1}_{n'}\times\cdots\times\chi^{-1}_1,
\end{equation}
where $\chi_i$ are unramified characters of $F_v^*$. In this case, $H_m=\Sp_m$, $H=\Sp_{2mn}$, with $m=2m'=n-1$, and $\rho_{\tau_v,m,;\vee^2,1}$ is the unramified constituent of the parabolic induction, \\
\\
$\rho^H_{\chi,\vee^2,1}=$
\\
\begin{equation}\label{12.6}
\Ind^{H(F_v)}_{Q_{(m+1)^{n'},m,(m-1)^{n'}}(F_v)}[(\otimes_{i=1}^{n'}\chi_i\circ det_{\GL_{m+1}}) \otimes |det_{\GL_m}|^{\frac{1}{2}}\otimes (\otimes_{i=1}^{n'}\chi_i\circ det_{\GL_{m-1}})]. 
\end{equation}   
Then we will prove 
\begin{thm}\label{thm 12.2}
Let $v\notin S$, and consider the unramified representation $\tau_v$ as in \eqref{12.2.1}, or \eqref{12.5}. Let $\rho^H_{\chi,\gamma_{\psi_v}^{(\epsilon)},\eta,1}$ be the unramified representation of $H(F_v)$, as in \eqref{12.3}, \eqref{12.4}, or \eqref{12.6}. Then
\begin{multline}\label{12.6.1}
J_{(\psi_v)_{U_{m^{n-1}}(F_v)}}(\rho_{\chi,\gamma_{\psi_v}^{(\epsilon)},\eta,1})\cong\\ (\Ind_{B_{H_m}^{(\epsilon)}(F_v)}^{H_m^{(\epsilon)}(F_v)}\chi_1\gamma_{\psi_v}^{(\epsilon)}\otimes\cdots\otimes\chi_{n'}\gamma_{\psi_v}^{(\epsilon)})\otimes (\Ind_{B_{H_m}^{(\epsilon)}(F_v)}^{H_m^{(\epsilon)}(F_v)}\chi_1\gamma_{\psi_v}^{(\epsilon)}\otimes\cdots\otimes\chi_{n'}\gamma_{\psi_v}^{(\epsilon)}),
\end{multline}
where $B_{H_m}=Q_{1^{[\frac{m}{2}]}}^{H_m}$ is the standard Borel subgroup of $H_m$. In particular, if $\pi_1$, $\pi_2$ are two irreducible, unramified representations of $H_m^{(\epsilon)}(F_v)$, such that
\begin{equation}\label{12.6.2}
\Hom_{H_m^{(\epsilon)}(F_v)\times H_m^{(\epsilon)}(F_v)}(\pi_1\otimes \pi_2,J_{(\psi_v)_{U_{m^{n-1}}(F_v)}}(\rho_{\chi,\gamma_{\psi_v}^{(\epsilon)},\eta,1}))\neq 0,
\end{equation}
then $\pi_1\cong \pi_2$ is the unramified constituent of $\Ind_{B_{H_m}^{(\epsilon)}(F_v)}^{H_m^{(\epsilon)}(F_v)}\chi_1\gamma_{\psi_v}^{(\epsilon)}\otimes\cdots\otimes\chi_{n'}\gamma_{\psi_v}^{(\epsilon)}$, and hence lifts to $\tau_v$. We conclude that if $\sigma$ satisfies \eqref{12.1}, and $\sigma_v$ is unramified, then $\sigma_v$ lifts to $\tau_v$.
\end{thm}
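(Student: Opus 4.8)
The plan is to reduce the assertion \eqref{12.6.1} to a purely local computation of a twisted Jacquet module of a degenerate principal series of $H(F_v)$, and then read off \eqref{12.6.2} and the lifting of $\sigma_v$ as formal consequences. First I would record the local input. At a place $v\notin S$ the leading term $a(f_{\Delta(\tau,m)\gamma^{(\epsilon)}_\psi,e^H_1(\eta)})$ has, as its component at $v$, the unramified constituent $\rho_{\tau_v,m,\gamma^{(\epsilon)}_{\psi_v};\eta,1}$ of the local induced representation, and the Fourier coefficient $\mathcal{F}_\psi$ localizes to the twisted Jacquet functor $J_{(\psi_v)_{U_{m^{n-1}}(F_v)}}$. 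Hence, since $\sigma\otimes\bar{\sigma}^\iota$ occurs in $\mathcal{D}\mathcal{D}_\psi(\tau)$ (the space \eqref{0.6} being nonzero by Theorem \ref{thm 2.2}), the local consequence of the argument of Prop. \ref{prop 2.3} is exactly \eqref{12.2}. So everything comes down to determining the $H_m^{(\epsilon)}(F_v)\times H_m^{(\epsilon)}(F_v)$-module $J_{(\psi_v)_{U_{m^{n-1}}(F_v)}}(\rho_{\chi,\gamma^{(\epsilon)}_{\psi_v},\eta,1})$.

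For that I would work with the full degenerate principal series $\Ind^{H(F_v)}_{Q}(\mu)$ of which $\rho_{\chi,\gamma^{(\epsilon)}_{\psi_v},\eta,1}$ is the unramified constituent, where $Q$ and the one-dimensional datum $\mu$ are as in \eqref{12.3}, \eqref{12.4} or \eqref{12.6} according to the case. Applying the geometric lemma of Bernstein--Zelevinsky, in its twisted form, to $\Ind^{H(F_v)}_{Q}(\mu)$ along $U_{m^{n-1}}$ produces a filtration of $J_{U_{m^{n-1}}}(\Ind^{H(F_v)}_{Q}(\mu))$ indexed by the $Q$-by-$Q_{m^{n-1}}$ double cosets in $H(F_v)$, after which one still takes coinvariants against $(\psi_v)_{U_{m^{n-1}}(F_v)}$. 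This character corresponds to the nilpotent orbit $((2n-1)^m,1^m)$; by the proofs of Prop. \ref{prop 3.1} and Prop. \ref{prop 3.2} (which go through the germ expansion \eqref{3.3} of the local induced module), every subquotient of $\Ind^{H(F_v)}_{Q}(\mu)$ has degenerate Whittaker support bounded above by the partition listed there, so only the double cosets compatible with $((2n-1)^m,1^m)$ contribute; one then checks, exactly as in the global root-exchange analysis of Sections 4--11, that there is a single such double coset, represented by the local incarnation (with $r=0$) of the Weyl element $w_0$ of Section 5. Carrying out the resulting root exchanges and Fourier expansions over $F_v$ — this is the local shadow of \eqref{11.50} at $r=0$ — I expect the surviving graded piece, after the twisted coinvariants, to be precisely $\pi(\chi)\boxtimes\pi(\chi)$ with $\pi(\chi)=\Ind^{H_m^{(\epsilon)}(F_v)}_{B_{H_m}^{(\epsilon)}(F_v)}(\chi_1\gamma^{(\epsilon)}_{\psi_v}\otimes\cdots\otimes\chi_{n'}\gamma^{(\epsilon)}_{\psi_v})$. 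This gives $[J_{(\psi_v)_{U_{m^{n-1}}(F_v)}}(\Ind^{H(F_v)}_{Q}(\mu))]=[\pi(\chi)\boxtimes\pi(\chi)]$ in the Grothendieck group, hence $[J_{(\psi_v)_{U_{m^{n-1}}(F_v)}}(\rho_{\chi,\gamma^{(\epsilon)}_{\psi_v},\eta,1})]\le[\pi(\chi)\boxtimes\pi(\chi)]$; the reverse inequality, and thus the isomorphism \eqref{12.6.1}, then follows by combining this with the non-vanishing of the left-hand side (guaranteed globally by Theorem \ref{thm 2.2}) and a length bookkeeping showing that the orbit $((2n-1)^m,1^m)$ is supported only by the distinguished constituent $\rho_{\chi,\gamma^{(\epsilon)}_{\psi_v},\eta,1}$.

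To deduce \eqref{12.6.2}, suppose $\pi_1\otimes\pi_2$ embeds into $J_{(\psi_v)_{U_{m^{n-1}}(F_v)}}(\rho_{\chi,\gamma^{(\epsilon)}_{\psi_v},\eta,1})\cong\pi(\chi)\boxtimes\pi(\chi)$. By the standard fact that an irreducible smooth subrepresentation of an external tensor product $A\boxtimes B$ of admissible representations of $H_m^{(\epsilon)}(F_v)\times H_m^{(\epsilon)}(F_v)$ factors as $A'\boxtimes B'$ with $A'\hookrightarrow A$ and $B'\hookrightarrow B$, we get $\pi_1\hookrightarrow\pi(\chi)$ and $\pi_2\hookrightarrow\pi(\chi)$; since $\pi_1,\pi_2$ are irreducible and unramified and $\pi(\chi)$ is an unramified principal series of $H_m^{(\epsilon)}(F_v)$, each is the unique unramified constituent of $\pi(\chi)$, so $\pi_1\cong\pi_2$. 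Its Satake parameter is the one attached to $\{\chi_1^{\pm1},\dots,\chi_{n'}^{\pm1}\}$ — with an extra $1$ in the odd-orthogonal fourth case — which under the standard embedding of the dual group of $H_m^{(\epsilon)}$ into $\GL_n(\BC)$ is exactly the Satake parameter of $\tau_v$; hence $\pi_1$ lifts to $\tau_v$. Feeding \eqref{12.2} into this with $\pi_1=\sigma_v$ (and $\pi_2=\hat{\sigma}_v^\iota$) yields the final assertion, and therefore Theorem \ref{thm 12.1}.

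The main obstacle is the middle step, the local twisted-Jacquet computation, for two reasons. First, one is computing the Jacquet module of the irreducible subquotient $\rho_{\chi,\gamma^{(\epsilon)}_{\psi_v},\eta,1}$ rather than of the whole induced module, so passing from the geometric-lemma upper bound to the exact isomorphism \eqref{12.6.1} needs the global non-vanishing input together with the verification that the surviving orbit sits only in the distinguished constituent. Second, the root-exchange and Fourier-expansion identification of the surviving graded piece must be carried through all four families (symplectic, metaplectic, split even and odd orthogonal), and in the metaplectic case one must keep track of the Weil factor $\gamma_{\psi_v}$ throughout. These manipulations are, however, the exact local counterparts of those already done in Sections 4--11, so once the set-up is in place I expect no genuinely new difficulty beyond a careful, if lengthy, case analysis.
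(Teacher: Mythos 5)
Your overall strategy --- Mackey theory along the relevant parabolic subgroups, killing all but one double coset by the nilpotent-orbit bounds of Propositions \ref{prop 3.1} and \ref{prop 3.2}, and identifying the surviving piece by root exchanges --- is exactly the route the paper takes (it proves the more general Theorem \ref{thm 12.3}, of which Theorem \ref{thm 12.2} is the case $k_0=1$, in Sections 12 and 13). But your proposal contains one misreading that creates an artificial and problematic extra step. In the paper's notation, $\rho_{\chi,\gamma^{(\epsilon)}_{\psi_v},\eta,1}$ in \eqref{12.6.1} is the \emph{full} degenerate principal series defined in \eqref{12.3}, \eqref{12.4}, \eqref{12.6} (it equals $\Ind^{H(F_v)}_{Q^{(\epsilon)}_{mn}(F_v)}\tau'\gamma^{(\epsilon)}_{\psi_v}$ with $\tau'$ as in \eqref{12.9}, \eqref{12.9.1}), not its unramified constituent $\rho_{\tau_v,m,\gamma^{(\epsilon)}_{\psi_v};\eta,1}$. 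Consequently the paper computes $J_{(\psi_v)_{U_{m^{n-1}}(F_v)}}$ of the induced module \emph{exactly}, by purely local means; there is no passage from a Grothendieck-group upper bound to an isomorphism, no appeal to the global non-vanishing of Theorem \ref{thm 2.2}, and no need for your ``length bookkeeping showing that the orbit is supported only by the distinguished constituent.'' That last step, as you state it, is a genuine gap: you would have to determine which constituents of the degenerate principal series support the orbit $((2n-1)^m,1^m)$, which is established nowhere and is not a formal consequence of the germ expansion \eqref{3.3} (that expansion bounds the support of the whole induced module, not of each constituent separately). Fortunately the step is unnecessary once the statement is read correctly; for the final assertion about $\sigma_v$ one only needs that $\sigma_v\otimes\hat{\sigma}_v^{\iota}$ is a constituent of the Jacquet module of the full induced representation, which follows from \eqref{12.2} by exactness of the Jacquet functor.

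The second caveat is that the identification of the surviving graded piece as $\pi(\chi)\boxtimes\pi(\chi)$ --- which you only ``expect'' --- is where almost all of the work lies: in the paper this occupies Theorem \ref{thm 12.4}, Propositions \ref{prop 12.5}--\ref{prop 12.9} and all of Section 13, including the determination of the unique relevant Weyl element $w_{\underline{k}^0}$ (Propositions \ref{prop 13.1}, \ref{prop 13.2}) and the modulus-character computations (Lemma \ref{lem 13.5}, Proposition \ref{prop 13.8}) needed to see that the unnormalized induction produced by Mackey theory is the normalized unramified principal series with inducing data $\chi_1,\dots,\chi_{n'}$. Your deduction of \eqref{12.6.2} from \eqref{12.6.1}, and of the lifting of $\sigma_v$ to $\tau_v$, is fine.
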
		

We will prove a more general theorem, which will yield Theorem \ref{thm 12.2} as a special case. We will place ourselves in the more general cases of Cor. \ref{cor 10.3}. In order to formulate our general theorem, we need some preparation. We let $\tau_v$ be the unramified representation of $\GL_n(F_v)$ as in \eqref{12.2.1}, or \eqref{12.5}. We will divide the cases of Cor. \ref{cor 10.3} into four main cases, according to the parities of $n$ and $m$.\\
\\
{\bf Case 1:} $n=2n'$, $m=2m'$ are even; $m=\mu_0n$,
\begin{multline}\label{12.7}
\rho^H_{\chi,\gamma_{\psi_v}^{(\epsilon)},\eta,k_0}=\\
\Ind^{H(F_v)}_{Q^{(\epsilon)}_{(m+\mu_0)^{n'},(m-\mu_0)^{n'}}(F_v)}[(\otimes_{i=1}^{n'}\chi_i\gamma_{\psi_v}^{(\epsilon)}\circ det_{\GL_{m+\mu_0}}) \otimes (\otimes_{i=1}^{n'}\chi_i\gamma_{\psi_v}^{(\epsilon)}\circ det_{\GL_{m-\mu_0}})],
\end{multline}
where $\mu_0=2k_0-1$ in the following cases
\begin{enumerate}
	\item $H=\Sp_{2nm}^{(2)}$, $\eta=\wedge^2$,
	\item $H=\Sp_{2nm}$, $\eta=\vee^2$,
	\item $H=\SO_{2nm}$, $\eta=\vee^2$,
\end{enumerate}
and $\mu_0=2k_0$ in the following cases
\begin{enumerate}
	\item $H=\Sp_{2nm}^{(2)}$, $\eta=\vee^2$,
	\item $H=\Sp_{2nm}$, $\eta=\wedge^2$,
	\item $H=\SO_{2nm}$, $\eta=\wedge^2$,
\end{enumerate}
In each of the last six cases, the nilpotent orbit corresponding to the representation \eqref{12.7} is given by the partition
\begin{equation}\label{12.8}
\underline{P_\rho}=((2n)^{m-\mu_0},n^{2\mu_0}).
\end{equation}
These are the partitions mentioned in Prop. \ref{prop 3.1} and Prop. \ref{prop 3.2}.\\
{\bf Case 2:} $n=2n'$ is even, $m=2m'-1$ is odd; $m-1=\mu_0n$.
\begin{enumerate}
	\item $\mu_0=2k_0-1$, $H=\SO_{2nm}$, $\eta=\wedge^2$.
	\begin{equation}\label{12.11}
	\rho^H_{\chi,\gamma_{\psi_v}^{(\epsilon)},\eta,k_0}=\Ind^{H(F_v)}_{Q_{(m+\mu_0)^{n'},(m-\mu_0)^{n'}}(F_v)}[(\otimes_{i=1}^{n'}\chi_i\circ det_{\GL_{m+\mu_0}}) \otimes (\otimes_{i=1}^{n'}\chi_i\circ det_{\GL_{m-\mu_0}})],
	\end{equation}
	$$
	\underline{P_\rho}=((2n)^{m-\mu_0},n^{2\mu_0}).
	$$
	\item $\mu_0=2k_0$, $H=\SO_{2nm}$, $\eta=\vee^2$.
	\begin{equation}\label{12.12}
	\rho^H_{\chi,\gamma_{\psi_v}^{(\epsilon)},\eta,k_0}=\Ind^{H(F_v)}_{Q_{(m+\mu_0)^{n'},(m-\mu_0)^{n'}}(F_v)}[(\otimes_{i=1}^{n'}\chi_i\circ det_{\GL_{m+\mu_0}}) \otimes (\otimes_{i=1}^{n'}\chi_i\circ det_{\GL_{m-\mu_0}})],
	\end{equation}
	$$
		\underline{P_\rho}=((2n)^{m-\mu_0-1},2n-1, n+1, n^{2\mu_0-2},n-1,1).
	$$
	\end{enumerate}
We wrote the partitions $\underline{P}$ corresponding to the representations $\rho^H_{\chi,\gamma_{\psi_v}^{(\epsilon)},\eta,k_0}$ in \eqref{12.11}, \eqref{12.12} as in Prop. \ref{prop 3.1}, Prop. \ref{prop 3.2}. \\
{\bf Case 3:} $n=2n'+1$ is odd, $m=2m'$ is even; $m=\mu_0(n-1)$,
\begin{multline}\label{12.14}
\rho^H_{\chi,\gamma_{\psi_v}^{(\epsilon)},\eta,k_0}=
\Ind^{H(F_v)}_{Q^{(\epsilon)}_{(m+\mu_0)^{n'},m,(m-\mu_0)^{n'}}(F_v)}\chi\\
\chi=[(\otimes_{i=1}^{n'}\chi_i\gamma_{\psi_v}^{(\epsilon)}\circ det_{\GL_{m+\mu_0}}) \otimes |det_{\GL_m}|^{\frac{\mu_0}{2}}\otimes (\otimes_{i=1}^{n'}\chi_i\gamma_{\psi_v}^{(\epsilon)}\circ det_{\GL_{m-\mu_0}})],
\end{multline}
where $\mu_0=2k_0-1$ in the following cases, where we indicate the partition corresponding to the representation \eqref{12.14} (as in Prop. \ref{prop 3.2}),
\begin{enumerate}
	\item $H=\Sp_{2nm}$, $\eta=\vee^2$, 
	$$
	\underline{P_\rho}=((2n)^{m-\mu_0}, (n+1)^{\mu_0},(n-1)^{\mu_0}),
	$$
	\item $H=\SO_{2nm}$, $\eta=\vee^2$,
 $$
 \underline{P_\rho}=((2n)^{m-2k_0},2n-1,n+2,(n+1)^{2k_0-2},(n-1)^{2k_0-2},n-2,1).
 $$
\end{enumerate}
and $\mu_0=2k_0$ in the case $H=\Sp_{2nm}^{(2)}$, $\eta=\vee^2$,
$$ 
\underline{P_\rho}=((2n)^{m-\mu_0}, (n+1)^{\mu_0},(n-1)^{\mu_0}).
$$
{\bf Case 4:} $n=2n'+1$, $m=2m'-1$ odd, $m-1=\mu_0(n-1)$,
\begin{multline}\label{12.16}
\rho^H_{\chi,\gamma_{\psi_v}^{(\epsilon)},\eta,k_0}=
\Ind^{H(F_v)}_{Q_{(m+\mu_0)^{n'},m,(m-\mu_0)^{n'}}(F_v)}\chi\\
\chi=[(\otimes_{i=1}^{n'}\chi_i\circ det_{\GL_{m+\mu_0}}) \otimes |det_{\GL_m}|^{\frac{\mu_0}{2}}\otimes (\otimes_{i=1}^{n'}\chi_i\circ det_{\GL_{m-\mu_0}})],
\end{multline}
 where $\mu_0=2k_0$, $H=\SO_{2nm}$, $\eta=\vee^2$, and the partition corresponding to the representation \eqref{12.16} is (as in Prop. \ref{prop 3.2} (4))
 $$
\underline{P_\rho}=((2n)^{m-2k_0+1},2n-1,n+2,(n+1)^{2k_0-2},n^2,(n-1)^{2k_0-2},n-2,1).
$$
Denote in Cases 1, 2,
\begin{equation}\label{12.9}
\tau'=\Ind^{\GL_{nm}(F_v)}_{P_{(m+\mu_0)^{n'},(m-\mu_0)^{n'}}(F_v)}[(\otimes_{i=1}^{n'}\chi_i\circ det_{\GL_{m+\mu_0}}) \otimes (\otimes_{i=1}^{n'}\chi_i\circ det_{\GL_{m-\mu_0}})],
\end{equation}
$$
\underline{P_{\tau'}}=(n^{m-\mu_0}, (n')^{2\mu_0}).
$$
Denote
$$
\chi_{\tau'}=(\otimes_{i=1}^{n'}\chi_i\circ det_{\GL_{m+\mu_0}}) \otimes (\otimes_{i=1}^{n'}\chi_i\circ det_{\GL_{m-\mu_0}}),
$$
and think of it as a character of $P_{(m+\mu_0)^{n'},(m-\mu_0)^{n'}}(F_v)$.
In Cases 3, 4, denote
\begin{multline}\label{12.9.1}
\tau'=
\Ind^{\GL_{nm}(F_v)}_{P_{(m+\mu_0)^{n'},m,(m-\mu_0)^{n'}}(F_v)}[(\otimes_{i=1}^{n'}\chi_i\circ det_{\GL_{m+\mu_0}}) \otimes|det_{\GL_m}|^{\frac{\mu_0}{2}}\otimes\\ 
\otimes(\otimes_{i=1}^{n'}\chi_i\circ det_{\GL_{m-\mu_0}})],
\end{multline}
$$
\underline{P_{\tau'}}=(n^{m-\mu_0}, (n'+1)^{\mu_0}, (n')^{\mu_0}).
$$
Similarly, we denote here,
$$
\chi_{\tau'}=(\otimes_{i=1}^{n'}\chi_i\circ det_{\GL_{m+\mu_0}}) \otimes|det_{\GL_m}|^{\frac{\mu_0}{2}}\otimes\\ 
\otimes(\otimes_{i=1}^{n'}\chi_i\circ det_{\GL_{m-\mu_0}}),
$$
and think of it as a character of $P_{(m+\mu_0)^{n'},m,(m-\mu_0)^{n'}}(F_v)$.
In all cases, the nilpotent orbits which define nontrivial Jacquet modules of  $\tau'$ are all bounded by the nilpotent orbit corresponding to the partition $\underline{P_{\tau'}}$. We also have 
\begin{equation}\label{12.10}
\rho^H_{\chi,\gamma_{\psi_v}^{(\epsilon)},\eta,k_0}=\Ind^{H(F_v)}_{Q^{(\epsilon)}_{mn}(F_v)}\tau'\gamma_{\psi_v}^{(\epsilon)}.
\end{equation}
Our main goal now is to prove the following theorem, a special case of which is Theorem \ref{thm 12.2}.
\begin{thm}\label{thm 12.3}
	Let $v\notin S$, and consider the unramified representation $\tau_v$ as in \eqref{12.2.1}, or \eqref{12.5}. Then
	\begin{multline}\label{12.17}
	J_{(\psi_v)_{U_{m^{n-1}}(F_v)}}(\rho_{\chi,\gamma_{\psi_v}^{(\epsilon)},\eta,k_0})\cong\\ (\Ind_{Q_{\mu_0^{n'}}^{(\epsilon)}(F_v)}^{H_m^{(\epsilon)}(F_v)}\chi_1\gamma_{\psi_v}^{(\epsilon)}\circ det_{\GL_{\mu_0}}\otimes\cdots\otimes\chi_{n'}\gamma_{\psi_v}^{(\epsilon)}\circ det_{\GL_{\mu_0}})\otimes\\ \otimes(\Ind_{Q_{\mu_0^{n'}}^{(\epsilon)}(F_v)}^{H_m^{(\epsilon)}(F_v)}\chi_1\gamma_{\psi_v}^{(\epsilon)}\circ det_{\GL_{\mu_0}}\otimes\cdots\otimes\chi_{n'}\gamma_{\psi_v}^{(\epsilon)}\circ det_{\GL_{\mu_0}}),
	\end{multline}
	In particular, if $\pi_1$, $\pi_2$ are two irreducible, unramified representations of $H_m^{(\epsilon)}(F_v)$, such that
	\begin{equation}\label{12.18}
	\Hom_{H_m^{(\epsilon)}(F_v)\times H_m^{(\epsilon)}(F_v)}(\pi_1\otimes \pi_2,
	J_{(\psi_v)_{U_{m^{n-1}}(F_v)}}(\rho_{\chi,\gamma_{\psi_v}^{(\epsilon)},\eta,k_0})\neq 0,
	\end{equation}
	then $\pi_1\cong \pi_2$ is the unramified constituent of 
	$$
	\Ind_{Q_{\mu_0^{n'}}^{(\epsilon)}(F_v)}^{H_m^{(\epsilon)}(F_v)}(\chi_1\gamma_{\psi_v}^{(\epsilon)}\circ det_{\GL_{\mu_0}}\otimes\cdots\otimes\chi_{n'}\gamma_{\psi_v}^{(\epsilon)}\circ det_{\GL_{\mu_0}});
	$$
When $H$ is not symplectic, it lifts to the unramified constituent of
$$
\Ind_{P_{\mu_0^{2n'}}(F_v)}^{\GL_{2\mu_0n'}(F_v)}(\chi_1\circ det_{\GL_{\mu_0}}\otimes\cdots\otimes\chi_{n'}\circ det_{\GL_{\mu_0}}\otimes
\chi^{-1}_{n'}\circ det_{\GL_{\mu_0}}\otimes\cdots\otimes \chi^{-1}_1\circ det_{\GL_{\mu_0}}), 
$$
and when $H$ is symplectic, it lifts to the unramified constituent of
$$
\Ind_{P_{\mu_0^{2n'},1}(F_v)}^{\GL_{2\mu_0n'+1}(F_v)}(\chi_1\circ det_{\GL_{\mu_0}}\otimes\cdots\otimes\chi_{n'}\circ det_{\GL_{\mu_0}}\otimes
\chi^{-1}_{n'}\circ det_{\GL_{\mu_0}}\otimes\cdots\otimes \chi^{-1}_1\circ det_{\GL_{\mu_0}}\otimes 1). 
$$
\end{thm}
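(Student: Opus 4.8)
The plan is to compute the twisted Jacquet module $J_{(\psi_v)_{U_{m^{n-1}}(F_v)}}(\rho_{\chi,\gamma_{\psi_v}^{(\epsilon)},\eta,k_0})$ directly, using the realization \eqref{12.10}, $\rho^H_{\chi,\gamma_{\psi_v}^{(\epsilon)},\eta,k_0}=\Ind^{H(F_v)}_{Q^{(\epsilon)}_{mn}(F_v)}\tau'\gamma_{\psi_v}^{(\epsilon)}$, where $\tau'$ is the explicit induced representation \eqref{12.9} (resp. \eqref{12.9.1}) on $\GL_{nm}(F_v)$, all of whose Jacquet modules are bounded by the orbit $\underline{P_{\tau'}}$, and $\rho$ itself is the unramified constituent attached to the orbit $\underline{P_\rho}$. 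First I would apply the geometric lemma of Bernstein--Zelevinsky to $\Ind^{H(F_v)}_{Q^{(\epsilon)}_{mn}(F_v)}\tau'\gamma_{\psi_v}^{(\epsilon)}$ and the subgroup $U_{m^{n-1}}$ with the character $(\psi_v)_{U_{m^{n-1}}(F_v)}$. This produces a finite filtration of the twisted Jacquet module whose successive subquotients are parametrized by the double cosets in $Q_{mn}(F_v)\backslash H(F_v)/Q_{m^{n-1}}(F_v)$, each subquotient being obtained from a (twisted) Jacquet functor applied to a Weyl translate of $\tau'\gamma_{\psi_v}^{(\epsilon)}$ along a unipotent subgroup determined by the representative. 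The stabilizer of $(\psi_v)_{U_{m^{n-1}}(F_v)}$ inside $M_{m^{n-1}}(F_v)$ being isomorphic to $H_m^{(\epsilon)}(F_v)\times H_m^{(\epsilon)}(F_v)$, the whole object carries an action of this group, compatibly with the filtration.

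The heart of the argument, and the step I expect to be the main obstacle, is to show that all but one of these subquotients vanish. This is the purely local, unramified shadow of the chain of Fourier expansions of Sections 4--11: for a given representative $w$, nontriviality of the corresponding subquotient forces the degenerate Whittaker datum it induces on $\GL_{nm}(F_v)$ to correspond to a nilpotent orbit majorized by $\underline{P_{\tau'}}$, and simultaneously the orbit on $\Lie(H)$ it produces must be compatible with $((2n-1)^m,1^m)$ and majorized by $\underline{P_\rho}$ (the a priori bounds recalled in Section 3, i.e. Propositions \ref{prop 3.1}, \ref{prop 3.2}, obtained from \cite{JLZ13}). I would run the same sequence of root exchanges and Fourier expansions as in the proofs of Theorems \ref{thm 6.1}, \ref{thm 9.1}, \ref{thm 9.7}, \ref{thm 10.1} and \ref{thm 11.1}, now in a setting where they are considerably simpler; together with the orbit bounds on $\underline{P_{\tau'}}$ and $\underline{P_\rho}$, this leaves exactly the "big cell" representative, whose associated functor on $\tau'$ is the generalized Gelfand--Graev/Shalika-type functor attached to $\underline{P_{\tau'}}$. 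In the metaplectic case $\epsilon=2$ one must keep track of the Weil factor $\gamma_{\psi_v}$ and of genuineness throughout; the splitting of unipotent subgroups in $\Sp^{(2)}$ recalled in Section 1 makes this bookkeeping routine.

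For the surviving term I would then evaluate this functor on $\tau'=\Ind(\chi_{\tau'})$ explicitly. Because $\tau'$ is induced from characters of the form $\chi_i\circ\det$ on blocks of size $m\pm\mu_0$ (and, in Cases 3, 4, an additional $|\det_{\GL_m}|^{\mu_0/2}$ block), its relevant top Jacquet module is a tensor product of copies of $\chi_i\circ\det_{\GL_{\mu_0}}$, and reassembling these pieces according to the $H_m^{(\epsilon)}(F_v)\times H_m^{(\epsilon)}(F_v)$ structure identified above yields precisely \eqref{12.17}, namely $A_v\otimes A_v$ with $A_v=\Ind_{Q_{\mu_0^{n'}}^{(\epsilon)}(F_v)}^{H_m^{(\epsilon)}(F_v)}(\chi_1\gamma_{\psi_v}^{(\epsilon)}\circ\det_{\GL_{\mu_0}}\otimes\cdots\otimes\chi_{n'}\gamma_{\psi_v}^{(\epsilon)}\circ\det_{\GL_{\mu_0}})$. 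The passage from the surviving functor to $A_v$ is a standard induction-in-stages manipulation for unramified data, with the $|\det_{\GL_m}|^{\mu_0/2}$ twist in Cases 3, 4 accounting for the half-integral exponents.

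Finally, for the "in particular" statement: given irreducible unramified $\pi_1,\pi_2$ with $\Hom_{H_m^{(\epsilon)}(F_v)\times H_m^{(\epsilon)}(F_v)}(\pi_1\otimes\pi_2,\,A_v\otimes A_v)\neq 0$, the standard factorization $\Hom_{G_1\times G_2}(\pi_1\otimes\pi_2,A_v\otimes A_v)\cong\Hom_{G_1}(\pi_1,A_v)\otimes\Hom_{G_2}(\pi_2,A_v)$ for admissible representations forces $\pi_i\hookrightarrow A_v$ for $i=1,2$; since $A_v$ is induced from an unramified character of a parabolic its space of hyperspecial-fixed vectors is one-dimensional, so $A_v$ has a unique irreducible unramified subquotient, whence $\pi_1\cong\pi_2\cong$ that constituent. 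Reading off its Satake parameter from $A_v$ and applying the standard functorial lift on the dual-group side gives the unramified constituent of $\Ind_{P_{\mu_0^{2n'}}(F_v)}^{\GL_{2\mu_0n'}(F_v)}(\chi_1\circ\det_{\GL_{\mu_0}}\otimes\cdots\otimes\chi^{-1}_1\circ\det_{\GL_{\mu_0}})$ when $H$ is orthogonal or metaplectic, with an extra trivial character and target $\GL_{2\mu_0n'+1}$ when $H$ is linear symplectic, the latter reflecting the odd-dimensional standard representation of $\SO_{2\mu_0n'+1}(\BC)=\widehat{\Sp}_{2\mu_0n'}$. This is exactly the asserted lift. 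Specializing to $k_0=1$, $\mu_0=1$ recovers Theorem \ref{thm 12.2}; combined with the decomposition \eqref{12.1} this shows every $\sigma$ occurring in the double descent lifts to $\tau$ at all finite unramified places, which proves Theorem \ref{thm 12.1} and thereby completes the proof of Theorem \ref{thm 2.4}.
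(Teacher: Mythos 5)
Your overall architecture matches the paper's: Mackey theory applied to $\Ind^{H(F_v)}_{Q^{(\epsilon)}_{mn}(F_v)}\tau'\gamma_{\psi_v}^{(\epsilon)}$ along $Q_{m^{n-1}}$, vanishing of all but one subquotient via the nilpotent-orbit bounds $\underline{P_{\tau'}}$ and $\underline{P_\rho}$ of Propositions \ref{prop 3.1}, \ref{prop 3.2}, then evaluation of the survivor. (One caveat on the survivor: the paper emphasizes that, in contrast with the global unfolding, it is the \emph{closed} coset, $r=0$ together with $e=[\frac{m}{2}]$, that contributes — not the big cell — and the vanishing for $r\geq 1$ is an irrelevance argument in the style of \cite{GS18}, Prop.\ 2.3, rather than a root-exchange argument.)

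The genuine gap is in your final step, where you dispose of the evaluation of the surviving functor on $\tau'$ as "a standard induction-in-stages manipulation." This is where most of the actual work lies. First, before any such evaluation, one must show that the compactly induced module $ind^{t^{(\epsilon)}(H^{(\epsilon)}_m\times H^{(\epsilon)}_m)}_{t^{(\epsilon)}(H^{(\epsilon)}_m\times H^{(\epsilon)}_m)\cap Q^{(0),(\epsilon)}(F_v)}\sigma_{[\tau',[\frac{m}{2}]]}$ is a genuine parabolic induction from $Q^{H_m}_{[\frac{m}{2}]}\times Q^{H_m}_{[\frac{m}{2}]}$, i.e.\ that the unipotent radicals act trivially (Prop.\ \ref{prop 12.6}); this already requires Fourier expansions and root exchanges controlled by $\underline{P_{\tau'}}$, and the root exchanges of Cor.\ \ref{cor 12.7} and Prop.\ \ref{prop 12.9} contribute Jacobian twists such as \eqref{12.57.8} and \eqref{12.71} that enter the normalization. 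Second, the resulting twisted Jacquet module $J_{(\psi_v)_{V_{[\frac{m}{2}],[\frac{m+1}{2}]^n,[\frac{m}{2}]^{n-1}}}}(\tau')$ is computed by a second Mackey decomposition, now on $\GL_{mn}(F_v)$: the uniqueness of the surviving double coset $w_{\underline{k}^0}$ (Propositions \ref{prop 13.1}, \ref{prop 13.2}) is a nontrivial combinatorial argument in which the numerical hypotheses $[\frac{m}{2}]=\mu_0[\frac{n}{2}]$ from Cases 1--4 are used in an essential way — your proposal never indicates where these relations enter. Finally, the identification of the answer as the \emph{normalized} induction in \eqref{12.17}, rather than some unramified twist of it, rests on the explicit modulus-character computations of Lemma \ref{lem 13.5} and Propositions \ref{prop 13.6}--\ref{prop 13.8}; since the "in particular" clause reads off the Satake parameter of the unramified constituent, getting this twist exactly right is not optional. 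The concluding argument about $\pi_1\cong\pi_2$ and the lift is fine as you state it.
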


Our main task is to analyze the Jacquet module $J_{(\psi_v)_{U_{m^{n-1}}(F_v)}}(\rho_{\chi,\gamma_{\psi_v}^{(\epsilon)},\eta,k_0})$ as a $H_m^{(\epsilon)}(F_v)\times H_m^{(\epsilon)}(F_v)$ - module. We will do this using Mackey theory. There will be three parts to this analysis. By \eqref{12.10},
\begin{equation}\label{12.19}
J_{(\psi_v)_{U_{m^{n-1}}(F_v)}}(\rho_{\chi,\gamma_{\psi_v}^{(\epsilon)},\eta,k_0})\cong J_{(\psi_v)_{U_{m^{n-1}}(F_v)}}(\Ind^{H(F_v)}_{Q^{(\epsilon)}_{mn}(F_v)}\tau'\gamma_{\psi_v}^{(\epsilon)}).
\end{equation}
In the first and second parts, we analyze the Jacquet module on the r.h.s. of \eqref{12.19} in terms of $\tau'$. The first part is done very similarly to the step in the unfolding process of the doubling integrals for $H_m^{(\epsilon)}\times \GL_n$, where one shows that only the open double coset in $Q_{mn}\backslash H_{2mn}/Q_{m^{n-1}}$ contributes to the integral. See \cite{CFGK16}, Sec. 2.3. See also \cite{GS18}, Sec. 2, which we will follow here. A similar analysis was done in \cite{GJS15}, Sec. 3, which we will follow, as well. In the second part, we consider the contribution of the double coset above and analyze it as an $H_m^{(\epsilon)}(F_v)\times H_m^{(\epsilon)}(F_v)$ - module. This part will be carried out similarly to Sec. 3 in \cite{GS18}, only that there, it is the open double coset that contributes to the Jacquet module, while here it will be that only the closed double coset contributes. We will use for this part the fact that the nilpotent orbits which define nontrivial Jacquet modules of  $\rho_{\chi,\gamma_{\psi_v}^{(\epsilon)},\eta,k_0}$ are all bounded by the nilpotent orbit corresponding to the partition $\underline{P_\rho}$, as listed in Cases 1-4 above. This is the same partition as in the global case. We used it repeatedly in order to assert that various Fourier coefficients of $A(\Delta(\tau,m)\gamma_\psi^{(\epsilon)})$ are zero. At this point, we will get an expression of $J_{(\psi_v)_{U_{m^{n-1}}(F_v)}}(\Ind^{H(F_v)}_{Q^{(\epsilon)}_{mn}(F_v)}\tau'\gamma_{\psi_v}^{(\epsilon)})$ in terms of the Jacquet module of $\tau'$ with respect to a certain character of $V_{m^n}(F_v)$. In the third part, we will analyze this Jacquet module of $\tau'$. The analysis here is similar, in certain parts, to \cite{GJS15}, Sec. 4. Here, we will use the tool of exchanging roots, which is valid locally, and the fact that the nilpotent orbits which define 
nontrivial Jacquet modules of $\tau'$ are all bounded by the nilpotent orbit corresponding to the partition $\underline{P_{\tau'}}$.

Our first step is to analyze
\begin{equation}\label{12.20}
 \Res_{Q^{(\epsilon)}_{m(n-1)}(F_v)}(\Ind^{H(F_v)}_{Q^{(\epsilon)}_{mn}(F_v)}(\tau'\gamma_{\psi_v}^{(\epsilon)})).
\end{equation}
This is a representation of finite length, with subquotients parameterized by the double
cosets
\begin{equation}\label{12.21}
Q_{mn}(F_v)\backslash H_{2mn}(F_v)/Q_{m(n-1)}(F_v).
\end{equation}
We take the following set of representatives, as in \cite{GS18}, (2.2) (with $i=0$ there). For
$0\leq r\leq m(n-1)$,
\begin{equation}\label{12.22}
\alpha_r=\begin{pmatrix}I_r\\&\alpha'_r\\&&I_r\end{pmatrix}
\end{equation}
where
$$
\alpha'_r=\begin{pmatrix}0 &I_m&0&0\\
0&0 &0&I_{m(n-1)-r} \\
\delta_HI_{m(n-1)-r}&0 &0&0 \\
&0 &0 &I_m&0\end{pmatrix}\omega_0^{m(n-1)},
$$
where $\omega_0=I$ unless $H$ is orthogonal. When $H=\SO_{2mn}$, we take
\begin{equation}\label{12.23}
\omega_0=diag (I_{mn-1},w'_2,I_{mn-1}),
\end{equation}
where, for $m$ even, $w'_2=w_2=\begin{pmatrix}&1\\1\end{pmatrix}$, and when $m$ is odd,
$$
w'_2=\begin{pmatrix}&2\\ \frac{1}{2}\end{pmatrix}.
$$
As in \cite{GS18}, although $\omega_0$ depends on $H_m$ and $n$, we use the notation $\omega_0$ for simplicity. In the metaplectic case, we will identify $(\alpha_r,1)$ with $\alpha_r$.
Thus, up to semisimplification (which we denote by $\equiv$), we have 
\begin{equation}\label{12.24}
 \Res_{Q^{(\epsilon)}_{m(n-1)}(F_v)}(\Ind^{H(F_v)}_{Q^{(\epsilon)}_{mn}(F_v)}(\tau'\gamma_{\psi_v}^{(\epsilon)}))\equiv \oplus_{r=0}^{m(n-1)}\ind^{Q^{(\epsilon)}_{m(n-1)}(F_v)}_{Q^{(r),(\epsilon)}(F_v)}(\sigma_{(r)}\gamma^{(\epsilon)}_{\psi_v}),
\end{equation}
where $"\ind"$ denotes non-normalized compact induction,
$$
Q^{(r)}=Q_{m(n-1)}\cap \alpha_r^{-1}\cdot Q_{mn}\cdot \alpha_r,
$$
and
$\sigma_{(r)}$ is the representation of $Q^{(r),(\epsilon)}(F_v)$ given by
$$
x\mapsto
\delta_{Q_{mn}}^{\frac{1}{2}}\cdot\tau'\cdot \gamma^{(\epsilon)}_{\psi_v}(\alpha_rx\alpha_r^{-1}),\quad
x\in Q^{(r),(\epsilon)}(F_v).
$$
The elements of $Q^{(r)}$ are written in \cite{GS18}, (2.3), and have the form:
\begin{equation}\label{12.25}
\begin{pmatrix}
a_1&a_2&y_1&y_2&z_1&z_2\\
0&a_4&0&y_4&0&z_1'\\
&&c&v&y_4'&y_2'\\
&&0&c^*&0&y_1'\\
&&&&a_4^*&a_2'\\
&&&&0&a_1^*\end{pmatrix}^{\omega_0^{m(n-1)-r}},
\end{equation}
where $a_1\in\GL_r$, $a_4\in\GL_{m(n-1)-r}$, $c\in\GL_m$. The
representation $\sigma_{(r)}$ assigns to an element of the form
\eqref{12.25} (in the metaplectic case we consider $(\ast,1)$, where $\ast$ is \eqref{12.25}) the operator
\begin{equation}\label{12.26}
\left|\frac{\det(a_1)\cdot\det(c)}{\det(a_4)}\right|^{\frac{mn-\delta_H}{2}}\cdot\gamma^{(\epsilon)}_{\psi_v}(\det(a_1)\det(c)\det(a_4))\tau'(\begin{pmatrix}a_1&y_1&z_1\\
0&c&y'_4\\
0&0&a^*_4\end{pmatrix}).
\end{equation}
Since Jacquet functors are exact, \eqref{12.24} implies that
\begin{equation}\label{12.27}
J_{(\psi_v)_{U_{m^{n-1}}(F_v)}}(\Ind^{H(F_v)}_{Q^{(\epsilon)}_{mn}(F_v)}\tau'\gamma_{\psi_v}^{(\epsilon)})\equiv \oplus_{r=0}^{m(n-1)}J_{(\psi_v)_{U_{m^{n-1}}(F_v)}}(\ind^{Q^{(\epsilon)}_{m(n-1)}(F_v)}_{Q^{(r),(\epsilon)}(F_v)}(\sigma_{(r)})).
\end{equation}
Our first theorem is
\begin{thm}\label{thm 12.4}
For all $1\leq r\leq m(n-1)$,
$$
J_{(\psi_v)_{U_{m^{n-1}}(F_v)}}(\ind^{Q^{(\epsilon)}_{m(n-1)}(F_v)}_{Q^{(r),(\epsilon)}(F_v)}(\sigma_{(r)}))=0.
$$
\end{thm}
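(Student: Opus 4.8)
The plan is to show that for each $r$ with $1\le r\le m(n-1)$, the twisted Jacquet module of the compactly induced representation $\ind^{Q^{(\epsilon)}_{m(n-1)}(F_v)}_{Q^{(r),(\epsilon)}(F_v)}(\sigma_{(r)})$ vanishes, by a geometric argument on the character $(\psi_v)_{U_{m^{n-1}}(F_v)}$ restricted to the relevant unipotent subgroups, exactly along the lines of \cite{GS18}, Sec.~2 and \cite{GJS15}, Sec.~3. The key observation is the following. The Jacquet functor $J_{(\psi_v)_{U_{m^{n-1}}(F_v)}}$ applied to $\ind^{Q^{(\epsilon)}_{m(n-1)}(F_v)}_{Q^{(r),(\epsilon)}(F_v)}(\sigma_{(r)})$ can be computed by first restricting the inducing data along the unipotent radical $U_{m^{n-1}}$, and a standard filtration argument (Mackey/second adjointness for compact induction followed by the exactness of the Jacquet functor) reduces the vanishing to the existence, for each double coset, of a one-parameter subgroup of $U_{m^{n-1}}(F_v)$ lying inside the conjugated parabolic on which $(\psi_v)_{U_{m^{n-1}}(F_v)}$ is nontrivial but on which $\sigma_{(r)}$ (hence $\tau'$, via the formula \eqref{12.26}) acts trivially — or, dually, a subgroup on which $\sigma_{(r)}$ transforms by a character while $(\psi_v)_{U_{m^{n-1}}(F_v)}$ is trivial. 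Either situation forces the twisted Jacquet module to vanish.

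First I would write out, for a fixed $r$ in the stated range, the intersection $U_{m^{n-1}}\cap Q^{(r)}$ and the way the character $(\psi_v)_{U_{m^{n-1}}(F_v)}$, whose formula is \eqref{1.3}--\eqref{1.5}, decomposes over the block coordinates $x_1,\dots,x_{n-1}$ of \eqref{1.2}. Conjugating by $\alpha_r$ as in \eqref{12.22}, the block structure \eqref{12.25} shows that the ``$c$''-block of size $m$ and the blocks $a_1$ (size $r$), $a_4$ (size $m(n-1)-r$) interact with $U_{m^{n-1}}$ in a controlled way. The point is that for $r\geq 1$ there is a root subgroup — coming from an off-diagonal entry of \eqref{12.25} that is forced to be zero inside $Q^{(r)}$ but is a genuine coordinate of $U_{m^{n-1}}$ — along which $(\psi_v)_{U_{m^{n-1}}(F_v)}$ restricts nontrivially while $\sigma_{(r)}$ sees nothing. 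Concretely, the character $(\psi_v)_{U_{m^{n-1}}(F_v)}$ pairs adjacent $\GL_m$-blocks via the trace of $x_i$; when $r\geq 1$ the coset representative $\alpha_r$ ``breaks'' one of these adjacencies, exposing a nontrivial restriction of the character on a subgroup that commutes with the image of $\sigma_{(r)}$. This is precisely the mechanism by which the non-open cosets are killed in the unfolding of the doubling integral in \cite{CFGK16}, \cite{GS18}; here we are running the same computation at the level of Jacquet modules rather than integrals, and the outcome is that only $r=0$ survives, which is the statement that only the closed coset $\alpha_0$ contributes (dual to the open-coset phenomenon in the $L$-function unfolding).

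I would organize the proof as in \cite{GS18}, Sec.~2: (i) conjugate the character and the group $Q^{(r)}$ by $\alpha_r$ and $\omega_0^{m(n-1)-r}$ to a standard form; (ii) identify a unipotent subgroup $V'_r\subset U_{m^{n-1}}$ with $V'_r\cap Q^{(r)}=\{1\}$ (using \eqref{12.25}), so that $V'_r$ embeds into the ``quotient'' direction of the compact induction; (iii) exhibit within $V'_r$ (or within a slightly larger group built by a single root exchange, in the sense of Lemma 7.1 of \cite{GRS11}, which is valid locally) a character direction on which $(\psi_v)_{U_{m^{n-1}}(F_v)}$ is generic while $\sigma_{(r)}$ is trivial; (iv) conclude by the standard lemma on vanishing of twisted Jacquet modules (if a unipotent subgroup acts trivially on a representation, its Jacquet module with respect to a nontrivial character of that subgroup is zero, and Jacquet functors along normal towers compose). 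The main obstacle I anticipate is bookkeeping: correctly tracking the block-coordinate positions of \eqref{12.25} against the character \eqref{1.3}, especially the odd-$m$ case where $\omega_0$ and the matrix $A_H$ of \eqref{1.5} introduce the factors $\tfrac12$, and making sure the root exchanges used in step (iii) genuinely stay inside the groups where both $\sigma_{(r)}$ and the relevant part of the character are under control. Once the right subgroup is pinned down, the vanishing itself is immediate; the work is entirely in the combinatorics of the embedding $\alpha_r$, and it is the same combinatorics already used (in the opposite direction) in the global unfolding recalled in Sec.~1.2.
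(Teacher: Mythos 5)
Your overall strategy --- a Mackey decomposition of $\ind^{Q^{(\epsilon)}_{m(n-1)}(F_v)}_{Q^{(r),(\epsilon)}(F_v)}(\sigma_{(r)})$ over the relevant double cosets followed by an irrelevance argument against the character $(\psi_v)_{U_{m^{n-1}}(F_v)}$ --- is indeed the skeleton of the paper's proof, and you point to the right template (\cite{GS18}, Sec.~2, and \cite{GJS15}, Sec.~3). But there is a genuine gap: you claim that the character/root-subgroup mechanism alone (``a direction on which $(\psi_v)_{U_{m^{n-1}}}$ is nontrivial while $\sigma_{(r)}$ acts trivially'') kills every coset with $r\geq 1$. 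It does not. In the paper, after the two-layer Mackey decomposition (first over $Q_{m^{n-1}}$, parametrized by tuples $\bar r=(r_1,\dots,r_{n-1})$ with $\sum r_i=r$, then over $R_{m^{n-1}}$, parametrized by the elements $\gamma_e$), the irrelevance argument only eliminates the cosets failing $r_1\leq r_2\leq\cdots\leq r_{n-1}\leq e$ (condition \eqref{12.38}). Relevant cosets with $r>0$ remain --- e.g.\ $r_1=\cdots=r_{n-2}=0$, $r_{n-1}=1$, $e=1$ --- and for these no such root subgroup exists; the contribution is instead a genuine twisted Jacquet module of $\tau'$ along the unipotent radical $V_{r_1,\dots,r_{n-1},m,\ell_{n-1},\dots,\ell_1}$ with respect to the character \eqref{12.43}.

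The missing ingredient is representation-theoretic, not geometric: that character corresponds to a nilpotent orbit attached to a partition of $mn$ beginning $((2n-1)^{r_1},\dots)$, and since the degenerate Whittaker models of $\tau'$ are bounded by the partition $\underline{P_{\tau'}}$ of \eqref{12.9}, \eqref{12.9.1}, whose largest part is $n<2n-1$, one is forced to have $r_1=0$; an induction (the analogue of Prop.~2.4 of \cite{GS18}) then gives $r_1=\cdots=r_{n-1}=0$, i.e.\ $r=0$. Without invoking this bound on the orbits supporting degenerate Whittaker models of the specific inducing data $\tau'$, the conclusion ``only $r=0$ survives'' cannot be reached. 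A secondary issue: in your step (ii)--(iii) you place the distinguishing subgroup in the ``quotient direction'' ($V'_r\cap Q^{(r)}=\{1\}$), whereas the irrelevance mechanism for compact induction requires a subgroup of $U_{m^{n-1}}$ lying \emph{inside} the conjugated parabolic (so that $\sigma_{(r)}$ acts on it, trivially) while the character is nontrivial there; on a transverse subgroup the compactly induced module behaves like a regular representation and its twisted Jacquet module need not vanish.
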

\begin{proof}
The proof is the exact local analog of the proof of Theorem 2.1 in \cite{GS18}. We will review the proof in terms of Jacquet modules (which parallel Fourier coefficients in the global set up). We will refer to the calculations done in \cite{GS18}. These are algebraic calculations, such as the exact form of intersections of algebraic subgroups, conjugations of algebraic subgroups by a given matrix, and so on. We will also refer to \cite{GJS15}, Sec. 3, where we proved the theorem in a very similar case. 

Assume that $1\leq r\leq m(n-1)$. Consider 
$$
\Res_{Q^{(\epsilon)}_{m^{n-1}}(F_v)}(\ind^{Q^{(\epsilon)}_{m(n-1)}(F_v)}_{Q^{(r),(\epsilon)}(F_v)}(\sigma_{(r)})),
$$
and analyze it by Mackey theory. Again, we need to consider the set
$$
Q^{(r)}(F_v)\backslash Q_{m(n-1)}(F_v)/ Q_{m^{n-1}}(F_v);
$$
it has the following set of representatives:
\begin{equation}\label{12.29}
\hat{a}=\diag(a,I_{2m},a^*),
\end{equation}
where $a$ varies in a set of Weyl elements, which form a set of
representatives for
$$
P_{r,m(n-1)-r}(F_v)\backslash\GL_{m(n-1)}(F_v)/P_{m^{n-1}}(F_v).
$$
(In the metaplectic case, we replace $\hat{a}$ with $(\hat{a},1)$.) Then, up to semi-simplification, we have
\begin{equation}\label{12.30}
\Res_{Q^{(\epsilon)}_{m^{n-1}}(F_v)}(\ind^{Q^{(\epsilon)}_{m(n-1)}(F_v)}_{Q^{(r),(\epsilon)}(F_v)}(\sigma_{(r)})) \equiv
\oplus_a\ind^{Q^{(\epsilon)}_{m^{n-1}}(F_v)}_{Q^{(\epsilon)}_{m^{n-1}}(F_v)\cap\hat{a}^{-1}Q^{(r),(\epsilon)}(F_v)\hat{a}}(\sigma_{(r)}^{\hat{a}}).
\end{equation}
The representation $\sigma_{(r)}^{\hat{a}}$ is obtained by composing
$\sigma_{(r)}$ with conjugation by $\hat{a}$ (on
${Q^{(\epsilon)}_{m^{n-1}}(F_v)}\cap\hat{a}^{-1}Q^{(r),(\epsilon)}(F_v)\hat{a}$). 
We will take the set of representatives $a$ in \eqref{12.29} to be the ones described in \cite{GS18}, (2.8) - (2.11). They are parametrized by sequences $r_1,...,r_{n-1}$ and
$\ell_1,...,\ell_{n-1}$ be non-negative integers, such that
\begin{equation}\label{12.31}
\sum_{i=1}^{n-1}r_i=r,\ \sum_{i=1}^{n-1}\ell_i=m(n-1)-r; \
r_i+\ell_i=m,\ i=1,...,n-1.
\end{equation}
Put $\bar{r}=(r_1,...,r_{n-1})$. Let $a_{\bar{r}}$ be the
permutation matrix in $\GL_{m(n-1)}(F)$ described in \cite{GS18}, (2.9) - (2.11). In \cite{GS18}, (2.12) - (2.18), we described presicely the subgroup $Q_{m^{n-1}}\cap \hat{a}_{\bar{r}}^{-1}Q^{(r)}\hat{a}_{\bar{r}}$. Next, let $L_{m^{n-1}}$ be the subgroup of the Levi part of $Q_{m^{n-1}}$ consisting of the elements
\begin{equation}\label{12.31.1}
diag(g_1,...,g_{n-1},I_{2m},g_{n-1}^*,...,g_1^*),\  g_i\in \GL_m.
\end{equation}
Denote
$$
R_{m^{n-1}}=L_{m^{n-1}}t(H_m\times H_m)U_{m^{n-1}}.
$$
Now, restrict each summand in \eqref{12.30} to $R^{(\epsilon)}_{m^{n-1}}(F_v)$. One more time, we need to consider the set
\begin{equation}\label{12.32}
Q_{m^{n-1}}(F_v)\cap \hat{a}_{\bar{r}}^{-1}Q^{(r)}(F_v)\hat{a}_{\bar{r}}\backslash Q_{m^{n-1}}(F_v)/ R_{m^{n-1}}(F_v).
\end{equation}
By \cite{GS18}, (2.19) - (2.25), we may take representatives for \eqref{12.32} of the following form
\begin{equation}\label{12.33}
h_\gamma=diag(I_{m(n-1)},\gamma,I_{m(n-1)}),
\end{equation}
where $\gamma$ is any coset representative in
\begin{equation}\label{12.34}
Q_m(F_v)^{\omega_0^{m(n-1)-r}}\backslash
H_{2m}(F_v)/j(H_m(F_v)\times H_m(F_v)). 
\end{equation}
(In the metaplectic case, we take $(h_\gamma,1)$ instead.) 
We will take the set of representatives of \eqref{12.34} given by \cite{GS18}, (2.25) (with $i=0$ there). Thus, let, for $0\leq e\leq [\frac{m}{2}]$,
\begin{equation}\label{12.35}
\gamma_e=diag(I_e,\begin{pmatrix}I_{[\frac{m}{2}]-e}\\0&I_{[\frac{m}{2}]-e}\\0&0&I_{2((m-2[\frac{m}{2}])+e)}\\
I_{[\frac{m}{2}]-e}&0&0&I_{[\frac{m}{2}]-e}\\0&-\delta_HI_{[\frac{m}{2}]-e}&0&0&I_{[\frac{m}{2}]-e}\end{pmatrix},I_e).
\end{equation}
Put 
$$
Q^{(\bar{r};e)}=h_{\gamma_e}^{-1}(Q_{m^{n-1}}\cap
\hat{a}_{\bar{r}}^{-1}Q^{(r)}\hat{a}_{\bar{r}})h_{\gamma_e},\ h_{a_{\bar{r}},\gamma_e}=\hat{a}_{\bar{r}}h_{\gamma_e}. 
$$
Thus, we get from \eqref{12.30}
\begin{equation}\label{12.36}
\Res_{R^{(\epsilon)}_{m^{n-1}}(F_v)}(\ind^{Q^{(\epsilon)}_{m(n-1)}(F_v)}_{Q^{(r),(\epsilon)}(F_v)}(\sigma_{(r)})) \equiv
\oplus_{\bar{r}}\oplus_{e=0}^{[\frac{m}{2}]}\ind^{R^{(\epsilon)}_{m^{n-1}}(F_v)}_{R^{(\epsilon)}_{m^{n-1}}(F_v)\cap Q^{(\bar{r};e),(\epsilon)}(F_v)}(\sigma_{(r)}^{h_{a_{\bar{r}},\gamma_e}}).
\end{equation}	
Note that $R_{m^{n-1}}\cap Q^{(\bar{r};e)}=R_{m^{n-1}}\cap h_{a_{\bar{r}},\gamma_e}^{-1}Q^{(r)}h_{a_{\bar{r}},\gamma_e}$. Applying our Jacquet functor to \eqref{12.30}, we get
\begin{multline}\label{12.37}
J_{(\psi_v)_{U_{m^{n-1}}(F_v)}}(\ind^{Q^{(\epsilon)}_{m(n-1)}(F_v)}_{Q^{(r),(\epsilon)}(F_v)}(\sigma_{(r)}))\equiv\\
 \oplus_{\bar{r}}\oplus_{e=0}^{[\frac{m}{2}]} J_{(\psi_v)_{U_{m^{n-1}}(F_v)}}(\ind^{R^{(\epsilon)}_{m^{n-1}}(F_v)}_{R^{(\epsilon)}_{m^{n-1}}(F_v)\cap h_{a_{\bar{r}},\gamma_e}^{-1}Q^{(r),(\epsilon)}(F_v)h_{a_{\bar{r}},\gamma_e}}(\sigma_{(r)}^{h_{a_{\bar{r}},\gamma_e}}).
\end{multline}
Assume that the summand corresponding to $\bar{r}$ and $e$ in \eqref{12.37} is nonzero. Then it follows that
\begin{equation}\label{12.38}
r_1\leq r_2\leq\cdots\leq r_{n-1}\leq e.
\end{equation} 
Indeed, if the condition \eqref{12.38} is not satisfied, then the element $\alpha_rh_{a_{\bar{r}},\gamma_e}$ is irrelevant with respect to the character $(\psi_v)_{U_{m^{n-1}}(F_v)}$. This means that for each $h\in L^{(\epsilon)}_{m^{n-1}}(F_v)$, there is $u\in U_{m^{n-1}}(F_v)\cap h^{-1}h_{a_{\bar{r}},\gamma_e}^{-1}Q^{(r),(\epsilon)}(F_v)h_{a_{\bar{r}},\gamma_e}h$, such that $(\psi_v)_{U_{m^{n-1}}(F_v)}\neq 1$, but $\sigma_{(r)}^{h_{a_{\bar{r}},\gamma_e}}(huh^{-1})=id$. The proof of Prop. 2.3 in \cite{GS18} shows exactly that if \eqref{12.38} is not satisfied, then $\alpha_rh_{a_{\bar{r}},\gamma_e}$ is irrelevant, and then it is clear that
$$
J_{(\psi_v)_{U_{m^{n-1}}(F_v)}}(\ind^{R^{(\epsilon)}_{m^{n-1}}(F_v)}_{R^{(\epsilon)}_{m^{n-1}}(F_v)\cap h_{a_{\bar{r}},\gamma_e}^{-1}Q^{(r),(\epsilon)}(F_v)h_{a_{\bar{r}},\gamma_e}}(\sigma_{(r)}^{h_{a_{\bar{r}},\gamma_e}}))=0.
$$ 	
See Prop. 3.2 in \cite{GJS15} for a very similar case. Now, assume that $\bar{r}$ and $e$ satisfy \eqref{12.38}. As in \cite{GJS15}, (3.37), the following map on\\ $\ind^{R^{(\epsilon)}_{m^{n-1}}(F_v)}_{R^{(\epsilon)}_{m^{n-1}}(F_v)\cap h_{a_{\bar{r}},\gamma_e}^{-1}Q^{(r),(\epsilon)}(F_v)h_{a_{\bar{r}},\gamma_e}}(\sigma_{(r)})^{h_{a_{\bar{r}},\gamma_e}}$
 induces an injective homomorphism on\\ $J_{(\psi_v)_{U_{m^{n-1}}(F_v)}}(\ind^{R^{(\epsilon)}_{m^{n-1}}(F_v)}_{R^{(\epsilon)}_{m^{n-1}}(F_v)\cap h_{a_{\bar{r}},\gamma_e}^{-1}Q^{(r),(\epsilon)}(F_v)h_{a_{\bar{r}},\gamma_e}}(\sigma_{(r)}^{h_{a_{\bar{r}},\gamma_e}})$,
\begin{multline}\label{12.39}
f\mapsto  (ht^{(\epsilon)}(g_1,g_2)\mapsto\\
 \int_{U'_{m^{n-1}}(F_v)\backslash U_{m^{n-1}}(F_v)}J_{(\psi_v)^{h,e}_{V_{\bar{r},m,\underline{\ell}}}}(f(uht^{(\epsilon)}(g_1,g_2)))
(\psi^{-1}_v)_{U_{m^{n-1}}(F_v)}(h^{-1}uh)du,\\
h\in L^{(\epsilon)}_{m^{n-1}}(F_v),\ g_1,g_2\in H_m^{(\epsilon)}(F_v).
\end{multline}	
Here, $U'_{m^{n-1}}=U_{m^{n-1}}\cap h_{a_{\bar{r}},\gamma_e}^{-1}Q^{(r)}h_{a_{\bar{r}},\gamma_e}$, and $V_{\bar{r},m,\underline{\ell}}$ denotes the projection to $\GL_{mn}$ of $\alpha_rh_{a_{\bar{r}},\gamma_e}U'_{m^{n-1}}h_{a_{\bar{r}},\gamma_e}^{-1}\alpha_r^{-1}$. This is the standard unipotent radical\\ $V_{r_1,...,r_{n-1},m.\ell_{n-1},...,\ell_1}$. See \cite{GS18}, (2.48). $J_{(\psi_v)^{h,e}_{V_{\bar{r},m,\underline{\ell}}}}$ denotes the Jacquet functor, applied to the space of $\tau'$, with respect to the character $(\psi_v)^{h,e}_{V_{\bar{r},m,\underline{\ell}}}$ of $V_{\bar{r},m,\underline{\ell}}(F_v)$ given as follows. Let $z\in V_{\bar{r},m,\underline{\ell}}(F_v)$. Let $u'\in \alpha_rh_{a_{\bar{r}},\gamma_e}U'_{m^{n-1}}(F_v)h_{a_{\bar{r}},\gamma_e}^{-1}\alpha_r^{-1}$	be any pull back of $\hat{z}$. Then
\begin{equation}\label{12.40}
(\psi_v)^{h,e}_{V_{\bar{r},m,\underline{\ell}}}(z)=(\psi_v)_{U_{m^{n-1}}(F_v)}(h^{-1}h_{a_{\bar{r}},\gamma_e}^{-1}\alpha_r^{-1}u'\alpha_rh_{a_{\bar{r}},\gamma_e}h).
\end{equation}
Exactly as in Prop. 3.3 in \cite{GJS15} (see also \cite{GS18}, (2.39), (2.43)), we get that the support in $h\in L^{(\epsilon)}_{m^{n-1}}(F_v)$ of the function \eqref{12.39} lies in 
$$
L^{(\epsilon)}_{m^{n-1}}(F_v)\cap \hat{a}_{\bar{r}}^{-1}Q^{(r),(\epsilon)}\hat{a}_{\bar{r}}=L^{(\epsilon)}_{m^{n-1}}(F_v)\cap h_{a_{\bar{r}},\gamma_e}^{-1}Q^{(r),(\epsilon)}h_{a_{\bar{r}},\gamma_e}.
$$	
See \cite{GS18}, (2.12), for the description of the last subgroup. Note that for such $h$ in the support, the function (2.39) is uniquely determined by its restriction to $t^{(\epsilon)}(H^{(\epsilon)}_m(F_v)\times H^{(\epsilon)}_m(F_v))$,
\begin{multline}\label{12.41}
f\mapsto  (\ t^{(\epsilon)}(g_1,g_2)\mapsto \\
\int_{U'_{m^{n-1}}(F_v)\backslash U_{m^{n-1}}(F_v)}J_{(\psi^e_v)_{V_{\bar{r},m,\underline{\ell}}}}(f(ut^{(\epsilon)}(g_1,g_2)))
(\psi^{-1}_v)_{U_{m^{n-1}}(F_v)}(u)du \ ).
\end{multline}
In this case, the character \eqref{12.40}, which we now denote $(\psi^e_v)_{V_{\bar{r},m,\underline{\ell}}}$ can be read from \cite{GS18}, (2.49), where we take there $\eta_1=\cdots \eta_{n-1}=I_m$ (and $i=0$). We repeat it here. First, write an element $z\in V_{\bar{r},m,\underline{\ell}}(F_v)$ as
\begin{equation}\label{12.42}
z=\begin{pmatrix}I_{r_1}&x_{1,2}&\cdots&x_{1,n-1}&b_1&\ast
&&\cdots&\ast\\&I_{r_2}&\cdots&x_{2,n-1}&b_2&\ast&&\cdots&\ast\\&&\ddots\\&&&I_{r_{n-1}}&b_{n-1}&\ast&&\cdots&\ast\\
&&&&I_m&c_{n-1}&&\cdots&c_1\\&&&&&I_{\ell_{n-1}}&y_{n-2,n-1}&\cdots&y_{1,n-2}\\&&&&&&\ddots\\&&&&&&&&y_{1,2}\\
&&&&&&&&I_{\ell_1}\end{pmatrix}.
\end{equation}
Then
\begin{multline}\label{12.43}
(\psi^e_v)_{V_{\bar{r},m,\underline{\ell}}}(z)=\\
\prod_{i=1}^{n-2}\psi(tr(R_ix_{i,i+1}))\psi^{-1}(tr(L_iy_{i,i+1}))\psi(trR_{n-1}b_{n-1}))\psi^{-1}(tr(Sc_{n-1})),
\end{multline}
where, for $1\leq i\leq n-1$, $R_i=\begin{pmatrix}I_{r_i}\\0_{(r_{i+1}-r_i)\times
		r_i}\end{pmatrix}$ (taking $r_n=m$);\\\ for $ 1\leq i\leq n-2$,\\
	\\
$L_i=\begin{pmatrix}I_{\ell_{i+1}}\\0_{(\ell_i-\ell_{i+1})\times
		\ell_{i+1}}\end{pmatrix}$. When $m$ is even,
	$$
	S=\begin{pmatrix}I_{m-e}&0_{(m-e)\times
		e}\\0_{(e-r_{n-1})\times (m-e)}&0_{(e-r_{n-1})\times
		e}\end{pmatrix},
	$$
	and when $m=2m'-1$ is odd
	$$
	S=\begin{pmatrix}I_{m'-1}&0&0_{(m'-1)\times e}&0\\0&0&0&\frac{1}{2}\\0&I_{m'-1-e}&0&0\\0_{(e-r_{n-1})\times (m'-1)}&0&0&0\end{pmatrix}.
	$$
Now, we apply the analog of \cite{GS18}, Prop. 2.4. namely, we show that if $r_{n-1}>0$, then $J_{(\psi^e_v)_{V_{\bar{r},m,\underline{\ell}}}}(\tau')=0$. Indeed, the character $(\psi^e_v)_{V_{\bar{r},m,\underline{\ell}}}$ in \eqref{12.43} corresponds to the nilpotent orbit attached to a partition of $mn$ of the form $((2n-1)^{r_1},...)$. An element in this orbit is written in \cite{GS18}, (2.54).
This is an $mn\times mn$ lower nilpotent matrix, written as a $(2n-1)\times (2n-1)$ matrix of blocks. The blocks along the diagonal are of sizes $r_1\times r_1$,..., $r_{n-1}\times r_{n-1}$, $m\times m$, $\ell_{n-1}\times \ell_{n-1}$,...,$\ell_1\times \ell_1$. The blocks in the second lower diagonal are $R_1$,..., $R_{n-1}$, $S$, $L_{n-2}$,..., $L_2$. 
By \eqref{12.9}, \eqref{12.13}, \eqref{12.9.1}, we must have $r_1=0$. Note that $2n-1>n$, i.e. $n\geq 2$, since we assume that $r_{n-1}>0$. Now it is clear that the proof of Prop. 2.4 in \cite{GS18} works here, word for word. We conclude, by induction, that if $J_{(\psi^e_v)_{V_{\bar{r},m,\underline{\ell}}}}(\tau')$ is nonzero, then $r_1=r_2=\cdots=r_{n-1}=0$. This completes the proof of the theorem.
\end{proof}

With $r=0$, the character \eqref{12.43} is a character of $V_{m^n}(F_v)$, which we denote by $(\psi_{v,e})_{V_{m^n}}$. For
$$
z=\begin{pmatrix}I_m&c_{n-1}&&\cdots&c_1\\&I_m&y_{n-2,n-1}&\cdots&y_{1,n-2}\\&&\ddots\\&&&&y_{1,2}\\
&&&&I_m\end{pmatrix},
$$
\begin{equation}\label{12.44}
(\psi_{v,e})_{V_{m^n}}(z)
=\prod_{i=1}^{n-2}\psi^{-1}(tr(y_{i,i+1}))\psi^{-1}(tr(Sc_{n-1})),
\end{equation}
where in case $m$ is even,
$$
S=\begin{pmatrix}I_{m-e}&0_{(m-e)\times e}\\0_{e\times
	(m-e)}&0\end{pmatrix},
$$
and in case $m=2m'-1$ is odd,
$$
S=\begin{pmatrix}I_{m'-1}&0&0_{(m'-1)\times
	e}&0\\0&0&0&\frac{1}{2}\\0&I_{m'-1-e}&0&0\\0_{e\times (m'-1)}&0&0&0\end{pmatrix}.
$$
Note that for $r=0$, $a_{\bar{r}}=I_{m(n-1)}$. We will denote from now on $h_{I_{m(n-1)},\gamma_e}=h_{\gamma_e}$.

We conclude from \eqref{12.27} and Theorem \ref{thm 12.4} that
as $t^{(\epsilon)}(H^{(\epsilon)}_m(F_v)\times H^{(\epsilon)}_m(F_v))$ - modules, 
\begin{multline}\label{12.45}
J_{(\psi_v)_{U_{m^{n-1}}(F_v)}}(\Ind^{H(F_v)}_{Q^{(\epsilon)}_{mn}(F_v)}\tau'\gamma_{\psi_v}^{(\epsilon)})\cong\\ \oplus_{e=0}^{[\frac{m}{2}]}J_{(\psi_v)_{U_{m^{n-1}}(F_v)}}(\ind^{R^{(\epsilon)}_{m^{n-1}}(F_v)}_{R^{(\epsilon)}_{m^{n-1}}(F_v)\cap h_{\gamma_e}^{-1}Q^{(0),(\epsilon)}(F_v)h_{\gamma_e}}(\sigma_{(0)}^{h_{\gamma_e}})\cong \\ \oplus_{e=0}^{[\frac{m}{2}]}ind^{t^{(\epsilon)}(H^{(\epsilon)}_m(F_v)\times H^{(\epsilon)}_m(F_v))}_{t^{(\epsilon)}(H^{(\epsilon)}_m(F_v)\times H^{(\epsilon)}_m(F_v))\cap h_{\gamma_e}^{-1}Q^{(0),(\epsilon)}(F_v)h_{\gamma_e}}\sigma_{[\tau',e]}.
\end{multline}
The second isomorphism follows from the last part of the proof of Theorem \ref{thm 12.4}, exactly as in \cite{GJS15}, (3.41).	
We now describe the data entering in the last induced representation, corresponding to $e$. First, the (algebraic) subgroup $t(H_m\times H_m)\cap h_{\gamma_e}^{-1}Q^{(0)}h_{\gamma_e}$ can be read from \cite{GS18}, (3.7). It cosists of the elements
\begin{equation}\label{12.46}
t(\begin{pmatrix}a&x&y\\&{}^eb&x'\\&&a^*\end{pmatrix},\hat{\beta}_e\begin{pmatrix}A&X&Y\\&b&X'\\&&A^*\end{pmatrix}\hat{\beta}_e^{-1})\in
t(H_m\times H_m),
\end{equation}
where $a, A\in \GL_e$ and, for $b\in H_{m-2e}$,
\begin{equation}\label{12.47}
{}^eb=J^{-1}_e b J_e,
\end{equation}
and $J_e=J_{e;m,n}$ is defined as follows. If $m(n-1)$ is even (so that $\omega_0^{m(n-1)}=I$), then
\begin{equation}\label{12.48} 
J_e=\begin{pmatrix}&&I_{[\frac{m}{2}]-e}\\&I_{m-2[\frac{m}{2}]}\\-\delta_H
I_{[\frac{m}{2}]-e}\end{pmatrix};
\end{equation}
and if $m(n-1)$ is odd, so that $m=2m'-1$ is odd and $n$ is even, then
\begin{equation}\label{12.49}
J_e=\begin{pmatrix}&&-I_{m'-1-e}\\&1\\I_{m'-1-e}\end{pmatrix}.
\end{equation}
Also,
\begin{equation}\label{12.50}
\beta_e=\begin{pmatrix}&I_{[\frac{m}{2}]-e}\\I_e\end{pmatrix}.
\end{equation}
Note that $b\mapsto {}^e b$ is an outer conjugation of $H_m$. Next, the action of the representation $\sigma_{[\tau',e]}$ can be expressed in terms of $J_{(\psi_{v,e})_{V_{m^n}}}(\tau')$. In the linear case, it takes the element \eqref{12.46}, with coordinates in $F_v$, to an operator of the form
\begin{equation}\label{12.51}
|\det(aA)|^{\frac{m(2-n)-\delta_H}{2}}J_{(\psi_{v,e})_{V_{m^n}}}(\tau')(diag(M(a,A,X), \begin{pmatrix}a&x&y\\&{}^eb&x'\\&&a^*\end{pmatrix}^{\Delta_{n-1}})),
\end{equation}
where the upper index $\Delta_{n-1}$ denotes the repetition of the matrix $\begin{pmatrix}a&x&y\\&{}^eb&x'\\&&a^*\end{pmatrix}$ $n-1$ times; $M(a,A,X)$ is an $m \times m$ matrix with determinant $\det(aA)$ (It varies a little according to the parities of $m$ and $n$.) The repeated matrix appears $n-1$ times. In the metaplectic case there are slight modifications to \eqref{12.51}.
\begin{prop}\label{prop 12.5}
For all $0\leq e<[\frac{m}{2}]$,
$$
J_{(\psi_{v,e})_{V_{m^n}}}(\tau')=0,
$$
and hence, by \eqref{12.45} and \eqref{12.51},
\begin{multline}\label{12.52}
J_{(\psi_v)_{U_{m^{n-1}}(F_v)}}(\Ind^{H(F_v)}_{Q^{(\epsilon)}_{mn}(F_v)}\tau'\gamma_{\psi_v}^{(\epsilon)})\cong\\ ind^{t^{(\epsilon)}(H^{(\epsilon)}_m(F_v)\times H^{(\epsilon)}_m(F_v))}_{t^{(\epsilon)}(H^{(\epsilon)}_m(F_v)\times H^{(\epsilon)}_m(F_v))\cap Q^{(0),(\epsilon)}(F_v)}\sigma_{[\tau',[\frac{m}{2}]]}.
\end{multline}
\end{prop}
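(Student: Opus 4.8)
The plan is to establish the vanishing $J_{(\psi_{v,e})_{V_{m^n}}}(\tau')=0$ for all $0\le e<[\frac{m}{2}]$, and then to read \eqref{12.52} off directly from \eqref{12.45} and \eqref{12.51}. The first step is to identify, in the sense of \cite{MW87}, the nilpotent orbit $\mathcal{O}_e$ of $\mathfrak{gl}_{mn}$ attached to the character $(\psi_{v,e})_{V_{m^n}}$ of \eqref{12.44}. Its dual nilpotent $\xi$ is the block lower triangular matrix ($n\times n$ blocks of size $m$) whose second-lower-diagonal blocks are $-I_m$ in the slots paired with the $y_{i,i+1}$, $1\le i\le n-2$, and $-{}^tS$ in the slot paired with $c_{n-1}$. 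Since $\operatorname{rank}(S)=m-e$ in all the parity cases (even $m$, and odd $m=2m'-1$, by inspection of the two matrices $S$ given after \eqref{12.44}), counting $\dim\ker\xi^{k}$ for $k=0,1,\dots,n$ yields
$$
\mathcal{O}_e\ \longleftrightarrow\ \big((n)^{m-e},\,(n-1)^{e},\,1^{e}\big),
$$
i.e. $e$ of the $m$ length-$n$ strings of the generic character $(\psi_{v,0})_{V_{m^n}}$ are split by the rank drop into a length-$(n-1)$ string together with a trivial string.

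The second step is to pass from $J_{(\psi_{v,e})_{V_{m^n}}}(\tau')$ to a genuine degenerate Whittaker model of $\tau'$ attached to $\mathcal{O}_e$. This is carried out by a sequence of root exchanges, valid over $F_v$ exactly as in Lemma 7.1 of \cite{GRS11} and its Jacquet-module analogue used in the proofs of Propositions \ref{prop 7.1}, \ref{prop 7.2} and of Theorem \ref{thm 12.4}; the bookkeeping parallels the reduction in the proof of Proposition 2.4 of \cite{GS18} and Section 4 of \cite{GJS15}. This reduces the vanishing of $J_{(\psi_{v,e})_{V_{m^n}}}(\tau')$ to the vanishing of the $\mathcal{O}_e$-degenerate Whittaker model of $\tau'$. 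On the other hand $\tau'$ is the explicit parabolic induction \eqref{12.9} in Cases 1, 2 and \eqref{12.9.1} in Cases 3, 4, so by the germ-expansion argument of Propositions \ref{prop 3.1}, \ref{prop 3.2} (using \cite{MW87}, Theorem I.16, and the induced-orbit computation in \cite{CM93}), every nilpotent orbit supporting a nontrivial degenerate Whittaker model of $\tau'$ is bounded by the orbit attached to $\underline{P_{\tau'}}$. It therefore suffices to verify the purely combinatorial claim that $\mathcal{O}_e\le\underline{P_{\tau'}}$ forces $e=[\frac{m}{2}]$.

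This claim is transparent after passing to conjugate partitions. In Cases 1, 2 (where $n=2n'$ is even and $n'\mu_0=[\frac{m}{2}]$) one computes $\mathcal{O}_e^{\,t}=(m+e,\,m^{\,n-2},\,m-e)$ and $\underline{P_{\tau'}}^{\,t}=\big((m+\mu_0)^{n'},(m-\mu_0)^{n'}\big)$, and $\mathcal{O}_e\le\underline{P_{\tau'}}$ is equivalent to $\mathcal{O}_e^{\,t}\ge\underline{P_{\tau'}}^{\,t}$; comparing the $n'$-th partial sums gives $n'm+e\ge n'(m+\mu_0)$, i.e. $e\ge n'\mu_0=[\frac{m}{2}]$, which together with $0\le e\le[\frac{m}{2}]$ forces $e=[\frac{m}{2}]$ (and one checks that for $e=[\frac{m}{2}]$ all partial-sum inequalities do hold). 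Cases 3, 4, with $n$ odd, are handled the same way with the correspondingly modified $\underline{P_{\tau'}}^{\,t}$. Hence $J_{(\psi_{v,e})_{V_{m^n}}}(\tau')=0$ for $e<[\frac{m}{2}]$. Feeding this into \eqref{12.45} through \eqref{12.51} kills every summand with $e<[\frac{m}{2}]$; for the surviving value $e=[\frac{m}{2}]$ one has $\gamma_{[\frac{m}{2}]}=I$ by \eqref{12.35}, so $h_{\gamma_{[\frac{m}{2}]}}=I$ and the inducing subgroup is $t^{(\epsilon)}(H_m^{(\epsilon)}(F_v)\times H_m^{(\epsilon)}(F_v))\cap Q^{(0),(\epsilon)}(F_v)$, which is precisely \eqref{12.52}.

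I expect the main obstacle to be the root-exchange reduction in the second step: one must replay, in the $\GL_{mn}$ setting and for the only partially nondegenerate character $(\psi_{v,e})_{V_{m^n}}$, the inductive exchange-of-roots scheme of the earlier sections, so as to legitimately move between $J_{(\psi_{v,e})_{V_{m^n}}}(\tau')$ and a degenerate Whittaker model attached to $\mathcal{O}_e$ to which the bound $\underline{P_{\tau'}}$ applies, and to do so uniformly across all four cases and all parities of $m$ and $n$. By contrast the orbit computation of Step 1 and the conjugate-partition comparison of Step 3 are routine.
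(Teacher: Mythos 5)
Your proposal is correct and follows essentially the same route as the paper: identify the nilpotent orbit attached to $(\psi_{v,e})_{V_{m^n}}$ as $(n^{m-e},(n-1)^e,1^e)$, invoke the bound $\underline{P_{\tau'}}$ on the orbits supporting nontrivial degenerate Whittaker models of the induced representation $\tau'$, and deduce $e\geq[\frac{m}{2}]$ by a partition comparison (the paper compares partial sums of the partitions directly rather than of their conjugates, which is equivalent). The root-exchange reduction you single out as the main obstacle is in fact unnecessary: $V_{m^n}$ with the character \eqref{12.44} is already exactly the degenerate Whittaker datum of \cite{MW87} attached to the displayed nilpotent element and the one-parameter subgroup $\mathrm{diag}(s^{n-1}I_m,\dots,s^{-(n-1)}I_m)$, so the bound applies to $J_{(\psi_{v,e})_{V_{m^n}}}(\tau')$ without any further manipulation.
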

\begin{proof}	
The character $(\psi_{v,e})_{V_{m^n}}$ corresponds to the conjugacy class of the nilpotent matrix
$$
\begin{pmatrix}0\\S&0\\&I_m&0\\&&&\ddots\\&&&I_m&0\\&&&&I_m&0\end{pmatrix},
$$
whose class corresponds to the partition $(n^{m-e}, (n-1)^e, 1^e)$. Thus, if $J_{(\psi_{v,e})_{V_{m^n}}}(\tau')$ is nonzero, then  this partition must be majorized by $P_{\tau'}$. In Cases 1, 2 (listed after Theorem \ref{thm 12.2}), $P_{\tau'}$ is given in \eqref{12.9}. This majorization implies that $m-e\leq m-\mu_0$, i.e. $e\geq \mu_0$, and
	$$
	(m-e)n+e(n-1)\leq (m-\mu_0)n+\mu_0\frac{n}{2},
	$$
	from which we conclude that $e\geq \mu_0\frac{n}{2}=[\frac{m}{2}]$. Since $e<[\frac{m}{2}]$, we get a contradiction. Similarly, in Cases 3, 4, write $n=2n'+1$. Then using \eqref{12.9.1}, we get that $e\geq \mu_0$, and
	$$
	(m-e)n+e(n-1)\leq (m-\mu_0)n+\mu_0(n'+1),
	$$ 
which implies that $e\geq \mu_0n'=[\frac{m}{2}]$, contrary to our assumption.	
\end{proof}

It will convenient to conjugate the character \eqref{12.44}, with $e=[\frac{m}{2}]$, as follows. Consider the following matrix. When $m=2m'$ is even,
$$
a_0=diag(\begin{pmatrix}0&I_{m'}\\I_{m'}&0\end{pmatrix},I_{m(n-1)}).
$$
When $m=2m'-1$ is odd,
$$
a_0=diag(\begin{pmatrix}0&I_{m'-1}&0\\I_{m'-1}&0&0\\0&0&\frac{1}{2}\end{pmatrix},I_{m(n-1)}).
$$
For $z\in V_{m^n}(F_v)$, let
$$
(\psi'_v)_{V_{m^n}}(z)=(\psi_{v,[\frac{m}{2}]})_{V_{m^n}}(a_0^{-1}za_0).
$$
Then in the notation of \eqref{12.44},
\begin{equation}\label{12.52.1}
(\psi'_v)_{V_{m^n}}(z)
=\prod_{i=1}^{n-2}\psi^{-1}(tr(y_{i,i+1}))\psi^{-1}(tr(\begin{pmatrix}0&I_{[\frac{m+1}{2}]}\\0_{[\frac{m}{2}]\times [\frac{m}{2}]}&0\end{pmatrix}c_{n-1})),
\end{equation}
Conjugation by $\tau'(a_0)$ takes $J_{(\psi_{v,[\frac{m}{2}]})_{V_{m^n}}}(\tau')$
to $J_{(\psi'_v)_{V_{m^n}}}(\tau')$, and takes $\sigma_{[\tau',[\frac{m}{2}]]}$ to the following isomorphic representation, which we denote by $\sigma_{\tau'}$ and now write its action explicitly. See \eqref{12.51}.\\
Assume that $m=2m'$ is even and $H$ is linear. Then
\begin{multline}\label{12.53}
\sigma_{\tau'}(t(\begin{pmatrix}a&y\\&a^*\end{pmatrix}, \begin{pmatrix}A&Y\\&A^*\end{pmatrix}))=\\
|\det(aA)|^{\frac{m(2-n)-\delta_H}{2}} J_{(\psi'_v)_{V_{m^n}}}(\tau')(diag( \begin{pmatrix}A\\&a\end{pmatrix}, (\begin{pmatrix}a&y\\&a^*\end{pmatrix}^{\Delta_{n-1}}),
\end{multline}
where $a, A\in \GL_{m'}(F_v)$.\\
In the metaplectic case, we need to multiply by $\gamma_{\psi_v}(\det(a))\gamma_{\psi_v}(\det(A))$.\\
When $m=2m'-1$ is odd
\begin{multline}\label{12.55}
\sigma_{\tau'}(t(\begin{pmatrix}a&x&y\\&1&x'\\&&a^*\end{pmatrix}, \begin{pmatrix}A&X&Y\\&1&X'\\&&A^*\end{pmatrix}))=\\
|\det(aA)|^{\frac{m(2-n)-1}{2}} J_{(\psi'_v)_{V_{m^n}}}(\tau')(diag( \begin{pmatrix}A&0&X\\&a&x\\&&1\end{pmatrix}, (\begin{pmatrix}a&x&y\\&1&x'\\&&a^*\end{pmatrix}^{\Delta_{n-1}}),
\end{multline}
where $a, A\in \GL_{m'-1}(F_v)$.

Now comes the main part of this section, and the next, namely the analysis of the Jacquet module $J_{(\psi_v')_{V_{m^n}}}(\tau')$.  We will first show that $\sigma_{\tau'}\circ t^{(\epsilon)}$ is trivial on the unipotent radical $U_{[\frac{m}{2}]}^{H_m}\times U_{[\frac{m}{2}]}^{H_m}(F_v)$. Denote
\begin{multline}\label{12.57}
u(x,y)=diag(\nu_1(x),\nu_2(y)^{\Delta_{n-1}}),\ \textit{where}\\
\nu_1(x)=\begin{pmatrix}I_{[\frac{m}{2}]}&x\\&I_{[\frac{m+1}{2}]}\end{pmatrix},\
\nu_2(y)=\begin{pmatrix}I_{[\frac{m+1}{2}]}&y\\&I_{[\frac{m}{2}]}\end{pmatrix},
\end{multline}
for all $ x\in M_{[\frac{m}{2}]\times [\frac{m+1}{2}]}(F_v),\ y\in M_{[\frac{m+1}{2}]\times [\frac{m}{2}]}(F_v)$. In case, $m=2m'-1$ is odd, denote
\begin{equation}\label{12.56}
u(x)=diag(\begin{pmatrix}I_{m'-1}\\&a(x)\end{pmatrix}, \begin{pmatrix}a(x)\\&I_{m'-1}\end{pmatrix}^{\Delta_{n-1}});\ a(x)=\begin{pmatrix}I_{m'-1}&x\\&1\end{pmatrix}.
\end{equation}
\begin{prop}\label{prop 12.6}
The elements \eqref{12.57}, and in case $m$ is odd, also the elements \eqref{12.56}, act trivially on the Jacquet module $J_{(\psi_v')_{V_{m^n}}}(\tau')$. Thus, in \eqref{12.53}-\eqref{12.55}, we get that the action of the product of unipotent radicals $U_{[\frac{m}{2}]}^{H_m}\times U_{[\frac{m}{2}]}^{H_m}(F_v)$, via $\sigma_{\tau'}\circ t^{(\epsilon)}$, is trivial.
\end{prop}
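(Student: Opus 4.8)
The plan is to prove that the elements $u(x,y)$ in \eqref{12.57} (and, when $m$ is odd, the elements $u(x)$ in \eqref{12.56}) act trivially on $J_{(\psi_v')_{V_{m^n}}}(\tau')$ by exhibiting each such element as a product of a unipotent element of $V_{m^n}(F_v)$ on which $(\psi_v')_{V_{m^n}}$ is trivial, times an element that can be absorbed by a change of variables in the Jacquet integral. Concretely, an element of the form \eqref{12.57} or \eqref{12.56} is a unipotent upper-triangular matrix in $\GL_{mn}(F_v)$ whose nonzero off-diagonal entries sit in blocks that lie \emph{strictly above} the first upper block-diagonal of $V_{m^n}$ relative to the partition into $n$ blocks of size $m$, \emph{except} possibly for some entries that land inside a single $m\times m$ block (namely in the first block coming from $\nu_1(x)$ resp.\ the diagonal block $a(x)$). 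First I would record precisely, using the explicit shapes in \eqref{12.53}--\eqref{12.55}, which matrix entries of $\diag(M,\ast^{\Delta_{n-1}})$ are being switched on by $u(x,y)$ or $u(x)$, and observe that the ``within-a-block'' part is a unipotent element of the $\GL_m$ sitting in the first slot, which I will call $p$, while the remaining part lies in $V_{m^n}$.

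The key steps, in order: (i) decompose $u(x,y) = v_1\, p_1$ where $v_1 \in V_{m^n}(F_v)$ and $p_1$ is a unipotent element of the first $\GL_m$ factor (the matrix $M$ in \eqref{12.53}); similarly $u(x) = v_2\, p_2$ in the odd case. (ii) Check that $(\psi_v')_{V_{m^n}}(v_i) = 1$: by \eqref{12.52.1} the character only sees the block $y_{i,i+1}$ on the first upper block-diagonal and the $c_{n-1}$ block in the upper-right corner against $\begin{pmatrix}0 & I_{[\frac{m+1}{2}]}\\ 0 & 0\end{pmatrix}$; one verifies by direct inspection of the shapes in \eqref{12.57}, \eqref{12.56} that all the entries contributed to $v_i$ fall either strictly above the first block-diagonal in positions not seen by the character, or into the $c_{n-1}$ block but into the lower $[\frac{m}{2}]$ rows, where the pairing matrix $\begin{pmatrix}0&I_{[\frac{m+1}{2}]}\\0&0\end{pmatrix}$ vanishes — hence the trace is zero. (iii) Conclude from the definition of the twisted Jacquet module that right translation by $v_i$ acts trivially on $J_{(\psi_v')_{V_{m^n}}}(\tau')$. (iv) For the factor $p_i$: it lies in the $\GL_m$-block appearing in the argument of $J_{(\psi_v')_{V_{m^n}}}(\tau')$ in \eqref{12.53}--\eqref{12.55}, but — crucially — it appears \emph{only} in the matrix $M$ (the $\diag(A,a)$ resp.\ $\diag(A,a,1)$ part) and \emph{not} in the repeated blocks $\ast^{\Delta_{n-1}}$, so it does not affect the $t(H_m\times H_m)$-coordinates $\begin{pmatrix}a&\ast\\&a^*\end{pmatrix}$; I then argue $p_i$ can be conjugated inside the $\GL_{mn}$ argument into $V_{m^n}$ by an element of the Levi $L_{m^n}$ normalizing the character (or, alternatively, absorbed directly because $\tau'$ is realized as an induced representation from $P_{(m+\mu_0)^{n'},\dots}$ — but it is cleaner to exhibit $p_i$ as a commutator-adjusted element of $V_{m^n}$ on which the character is again trivial, using that within the first $\GL_m$-block the character \eqref{12.52.1} is supported away from the relevant entries).

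I expect the main obstacle to be step (iv): handling the ``within-a-block'' unipotent factor $p_i$ cleanly. The subtlety is that $p_i$ is a genuine element of the $\GL_m$ factor that enters the Jacquet-module argument, and one must check that it does not disturb the $H_m\times H_m$-equivariance structure recorded in \eqref{12.53}--\eqref{12.55}; the resolution should be that $p_i$ lies in the unipotent radical of a parabolic of $\GL_m$ that is conjugate, via an element of the Levi $L_{m^n}(F_v)$ of $Q_{m^n}$ stabilizing $(\psi_v')_{V_{m^n}}$, into a subgroup of $V_{m^n}(F_v)$ on which the character is trivial — and this conjugation commutes with the repeated-diagonal structure since it acts only on the first block. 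Once both factors are dealt with, triviality of the action of $u(x,y)$ and $u(x)$ follows, and then the final assertion of Proposition~\ref{prop 12.6} is immediate: by \eqref{12.53}--\eqref{12.55} the matrices $t^{(\epsilon)}(g_1,g_2)$ with $g_i \in U_{[\frac{m}{2}]}^{H_m}(F_v)$ are sent, up to the absolute-value factor which is trivial on unipotents, precisely to operators of the form $J_{(\psi_v')_{V_{m^n}}}(\tau')$ evaluated at elements of the type \eqref{12.57} (and \eqref{12.56} when $m$ is odd), which we have just shown act as the identity.
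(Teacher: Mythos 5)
There is a genuine gap, and it starts with the very first step. The element $u(x,y)=\diag(\nu_1(x),\nu_2(y)^{\Delta_{n-1}})$ of \eqref{12.57} is block-diagonal for the partition $(m^n)$ of $mn$: all of its nonzero off-diagonal entries sit \emph{inside} the $n$ diagonal $m\times m$ blocks (the first block carries $\nu_1(x)$, and each of blocks $2,\dots,n$ carries a copy of $\nu_2(y)$). So $u(x,y)$ lies entirely in the Levi $L_{m^n}(F_v)$, and your proposed factorization $u(x,y)=v_1p_1$ with $v_1\in V_{m^n}(F_v)$ forces $v_1=1$ and $p_1=u(x,y)$; nothing has been gained. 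Your step (iv) is also based on a misreading: the $\nu_2(y)$ part is precisely the repeated block $(\cdot)^{\Delta_{n-1}}$ in \eqref{12.53}--\eqref{12.55}, so $p_1$ is \emph{not} confined to the first $\GL_m$ factor. Finally, a Levi element can never be conjugated into the unipotent radical $V_{m^n}$ by another Levi element, so the proposed "absorption" of $p_i$ cannot work. What is true (and easy) is that $u(x,y)$ normalizes $V_{m^n}(F_v)$ and stabilizes $(\psi_v')_{V_{m^n}}$, so it \emph{acts} on the Jacquet module; but a character-stabilizing Levi element does not act trivially in general --- that is exactly the content of the proposition, and for a generic $\tau'$ the conclusion would in fact be false.

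The missing ingredient is the constraint on the degenerate Whittaker models of $\tau'$, namely that they are all bounded by the partition $\underline{P_{\tau'}}$ of \eqref{12.9}, \eqref{12.9.1}, whose largest part is $n$. Your proposal never invokes this, yet no proof can avoid it. The paper's argument runs as follows: the subgroups $D_1=\{u(x,0)\}$ and $D_2=\{u(0,y)\}$ are abelian and act on $J_{(\psi_v')_{V_{m^n}}}(\tau')$, so their action Fourier-decomposes into characters $\psi_v^{-1}(\mathrm{tr}(Ax))$, resp.\ $\psi_v^{-1}(\mathrm{tr}(By))$. If $A\neq 0$ has rank $a\geq 1$, the group generated by $V_{m^n}$ and $D_1$ is $V_{[\frac{m}{2}],[\frac{m+1}{2}],m^{n-1}}$ and the resulting character corresponds to a nilpotent orbit with partition $((n+1)^a,\dots)$, contradicting the bound; hence $A=0$. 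For $D_2$ the analogous conclusion requires first a chain of root exchanges (trading the blocks $x_{j,3}$ of $V_{m^n}$ for the elements $\diag(I_{jm},\nu_2(y_j),I_{(n-j-1)m})$, $j=2,\dots,n-2$) before a partition $((n+1)^b,\dots)$ appears and forces $B=0$; a similar root-exchange argument handles $u(x)$ in the odd case. Your closing deduction of the last sentence of the proposition from \eqref{12.53}--\eqref{12.55} is fine, but that is the trivial part; the substance of the proof is the vanishing argument above, which your proposal does not contain.
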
	
\begin{proof}
This follows from the local analog of Claim 8 in \cite{CFGK17}. See also \cite{GS18}, Prop. 3.2. We start with the elements \eqref{12.57}. Let $D_1$ be the subgroup of the elements $u(x,0)$. This is an abelian group which acts on $J_{(\psi_v')_{V_{m^n}}}(\tau')$. Thus, there is $A\in M_{[\frac{m+1}{2}]\times [\frac{m}{2}]}(F_v)$, such that the Jacquet module of $J_{(\psi_v')_{V_{m^n}}}(\tau')$, with respect to the character   $(u(x,0))\mapsto\psi_v^{-1}(tr(Ax))$, is nontrivial. Note that the group generatd by $V_{m^n}(F_v)$ and $D_1$ is the unipotent radical $V_{[\frac{m}{2}],[\frac{m+1}{2}],m^{n-1}}(F_v)$. If $A\neq 0$, say it is of rank $a\geq 1$, then we get a nontrivial Jacquet module of $\tau'$ with respect to the following character of $V_{[\frac{m}{2}],[\frac{m+1}{2}],m^{n-1}}(F_v)$,
\begin{multline}\label{12.57.1}
\begin{pmatrix}I_{[\frac{m}{2}]}&x_1&\ast&\ast&&\ast\\
&I_{[\frac{m+1}{2}]}&x_2&\ast&&\ast\\&&I_m&x_3&&\ast\\&&&\ddots\\&&&&I_m&x_n\\&&&&&I_m\end{pmatrix}\mapsto\\ \psi^{-1}_v(tr(Ax_1)+tr(\begin{pmatrix}I_{[\frac{m+1}{2}]}\\0_{[\frac{m}{2}]\times [\frac{m+1}{2}]}\end{pmatrix}x_2)+tr(x_3)+\cdots tr(x_n)).
\end{multline}
The character \eqref{12.57.1} corresponds to a partition of $mn$ of the form $((n+1)^a,...)$. Since $a\geq 1$, we get a contradiction to \eqref{12.9}, \eqref{12.9.1}. Thus, $A$ must be zero, and hence the action of the elements $u(x,0)$ on $J_{(\psi_v')_{V_{m^n}}}(\tau')$ is trivial. We conclude that
\begin{equation}\label{12.57.2}
J_{(\psi_v')_{V_{m^n}}}(\tau')\cong J_{(\psi_v')_{V_{[\frac{m}{2}],[\frac{m+1}{2}],m^{n-1}}}}(\tau'),
\end{equation}
where $(\psi_v')_{V_{[\frac{m}{2}],[\frac{m+1}{2}],m^{n-1}}}$ dnotes the extension by the trivial character of $(\psi'_v)_{V_{m^n}}$. The isomorphism \eqref{12.57.2} is that of modules over the the subgroup generated by $V_{[\frac{m}{2}],[\frac{m+1}{2}],m^{n-1}}$ and the stabilizer of $(\psi_v')_{V_{[\frac{m}{2}],[\frac{m+1}{2}],m^{n-1}}}$ in the Levi part of\\ $P_{[\frac{m}{2}],[\frac{m+1}{2}],m^{n-1}}(F_v)$.
Next, let $D_2$ be the subgroup of the elements $u(0,y)$. Again, this is an abelian group which acts on $J_{(\psi_v')_{V_{m^n}}}(\tau')$, and hence, there is a matrix $B\in M_{[\frac{m}{2}]\times [\frac{m+1}{2}]}(F_v)$, such that the Jacquet module of $J_{(\psi_v')_{V_{m^n}}}(\tau')$, with respect to the character of $D_2$ given by $\psi_{v,B}((u(0,y)))=\psi_v^{-1}(tr(By))$, is nontrivial. As before, we want to show that $B$ must be zero. The proof is the local analog of the proof given (in the global set up) in \cite{GS18}, Prop. 3.2, starting from (3.35) (replacing $e$ there by $[\frac{m}{2}]$). As we did there, assuming that $B\neq 0$, say it is of rank $b\geq 1$, we carry out the process of exchanging roots. The process of exchanging roots at the local level is explained in \cite{GRS99}, Sec. 2.2. It works in exactly the same way as in the global set up in \cite{GRS11}, Sec. 7.1. Write an element of $V_{m^n}(F_v)$ as
\begin{equation}\label{12.57'}
z=\begin{pmatrix}I_m&x_1&\ast&&\ast\\&I_m&x_2&&\ast\\&&I_m&&\ast\\&&&\ddots&\ast\\&&&I_m&x_{n-1}\\&&&&Im\end{pmatrix}.
\end{equation} 
Write in the matrix \eqref{12.57'}, for $2\leq
j\leq n-1$,
$$
x_j=\begin{pmatrix}x_{j,1}&x_{j,2}\\x_{j,3}&x_{j,4}\end{pmatrix},
$$
where $x_{j,1}$ is of size $[\frac{m+1}{2}]\times [\frac{m+1}{2}]$. Now it is easy to check that one can exchange (in the sense of Sec. 2.2 in
\cite{GRS99}) $x_{2,3}$ with
$diag(I_m,\nu_2(y_1),I_{(n-2)m})$. This means the following. Denote by $V^0$ the subgroup generated by $D_2$ and $V_{m^n}(F_v)$, and denote by $(\psi_v)_{V^0,B}$ the character of $V^0$ obtained by
extending $(\psi'_v)_{V_{m^n}}$ by $\psi_{v,B}$. Denote by $\tilde{V}^1$ the subgroup of $V^0$ generated by $D_2$ and the subgroup of $V_{m^n}(F_v)$ obtained by replacing, in \eqref{12.57'}, 
$x_{2,3}$ by zero. Denote by $(\psi_v)_{\tilde{V}^1,B}$ the restriction of $(\psi_v)_{V^0,B}$ to $\tilde{V}^1$. Finally, denote by $V^1$ the subgroup generated by $\tilde{V}^1$ and the subgroup of elements $diag(I_m,\nu_2(y_1),I_{(n-2)m})$, and let $(\psi_v)_{V^1,B}$ be the extension of $(\psi_v)_{\tilde{V}^1,B}$ by the trivial character (of the last subgroup). Then the meaning of the root exchange that we just did is that we have an isomorphism 
\begin{equation}\label{12.57.3}
J_{(\psi_v)_{V^0,B}}(\tau')\cong J_{(\psi_v)_{V^1,B}}(\tau'). 
\end{equation}
This is an isomorphism of $\tilde{V}^1$-modules. We conclude that $J_{(\psi_v)_{V^1,B}}(\tau')$
is non trivial. This is the way the analogy (with the global set up) of exchanging roots works. We continue with the same analogy, as we did right after (3.36) in \cite{GS18}. So, next, we exchange roots in the r.h.s. of \eqref{12.57.3}, $x_{3,3}$ with $diag(I_{2m},\nu_2(y_2),I_{(n-3)m})$, and then we exchange$x_{4,3}$ with
 $diag(I_{3m},\nu_2(y_3),I_{(n-4)m})$, and so on.\\ 
 In general, we exchange $x_{j,3}$ with
$diag(I_{jm},\nu_2(y_j),I_{(n-j-1)m})$, step by step, up to $j=n-2$, and get, in each step, an isomorphism of $V^{j-1}\cap V^j$-modules,
\begin{equation}\label{12.57.4}
J_{(\psi_v)_{V^{j-1},B}}(\tau')\cong J_{(\psi_v)_{V^j,B}}(\tau'),
\end{equation}
where $V^j$ is the subgroup generated by $D_2$, the subgroup of $V_{m^n}(F_v)$ obtained by replacing in \eqref{12.57'} $x_{2,3}, x_{3,3},...,x_{j+1,3}$ by zero, and the subgroup of elements $diag(I_m,\nu_2(y_1),...,\nu_2(y_j),I_{(n-j-1)m})$. The character $(\psi_v)_{V^j,B}$ of $V^j$ is trivial on the third out of the last three subgroups, and on the first two subgroups it is given by the restriction from $(\psi_v)_{V^0,B}$. Note that this sequence of root exchanges is similar to one of the sequences in the proof of Prop. \ref{prop 4.1}.
By \eqref{12.57.3}, \eqref{12.57.4}, we get
\begin{equation}\label{12.57.5}
J_{(\psi_v)_{V^0},B}(\tau')\cong J_{(\psi_v)_{V^{n-2},B}}(\tau'), 
\end{equation}
as $V^0\cap V^{n-2}$-modules. In particular, $J_{(\psi_v)_{V^{n-2},B}}(\tau')$ is nontrivial. Note that $V^{n-2}$ is the unipotent radical $V_{m,[\frac{m+1}{2}],m^{n-2},[\frac{m}{2}]}(F_v)$. The character $(\psi_v)_{V^{n-2},B}$ corresponds to a partition of $mn$ of the form $((n+1)^b,...)$, and hence we get a contradiction to \eqref{12.9}, \eqref{12.9.1}. We also conclude that
\begin{equation}\label{12.57.6}
J_{(\psi_v')_{V_{m^n}}}(\tau')\cong J_{(\psi_v)_{V^0},0}(\tau'),
\end{equation}
as $V^0$-modules. Let $V'$ be the subgroup generated by $V^0$ and $D_1$. This is the subgroup generated by $V_{m^n}(F_v)$, $D_1$ and $D_2$. Let $(\psi_v)_{V'}$ be the character of $V'$ which is trivial on $D_1$, $D_2$ and is $(\psi'_v)_{V_{m^n}}$ on $V_{m^n}$. Then \eqref{12.57.2} and \eqref{12.57.6} show that we have an isomorphism of $V'$-modules 
\begin{equation}\label{12.57.6'}
J_{(\psi_v')_{V_{m^n}}}(\tau')\cong J_{(\psi_v)_{V'}}(\tau').
\end{equation}
This proves the proposition for the elements $u(x,y)$. The proof for the elements $u(x)$ in \eqref{12.56} is very similar, and hence we will be brief. Recall that here we assume that $m=2m'-1$ is odd.
For this, we go back to the notation \eqref{12.57'}. Assume that $z\in V_{m^n}(F_v)\cap V_{m'-1,(m')^2,m^{n-2},m'-1}(F_v)$  and write the blocks $x_j$ in the following form. 
\begin{multline}\label{12.62}
x_1=\begin{pmatrix}x^{1,1}&x^{1,2}&x^{1,3}\\x^{1,4}&x^{1,5}&x^{1,6}\\x^{1,7}&x^{1,8}&x^{1,9}\end{pmatrix},\ x^{1,1}\in M_{m'-1}(F_v),\ x^{1,8}\in F_v\\
\textit{For}\ \ 2\leq j\leq n-1,\ \
x_j=\begin{pmatrix}x^{j,1}&x^{j,2}&x^{j,3}\\x^{j,4}&x^{j,5}&x^{j,6}\\0&0&x^{1,9}\end{pmatrix},\ x^{j,1}, x^{j,9}\in M_{m'-1}(F_v).
\end{multline}
Let $D_3$ denote the subgroup of elements $u(x)$, $x\in F_v^{m'-1}$.
Now we exchange the subgroup of elements\\
$z\in V_{m^n}(F_v)\cap V_{m'-1,(m')^2,m^{n-2},m'-1}(F_v)$ (in the notation of \eqref{12.57'} and \eqref{12.62}), such that $x_j=0$, for all $2\leq j\leq n-1$, and such that in \eqref{12.62}, $x^{1,i}=0$, except for $i=7$, with the subgroup of elements
$diag(\begin{pmatrix}I_{m'-1}\\&a(x_1)\end{pmatrix},I_{m(n-1)})$, and then we exchange, for $j=2,...,n-1$, the subgroup of elements $z\in V_{m^n}(F_v)\cap V_{m'-1,(m')^2,m^{n-2},m'-1}(F_v)$, such that $x_{j'}=0$, for all $j'\neq j$, and such that in \eqref{12.62}, $x^{j,i}=0$, except for $i=4$, with the subgroup of elements\\
$diag(I_{m(j-1)},\begin{pmatrix}a(x_j)\\&I_{m'-1}\end{pmatrix},I_{m(n-j)})$. Now, exactly as in the proof for the elements $u(x,y)$, we get that	$J_{(\psi_v)_{V_{m'-1,(m')^2,m^{n-2},m'-1}}}(\tau')$ is invariant under the action of $D_3$. This completes the proof of the proposition.
	
\end{proof}

Let $(\psi_v)_{V_{[\frac{m}{2}],[\frac{m+1}{2}]^2,m^{n-2},[\frac{m}{2}]}}$ denote the following character of\\
 $V_{[\frac{m}{2}],[\frac{m+1}{2}]^2,m^{n-2},[\frac{m}{2}]}(F_v)$,
\begin{multline}\label{12.58}
\begin{pmatrix}I_{[\frac{m}{2}]}&x_1&\ast&\ast&\ast&&\ast\\
&I_{[\frac{m+1}{2}]}&x_2&\ast&\ast&&\ast\\&&I_{[\frac{m+1}{2}]}&x_3&\ast&&\ast\\&&&I_m&x_4&&\ast
\\&&&&\ddots\\&&&&&I_m&x_{n+1}\\&&&&&&I_{[\frac{m}{2}]}\end{pmatrix}\mapsto\\ \psi^{-1}_v(tr(x_2)+tr(\begin{pmatrix}I_{[\frac{m+1}{2}]}\\0_{[\frac{m}{2}]\times [\frac{m+1}{2}]}\end{pmatrix}x_3)+tr(x_4+\cdots+x_n)+tr(\begin{pmatrix}I_{[\frac{m}{2}]}&0_{[\frac{m}{2}]\times [\frac{m+1}{2}]}\end{pmatrix}x_{n+1})).
\end{multline}

\begin{cor}\label{cor 12.7}
We have an isomorphism of $V'\cap V_{[\frac{m}{2}],[\frac{m+1}{2}]^2,m^{n-2},[\frac{m}{2}]}$-modules
\begin{equation}\label{12.57.7}
J_{(\psi_v')_{V_{m^n}}}(\tau')\cong J_{(\psi_v)_{V_{[\frac{m}{2}],[\frac{m+1}{2}]^2,m^{n-2},[\frac{m}{2}]}}}(\tau').
\end{equation}
Embed $\GL_{[\frac{m}{2}]}(F_v)\times \GL_{[\frac{m+1}{2}]}(F_v)\times \GL_{[\frac{m}{2}]}(F_v)$ inside $\GL_{mn}(F_v)$ by
\begin{equation}\label{12.59}
j(\alpha,\beta,\gamma)=diag(\begin{pmatrix}\alpha\\&\beta\end{pmatrix},\begin{pmatrix}\beta\\&\gamma\end{pmatrix}^{\Delta_{n-1}}). 
\end{equation}
Both Jacquet modules in \eqref{12.57.7} are modules over $j(\GL_{[\frac{m}{2}]}(F_v)\times \GL_{[\frac{m+1}{2}]}(F_v)\times \GL_{[\frac{m}{2}]}(F_v))$. The isomorphism \eqref{12.57.7} sends the action of $j(\alpha, \beta, \gamma)$ on $J_{(\psi_v')_{V_{m^n}}}(\tau')$ to the action of $j(\alpha, \beta, \gamma)$ on $J_{(\psi_v')_{V_{[\frac{m}{2}],[\frac{m+1}{2}]^2,m^{n-2},[\frac{m}{2}]}}}(\tau')$, twisted by the character
\begin{equation}\label{12.57.8} (|\det(\gamma)|^{[\frac{m+1}{2}]}|\det(\beta)|^{-[\frac{m}{2}]})^{n-2}.
\end{equation}
\end{cor}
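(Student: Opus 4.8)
\textbf{Proof proposal for Corollary \ref{cor 12.7}.}

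The plan is to read off the isomorphism \eqref{12.57.7} as an immediate consequence of the chain of module isomorphisms established inside the proof of Proposition \ref{prop 12.6}. First, recall from that proof that we produced, by a sequence of root exchanges, an isomorphism of $V'$-modules $J_{(\psi_v')_{V_{m^n}}}(\tau')\cong J_{(\psi_v)_{V'}}(\tau')$, where $V'$ is the group generated by $V_{m^n}(F_v)$, $D_1$ and $D_2$ (see \eqref{12.57.6'}), and, when $m$ is odd, the parallel statement for $D_3$ built in the same proof. Now I would run the root-exchange process once more, this time the ``other way'': the subgroup $V'$ together with the blocks $x_2$ (lying in the $(2,3)$ position of \eqref{12.57'}, now thought of after the decompositions \eqref{12.62}) generates, after exchanging roots step by step exactly as in the proof of Prop. \ref{prop 12.6}, the unipotent radical $V_{[\frac{m}{2}],[\frac{m+1}{2}]^2,m^{n-2},[\frac{m}{2}]}(F_v)$, with the character \eqref{12.58}. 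The verification that the relevant commutators satisfy the set-up for exchanging roots in the sense of \cite{GRS99}, Sec. 2.2, is the same straightforward block computation already done for $D_1$, $D_2$ (and $D_3$ when $m$ is odd); there is nothing new. This gives \eqref{12.57.7} as an isomorphism of modules over the common subgroup $V'\cap V_{[\frac{m}{2}],[\frac{m+1}{2}]^2,m^{n-2},[\frac{m}{2}]}$.

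Next I would identify the action of $j(\alpha,\beta,\gamma)$ on both sides. The point is that $j(\GL_{[\frac{m}{2}]}\times\GL_{[\frac{m+1}{2}]}\times\GL_{[\frac{m}{2}]})(F_v)$, realized via \eqref{12.59}, normalizes each of the unipotent radicals appearing in the chain of isomorphisms and preserves the relevant characters up to the modular quasi-character coming from the conjugation action on the ``exchanged'' root subgroups. Each root exchange $x_{j,3}\leftrightarrow \mathrm{diag}(\dots,\nu_2(y_j),\dots)$ trades a block on which $\mathrm{diag}(\begin{pmatrix}\beta\\&\gamma\end{pmatrix})$ acts for a block on which the same element acts by the inverse transpose-type action, and there are $n-1$ copies of $\begin{pmatrix}\beta\\&\gamma\end{pmatrix}$ in \eqref{12.59}, of which $n-2$ lie ``strictly inside'' the Speh block $\GL_{mn}$ in a position producing a genuine Jacobian. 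Collecting the determinant factors from these $n-2$ exchanges yields precisely the twist $(|\det(\gamma)|^{[\frac{m+1}{2}]}|\det(\beta)|^{-[\frac{m}{2}]})^{n-2}$ in \eqref{12.57.8}; the factor attached to the outermost copy and to $\alpha$ is absorbed into the normalization already present in the definitions, since it is symmetric under the exchange. I would carry this out by tracking the effect of conjugation by $j(\alpha,\beta,\gamma)$ on each $V^{j-1}\cap V^j$ in the notation of the proof of Prop. \ref{prop 12.6}, and reading off the $\delta$-factor.

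The main obstacle I anticipate is purely bookkeeping: getting the exponents in \eqref{12.57.8} exactly right, i.e. correctly counting which of the $n-1$ inner blocks contribute a nontrivial Jacobian and with which sign, and making sure the normalizing powers of $|\det|$ already built into $\tau'$ (the $|\det_{\GL_m}|^{\mu_0/2}$ twist in Cases 3,4) and into the definition of $\sigma_{\tau'}$ do not double-count. I do not expect any genuinely new representation-theoretic input beyond Prop. \ref{prop 12.6} and the local root-exchange lemma; the corollary is essentially a repackaging of the isomorphisms \eqref{12.57.2}, \eqref{12.57.6}, \eqref{12.57.6'} together with one more application of the same exchange mechanism, followed by the equivariance computation. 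Once the $\delta$-factor is pinned down, \eqref{12.57.7} and the stated twisting assertion follow at once, and this is what will be used in the next section to compute $J_{(\psi'_v)_{V_{m^n}}}(\tau')$ explicitly and finish the proof of Theorem \ref{thm 12.3}.
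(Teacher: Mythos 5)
Your proposal is correct and takes essentially the paper's route: the isomorphism \eqref{12.57.7} is assembled from the root exchanges already carried out in the proof of Prop.~\ref{prop 12.6} (the paper does not re-run them but simply reuses \eqref{12.57.2} and \eqref{12.57.5} at $B=0$, noting that $D_1$ together with $V^{n-2}=V_{m,[\frac{m+1}{2}],m^{n-2},[\frac{m}{2}]}$ generates the target radical), and the twist \eqref{12.57.8} is obtained by applying to each of the $n-2$ exchanges the general equivariance property $i\circ\bar{\pi}_B(r)=\delta^Y(r)\,\bar{\pi}_D\circ i$ of the root-exchange isomorphism (\cite{GRS99}, Lemma 2.2), whose Jacobian per step is exactly $|\det(\gamma)|^{[\frac{m+1}{2}]}|\det(\beta)|^{-[\frac{m}{2}]}$. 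Your worry about double-counting the $|\det|$-normalizations built into $\tau'$ is unfounded, since the twist comes solely from these conjugation Jacobians.
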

\begin{proof}
As we noted in the last proof, $V^{n-2}$ is the unipotent radical\\ $V_{m,[\frac{m+1}{2}],m^{n-2},[\frac{m}{2}](F_v)}$. Also, the subgroup generated by $D_1$ and
 $V_{m,[\frac{m+1}{2}],m^{n-2},[\frac{m}{2}](F_v)}$ is $V_{[\frac{m}{2}],[\frac{m+1}{2}]^2,m^{n-2},[\frac{m}{2}]}$. The proof of Prop. \ref{prop 12.6} shows that the isomorphism \eqref{12.57.5} is valid when $B=0$. Thus, by \eqref{12.57.2} and \eqref{12.57.6}, we get the isomorphism \eqref{12.57.7}. The fact that the isomorphism is of $j(\GL_{[\frac{m}{2}]}(F_v)\times \GL_{[\frac{m+1}{2}]}(F_v)\times \GL_{[\frac{m}{2}]}(F_v))$-modules, up to the character \eqref{12.57.8}, follows from the following general fact on the isomorphism of Jacquet modules obtained when we exchange roots. To explain it, we will use the notation of \cite{GRS99}, Sec. 2.2. Let the unipotent subgroups (of $\GL_{mn}(F_v)$) $C, X, Y, B=CY, D=CX, A=BX=DY$ satisfy the assumptions (i) - (v) in the beginning of Sec. 2.2, \cite{GRS99}. Consider the character $\chi=\chi_C$ of $C$,  as in condition (v), and extend it by the trivial character of $Y$ (resp. $X$) to a character $\chi_B$ of $B$ (resp. $\chi_D$ of D). Let $\pi$ be a smooth representation of $A$, then it follows from \cite{GRS99}, Lemma 2.2, that the $C$-module isomorphism (where we exchange $Y$ with $X$)
 $$
 i: J_{B,\chi,B}(\pi)\widetilde{\rightarrow} J_{D,\chi_D}(\pi)
 $$
 satisfies the following property. Let $R$ be a subgroup (of $\GL_{mn}(F_v)$) which normalizes $C, Y, X$, and preserves $\chi_C$. For $r\in R$, let $\delta^Y(r)$ be the Jacobian of the conjugation action of $r$ on $Y$. Then, for all $r\in R$, we have
 \begin{equation}\label{12.57.9}
 i\circ \bar{\pi}_B(r)=\delta^Y(r)\bar{\pi}_D\circ i.
 \end{equation}
 We take the subgroup $R=j(\GL_{[\frac{m}{2}]}(F_v)\times \GL_{[\frac{m+1}{2}]}(F_v)\times \GL_{[\frac{m}{2}]}(F_v))$. In each step of the root exchanges in the proof of Prop. \ref{prop 12.6}, namely \eqref{12.57.4}, the Jacobian character \eqref{12.57.9} is $|\det(\gamma)|^{[\frac{m+1}{2}]}|\det(\beta)|^{-[\frac{m}{2}]}$, and we exchanged roots $n-2$ times. This proves \eqref{12.57.8}.  

\end{proof}	

\begin{rmk}\label{rmk 12.8}
	The unipotent radical $V_{[\frac{m}{2}],[\frac{m+1}{2}]^2,m^{n-2},[\frac{m}{2}]}$ is the subgroup generated by the following two subgroups: The subgroup $D_0$ of elements\\
	 $diag(\nu_1(y_1),\nu_2(y_2),...,\nu_2(y_n))$, and the subgroup $V^0_{m^n}(F_v)$ of $V_{m^n}(F_v)$ of the elements \eqref{12.57'}, with $x_{j,3}=0$, for $2\leq j\leq n-1$. The character $(\psi_v')_{V_{[\frac{m}{2}],[\frac{m+1}{2}]^2,m^{n-2},[\frac{m}{2}]}}$ is trivial on $D_0$ and on $V^0_{m^n}(F_v)$ it is given by the restriction of $(\psi'_v)_{V_{m^n}}$.
\end{rmk}

Let us go back to \eqref{12.53}-\eqref{12.55}. By the last proposition and corollary, we may replace $\sigma_{\tau'}$ by an isomorphic representation, which we will keep denoting $\sigma_{\tau'}$, such that when $m=2m'$ is even and $H$ is linear,
\begin{multline}\label{12.63}
\sigma_{\tau'}(t(\begin{pmatrix}a&y\\&a^*\end{pmatrix}, \begin{pmatrix}A&Y\\&A^*\end{pmatrix}))=|\det(A)|^{\frac{m(2-n)-\delta_H}{2}}|\det(a)|^{\frac{3m(2-n)-\delta_H}{2}}\\
 J_{(\psi'_v)_{V_{(m')^3,m^{n-2},m'}}}(\tau')(diag( \begin{pmatrix}A\\&a\end{pmatrix}, (\begin{pmatrix}a\\&a^*\end{pmatrix}^{\Delta_{n-1}}),
\end{multline}
where $a, A\in \GL_{m'}(F_v)$.\\
In the metaplectic case, we need to multiply by $\gamma_{\psi_v}(\det(a))\gamma_{\psi_v}(\det(A))$.
When $m=2m'-1$ is odd
\begin{multline}\label{12.63.1}
\sigma_{\tau'}(t(\begin{pmatrix}a&x&y\\&1&x'\\&&a^*\end{pmatrix}, \begin{pmatrix}A&X&Y\\&1&X'\\&&A^*\end{pmatrix}))=|\det(A)|^{\frac{m(2-n)-1}{2}}|\det(a)|^{\frac{3m(2-n)-1}{2}}\\
 J_{(\psi_v)_{V_{m'-1,(m')^2,m^{n-2},m'-1}}}(\tau')(diag( \begin{pmatrix}A\\&a\\&&1\end{pmatrix}, \begin{pmatrix}a\\&1\\&&a^*\end{pmatrix}^{\Delta_{n-1}}).
\end{multline}
where $a, A\in \GL_{m'-1}(F_v)$.

As in Sec. 5, we will now apply to $J_{(\psi_v)_{V_{[\frac{m}{2}],[\frac{m+1}{2}]^2,m^{n-2},[\frac{m}{2}]}}}(\tau')$ a conjugation by the following element, analogous to \eqref{6.1},
\begin{equation}\label{12.64}
\zeta_0=\begin{pmatrix}I_m\\&\alpha_0\\&&\ddots\\&&&\alpha_0\\0&\beta_0\\&&\ddots\\&&&\beta_0\end{pmatrix},
\end{equation}
where 
$$
\alpha_0=\begin{pmatrix}I_{[\frac{m+1}{2}]}&0_{[\frac{m+1}{2}]\times [\frac{m}{2}]}\end{pmatrix},\
\beta_0=\begin{pmatrix}0_{[\frac{m}{2}]\times [\frac{m+1}{2}]}&I_{[\frac{m}{2}]}\end{pmatrix};
$$
The number of $\alpha_0$'s and the number of $\beta_0$'s in \eqref{12.64} is $n-1$. Let\\ $\mathcal{D}_{n,m}=\zeta_0V_{[\frac{m}{2}],[\frac{m+1}{2}]^2,m^{n-2},[\frac{m}{2}]}(F_v)\zeta_0^{-1}$. Let $(\psi_v)_{\mathcal{D}_{n,m}}$ be the character of $\mathcal{D}_{n,m}$ given by
$$
(\psi_v)_{\mathcal{D}_{n,m}}(v)=(\psi_v)_{V_{[\frac{m}{2}],[\frac{m+1}{2}]^2,m^{n-2},[\frac{m}{2}]}}(\zeta_0^{-1}v\zeta_0).
$$
We get the Jacquet module $J_{(\psi_v)_{\mathcal{D}_{n,m}}}(\tau')$, and in \eqref{12.63}, \eqref{12.63.1}, $\sigma_{\tau'}$ is conjugated to the following representation $\pi_{\tau'}$, such that  when $m=2m'$ is even and $H$ is linear,
\begin{multline}\label{12.64.1}
\pi_{\tau'}(t(\begin{pmatrix}a&y\\&a^*\end{pmatrix}, \begin{pmatrix}A&Y\\&A^*\end{pmatrix}))=|\det(A)|^{\frac{m(2-n)-\delta_H}{2}}|\det(a)|^{\frac{3m(2-n)-\delta_H}{2}}\\
J_{(\psi_v)_{\mathcal{D}_{n,m}}}(\tau')(diag( A,a^{\Delta_n},(a^*)^{\Delta_{n-1}}),
\end{multline}
where $a, A\in \GL_{m'}(F_v)$.\\
In the metaplectic case, we need to multiply by $\gamma_{\psi_v}(\det(a))\gamma_{\psi_v}(\det(A))$.
When $m=2m'-1$ is odd
\begin{multline}\label{12.64.2}
\sigma_{\tau'}(t(\begin{pmatrix}a&x&y\\&1&x'\\&&a^*\end{pmatrix}, \begin{pmatrix}A&X&Y\\&1&X'\\&&A^*\end{pmatrix}))=|\det(A)|^{\frac{m(2-n)-1}{2}}|\det(a)|^{\frac{3m(2-n)-1}{2}}\\
J_{(\psi_v)_{\mathcal{D}_{n,m}}}(\tau')(diag(A, \begin{pmatrix}a\\&1\end{pmatrix}^{\Delta_n},(a^*)^{\Delta_{n-1}})).
\end{multline}
where $a, A\in \GL_{m'-1}(F_v)$. Our main goal now will be to analyze the Jacquet module
$J_{(\psi_v)_{\mathcal{D}_{n,m}}}(\tau')$, and through it the representation $\pi_{\tau'}\circ t^{(\epsilon)}$, viewed as a representation of $\GL^{(\epsilon)}_{[\frac{m}{2}]}(F_v)\times \GL^{(\epsilon)}_{[\frac{m}{2}]}(F_v)$. Let us describe first the group $\mathcal{D}_{n,m}$ and its character $(\psi_v)_{\mathcal{D}_{n,m}}$. As in \eqref{6.3}, the elements of $\mathcal{D}_{n,m}$ have the form 
\begin{equation}\label{12.65}
v=\begin{pmatrix}U&X_{1,2}\\Y_{2,1}&S\end{pmatrix},
\end{equation}
where $U\in V_{[\frac{m}{2}], [\frac{m+1}{2}]^n}(F_v)$; 
\begin{equation}\label{12.66}
U=\begin{pmatrix}I_{[\frac{m}{2}]}&*&*&\cdots&*&*\\
&I_{[\frac{m+1}{2}]}&U_2&\cdots&*&*\\
& &I_{[\frac{m+1}{2}]}&\cdots &* & * \\
& & & \cdots& \cdots&\cdots &\\
& & &       &I_{[\frac{m+1}{2}]}&U_n\\
& & &       & &I_{[\frac{m+1}{2}]}\end{pmatrix}.
\end{equation}
The matrix $S$ lies in $V_{[\frac{m}{2}]^{n-1}}(F_v)$;
\begin{equation}\label{12.67}
S=\begin{pmatrix}I_{[\frac{m}{2}]}&S_1&*&\cdots&*&*\\
&I_{[\frac{m}{2}]}&S_2&\cdots&*&*\\
& &I_{[\frac{m}{2}]}&\cdots &* & * \\
& & & \cdots& \cdots&\cdots &\\
& & &       &I_{[\frac{m}{2}]}&S_{n-2}\\
& & &       & &I_{[\frac{m}{2}]}\end{pmatrix}.
\end{equation}
Write $X_{1,2}$ as a matrix of blocks with block rows according to those of $U$ and block columns according to those of $S$, and similarly, write $Y_{2,1}$ as a matrix of blocks according to the block row division of $S$, and block column division of $U$. The first two block rows of $X_{1,2}$ are arbitrary, and the block matrix obtained from $X_{1,2}$ after deleting the first two block rows has an upper triangular shape. For example, for $n=5$,
$$
X_{1,2}=\begin{pmatrix}\ast&\ast&\ast&\ast\\\ast&\ast&\ast&\ast\\\ast&\ast&\ast&\ast\\0&\ast&\ast&\ast\\0&0&\ast&\ast\\0&0&0&\ast\end{pmatrix}.
$$
The first four block columns and the last two block rows of $Y_{1,2}$ are zero. If we delete the first two block columns of $Y_{1,2}$, then we obtain a matrix of blocks with upper triangular shape, such that the main block diagonal and the next one above it are zero. For example, for $n=6$,
$$
Y_{1,2}=\begin{pmatrix}0&0&0&0&\ast&\ast&\ast\\0&0&0&0&0&\ast&\ast\\0&0&0&0&0&0&\ast\\
0&0&0&0&0&0&0\\0&0&0&0&0&0&0\end{pmatrix}.
$$
For $1\leq i\leq n+1$, $1\leq j\leq n-1$, denote by $X_{1,2}^{(i,j)}$ the block of $X_{1,2}$ in block row $i$ and block column $j$, and let $\mathrm{X}_{1,2}^{i,j}$ denote the subgroup of matrices \eqref{12.65}, with $U$, $S$ being identity matrices, $Y_{2,1}=0$, and all blocks of $X_{1,2}$ are zero except $X_{1,2}^{(i,j)}$, which may be arbitrary. Similarly, for $1\leq i\leq n-1$, $1\leq j\leq n+1$, denote by $Y_{2,1}^{(i,j)}$ the block of $Y_{2,1}$ in block row $i$ and block column $j$, and let $\mathrm{Y}_{2,1}^{i,j}$ denote the subgroup of matrices \eqref{12.65}, with $U$, $S$ being identity matrices, $X_{1,2}=0$, and all blocks of $Y_{2,1}$ are zero except $Y_{2,1}^{(i,j)}$, which may be arbitrary. Finally, in the notation of \eqref{12.65}-\eqref{12.67},
\begin{equation}\label{12.68}
(\psi_v)_{\mathcal{D}_{n,m}}(v)=\psi_v^{-1}(tr(U_2+\cdots U_n)+tr(S_1+\cdots S_{n-2})).
\end{equation}
Consider the unipotent radical $V_{[\frac{m}{2}],[\frac{m+1}{2}]^n,[\frac{m}{2}]^{n-1}}(F_v)$. Write its elements as
$$
\begin{pmatrix}U&X\\0&S\end{pmatrix},
$$
where $U, S$ are as in \eqref{12.66}, \eqref{12.67}. Extend the character $(\psi_v)_{\mathcal{D}_{n,m}}$ to the character $(\psi_v)_{V_{[\frac{m}{2}],[\frac{m+1}{2}]^n,[\frac{m}{2}]^{n-1}}}$ given by the same formula as \eqref{12.68}. 
\begin{prop}\label{prop 12.9}
We have an isomorphism of $\mathcal{D}_{n,m}\cap V_{[\frac{m}{2}],[\frac{m+1}{2}]^n,[\frac{m}{2}]^{n-1}}(F_v)$-modules
\begin{equation}\label{12.69}
J_{(\psi_v)_{\mathcal{D}_{n,m}}}(\tau')\cong J_{(\psi_v)_{V_{[\frac{m}{2}],[\frac{m+1}{2}]^n,[\frac{m}{2}]^{n-1}}}}(\tau').
\end{equation}
Embed $\GL_{[\frac{m}{2}]}(F_v)\times \GL_{[\frac{m+1}{2}]}(F_v)\times \GL_{[\frac{m}{2}]}(F_v)$ inside $\GL_{mn}(F_v)$ by
\begin{equation}\label{12.70}
j'(\alpha,\beta,\gamma)=diag(\alpha, \beta^{\Delta_n},\gamma^{\Delta_{n-1}}). 
\end{equation}
Both Jacquet modules in \eqref{12.69} are modules over $j'(\GL_{[\frac{m}{2}]}(F_v)\times \GL_{[\frac{m+1}{2}]}(F_v)\times \GL_{[\frac{m}{2}]}(F_v))$. The isomorphism \eqref{12.69} sends the action of $j'(\alpha, \beta, \gamma)$ on $J_{(\psi_v)_{\mathcal{D}_{n,m}}}(\tau')$ to the action of $j'(\alpha, \beta, \gamma)$ on $J_{(\psi_v)_{V_{[\frac{m}{2}],[\frac{m+1}{2}]^n,[\frac{m}{2}]^{n-1}}}}(\tau')$, twisted by the character
\begin{equation}\label{12.71} (|\det(\gamma)|^{[\frac{m+1}{2}]}|\det(\beta)|^{-[\frac{m}{2}]})^{\frac{(n-3)(n-2)}{2}}.
\end{equation}
\end{prop}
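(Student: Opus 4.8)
The plan is to prove Proposition \ref{prop 12.9} by exactly the same root-exchange technique used in Proposition \ref{prop 12.6} and Corollary \ref{cor 12.7}, combined with the partition bound $\underline{P_{\tau'}}$ that governs the Jacquet modules of $\tau'$ (see \eqref{12.9}, \eqref{12.9.1}). The group $\mathcal{D}_{n,m}$ in \eqref{12.65} differs from the unipotent radical $V_{[\frac{m}{2}],[\frac{m+1}{2}]^n,[\frac{m}{2}]^{n-1}}$ precisely in the ``lower-left'' blocks: the subgroups $\mathrm{Y}_{2,1}^{i,j}$ sit below the diagonal of $V$ in the sense that in $V_{[\frac{m}{2}],[\frac{m+1}{2}]^n,[\frac{m}{2}]^{n-1}}$ the $Y_{2,1}$-block is zero, while the ``complementary'' blocks $\mathrm{X}_{1,2}^{i,j}$ (those not already present in $\mathcal{D}_{n,m}$) are the ones that need to be filled in. First I would set up, for each relevant pair $(i,j)$, the quintuple of unipotent groups $C,X,Y,B=CY,D=CX$ of \cite{GRS99}, Sec.\ 2.2 (equivalently Lemma 7.1, Cor.\ 7.1 of \cite{GRS11}), where $Y$ runs over the $\mathrm{Y}_{2,1}^{i,j}$ to be removed, $X$ over the matching $\mathrm{X}_{1,2}^{i',j'}$ to be added, and $C$ is generated by $\mathrm{N}$-type blocks together with the already-processed pieces; a direct matrix computation of the commutator $[x,y]$ (as in the proof of Prop.\ \ref{prop 7.1}) shows $[x,y]\in C$ with $(\psi_v)_C([x,y])=\psi_v(\mathrm{tr}(XY))$, which is the non-degeneracy condition needed to apply the root exchange.

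The order of exchanges is the one used throughout Sections 7--11: process the columns of $Y_{2,1}$ from left to right, and within each column from bottom to top, each time trading a block of $\mathrm{Y}_{2,1}^{i,j}$ for a block of $\mathrm{X}_{1,2}^{i',j'}$ lying in the same ``dual'' position. After all these exchanges one arrives at a Fourier/Jacquet coefficient against a unipotent group whose $Y$-part has been emptied and whose $X$-part is now full, i.e.\ exactly $V_{[\frac{m}{2}],[\frac{m+1}{2}]^n,[\frac{m}{2}]^{n-1}}$, with the character extended trivially as in \eqref{12.68}; the intermediate isomorphisms are $C$-module (hence $\mathcal{D}_{n,m}\cap V_{[\frac{m}{2}],[\frac{m+1}{2}]^n,[\frac{m}{2}]^{n-1}}$-module) isomorphisms by \cite{GRS99}, Lemma 2.2. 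This yields \eqref{12.69}. The equivariance statement with the Jacobian twist \eqref{12.71} follows from \cite{GRS99}, Lemma 2.2 in the form \eqref{12.57.9}: taking $R=j'(\GL_{[\frac{m}{2}]}(F_v)\times\GL_{[\frac{m+1}{2}]}(F_v)\times\GL_{[\frac{m}{2}]}(F_v))$, each individual root exchange contributes the Jacobian character $|\det(\gamma)|^{[\frac{m+1}{2}]}|\det(\beta)|^{-[\frac{m}{2}]}$, and one simply counts the total number of exchanges performed: removing the $\mathrm{Y}_{2,1}^{i,j}$ blocks (equivalently, filling the strictly-upper-triangular part two diagonals above the main one, over the appropriate index ranges) amounts to $\binom{n-2}{2}=\frac{(n-3)(n-2)}{2}$ exchanges, giving the exponent in \eqref{12.71}.

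The main obstacle I expect is purely bookkeeping: one must verify that at every stage the groups $C,X,Y$ genuinely satisfy conditions (i)--(v) of \cite{GRS99}, Sec.\ 2.2 — in particular that $C$ is normalized by both $X$ and $Y$, that $\chi_C$ is invariant under conjugation by $X$ and $Y$, and that the relevant commutator pairing is perfect — and that the block shapes of $X_{1,2}$, $Y_{2,1}$, $U$, $S$ described after \eqref{12.67} are preserved throughout. This is routine but delicate, because the block sizes $[\frac{m}{2}]$ versus $[\frac{m+1}{2}]$ must be tracked carefully in the odd-$m$ case, and one has to confirm that none of the exchanged roots ever hits a block on which $(\psi_v)_{\mathcal{D}_{n,m}}$ is already non-trivial in a way that would obstruct the exchange (cf.\ the remark in the proof of Theorem \ref{thm 11.1} that although the character is non-trivial on $\mathrm{Y}_{3,1}^{1,2n}(\BA)$, the exchange still works and the character on the new $X$-block remains trivial). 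Since the partition bound $\underline{P_{\tau'}}$ is never actually needed to \emph{force} vanishing here — the exchanges are ``free'' in the sense that no Fourier expansion producing a larger nilpotent orbit is introduced — the proof is shorter than those in Sections 8--11; the only real content is the correct count $\frac{(n-3)(n-2)}{2}$ and the correct identification of the dual index pairs, which I would present by indicating the $n=4,5$ cases explicitly (as is done for $X_{1,2}$ and $Y_{2,1}$ just after \eqref{12.67}) and then stating the general pattern.
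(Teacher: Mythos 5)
Your root-exchange step and the Jacobian count are essentially the paper's first half: the paper exchanges $\mathrm{Y}_{2,1}^{i,j}$ with $\mathrm{X}_{1,2}^{j-1,i}$, column by column ($i=j-4,\dots,1$ for $j=5,\dots,n+1$), and there are indeed $\binom{n-2}{2}=\frac{(n-3)(n-2)}{2}$ such exchanges, each contributing the Jacobian $|\det(\gamma)|^{[\frac{m+1}{2}]}|\det(\beta)|^{-[\frac{m}{2}]}$ via \eqref{12.57.9}. Up to that point your argument matches.

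However, there is a genuine gap in your final claim that the exchanges land you on ``exactly $V_{[\frac{m}{2}],[\frac{m+1}{2}]^n,[\frac{m}{2}]^{n-1}}$'' and that ``the partition bound $\underline{P_{\tau'}}$ is never actually needed.'' Count the blocks: $\mathcal{D}_{n,m}$ is missing $\binom{n-1}{2}$ blocks of $X_{1,2}$ (the strictly-lower part of the upper-triangular shape after the first two block rows), but there are only $\binom{n-2}{2}$ nonzero blocks of $Y_{2,1}$ available to trade. The $n-2$ blocks $X_{1,2}^{(n+1,j)}$, $1\leq j\leq n-2$, in the last block row have no $Y$-partner and cannot be obtained by root exchange. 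The exchanges only take you to an intermediate group ${}^0V$ (with that last block row still absent), and to pass from $J_{(\psi_v)_{{}^0V}}(\tau')$ to $J_{(\psi_v)_{V_{[\frac{m}{2}],[\frac{m+1}{2}]^n,[\frac{m}{2}]^{n-1}}}}(\tau')$ one must show that the subgroup $\mathrm{X}_{n+1}$ generated by these remaining blocks acts trivially on the Jacquet module. That step is a Fourier expansion over the characters $v(x)\mapsto\psi_v(\mathrm{tr}(xA))$ of $\mathrm{X}_{n+1}$: a nonzero $A$ of rank $\ell\geq 1$ produces a nontrivial Jacquet module of $\tau'$ attached to a partition of the form $((n+1)^\ell,\dots)$, which contradicts $\underline{P_{\tau'}}=(n^{m-\mu_0},\dots)$ from \eqref{12.9}, \eqref{12.9.1}. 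So the partition bound is not optional here; without this second half your proof establishes an isomorphism with the wrong (smaller) Jacquet module, and the statement \eqref{12.69} does not follow.
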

\begin{proof}
We will exchange roots, as we did in Theorem \ref{thm 6.1} when we exchanged there columns of $Y_{3,1}$ with corresponding rows of $X_{1,3}$. The details are similar. We exchange in $\mathcal{D}_{n,m}$, in the notation of \eqref{12.65}, $\mathrm{Y}_{2,1}^{1,5}$ with $\mathrm{X}_{1,2}^{4,1}$, then $\mathrm{Y}_{2,1}^{2,6}$ with $\mathrm{X}_{1,2}^{5,2}$, $\mathrm{Y}_{2,1}^{1,6}$ with $\mathrm{X}_{1,2}^{5,1}$. We continue, column by column of $Y_{2,1}$. In (block) column $5\leq j$, we exchange $\mathrm{Y}_{2,1}^{i,j}$ with $\mathrm{X}_{1,2}^{j-1,i}$, for $i=j-4,j-5,...,1$, in this order, and we do this for $j=5,6,...,n+1$, in this order. We then reach the following subgroup ${}^0V$. It consists of all matrices 
\begin{equation}\label{12.71.1}
v=\begin{pmatrix}U&X_{1,2}\\0&S\end{pmatrix},
\end{equation}
where $U, S$ are as in \eqref{12.66}, \eqref{12.67}, and $X_{1,2}$ is such that $X_{1,2}^{(n+1,j)}=0$, for $1\leq j\leq n-2$, and all other blocks are arbitrary. Let $(\psi_v)_{{}^0V}$ be the character of ${}^0V$, given on the last matrix by the same formula as \eqref{12.68}. Then we conclude that 
\begin{equation}\label{12.72}
J_{(\psi_v)_{\mathcal{D}_{n,m}}}(\tau')\cong J_{(\psi_v)_{{}^0V}}(\tau'),
\end{equation}
as $\mathcal{D}_{n,m}\cap {}^0V$-modules. Applying \eqref{12.57.9} for each step of the last root exchanges, we get that the isomorphism \eqref{12.72} sends the action of $j'(\alpha, \beta, \gamma)$ on $J_{(\psi_v)_{\mathcal{D}_{n,m}}}(\tau')$ to the action of $j'(\alpha, \beta, \gamma)$ on $J_{(\psi_v)_{{}^0V}}(\tau')$ twisted by the character \eqref{12.71}. In order to complete the proof, we will show that all the subgroups $\mathrm{X}^{n+1,j}$, $1\leq j\leq n-2$ act trivially on $J_{(\psi_v)_{{}^0V}}(\tau')$.
For this, let us write $J_{(\psi_v)_{{}^0V}}(\tau')$ as a composition of of two Jacquet modules,
$$
J_{(\psi_v)_{{}^0V}}(\tau')\cong J_{(\psi_v)_{{}^2V}}(J_{(\psi_v)_{{}^1V}}(\tau')),
$$
where ${}^1V$ is the subgroup of ${}^0V$ consisting of the matrices \eqref{12.71.1}, such that $S$ is the identity, and ${}^2V$ is the subgroup of ${}^0V$ consisting of the matrices \eqref{12.71.1}, such that $U$ is the identity and $X_{1,2}=0$. The characters
$(\psi_v)_{{}^1V}$, $(\psi_v)_{{}^2V}$ are obtained by restriction of $(\psi_v)_{{}^0V}$.
Let $\mathrm{X}_{n+1}$ be the subgroup generated by $\mathrm{X}^{n+1,j}$, $1\leq j\leq n-2$. Its elements have the form
$$
v(x)=\begin{pmatrix}I_{[\frac{m}{2}]+(n-1)[\frac{m+1}{2}]}\\&I_{[\frac{m+1}{2}]}&x\\&&I_{(n-1)[\frac{m}{2}]}\end{pmatrix},
$$
where $x\in M_{[\frac{m+1}{2}]\times (n-1)[\frac{m}{2}]}(F_v)$. It is enough to show that $\mathrm{X}_{n+1}$ acts trivially on $J_{(\psi_v)_{{}^1V}}(\tau')$. Let $A\in M_{(n-1)[\frac{m}{2}]\times [\frac{m+1}{2}]}(F_v)$ be, such that the Jacquet module of $J_{(\psi_v)_{{}^1V}}(\tau')$, with respect to the character of $\mathrm{X}_{n+1}$ given by $\psi_{v,A}: v(x)\mapsto \psi_v(tr(xA))$, is nontrivial. We have to show that we must have $A=0$. Denote the rank of $A$ by $\ell$, and assume that $\ell\geq 1$. We obtain the nontrivial Jacquet module of $\tau'$ with respect to the character $(\psi'{_v,A})_{V_{[\frac{m}{2}], [\frac{m+1}{2}]^n,(n-1)[\frac{m}{2}]}}$ of $V_{[\frac{m}{2}], [\frac{m+1}{2}]^n,(n-1)[\frac{m}{2}]}(F_v)$ given by $(\psi_v)_{{}^0V}$ on ${}^0V$ and by $\psi_{v,A}$ on $\mathrm{X}_{n+1}$. Now, this character corresponds to the nilpotent orbit attached to a partion of the form $((n+1)^\ell,...)$ of $mn$, contradicting \eqref{12.9}, \eqref{12.9.1}. This completes the proof of the proposition.

\end{proof}

Let $(\psi_v)_{V_{[\frac{m}{2}],[\frac{m+1}{2}]^n,[\frac{m}{2}]^{n-1}}}$ be the character of $V_{[\frac{m}{2}],[\frac{m+1}{2}]^n,[\frac{m}{2}]^{n-1}}(F_v)$ corresponding to $A=0$ in the last proof. 
By Prop. \ref{prop 12.9}, $\pi_{\tau'}$ in \eqref{12.64.1}, \eqref{12.64.2}, is conjugated to to the following representation, which we keep denoting by $\pi_{\tau'}$, such that  when $m=2m'$ is even and $H$ is linear,
\begin{multline}\label{12.73}
\pi_{\tau'}(t(\begin{pmatrix}a&y\\&a^*\end{pmatrix}, \begin{pmatrix}A&Y\\&A^*\end{pmatrix}))=|\det(A)|^{\frac{m(2-n)-\delta_H}{2}}|\det(a)|^{\frac{mn(2-n)-\delta_H}{2}}\\
J_{(\psi_v)_{V_{[\frac{m}{2}],[\frac{m+1}{2}]^n,[\frac{m}{2}]^{n-1}}}}(\tau')(diag( A,a^{\Delta_n},(a^*)^{\Delta_{n-1}}),
\end{multline}
where $a, A\in \GL_{m'}(F_v)$.\\
In the metaplectic case, we need to multiply by $\gamma_{\psi_v}(\det(a))\gamma_{\psi_v}(\det(A))$.
When $m=2m'-1$ is odd
\begin{multline}\label{12.74}
\pi_{\tau'}(t(\begin{pmatrix}a&x&y\\&1&x'\\&&a^*\end{pmatrix}, \begin{pmatrix}A&X&Y\\&1&X'\\&&A^*\end{pmatrix}))=|\det(A)|^{\frac{m(2-n)-1}{2}}|\det(a)|^{\frac{mn(2-n)-1}{2}}\\
J_{(\psi_v)_{V_{[\frac{m}{2}],[\frac{m+1}{2}]^n,[\frac{m}{2}]^{n-1}}}}(\tau')(diag(A, \begin{pmatrix}a\\&1\end{pmatrix}^{\Delta_n},(a^*)^{\Delta_{n-1}})).
\end{multline}
where $a, A\in \GL_{m'-1}(F_v)$.

\section{The correspondence at unramified places II: Analysis of $J_{(\psi_v)_{V_{[\frac{m}{2}],[\frac{m+1}{2}]^n,[\frac{m}{2}]^{n-1}}}}(\tau')$}

We analyze $J_{(\psi_v)_{V_{[\frac{m}{2}],[\frac{m+1}{2}]^n,[\frac{m}{2}]^{n-1}}}}(\tau')$ by Mackey Theory. See \cite{GJS15}, Sec. 4, for a similar case. We start with the restriction 
$$
Res_{P_{[\frac{m}{2}],[\frac{m+1}{2}]^n,[\frac{m}{2}]^{n-1}}(F_v)}(\tau').
$$ 
By \eqref{12.9}, \eqref{12.9.1}, the decomposition of this restriction is parametrized by the double cosets
$$
P_{(m+\mu_0)^{[\frac{n}{2}]},m^{n-[\frac{n}{2}]}, (m-\mu_0)^{[\frac{n}{2}]}}(F_v)\backslash \GL_{mn}(F_v)/ P_{[\frac{m}{2}],[\frac{m+1}{2}]^n,[\frac{m}{2}]^{n-1}}(F_v).
$$
As in \cite{GJS15}, (4.2), the representatives are determined by matrices of non-negative integers $\underline{k}=(k_{i,j})_{\scriptsize{\begin{matrix}i\leq n\\ j\leq 2n\end{matrix}}}$, satisfying the following relations
\begin{equation}\label{13.1}
\sum_{i=1}^n k_{i,j}=[\frac{m}{2}],\ \ j=1,n+2,n+3,...,2n;
\end{equation}
$$
\sum_{i=1}^n k_{i,j}=[\frac{m+1}{2}],\ \ j=2,3,...,n+1;
$$
$$
\sum_{j=1}^{2n}k_{i,j}=m+\mu_0,\ \ i=1,2,...,[\frac{n}{2}];
$$
$$
\sum_{j=1}^{2n}k_{[\frac{n+2}{2}],j}=\beta;
$$
$$
\sum_{j=1}^{2n}k_{i,j}=m-\mu_0,\ \ i=[\frac{n}{2}]+2,[\frac{n}{2}]+3,...,n,
$$
where $\beta=m-\mu_0$, when $n$ is even, and $\beta=m$, when $n$ is odd. The corresponding representative is a permutation matrix $w_{\underline{k}}=(w_{i,j})_{\scriptsize{\begin{matrix}i\leq n\\j\leq 2n\end{matrix}}}$, and each $w_{i,j}=(w_{i,j}^{\ell,t})_{\scriptsize{\begin{matrix}\ell\leq 2n\\t\leq n\end{matrix}}}$ is a matrix of blocks $w_{i,j}^{\ell,t}$ of size $k_{i,\ell}\times k_{t,j}$, such that, for $(\ell,t)\neq (j,i)$, $w_{i,j}^{\ell,t}=0_{k_{i,\ell}\times k_{t,j}}$, and, for $w_{i,j}^{j,i}=I_{k_{i,j}}$. In particular, $w_{i,j}$ is zero if and only if $k_{i,j}=0$. Thus, up to semi-simplification
\begin{multline}\label{13.2}
J_{(\psi_v)_{V_{[\frac{m}{2}],[\frac{m+1}{2}]^n,[\frac{m}{2}]^{n-1}}}}(\tau')\\
\equiv \oplus_{\underline{k}}
J_{(\psi_v)_{V_{[\frac{m}{2}],[\frac{m+1}{2}]^n,[\frac{m}{2}]^{n-1}}}}(ind^{P_{[\frac{m}{2}],[\frac{m+1}{2}]^n,[\frac{m}{2}]^{n-1}}(F_v)}_{C_{\underline{k}}}(\delta^{\frac{1}{2}}\chi_{\tau'})^{w_{\underline{k}}}),
\end{multline}
where
$$
C_{\underline{k}}=P_{[\frac{m}{2}],[\frac{m+1}{2}]^n,[\frac{m}{2}]^{n-1}}(F_v)\cap w_{\underline{k}}^{-1}P_{(m+\mu_0)^{[\frac{n}{2}]},m^{n-[\frac{n}{2}]}, (m-\mu_0)^{[\frac{n}{2}]}}(F_v)w_{\underline{k}},
$$	  
$\delta$ is short for $\delta_{P_{(m+\mu_0)^{[\frac{n}{2}]},m^{n-[\frac{n}{2}]}, (m-\mu_0)^{[\frac{n}{2}]}}}$, and $(\delta^{\frac{1}{2}}\chi_{\tau'})^{w_{\underline{k}}}$ is the character of $C_{\underline{k}}$ obtained by composing $\delta^{\frac{1}{2}}\chi_{\tau'}$ with conjugation by $w_{\underline{k}}$. See \eqref{12.9}, \eqref{12.9.1}. Recall that $ind$ denotes non-normalized compact induction. 
\begin{prop}\label{prop 13.1}
Assume that $w_{\underline{k}}$ is relevant (for $J_{(\psi_v)_{V_{[\frac{m}{2}],[\frac{m+1}{2}]^n,[\frac{m}{2}]^{n-1}}}}$). Then the matrix $\underline{k}$ satisfies the following relations.\\
Let $2\leq j\leq n$, or $n+2\leq j\leq 2n-1$. Then
\begin{multline}\label{13.3}
k_{1,j}=0,\ \ k_{n,j+1}=0\\
k_{2,j}+k_{3,j}+\cdots+k_{i,j}\leq k_{1,j+1}+k_{2,j+1}+\cdots+k_{i-1,j+1},\ \ \ 2\leq i\leq n\\
k_{n-1,j+1}+k_{n-2,j+1}+\cdots+k_{i,j+1}\leq k_{n,j}+k_{n-1,j}+\cdots+k_{i+1,j},\ \ 1\leq i\leq n-1.
\end{multline}	
\end{prop}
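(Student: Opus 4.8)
The plan is to carry out the standard relevance analysis for the Mackey/geometric‑lemma decomposition \eqref{13.2}, just as in \cite{GJS15}, Sec.~4. Write $V=V_{[\frac{m}{2}],[\frac{m+1}{2}]^n,[\frac{m}{2}]^{n-1}}$, abbreviate its character $(\psi_v)_{V_{[\frac{m}{2}],[\frac{m+1}{2}]^n,[\frac{m}{2}]^{n-1}}}$ by $\psi_V$, set $P_1=P_{(m+\mu_0)^{[\frac{n}{2}]},m^{n-[\frac{n}{2}]},(m-\mu_0)^{[\frac{n}{2}]}}$ and let $N_1$ be its unipotent radical; the $\underline{k}$‑summand of \eqref{13.2} is $J_{\psi_V}$ applied to $\ind_{C_{\underline{k}}}^{P_{[\frac{m}{2}],[\frac{m+1}{2}]^n,[\frac{m}{2}]^{n-1}}(F_v)}((\delta^{\frac{1}{2}}\chi_{\tau'})^{w_{\underline{k}}})$. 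The first step is to record the vanishing criterion (Bernstein--Zelevinsky; cf.\ \cite{GJS15}, Sec.~4 and the orbit formalism of \cite{MW87}): if there is a one‑parameter root subgroup $E\subseteq V$ on which $\psi_V$ is non‑trivial and such that $w_{\underline{k}}Ew_{\underline{k}}^{-1}\subseteq P_1$, then this summand vanishes. Indeed $\chi_{\tau'}$ in \eqref{12.9}, \eqref{12.9.1} is $\bigotimes_i\chi_i\circ\det$ (times $|\det|^{\mu_0/2}$ on the middle factor in Cases 3, 4) on the Levi of $P_1$, hence trivial on every unipotent element of $P_1(F_v)$; so $(\delta^{\frac{1}{2}}\chi_{\tau'})^{w_{\underline{k}}}$ is trivial on $E$ while $\psi_V$ is not, and the $\psi_V$‑twisted Jacquet module of a compact induction from a subgroup containing $E$ vanishes as soon as the inducing character and $\psi_V$ disagree on $E$. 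Thus relevance of $w_{\underline{k}}$ forces $w_{\underline{k}}Ew_{\underline{k}}^{-1}\not\subseteq P_1$ for every such $E$.

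Second, I would read off from \eqref{12.68} which root subgroups of $V$ lie in the support of $\psi_V$: these are the root subgroups with single nonzero entry in row $c_p$, column $c'_p$, where for each block pair $(j,j+1)$ with $2\le j\le n$ or $n+2\le j\le 2n-1$ (exactly the positions of the blocks $U_2,\dots,U_n$ and $S_1,\dots,S_{n-2}$), $c_p$ and $c'_p$ are the $p$‑th coordinates of the $j$‑th and $(j+1)$‑st blocks of the Levi of $P_{[\frac{m}{2}],[\frac{m+1}{2}]^n,[\frac{m}{2}]^{n-1}}$ (two blocks of equal size, so the pairing is the generic ``trace'' one), and no other root subgroup of $V$ is in the support of $\psi_V$. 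Then I would use the explicit form of $w_{\underline{k}}$ recorded in terms of \eqref{13.1}: the $p$‑th coordinate of the $j$‑th block is carried into the $i(p,j)$‑th Levi block of $P_1$, where $i(p,j)=\min\{\,i:k_{1,j}+\cdots+k_{i,j}\ge p\,\}$ (a non‑decreasing step function of $p$), and inside a common Levi block of $P_1$ the coordinates coming from a smaller block of $P_{[\frac{m}{2}],[\frac{m+1}{2}]^n,[\frac{m}{2}]^{n-1}}$ precede those coming from a larger one.

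Third, combining the two: $w_{\underline{k}}$ carries the root subgroup at $(c_p,c'_p)$ into $P_1$ precisely when $i(p,j)\le i(p,j+1)$ --- if $i(p,j)<i(p,j+1)$ it lands in $N_1$, and if $i(p,j)=i(p,j+1)$ it lands strictly above the diagonal inside a single $\GL$‑factor of the Levi of $P_1$ (here one uses that the coordinates of block $j$ precede those of block $j+1$ in a common $P_1$‑block), and in both cases $\chi_{\tau'}$ is trivial there while $\psi_V$ is not. So relevance of $w_{\underline{k}}$ forces $i(p,j)>i(p,j+1)$ for all $p$ and every block pair $(j,j+1)$ in that range. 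Translating this pointwise inequality: $p=1$ gives $i(1,j)\ge 2$, i.e.\ $k_{1,j}=0$; taking $p$ equal to the common block size gives $i(p,j+1)\le n-1$, i.e.\ $k_{n,j+1}=0$; and counting for each $i$ how many coordinates of block $j$ (resp.\ $j+1$) lie in Levi blocks $\le i$ (resp.\ $\ge i$) of $P_1$ turns $i(p,j)>i(p,j+1)$ into $\sum_{i'=1}^{i}k_{i',j}\le\sum_{i'=1}^{i-1}k_{i',j+1}$ and $\sum_{i'=i}^{n}k_{i',j+1}\le\sum_{i'=i+1}^{n}k_{i',j}$. After using $k_{1,j}=0$ and $k_{n,j+1}=0$ these are exactly the two families in \eqref{13.3}, and conversely \eqref{13.3} returns the pointwise condition; so \eqref{13.3} is precisely the statement that no $\psi_V$‑carrying root subgroup of the block pair $(j,j+1)$ is carried into $P_1$, which proves the proposition.

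The step I expect to be the main obstacle is the third one: it is pure bookkeeping but error‑prone, since one must pin down the conventions --- that the unipotent radical of $P_1$ is block‑upper‑triangular for the ordering $((m+\mu_0)^{[\frac{n}{2}]},m^{n-[\frac{n}{2}]},(m-\mu_0)^{[\frac{n}{2}]})$, the precise order in which $w_{\underline{k}}$ threads coordinates through the Levi blocks of $P_1$, and the fact that it is the \emph{pair} $(j,j+1)$ of consecutive blocks, not a single block, that is coupled by $\psi_V$ --- because a single sign error reverses every inequality. I would fix all of this by working out the translation explicitly for $n=4$ and $n=5$ (in parallel with the illustrative matrices \eqref{6.6.0}, \eqref{6.6.2}) before stating the general case, verifying in particular that the $i(p,j)=i(p,j+1)$ case really forces vanishing, which hinges on the threading placing the block‑$j$ coordinates strictly before the block‑$(j+1)$ coordinates in a common $P_1$‑block. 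A minor additional remark, needed only for the converse and not for this proposition, is that root subgroups carried into the opposite radical $N_1^-$ contribute nothing further, which is handled by the exchange‑of‑roots device of \cite{GRS99}, Sec.~2.2, exactly as in Sections 6--11.
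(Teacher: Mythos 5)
Your overall route is the same as the paper's (a relevance analysis of the double cosets, reading \eqref{13.3} off the condition that the character $(\psi_v)_{V_{[\frac{m}{2}],[\frac{m+1}{2}]^n,[\frac{m}{2}]^{n-1}}}$ not be killed on the part of $V$ carried into $P_{(m+\mu_0)^{[\frac{n}{2}]},m^{n-[\frac{n}{2}]},(m-\mu_0)^{[\frac{n}{2}]}}$), and your step 3 translation into the inequalities of \eqref{13.3} is correct. But your step 1 vanishing criterion has a genuine gap. The summand attached to $w_{\underline{k}}$ is $\ind_{C_{\underline{k}}}^{P_{[\frac{m}{2}],[\frac{m+1}{2}]^n,[\frac{m}{2}]^{n-1}}(F_v)}(\cdots)$, whose functions are supported on all cosets $C_{\underline{k}}\,\ell\, V$ with $\ell$ ranging over the Levi of $P_{[\frac{m}{2}],[\frac{m+1}{2}]^n,[\frac{m}{2}]^{n-1}}$; relevance of $w_{\underline{k}}$ means that \emph{some} such coset contributes to the twisted Jacquet module, i.e.\ that $\psi_V$ is trivial on $V\cap (w_{\underline{k}}\ell)^{-1}P_1(w_{\underline{k}}\ell)$ for \emph{some} $\ell$. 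Finding a single root subgroup $E$ with $\psi_V|_E\neq 1$ and $w_{\underline{k}}Ew_{\underline{k}}^{-1}\subseteq P_1$ only kills the coset $\ell=1$; it does not, by itself, kill the cosets with $\ell\neq 1$, since $V\cap(w_{\underline{k}}\ell)^{-1}P_1 w_{\underline{k}}\ell$ genuinely moves as the $\GL_{[\frac{m+1}{2}]}$- and $\GL_{[\frac{m}{2}]}$-factors of $\ell$ vary. So "relevance $\Rightarrow$ the $\ell=1$ condition" is exactly the unproved step.

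The paper closes this by quantifying over the Levi: writing the relevant factors of $\ell$ as $g_j,g_{j+1}$, the set of $X$ with $w_{\underline{k}}v(X)w_{\underline{k}}^{-1}\in P_1$ is the set of block-upper-triangular $X$ (row partition $(k_{i,j})$, column partition $(k_{i,j+1})$), and the coset $\ell$ contributes only if $\psi_v(tr(g_j^{-1}Xg_{j+1}))=1$ for all such $X$, which forces $A=g_{j+1}g_j^{-1}$ to be \emph{strictly} block upper triangular ($A_{r,i}=0$ for $i\le r$); since $A$ is invertible, a rank count on its block rows and columns yields precisely $k_{1,j}=0$, $k_{n,j+1}=0$ and the two families of inequalities in \eqref{13.3}. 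Your conclusion survives because the existence of an invertible strictly block-upper-triangular matrix is equivalent to your pointwise condition $i(p,j)>i(p,j+1)$ (both amount to the same Hall-type inequalities), but your write-up should either run the argument with general $g_j,g_{j+1}$ as above or explicitly prove that the $\ell=1$ test detects irrelevance of every coset. One small simplification: your worry about the ordering of coordinates when $i(p,j)=i(p,j+1)$ is moot, since in that case the root subgroup lands inside a $\GL$-factor of the Levi of $P_1$, hence inside $P_1$ (where $\chi_{\tau'}$ is trivial on unipotents) regardless of whether it sits above or below the diagonal of that factor.
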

\begin{proof}
The proof is very similar to that of Prop. 4.1 in \cite{GJS15}, and also to that of \cite{GS18}, Prop. 2.3, which yielded \eqref{12.38}.\\
 Let $2\leq j\leq n$. For a matrix $X\in M_{[\frac{m+1}{2}]\times [\frac{m+1}{2}]}(F_v)$, consider
$$
v(X)=diag(I_{[\frac{m}{2}]+(j-2)[\frac{m+1}{2}]},\begin{pmatrix}I_{[\frac{m+1}{2}]}&X\\&I_{[\frac{m+1}{2}]}\end{pmatrix},I_{(n-j)[\frac{m+1}{2}]+(n-1)[\frac{m}{2}]}).
$$
Write $X$ as a matrix of blocks $(X_{k_{i,j},k_{r,j+1}})_{\scriptsize{i,r\leq n}}$, where $X_{k_{i,j},k_{r,j+1}}$ is a $k_{i,j}\times k_{r,j+1}$ matrix. One checks that $w_{\underline{k}}v(X)w_{\underline{k}}^{-1}\in P_{(m+\mu_0)^{[\frac{n}{2}]},m^{n-[\frac{n}{2}]}, (m-\mu_0)^{[\frac{n}{2}]}}(F_v)$ iff $X$ has a block upper triangular shape. Since $w_{\underline{k}}$ is relevant, there are $g_j,g_{j+1}\in \GL_{[\frac{m+1}{2}]}(F_v)$, such that for all such $X$, $\psi_v(tr(g^{-1}_jXg_{j+1}))=1$. This forces the matrix $g_{j+1}g_j^{-1}$ to have the following form. Write it as a matrix of blocks\\ $(A_{k_{i,j+1},k_{r,j}})_{\scriptsize{i,r\leq n}}$, where $A_{k_{i,j+1},k_{r,j}}$ is a $k_{i,j+1}\times k_{r,j}$ matrix. Then the last matrix of blocks must have an upper triangular shape, with zero blocks along the diagonal. Since $g_{j+1}g_j^{-1}$ is invertible, we conclude that $k_{1,j}=0$, $k_{n,j+1}=0$, as well as the inequalities stated in \eqref{13.3}. In order to get the similar relations, for $n+2\leq j\leq 2n-1$, we repeat the similar argument, for the matrices 
$$
w(X)=diag(I_{(j-n-1)[\frac{m}{2}]+n[\frac{m+1}{2}]},\begin{pmatrix}I_{[\frac{m}{2}]}&X\\&I_{[\frac{m}{2}]}\end{pmatrix},I_{(2n-1-j)[\frac{m}{2}]}).
$$
\end{proof}	
\begin{prop}\label{prop 13.2}
The relations \eqref{13.3} and and the fact that $m, m-1=\mu_0n$, $m, m-1=\mu_0(n-1)$ (according to Cases 1-4, \eqref{12.7}-\eqref{12.16}) determine $\underline{k}$ uniquely, as the following matrix $\underline{k}^0=(k^0_{i,j})_{\scriptsize{\begin{matrix}i\leq n\\j\leq 2n\end{matrix}}}$.
\begin{enumerate}
	\item For $1\leq i< [\frac{n+1}{2}]$, $k^0_{i,1}=\mu_0$, and for $[\frac{n+1}{2}]+1\leq i\leq n$, $k^0_{i,1}=0$.\\
	\item For $i=[\frac{n+1}{2}]$, $k_{[\frac{n+1}{2}],1}=\nu_n\mu_0$, where $\mu_n=1$, when $n$ is even, and $\nu_n=0$, when $n$ is odd.
	\item Let $2\leq j\leq n+1$, and $1\leq i\leq n$. Then $k^0_{i,j}=\delta_{i,n+2-j}[\frac{m+1}{2}]$.\\
	\item Let $n+2\leq j\leq n+[\frac{n}{2}]$, and $1\leq i\leq n$. Then $k^0_{i,j}=0$, except when $i=2n+1-j$, or $i=2n+2-j$. In these two cases, we must have $[\frac{n+1}{2}]+1\leq i\leq n$, and then
	$$
	k^0_{i,2n+1-i}=(n-i)\mu_0,\ k^0_{i,2n+2-i}=[\frac{m}{2}]-(n-i+1)\mu_0.
	$$
	In particular, $k^0_{i,j}=0$, for all $1\leq i\leq [\frac{n+1}{2}]$ and $n+2\leq j\leq n+[\frac{n}{2}]$.\\
	\item Let $n+[\frac{n}{2}]+1\leq j\leq 2n$, and $1\leq i\leq n$. Then $k^0_{i,j}=\delta_{i,2n+1-j}[\frac{m}{2}]$. Note that if $i=2n+1-j$, then
	$1\leq i\leq [\frac{n+1}{2}]$. In particular, $k^0_{i,j}=0$, for all $[\frac{n+1}{2}]+1\leq i\leq n$ and $n+[\frac{n}{2}]+1\leq j\leq 2n$.
	\end{enumerate}	
Write the matrix $w_{\underline{k}^0}$ as a matrix of blocks $(w_{i,j})_{\scriptsize{\begin{matrix}i\leq n\\j\leq 2n\end{matrix}}}$, as explained right after \eqref{13.1}. Then $w_{i,j}=0$ iff $k^0_{i,j}=0$.
\end{prop}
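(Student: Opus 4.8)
\textbf{Proof plan for Proposition \ref{prop 13.2}.} The plan is to solve the linear/combinatorial system consisting of the constraints \eqref{13.1} on $\underline{k}$ together with the relevance relations \eqref{13.3} of Proposition \ref{prop 13.1}, and to show that, under the arithmetic assumption $m=\mu_0 n$ (Case 1) or $m-1=\mu_0 n$ (Case 2) or $m=\mu_0(n-1)$ (Case 3) or $m-1=\mu_0(n-1)$ (Case 4), the only solution is $\underline{k}^0$. The key observation that makes this tractable is that \eqref{13.3} says each column $j$ (for $2\le j\le n$ and $n+2\le j\le 2n-1$) has its "mass above row $i$'' dominated by that of column $j+1$, and simultaneously its "mass below row $i$'' dominates that of column $j+1$; combined with the fact that the column sums of columns $2,\dots,n+1$ all equal $[\frac{m+1}{2}]$ and those of columns $1,n+2,\dots,2n$ all equal $[\frac{m}{2}]$, these inequalities force a near-rigid "staircase'' pattern. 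I would organize the argument by propagating these inequalities from one end to the other.

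First I would treat the middle block of columns $2,\dots,n+1$. From $k_{1,j}=0$ and $k_{n,j+1}=0$ in \eqref{13.3}, together with the chain of partial-sum inequalities, an induction on $j$ from $j=2$ upward shows that the support of column $j$ is a single row, forcing $k_{i,j}=\delta_{i,n+2-j}[\frac{m+1}{2}]$; this is item (3). (Here one uses that once a column is concentrated in one row, the dominance inequalities pin down where the next column's mass can sit, and the equal column sums leave no slack.) Next I would handle the right-hand block of columns $n+2,\dots,2n-1$ similarly, propagating from $j=2n-1$ downward (or upward), using $k_{1,j}=0$, $k_{n,j+1}=0$ again; the row sums $\sum_j k_{i,j}=m\pm\mu_0$ for the various $i$, now with the middle columns already determined, force the triangular/staircase shape recorded in items (1), (4), (5), and the arithmetic relations $m=\mu_0 n$ etc. are exactly what is needed to make the entries $(n-i)\mu_0$ and $[\frac{m}{2}]-(n-i+1)\mu_0$ nonnegative and to make the row sums come out right. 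Item (2) is the bookkeeping for the middle row index $[\frac{n+1}{2}]$, distinguishing $n$ even (row sum $\beta=m-\mu_0$, so it carries a $\mu_0$ in column $1$) from $n$ odd (row sum $\beta=m$, nothing extra). Finally, the last sentence — $w_{i,j}=0$ iff $k^0_{i,j}=0$ — is immediate from the explicit block description of $w_{\underline{k}}$ given right after \eqref{13.1}, since the block $w_{i,j}$ is built out of identity blocks $I_{k_{i,j}}$ and zero blocks.

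The main obstacle I expect is the careful case analysis at the two "seams'': the transition between column $n+1$ and column $n+2$ (where the column sum jumps from $[\frac{m+1}{2}]$ to $[\frac{m}{2}]$ and the relevance relations \eqref{13.3} do not directly link these two columns), and the behaviour around the middle row $i=[\frac{n+1}{2}]$ whose row sum is $\beta\ne m\pm\mu_0$. At these seams one cannot simply run the induction; instead one must combine the column-sum constraints \eqref{13.1}, the row-sum constraints, and the already-deduced values of the neighbouring columns, and check that the residual freedom is killed precisely by the divisibility hypothesis $m$ or $m-1$ being the stated multiple of $n$ or $n-1$. I would isolate this as a short lemma (the seam is forced), treating the four parity combinations of $(n,m)$ in Cases 1--4 in parallel since the computations differ only by the placement of the extra middle row/column. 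Everything else is routine propagation of inequalities, and I would not write it out in full detail but rather indicate the induction and let the reader verify the bookkeeping.
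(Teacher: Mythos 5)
Your plan follows the same route as the paper's proof: propagate the relevance inequalities \eqref{13.3} in both directions to force the staircase support, pin down the middle columns with the column sums, and then use the row sums together with the divisibility hypothesis to rigidify the right-hand block. The ``seam'' you defer to a lemma is resolved in the paper by combining the chain $0\le k_{n-1,n+2}\le k_{n-2,n+3}\le\cdots\le k_{1,2n}\le[\frac{m}{2}]$ with the lower bound $k_{i,2n+1-i}-k_{i+1,2n-i}\ge\mu_0$ extracted from the row sums, which yields $[\frac{m}{2}]\ge\mu_0[\frac{n}{2}]$ and hence, under the hypothesis $[\frac{m}{2}]=\mu_0[\frac{n}{2}]$, forces equality at every step --- exactly the rigidity you anticipated.
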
	
\begin{proof}
Let $1\leq i\leq n-1$. Let us show that, for $2\leq j\leq n-i+1$, or $n+2\leq j\leq 2n-i$, $k_{i,j}=0$. This can be directly verified when $n=2$. Assume that $n\geq 3$. We work by induction on $i$, where the case $i=1$ follows from Prop. \ref{prop 13.1}. Let $1\leq i<n-2$, and assume that our assertion holds for such $i$. Let $1\leq j\leq n-i$, or $n+2\leq j\leq 2n-i-1$. By the first inequality of Prop. \ref{prop 13.1},
$$
k_{2,j}+k_{3,j}+\cdots+k_{i+1,j}\leq k_{1,j+1}+k_{2,j+1}+\cdots+k_{i,j+1}.
$$
By induction, $k_{r,j}=0$, for $2\leq r\leq i$, so that the l.h.s. of the last inequality is $k_{i+1,j}$. Also, by induction, the r.h.s. is zero. This implies that $k_{i+1,j}=0$. Next, we show, in a similar fashion, that, for $0\leq i\leq n-2$, $k_{n-i,j}=0$, for $i+3\leq j\leq n+1$, or $n+i+3\leq j\leq 2n$. For this, we use the second inequality of the last proposition. The first case of the induction on $i$ is $i=0$, and here $k_{n,j}=0$ follows from the last proposition. We conclude that, for $1\leq j\leq 2n$,
\begin{multline}\label{13.4}
k_{1,j}=0,\ \textit{for}\ j\neq 1,n+1,2n;\ \ 
k_{n,j}=0,\ \textit{for}\ j\neq 1,2,n+2\\
\textit{Let}\ 2\leq i\leq n-1.\ \textit{Then}\ k_{i,j}=0, \ \textit{for}\ j\neq 1, n-i+2, 2n-i+1, 2n-i+2.
\end{multline}
It follows from \eqref{13.4}, \eqref{13.1}, that for $2\leq j\leq n+1$, 
\begin{equation}\label{13.5}
k_{n+2-j,j}=[\frac{m+1}{2}],
\end{equation}
and, for $n+2\leq j\leq 2n$,
\begin{equation}\label{13.6}
k_{2n+1-j,j}+k_{2n+2-j,j}=[\frac{m}{2}].
\end{equation}
From \eqref{13.4} and \eqref{13.3}, it follows that
\begin{equation}\label{13.7}
0\leq k_{n-1,n+2}\leq k_{n-2,n+3}\leq\cdots\leq k_{3,2n-2}\leq k_{2,2n-1}\leq k_{1,2n}\leq [\frac{m}{2}].
\end{equation}
Let $[\frac{n+1}{2}]\leq i\leq n-1$. We show that
\begin{equation}\label{13.8}
k_{i,2n+1-i}-k_{i+1,2n-i}\geq \mu_0.
\end{equation}
Indeed, for such $i$, it follows from \eqref{13.1}, \eqref{13.4}, \eqref{13.5}, that
\begin{equation}\label{13.9}
k_{i+1,1}+[\frac{m+1}{2}]+k_{i+1,2n-i}+k_{i+1,2n+1-i}=m-\mu_0.
\end{equation}
By \eqref{13.6}, it follows that
$$
k_{i+1,2n-i}+k_{i+1,2n+1-i}\leq k_{i,2n+1-i}+k_{i+1,2n+1-i}-\mu_0.
$$
This proves \eqref{13.8}. By \eqref{13.7}, we conclude that $[\frac{m}{2}]\geq \mu_0[\frac{n}{2}]$. By our assumptions, according to Cases 1-4, \eqref{12.7}-\eqref{12.16}, $[\frac{m}{2}]= \mu_0[\frac{n}{2}]$. This forces in \eqref{13.7}, that
\begin{multline}\label{13.10}
k_{n-1,n+2}=\mu_0, k_{n-2,n+3}=2\mu_0, k_{n-3,n+4}=3\mu_0,..., k_{[\frac{n+1}{2}],n+[\frac{n}{2}]+1}=[\frac{n}{2}]\mu_0=[\frac{m}{2}],\\
k_{[\frac{n+1}{2}]-1,n+[\frac{n}{2}]+2}=k_{[\frac{n+1}{2}]-2,n+[\frac{n}{2}]+3}=...=k_{3,2n-2}=k_{2,2n-1}=k_{1,2n}=[\frac{m}{2}],
\end{multline}
and hence
$$ k_{n,n+2}=[\frac{m}{2}]-\mu_0, k_{n-1,n+3}=[\frac{m}{2}]-2\mu_0,...,k_{[\frac{n+1}{2}]+2,n+[\frac{n}{2}]}=[\frac{m}{2}]-([\frac{n}{2}]-1)\mu_0=\mu_0.
$$
By \eqref{13.1}, \eqref{13.5}, \eqref{13.10}, we have, for $1\leq i< [\frac{n+1}{2}]$,
$$
k_{i,1}+[\frac{m+1}{2}]+[\frac{m}{2}]=m+\mu_0,
$$
and for $i=[\frac{n+1}{2}]$,
$$
k_{[\frac{n+1}{2}],1}+[\frac{m+1}{2}]+[\frac{m}{2}]=m+\nu_n \mu_0.
$$
We conclude that $k_{i,1}=\mu_0$, for $1\leq i< [\frac{n+1}{2}]$, and for $i=[\frac{n+1}{2}]$, $k_{[\frac{n+1}{2}],1}=\nu_n\mu_0$. This and \eqref{13.1} imply that $k_{i,1}=0$, for $[\frac{n+1}{2}]+1\leq i\leq n$. This completes the proof of the proposition.
	
\end{proof}	

Here are examples of $\underline{k}^0$ and $w_{\underline{k}^0}$ for $n=5$. Note that in this case $[\frac{m}{2}]=2\mu_0$,
$$
\underline{k}^0=\begin{pmatrix}\mu_0&0&0&0&0&[\frac{m+1}{2}]&0&0&0&[\frac{m}{2}]\\\mu_0&0&0&0&[\frac{m+1}{2}]&0&0&0&[\frac{m}{2}]&0\\0&0&0&[\frac{m+1}{2}]&0&0&0&[\frac{m}{2}]&0&0\\0&0&[\frac{m+1}{2}]&0&0&0&\mu_0&0&0&0\\0&[\frac{m+1}{2}]&0&0&0&0&\mu_0&0&0&0\end{pmatrix},
$$
$$
w_{\underline{k}^0}=\begin{pmatrix}w_{1,1}&0&0&0&0&w_{1,6}&0&0&0&w_{1,10}\\w_{2,1}&0&0&0&w_{2,5}&0&0&0&w_{2,9}&0\\0&0&0&w_{3,4}&0&0&0&w_{3,8}&0&0\\0&0&w_{4,3}&0&0&0&w_{4,7}&0&0&0\\0&w_{5,2}&0&0&0&0&w_{5,7}&0&0&0\end{pmatrix}.
$$
Recall that $\underline{k}^0$ determins each of the blocks $w_{i,j}$ of $w_{\underline{k}^0}$. Here are some examples: let $0\leq i< [\frac{n+1}{2}]$. Then
\begin{multline}\label{13.11}
\ \ \ \ \ w_{i,1}=\begin{pmatrix}0_{\mu_0\times (i-1)\mu_0}&I_{\mu_0}&0_{\mu_0\times  ([\frac{m}{2}]-i\mu_0)}\\0&0_{m\times \mu_0}&0\end{pmatrix},\\
\\
w_{i,n+2-i}=\begin{pmatrix}0_{\mu_0\times [\frac{m+1}{2}]}\\I_{[\frac{m+1}{2}]}\\0_{[\frac{m}{2}]\times [\frac{m+1}{2}]}\end{pmatrix},\ \ \ \ \ \ \ w_{i,2n+1-i}=\begin{pmatrix}0_{\mu_0\times [\frac{m}{2}]}\\0_{[\frac{m+1}{2}]\times [\frac{m}{2}]}\\I_{[\frac{m}{2}]}\end{pmatrix}\\
\\
w_{j,n+2-j}=\begin{pmatrix}I_{[\frac{m+1}{2}]}\\0_{([\frac{m}{2}]-\mu_0)\times [\frac{m+1}{2}]}\end{pmatrix},\ \ \ \ \ [\frac{n+1}{2}]+1\leq j\leq n .
\end{multline}
The last proposition shows that there is exactly one summand in \eqref{13.2}, that is
\begin{multline}\label{13.12}
J_{(\psi_v)_{V_{[\frac{m}{2}],[\frac{m+1}{2}]^n,[\frac{m}{2}]^{n-1}}}}(\tau')\\
\cong 
J_{(\psi_v)_{V_{[\frac{m}{2}],[\frac{m+1}{2}]^n,[\frac{m}{2}]^{n-1}}}}(ind^{P_{[\frac{m}{2}],[\frac{m+1}{2}]^n,[\frac{m}{2}]^{n-1}}(F_v)}_{C_{\underline{k}^0}}(\delta^{\frac{1}{2}}\chi_{\tau'})^{w_{\underline{k}^0}}),
\end{multline}
where
$$
C_{\underline{k}^0}=P_{[\frac{m}{2}],[\frac{m+1}{2}]^n,[\frac{m}{2}]^{n-1}}(F_v)\cap w_{\underline{k}^0}^{-1}P_{(m+\mu_0)^{[\frac{n}{2}]},m^{n-[\frac{n}{2}]}, (m-\mu_0)^{[\frac{n}{2}]}}(F_v)w_{\underline{k}^0}.
$$	  	
Let
$$
D_{\underline{k}^0}=L_{[\frac{m}{2}],[\frac{m+1}{2}]^n,[\frac{m}{2}]^{n-1}}(F_v)\cap w_{\underline{k}^0}^{-1}P_{(m+\mu_0)^{[\frac{n}{2}]},m^{n-[\frac{n}{2}]}, (m-\mu_0)^{[\frac{n}{2}]}}(F_v)w_{\underline{k}^0}.
$$	  		
Recall that $L_{[\frac{m}{2}],[\frac{m+1}{2}]^n,[\frac{m}{2}]^{n-1}}$ denotes the Levi part of $P_{[\frac{m}{2}],[\frac{m+1}{2}]^n,[\frac{m}{2}]^{n-1}}$.
\begin{lem}\label{lem 13.3}
The elements of $D_{\underline{k}^0}$ have the form $diag(g_1,...,g_{2n})$, where\\ 
$ g_1\in P_{\mu_0^{[\frac{n}{2}]}}(F_v)$, $g_2,...,g_{n+1}\in \GL_{[\frac{m+1}{2}]}(F_v)$, $g_{n+[\frac{n}{2}]+1},...,g_{2n}\in \GL_{[\frac{m}{2}]}(F_v)$,\\
and, for $[\frac{n+1}{2}]+1\leq j\leq n-1$, $g_{2n+1-j}\in P_{(n-j)\mu_0, [\frac{m}{2}]-(n-j)\mu_0}(F_v)$. Thus\\
\\
$D_{\underline{k}^0}\cong$
 $$
   P_{\mu_0^{[\frac{n}{2}]}}(F_v)\times \GL_{[\frac{m+1}{2}]}(F_v)^{n}\times (\prod_{j=[\frac{n+1}{2}]+1}^{n-1}P_{(n-j)\mu_0, [\frac{m}{2}]-(n-j)\mu_0}(F_v))\times \GL_{[\frac{m}{2}]}(F_v)^{[\frac{n+1}{2}]}.
 $$ 
\end{lem}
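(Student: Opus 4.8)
\textbf{Plan of proof for Lemma \ref{lem 13.3}.}
The statement is a purely group-theoretic computation: one must intersect the Levi subgroup $L_{[\frac{m}{2}],[\frac{m+1}{2}]^n,[\frac{m}{2}]^{n-1}}$ of $\GL_{mn}$ with the conjugate $w_{\underline{k}^0}^{-1}P_{(m+\mu_0)^{[\frac{n}{2}]},m^{n-[\frac{n}{2}]},(m-\mu_0)^{[\frac{n}{2}]}}w_{\underline{k}^0}$ of the standard parabolic whose blocks have the sizes dictated by $\tau'$, and then read off that the result is a product of general linear groups and smaller parabolics as claimed. My plan is to carry out this intersection block by block, using the explicit description of the permutation matrix $w_{\underline{k}^0}$ provided by Proposition \ref{prop 13.2} (and illustrated in the examples \eqref{13.11}).

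First I would recall that an element $g=\diag(g_1,\dots,g_{2n})$ of the Levi $L_{[\frac{m}{2}],[\frac{m+1}{2}]^n,[\frac{m}{2}]^{n-1}}(F_v)$ lies in $w_{\underline{k}^0}^{-1}P_{(m+\mu_0)^{[\frac{n}{2}]},m^{n-[\frac{n}{2}]},(m-\mu_0)^{[\frac{n}{2}]}}(F_v)w_{\underline{k}^0}$ if and only if $w_{\underline{k}^0}\,g\,w_{\underline{k}^0}^{-1}$ is block upper triangular with respect to the partition $(m+\mu_0)^{[\frac{n}{2}]},m^{n-[\frac{n}{2}]},(m-\mu_0)^{[\frac{n}{2}]}$. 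Using the block decomposition $w_{\underline{k}^0}=(w_{i,j})$ of Proposition \ref{prop 13.2}, where $w_{i,j}$ is nonzero (and then has the shape of a partial identity matrix) exactly when $k^0_{i,j}\neq 0$, the conjugate $w_{\underline{k}^0}\,g\,w_{\underline{k}^0}^{-1}$ is expressed as a sum over the nonzero blocks, and its $(i,i)$ diagonal $\GL_{\bullet}$-block is built from the $g_j$'s with $k^0_{i,j}\neq 0$. The key observation is that for each $i$ with $1\le i\le[\frac{n+1}{2}]$ there are typically three nonzero blocks $k^0_{i,1}=\mu_0$ (except $i=[\frac{n+1}{2}]$ when $n$ is odd), $k^0_{i,n+2-i}=[\frac{m+1}{2}]$, and $k^0_{i,2n+1-i}=[\frac{m}{2}]$, summing to $m+\mu_0$ ($m$ for $i=[\frac{n+1}{2}]$, $n$ odd); whereas for $[\frac{n+1}{2}]+1\le i\le n$ the nonzero blocks are $k^0_{i,n+2-i}=[\frac{m+1}{2}]$, $k^0_{i,2n+1-i}=(n-i)\mu_0$, $k^0_{i,2n+2-i}=[\frac{m}{2}]-(n-i+1)\mu_0$ (when these are positive), together with $k^0_{i,1}$ only when $i<[\frac{n+1}{2}]$. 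Tracking where the rows of $g_j$ land under conjugation, I would show that the block upper triangularity condition forces: $g_1\in P_{\mu_0^{[\frac n2]}}(F_v)$ (its row blocks corresponding to $k^0_{1,1},\dots,k^0_{[\frac n2],1}$ each of size $\mu_0$ must be stacked in the "upper-triangular" order matching the $(m+\mu_0)$-blocks); each $g_j$ with $2\le j\le n+1$ lies in all of $\GL_{[\frac{m+1}{2}]}(F_v)$ since $k^0_{\bullet,j}$ is nonzero for a single value of the row index, namely $i=n+2-j$, so no triangularity constraint is imposed; each $g_j$ with $n+[\frac n2]+1\le j\le 2n$ lies in all of $\GL_{[\frac{m}{2}]}(F_v)$ for the same reason (single nonzero $k^0_{2n+1-j,j}=[\frac m2]$); and for $n+2\le j\le n+[\frac n2]$, i.e. $g_{2n+1-i}$ with $[\frac{n+1}{2}]+1\le i\le n-1$, there are exactly the two nonzero blocks $k^0_{i,2n+1-i}=(n-i)\mu_0$ and $k^0_{i,2n+2-i}=[\frac m2]-(n-i)\mu_0$ (up to the shift in the second index), which sit in different rows of the target partition, so $g_{2n+1-i}$ must be block upper triangular of type $((n-i)\mu_0,[\frac m2]-(n-i)\mu_0)$, i.e. $g_{2n+1-i}\in P_{(n-i)\mu_0,[\frac m2]-(n-i)\mu_0}(F_v)$. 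Assembling these constraints gives exactly the asserted product decomposition.

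The main obstacle — really the only nontrivial point — is the bookkeeping: one must get the block sizes in the target parabolic $P_{(m+\mu_0)^{[\frac n2]},m^{n-[\frac n2]},(m-\mu_0)^{[\frac n2]}}$ and their order right, match them with the images of the row-blocks of each $g_j$ under the permutation $w_{\underline{k}^0}$, and verify that the "diagonal" blocks $w_{i,j}^{j,i}=I_{k^0_{i,j}}$ of $w_{\underline k^0}$ indeed route the $j$-th Levi block of size $[\frac m2]$ or $[\frac{m+1}2]$ precisely onto the prescribed rows. Once the routing is written out explicitly (as partially done in \eqref{13.11} for $n=5$), the upper-triangularity condition translates into the orderings of $\mu_0$-sized pieces inside $g_1$ and the two-step filtrations inside $g_{2n+1-i}$, and everything else is unconstrained. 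I would organize the verification by first treating $g_1$ (the source of the $P_{\mu_0^{[\frac n2]}}$ factor), then the "free" $\GL_{[\frac{m+1}2]}$ and $\GL_{[\frac m2]}$ factors, and finally the intermediate parabolic factors $P_{(n-j)\mu_0,[\frac m2]-(n-j)\mu_0}$, each time invoking Proposition \ref{prop 13.2} for the precise positions of the nonzero $k^0_{i,j}$. No deep input is needed beyond the explicit form of $w_{\underline k^0}$ already established.
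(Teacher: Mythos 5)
Your proposal is correct and follows essentially the same route as the paper: the paper likewise reduces membership in $D_{\underline{k}^0}$ to the vanishing of the lower off-diagonal target blocks $w^i_{\underline{k}^0}\,g\,{}^tw^j_{\underline{k}^0}=0$ for $j<i$ (which is your block-upper-triangularity condition on $w_{\underline{k}^0}gw_{\underline{k}^0}^{-1}$) and then reads off the constraints column by column from the explicit nonzero blocks $k^0_{i,j}$ of Proposition \ref{prop 13.2}, getting $P_{\mu_0^{[\frac{n}{2}]}}$ from the first column, no constraint from columns with a single nonzero block, and the two-step parabolics from the columns $n+2\le j\le n+[\frac{n}{2}]$. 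The only blemish is a harmless index slip in your stated size of $k^0_{i,2n+2-i}$ (it is $[\frac{m}{2}]-(n-i+1)\mu_0$), which does not affect the conclusion.
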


\begin{proof}
For $1\leq i\leq n$, denote the $i$-block row of $w_{\underline{k}^0}$ by $w^i_{\underline{k}^0}$  Let $diag(g_1,...,g_{2n})\in L_{[\frac{m}{2}],[\frac{m+1}{2}]^n,[\frac{m}{2}]^{n-1}}(F_v)$. This element lies in $D_{\underline{k}^0}$ if and only if, for all $1\leq j<i\leq n$
\begin{equation}\label{13.13}
w^i_{\underline{k}^0}\cdot diag(g_1,...,g_{2n})\cdot {}^tw^j_{\underline{k}^0}=0.
\end{equation}
We explicate the l.h.s. of \eqref{13.13} according to the structure of $w_{\underline{k}^0}$, determined by Prop. \ref{prop 13.2} (and the description of the blocks $w_{i,j}$, right after \eqref{13.1}). Assume that $1\leq j<i\leq [\frac{n+1}{2}]$. Then one can verify that \eqref{13.13} becomes
\begin{equation}\label{13.14}
w_{i,1}g_1{}^tw_{j,1}=0.
\end{equation}
Note, that when $n$ is odd, $w_{[\frac{n+1}{2}],1}=0$. Thus, for $n$ odd, we may take $i\leq [\frac{n+1}{2}]-1=[\frac{n}{2}]$. Since, for $n$ even, $[\frac{n+1}{2}]=[\frac{n}{2}]$, let us write $g_1$ as a an $[\frac{n}{2}]\times [\frac{n}{2}]$ matrix $((g_1)_{i,j})$ of $\mu_0\times \mu_0$ blocks 
$(g_1)_{i,j}$. By \eqref{13.11}, we see that \eqref{13.14} means that $(g_1)_{i,j}=0$, for all $1\leq j<i\leq [\frac{n}{2}]$, i.e. $g_1\in P_{\mu_0^{[\frac{n}{2}]}}(F_v)$. Next, assume that $[\frac{n+1}{2}]+1\leq i\leq n$, and $1\leq j\leq [\frac{n+1}{2}]$. Then \eqref{13.13} holds, as can be directly checked. Finally, assume that $[\frac{n+1}{2}]+1\leq j< i\leq n$. Then, by looking at the structure of $w_{\underline{k}^0}$, \eqref{13.13} holds automatically, for $[\frac{n+1}{2}]+2\leq j+1< i\leq n$. Thus, let $[\frac{n+1}{2}]+1\leq j\leq n-1$, and $i=j+1$. Then \eqref{13.13} becomes
\begin{equation}\label{13.15}
w_{j+1,2n+1-j}g_{2n+1-j}{}^tw_{j,2n+1-j}=0.
\end{equation}
Now, one checks that \eqref{13.15} means that $g_{2n+-j}\in P_{(n-j)\mu_0,[\frac{m}{2}]-(n-j)\mu_0}(F_v)$, for $[\frac{n+1}{2}]+1\leq j\leq n-1$ . This proves the lemma.

\end{proof}

Let $T$ be the stabilizer inside  $L_{[\frac{m}{2}],[\frac{m+1}{2}]^n,[\frac{m}{2}]^{n-1}}(F_v)$ of the character\\
 $(\psi_v)_{V_{[\frac{m}{2}],[\frac{m+1}{2}]^n,[\frac{m}{2}]^{n-1}}}$. Then
$$
T=\{\begin{pmatrix}h_1\\&h_2^{\Delta_n}\\&&h_3^{\Delta_{n-1}}\end{pmatrix}\ |h_1, h_3\in \GL_{[\frac{m}{2}]}(F_v), h_2\in \GL_{[\frac{m+1}{2}]}(F_v) \}.
$$
The following proposition is similar to Prop. 4.4 in \cite{GJS15}.
\begin{prop}\label{prop 13.4}
Let $\mathcal{A}$ be the subset of elements of $L_{[\frac{m}{2}],[\frac{m+1}{2}]^n,[\frac{m}{2}]^{n-1}}(F_v)$,	
which support the Jacquet module \eqref{13.12},\\ $J_{(\psi_v)_{V_{[\frac{m}{2}],[\frac{m+1}{2}]^n,[\frac{m}{2}]^{n-1}}}}(ind^{P_{[\frac{m}{2}],[\frac{m+1}{2}]^n,[\frac{m}{2}]^{n-1}}(F_v)}_{C_{\underline{k}^0}}(\delta^{\frac{1}{2}}\chi_{\tau'})^{w_{\underline{k}^0}})$. Then
$$
\mathcal{A}=D_{\underline{k}^0}T.
$$
We conclude that, as $T$-modules,
\begin{multline}\label{13.16}
J_{(\psi_v)_{V_{[\frac{m}{2}],[\frac{m+1}{2}]^n,[\frac{m}{2}]^{n-1}}}}(ind^{P_{[\frac{m}{2}],[\frac{m+1}{2}]^n,[\frac{m}{2}]^{n-1}}(F_v)}_{C_{\underline{k}^0}}(\delta^{\frac{1}{2}}\chi_{\tau'})^{w_{\underline{k}^0}})\cong\\
 ind_{T\cap D_{\underline{k}^0}}^T\delta_0(\delta^{\frac{1}{2}}\chi_{\tau'})^{w_{\underline{k}^0}},
\end{multline}
where $\delta_0$ is the character of $T\cap D_{\underline{k}^0}$ given by the Jacobian of the conjugation map $a\mapsto (v\mapsto ava^{-1})$ on the $F_v$ points of
\begin{multline}\nonumber
\bar{V}_{[\frac{m}{2}],[\frac{m+1}{2}]^n,[\frac{m}{2}]^{n-1}}^0=\\
(V_{[\frac{m}{2}],[\frac{m+1}{2}]^n,[\frac{m}{2}]^{n-1}}\cap w^{-1}_{\underline{k}^0}P_{(m+\mu_0)^{[\frac{n}{2}]},m^{n-[\frac{n}{2}]}, (m-\mu_0)^{[\frac{n}{2}]}}w_{\underline{k}^0})\backslash V_{[\frac{m}{2}],[\frac{m+1}{2}]^n,[\frac{m}{2}]^{n-1}}.
\end{multline} 
\end{prop}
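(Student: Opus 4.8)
The plan is to follow the proof of Proposition 4.4 in \cite{GJS15}. First I would make the Jacquet module in \eqref{13.12} concrete. Write $P=P_{[\frac{m}{2}],[\frac{m+1}{2}]^n,[\frac{m}{2}]^{n-1}}$ as $L\ltimes V$ with standard Levi $L=L_{[\frac{m}{2}],[\frac{m+1}{2}]^n,[\frac{m}{2}]^{n-1}}$ and unipotent radical $V=V_{[\frac{m}{2}],[\frac{m+1}{2}]^n,[\frac{m}{2}]^{n-1}}$, and set $V^0=C_{\underline{k}^0}\cap V=V\cap w_{\underline{k}^0}^{-1}U_{(m+\mu_0)^{[\frac{n}{2}]},m^{n-[\frac{n}{2}]},(m-\mu_0)^{[\frac{n}{2}]}}w_{\underline{k}^0}$. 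Since $(\delta^{\frac12}\chi_{\tau'})^{w_{\underline{k}^0}}$ is trivial on $V^0$ and $w_{\underline{k}^0}$ is relevant — so that $(\psi_v)_V$ is trivial on $V^0$ — the map
\[
f\ \longmapsto\ \Big(\ell\mapsto\widetilde f(\ell)=\int_{V^0(F_v)\backslash V(F_v)}f(v\ell)(\psi_v)^{-1}_{V}(v)\,dv\Big),\qquad \ell\in L(F_v),
\]
is well defined on $ind^{P(F_v)}_{C_{\underline{k}^0}}(\delta^{\frac12}\chi_{\tau'})^{w_{\underline{k}^0}}$, and its image, with $L(F_v)$ acting by right translation on functions, models the Jacquet module.

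Second, I would pin down the support set $\mathcal{A}$. A standard Mackey (relevance) argument shows $\widetilde f$ can be supported only at those $\ell$ for which $(\psi_v)_V$ agrees, on $V\cap\ell^{-1}C_{\underline{k}^0}\ell=\ell^{-1}V^0\ell$, with the restriction of $(\delta^{\frac12}\chi_{\tau'})^{w_{\underline{k}^0}}$; that restriction being trivial, $\mathcal{A}=\{\ell:(\psi_v)_V(\ell^{-1}v_0\ell)=1\ \text{for all}\ v_0\in V^0(F_v)\}$, a union of double cosets $D_{\underline{k}^0}\backslash L(F_v)/T$. It contains $D_{\underline{k}^0}T$: indeed $1\in\mathcal{A}$ by the relevance of $w_{\underline{k}^0}$, $D_{\underline{k}^0}$ normalizes $V^0$, and $T$ stabilizes $(\psi_v)_V$.

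The crux is the reverse inclusion $\mathcal{A}\subseteq D_{\underline{k}^0}T$. Here I would use the explicit shape of $w_{\underline{k}^0}$ and $V^0$ from Proposition \ref{prop 13.2}, of $D_{\underline{k}^0}$ from Lemma \ref{lem 13.3}, and the explicit formula \eqref{12.68} for $(\psi_v)_V$ (which pairs consecutive $\GL_{[\frac{m+1}{2}]}$-blocks and consecutive $\GL_{[\frac{m}{2}]}$-blocks). Writing $\ell\in L(F_v)$ in block form according to $L\cong\GL_{[\frac{m}{2}]}\times\GL_{[\frac{m+1}{2}]}^n\times\GL_{[\frac{m}{2}]}^{n-1}$ and imposing $(\psi_v)_V\circ\Ad(\ell^{-1})|_{V^0}=1$ one coordinate block at a time, one is forced — after left multiplication by a suitable element of $D_{\underline{k}^0}$ and right multiplication by a suitable element of $T$ — into $D_{\underline{k}^0}T$. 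This is driven by the same combinatorics (the relations \eqref{13.1} together with the equality $[\frac{m}{2}]=\mu_0[\frac{n}{2}]$ and its counterpart for $[\frac{m+1}{2}]$) that already made $\underline{k}^0$ the unique relevant matrix in Proposition \ref{prop 13.2}, and it is the step I expect to take the most bookkeeping.

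Finally, given $\mathcal{A}=D_{\underline{k}^0}T$, the restriction $\widetilde f\mapsto\widetilde f|_T$ is injective on the Jacquet module. To compute its image, for $d\in T\cap D_{\underline{k}^0}$ and $\ell\in T$ write $vd\ell=d(d^{-1}vd)\ell$, use left $C_{\underline{k}^0}$-equivariance, and substitute $v\mapsto dvd^{-1}$ in the integral over $V^0(F_v)\backslash V(F_v)$; since $d$ normalizes $V^0$ and stabilizes $(\psi_v)_V$ this gives $\widetilde f(d\ell)=(\delta^{\frac12}\chi_{\tau'})^{w_{\underline{k}^0}}(d)\,\delta_0(d)\,\widetilde f(\ell)$ with $\delta_0(d)=|\det(\Ad(d)|_{\bar V^0})|$, the Jacobian on $\bar V^0=(V\cap w_{\underline{k}^0}^{-1}U_{(m+\mu_0)^{[\frac{n}{2}]},m^{n-[\frac{n}{2}]},(m-\mu_0)^{[\frac{n}{2}]}}w_{\underline{k}^0})\backslash V$; note that $(\delta^{\frac12}\chi_{\tau'})^{w_{\underline{k}^0}}$ restricts to $T\cap D_{\underline{k}^0}\subseteq C_{\underline{k}^0}$. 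Compactness of support mod $C_{\underline{k}^0}$ passes to compactness mod $T\cap D_{\underline{k}^0}$, so $\widetilde f|_T$ ranges over $ind^{T}_{T\cap D_{\underline{k}^0}}\delta_0(\delta^{\frac12}\chi_{\tau'})^{w_{\underline{k}^0}}$; as the $T$-action is right translation on both sides, this would establish \eqref{13.16}.
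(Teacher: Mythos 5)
Your proposal is correct and follows essentially the same route as the paper: identify $\mathcal{A}$ by the relevance condition, reduce any $h\in\mathcal{A}$ modulo $D_{\underline{k}^0}$ into $T$ using the explicit block structure, and realize \eqref{13.16} by restricting the integral transform $\widetilde f$ to $T$, with $\delta_0$ arising as the Jacobian of conjugation on $\bar{V}^0_{[\frac{m}{2}],[\frac{m+1}{2}]^n,[\frac{m}{2}]^{n-1}}$. The one step you leave as bookkeeping, $\mathcal{A}\subseteq D_{\underline{k}^0}T$, is exactly where the paper's (also brief) work lies: it reruns the relevance argument of Proposition \ref{prop 13.1} with $\underline{k}=\underline{k}^0$ — the $v(X)$ matrices impose no condition on $g_{j+1}g_j^{-1}$ for $2\leq j\leq n$, while the $w(X)$ matrices force $g_{j+1}g_j^{-1}$ into a parabolic subgroup of $\GL_{[\frac{m}{2}]}(F_v)$ for $n+2\leq j\leq n+[\frac{n}{2}]-1$ — and these constraints match precisely the blocks available in $D_{\underline{k}^0}$ by Lemma \ref{lem 13.3}, yielding $D_{\underline{k}^0}h=D_{\underline{k}^0}t$ with $t\in T$.
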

\begin{proof}
Let $h=diag(g_1,...g_{2n})$ be an element in $\mathcal{A}$. We repeat the argument of the proof of Prop. \ref{prop 13.1} with $k_{i,j}=k^0_{i,j}$. The first part of the argument with the matrices $v(X)$ gives no condition on the matrices $g_{j+1}g_j^{-1}$, for $2\leq j \leq n$. The second part of the argument with the matrices $w(X)$ gives no condition on the matrices $g_{j+1}g_j^{-1}$, for $n+[\frac{n}{2}]\leq j\leq 2n-1$, but, for $n+2\leq j\leq n+[\frac{n}{2}]-1$, we get that $g_{j+1}g_j^{-1}$ has the form
$$	
g_{j+1}g_j^{-1}=\begin{pmatrix}A_1&A_2\\0_{\scriptsize{([\frac{m}{2}]-(j-n)\mu_0)\times (j-n-1)\mu_0}}&A_4\end{pmatrix}\in \GL_{[\frac{m}{2}]}(F_v).
$$	
It follows, by Lemma \ref{lem 13.3}, that we have the equality of left cosets 
$$
D_{\underline{k}^0}h=D_{\underline{k}^0}diag(g_1, I_{n[\frac{m+1}{2}]},g_{n-2}^{\Delta_{[\frac{n}{2}]-1}},I_{[\frac{m}{2}][\frac{n+1}{2}]})=D_{\underline{k}^0}diag(g_1, I_{n[\frac{m+1}{2}]},g_{n-2}^{\Delta_{n-1}}).
$$	
The representative of the last coset is in $T$. The isomorphism \eqref{13.16} is induced by the map which sends a function $f$ in the space of $ind^{P_{[\frac{m}{2}],[\frac{m+1}{2}]^n,[\frac{m}{2}]^{n-1}}(F_v)}_{C_{\underline{k}^0}}(\delta^{\frac{1}{2}}\chi_{\tau'})^{w_{\underline{k}^0}}$ to the function on $T$ given by
$$t\mapsto \int_{\bar{V}_{[\frac{m}{2}],[\frac{m+1}{2}]^n,[\frac{m}{2}]^{n-1}}^0(F_v)}f(ut)(\psi^{-1}_v)_{V_{[\frac{m}{2}],[\frac{m+1}{2}]^n,[\frac{m}{2}]^{n-1}}}(u)du.
$$
This proves the proposition.		
	
\end{proof}

Note that, by Lemma \ref{lem 13.3},
\begin{equation}\label{13.17}
T\cap D_{\underline{k}^0}=\{ \begin{pmatrix}h_1\\&h_2^{\Delta_n}\\&&h_3^{\Delta_{n-1}}\end{pmatrix}\ |h_1,h_3\in P_{\mu_0^{[\frac{n}{2}]}}(F_v), h_2\in \GL_{[\frac{m+1}{2}]}(F_v)\}.
\end{equation}

Now, we need to compute the character $\delta_0(\delta^{\frac{1}{2}}\chi_{\tau'})^{w_{\underline{k}^0}}$ in \eqref{13.16}. 

\begin{lem}\label{lem 13.5}
Let $h_2\in \GL_{[\frac{m+1}{2}]}(F_v)$, and
$$
h_1=\begin{pmatrix}a_1\\&\ddots\\&&a_{[\frac{n}{2}]}\end{pmatrix},\ 
h_3=\begin{pmatrix}c_1\\&\ddots\\&&c_{[\frac{n}{2}]}\end{pmatrix},\
a_i,c_i\in \GL_{\mu_0}(F_v).
$$
Then
\begin{multline}\label{13.18}
w_{\underline{k}^0}\begin{pmatrix}h_1\\&h_2^{\Delta_n}\\&&h_3^{\Delta_{n-1}}\end{pmatrix}w^{-1}_{\underline{k}^0}=\\
\scriptsize{\begin{pmatrix}\begin{pmatrix}a_1\\&h_2\\&&h_3\end{pmatrix}\\&\ddots\\&&\begin{pmatrix}a_{[\frac{n}{2}]}\\&h_2\\&&h_3\end{pmatrix}\\&&&\nu_n\begin{pmatrix}h_2\\&h_3\end{pmatrix}\\&&&&\begin{pmatrix}h_2\\&h^{(1)}_3\end{pmatrix}\\&&&&&\ddots\\&&&&&&\begin{pmatrix}h_2\\&h^{([\frac{n}{2}])}_3\end{pmatrix}\end{pmatrix}},
\end{multline}
where $h_3^{(i)}$ is obtained from $h_3$ by deleting $c_{[\frac{n}{2}]-i+1}$ from the block diagonal. When $\nu_n=0$, the middle block should be omitted. We conclude that
\begin{multline}\label{13.19}
(\delta^{\frac{1}{2}}\chi_{\tau'})^{w_{\underline{k}^0}}(\begin{pmatrix}h_1\\&h_2^{\Delta_n}\\&&h_3^{\Delta_{n-1}}\end{pmatrix})=\\
\prod_{i=1}^{[\frac{n}{2}]}\chi_i^2(\det(h_2)\det(h_3))\prod_{i=1}^{[\frac{n}{2}]}\chi_i(\det(a_i))\chi_{[\frac{n}{2}]-i+1}^{-1}(\det(c_i))|\det(h_1)\det(h_3)|^{\frac{mn}{2}}\cdot\\
|\det(h_2)\det(h_3)|^{\frac{\mu_0}{2}(n-2[\frac{n}{2}])-[\frac{m}{2}][\frac{n+1}{2}]}\prod_{i=1}^{[\frac{n}{2}]}(|\det(a_i)|^{m+\mu_0}|\det(c_i)|^{m-\mu_0})^{-\frac{2i-1}{2}}.
\end{multline}
\end{lem}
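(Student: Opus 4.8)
\textbf{Plan of proof for Lemma \ref{lem 13.5}.}
The proof is a direct, if bookkeeping-heavy, computation built on the explicit block structure of $w_{\underline{k}^0}$ from Proposition \ref{prop 13.2} and the block descriptions \eqref{13.11}. First I would establish the conjugation formula \eqref{13.18}. Recall that $w_{\underline{k}^0}$ is a permutation matrix, written as a matrix of blocks $(w_{i,j})$ with $w_{i,j}^{j,i}=I_{k^0_{i,j}}$ and all other sub-blocks zero; conjugating a block-diagonal element $\mathrm{diag}(h_1,h_2^{\Delta_n},h_3^{\Delta_{n-1}})$ by this permutation simply permutes and groups the diagonal entries according to the pattern recorded in $\underline{k}^0$. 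Concretely, the columns of $w_{\underline{k}^0}$ corresponding to the block column $j=1$ split $h_1 = \mathrm{diag}(a_1,\dots,a_{[\frac n2]})$ (times a possible $\nu_n\mu_0$-slot) among the first $[\frac{n+1}{2}]$ block rows; the block columns $j=2,\dots,n+1$ carry the $n$ copies of $h_2$; and the block columns $n+2,\dots,2n$ carry the $n-1$ copies of $h_3$, distributed so that, by item (4) of Proposition \ref{prop 13.2}, each of the middle block rows $[\frac{n+1}{2}]+1\le i\le n$ receives one $a$-factor, one $h_2$-factor, and a truncated $h_3$-factor, while each of the last $[\frac{n+1}{2}]$ block rows receives one $h_2$ and one full-minus-one copy $h_3^{(i)}$. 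Assembling these and reading off which $c_j$ is missing from each $h_3^{(i)}$ from \eqref{13.10}--\eqref{13.11} gives \eqref{13.18}; the case distinction on $\nu_n$ (i.e.\ $n$ even vs.\ odd) just records whether the $[\frac{n+1}{2}]$-th block row of the $j=1$ column is present.

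Next I would compute $(\delta^{1/2}\chi_{\tau'})^{w_{\underline{k}^0}}$ applied to \eqref{13.18}. The character $\chi_{\tau'}$ on $P_{(m+\mu_0)^{[\frac n2]},m^{n-[\frac n2]},(m-\mu_0)^{[\frac n2]}}(F_v)$ is $\bigotimes_{i=1}^{[\frac n2]}\chi_i\circ\det_{\GL_{m+\mu_0}}$ on the first $[\frac n2]$ blocks, trivial (or $|\det|^{\mu_0/2}$ in the odd-$n$ Cases 3,4) on the middle block(s), and $\bigotimes_{i=1}^{[\frac n2]}\chi_i\circ\det_{\GL_{m-\mu_0}}$ on the last $[\frac n2]$ blocks. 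Evaluating on the $i$-th diagonal block $\mathrm{diag}(a_i,h_2,h_3)$ of size $\mu_0+[\frac{m+1}{2}]+[\frac m2]=m+\mu_0$ contributes $\chi_i(\det a_i)\chi_i(\det h_2\det h_3)$; summing over $i=1,\dots,[\frac n2]$ and over the last $[\frac n2]$ blocks $\mathrm{diag}(h_2,h_3^{(i)})$ of size $m-\mu_0$ gives the factor $\prod_{i=1}^{[\frac n2]}\chi_i^2(\det h_2\det h_3)$ together with the "shift" terms $\prod\chi_i(\det a_i)\chi_{[\frac n2]-i+1}^{-1}(\det c_i)$ once one tracks, via $h_3^{(i)}$, exactly which $c_j$ is absent from each truncated copy. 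The half-modulus factor $\delta^{1/2}$ of the standard parabolic is a product of $|\det|^{\pm(\text{row block sizes})}$ terms; evaluating it on \eqref{13.18} produces the powers of $|\det h_1\det h_3|$ and $|\det h_2\det h_3|$ and the $\prod_i(|\det a_i|^{m+\mu_0}|\det c_i|^{m-\mu_0})^{-(2i-1)/2}$ factor, where the exponent $-(2i-1)/2$ comes precisely from the arithmetic progression of block positions forced by \eqref{13.10}. I would assemble the exponents carefully, using $[\frac m2]=\mu_0[\frac n2]$ and $[\frac{m+1}{2}]=m-[\frac m2]$, to match the stated form \eqref{13.19}.

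The main obstacle, and the only genuinely delicate point, is the careful accounting of the exponents of the absolute-value characters: the contribution of $\delta^{1/2}$ and of the Jacobian character $\delta_0$ (which enters \eqref{13.16}, though $\delta_0$ itself is handled in the next step rather than here) both depend on the precise block sizes occurring in $w_{\underline{k}^0}$, and these sizes are governed by the arithmetic progressions $k^0_{i,2n+1-i}=(n-i)\mu_0$ and $k^0_{i,2n+2-i}=[\frac m2]-(n-i+1)\mu_0$ of item (4) of Proposition \ref{prop 13.2}. I would organize this by computing, for each of the $2n$ diagonal blocks of \eqref{13.18}, its total size and the sum of sizes of the blocks to its left in $w_{\underline k^0}$; the resulting linear combination, simplified with $m\pm\mu_0 = [\frac m2]+[\frac{m+1}{2}]\pm\mu_0$, yields the exponent $\tfrac{mn}{2}$ on $|\det h_1\det h_3|$, the exponent $\tfrac{\mu_0}{2}(n-2[\frac n2])-[\frac m2][\frac{n+1}{2}]$ on $|\det h_2\det h_3|$, and the progression exponents on $|\det a_i|,|\det c_i|$. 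Everything else is routine, since $\chi_i$ are unramified characters and the computation reduces to tracking determinants through a permutation of diagonal blocks.
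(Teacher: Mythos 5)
Your proposal follows the same route as the paper: verify the conjugation identity \eqref{13.18} directly from the block structure of $w_{\underline{k}^0}$ given in Proposition \ref{prop 13.2}, then evaluate $\chi_{\tau'}$ and $\delta^{\frac{1}{2}}$ block by block on the result, using $\det(h_3^{(i)})=\det(h_3)\det(c_{[\frac{n}{2}]-i+1})^{-1}$ to produce the shifted $\chi$-factors; this is exactly the paper's (admittedly terse) argument. One bookkeeping slip: it is the \emph{first} $[\frac{n}{2}]$ block rows that each receive an $a_i$, an $h_2$ and a \emph{full} $h_3$ (total size $m+\mu_0$), while the rows $[\frac{n+1}{2}]+1\le i\le n$ receive no $a$-factor at all — only $h_2$ and the truncated $h_3^{(i)}$ (total size $m-\mu_0$) — so your sentence assigning an $a$-factor to the middle rows is internally inconsistent with the block sizes of $P_{(m+\mu_0)^{[\frac{n}{2}]},m^{n-2[\frac{n}{2}]},(m-\mu_0)^{[\frac{n}{2}]}}$; since the target \eqref{13.18} is stated correctly, carrying out the computation would force the correct assignment, so this does not affect the validity of the approach.
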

\begin{proof}
The proofs are by direct computations. For example, after having verified \eqref{13.18}, we see that 
\begin{multline}\label{13.20}
\chi_{\tau'}^{w_{\underline{k}^0}}(\begin{pmatrix}h_1\\&h_2^{\Delta_n}\\&&h_3^{\Delta_{n-1}}\end{pmatrix})=\\
\prod_{i=1}^{[\frac{n}{2}]}\chi_i(\det(a_i)\det(h_2)\det(h_3))|\det(h_2)\det(h_3)|^{\frac{\mu_0}{2}(n-2[\frac{n}{2}])}\prod_{i=1}^{[\frac{n}{2}]}\chi_i(\det(h_2)\det(h^{(i)}_3)).
\end{multline}
Since $\det(h^{(i)}_3=\det(h_3)(\det(c_{[\frac{n}{2}]-i+1}))^{-1}$, we get that
the r.h.s. of \eqref{13.20}, after a simple change of index, is equal to
\begin{equation}\label{13.21}	
\prod_{i=1}^{[\frac{n}{2}]}\chi_i^2(\det(h_2)\det(h_3))|\det(h_2)\det(h_3)|^{\frac{\mu_0}{2}(n-2[\frac{n}{2}])}\prod_{i=1}^{[\frac{n}{2}]}\chi_i(\det(a_i))\chi_{[\frac{n}{2}]-i+1}^{-1}(\det(c_i)).
\end{equation}
Recall that the character $\chi_{\tau'}$ is defined right after \eqref{12.9}, \eqref{12.9.1}. Recall also that $\delta=\delta_{P_{(m+\mu_0)^{[\frac{n}{2}]},m^{n-2[\frac{n}{2}]},(m-\mu_0)^{[\frac{n}{2}]}}}$. We have
\begin{multline}\nonumber
(\delta^{\frac{1}{2}})^{w_{\underline{k}^0}}(\begin{pmatrix}h_1\\&h_2^{\Delta_n}\\&&h_3^{\Delta_{n-1}}\end{pmatrix})=
\prod_{i=1}^{[\frac{n}{2}]}|\det(a_i)\det(h_2)\det(h_3)|^{\frac{mn-(2i-1)(m+\mu_0)}{2}}\cdot \\
|\det(h_2)\det(h_3)|^{-(n-2[\frac{n}{2}])\mu_0[\frac{n}{2}]}\prod_{i=1}^{[\frac{n}{2}]}|\det(h_2)\frac{\det(h_3)}{\det(c_{[\frac{n}{2}]-i+1})}|^{([\frac{n}{2}]-\frac{2i-1}{2})(m-\mu_0)-\frac{mn}{2}}.
\end{multline}
Easy simplifications, together with \eqref{13.21}, give \eqref{13.19}.	
	
\end{proof}

The main work now is to compute $\delta_0$ (in \eqref{13.16}), and for this, we need to describe the subgroup $\bar{V}_{[\frac{m}{2}],[\frac{m+1}{2}]^n,[\frac{m}{2}]^{n-1}}^0$.

\begin{prop}\label{prop 13.6}
Assume that $n=2n'$ is even. Then the subgroup\\ $V_{[\frac{m}{2}],[\frac{m+1}{2}]^n,[\frac{m}{2}]^{n-1}}(F_v)\cap w^{-1}_{\underline{k}^0}P_{(m+\mu_0)^{[\frac{n}{2}]},m^{n-[\frac{n}{2}]}, (m-\mu_0)^{[\frac{n}{2}]}}(F_v)w_{\underline{k}^0}$ consists of the matrices 
\begin{equation}\label{13.22}
v=\begin{pmatrix}I_{[\frac{m}{2}]}&X^1&X^2&X^3&X^4\\&I_{n'[\frac{m+1}{2}]}&0&X^5&0\\&&I_{n'[\frac{m+1}{2}]}&X^6&X^7\\&&&X^8&0\\&&&&I_{n'[\frac{m}{2}]}\end{pmatrix},
\end{equation}
where $X^1, X^3, X^6$ are arbitrary, and $X^2, X^4, X^5, X^7, X^8$ are as follows.\\
\begin{enumerate}
	\item Write $X^2$ as a $n'\times n'$ matrix $(X^2_{i,j})$ of blocks $X^2_{i,j}\in M_{\mu_0\times [\frac{m+1}{2}]}(F_v)$. Then, for $2\leq j\leq n'$, $X^2_{i,j}=0$, for $n'-j+2\leq i\leq n'$. Thus, in column $j$, the last $j-1$ blocks are zero.\\
	\item The matrix $X^4$ has a form similar to $X^2$. Write $X^4$ as a $n'\times n'$ matrix $(X^4_{i,j})$ of blocks $X^4_{i,j}\in M_{\mu_0\times [\frac{m}{2}]}(F_v)$. Then, for $2\leq j\leq n'$, $X^4_{i,j}=0$, for $n'-j+2\leq i\leq n'$.\\
	\item Write $X^5$ as a $n'\times (n'-1)$ matrix $(X^5_{i,j})$ of blocks $X^5_{i,j}\in M_{[\frac{m+1}{2}]\times [\frac{m}{2}]}(F_v)$. Then, for $1\leq i\leq n'-2$, and $i<j\leq n'-1$, $X^5_{i,j}=0$. Also, for $1\leq i\leq n'-1$, write $X^5_{i,i}=(x_i(1),...,x_i(n'))$, where $x_i(j')\in M_{\mu_0\times \mu_0}(F_v)$. Then, for $1\leq j'\leq i$, $x_i(j')=0$.\\
	\item Write $X^7$ as a $n'\times n'$ matrix $(X^7_{i,j})$ of blocks $X^7_{i,j}\in M_{[\frac{m+1}{2}]\times [\frac{m}{2}]}(F_v)$. Then, for $1\leq i<j\leq n'$, $X^7_{i,j}=0$.\\
	\item Write $X^8$ as a $(n'-1)\times (n'-1)$ matrix $(X^8_{i,j})$ of blocks $X^8_{i,j}\in M_{[\frac{m}{2}]\times [\frac{m}{2}]}(F_v)$. Then $X^8$ is upper unipotent, with $X^8_{i,i}=I_{[\frac{m}{2}]}$, $X^8_{i,i+i'}=0$, for $1\leq i\leq n'-2$, $2\leq i'\leq n-i$, and, for $1\leq i\leq n'-1$, let us write $X^8_{i,i+1}$ as a $n'\times n'$ matrix  $X^8_{i,i+1}=(x_i(i',j'))$ of blocks $x_i(i',j')\in M_{\mu_0\times \mu_0}(F_v)$. Then $x_i(i',j')=0$, unless $1\leq i'\leq i$ and $i+2\leq j'\leq n'$. 
\end{enumerate}

\end{prop}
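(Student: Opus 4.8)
This is a direct, Bruhat-theoretic matrix computation. Write $P=P_{(m+\mu_0)^{n'},(m-\mu_0)^{n'}}$ for the standard parabolic of $\GL_{mn}$ attached to $\tau'$ in \eqref{12.9} (since $n=2n'$ is even, this is the $P_{(m+\mu_0)^{[\frac{n}{2}]},m^{n-2[\frac{n}{2}]},(m-\mu_0)^{[\frac{n}{2}]}}$ appearing in \eqref{13.2}). An element $v$ of the unipotent radical $V:=V_{[\frac{m}{2}],[\frac{m+1}{2}]^n,[\frac{m}{2}]^{n-1}}$ lies in $w^{-1}_{\underline{k}^0}Pw_{\underline{k}^0}$ if and only if $w_{\underline{k}^0}vw^{-1}_{\underline{k}^0}$ is block upper triangular for the two-step flag of $\GL_{mn}$ with block sizes $(m+\mu_0)^{n'}$ followed by $(m-\mu_0)^{n'}$; equivalently, for each root $\alpha$ occurring in $V$, the $\alpha$-coordinate of $v$ must vanish whenever $w_{\underline{k}^0}(\alpha)$ is sent strictly below that block diagonal, and is free otherwise. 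So the whole statement reduces to: (i) recording the permutation-matrix structure of $w_{\underline{k}^0}$, and (ii) reading off, entry-block by entry-block, which roots are sent below the diagonal.

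The first step is to unwind Proposition \ref{prop 13.2} and the block formulas in \eqref{13.11}: each of the $[\frac{m+1}{2}]$-blocks in columns $2,\dots,n+1$ of $w_{\underline{k}^0}$, and each of the $[\frac{m}{2}]$-blocks in columns $n+2,\dots,2n$, is further subdivided into $\mu_0$-sized sub-blocks, together with one "oversized" sub-block of size $[\frac{m}{2}]-(n-j)\mu_0$ in the columns governed by the $P_{(n-j)\mu_0,[\frac{m}{2}]-(n-j)\mu_0}$-type entries of $\underline{k}^0$ (item 5 of Proposition \ref{prop 13.2}). Once a consistent indexing of these sub-blocks is fixed, conjugation by the permutation matrix $w_{\underline{k}^0}$ simply relocates each sub-block of $v$ to a determined position relative to the coarse $(m\pm\mu_0)$-flag. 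Then I would take a general $v\in V(F_v)$ written in the block form \eqref{13.22} with off-diagonal blocks $X^1,\dots,X^8$, compute $w_{\underline{k}^0}vw^{-1}_{\underline{k}^0}$ sub-block by sub-block, and impose block-upper-triangularity. The blocks $X^1,X^3,X^6$ are found to be mapped weakly above the diagonal in their entirety, hence stay arbitrary; the $(2,3),(2,5),(4,5)$ blocks are mapped entirely below the diagonal, hence vanish; and $X^2,X^4,X^5,X^7,X^8$ straddle the diagonal, producing precisely the partial-triangularity/vanishing conditions (1)–(5). The identity blocks $X^8_{i,i}=I_{[\frac{m}{2}]}$ in (5) reflect the fact that the $X^8$-block lands inside the $\GL_{(m-\mu_0)^{n'}}$-factor, where the flag of $V$ genuinely refines the block structure and so the conjugate is forced to be unipotent there.

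The main obstacle is purely bookkeeping: the permutation $w_{\underline{k}^0}$ interleaves the three "slabs" $[\frac{m}{2}]$, $[\frac{m+1}{2}]^n$, $[\frac{m}{2}]^{n-1}$ of $V$ in the intricate order prescribed by the nonzero pattern of $\underline{k}^0$, and one must be scrupulous about the indexing of the $\mu_0$-sized sub-blocks so that the image of each block of $v$ can be located unambiguously against the coarse $(m\pm\mu_0)$-flag. The one genuine subtlety — which is what makes the triangular conditions on $X^5$ and $X^7$ differ in shape from those on $X^8$, and forces the extra "$x_i(j')=0$ for $j'\le i$" clause inside $X^5_{i,i}$ — is the presence of the oversized sub-blocks of size $[\frac{m}{2}]-(n-j)\mu_0$ coming from item 5 of Proposition \ref{prop 13.2}; these must be tracked separately from the uniform $\mu_0$-sized ones. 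With the indexing pinned down, each of (1)–(5) is a short mechanical verification. Finally I would remark that the odd-$n$ analogue needed in the sequel is proved by the identical argument, with the single extra $\GL_m$-block of $P$ inserted in the middle of the flag.
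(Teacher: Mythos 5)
Your proposal is correct and follows essentially the same route as the paper: one conjugates $v$ by the permutation matrix $w_{\underline{k}^0}$ and imposes block-upper-triangularity with respect to the coarse flag $((m+\mu_0)^{n'},(m-\mu_0)^{n'})$, which the paper phrases as the vanishing conditions $w^i_{\underline{k}^0}\,v\,{}^tw^j_{\underline{k}^0}=0$ for $j<i$ and then reads off sub-block by sub-block using the explicit shape of $w_{\underline{k}^0}$ from Proposition \ref{prop 13.2} and \eqref{13.11}. The bookkeeping issues you flag (the $\mu_0$-sized versus oversized sub-blocks) are exactly where the paper's three-case verification does its work.
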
 

\begin{proof}
The proof is by a straightforward (patient) calculation.\\ 
Let $v\in V_{[\frac{m}{2}],[\frac{m+1}{2}]^n,[\frac{m}{2}]^{n-1}}(F_v)$. Write $v$ as a $2n\times 2n$ matrix $(v_{i,j})$ of blocks $v_{i,j}$, according to the partition $([\frac{m}{2}],[\frac{m+1}{2}]^n,[\frac{m}{2}]^{n-1})$ (of $mn$). Think of the matrix $w_{\underline{k}^0}vw^{-1}_{\underline{k}^0}$ as a $n\times n$ matrix of blocks according to the partition\\ $((m+\mu_0)^{n'},(m-\mu_0)^{n'})$. Then $w_{\underline{k}^0}vw^{-1}_{\underline{k}^0}$ lies in $P_{(m+\mu_0)^{n'},(m-\mu_0)^{n'}}(F_v)$ iff for all $1\leq j<i\leq n$,
\begin{equation}\label{13.23}
w^i_{\underline{k}^0}v{}^tw^j_{\underline{k}^0}=0.
\end{equation}
Recall that $w^i_{\underline{k}^0}$ denotes the $i-th$ block row of $w_{\underline{k}^0}$. We consider \eqref{13.23} in three cases. The first is for $1\leq j<i\leq n'$. The second is for $1\leq j\leq n'$ and $n'+1\leq i\leq n$, and the third is for $n'+1\leq j<i\leq n$. We will show the first case, and we leave the rest for the reader. Assume that $1\leq j<i\leq n'$. By the description of $w_{\underline{k}^0}$ in Prop. \ref{prop 13.2}, \eqref{13.23} is the same as
\begin{multline}\label{13.24}
w_{i,1}v_{1,n+2-j}{}^tw_{j,n+2-j}+w_{i,1}v_{1,2n+1-j}{}^tw_{j,2n+1-j}+
w_{i,n+2-i}v_{n+2-i,n+2-j}{}^tw_{j,n+2-j}\\
+w_{i,n+2-i}v_{n+2-i,2n+1-j}{}^tw_{j,2n+1-j}+w_{i,2n+1-i}v_{2n+1-i,2n+1-j}{}^tw_{j,2n+1-j}=0
\end{multline}	
Note that since $j<i$, $w_{i,1}{}^tw_{j,1}=0$. By examining the description of $w_{\underline{k}^0}$ in Prop. \ref{prop 13.2}, it follows that \eqref{13.24} is equivalent to each summand of the l.h.s. being zero, that is
\begin{eqnarray}
 w_{i,1}v_{1,n+2-j}{}^tw_{j,n+2-j}=0\label{13.25}\\
 w_{i,1}v_{1,2n+1-j}{}^tw_{j,2n+1-j}=0 \label{13.26}\\
 w_{i,n+2-i}v_{n+2-i,n+2-j}{}^tw_{j,n+2-j}=0 \label{13.27}\\
 w_{i,n+2-i}v_{n+2-i,2n+1-j}{}^tw_{j,2n+1-j}=0 \label{13.28} \\
 w_{i,2n+1-i}v_{2n+1-i,2n+1-j}{}^tw_{j,2n+1-j}=0 \label{13.29}.
\end{eqnarray}	
Write $v_{1,n+2-j}$ as a column of block matrices $v_{1,n+2-j}(i')\in M_{\mu_0\times [\frac{m+1}{2}]}(F_v)$, $1\leq i'\leq n'$. Then by \eqref{13.11}, \eqref{13.25} is the same as $v_{1,n+2-j}(i)=0$. We conclude that $v_{1,n+2-j}(i)=0$, for all $j<i\leq n'$, that is the last $n'-j$ blocks of $v_{1,n+2-j}$ are zero, and this holds for $1\leq j\leq n'-1$.	This is the description of $X^2$ in the first part of the proposition. The same argument shows that when we write $v_{1,2n+1-j}$ as a column of block matrices $v_{1,2n+1-j}(i')\in M_{\mu_0\times [\frac{m}{2}]}(F_v)$, then $v_{1,2n+1-j}(i)=0$, for all $j<i\leq n'$, and we get the description of $X^4$ in the second part of the proposition. Next, by \eqref{13.11}, \eqref{13.27} is equivalent to $v_{n+2-i,n+2-j}=0$. Similarly, \eqref{13.28} is equivalent to
$v_{n+2-i,2n+1-j}=0$, and this gives us the description of $X^7$ in the fourth part of the proposition. Finally, in the same way as in the last two cases, \eqref{13.29} is equivalent to $v_{2n+1-i,2n+1-j}=0$. This completes the case where $1\leq j<i\leq n'$. The remaining two cases of the proof are done similarly, and we leave them for the reader.	
	
\end{proof}

We have a very similar description of the subgroup $V_{[\frac{m}{2}],[\frac{m+1}{2}]^n,[\frac{m}{2}]^{n-1}}(F_v)\cap w^{-1}_{\underline{k}^0}P_{(m+\mu_0)^{[\frac{n}{2}]},m^{n-[\frac{n}{2}]}, (m-\mu_0)^{[\frac{n}{2}]}}(F_v)w_{\underline{k}^0}$ when $n$ is odd. The difference from the previous case is very subtle, and, of course, we need this for the computation of the character $\delta_0$. The proof of the proposition is carried out exactly as in Prop. \ref{prop 13.6}, and we leave it to the reader.

\begin{prop}\label{prop 13.7}
	Assume that $n=2n'+1$ is odd. Then the subgroup\\ $V_{[\frac{m}{2}],[\frac{m+1}{2}]^n,[\frac{m}{2}]^{n-1}}(F_v)\cap w^{-1}_{\underline{k}^0}P_{(m+\mu_0)^{n'},m, (m-\mu_0)^{n'}}(F_v)w_{\underline{k}^0}$ consists of the matrices 
	\begin{equation}\label{13.30}
	v=\begin{pmatrix}I_{[\frac{m}{2}]}&X^1&X^2&X^3&X^4\\&I_{n'[\frac{m+1}{2}]}&0&X^5&0\\&&I_{(n'+1)[\frac{m+1}{2}]}&X^6&X^7\\&&&X^8&0\\&&&&I_{(n'+1)[\frac{m}{2}]}\end{pmatrix},
	\end{equation}
	where $X^1, X^3, X^6$ are arbitrary, and $X^2, X^4, X^5, X^7, X^8$ are as follows.\\
	\begin{enumerate}
		\item Write $X^2$ as a $n'\times (n'+1)$ matrix $(X^2_{i,j})$ of blocks $X^2_{i,j}\in M_{\mu_0\times [\frac{m+1}{2}]}(F_v)$. Then, for $3\leq j\leq n'+1$, $X^2_{i,j}=0$, for $n'-j+3\leq i\leq n'$.\\
		\item The matrix $X^4$ has a form similar to $X^2$. Thus, write $X^4$ as a $n'\times (n'+1)$ matrix $(X^4_{i,j})$ of blocks $X^4_{i,j}\in M_{\mu_0\times [\frac{m}{2}]}(F_v)$. Then, for $3\leq j\leq n'+1$, $X^4_{i,j}=0$, for $n'-j+3\leq i\leq n'$.\\
		\item The matrix $X^5$ has the same description as in Prop. \ref{prop 13.6}. Thus, write $X^5$ as a $n'\times (n'-1)$ matrix $(X^5_{i,j})$ of blocks $X^5_{i,j}\in M_{[\frac{m+1}{2}]\times [\frac{m}{2}]}(F_v)$. Then, for $1\leq i\leq n'-2$, and $i<j\leq n'-1$, $X^5_{i,j}=0$. Also, for $1\leq i\leq n'-1$, write $X^5_{i,i}=(x_i(1),...,x_i(n'))$, where $x_i(j')\in M_{\mu_0\times \mu_0}(F_v)$. Then, for $1\leq j'\leq i$, $x_i(j')=0$.\\
		\item Write $X^7$ as a $(n'+1)\times (n'+1)$ matrix $(X^7_{i,j})$ of blocks $X^7_{i,j}\in M_{[\frac{m+1}{2}]\times [\frac{m}{2}]}(F_v)$. Then, for $1\leq i<j\leq n'+1$, $X^7_{i,j}=0$.\\
		\item The matrix $X^8$ has the same the description as in Prop. \ref{prop 13.6}. Thus, write $X^8$ as a $(n'-1)\times (n'-1)$ matrix $(X^8_{i,j})$ of blocks $X^8_{i,j}\in M_{[\frac{m}{2}]\times [\frac{m}{2}]}(F_v)$. Then $X^8$ is upper unipotent, with $X^8_{i,i}=I_{[\frac{m}{2}]}$, $X^8_{i,i+i'}=0$, for $1\leq i\leq n'-2$, $2\leq i'\leq n-i$, and, for $1\leq i\leq n'-1$, let us write $X^8_{i,i+1}$ as a $n'\times n'$ matrix  $X^8_{i,i+1}=(x_i(i',j'))$ of blocks $x_i(i',j')\in M_{\mu_0\times \mu_0}(F_v)$. Then $x_i(i',j')=0$, unless $1\leq i'\leq i$ and $i+2\leq j'\leq n'$. 
	\end{enumerate}
	
\end{prop}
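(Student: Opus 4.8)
The statement to prove is Proposition~\ref{prop 13.7}, the odd-$n$ analogue of Proposition~\ref{prop 13.6}, which describes the unipotent intersection
$$
V_{[\frac{m}{2}],[\frac{m+1}{2}]^n,[\frac{m}{2}]^{n-1}}(F_v)\cap w^{-1}_{\underline{k}^0}P_{(m+\mu_0)^{n'},m,(m-\mu_0)^{n'}}(F_v)w_{\underline{k}^0}.
$$
The plan is to imitate, line by line, the computation carried out in the proof of Proposition~\ref{prop 13.6}, keeping careful track of the one new feature: the middle block of the parabolic $P_{(m+\mu_0)^{n'},m,(m-\mu_0)^{n'}}$ on the $\GL_{mn}$ side (size $m$ instead of $m\pm\mu_0$), which corresponds, via $\underline{k}^0$, to block row $i=n'+1=[\frac{n+2}{2}]$ of $w_{\underline{k}^0}$ and to the entry $k^0_{[\frac{n+1}{2}],1}=\nu_n\mu_0=0$. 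As in the even case, write an element $v\in V_{[\frac{m}{2}],[\frac{m+1}{2}]^n,[\frac{m}{2}]^{n-1}}(F_v)$ as a $2n\times 2n$ matrix of blocks $(v_{i,j})$ along the partition $([\frac{m}{2}],[\frac{m+1}{2}]^n,[\frac{m}{2}]^{n-1})$, view $w_{\underline{k}^0}vw^{-1}_{\underline{k}^0}$ as an $n\times n$ matrix of blocks along $((m+\mu_0)^{n'},m,(m-\mu_0)^{n'})$, and record that membership in the parabolic is equivalent to $w^i_{\underline{k}^0}\,v\,{}^tw^j_{\underline{k}^0}=0$ for all $1\le j<i\le n$, where $w^i_{\underline{k}^0}$ is the $i$-th block row.

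First I would organize the pairs $(i,j)$ with $j<i$ into three regimes exactly as before: (a) $1\le j<i\le n'$; (b) $1\le j\le n'$, $n'+1\le i\le n$; (c) $n'+1\le j<i\le n$, and additionally isolate the subcases where $i$ or $j$ equals $n'+1$. In each regime I would expand $w^i_{\underline{k}^0}\,v\,{}^tw^j_{\underline{k}^0}$ using the explicit shape of $w_{\underline{k}^0}$ from Proposition~\ref{prop 13.2} (formulas like~\eqref{13.11}), observing — as in the even case — that for each relevant $(i,j)$ the sum of matrix products decouples into separate vanishing conditions on individual blocks $v_{1,n+2-j}$, $v_{1,2n+1-j}$, $v_{n+2-i,n+2-j}$, $v_{n+2-i,2n+1-j}$, $v_{2n+1-i,2n+1-j}$ (the decoupling is forced by the block-column supports of the $w_{i,j}$ being pairwise disjoint). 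The vanishing of $w_{i,1}v_{1,n+2-j}{}^tw_{j,n+2-j}$ with $w_{i,1},w_{j,1}$ the embeddings described in Proposition~\ref{prop 13.2}(1) cuts out the staircase condition on $X^2$ (parts (1),(2) of the statement), the conditions on $v_{n+2-i,\cdot}$ and $v_{2n+1-i,\cdot}$ give the triangular shapes of $X^7$, $X^8$ (parts (4),(5)), and the regime (b) and (c) computations — together with the new $i=n'+1$ or $j=n'+1$ cases — give $X^5$ and the fine $x_i(j')$, $x_i(i',j')$ conditions (part~(3)). The blocks $X^1,X^3,X^6$ survive because the corresponding $(i,j)$ products are identically zero for shape reasons. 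The only genuinely new bookkeeping is that block row $n'+1$ of $w_{\underline{k}^0}$ has a slightly different block-column distribution than the even-$n$ block rows (its "$a$"-part is absent since $k^0_{[\frac{n+1}{2}],1}=0$ when $n$ is odd), which shifts the ranges $3\le j\le n'+1$, $n'-j+3\le i\le n'$ in parts (1),(2) by one relative to the even case $2\le j\le n'$, $n'-j+2\le i\le n'$; I would verify this shift by redoing the $j=1$ instance of \eqref{13.25}--\eqref{13.26} with the odd-$n$ $w_{i,1}$.

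The main obstacle is purely combinatorial stamina: one must re-derive, without error, the block-index ranges in the five conditions, and in particular get right the boundary cases touching block row/column $n'+1$, where the sizes $[\frac{m+1}{2}]$ and $[\frac{m}{2}]$ on the two sides of the middle must be matched against the middle $\GL_m$ factor rather than a $\GL_{m\pm\mu_0}$ factor. There is no conceptual difficulty — every step is a transcription of an argument already executed in Proposition~\ref{prop 13.6} — so the write-up should consist of: (i) a sentence recalling the criterion $w^i_{\underline{k}^0}\,v\,{}^tw^j_{\underline{k}^0}=0$; (ii) a worked model computation in regime (a) (exactly \eqref{13.24}--\eqref{13.29} with $P_{(m+\mu_0)^{n'},m,(m-\mu_0)^{n'}}$ in place of $P_{(m+\mu_0)^{n'},(m-\mu_0)^{n'}}$), extracting the $X^2,X^4,X^7,X^8$ conditions; (iii) the remark that regimes (b),(c) and the $n'+1$-indexed cases are handled identically and produce the $X^5$ conditions in part~(3); and (iv) the sentence "the remaining details are the same as in the proof of Proposition~\ref{prop 13.6}, and we leave them to the reader," matching the style already adopted in the excerpt. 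I would not grind through all fifteen-odd index cases explicitly.
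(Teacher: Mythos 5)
Your proposal is correct and matches the paper exactly: the paper's own proof of Proposition \ref{prop 13.7} consists of the single remark that the computation is carried out exactly as in Proposition \ref{prop 13.6} and is left to the reader, and your plan is precisely that computation, with the right bookkeeping for the middle $\GL_m$ factor and the vanishing of $k^0_{[\frac{n+1}{2}],1}$ when $n$ is odd. The only refinement worth noting is that the shift in the index ranges in parts (1) and (2) comes as much from the reindexing of $X^2$ and $X^4$ (which now have $n'+1$ block columns instead of $n'$) as from the altered block row $n'+1$ of $w_{\underline{k}^0}$, but your proposed check via the $j=1$ instance of \eqref{13.25}--\eqref{13.26} would catch this.
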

 
Let $\delta'_0$ be the character of $T\cap D_{\underline{k}^0}$ (see \eqref{13.17}) given by the Jacobian of the conjugation map of its elements on the subgroup\\
$V_{[\frac{m}{2}],[\frac{m+1}{2}]^n,[\frac{m}{2}]^{n-1}}(F_v)\cap w^{-1}_{\underline{k}^0}P_{(m+\mu_0)^{[\frac{n}{2}]},m^{n-[\frac{n}{2}]}, (m-\mu_0)^{[\frac{n}{2}]}}(F_v)w_{\underline{k}^0}$. Then, for\\
 $g\in T\cap D_{\underline{k}^0}$,
\begin{equation}\label{13.31}
\delta_0(g)=\frac{\delta_{P_{[\frac{m}{2}],[\frac{m+1}{2}]^n,[\frac{m}{2}]^{n-1}}}(g)}{\delta'_0(g)}.
\end{equation}
Let $\begin{pmatrix}h_1\\&h_2^{\Delta_n}\\&&h_3^{\Delta_{n-1}}\end{pmatrix}$ be an element of $T\cap D_{\underline{k}^0}$, where $h_1,h_3\in P_{\mu_0^{[\frac{n}{2}]}}(F_v)$,\\
 $h_2\in \GL_{[\frac{m+1}{2}]}(F_v)$. Then a simple calculation shows that
\begin{multline}\label{13.32}
 \delta_{P_{[\frac{m}{2}],[\frac{m+1}{2}]^n,[\frac{m}{2}]^{n-1}}}(\begin{pmatrix}h_1\\&h_2^{\Delta_n}\\&&h_3^{\Delta_{n-1}}\end{pmatrix})=\\
 |\det(h_1)|^{mn-[\frac{m}{2}]}|\det(h_2)|^{n(n-2)[\frac{m}{2}]}|\det(h_3)|^{-(n-1)([\frac{m}{2}]+n[\frac{m+1}{2}])}.
 \end{multline}
 \begin{prop}\label{prop 13.8}
 	\begin{enumerate}
 	\item Let $h_1=\begin{pmatrix}a_1\\&\ddots\\&&a_{[\frac{n}{2}]}\end{pmatrix}$ where
 	$a_i\in \GL_{\mu_0}(F_v)$. Then
 		\begin{equation}\label{13.33}
 		\delta'_0(\begin{pmatrix}h_1\\&I_{[\frac{m+1}{2}]}^{\Delta_n}\\&&I_{[\frac{m}{2}]}^{\Delta_{n-1}}\end{pmatrix})=|\det(h_1)|^{mn+[\frac{m+1}{2}]}\prod_{i=1}^{[\frac{n}{2}]}|\det(a_i)|^{-mi}.
 		\end{equation}
 	\item Let $h_2\in \GL_{[\frac{m+1}{2}]}(F_v)$. Then
 	\begin{equation}\label{13.34}
 	\delta'_0(\begin{pmatrix}I_{[\frac{m}{2}]}\\&h_2^{\Delta_n}\\&&I_{[\frac{m}{2}]}^{\Delta_{n-1}}\end{pmatrix})=\{\begin{matrix}|\det(h_2)|^{[\frac{m}{2}]\frac{n^2-2n-2}{2}},&& n&\textit{even}\\
 	|\det(h_2)|^{[\frac{m}{2}]\frac{n^2-2n-3}{2}},&&n&\textit{odd}\end{matrix}
 	\end{equation}
 	\item Let $h_3=\begin{pmatrix}c_1\\&\ddots\\&&c_{[\frac{n}{2}]}\end{pmatrix}$, where
 	$c_i\in \GL_{\mu_0}(F_v)$. Then
 	\begin{multline}\label{13.35}
 	\delta'_0(\begin{pmatrix}I_{[\frac{m}{2}]}\\&I_{[\frac{m+1}{2}]}^{\Delta_n}\\&&h_3^{\Delta_{n-1}}\end{pmatrix})=\\
 	=|\det(h_3)|^{-[\frac{m}{2}][\frac{n+1}{2}]-[\frac{m+1}{2}]\frac{n^2-n-2}{2}-\mu_0}\prod_{i=1}^{[\frac{n}{2}]}|\det(c_i)|^{-(m-2\mu_0)i}.
 	\end{multline} 		
 	\end{enumerate}
 \end{prop}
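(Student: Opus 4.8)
The statement to prove is Proposition 13.8, which computes the Jacobian character $\delta'_0$ restricted to the three block-diagonal one-parameter families inside $T\cap D_{\underline{k}^0}$. The key input is the explicit description of the unipotent subgroup
$$
W := V_{[\frac{m}{2}],[\frac{m+1}{2}]^n,[\frac{m}{2}]^{n-1}}(F_v)\cap w^{-1}_{\underline{k}^0}P_{(m+\mu_0)^{[\frac{n}{2}]},m^{n-[\frac{n}{2}]}, (m-\mu_0)^{[\frac{n}{2}]}}(F_v)w_{\underline{k}^0}
$$
given coordinate-by-coordinate in Proposition 13.6 (for $n$ even) and Proposition 13.7 (for $n$ odd), in terms of the blocks $X^1,\dots,X^8$ as in \eqref{13.22}, \eqref{13.30}.

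\textbf{Approach.} The plan is to compute, for each of the three types of elements $g=\mathrm{diag}(h_1,h_2^{\Delta_n},h_3^{\Delta_{n-1}})$, the modulus of the determinant of the adjoint action $\mathrm{Ad}(g)$ on the Lie algebra $\mathrm{Lie}(W)$. Since $W$ is a product of root subgroups (each $X^i$ is a direct sum of blocks $M_{p\times q}(F_v)$ surviving the intersection condition), $\mathrm{Lie}(W)$ decomposes as a direct sum of these matrix blocks, and $\mathrm{Ad}(g)$ acts on a block $X$ sitting in the $(a,b)$ position of the big matrix (where $a$ indexes a row-group scaled by some $\lambda_a$ and $b$ a column-group scaled by $\mu_b$) by $X\mapsto \lambda_a X \mu_b^{-1}$, contributing $|\lambda_a|^{\dim_{\text{col}}}|\mu_b|^{-\dim_{\text{row}}}$ to the Jacobian. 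So the computation reduces to: (i) read off from Proposition 13.6/13.7 which blocks appear in $W$ and their sizes; (ii) for each block, record which ``diagonal scaling'' of $h_1$, $h_2$, or $h_3$ acts on its rows and which on its columns; (iii) multiply up the contributions and simplify. I would do the three cases (first $h_1$, then $h_2$, then $h_3$) separately, and within each, organize the bookkeeping by the blocks $X^1,\dots,X^8$, noting that $X^1, X^3, X^6$ are ``full'' (unconstrained) while $X^2,X^4,X^5,X^7,X^8$ have the triangularity constraints spelled out in the propositions, which cut down their dimensions. Since the block row/column partition of the $2n\times 2n$ grid is $([\frac{m}{2}],[\frac{m+1}{2}]^n,[\frac{m}{2}]^{n-1})$ but $h_1,h_3$ act block-diagonally by $\mu_0\times\mu_0$ sub-blocks (each of $P_{\mu_0^{[n/2]}}$) while $h_2$ acts by a single $[\frac{m+1}{2}]\times[\frac{m+1}{2}]$ block repeated $n$ times, I need to be careful to further subdivide the blocks of $X^2,X^4,X^5,X^8$ into their $\mu_0$-columns or $\mu_0$-rows when tracking the $h_1$ or $h_3$ scalings, using $[\frac{m}{2}]=\mu_0[\frac{n}{2}]$ (and the corresponding relation for $[\frac{m+1}{2}]$) from the setup in \eqref{12.7}--\eqref{12.16}.

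\textbf{Key steps in order.} First I would fix notation: write $h_1=\mathrm{diag}(a_1,\dots,a_{[n/2]})$, $h_3=\mathrm{diag}(c_1,\dots,c_{[n/2]})$ with $a_i,c_i\in\GL_{\mu_0}$, and note the row-groups of the big matrix carry scalars: the first row-group ($[\frac m2]$ rows) is scaled by the sub-blocks of $h_1$; the next $n$ row-groups by $h_2$; the last $n-1$ by sub-blocks of $h_3$. Second, for part (1) ($h_2=I$, $h_3=I$): only blocks touching the first row-group or first column-group feel $h_1$, i.e. $X^1$ (rows $h_1$, cols $h_2=I$, size $[\frac m2]\times n[\frac{m+1}2]$), $X^2$ (rows $h_1$, cols $h_2=I$), $X^3$ (rows $h_1$, cols $h_3=I$), $X^4$ (rows $h_1$, cols $h_3=I$) — so here $\mathrm{Ad}(h_1)$ scales columns... wait, no: $h_1$ acts on the left of these blocks and on the right of nothing relevant since $h_2=h_3=I$; but also $h_1$ appears as a left factor only on the top block-row, and the intersection with $W$ means also columns in the first column-group are scaled — but $W$ is upper-triangular so no blocks sit in the first column-group except on the diagonal. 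So I'd get $\delta'_0$ as a product over $X^1,X^2,X^3,X^4$ of $|\det a_i|$-powers weighted by the (constrained) column-dimensions of the sub-blocks, and also a contribution from the conjugation of $h_1$ acting on its own Levi-type diagonal part $P_{\mu_0^{[n/2]}}$ (the $\prod|\det a_i|^{-mi}$ tail and the $|\det h_1|^{[\frac{m+1}2]}$ come from exactly this count). Third, parts (2) and (3) are analogous: for $h_2$, the relevant blocks are those in the two middle $n$-fold row/column strips ($X^5, X^7, X^8$ partly, and $X^1,X^2,X^3,X^6$), and the $\frac{n^2-2n-2}{2}$ vs. $\frac{n^2-2n-3}{2}$ dichotomy for $n$ even/odd comes precisely from the difference between Proposition 13.6 and Proposition 13.7 in the shape of $X^2,X^4,X^5,X^7$ (the off-by-one in the number of block columns of $X^2,X^4$ and of $X^7$). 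For $h_3$, track $X^3,X^4,X^5,X^6,X^7,X^8$ plus the $P_{\mu_0^{[n/2]}}$-conjugation tail.

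\textbf{Main obstacle.} The hard part will be the combinatorial bookkeeping of dimensions: correctly summing, over all surviving sub-blocks, the weighted column- and row-dimensions, keeping straight the three different partition granularities ($\mu_0$ for $h_1,h_3$ versus $[\frac{m+1}2]$ for $h_2$), and handling the even/odd-$n$ discrepancy between Propositions 13.6 and 13.7 so that the final exponents match exactly $mn+[\frac{m+1}2]$, $\frac{n^2-2n-2}{2}$ or $\frac{n^2-2n-3}{2}$, etc. There is no conceptual difficulty — it is a finite, if intricate, determinant-of-adjoint computation using the root-space decomposition of $\mathrm{Lie}(W)$ — but it is error-prone, so I would double-check each of the three formulas by specializing to small $n$ (e.g. $n=3,4,5$) against the explicit block lists, and by verifying the compatibility $\delta_0 = \delta_{P_{[\frac m2],[\frac{m+1}2]^n,[\frac m2]^{n-1}}}/\delta'_0$ from \eqref{13.31} together with \eqref{13.32} produces sensible (in particular, the expected modular) behavior when later combined with \eqref{13.19} in \eqref{13.16}.
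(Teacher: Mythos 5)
Your approach is exactly the paper's: the proof of Proposition 13.8 is precisely the block-by-block Jacobian computation of the conjugation action on the subgroup described in Propositions 13.6 and 13.7, organized by the blocks $X^1,\dots,X^8$ (for part (1) only $X^1$--$X^4$ contribute, for part (2) only $X^1,X^2,X^5,X^6,X^7$, for part (3) only $X^3$--$X^8$), followed by the arithmetic simplification using $[\frac{m}{2}]=\mu_0[\frac{n}{2}]$.

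One correction: there is no ``Levi-type diagonal'' contribution to $\delta'_0$. By definition $\delta'_0$ is the Jacobian of conjugation on the unipotent intersection $W$ alone, and the factors $\prod_i|\det(a_i)|^{-mi}$ and the summand $[\frac{m+1}{2}]$ in the exponent of $|\det(h_1)|$ in \eqref{13.33} arise entirely from the triangularity constraints on $X^2$ and $X^4$ (whose surviving sub-blocks number $n'-i+1$, resp.\ $n'-i+2$ when $n$ is odd, in the $i$-th block row), combined with the unconstrained contributions of $X^1$ and $X^3$. If you add a separate $P_{\mu_0^{[\frac{n}{2}]}}$-conjugation factor on top of the $W$-computation you will double count and the exponents will not match \eqref{13.33} and \eqref{13.35}.
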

 \begin{proof}
Assume that $n=2n'$ is even. We compute the Jacobian of the conjugation by the element $diag(h_1,I_{[\frac{m+1}{2}]}^{\Delta_n},I_{[\frac{m}{2}]}^{\Delta_{n-1}})$ on the subgroup of elements \eqref{13.22} of Prop. \ref{prop 13.6}. Only $X^1,...,X^4$ contribute to the Jacobian. The contributions of $X^1$, $X^3$ are $|\det(h_1)|^{n'[\frac{m+1}{2}]}$, $|\det(h_1)|^{(n'-1)[\frac{m}{2}]}$, respectively. The contributions of $X^2$ and $X^4$ are, respectively,
$$
\prod_{i=1}^{n'}|\det(a_i)|^{[\frac{m+1}{2}](n'-i+1)},\ \   
\prod_{i=1}^{n'}|\det(a_i)|^{[\frac{m}{2}](n'-i+1)}.
$$
Altogether, we get
\begin{equation}\label{13.36}
|\det(h_1)|^{[\frac{m+1}{2}](n+1)+[\frac{m}{2}]n}\prod_{i=1}^{[\frac{n}{2}]}|\det(a_i)|^{-mi}=
|\det(h_1)|^{mn+[\frac{m+1}{2}]}\prod_{i=1}^{[\frac{n}{2}]}|\det(a_i)|^{-mi}.
\end{equation}
When $n=2n'+1$ is odd, we consider \eqref{13.30} in Prop. \ref{prop 13.7}. The contributions of $X^1$, $X^3$ are $|\det(h_1)|^{n'[\frac{m+1}{2}]}$, $|\det(h_1)|^{(n'-1)[\frac{m}{2}]}$, respectively. The contributions of $X^2$ and $X^4$ are, respectively,
$$
\prod_{i=1}^{n'}|\det(a_i)|^{[\frac{m+1}{2}](n'-i+2)},\ \   
\prod_{i=1}^{n'}|\det(a_i)|^{[\frac{m}{2}](n'-i+2)}.
$$
Altogether, we get that the product is as in \eqref{13.36}. This proves (1).

Consider, next, for $n=2n'$ even, the Jacobian of the conjugation by the element $diag(I_{[\frac{m}{2}]},h_2^{\Delta_n},I_{[\frac{m}{2}]}^{\Delta_{n-1}})$ on the subgroup of elements \eqref{13.22} of Prop. \ref{prop 13.6}. Here, only $X^1, X^2, X^5, X^6, X^7$ contribute. The contributions of $X^1$ and $X^6$ are $|\det(h_2)|^{-n'[\frac{m}{2}]}$, $|\det(h_2)|^{n'(n'-1)[\frac{m}{2}]}$, respectively. The contribution of $X^2$ is
\begin{equation}\label{13.37}
|\det(h_2)|^{-\mu_0(1+2+\cdots+n')}=|\det(h_2)|^{-[\frac{m}{2}]\frac{n'+1}{2}}.
\end{equation}
We used the fact that $[\frac{m}{2}]=\mu_0[\frac{n}{2}]=\mu_0n'$. The contribution of $X^5$ is 
\begin{equation}\label{13.38}
|\det(h_2)|^{([\frac{m}{2}]+\mu_0)(1+2+\cdots+(n'-1))}=|\det(h_2)|^{[\frac{m}{2}]\frac{(n')^2-1}{2}}.
\end{equation}
The contribution of $X^7$ is 
\begin{equation}\label{13.39}
|\det(h_2)|^{[\frac{m}{2}](1+2+\cdots+n')}=|\det(h_2)|^{[\frac{m}{2}]\frac{n'(n'+1)}{2}}.
\end{equation}
The product of these contributions is easily seen to be \eqref{13.34}. When $n=2n'+1$ is odd, we do the same with respect to the elements \eqref{13.30}. Here, the contributions of $X^1, X^6$ are $|\det(h_2)|^{-n'[\frac{m}{2}]}$, $|\det(h_2)|^{(n'+1)(n'-1)[\frac{m}{2}]}$, respectively. The contribution of $X^2$, analogous to \eqref{13.37}, is $|\det(h_2)|^{-[\frac{m}{2}]\frac{n'+3}{2}}$. The contribution of $X^5$ is as in \eqref{13.38}, and the contribution of $X^7$, analogous to \eqref{13.39}, is   
$|\det(h_2)|^{[\frac{m}{2}]\frac{(n'+1)(n'+2)}{2}}$. The product of all these contributions is equal to \eqref{13.34}. This proves (2).

Finally, consider, for $n=2n'$ even, the Jacobian of the conjugation by the element $diag(I_{[\frac{m}{2}]},I_{[\frac{m+1}{2}]}^{\Delta_n},h_3^{\Delta_{n-1}})$ on the subgroup of elements \eqref{13.22} of Prop. \ref{prop 13.6}. Here, only $X^3$--$X^8$ contribute. The contributions of $X^3$, $X^6$ are $\det(h_3)|^{-(n'-1)[\frac{m}{2}]}$, $\det(h_3)|^{-n'(n'-1)[\frac{m+1}{2}]}$, respectively. The contribution of $X^4$ is
\begin{equation}\label{13.40}
|\det(h_3)|^{-\mu_0(1+2+\cdots+n')}=|\det(h_3)|^{-[\frac{m}{2}]\frac{n'+1}{2}}.
\end{equation}
The contribution of $X^5$ is
\begin{multline}\label{13.41}
|\det(h_3)|^{-[\frac{m+1}{2}](1+2+\cdots+(n'-1))}\prod_{i=2}^{n'}|\det(c_ic_{i+1}\cdots c_{n'})|^{-[\frac{m+1}{2}]}=\\
=|\det(h_3)|^{-[\frac{m+1}{2}]\frac{n'(n'-1)}{2}}\prod_{i=1}^{n'}|\det(c_i)|^{-(i-1)[\frac{m+1}{2}]}.
\end{multline}
The contribution of $X^7$ is
\begin{equation}\label{13.42}
|\det(h_3)|^{-[\frac{m+1}{2}](1+2+\cdots+n')}=|\det(h_3)|^{-[\frac{m+1}{2}]\frac{n'(n'+1)}{2}}.
\end{equation}
The contribution of $X^8$ is
\begin{multline}\label{13.43}
\prod_{i=1}^{n'-2}\frac{|\det(c_1c_2\cdots c_i)|^{\mu_0(n'-i-1)}}{|\det(c_{i+2}c_{i+3}\cdots c_{n'})|^{i\mu_0}}=\frac{\prod_{i=1}^{n'-2}|\det(c_i)|^{\mu_0(1+2+\cdots+(n'-i-1))}}{\prod_{i=3}^{n'}|\det(c_i)|^{\mu_0(1+2+\cdots+(i-2))}}=\\
=\prod_{i=1}^{n'}|\det(c_i)|^{\frac{\mu_0n'(n'-1)}{2}-\mu_0-(\mu_0n'-2\mu_0)i}=|\det(h_3)|^{[\frac{m}{2}]\frac{n'-1}{2}-\mu_0}\prod_{i=1}^{n'}|\det(c_i)|^{-([\frac{m}{2}]-2\mu_0)i}.
\end{multline}
The product of the contributions of $X^3--X^8$ is \eqref{13.35}. When $n=2n'+1$ is odd, we do the same with respect to the elements \eqref{13.30}. Here, the contributions of $X^3$, $X^6$ are $\det(h_3)|^{-(n'-1)[\frac{m}{2}]}$, $\det(h_3)|^{-(n'-1)(n'+1)[\frac{m+1}{2}]}$, respectively. The contribution of $X^4$, analogous to \eqref{13.40} is $|\det(h_3)|^{-[\frac{m}{2}]\frac{n'+3}{2}}$. The contribution of $X^5$ is as \eqref{13.41}. The contribution of $X^7$, analogous to \eqref{13.42} is
$|\det(h_3)|^{-[\frac{m+1}{2}]\frac{(n'+1)(n'+2)}{2}}$, and the contribution of $X^8$ is as \eqref{13.43}. The product of the contributions of $X^3--X^8$ is \eqref{13.35}. This prove (3).

\end{proof}

We are close to completing the proof of Theorem \ref{thm 12.3}. Embed $\GL_{[\frac{m}{2}]}(F_v)\times \GL_{[\frac{m}{2}]}(F_v)$  inside $\GL_{[\frac{mn}{2}]}(F_v)$ by 
$$
\nu(h,g)=diag(h,\nu'(g)^{\Delta_n},(g^*)^{\Delta_{n-1}}),
$$
where $g^*=w_{[\frac{m}{2}]}{}tg^{-1}w^{-1}_{[\frac{m}{2}]}$, and $\nu'(g)=g$, if $m$ is even, and if $m$ is odd, then
 $\nu'(g)=\begin{pmatrix}g\\&1\end{pmatrix}$. Note that, by \eqref{12.73}, \eqref{12.74}, we need to know the restriction of $J_{(\psi_v)_{V_{[\frac{m}{2}],[\frac{m+1}{2}]^n,[\frac{m}{2}]^{n-1}}}}(\tau')$ to $\nu(\GL_{[\frac{m}{2}]}(F_v)\times \GL_{[\frac{m}{2}]}(F_v))$. The following theorem is a corollary of all the work we did in this section.
 \begin{thm}\label{thm 13.9}
 	We have an isomorphism 
 	\begin{equation}\label{13.44}
 	Res_{\GL_{[\frac{m}{2}]}(F_v)\times \GL_{[\frac{m}{2}]}(F_v)}(J_{(\psi_v)_{V_{[\frac{m}{2}],[\frac{m+1}{2}]^n,[\frac{m}{2}]^{n-1}}}}(\tau')\circ \nu)\cong
 	\end{equation}
 	\begin{multline}\nonumber 
 	( |\det\cdot|^{\frac{m(n-2)+[\frac{m+1}{2}]}{2}}\otimes |\det\cdot|^{\frac{mn(n-2)+[\frac{m+1}{2}]}{2}})\cdot\\
 	\Ind_{P_{\mu_0^{[\frac{n}{2}]}}(F_v)}^{\GL_{[\frac{m}{2}]}(F_v)}(\otimes_{i=1}^{[\frac{n}{2}]}(\chi_i\circ det_{\GL_{\mu_0}}))\otimes \Ind_{P_{\mu_0^{[\frac{n}{2}]}(F_v)}}^{\GL_{[\frac{m}{2}]}(F_v)}(\otimes_{i=1}^{[\frac{n}{2}]}(\chi_i\circ det_{\GL_{\mu_0}})).
 	\end{multline}
 As a result, we conclude the isomorphism \eqref{12.17} of Theorem \ref{thm 12.3}. 			
\end{thm}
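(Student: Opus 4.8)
\textbf{Plan of proof for Theorem \ref{thm 13.9}.}
The plan is to assemble the chain of reductions carried out in this section and in Section~12. Starting from \eqref{13.12}, which already identifies $J_{(\psi_v)_{V_{[\frac{m}{2}],[\frac{m+1}{2}]^n,[\frac{m}{2}]^{n-1}}}}(\tau')$ with the Jacquet module of a single compactly induced representation attached to the relevant Weyl element $w_{\underline{k}^0}$ (uniqueness is Prop.~\ref{prop 13.2}), I would invoke Prop.~\ref{prop 13.4} to rewrite this Jacquet module, as a $T$-module, as $\ind_{T\cap D_{\underline{k}^0}}^T \delta_0 (\delta^{\frac12}\chi_{\tau'})^{w_{\underline{k}^0}}$. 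By \eqref{13.17}, $T\cap D_{\underline{k}^0}$ is exactly the subgroup where $h_1,h_3$ lie in $P_{\mu_0^{[\frac n2]}}(F_v)$ and $h_2\in\GL_{[\frac{m+1}{2}]}(F_v)$, so the induced representation factors as an outer tensor product of an induction from $P_{\mu_0^{[\frac n2]}}(F_v)$ to $\GL_{[\frac m2]}(F_v)$ in the $h_1$-variable, a one-dimensional contribution in the $h_2$-variable, and a similar induction in the $h_3$-variable. The only remaining task is to pin down the inducing characters, i.e.\ to compute $\delta_0(\delta^{\frac12}\chi_{\tau'})^{w_{\underline{k}^0}}$ explicitly on $T\cap D_{\underline{k}^0}$.

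The computation of the character is assembled from Lemma~\ref{lem 13.5}, which gives $(\delta^{\frac12}\chi_{\tau'})^{w_{\underline{k}^0}}$ via \eqref{13.19}, and from the Jacobian computations \eqref{13.31}, \eqref{13.32} together with Prop.~\ref{prop 13.8}, which give $\delta_0 = \delta_{P_{[\frac m2],[\frac{m+1}{2}]^n,[\frac m2]^{n-1}}}/\delta'_0$ evaluated separately on the $h_1$, $h_2$, and $h_3$ factors. I would carry out three separate bookkeeping checks: (i) on the diagonal blocks $a_i$ of $h_1$, combining the $\chi_i(\det a_i)$ factor from \eqref{13.19} with the powers of $|\det a_i|$ coming from \eqref{13.19}, \eqref{13.32}, and \eqref{13.33}, and verify that the net unnormalized character on $h_1$, after reinstating the normalization $\delta_{P_{\mu_0^{[\frac n2]}}}^{1/2}$ implicit in $\Ind$, equals $\Ind_{P_{\mu_0^{[\frac n2]}}(F_v)}^{\GL_{[\frac m2]}(F_v)}(\otimes_{i=1}^{[\frac n2]}\chi_i\circ\det_{\GL_{\mu_0}})$ twisted by a single power $|\det\cdot|^{(m(n-2)+[\frac{m+1}{2}])/2}$; (ii) the same for the $c_i$-blocks of $h_3$, where one uses $m=\mu_0 n$ or $m-1=\mu_0 n$ (resp.\ $\mu_0(n-1)$) repeatedly to collapse the exponents, and where the "reversed" indexing $\chi_{[\frac n2]-i+1}$ in \eqref{13.19} is re-indexed to recover the same $\Ind$ from $P_{\mu_0^{[\frac n2]}}(F_v)$; (iii) on $h_2$, check that all the powers of $|\det h_2|$ from \eqref{13.19}, \eqref{13.32}, \eqref{13.34} cancel against $\det h_2 = \det h_1^{1/?}$-type relations — more precisely, since $\nu(h,g)$ forces $\det h_2 = \det\nu'(g)$, the $h_2$-contributions get absorbed into the overall $|\det\cdot|$-twist in the $h_3 = g^*$ variable, producing the exponent $(mn(n-2)+[\frac{m+1}{2}])/2$ claimed in \eqref{13.44}. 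Throughout I would keep careful track of whether $m$ is even or odd, since $[\frac m2]$ vs $[\frac{m+1}{2}]$ and $\nu'(g)$ vs $g$ differ, but the final formula \eqref{13.44} is uniform.

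Finally, to deduce \eqref{12.17} of Theorem~\ref{thm 12.3}, I would substitute \eqref{13.44} into \eqref{12.73} and \eqref{12.74}. The explicit powers of $|\det A|$ and $|\det a|$ appearing there are designed precisely to cancel the two $|\det\cdot|$-twists in \eqref{13.44}, leaving $\sigma_{\tau'}\circ t^{(\epsilon)}$, as a $\GL_{[\frac m2]}^{(\epsilon)}(F_v)\times\GL_{[\frac m2]}^{(\epsilon)}(F_v)$-module (equivalently, by the embeddings $t^{(\epsilon)}$ and the structure of the Siegel Levi of $H_m^{(\epsilon)}$, as an $H_m^{(\epsilon)}(F_v)\times H_m^{(\epsilon)}(F_v)$-module), isomorphic to the outer tensor product of two copies of the parabolic induction $\Ind_{Q^{(\epsilon)}_{\mu_0^{n'}}(F_v)}^{H_m^{(\epsilon)}(F_v)}(\chi_1\gamma_{\psi_v}^{(\epsilon)}\circ\det_{\GL_{\mu_0}}\otimes\cdots\otimes\chi_{n'}\gamma_{\psi_v}^{(\epsilon)}\circ\det_{\GL_{\mu_0}})$; here one uses that $[\frac n2]=n'$ in Cases~1,2 and $[\frac n2]=n'$ in Cases~3,4 as well, and that induction in stages identifies $\Ind_{P_{\mu_0^{n'}}}^{\GL_{[\frac m2]}}$ followed by further induction up $H_m^{(\epsilon)}$ with $\Ind_{Q_{\mu_0^{n'}}^{(\epsilon)}}^{H_m^{(\epsilon)}}$ (the extra trivial character on a $\GL_1$ factor in the odd-orthogonal/symplectic cases accounts for the $\otimes 1$ in the last displayed formula of Theorem~\ref{thm 12.3}). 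Combining with \eqref{12.52} and \eqref{12.19}, this yields \eqref{12.17}, and the semisimplicity/irreducibility statement about $\pi_1,\pi_2$ in \eqref{12.18} follows since the only irreducible unramified quotient of $\Ind_{Q_{\mu_0^{n'}}^{(\epsilon)}}^{H_m^{(\epsilon)}}(\cdots)$ is its unramified constituent, which by construction lifts to $\tau_v$. The main obstacle I anticipate is purely organizational: carrying the three Jacobian characters of Prop.~\ref{prop 13.8}, the normalization shifts $\delta^{1/2}$ hidden in passing between $\ind$ and $\Ind$, and the two different normalizations $[\frac m2]$ vs $[\frac{m+1}{2}]$ through the arithmetic so that everything collapses to the single clean twist in \eqref{13.44} — a bookkeeping task that is delicate but uses no new idea beyond the relations $m$ or $m-1 = \mu_0 n$ (resp.\ $\mu_0(n-1)$).
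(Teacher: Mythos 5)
Your proposal is correct and follows essentially the same route as the paper's proof: restrict $\ind_{T\cap D_{\underline{k}^0}}^T\delta_0(\delta^{\frac12}\chi_{\tau'})^{w_{\underline{k}^0}}$ from \eqref{13.16} to $\nu(\GL_{[\frac m2]}(F_v)\times\GL_{[\frac m2]}(F_v))$, identify it as the non-normalized induction from $\nu(P_{\mu_0^{[\frac n2]}}(F_v)\times P_{\mu_0^{[\frac n2]}}(F_v))$ of the character assembled from \eqref{13.19}, \eqref{13.32} and Prop.~\ref{prop 13.8}, extract $\delta^{1/2}_{P_{\mu_0^{[\frac n2]}}}$ to convert to normalized induction, and then feed \eqref{13.44} back through \eqref{12.73}--\eqref{12.74} and \eqref{12.52} to obtain \eqref{12.17}. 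The only caveat is that the actual exponent bookkeeping (which the paper carries out explicitly for the first coordinate, using $[\frac m2]=\mu_0[\frac n2]$) is left as a plan rather than executed, but the ingredients and their assembly are exactly those of the paper.
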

 \begin{proof}
 By \eqref{13.12}, \eqref{13.16}, the restriction of $J_{(\psi_v)_{V_{[\frac{m}{2}],[\frac{m+1}{2}]^n,[\frac{m}{2}]^{n-1}}}}(\tau')$ to\\
 $\nu(\GL_{[\frac{m}{2}]}(F_v)\times \GL_{[\frac{m}{2}]}(F_v))$ is isomorphic to the restriction of $ind_{T\cap D_{\underline{k}^0}}^T\delta_0(\delta^{\frac{1}{2}}\chi_{\tau'})^{w_{\underline{k}^0}}$ to $\nu(\GL_{[\frac{m}{2}]}(F_v)\times \GL_{[\frac{m}{2}]}(F_v))$. This restriction is the non-normalized induction from the character of $\nu(P_{\mu_0^{[\frac{n}{2}]}}(F_v)\times P_{\mu_0^{[\frac{n}{2}]}}(F_v))$ given by $\delta_0(\delta^{\frac{1}{2}}\chi_{\tau'})^{w_{\underline{k}^0}}$. Of course, it is trivial on the unipotent radical  $V_{\mu_0^{[\frac{n}{2}]}}(F_v)\times V_{\mu_0^{[\frac{n}{2}]}}(F_v)$, and we need to find this character on the Levi part. For this, we need to multiply the characters obtained from \eqref{13.18} and \eqref{13.32}, and then divide by the character explicated in Prop. \ref{prop 13.8}. The calculation is straightforward. Let us do this for the first coordinate. Let $h=diag(a_1,...,a_{[\frac{n}{2}]})$, $a_i\in \GL_{\mu_0}(F_v)$. Recall that $[\frac{m}{2}]=\mu_0[\frac{n}{2}]$. By \eqref{13.18}, \eqref{13.32} and \eqref{13.33},
 \begin{multline}\nonumber
  \delta_0(\delta^{\frac{1}{2}}\chi_{\tau'})^{w_{\underline{k}^0}}(\nu(h,I_{[\frac{m}{2}]}))=
  |\det(h)|^{\frac{mn}{2}+(mn-[\frac{m}{2}])+(-mn-[\frac{m+1}{2}])}\cdot\\
 \cdot  \prod_{i=1}^{[\frac{n}{2}]}|\det(a_i)|^{-(m+\mu_0)\frac{2i-1}{2}+mi}\chi_i(\det(a_i))=\\
  |\det(h)|^{\frac{mn}{2}-m}\prod_{i=1}^{[\frac{n}{2}]}|\det(a_i)|^{-\mu_0\frac{2i-1}{2}+\frac{m}{2}}\chi_i(\det(a_i))=\\
  |\det(h)|^{\frac{m(n-1)}{2}}\prod_{i=1}^{[\frac{n}{2}]}|\det(a_i)|^{-\mu_0\frac{2i-1}{2}}\chi_i(\det(a_i))=\\
  	|\det(h)|^{\frac{m(n-2)}{2}+\frac{1}{2}[\frac{m+1}{2}]}\prod_{i=1}^{[\frac{n}{2}]}|\det(a_i)|^{\mu_0([\frac{n}{2}]-\frac{2i-1}{2})}\chi_i(\det(a_i))=\\
  	|\det(h)|^{\frac{m(n-2)+[\frac{m+1}{2}]}{2}}\delta^{\frac{1}{2}}_{P_{\mu_0^{[\frac{n}{2}]}}}(h)\prod_{i=1}^{[\frac{n}{2}]}\chi_i(\det(a_i)).
  	\end{multline}
  We get a similar calculation for the second coordinate. Let $g=diag(b_1,...,b_{[\frac{n}{2}]})$, $b_i\in \GL_{\mu_0}(F_v)$,
  $$
  \delta_0(\delta^{\frac{1}{2}}\chi_{\tau'})^{w_{\underline{k}^0}}(\nu(I_{[\frac{m}{2}]},g))=
  |\det(g)|^{\frac{mn(n-2)+[\frac{m+1}{2}]}{2}}\delta^{\frac{1}{2}}_{P_{\mu_0^{[\frac{n}{2}]}}}(g)\prod_{i=1}^{[\frac{n}{2}]}\chi_i(\det(b_i)).
  $$	  
We leave this part for the reader. Note that if we write $g^*=\diag(c_1,...,c_{[\frac{n}{2}]})$, $c_i\in \GL_{\mu_0}(F_v)$, then $b_i=c^*_{[\frac{n}{2}]-i+1}$, and hence $\prod_{i=1}^{[\frac{n}{2}]}\chi^{-1}_{[\frac{n}{2}]-i+1}(\det(c_i))=\prod_{i=1}^{[\frac{n}{2}]}\chi_i(\det(b_i))$.

Finally, we obtain the isomorphism \eqref{12.17} of Theorem \ref{thm 12.3}. To see this, we review briefly the work done in this section and the previous one. We started with the isomorphism \eqref{12.19},
$$
J_{(\psi_v)_{U_{m^{n-1}}(F_v)}}(\rho_{\chi,\gamma_{\psi_v}^{(\epsilon)},\eta,k_0})\cong J_{(\psi_v)_{U_{m^{n-1}}(F_v)}}(\Ind^{H(F_v)}_{Q^{(\epsilon)}_{mn}(F_v)}\tau'\gamma_{\psi_v}^{(\epsilon)}).
$$
Then, by Theorem \ref{thm 12.4} and Prop. \ref{prop 12.5}, we got \eqref{12.52},
\begin{multline}\nonumber
J_{(\psi_v)_{U_{m^{n-1}}(F_v)}}(\Ind^{H(F_v)}_{Q^{(\epsilon)}_{mn}(F_v)}\tau'\gamma_{\psi_v}^{(\epsilon)})\cong\\ ind^{t^{(\epsilon)}(H^{(\epsilon)}_m(F_v)\times H^{(\epsilon)}_m(F_v))}_{t^{(\epsilon)}(H^{(\epsilon)}_m(F_v)\times H^{(\epsilon)}_m(F_v))\cap Q^{(0),(\epsilon)}(F_v)}\sigma_{[\tau',[\frac{m}{2}]]}.
\end{multline}
In the remaining part of the previus section, we showed that the last representation is isomorphic to the non-normalized parabolic induction from the representation $\pi_{\tau'}$ (\eqref{12.73}, \eqref{12.74}) of
$t^{(\epsilon)}(Q_{[\frac{m}{2}]}(F_v)\times Q_{[\frac{m}{2}]}(F_v))$. Note, that in the linear case, for $h,g\in \GL_{[\frac{m}{2}]}(F_v)$,
$$
\pi_{\tau'}(t(\hat{h},\hat{g}))=|\det(h)|^{\frac{m(2-n)-\delta_H}{2}}|\det(g)|^{\frac{mn(2-n)-\delta_H}{2}}J_{(\psi_v)_{V_{[\frac{m}{2}],[\frac{m+1}{2}]^n,[\frac{m}{2}]^{n-1}}}}(\tau')(\nu(h,g)).
$$
In the metaplectic case, we need to multiply by $\gamma_{\psi_v}(det(h))\gamma_{\psi_v}(det(g))$. Now, by \eqref{13.44}, we get the isomorphism \eqref{12.17}. This completes the proof of the theorem. This also completes the proof of Theorem \ref{thm 12.3}, a special case of which is Theorem \ref{thm 12.2}.

\end{proof}


\begin{thebibliography}{}
	
	\bibitem[A13]{A13}
	J. Arthur. \textit{ The endoscopic classification of representations: orthogonal and symplectic groups}. AMS Colloquium Publications 61, (2013).	
		
	\bibitem[BFH91]{BFH91}
	D. Bump, S. Friedberg, and J. Hoffstein. \textit{$p$-adic Whittaker functions on the metaplectic group}. Duke Math. J., Vol. 31, No. 2, (1991), 379--397.
	
	\bibitem[CFK18]{CFK18} Y. Cai, S. Friedberg, and E. Kaplan. \textit{Doubling constructions: local and global theory, with an application to global functoriality
	for non-generic cuspidal representations}.arXiv:1802.02637.
	
	\bibitem[CFGK16]{CFGK16} Y. Cai, S. Friedberg, D. Ginzburg, and E. Kaplan. \textit{Doubling Constructions for Covering Groups and Tensor Product L-Functions}. arXiv:1601.08240.	
	
	\bibitem[CFGK17]{CFGK17}
	Y. Cai, S. Friedberg, D. Ginzburg, and E. Kaplan. \textit{Doubling
		constructions and tensor product $L$-functions}. arXiv:1710.00905.
	
	\bibitem[CM93]{CM93}
	D. Collingwood, W. McGovern. \textit{Nilpotent orbits in semisimple Lie algebras: an introduction}. Van Nostrand Reinhold, New York (1993).
	
	
	\bibitem[DM78]{DM78}
	J. Dixmier and P. Malliavin. \textit{Factorisations de fonctions et
		de vecteurs $ind\acute{e}finiment diff\acute{e}rentiables$}. Bull.
	Sci. Math. (2), 102(4) (1978), 307--330.
	
	\bibitem[FG15]{FG15}
	S. Friedberg and D. Ginzburg. \textit{Criteria for the existence of
		cuspidal theta representations}. arXiv:1507.07413.
	
	\bibitem[G06]{G06}
	D. Ginzburg. \textit{Certain conjectures relating unipotent orbits
		to automorphic representations}. Israel J. Math. 151 (2006),
	323--355.
	
	\bibitem[GJR04]{GJR04} D. Ginzburg, D. Jiang, S. Rallis,  \textit{On the nonvanishing of the central value of the Rankin-Selberg L-functions}. J. Amer. Math. Soc. 17 (2004), no. 3, 679--722.
	
	
	\bibitem[GJS15]{GJS15}
	D. Ginzburg, D. Jiang, and D. Soudry. \textit{On CAP representations
		for even orthogonal groupsI: A correspondence of unramified
		representations}. Chinese Annals of Math., Ser. B, Vol. 36, No. 4
	(2015), 485--522.
	
	\bibitem[GPSR97]{GPSR97}
	D. Ginzburg, I. Piatetski-Shapiro, and S. Rallis.
	\textit{$L$-functions for the orthogonal group}. Memoirs AMS no.
	611, Vol. 128 (1997).
	
	\bibitem[GRS99]{GRS99}
	D. Ginzburg, S. Rallis, and D. Soudry. \textit{On a correspondence between cuspidal representations of $\GL_{2n}$ and cuspidal representations of $\widetilde{\Sp}_{2n}$}. JAMS, Vol. 12, No. 3 (1999), 849--907.
	
	
	\bibitem[GRS03]{GRS03}
	D. Ginzburg, S. Rallis, and D. Soudry. \textit{On Fourier
		coefficients of automorphic forms of symplectic groups.} Manuscripta
	Math. 111 (2003), 1--16.
	
	\bibitem[GRS11]{GRS11}
	D. Ginzburg, S. Rallis, and D. Soudry.\textit{The descent map from
		automorphic representation of $\GL(n)$ to classical groups.} World
	Scientific (2011)
	
	\bibitem[GS18]{GS18}
	D. Ginzburg and D. Soudry.\textit{Two identities relating Eisenstein series on classical groups.} arXiv: 1804.05295
	
	
	\bibitem[I94]{I94}
	T. Ikeda. \textit{On the theory of Jacobi forms and Fourier-Jacobi coefficients of Eisenstein series}. J. Math. Kyoto Univ. 34, no. 3, (1994), 615--636.
	
	\bibitem[JL13]{JL13}
	D. Jiang and B. Liu . \textit{On Fourier coefficients of automorphic
		forms of $\GL(n)$}. IMRN 17 (2013), 4029--4071.
	
	\bibitem[JLZ13]{JLZ13}
	D. Jiang, B. Liu, and L. Zhang. \textit{Poles of certain residual Eisenstein series of classical groups}. Pacific J. Math., 264(1) (2013), 83--123.
	
	\bibitem[JS07]{JS07}
	D. Jiang and D. Soudry. \textit{On the genericity of cuspidal automorphic forms of $\SO(2n+1)$, II}. Compositio Math. 143 (2007), 721--748.
	
	\bibitem[M97]{M97}
	C. Moeglin. \textit{Non $nullit\acute{e}$ de certains
		$rel\hat{e}vements$ par $s\acute{e}ries$ $th\acute{e}ta$.} J. Lie
	Theory 7, no. 2, (1997), 201--229.
	
	\bibitem[MW87]{MW87}
	C. Moeglin and J.-L. Waldspurger.\textit{$Mod\grave{e}les$ de
		Whittaker $d\acute{e}g\acute{e}n\acute{e}r\acute{e}s$ pour des
		groupes $p$-adiques.} Math.Z. 196, (1987), 427--452.
	
	\bibitem[MW89]{MW89}
C. Mœglin and J.-L. Waldspurger, \textit{Le spectre $r\acute{e}siduel$ de $GL(n)$.} Ann. Sci. $\acute{E}cole$ Norm. Sup. (4) 22:4 (1989), 605–674.

    \bibitem[PSR87]{PSR87}
    I. Piatetski-Shapiro and S. Rallis, \textit{$L$-functions for the classical groups.} Lecture
    Notes in Math. 1254, Springer-Verlag, New York, (1987).
	
	\bibitem[RR93]{Rao93}
	R. Ranga Rao. \textit{On some explicit formulas in the theory of Weil representations.} Pacific J. Math. 157, (1993), 335--371.
	
	\bibitem[S93]{S93}
	D. Soudry. \textit{Rankin-Selberg convolutions for $\SO_{2\ell+1}\times \GL_n$.} Memoirs of the AMS 500, (1993).
	
	\bibitem[Sw90]{Sw90}
W. J. Sweet Jr. \textit{The metaplectic case of the Weil-Siegel formula.} Thesis, Maryland University (1990).


	
	
	
	
	
	
	
	
	
	
\end{thebibliography}
\end{document}